\pgfplotsset{compat=1.15}
\newtheorem*{thm*}{Theorem}
\newtheorem{thm}{Theorem}[section]
\newtheorem{theorem}[thm]{Theorem}
\newtheorem{cor}[thm]{Corollary}
\newtheorem{corollary}[thm]{Corollary}
\newtheorem{prop}[thm]{Proposition}
\newtheorem{lemma}[thm]{Lemma}
\newtheorem{proposition}[thm]{Proposition}
\theoremstyle{definition}
\newtheorem{definition}[thm]{Definition}
\newtheorem{assumptions}[thm]{Assumptions}
\newtheorem{example}[thm]{Example}
\newtheorem{remark}[thm]{Remark}
\newcommand{\com}[1]{}
\newcommand{\Com}[1]{}
\newcommand{\GRN}{}
\newcommand{\FGRN}{}
\definecolor{purple}{rgb}{.5, 0, .3}
\newcommand{\Grn}[1]{{#1}}
\definecolor{burntorange}{rgb}{0.8, 0.33, 0.0}
\newcommand{\mfrac}[2]{\tfrac{#1\strut}{#2\strut}}
\newcommand{\C}{\mathbb C}
\newcommand{\CP}{\mathbb{CP}}
\newcommand{\R}{\mathbb R}
\newcommand{\Z}{\mathbb Z}
\newcommand{\Q}{\mathbb Q}
\renewcommand{\H}{\mathbb H}
\newcommand{\SL}{\operatorname{SL}}
\newcommand{\bX}{\bm X}
\newcommand{\bY}{\bm Y}
\newcommand{\bZ}{\bm Z}
\newcommand{\bt}{\bm t}
\newcommand{\bz}{\bm z}
\newcommand{\bT}{\bm T}
\newcommand{\bmu}{\bm\mu}
\newcommand{\bexp}{\operatorname{\mathbf{exp}}}
\newcommand{\bE}{\bm E}
\newcommand{\hatbXnf}{\hat{\bm X}_{\mathrm{nf}}}
\newcommand{\bXnf}{{\bm X}_{\mathrm{nf}}}
\newcommand{\bXnfeps}{{\bm X}_{\mathrm{nf},\epsilon}}
\newcommand{\bXnfS}{{\bm X}_{\mathrm{nf,S}}}
\newcommand{\hatphinf}{\hat\phi_{\mathrm{nf}}}
\newcommand{\phinf}{\phi_{\mathrm{nf}}}
\newcommand{\phinfeps}{\phi_{\mathrm{nf},\epsilon}}
\newcommand{\phinfS}{\phi_{\mathrm{nf},S}}
\newcommand{\hattaunf}{\hat\tau_{\mathrm{nf}}}
\newcommand{\taunf}{\tau_{\mathrm{nf}}}
\newcommand{\hattaunff}[1]{\hat\tau_{#1,\mathrm{nf}}}
\newcommand{\taunff}[1]{\tau_{#1,\mathrm{nf}}}
\newcommand{\rhonf}{\rho_{\mathrm{nf}}}
\newcommand{\hatrhonf}{\hat\rho_{\mathrm{nf}}}
\newcommand{\hatchinf}{\hat\chi_{\mathrm{nf}}}
\newcommand{\chinf}{\chi_{\mathrm{nf}}}
\newcommand{\hatfnf}{f_{\mathrm{nf}}}
\newcommand{\bXmod}{{\bm X}_{\mathrm{mod}}}
\newcommand{\bXmodh}{{\bm X}_{\mathrm{mod},h}}
\newcommand{\rhomod}{\rho_{\mathrm{mod}}}
\newcommand{\phimod}{\phi_{\mathrm{mod}}}
\newcommand{\phimodh}{\phi_{\mathrm{mod},h}}
\newcommand{\chimod}{\chi_{\mathrm{mod}}}
\newcommand{\taumod}{\tau_{\mathrm{mod}}}
\newcommand{\taumodf}[1]{\tau_{#1,\mathrm{mod}}}
\newcommand{\fmod}{f_{\mathrm{mod}}}
\newcommand{\Mnf}{M_{\mathrm{nf}}}
\newcommand{\Mmod}{M_{\mathrm{mod}}}
\newcommand{\CMmod}{\Cal M_{\mathrm{mod}}}
\newcommand{\GL}{\operatorname{GL}}
\newcommand{\mon}{\operatorname{mon}}
\newcommand{\const}{\operatorname{const}}
\newcommand{\ord}{\operatorname{ord}}
\newcommand{\Ends}{\mathit{Ends}}
\newcommand{\dd}[1]{\frac{\partial}{\partial #1}}
\newcommand{\tdd}[1]{\tfrac{\partial}{\partial #1 \vphantom{\hat X}}}
\newcommand{\ov}{\overline}
\newcommand{\id}{\operatorname{id}}
\newcommand{\cotg}{\operatorname{cotan}}
\newcommand{\transp}[1]{#1^\mathsf{T}}
\newcommand{\tr}{\operatorname{tr}}
\newcommand{\Fix}{\operatorname{Fix}}
\newcommand{\Cal}{\mathcal}
\newcommand{\sminus}{\smallsetminus}
\newcommand{\hot}{\mathrm{h.o.t.}}
\newcommand{\res}{\operatorname{res}}
\newcommand{\RE}{\operatorname{Re}}
\newcommand{\IM}{\operatorname{Im}}
\newcommand{\Diff}{\mathrm{Diff}}
\newcommand{\hatDiff}{\widehat{\Diff}}
\newcommand{\re}[1]{(\ref{#1})}
\newcommand{\rd}[1]{Definition~\ref{#1}}
\def\beq{\begin{equation}}
\def\eeq{\end{equation}}
\begin{document}

\title{\Grn{Reversible parabolic diffeomorphisms of $(\mathbb{C}^2,0)$\\ and exceptional hyperbolic CR-singularities}}
\author{Martin Klime\v{s}\,\thanks{FER, University of Zagreb, email: {\tt martin.klimes@fer.hr}. Supported by the grant PZS-2019-02-3055 ``Fractal properties of bifurcations of dynamical systems'' of Croatian Science Foundation.}
\ and 
Laurent Stolovitch\,\thanks{CNRS and Laboratoire J.-A. Dieudonn\'e
U.M.R. 7351, Universit\'e de Nice - Sophia Antipolis, Parc Valrose
06108 Nice Cedex 02, France, email: {\tt stolo@unice.fr}. This research was supported by ANR grant ``ANR-10-BLAN 0102'' for the project DynPDE.}
}

\date{\today}
\maketitle
\def\abstractname{Abstract} 

\begin{abstract}

The aim of this article is twofold: 
First we study holomorphic germs of parabolic diffeomorphisms of $(\mathbb{C}^2,0)$ that are reversed by a holomorphic reflection and posses an analytic first integral
with non-degenerate critical point at the origin. 
We find a canonical formal normal form and provide a complete analytic classification (in formal generic cases) in terms of a collection of functional invariants. Their restriction to an irreductible component of the zero locus of the first integral reduces to the Birkhoff--\'Ecalle--Voronin modulus of the 1-dimensional restricted parabolic germ.

\Grn{We then generalize this classification also to germs of anti-holomorphic diffeomorphisms of $(\mathbb{C}^2,0)$ whose square iterate is of the above form.}

Related to it, we solve the problem of both formal and analytic classification of germs of real analytic surfaces in $\C^2$ with non-degenerate CR singularities of  exceptional hyperbolic type, under the assumption that the surface is holomorphically flat, i.e. that it can be locally holomorphically embedded in a real hypersurface of $\C^2$.

\end{abstract}


\section{Introduction}\label{intro}

Early works on iterations of germs of holomorphic maps of $(\mathbb{C},0)$ of the form $\phi: z\mapsto e^{2i\pi\alpha}z +\hot(z)$ in a neighborhood of the origin, the fixed point, can be traced back to Leau \cite{Leau} in the 19th century. 
The structure of orbits of points near the origin under iteration exhibits quite different features depending on whether $\alpha$ is an irrational number or a rational one. 
In the first case, one either encounters Siegel discs on which the dynamics is holomorphically linearizable: conjugate to $z'\mapsto e^{2i\pi\alpha} z'$ by a germ of holomorphic change of coordinate $z'=\psi(z)$ at the origin \cite{siegel,Brjuno,yoccoz}, 
or otherwise, if the dynamics is non-linearizable, one encounters complicated invariant sets known as ``hedgehogs'' \cite{Perez-Marco}. 
On the other hand, \emph{parabolic dynamic} concerns the case of a rational $\alpha=\frac{q}{p}$, meaning that $\phi^{\circ p}(z)=z+\hot(z)$ is tangent to identity. 
Its main feature is the organization of orbits of $\phi^{\circ p}$ into invariant \emph{petals} attached to the origin. 
Furthermore, such germ is formally equivalent to a polynomial \emph{normal form} of the form 
$\phinf:z'\mapsto e^{2i\pi\frac{q}{p}} z' + a z'^{kp+1}+b z'^{2kp+1}$, for some $k\geq 1$ and $a\neq 0,\ b\in\C$. 
It is well known that \emph{normalizing transformations} conjugating $\phi$ to such a normal form $\phinf$ are usually divergent power series of Gevrey type.
Nevertheless, G.D.~Birkhoff \cite{Birkhoff} and T.~Kimura \cite{kimura} proved the existence of \emph{sectorial normalizations}, that is of a finite ``cochain'' of local biholomorphisms $\{\Psi_i\}_{i\in\Z_{2kp}}$, defined on some covering of a neighborhood of the origin by $2kp$ \emph{sectors} (petals) $\{\Omega_i\}_{i\in\Z_{2kp}}$, and conjugating $\phi$ to its normal form, $\phinf\circ\Psi_i=\Psi_{i+2kq}\circ\phi$. 
This is the starting point of the \emph{holomorphic classification problem}, solved first partially by G.D.~Birkhoff \cite{Birkhoff}, and later independently by J.~\'Ecalle \cite{Ecalle,Ecalle2} and S.M.~Voronin \cite{Voronin} (see also \cite{Malg-diffeo,Ilyashenko,IlYa}). Its aim is to describe the equivalence classes of biholomorphisms which are holomorphically conjugate with each other on a neighborhood of the origin. 
\Grn{In the one-dimensional parabolic case, the classifying space, called \emph{Birkhoff--\'Ecalle--Voronin moduli space}, is an infinite-dimensional space
consisting of \emph{cocycles}: $2kp$-tuples of equivalence classes of the transition maps $\{\Psi_{i-1}\circ \Psi_i^{\circ-1}\}$ over the intersection sectors $\Omega_i\cap \Omega_{i-1}$. }

The vector field counterpart of this theory was devised by J. Martinet and J.-P. Ramis \cite{MR,MR2} for 2-dimensional vector fields (corresponding to a \emph{saddle--node} and to a \emph{resonant saddle} respectively) and generalized by the second author to any dimension to \emph{$1$-resonant} vector fields \cite{Stolo-classif}. 
Similar types of functional moduli spaces have since then been discovered in several other contexts (e.g. \cite{Ilyashenko, Ahern-Gong2, lohrmann1, Bittman2}...). 
The common thread through most of these works is that the divergent behavior is concentrated to a single variable or a single resonant monomial, and that there is a finite  covering of a full neighborhood of the singularity by domains projecting to onto sectors in the divergent variable.

The primary goal of this article is to obtain an analytic classification of germs of \emph{parabolic reversible diffeomorphisms} of $(\mathbb{C}^2,0)$, that is
of pairs $(\phi,\tau)$, where $\phi$ is a holomorphic diffeomorphism fixing $0$, such that $\phi^{\circ p}=\id+\hot$ is tangent to identity
for some power $p\geq 1$, and $\tau$ is a holomorphic \Grn{reflection} reversing $\phi$:
\[\tau^{\circ 2}=\id,\qquad  \tau\circ\phi\circ\tau=\phi^{\circ(-1)}.\]
We restrict our attention only to those germs $\phi$ that posses a \emph{holomorphic first integral} $H=H\circ\phi$ of Morse type \Grn{(i.e. with nondegenerate critical point)} at $0$. 

Afterwards we extend the classification also to germs of \emph{parabolic reversible antiholomorphic diffeomorphisms} of $(\C^2,0)$, \Grn{that is to pairs $(\chi,\tau)$ where $\chi$ is an antiholomorphic germ, $\tau\circ\chi\circ\tau=\chi^{\circ(-1)}$, and $\phi=\chi^{\circ 2}$, $\tau$ are as above.}

Following the same general approach as Birkhoff--\'Ecalle--Voronin, we first obtain a formal classification by finding canonical formal normal forms $(\hatphinf,\hattaunf)$, resp. $(\hatchinf,\hattaunf)$,
and then construct a normalizing cochain of transformations on a certain covering of a neighborhood of the origin, which conjugate $(\phi,\tau)$, resp. $(\chi,\tau)$, to an analytic model $(\phimod,\taumod)$, resp. $(\chimod,\taumod)$, representing an equivalence class slightly broader than the formal class.  
The peculiarity of the normalizing cochain is due to its domains no longer being sector-like, but 
having more complicated two-dimensional shapes, attached to the fixed-points divisor. 
This is similar to the domains encountered in the theory of parametric unfolding of 1-dimensional parabolic germs developed by C.~Christopher, P.~Marde\v{s}i\'c, R.~Roussarie \& C.~Rousseau \cite{MRR, Rousseau-Christopher, Rousseau10, Christopher-Rousseau, Rousseau} and by J.~Ribon \cite{Ribon-f, Ribon-a, Ribon-c} building on the works A.~Douady, P.~Lavaurs \cite{Lavaurs}, R.~Oudekerk \cite{Oudekerk} and M.~Shishikura \cite{Shi} on the parabolic bifurcation.
In a striking difference to these works, the covering in our case consists of an infinite number of domains in general.

We emphasize that our result is one of the very first classification results in  parabolic dynamics in a higher dimension. 
In fact, most previous studies focus solely on the existence of \emph{parabolic curves}, notion generalizing that of ``petals'' (see for instance \cite{hakim,weickert,Abate, Abate2}).
\Grn{Under our assumption on existence of Morse first integral $H$, this follows trivially from the 1-dimensional theory by restriction to each irreducible component of the zero level set of $H$.}

Besides, the dynamical system interest, this work is largely motivated by the seemingly unrelated problem of understanding the geometry and holomorphic classification of exceptional hyperbolic Cauchy-Riemann singularities of real analytic surfaces in $(\mathbb{C}^2,0)$. These are real surfaces 
of the form
\begin{equation*}
	M:\ z_2=\begin{cases}\gamma^{-1}z_1\bar z_1+z_1^2+\bar z_1^2+\hot(z,\bar z),& \gamma\in\,\,]0,\infty],\\
		z_1\bar z_1+\hot(z,\bar z),&\gamma=0,
	\end{cases}
\end{equation*}
whose the tangent plane 
\Grn{at the origin is a complex subspace of $\C^2$, but those at neighboring points are not (except if $\gamma=\frac12$ when the set of points with a complex tangent can form a real curve).}
As shown by J.~Moser and S.~Webster \cite{Moser-Webster}, for $\gamma\neq 0$, the moduli space of such surfaces with respect to biholomorphic changes of the ambient space $(\C^2,0)$ 
is in fact isomorphic to the moduli space of 
\Grn{holomorphic conjugacy classes of triples $(\tau_1,\tau_2,\rho)$ where $\tau_1,\tau_2$ are holomorphic reflections and $\rho$ is an anti-holomorphic one such that $\tau_1\circ\rho=\rho\circ\tau_2$. This is the same as the space of conjugacy classes of reversible antiholomorphic diffeomorphisms  
$(\chi,\tau)=(\tau_1\circ\rho,\tau_1)$.}
To the best of our knowledge, this article presents the very first systematic investigation of the \emph{exceptional hyperbolic} case, that is the case when the multipliers $\lambda,\lambda^{-1}$ defined by 
$\lambda+\lambda^{-1}=\gamma^{-2}-2$ 
are non-trivial roots of unity. 
\Grn{The assumption on existence of Morse first integral translates to a condition on the surface $M$ to be \emph{holomorphically flat}: contained in the real hypersurface $\{\RE z_2=0\}$ of $\C^2$.}

\goodbreak
\newpage 

\subsection{Notations}\label{notation:bar}~

\begin{itemize}[leftmargin=12pt]
	    \item[\textasteriskcentered] \Grn{$\hot(\xi)$ stands for ``higher order terms'' in the variable $\xi$.}
	    
		\item[\textasteriskcentered] $\mathbb{Z}_l:=\mathbb{Z}/l\mathbb{Z}\simeq \{0,\ldots, l-1\}$.
		
		\item[\textasteriskcentered] $(\C^n,0)$ stands for a \emph{germ of a neighborhood of $0$ in $\C^n$}.
		
		\item[\textasteriskcentered] \Grn{$\Diff(\C^n,0)\supset\Diff_{\id}(\C^n,0)$ denote the \emph{group of germs of holomorphic diffeomorphisms} fixing the origin in $\C^n$ and its subgroup of \emph{diffeomorphisms tangent to the identity.} }
		
		\item[\textasteriskcentered] \Grn{$\hatDiff(\C^n,0)\supset\hatDiff_{\id}(\C^n,0)$ denote the \emph{group of formal diffeomorphisms of $\C^n$} and its subgroup of elements \emph{tangent to the identity}.}
		
		\item[\textasteriskcentered] If $f(\xi)=\sum_{\bm m\in \mathbb{N}^2}f_{\bm m} \xi^{\bm m}$ is a  germ, then its \emph{complex conjugate} $\bar f(\xi)$ is defined by $\ov{f(\xi)}=\bar f(\bar\xi)$, i.e. $\bar f(\xi)=\sum_{\bm m\in \mathbb{N}^2}\bar f_{\bm m} \xi^{\bm m}$.\\
		Likewise, if $\bX(\xi)=X_1(\xi)\dd{\xi_1}+ X_2(\xi)\dd{\xi_2}$ is a  vector field, then we denote 
		$\ov\bX(\xi)=\ov X_1(\xi)\dd{\xi_1}+ \ov X_2(\xi)\dd{\xi_2}$.
		
		\item[\textasteriskcentered] For a vector field $\bX(\xi)=X_1(\xi)\dd{\xi_1}+X_2(\xi)\dd{\xi_2}$ and a germ $f(\xi)$, we denote 
		\[\bX.f(\xi)=X_1(\xi)\tdd{\xi_1} f (\xi)+X_2(\xi)\tdd{\xi_2} f (\xi)\] 
		the \emph{Lie derivative} of $f$ along $\bX$.  If $f=\transp{(f_1, f_2)}$ is a vector valued function, then $\bX.f=\transp{(\bX.f_1,\,\bX.f_2)}$.
		In particular, $\bX.\xi=\transp{(X_1,X_2)}$
		
		\item[\textasteriskcentered] If $\bX$ is a vector field, then $\exp(t\bX)(\xi)$ denotes the \emph{flow map of $\bX$ at time $t$} (see Section~\ref{sec:3infgen}), 	
		and $\exp(t\bX)\big|_{t=f(\xi)}$ is the map obtained by substituting $f(\xi)$ for $t$ in the map $(\xi,t)\mapsto\exp(t\bX)(\xi)$.
		
		\item[\textasteriskcentered] Let $\Psi:\xi\mapsto\xi'=\Psi(\xi)$ be a diffeomorphism, conjugating two vector fields $\bX(\xi)$ and $\bX'(\xi')$,
		that is such that
		$\bX'.\xi'\big|_{\xi=\Psi^{\circ(-1)}}=D\Psi(\bX.\xi)=\bX.\Psi$,
		then $\bX$ is the \emph{pullback} of $\bX'$
		\[ \bX=\Psi^*\bX', \] 
		and \[\exp(\bX)=\Psi^{\circ(-1)}\circ\exp(\bX')\circ\Psi.\]
		
		\item[\textasteriskcentered] A function $f:(\C^2,0)\to (\C,0)$ is \emph{$\tau$-invariant} if $f\circ\tau=f$.\\
		A map $F:(\C^2,0)\to (\C^2,0)$ is \emph{$\tau$-equivariant} if $F\circ\tau=\tau\circ F$.\\
		A vector field $\bX$ is \emph{$\tau$-equivariant} if $\tau^*\bX=\bX$. 
	\end{itemize}

\goodbreak

\subsection{Recall: Birkhoff--\'Ecalle--Voronin theory of parabolic diffeomorphisms of $(\C,0)$}

To motivateour results, let us shortly recall some of the basics of analytic theory of  parabolic diffeomorphisms of $(\C,0)$.
For more details see \cite{Ecalle,Ecalle2, Malg-diffeo, Voronin, Voronin2} or \cite{Bracci, Loray, IlYa}.

Let $\phi(z)=\lambda z+\hot(z)\in\Diff(\C,0)$ be a germ of analytic diffeomorphism fixing the origin in $\C$, 
where $\lambda$ is a root of unity of some order $p\geq 1$,  $\lambda^p=1$.
Its $p$-th iteration $\phi^{\circ p}(z)=z+\hot(z)$ is a diffeomorphism tangent to the identity, and as such it possesses a unique formal infinitesimal generator: a formal vector field $\hat{\bX}(z)$ at $0$ with vanishing linear part, such that the Taylor series of $\phi^{\circ p}(z)$ agrees with the formal time-1-flow $\exp(\hat{\bX})(z)$ of $\hat{\bX}$.
The formal vector field $\hat{\bX}$ can be conjugated by some formal tangent-to-identity map $\hat{\Psi}(z)=z+\hot(z)$ to its \emph{normal form}
\begin{equation*}
\bXnf(z)=\mfrac{c\,z^{kp}}{1+c\,\mu\, z^{kp}}z\tdd{z},\quad k\geq 1,\quad c\neq 0,	
\end{equation*}
which is invariant by the rotation $z\mapsto\lambda z$. Consequently also the germ $\phi(z)$ is \Grn{formally} conjugated to the \emph{normal form}
\begin{equation*}
\phinf(z)=\lambda\exp(\tfrac1p\bXnf)(z).
\end{equation*}	
As it turns out, while the formal conjugacy $\hat\Psi(z)$ is generically divergent, it is Borel summable (of order $kp$) on sectors.

\begin{theorem}[\Grn{Birkhoff, Kimura, \'Ecalle, Voronin,...}]\label{thm:kimura}
The germ $\phi(z)$ is conjugated to its  normal form $\phinf(z)$ by a \emph{cochain of bounded analytic transformations} 
$\big\{\Psi_{\Omega_j}(z)=z+\hot(z)\big\}_{j\in\Z_{2kp}}$
on a covering by $2kp$ sectors (Leau--Fatou petals) $\Omega_j$, $j\in\Z_{2kp}$,\footnote{\Grn{The sectorial covering is $\lambda$-invariant: writing $\lambda=e^{2\pi i\frac{q}{p}}$ then for every sector $\Omega_j$ the rotated sector $\lambda\Omega_j=\Omega_{l}$, $l=j+2kq\mod 2kp$, belongs again to the covering.}}
\begin{equation*}
\Psi_{\lambda\Omega_j}\circ\phi(z)=\phinf\circ\Psi_{\Omega_j}(z),\quad z\in\Omega_j. 
\end{equation*}	
Such normalizing cochain $\big\{\Psi_{\Omega_j}(z)\big\}_{j\in\Z_{2kp}}$ is \emph{unique up to} left composition with cochains $\big\{\exp(C_{\Omega_j}\bXnf)(z)\big\}_{j\in\Z_{2kp}}$, $C_{\Omega_j}\in\C$, of flow maps of $\bXnf$.
\end{theorem}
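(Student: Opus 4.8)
\emph{Proof strategy.} The plan is to follow the classical route of Birkhoff--Kimura--\'Ecalle--Voronin through sectorial \emph{Fatou coordinates}, and to obtain the uniqueness clause by a rigidity (Liouville) argument on the petals. First I would start from a \emph{formal} normalizing transformation $\hat\Psi(z)=z+\hot(z)$ with $\hat\Psi\circ\phi=\phinf\circ\hat\Psi$, whose existence was recalled above, truncate it to a polynomial so that after conjugation $\phi$ agrees with $\phinf$ up to terms flat of a prescribed high order $N$, and then recall the geometry of the $2kp$ Leau--Fatou petals of $\phi^{\circ p}(z)=z+a z^{kp+1}+\hot$, $a\neq 0$: there are $kp$ attracting and $kp$ repelling directions, and one chooses the petals $\Omega_j$, $j\in\Z_{2kp}$, as standard sectorial neighbourhoods so that the covering is invariant under $z\mapsto\lambda z$, i.e. $\lambda\Omega_j=\Omega_{j+2kq}$. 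On each petal I would introduce the rectifying (``time'') coordinate $w=T_{\mathrm{nf}}(z)$ of $\bXnf$, defined by $\tfrac{dw}{dz}=1/(\bXnf.z)$, explicitly $w=-\tfrac{1}{ckp\,z^{kp}}+\mu\log z$ for a suitable branch of $\log$; it maps $\Omega_j$ biholomorphically onto a region containing a half-plane $\{\RE w>R\}$ (attracting case) or $\{\RE w<-R\}$ (repelling case), conjugates $\phinf^{\circ p}=\exp(\bXnf)$ to $w\mapsto w+1$, and conjugates $\phinf$ itself to an affine translation in $w$ (twisted by the monodromy that $\lambda$ induces on $\log z$).

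Next comes the analytic core: the construction of the sectorial conjugacies. Because $\phi$ agrees with $\phinf$ up to high flat order, one checks that on an attracting petal $\phi^{\circ p}$ in the coordinate $w$ reads $w\mapsto w+1+\epsilon(w)$ with $\epsilon(w)=O(w^{-s})$, $s>1$, hence $\sum_{n}\epsilon(w+n)<\infty$, and one defines the Fatou coordinate
\[T_j(z)=\lim_{n\to\infty}\big(T_{\mathrm{nf}}\circ\phi^{\circ pn}(z)-n-\mu\cdot(\text{logarithmic correction})\big),\]
the limit converging uniformly on compact subsets of the slightly shrunk petal by a telescoping estimate and satisfying $T_j\circ\phi^{\circ p}=T_j+1$; on a repelling petal one uses $\phi^{\circ(-p)}$ instead. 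Normalizing the additive constant in $T_j$ so that its asymptotic expansion at $0$ matches that of $T_{\mathrm{nf}}\circ\hat\Psi$ and setting $\Psi_{\Omega_j}:=T_{\mathrm{nf}}^{\circ(-1)}\circ T_j$, a Ramis--Sibuya type argument (or direct estimation of the defining limit) shows $\Psi_{\Omega_j}(z)=z+\hot(z)$ is bounded and holomorphic on $\Omega_j$ with $\hat\Psi$ as asymptotic expansion, and conjugates $\phi^{\circ p}$ to $\phinf^{\circ p}$. To upgrade this to a conjugacy of $\phi$ itself one observes that $\phi(\Omega_j)\subset\lambda\Omega_j=\Omega_{j+2kq}$ after shrinking, and that $\phinf^{\circ(-1)}\circ\Psi_{\lambda\Omega_j}\circ\phi$ again conjugates $\phi^{\circ p}$ to $\phinf^{\circ p}$ on $\Omega_j$ with asymptotic expansion $\hat\Psi$; since the rotation $z\mapsto\lambda z$ permutes the $2kp$ petals with finitely many orbits, one fixes the normalization of one $T_j$ per orbit, propagates it along each orbit by the displayed relation, and arrives at $\Psi_{\lambda\Omega_j}\circ\phi=\phinf\circ\Psi_{\Omega_j}$ on all of the covering. (An alternative to this step is to solve the conjugacy equation directly as a fixed point / cohomological equation on each petal, using the contractivity of $\phi^{\circ p}$ in the Fatou picture; the estimates are essentially the same.)

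For uniqueness, let $\{\Psi_{\Omega_j}\}$ and $\{\Psi'_{\Omega_j}\}$ be two normalizing cochains and put $G_{\Omega_j}:=\Psi'_{\Omega_j}\circ\Psi_{\Omega_j}^{\circ(-1)}$ on $\Omega_j$. The conjugacy relations give $G_{\lambda\Omega_j}\circ\phinf=\phinf\circ G_{\Omega_j}$, and since $\lambda^p\Omega_j=\Omega_j$, iterating $p$ times yields $G_{\Omega_j}\circ\exp(\bXnf)=\exp(\bXnf)\circ G_{\Omega_j}$ on $\Omega_j$. In the coordinate $w=T_{\mathrm{nf}}(z)$, the conjugated map $H_j:=T_{\mathrm{nf}}\circ G_{\Omega_j}\circ T_{\mathrm{nf}}^{\circ(-1)}$ therefore commutes with $w\mapsto w+1$, so $H_j(w)=w+g_j(w)$ with $g_j$ holomorphic and $1$-periodic on a region containing a half-plane; both $\Psi_{\Omega_j}$ and $\Psi'_{\Omega_j}$ are bounded and tangent to the identity at $0$, so $G_{\Omega_j}$ is bounded and tangent to the identity there, whence $g_j(w)$ is bounded as $\IM w\to\pm\infty$, its Fourier expansion $g_j=\sum_n b_n e^{2\pi i n w}$ has $b_n=0$ for all $n\neq 0$, and $g_j\equiv C_{\Omega_j}$ is constant. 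Since $w\mapsto w+C_{\Omega_j}$ is exactly $\exp(C_{\Omega_j}\bXnf)$ in the rectifying coordinate, this gives $G_{\Omega_j}=\exp(C_{\Omega_j}\bXnf)$, i.e. the asserted uniqueness up to left composition with $\{\exp(C_{\Omega_j}\bXnf)\}_{j\in\Z_{2kp}}$.

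The main obstacle is the analytic step: establishing convergence of the Fatou-coordinate limit with uniform control up to the origin and identifying the resulting bounded sectorial map with the Borel sum of the generically divergent formal normalization $\hat\Psi$ --- that is, the Gevrey / Ramis--Sibuya estimates which guarantee that each $\Psi_{\Omega_j}$ is holomorphic, bounded, and tangent to the identity on its petal with the correct asymptotic expansion. The passage from a $\phi^{\circ p}$-conjugacy to a genuinely $\phi$-equivariant cochain over the $\lambda$-invariant covering is a secondary, more combinatorial bookkeeping matter, handled once the normalizations of the Fatou coordinates are pinned down orbit by orbit.
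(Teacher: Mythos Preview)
Your outline is correct and follows the classical Birkhoff--Kimura route (iteration limit for the Fatou coordinate on each petal, then the Liouville/Fourier argument for uniqueness). Note, however, that in the paper this theorem is stated as a \emph{recall} of well-known one-dimensional theory and is not given a proof of its own there; so there is no ``paper's proof'' to compare against directly for this particular statement.

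That said, it is worth pointing out that when the paper does construct Fatou coordinates in its main two-dimensional setting (Proposition~\ref{prop:Fatou}), it does \emph{not} use the iteration-limit definition $T_j=\lim_{n\to\infty}(T_{\mathrm{nf}}\circ\phi^{\circ pn}-n)$ that you describe. Instead, following Voronin, Shishikura, Marde\v{s}i\'c--Roussarie--Rousseau and Rib\'on, it first builds a \emph{quasiconformal} conjugacy $\omega_1$ on a fundamental strip, solves the Beltrami equation via Ahlfors--Bers to obtain a holomorphic correction $\omega_2$, defines $T:=\omega_2\circ\omega_1^{-1}\circ\bt$ on the strip, and only then extends by iteration. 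The advantage of that approach in the paper's context is uniform control of the dependence on the leaf parameter $h$ (including the delicate limit $h\to 0$ when $s>0$, handled via Lemma~\ref{lemma:hn}); your iteration-limit argument is simpler and perfectly adequate in the fixed one-dimensional situation of Theorem~\ref{thm:kimura}, but would require more care to make uniform in a parameter. Your combinatorial step propagating the normalization along each $\lambda$-orbit of petals is the right idea; in the paper's 2D analogue this is the content of Theorem~\ref{thm:cochain}, where one must additionally check consistency of the propagation (that after $p$ steps one returns to the same Fatou coordinate up to an allowed constant), which you allude to but do not spell out.
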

	
\Grn{The form of these sectorial domains $\Omega_j$ (Figure~\ref{figure:parabolicpetals}) is related to the dynamics of $\bXnf$: they} are spanned by the real-time trajectories of the family of rotated vector fields $e^{i\theta}\bXnf(z)$, 
i.e. by the real curves 
\[\mfrac{dz}{dt}=e^{i\theta}\mfrac{c\,z^{kp}}{1+c\,\mu\, z^{kp}}z,\qquad t\in\R,\]
that stay inside some disc $\{|z|<\delta_1\}$, where $\theta$ is allowed to vary in some interval $]\delta_3,\pi-\delta_3[$,
 for some $\delta_1,\delta_3>0$. See Figure~\ref{figure:parabolictrajectories}.

\begin{figure}[t]
	\centering	
	\begin{subfigure}{.31\textwidth} \includegraphics[angle=90, width=\textwidth]{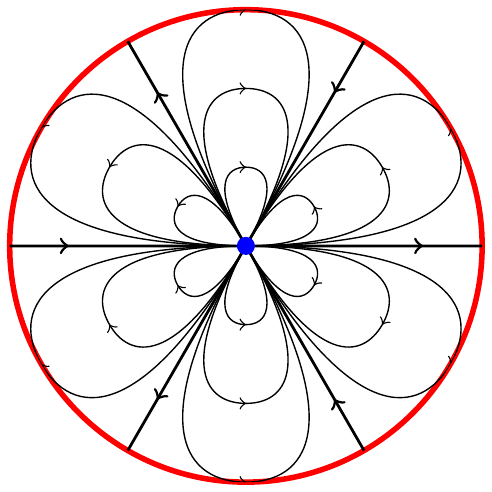} \caption{} \label{figure:parabolictrajectories} \end{subfigure}	
	\qquad
	\begin{subfigure}{.4\textwidth} \includegraphics[width=\textwidth]{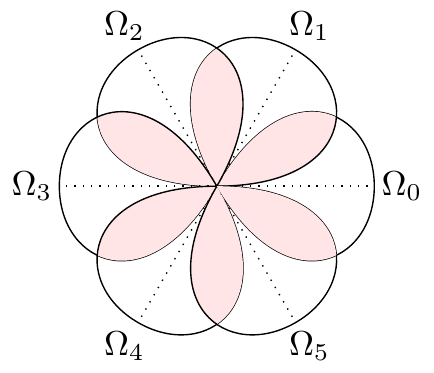} \caption{} \label{figure:parabolicpetals}  \end{subfigure}	
	\caption{(a) Real-time trajectories of the vector field $e^{i\theta}\bXnf$ inside a small disc. 
		(b) The Leau--Fatou petals $\Omega_j$, $j\in\Z_{2kp}$.}
	\label{figure:parabolic}
\end{figure}

\medskip
The equivalence class of the set of the $2kp$ \emph{transition maps} 
\[\psi_j=\Psi_{\Omega_{j-1}}\circ\Psi_{\Omega_j}^{\circ(-1)}\quad \text{on the \emph{intersections} }\ \Omega_{j-1}\cap\Omega_j,\quad j\in\Z_{2kp},\]
modulo conjugation by cochains of flow maps $\big\{\exp(C_{\Omega_j}\bXnf)\big\}_{j\in\Z_{2kp}}$, $C_{\Omega_j}\in\C$, 
\[\psi_{j}\simeq \exp(C_{\Omega_{j-1}}\bXnf)\circ\psi_{j}\circ\exp(-C_{\Omega_{j}}\bXnf)\]
is then called a \emph{cocycle}. It is an analytic invariant of $\phi$  which expresses the obstruction to convergence of the formal normalizing transformation. 
It was initially described by G.D.~Birkhoff \cite{Birkhoff} and later independently rediscovered by J.~\'Ecalle \cite{Ecalle} and S.M.~Voronin \cite{Voronin}.

\begin{theorem}~
\begin{enumerate}[wide=0pt, leftmargin=\parindent]
\item \textnormal{(Birkhoff, \'Ecalle, Voronin).}
Two germs $\phi$, $\phi'$ that are formally tangent-to-identity equivalent are analytically tangent-to-identity equivalent if and only if their cocycles $\big\{\psi_j\big\}_{j\in\Z_{2kp}}$, $\big\{\psi_j'\big\}_{j\in\Z_{2kp}}$ agree.

\item \textnormal{(\'Ecalle, Malgrange, Voronin).}
For each formal normal form $\phinf$ and each collection of maps $\big\{\psi_j\big\}_{j\in\Z_{2kp}}$ on the intersections sectors, that are asymptotic to the identity and commute with $\phinf$: 
\[\phinf\circ\psi_j=\psi_{j+2kq}\circ\phinf,\]
there exists an analytic map $\phi$ whose cocycle is represented by $\big\{\psi_j\big\}_{j\in\Z_{2kp}}$.
\end{enumerate}	
\end{theorem}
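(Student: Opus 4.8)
The statement to prove is the classical Birkhoff–\'Ecalle–Voronin theorem: part (1) the cocycle is a complete invariant of analytic equivalence among formally-conjugate parabolic germs, and part (2) the realization statement that any admissible collection $\{\psi_j\}$ arises from some analytic germ. For part (1), I would proceed as follows. The ``only if'' direction is immediate: if $\phi' = \Phi \circ \phi \circ \Phi^{\circ(-1)}$ for an analytic $\Phi$ tangent to the identity, then $\{\Psi_{\Omega_j} \circ \Phi^{\circ(-1)}\}$ is a normalizing cochain for $\phi'$, and by the uniqueness clause of Theorem~\ref{thm:kimura} it differs from the chosen cochain $\{\Psi'_{\Omega_j}\}$ for $\phi'$ by left composition with flow maps $\exp(C_{\Omega_j}\bXnf)$; substituting into the transition maps shows the two cocycles agree in the quotient. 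The ``if'' direction is the heart of part (1): given that the cocycles agree, after adjusting one of the cochains by a cochain of flow maps we may assume the transition maps literally coincide, $\psi_j = \psi'_j$ on $\Omega_{j-1}\cap\Omega_j$ for all $j$. Then the germ-valued map $\Phi$ defined sector-by-sector by $\Phi|_{\Omega_j} := \Psi'^{\circ(-1)}_{\Omega_j} \circ \Psi_{\Omega_j}$ is well-defined: on overlaps the two local definitions agree precisely because $\psi_j = \psi'_j$. So $\Phi$ is a single holomorphic map on a punctured neighborhood of $0$, bounded (each piece is bounded and asymptotic to the identity), hence extends holomorphically across $0$ by Riemann's removable singularity theorem, is tangent to the identity, and conjugates $\phi$ to $\phi'$ by construction.

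\textbf{Part (2): realization.} The strategy is the standard non-linear Riemann–Hilbert / gluing construction. Starting from the standard model domains --- the Leau–Fatou petals $\Omega_j$ attached to $0$, whose pairwise intersections $\Omega_{j-1}\cap\Omega_j$ split into the $2kp$ ``crossing'' regions --- I would glue copies of the model dynamics $(\Omega_j, \phinf)$ along the overlaps using the prescribed transition maps $\psi_j$ (which are asymptotic to the identity and commute with $\phinf$ in the equivariant sense $\phinf\circ\psi_j = \psi_{j+2kq}\circ\phinf$). This produces an abstract complex manifold $\mathcal{M}$, a neighborhood of a rational curve (or just of $0$, in the local picture), equipped with a self-map induced by $\phinf$. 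The key analytic input is that $\mathcal{M}$ is biholomorphic to a neighborhood of $0$ in $\C$: one either invokes Newlander–Nirenberg / the fact that any almost-complex structure on a disc is integrable, or more in the spirit of the subject one solves the associated quasi-linear $\bar\partial$-problem (Beltrami equation) with the small Beltrami coefficient supplied by the fact that the $\psi_j$ are tangent to the identity. Transporting $\phinf$ through this biholomorphism yields an analytic germ $\phi$ on $(\C,0)$, and unwinding the construction shows its normalizing cochain has transition maps in the class of $\{\psi_j\}$, so its cocycle is the prescribed one.

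\textbf{Main obstacle.} The genuinely nontrivial step is the analytic one in part (2): showing that the glued object is a \emph{genuine} neighborhood of $0$ in $\C$ rather than merely an abstract Riemann surface germ, and that the induced self-map is holomorphic with the right $2kp$-periodic asymptotics at the fixed point. Concretely this means verifying that the almost-complex (or Beltrami) data assembled from the $\psi_j$ extends across the fixed-point divisor with enough regularity, and that the uniformizing map is asymptotic to the identity sector-by-sector so that the recovered $\phi$ is indeed parabolic of the prescribed formal type. Part (1)'s ``if'' direction, by contrast, is essentially formal once Theorem~\ref{thm:kimura} and the removable-singularity argument are in hand, and the ``only if'' direction and all the bookkeeping with flow-map cochains are routine. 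I would therefore devote most of the write-up to the gluing/uniformization argument, citing \cite{Ecalle,Voronin,Malg-diffeo,IlYa} for the details of the Beltrami estimate and Newlander–Nirenberg application.
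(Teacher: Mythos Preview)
The paper does not prove this theorem: it appears in the subsection ``Recall: Birkhoff--\'Ecalle--Voronin theory of parabolic diffeomorphisms of $(\C,0)$'' as background material, stated without proof and attributed to the original authors with citations to \cite{Ecalle,Ecalle2,Malg-diffeo,Voronin,Voronin2,Bracci,Loray,IlYa}. There is therefore no proof in the paper to compare your proposal against.

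That said, your outline is the standard one found in those references. Part~(1) is exactly as you describe: the ``if'' direction glues the sectorial conjugacies $\Psi'^{\circ(-1)}_{\Omega_j}\circ\Psi_{\Omega_j}$ into a single bounded map, removable at the origin. For part~(2), the realization argument you sketch (gluing the petals via the prescribed $\psi_j$, then uniformizing) is indeed the approach of Malgrange and Voronin; the analytic core is, as you identify, showing the glued surface is a standard disc. One minor point: in the one-dimensional setting the uniformization step is often carried out more concretely than Newlander--Nirenberg, by passing to the quotient cylinder in the Fatou coordinate and identifying the glued object with a sphere (the ``horn map'' picture), but your Beltrami formulation is equivalent and is in fact the route the present paper takes for its own two-dimensional generalization in Section~\ref{sec:6.1}.
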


If one wants to obtain the modulus of analytic equivalence with respect to conjugation by general transformations in $\Diff(\C,0)$, one has to consider the cocycles modulo an action of the group of rotations $z\mapsto e^{2\pi i\frac{r}{kp}}z$, $r\in \Z_{kp}$, which preserve $\bXnf$. 	

\medskip
The theory can be generalized also to analytic classification of germs of antiholomorphic diffeomorphisms of parabolic type, see \cite{GR1}.

\goodbreak

\subsection{\Grn{Classification of reversible parabolic diffeomorphisms}}

Let $(\phi,\tau)$ be a pair of a reversible map $\phi$ and its reversing involution $\tau$
\begin{equation}\label{eq:reversible}
	\tau^{\circ 2}=\id,\qquad  \tau\circ\phi\circ\tau=\phi^{\circ(-1)}.
\end{equation}
Denoting $\Cal G$ the group of diffeomorphisms generated by $\{\phi,\tau\}$, then
\[\Cal G=\big\{\phi^{\circ n}\mid n\in\Z\big\}\cup\big\{\tau\circ\phi^{\circ n}\mid n\in\Z\big\},\]
where each $\tau_{n+1}=\tau\circ\phi^{\circ n}$ is an involution reversing $\phi$.
For every $n\in\Z$ the pair of involutions $(\tau_n,\tau_{n+1})$ satisfies $\tau_n\circ\tau_{n+1}=\phi$ and therefore generates $\Cal G$. 
\emph{Thus the problem of classification of reversible maps $(\phi,\tau)$ with respect to conjugation is equivalent to that of pairs of involutions $(\tau_n,\tau_{n+1})$.} 
Since all unordered pairs $\{\tau_{n},\tau_{n+1}\}$ are conjugated to each other, one may consider just the pair  
\[(\tau_1,\tau_2)=(\tau,\tau\circ\phi).\]

\begin{assumptions}\label{assumptions}
We shall assume that $(\phi,\tau)$ are holomorphic diffeomorphisms of $(\C^2,0)$  such that \Grn{$\phi\neq\tau$} and:
\begin{enumerate}
	\item $\phi\in\Diff(\C^2,0)$ is \emph{parabolic}: $\phi^{\circ p}=\id+\hot$ for some positive integer $p\geq 1$ (the minimal with such property),
	\item $\tau\in\Diff(\C^2,0)$ is a \emph{holomorphic reflection} (an involution whose linear part has eigenvalues  $\{1,-1\}$) which reverses $\phi$,
	\[	\tau^{\circ 2}=\id,\qquad  \tau\circ\phi\circ\tau=\phi^{\circ(-1)},\]
   \item the pair $(\phi,\tau)$ possesses an \emph{analytic first integral of Morse type}, i.e. with non-degenerate critical point at $0$, $H(0)=0$, $DH(0)=0$, $\det D^2H(0)\neq 0$,
\[H=H\circ\phi=H\circ\tau.\]
\end{enumerate}	
\end{assumptions}

Up to a linear change of variables (Lemma~\ref{lemma:phitau}) they take the form
\begin{equation}\label{eq:phitau}
\phi(\xi)=\Lambda\xi+\hot(\xi),\qquad 	\tau(\xi)=\sigma\xi+\hot(\xi),	\qquad H(\xi)=\xi_1\xi_2+\hot(\xi),
\end{equation}	
\GRN
where
\begin{equation}\label{eq:sigmaLambda}
\sigma=\left(\begin{smallmatrix}0\, &1\\[4pt]1&\, 0\end{smallmatrix}\right),
\qquad 	 \Lambda=\begin{cases}
\left(\begin{smallmatrix}\lambda & 0 \\[4pt] 0 & \lambda^{-1}\end{smallmatrix}\right),& \Lambda^{p}=I,\quad p\geq 1,\\[6pt]
-\sigma,& \Lambda^{2}=I,\quad p=2.
\end{cases}
\end{equation}
The case of diagonal $ \Lambda=	\left(\begin{smallmatrix}\lambda & 0 \\[3pt] 0 & \lambda^{-1}\end{smallmatrix}\right)$ arises when the involution
$\tau_2=\tau\circ\phi$ is a holomorphic reflection as well, while
the case $\Lambda=-\sigma$ happens when the involution $\tau_2=\tau\circ\phi$ is tangent to $-\id$.
We shall note that a possibility of $\tau_2=\tau\circ\phi$ being tangent to $\id$ is excluded by the assumption that $\phi\neq\tau$, since any involution tangent to the identity is in fact the identity.
\FGRN

\com{We didn't consider the case $\Lambda=-\sigma$ before, and it doesn't arise in the Moser-Webster context, but since its addition doesn't cost much (all the proofs work exactly the same way except for some small accommodations), i think it's reasonable to include it as well. It gives rise to the case (c) in Theorem~\ref{thm:1}.}

The diffeomorphism $\phi^{\circ p}(\xi)=\xi+\hot(\xi)$ is tangent to the identity, and as such it possesses  a unique formal infinitesimal generator (see Section~\ref{sec:3infgen}): a formal vector field $\hat\bX(\xi)$ whose formal time-1-flow $\exp(\hat\bX)(\xi)$ is equal to the Taylor expansion of $\phi^{\circ p}(\xi)$.  This formal vector field $\hat\bX(\xi)$ has $H(\xi)$ as a first integral, and is reversed by $\tau$:  $\tau^*\hat\bX(\xi)=-\hat\bX(\xi)$.
This allows to reduce the problem of formal classification of $(\phi,\tau)$ to a formal classification of such integrable reversible formal vector fields $\hat\bX$ 
(Theorem~\ref{thm:FNFX1}).

 \begin{thm}[Formal classification]\label{thm:1}~\\
Let $(\phi,\tau)$ and $H$ be as above satisfying Assumptions~\ref{assumptions}.
\Grn{Let $s\geq 0$ be the multiplicity of the zero level set $\{H(\xi)=0\}$ in the fixed point divisor $\Fix(\phi^{\circ p})$ which is of the form
$\{H^s(\xi)\cdot g(\xi)=0\}$ for some analytic germ $g(\xi)$,
and denote $kp=\ord_0 g(\xi)$ its order of vanishing.}

There exists a formal transformation $\xi\mapsto\hat\Psi(\xi)\in\widehat{\Diff}_{\id}(\C^2,0)$ and a formal diffeomorphism $\hat G\in\widehat{\Diff}(\C,0)$,
such that
\[\hat\Psi\circ\phi=\hatphinf\circ\hat\Psi, \qquad  \hat\Psi\circ\tau=\sigma\,\hat\Psi, \qquad \hat G(H)=h\circ\hat\Psi, \]
where
\begin{equation*}
\hatphinf(\xi)=\Lambda\cdot\exp(\tfrac{1}{p}\hatbXnf)(\xi),\quad
\text{with $\sigma,\Lambda$ as in \eqref{eq:sigmaLambda}, \quad and \quad $h=\xi_1\xi_2.$}
\end{equation*}
Here  $\hatbXnf(\xi)=\Lambda^*\hatbXnf(\xi)=-\sigma^*\hatbXnf(\xi)$ is one of the following vector fields:
\begin{itemize}[wide=0pt, leftmargin=\parindent]
\item[(o)] $s=+\infty$: $\hatbXnf(\xi)=0$. 
		\Grn{This happens if and only if $\phi^{\circ p}=\id$, and there exists such normalizing transformation $\hat\Psi$ which is convergent.}
\end{itemize}
If $\Lambda$ is diagonal:	
\begin{itemize}[wide=0pt, leftmargin=\parindent]	
\item[(a)] $k=0$, $s\geq 1:$ \ $\hatbXnf(\xi)= c\,h^s\big(\xi_1\tdd{\xi_1}-\xi_2\tdd{\xi_2}\big)$, \ $c\neq 0$.
			
\item[(b)] $k\geq 1$, $s\geq 0:$ \ $\displaystyle{\hatbXnf(\xi)=\frac{c\, h^s P(u,h)}{1+c\,\hat\mu(h)P(u,h)}\big(\xi_1\tdd{\xi_1}-\xi_2\tdd{\xi_2}\big)}$, \ $c\neq0$,\\
	where $P(u,h)$ is polynomial in $u(\xi):=\xi_1^p+\xi_2^p$ of order $k$,
	\[P(u,h)=u^k+ P_{k-1}(h)u^{k-1}+\ldots+P_0(h),\quad P(u,0)=u^k,\]
    and $\hat\mu(h)=\sum_{n=0}^{+\infty}\mu_nh^n$ is a formal power series. 
\end{itemize} 
\GRN  
If $\Lambda=-\sigma$:    
\begin{itemize}[wide=0pt, leftmargin=\parindent]
	\item[(c)] $k=\tilde k+\frac12$, $s\geq 0:$ \ $\displaystyle{\hatbXnf(\xi)= c\,h^s\tilde P(\tilde u,h)\big(\xi_1\!+\!\xi_2\big)\big(\xi_1\tdd{\xi_1}-\xi_2\tdd{\xi_2}\big)}$, \ $c\neq0$,\\
    where $\tilde P(\tilde u,h)$ is polynomial in $\tilde u(\xi):=(\xi_1+\xi_2)^2$ of order $\tilde k\geq 0$,
    \[\tilde P(\tilde u,h)=\tilde u^{\tilde k}+ \tilde P_{\tilde k-1}(h)\tilde u^{\tilde k-1}+\ldots+\tilde P_0(h),\quad \tilde P(\tilde u,0)=\tilde u^{\tilde k}.\]
\end{itemize}
\FGRN

\begin{itemize}[wide=0pt, leftmargin=\parindent, label=$\triangleright$]
\item In the cases (a),\,(b),\,(c) the formal normalizing transformation $\hat\Psi\in\widehat{\Diff}_{\id}(\C^2,0)$ is unique. 
Furthermore, in the cases (b),\,(c) $h\circ\hat\Psi$ is convergent. 

\item The formal equivalence class of $(\phi,\tau)$ with respect to conjugation by the group $\widehat{\Diff}_{\id}(\C^2,0)$ contains a unique representative in the above formal normal form $(\hatphinf,\hattaunf)$.

\item In the formal equivalence class of $(\phi,\tau)$ with respect to conjugation in the full group $\widehat{\Diff}(\C^2,0)$ the above formal normal form $(\hatphinf,\hattaunf)$ and its infinitesimal generator $\hatbXnf$ are determined uniquely up to the action of scalar transformation
$\xi\mapsto\zeta\cdot\xi$, $\zeta\in\C^*$, and also of $\xi\mapsto\sigma\xi$ in case $p\in\{1,2\}$ when $\sigma\Lambda=\Lambda\sigma$, by which the constant $c\neq 0$ can be further normalized.

\item The group $\Cal Z(\hatphinf,\sigma)=\Cal Z(\hatbXnf,\sigma,\Lambda)$ of formal diffeomorphisms commuting with $\hatphinf,\sigma$, which is the same as 
the $(\sigma,\Lambda)$-equivariant diffeomorphisms preserving $\hatbXnf$, is in the cases (a),\,(b),\,(c) identified with some subgroup of $\Z_{2kp+4s}$ acting on $(\C^2,0)$ by $\xi\mapsto e^{\frac{\pi i r}{kp+2s}}\sigma^r\xi,\ r\in\Z_{2kp+4s}$.
If $p>2$ then only the action with $r$ commute with $\Lambda$. 
\end{itemize}
\end{thm}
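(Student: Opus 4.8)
The plan is to follow the Birkhoff--\'Ecalle--Voronin template one level up, reducing everything to the formal classification of the infinitesimal generator $\hatbX$ and then exploiting the three symmetry constraints it satisfies. First I would invoke the reduction announced in the text: $\phi^{\circ p}=\id+\hot$ has a unique formal infinitesimal generator $\hatbX$, characterized by $\exp(\hatbX)=\phi^{\circ p}$ as formal series; chasing the relations \eqref{eq:reversible} and $H\circ\phi=H$ through the exponential shows $\hatbX.H=0$ and $\tau^*\hatbX=-\hatbX$, and (when $\Lambda$ is diagonal) also $\Lambda^*\hatbX=\hatbX$ from $\phi^{\circ p}=\Lambda^p\cdot(\text{the part generated by }\hatbX)$ being $\Lambda$-conjugation-invariant. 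The key point is that a formal normalization of the \emph{triple} $(\hatbX,\tau,H)$ to a normal form $(\hatbXnf,\sigma,h)$, together with $\hat G(H)=h\circ\hat\Psi$, is enough: once $\hat\Psi$ conjugates $\hatbX\mapsto\hatbXnf$ and $\tau\mapsto\sigma$, it automatically conjugates $\phi=\tau_1\circ\tau_2$ and $\phi^{\circ p}=\exp(\hatbX)$ to the corresponding objects built from $\hatbXnf,\sigma$, hence $\hat\Psi\circ\phi=\hatphinf\circ\hat\Psi$ with $\hatphinf=\Lambda\exp(\tfrac1p\hatbXnf)$. This is presumably Theorem~\ref{thm:FNFX1} referenced in the text, so I would simply cite it for the existence half.

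Next, the heart of the matter is the \emph{shape} of $\hatbXnf$, i.e. classifying integrable, $\tau$-reversed, $\Lambda$-equivariant formal vector fields with nondegenerate first integral $H=\xi_1\xi_2+\hot$. After first normalizing $H$ to $h=\xi_1\xi_2$ (Morse lemma, formal version; possible because all maps preserve it), the vector field must be tangent to the level curves of $h$, so $\hatbX = a(\xi)\,(\xi_1\tdd{\xi_1}-\xi_2\tdd{\xi_2}) + b(\xi)\,E_h$ where $E_h$ is a Hamiltonian-type field for $h$ — but $\hatbX.h=0$ forces the $E_h$-component to vanish, leaving $\hatbX = a(\xi)(\xi_1\tdd{\xi_1}-\xi_2\tdd{\xi_2})$ with $a$ a formal series; the reversibility $\sigma^*\hatbX=-\hatbX$ translates into $a(\xi_2,\xi_1)=a(\xi_1,\xi_2)$, i.e. $a$ is a function of the symmetric generators, and $\Lambda$-equivariance pins down the monomials further. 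The remaining freedom is the group $\hatDiff_{\id}(\C^2,0)$ commuting with $\sigma$ and preserving $h$; one parametrizes this group, writes out the homological equation for the conjugation action on $a(\xi)$, and solves it order by order. The ``resonant'' terms that cannot be removed are exactly the invariants of the action — this yields the polynomial part $P(u,h)$ (resp.\ $\tilde P(\tilde u,h)$) times $\frac{1}{1+c\hat\mu(h)P}$, the $\frac1{1+\dots}$ shape being the standard way to encode the one free formal series $\hat\mu(h)$ (this is the two-parameter $(c,\mu)$ normal form of the $1$-d case, now with $h$-dependent coefficients). Cases (a) ($k=0$, pure $h^s$), (b) ($k\ge 1$, diagonal $\Lambda$, $u=\xi_1^p+\xi_2^p$), (c) ($\Lambda=-\sigma$, half-integer $k$, extra factor $\xi_1+\xi_2$) are separated by whether $\det D^2$-degeneracy occurs, by the parity/periodicity forced by $\Lambda$, and by the anti-diagonal structure of $-\sigma$; case (o) is just $\phi^{\circ p}=\id$, where a convergent normalization exists by the classical Bochner linearization / finite-order argument.

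Finally, the uniqueness and symmetry-group bullet points. Uniqueness of $\hat\Psi\in\hatDiff_{\id}$ in cases (a),(b),(c) follows because the stabilizer of the normal form inside $\hatDiff_{\id}(\C^2,0)$ intersected with the $(\sigma$-equivariant, $h$-preserving) subgroup is trivial — this is where I expect the \emph{main technical work}: showing that a formal diffeomorphism tangent to identity, commuting with $\hatphinf$ and $\sigma$, must be $\id$; equivalently that $\Cal Z(\hatbXnf,\sigma,\Lambda)\cap\hatDiff_{\id}=\{\id\}$. One shows any such map is itself a flow $\exp(c(h)\hatbXnf)$ of the normal-form field (centralizer of a ``generic'' vector field in its own flows) composed possibly with the obvious finite symmetries $\xi\mapsto e^{\pi i r/(kp+2s)}\sigma^r\xi$; tangency to identity then kills $c(h)$ and the finite part, giving $\id$ and hence uniqueness, while the full centralizer is exactly the stated subgroup of $\Z_{2kp+4s}$ (the integer $2kp+4s$ being $\ord_0$ of the coefficient function $a(\xi)=c\,h^s P/(1+\dots)$ viewed with its $\xi$-weights: $h^s$ contributes $2s$, $u^k$ contributes $kp$, doubled because the rotations act by $e^{\pi i r/(kp+2s)}$). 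Convergence of $h\circ\hat\Psi$ in cases (b),(c) is obtained separately: $h\circ\hat\Psi$ is a first integral of $\hatbXnf$ of the right form, and the space of such convergent first integrals is large enough to absorb it — concretely $h\circ\hat\Psi=\hat G^{-1}\circ H$ and one shows $\hat G$ must be convergent because $H$ already is and the obstruction lives in a finite-dimensional quotient. The behaviour under the full group $\hatDiff(\C^2,0)$ is then read off: the only non-tangent-to-identity normalizers that preserve the structure are the scalings $\xi\mapsto\zeta\xi$ (which rescale $c$ via the weight of $a$) and, when $\sigma\Lambda=\Lambda\sigma$ i.e. $p\in\{1,2\}$, the flip $\xi\mapsto\sigma\xi$.
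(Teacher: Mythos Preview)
Your overall architecture is right---reduce to the formal infinitesimal generator $\hat\bX$ of $\phi^{\circ p}$, exploit the symmetries, and normalize---and it matches the paper's route through Theorem~\ref{thm:FNFX1}. But two steps are genuine gaps, not just details to fill in.

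\textbf{First}, the claim that once $\hat\Psi$ conjugates $(\hat\bX,\tau)\to(\hatbXnf,\sigma)$ it ``automatically'' sends $\phi$ to $\hatphinf=\Lambda\exp(\tfrac1p\hatbXnf)$ is not automatic. Conjugating $\hat\bX$ controls only $\phi^{\circ p}=\exp(\hat\bX)$; writing $\phi=\tau_1\circ\tau_2$ does not help, since $\tau_2=\tau\circ\phi$ already depends on $\phi$. What you actually know is that $\tilde\phi:=\hat\Psi\circ\phi\circ\hat\Psi^{\circ(-1)}$ has linear part $\Lambda$, satisfies $\tilde\phi^{\circ p}=\exp(\hatbXnf)$, and preserves $\hatbXnf$ (because $\phi$ commutes with $\phi^{\circ p}$). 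The paper closes this via Lemma~\ref{lemma:automorphismofX}: any tangent-to-identity $h$-preserving map preserving $\hatbXnf\neq 0$ is $\exp(h^{-s}\hat\beta(h)\hatbXnf)$ for some $\hat\beta$. Applied to $\Lambda^{-1}\tilde\phi$, the relation $\tilde\phi^{\circ p}=\exp(\hatbXnf)$ then forces $h^{-s}\hat\beta=\tfrac1p$. This is precisely the centralizer computation you postponed to the uniqueness bullet, so the ingredient is present in your sketch---but the logical dependency is inverted: you need it already for existence of the normal form of $\phi$, not only for uniqueness of $\hat\Psi$.

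\textbf{Second}, and more seriously, your homological-equation scheme gives no reason why $P(u,h)$ is \emph{analytic} rather than merely formal; the theorem asserts analyticity, and this is exactly what makes $h\circ\hat\Psi$ convergent in cases (b),\,(c). The paper does not extract $P$ from a formal normalization of $\hat\bX$. Instead it first performs an \emph{analytic} prenormalization of $\phi$ itself (Proposition~\ref{prop:prepared}) making the analytic ideal $\Cal I=\big(\tfrac{\xi_1\circ\phi^{\circ p}-\xi_1}{\xi_1}\big)$ invariant under both $\sigma$ and $\Lambda$; then $h^sP(u,h)$ is obtained as an analytic Weierstrass-polynomial generator of $\Cal I$ (Lemma~\ref{lemma:P}), and the polynomial $Q$ in the denominator is shown analytic via the explicit formula~\eqref{eq:Qinverse} (Lemma~\ref{lemma:Q}). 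Only after $P,Q$ are fixed as analytic data does the formal step (Proposition~\ref{prop:FNFX}) remove the remainder $\hat R$ down to $\hat\mu(h)$---and that transformation preserves $h$ exactly, while the remaining steps (the rescaling~\eqref{eq:rescaling}, Lemma~\ref{lemma:Q0}, Proposition~\ref{prop:normalization}) change $h$ only analytically. Your proposed argument for the convergence of $h\circ\hat\Psi$ (``the obstruction lives in a finite-dimensional quotient'') does not supply this: without going through the analytic fixed-point divisor there is no a~priori mechanism forcing a formal $P$ to converge.

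A smaller inaccuracy worth flagging: $\Lambda^*\hat\bX=\hat\bX$ is not true at the outset---only $\phi^*\hat\bX=\hat\bX$ holds, and $\Lambda$ is merely the linear part of $\phi$. The $\Lambda$-invariance (of $\Cal I$, hence of the prepared $\hat\bX$) is a \emph{consequence} of the analytic preparation, not an input to it.
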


\begin{remark}
\begin{enumerate}[wide=0pt, leftmargin=\parindent]
\item The variables $h=\xi_1\xi_2$ and $u=\xi_1^p+\xi_2^p$, \Grn{resp. $\tilde u=(\xi_1+\xi_2)^2$,} are basic $(\sigma,\Lambda)$-invariant functions: any formal/analytic $(\sigma,\Lambda)$-invariant function can be written as a formal/analytic function of $(h,u)$, \Grn{resp. $(h,\tilde u)$,} (see e.g. \cite[\S XII-4]{GSS}).
		
\item The cases (o), (a) and (b)+(c), of Theorem~\ref{thm:1} are distinguished by the position of their fixed point divisor $\Fix(\phi^{\circ p})=\{\xi\in(\C^2,0): \phi^{\circ p}(\xi)=\xi\}$ with respect to the foliation by level sets of $H(\xi)$:
\[\begin{cases}
	\text{(o)}\ \Fix(\phi^{\circ p})=(\C^2,0),\\
	\text{(a)}\ \Fix(\phi^{\circ p})=\{H^s(\xi)=0\},\\
	\text{(b),\,(c)}\ \Fix(\phi^{\circ p})=\{H^s(\xi)\cdot g(\xi)=0\},
\end{cases}\]	
where $\{g(\xi) =0\}$ is a divisor  transverse to the foliation \Grn{intersecting each level set $\{H(\xi)=\const\}$ at $2kp$ points (counted with multiplicity).}
\end{enumerate}
\end{remark}

In the case (a)	of Theorem~\ref{thm:1}, there exist analytic germs that are formally equivalent to the normal form but not analytically (Theorem~\ref{thm:divergent}). In fact, there are topological obstructions to convergence.
\Grn{However, somewhat surprisingly, there are also interesting examples where the conjugacy is analytic (Example~\ref{example:Painleve} below).}

The cases (b) \Grn{and (c)} of Theorem~\ref{thm:1} carry close analogy with the Birkhoff--\'Ecalle--Voronin theory of parabolic diffeomorphisms in dimension 1.  
While the formal normalizing transformation is generically divergent, the obstructions to convergence are of a purely analytic nature and can be expressed in terms of 
an infinite-dimensional functional modulus  (Theorem~\ref{thm:analytic} below).

The formal invariant $\hat{\mu}(h)$ in Theorem~\ref{thm:1}\,(b), or more precisely $2\pi i\, h^{-s}\hat\mu(h)$ is the formal period of 
any formal differential 1-form  dual to $\hatbXnf$ along  the ``vanishing cycles'' generating the fundamental group of the leaves $\{h=\const\neq 0\}$.
\Grn{Correspondingly,} the composition $\big(2\pi i\, h^{-s}\hat\mu(h)\big)\circ\hat{\Psi}$ is the formal period of 
any formal differential 1-form dual to the infinitesimal generator $\hat\bX$ of $\phi^{\circ p}$ along  the ``vanishing cycles'' generating the fundamental group of the leaves $\{H=\const\neq 0\}$. (Lemma~\ref{lemma:mu}).
At the present moment it is not known to us whether the formal series $\hat\mu(h)$ is convergent in general or under what condition.

\begin{remark}
The formal classification of Theorem~\ref{thm:1} is quite similar to the study of 1-parameter families of holomorphic germs $\phi_{\epsilon}(z)$
unfolding a parabolic germ $\phi_0(z)=\lambda z+\hot(z)$, $\lambda^{p}=1$.
The formal normal form for such germs in the case $p>1$ is
\[\phinfeps(z)=\lambda\exp\big(\tfrac1p\bXnfeps\big)(z),\qquad 
\bXnfeps=\begin{cases}
	0,&\\
	c\,\epsilon^sz\tdd{z},&s>0,\\[6pt]
	\frac{c\,\epsilon^sP(z^p,\epsilon)}{1+c\mu(\epsilon)z^{kp}}z\tdd{z},& s\geq 0,\ k>0,
\end{cases}\]
with $P(z^p,\epsilon)=z^{kp}+P_{k-1}(\epsilon)z^{(k-1)p}+\ldots+P_0(\epsilon)$, $P(z^p,0)=z^{kp}$.
The study of such families in the finite codimension case $s=0$ was carried independently by C.~Christopher, P.~Marde\v{s}i\'c, R.~Roussarie \& C.~Rousseau \cite{MRR, Rousseau-Christopher, Rousseau10, Christopher-Rousseau, Rousseau} and by J.~Ribon \cite{Ribon-f, Ribon-a, Ribon-c}.\footnote{Prior to that, this was investigated also by J.~Martinet \cite{Martinet}, P.~Lavaurs \cite{Lavaurs}, R.~Oudekerk \cite{Oudekerk}, M.~Shishikura \cite{Shi} and A.~Glutsyuk \cite{Glutsyuk}.}
It leads to a modulus of analytic classification that ``unfolds'' the Birkhoff--\'Ecalle--Voronin modulus.

A 1-parameter family of diffeomorphisms $\phi_{\epsilon}(z)$ unfolding $\phi_0(z)$ can be thought of as a parabolic diffeomorphism $(z,\epsilon)\mapsto\big(\phi_{\epsilon}(z),\epsilon\big)$ of $(\C^2,0)$ with a first integral $\epsilon$, and hence with a locally trivial leaf-wise invariant foliation by level curves $\{\epsilon=\const\}$ (Figure~\ref{figure:fol-a}).
\Grn{The essential difference to the situation considered here is that in our case the leaf-wise invariant foliation, given by level curves of the first integral $H(\xi)=\xi_1\xi_2+\hot(\xi)$,
is topologically non-trivial (Figure~\ref{figure:fol-b}).}
\end{remark}

\begin{figure}[t]
\centering	
\begin{subfigure}{.3\textwidth} \includegraphics{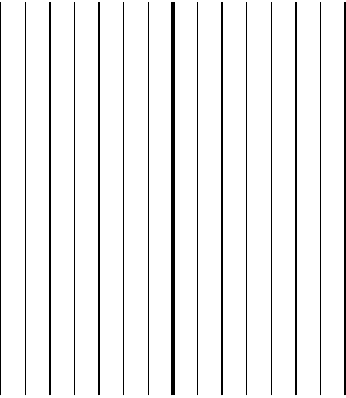} \caption{ } \label{figure:fol-a}	\end{subfigure}	
\qquad
\begin{subfigure}{.3\textwidth} \includegraphics{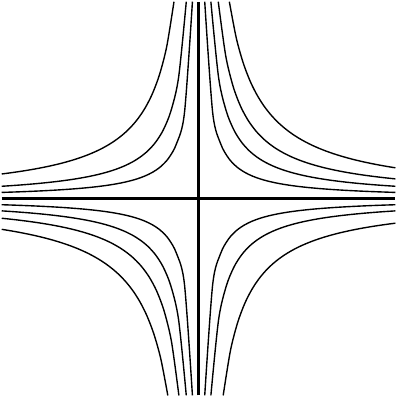} \caption{ } \label{figure:fol-b} \end{subfigure}	
\caption{(a) Trivial invariant foliation of a 1-parameter family of diffeomorphisms of $\C$.
	(b) Invariant foliation of a diffeomorphism of $(\C^2,0)$ with a first integral $H(\xi)=\xi_1\xi_2+\hot(\xi)$.}
\label{figure:foliation}
\end{figure}

\GRN
\begin{remark}
The formal classification of reversible diffeomorphisms 
\[\phi(\xi)=\left(\begin{smallmatrix}\lambda &0\\[3pt]0&\lambda^{-1}\end{smallmatrix}\right)\xi+\hot(\xi),\qquad \tau(\xi)=\sigma\xi+\hot(\xi),\quad\text{with }\ \lambda\notin e^{\pi i\Q},\]
has been achieved by Moser \& Webster \cite{Moser-Webster} with a formal normal form  
\begin{equation}\label{eq:phinf}
	\phinf(\xi)=\left(\begin{smallmatrix}\lambda\, e^{\tilde ch^s} &0\\[3pt]0&\lambda^{-1} e^{-\tilde ch^s}\end{smallmatrix}\right)\xi,\qquad \taunf(\xi)=\sigma\xi=\left(\begin{smallmatrix}0 &1\\[3pt]1&0\end{smallmatrix}\right)\xi,\qquad h=\xi_1\xi_2.
\end{equation}
This classification has been later generalized to all elements of $\Diff(\C^2,0)$ that are formally conjugated to their inverse by O'Farrell \& Zaitsev \cite{OFarrellZaitsev}. 

The classification is analytic if $|\lambda|\neq 1$, which in particular implies the existence of Morse first integral $H(\xi)$ of $(\phi,\tau)$.  
\end{remark}

The following example is of a non-trivial situation in which a reversible diffeomorphism is analytically conjugated to its formal normal form of type (a) of Theorem~\ref{thm:1}. 

\begin{example}[Monodromy of the Sixth Painlev\'e equation]\label{example:Painleve}
The operator of a local monodromy of Sixth Painlev\'e equation at either of its singular points is one that acts on solutions by their analytic continuation along a loop around the singularity. Considered as a map on the space of ``initial conditions'', it is a reversible holomorphic map $\phi$ with up to 4 fixed points (corresponding to locally non-ramified solutions near the singularity), and with a first integral which is of Morse at each of the fixed points. The local multipliers $\lambda,\lambda^{-1}$ of $\phi$ near a fixed point
depend on the parameters of the equations, and for some of the parameters they are indeed roots of unity, however, no matter what they are, the map is always locally analytically conjugated to the formal normal form \eqref{eq:phinf} with $s=1$. The normalizing map is essentially given by the Riemann--Hilbert correspondance. More details in Section~\ref{sec:Painleve}.  
\end{example}
\FGRN

Since the formal normal form $(\hatphinf,\sigma)$ of Theorem~\ref{thm:1} in the case (b) is a priori purely formal \Grn{(due to the formal invariant $\hat\mu(h)$)}, we introduce instead a larger \emph{model class} represented by an analytic model.

\begin{definition}[Model]\label{def:model}
Let $(\hatphinf,\sigma)$ be the formal normal form of Theorem~\ref{thm:1} for $(\phi,\tau)$, with infinitesimal generator 
\[\hatbXnf(\xi)=\begin{cases} 
	0,\\ 
	c\,h^s\big(\xi_1\tdd{\xi_1}-\xi_2\tdd{\xi_2}\big), \\[6pt] 
	c\,h^s\frac{P(u,h)}{1+c\,\hat\mu(h)P(u,h)}\big(\xi_1\tdd{\xi_1}-\xi_2\tdd{\xi_2}\big), \\[6pt] 
	\Grn{c\,h^s\tilde P(\tilde u,h)\big(\xi_1\!+\!\xi_2\big)\big(\xi_1\tdd{\xi_1}-\xi_2\tdd{\xi_2}\big). }
\end{cases}\]
Let us introduce a \emph{model} $(\phimod,\sigma)$ for $(\phi,\tau)$, as $\phimod=\Lambda\exp\big(\tfrac{1}{p}\bXmod \big)$ where
\begin{equation}\label{eq:Xmodel}
	\bXmod(\xi)=\begin{cases} 
		0,\\ 
		c\,h^s\big(\xi_1\tdd{\xi_1}-\xi_2\tdd{\xi_2}\big), \\[6pt] 
		c\,h^sP(u,h)\big(\xi_1\tdd{\xi_1}-\xi_2\tdd{\xi_2}\big), \\[6pt] 
	\Grn{c\,h^s\tilde P(\tilde u,h)\big(\xi_1\!+\!\xi_2\big)\big(\xi_1\tdd{\xi_1}-\xi_2\tdd{\xi_2}\big). }
\end{cases}
\end{equation}
with the same $c$ and $P(u,h)$, resp. $\tilde P(\tilde u,h)$, as in the formal normal form.
The \emph{model class} of $\phimod$ is the set of all analytic $\phi$ with the same model, i.e. it is the union of formal classes with over all invariants $\hat{\mu}(h)$.
\end{definition}


\begin{remark}\label{remark:power-log}
	The normal form vector field $\hatbXnf=h^s\frac{cP}{1+\hat\mu cP}\bE$ is equivalent to the model $\bXmod=h^scP\bE$ by means of a $(\sigma,\Lambda)$-equivariant formal power-log transformation \[\hat\Psi=\exp(tcP\bE)\big|_{t=\tfrac{\hat\mu(h)}{2p}(\log\xi_1^p-\log\xi_2^p)},\qquad \hat\Psi^*\bXmod=\hatbXnf.\]	
	This follows from Lemma~\ref{lemma:X} by writing $\hat\mu(h)=\bE.\big[\tfrac{\hat\mu(h)}{2p}(\log\xi_1^p-\log\xi_2^p)\big]$.
\end{remark}

\begin{proposition}[Prenormalization]\label{prop:prenormalphi}
Let $(\phi,\tau)$ and $H$ be as in Theorem~\ref{thm:1} of formal type (b) or (c), and let $(\hatphinf,\sigma)$ be its formal normal form and $(\phimod,\sigma)$ its model.
There exists an analytic tangent-to-identity change of coordinates, after which $\tau(\xi)=\sigma\xi$ and $\phi(\xi)$ is such that $h\circ\phi(\xi)=h(\xi)$ and
\[\begin{aligned}
\xi_j\circ\phi(\xi)&=\xi_j\circ\hatphinf(\xi)\mod h^{-s}f(\xi)^2\xi_j\\
&=\xi_j\circ\phimod(\xi)\mod h^{-s}f(\xi)^2\xi_j,\qquad j=1,2,\end{aligned}\]
where $f(\xi)=\frac{\xi_1\circ\phi^{\circ p}(\xi)-\xi_1}{\xi_1}$ generates the same ideal of $\C\llbracket\xi\rrbracket$ as 
\Grn{$\hatfnf(\xi)=\frac{\xi_1\circ\hatphinf^{\circ p}(\xi)-\xi_1}{\xi_1}$ and $\fmod(\xi)=\frac{\xi_1\circ\phimod^{\circ p}(\xi)-\xi_1}{\xi_1}$,
and $\Fix(\phi^{\circ p})=\{f(\xi)=0\}$.}

\com{I've removed the part about the infinitesimal generator to keep it shorter.}
\end{proposition}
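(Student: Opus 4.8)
The plan is to build the prenormalizing transformation in two stages. First, I would use the formal classification of Theorem~\ref{thm:1} together with the fact that in cases (b), (c) the object $h\circ\hat\Psi$ is convergent: applying the convergent part of the normalization, one may assume from the outset that $\tau(\xi)=\sigma\xi$ and that $H(\xi)=h(\xi)=\xi_1\xi_2$, i.e. that $h\circ\phi = h$ and $h\circ\tau=h$. (Here I am using that a $\tau$-invariant function of Morse type can be brought to $\xi_1\xi_2$ by an analytic $\tau$-equivariant change; this is part of Theorem~\ref{thm:1} in the (b)/(c) cases.) From now on everything happens along the fibration by the curves $\{h=\const\}$, and $\phi$ restricts to each fiber as a parabolic map.

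The second stage is to kill the low-order terms of $\phi$ modulo the ideal $(h^{-s}f^2)$. I would proceed by induction on the weighted order (with $\xi_1,\xi_2$ of weight $1$ each, so $h$ has weight $2$), following the standard ``homological equation'' scheme used for tangent-to-identity maps, but carried out equivariantly with respect to $(\sigma,\Lambda)$ and preserving $h$. At each step, the difference $\xi_j\circ\phi - \xi_j\circ\phimod$ has a leading homogeneous term; one wants to remove it by conjugating with a time-$t$ flow of a $(\sigma,\Lambda)$-equivariant, $h$-preserving vector field $\bm Y$ (equivalently, composing with $\exp(\bm Y)$), where $t$ is a suitable $(\sigma,\Lambda)$-invariant function. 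The relevant homological operator is the adjoint action of $\bXmod$ (or of the formal generator $\hat\bX$) on the space of such vector fields; its cokernel is exactly what cannot be removed, and one has to check that this cokernel is contained in $h^{-s}f(\xi)^2\xi_j\,\partial_{\xi_j}$. Because $f$ generates the same ideal as $\fmod$ and $\hatfnf$ — which follows since $\phi$, $\phimod$, $\hatphinf$ are all formally conjugate and $f$ is (up to a unit) the defining function of $\Fix(\phi^{\circ p})$, whose multiplicity structure $H^s\cdot g$ is the formal invariant from Theorem~\ref{thm:1} — the ideal $(h^{-s}f^2)$ is well-defined and the three congruences in the statement are equivalent. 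The identity $\Fix(\phi^{\circ p})=\{f=0\}$ is immediate from the definition $f=(\xi_1\circ\phi^{\circ p}-\xi_1)/\xi_1$ once one notes that, by $\tau$-equivariance and the form of $\phi$, the second component of $\phi^{\circ p}-\id$ vanishes on $\{f=0\}$ as well.

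The main obstacle is the homological/cokernel computation: one must show precisely that, after using up all available $(\sigma,\Lambda)$-equivariant $h$-preserving conjugacies, the remaining obstruction to equality with $\hatphinf$ lands in the ideal $h^{-s}f(\xi)^2\xi_j\,\partial_{\xi_j}$, and not in something larger. The key input is the structure of $\bXmod = c\,h^sP(u,h)\,\bm E$ (resp. with the extra factor $\xi_1+\xi_2$ in case (c)), where $\bm E=\xi_1\partial_{\xi_1}-\xi_2\partial_{\xi_2}$: the kernel and cokernel of $\operatorname{ad}_{\bXmod}$ on equivariant $h$-preserving fields are controlled by the ring of $(\sigma,\Lambda)$-invariants $\C\llbracket h,u\rrbracket$ (resp. $\C\llbracket h,\tilde u\rrbracket$) and by divisibility by $P$, and a term is removable unless it is divisible by $h^sP^2$ up to the Euler factor — which is exactly the ideal $(h^{-s}f^2)$ since $f$ and $h^sP$ generate the same ideal. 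One then also checks that the normalizing transformation can be taken tangent to the identity (its weighted-order-$1$ part is trivial because $\phi$ and $\phimod$ already agree in linear part) and convergent, the convergence being automatic here because at each order only finitely many monomials are involved and, more substantively, because the part of the normalization that could diverge — the one encoding $\hat\mu(h)$, cf. Remark~\ref{remark:power-log} — is precisely the part we are deliberately \emph{not} performing, having replaced $\hatphinf$ by the model $\phimod$ modulo $(h^{-s}f^2)$.
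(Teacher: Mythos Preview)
Your overall strategy diverges from the paper's, and the divergence matters because your route leaves convergence unproven.

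The paper does not use an inductive homological-equation scheme. Instead it builds the prenormalizing transformation out of a finite composition of \emph{explicit, closed-form analytic} maps: first the averaging $\Psi=\tfrac{1}{2p}\sum_{j=-p}^{p}\Lambda^{-j}\phi^{\circ j}$ of Proposition~\ref{prop:prepared} brings $\phi$ to $\Lambda\xi\bmod\Lambda\xi\Cal I$ and makes the ideal $\Cal I=(f)$ $(\sigma,\Lambda)$-invariant; Weierstrass preparation (Lemma~\ref{lemma:P}) then identifies $\Cal I=(h^sP)$; the formal infinitesimal generator is shown via an explicit formula \eqref{eq:Q}--\eqref{eq:Qinverse} to be of the form $h^s\frac{P}{Q+P\hat R}\bE$ with $Q$ \emph{analytic} (Lemma~\ref{lemma:Q}); and finally Lemma~\ref{lemma:Q0} together with Proposition~\ref{prop:normalization} apply further analytic flows of explicitly constructed vector fields, plus the rescaling \eqref{eq:rescaling}, to reduce $Q$ to the constant $c^{-1}$. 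At that point $\hat\bX=\bXmod\bmod h^sP^2\bE$ by the prepared form \eqref{eq:prenormalX}, and the desired congruence for $\phi$ itself is read off from the closed formula \eqref{eq:Q}. Each step is a single analytic transformation, so convergence is never in question.

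Your scheme, by contrast, produces an infinite product of tangent-to-identity conjugacies, one per weighted degree, and the justification you offer for its convergence is not a proof: ``finitely many monomials at each order'' only says the formal composition is well-defined, and the remark that ``the divergent part is the one encoding $\hat\mu(h)$, which we are not performing'' is a heuristic, not an estimate. Without an actual bound on the coefficients of the accumulated transformation (or a Newton/KAM iteration), there is nothing preventing factorial growth. This is the genuine gap. A secondary awkwardness is that your target ideal $(h^{-s}f^2)$ depends on $\phi$ and hence moves as you conjugate; the paper sidesteps this by first fixing the ideal as $(h^sP)$ via Proposition~\ref{prop:prepared} and Lemma~\ref{lemma:P}, and only afterwards adjusting the ``denominator'' $Q$.
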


\Grn{The following is our analogy of Theorem~\ref{thm:kimura}.}
	
\begin{thm}[``Sectorial'' equivalence]\label{thm:sectorial}
Let $(\phi,\sigma)$ of formal type \textit{(b)} or \textit{(c)} be in the prenormal form of Proposition~\ref{prop:prenormalphi},
and let $(\phimod,\sigma)$ be its model.
There exists a countable collection of cuspidal sectors\footnote{See Figure~\ref{figure:sectors} and Definition~\ref{def:sector}.} 
covering a disc $\{|h(\xi)|<\delta_2\}$ for some $\delta_2>0$, and for each given sector $S$ a $(\sigma,\Lambda)$-invariant family\footnote{If $\Omega_{S}$ is a domain in the family, then its images $\sigma(\Omega_S)$ and $\Lambda(\Omega_S)$ are also in the family.} 
of $4kp$ ``Lavaurs domains'' $\{\Omega_{S}^j\}_{j=1,\ldots, 4kp}$ covering together the set $B_S\sminus\Fix(\phimod^{\circ p})$
\begin{equation}\label{eq:BS}
	B_S=\{\xi\in\C^2: |\xi|<\delta_1,\ h(\xi)\in S\}
\end{equation} 
 (see Figure~\ref{figure:sectors}), a family of bounded analytic transformations $\{\Psi_{\Omega_{S}^j}\}_{j=1,\ldots, 4kp}$ defined on  the Lavaurs domains $\Omega_{S}^j$,
such that
\[\Psi_{\Lambda(\Omega_{S}^j)}\circ\phi=\phimod\circ\Psi_{\Omega_{S}^j},\qquad \Psi_{\sigma(\Omega_{S}^j)}\circ\sigma=\sigma\circ\Psi_{\Omega_{S}^j},\qquad h\circ\Psi_{\Omega_{S}^j}=h,\] 
 for all $j$ and $S$.
\Grn{We call the family $\{\Psi_{\Omega_{S}^j}\}_{j=1,\ldots, 4kp}$ a \emph{normalizing cochain}.
Such normalizing cochain is unique up to left composition with cochains of flow maps
\begin{equation}\label{eq:cochainofflowmaps}
	\Big\{\exp\big(h^{-s}C_{\Omega^{j}_{S}}(h)\bXmod\big)\Big\}_{j=1,\ldots 4kp},
\end{equation} 
where the $C_{\Omega^{j}_{S}}(h)$ are bounded analytic functions on $S$.}
\end{thm}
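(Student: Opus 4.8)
The plan is to mimic the Birkhoff--Kimura construction of Theorem~\ref{thm:kimura}, but fibered over the first integral $h$, and with the one-dimensional petals replaced by the two-dimensional ``Lavaurs domains'' attached to the fixed-point divisor $\Fix(\phi^{\circ p})=\{f(\xi)=0\}$. First I would describe the covering: for $h\ne 0$ the level curve $\{h(\xi)=\const\}\cap\{|\xi|<\delta_1\}$ is an annulus-like curve on which $\phimod$ restricts to a one-dimensional parabolic germ with $2kp$ fixed points (counted with multiplicity), because by the prenormalization $\fmod$ has order $kp$ in $(\xi_1,\xi_2)$, hence the restricted fixed-point set has $2kp$ points; the standard Leau--Fatou flower for the restricted germ gives $4kp$ petals, which assemble (using that $h\circ\phimod=h$ and $\phimod$ depends analytically on parameters) into $4kp$ domains $\Omega_S^j$ over a cuspidal sector $S$ in the $h$-variable — the cuspidal shape in $h$ being forced exactly as in the unfolding theory of \cite{MRR, Ribon-f}, since the ``period'' of the vanishing cycle of $\bXmod$ along $\{h=\const\}$ degenerates as $h\to 0$. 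The $(\sigma,\Lambda)$-invariance of the family is immediate from $\bXmod$ being $(\sigma,\Lambda)$-equivariant together with the invariance of $h$ under $\sigma$ (up to sign) and $\Lambda$, so the construction can be carried out on a fundamental domain and propagated.

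Next I would construct the normalizing transformations $\Psi_{\Omega_S^j}$. On each Lavaurs domain $\Omega_S^j$ one solves the conjugacy equation $\Psi_{\Lambda(\Omega_S^j)}\circ\phi=\phimod\circ\Psi_{\Omega_S^j}$ by the usual fixed-point/majorant argument: because $\phi$ and $\phimod$ share the same model, the difference $\phi-\phimod$ (and of their $p$-th iterates' infinitesimal generators) lies in the ideal $h^{-s}f(\xi)^2\xi_j$ by Proposition~\ref{prop:prenormalphi}, which provides the contraction needed for the functional equation to have a bounded analytic solution on a domain that is ``Borel-summation-like'' for the restricted one-dimensional parabolic dynamics. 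Concretely, one passes to the rectifying (Fatou) coordinate of $\bXmod$ along the leaves, in which $\phimod$ becomes the shift, solves the linearized cohomological equation by a Cauchy-type integral along the flow trajectories of $e^{i\theta}\bXmod$ staying in $B_S$, and iterates; the bound $h^{-s}f^2$ is exactly what makes the Cauchy estimates close despite the $h^{-s}$ blow-up, because on the Lavaurs domain $f$ is comparable to a uniformizing coordinate that vanishes on $\Fix(\phimod^{\circ p})$. One then imposes $h\circ\Psi_{\Omega_S^j}=h$ (automatic, since the construction is fiberwise) and the equivariance $\Psi_{\sigma(\Omega_S^j)}\circ\sigma=\sigma\circ\Psi_{\Omega_S^j}$ (by averaging the solution over the $\sigma$-action, or by constructing it only on a fundamental subfamily and extending it by the formula $\Psi_{\sigma(\Omega)}:=\sigma\circ\Psi_\Omega\circ\sigma$, which is consistent because both sides solve the same conjugacy equation on $\sigma(\Omega)$).

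For uniqueness, suppose $\{\Psi_{\Omega_S^j}\}$ and $\{\tilde\Psi_{\Omega_S^j}\}$ are two normalizing cochains. Then on each $\Omega_S^j$ the composition $\tilde\Psi_{\Omega_S^j}\circ\Psi_{\Omega_S^j}^{\circ(-1)}$ is a $(\sigma,\Lambda)$-equivariant, $h$-preserving automorphism commuting with $\phimod$, hence commuting with $\bXmod$ on $\Omega_S^j$; by the analogue of the one-dimensional fact that the centralizer of a parabolic normal form on a petal consists of the flow maps, and using that $\bXmod=h^scP(u,h)\bE$ has $h^{-s}\bXmod$ with leaf-wise a simple pole structure whose centralizer on a petal is the flow $\exp(t\,h^{-s}\bXmod)$ with $t$ constant along leaves, one deduces $\tilde\Psi_{\Omega_S^j}\circ\Psi_{\Omega_S^j}^{\circ(-1)}=\exp(h^{-s}C_{\Omega_S^j}(h)\bXmod)$ for some function $C_{\Omega_S^j}$ that must be analytic and bounded on $S$ (analyticity because $\Psi,\tilde\Psi$ are analytic, boundedness because both are bounded); the $(\sigma,\Lambda)$-equivariance of the cochain forces the $C$'s on $\sigma$- and $\Lambda$-related domains to match up, giving the stated normalization.

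\textbf{Main obstacle.} The hard part will be the second step: making the Cauchy/fixed-point estimates uniform as $h\to 0$, i.e. controlling the solution of the cohomological equation on the cuspidal sector $B_S$ right up to the fixed-point divisor, where the flow of $\bXmod$ degenerates and the Fatou coordinate has a logarithmic singularity. This requires a careful choice of the geometry of the Lavaurs domains $\Omega_S^j$ (width of petals shrinking at a controlled rate as $|h|\to 0$, compatible overlaps) so that the ``real-time trajectory'' integration paths of $e^{i\theta}\bXmod$ stay inside $B_S$ and the $h^{-s}$ factor is absorbed by the $f^2$ gain — essentially reproving, in this two-dimensional fibered setting with a topologically nontrivial leaf space (Figure~\ref{figure:fol-b}), the summability estimates that underlie Theorem~\ref{thm:kimura} and its parametric version in \cite{MRR, Rousseau-Christopher, Ribon-f}.
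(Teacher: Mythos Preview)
Your overall strategy---fiberwise construction of Fatou-type normalizing transformations over the levels of $h$, assembling them into domains over cuspidal sectors in $h$, and extracting uniqueness from the leafwise centralizer of $\bXmod$---matches the paper's architecture. Two substantive differences are worth flagging.

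\textbf{Construction of the Fatou coordinate.} You propose solving the cohomological equation by Cauchy integrals along real trajectories of $e^{i\theta}\bXmod$ and iterating (the Kimura-style approach). The paper instead follows the Voronin--Shishikura--MRR--Ribon route: on each leaf it passes to the rectifying coordinate $\bt_h=\int\bY_h^{-1}$, chooses an \emph{admissible strip} of slope $e^{i\theta}h^s$ in the translation surface $\bt_h(B_h)$, builds a quasi-conformal conjugacy to the shift $t\mapsto t+h^s$ on that strip, and corrects it to a holomorphic Fatou coordinate via the Ahlfors--Bers theorem (Proposition~\ref{prop:Fatou}). This buys a clean proof that the construction depends analytically on $h$ and survives the limit $h\to 0$: for $s>0$, where the strip degenerates, the paper argues via sequences $h_n=n^{-1/s}h_0$ and iterates $\phi^{\circ np}$ converging by an Euler-method argument (Lemma~\ref{lemma:hn} and Proposition~\ref{prop:Fatou-dependence}). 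Your Cauchy-integral scheme could in principle be pushed through, but the uniformity you flag as the ``main obstacle'' is exactly where the quasi-conformal method earns its keep, and you should expect real work to match it.

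\textbf{Geometry of the Lavaurs domains.} You describe the $4kp$ domains as arising from ``the standard Leau--Fatou flower for the restricted germ''. This understates the issue: for $h\neq 0$ the leaf $B_h$ is an \emph{annulus} in $\xi_1$ with two poles of $\bY_h$ (at $0$ and $\infty$), and the $4kp$ domains split into $2kp$ \emph{outer} and $2kp$ \emph{inner} ones exchanged by $\sigma$. The paper devotes all of Section~\ref{sec:6.2} to analyzing the real dynamics of the rotating family $e^{i\theta}h^s\bY_h$ as a rational vector field on $\CP^1$: zone decompositions, half-zones, gate trajectories, rotational stability relative to $B_h$, and the resulting cuspidal-sector structure in $h$ (Lemma~\ref{lemma:asymptotic-h}). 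The covering theorem (Theorem~\ref{thm:covering}) is not immediate from one-dimensional petal theory; one must show that for each $h$ some stable direction $\theta$ exists and that the resulting domains, after removing the ``shade'' of the holes (Definition~\ref{def:shade}), cover $B_h\sminus\{P_h=0\}$. Your sketch would need this entire layer.

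Your uniqueness argument (composition of two normalizing cochains is a leafwise isotropy of $\phimod^{\circ p}$, hence a flow map of $h^{-s}\bXmod$) is essentially the paper's; see Proposition~\ref{prop:Fatou} part 2 and Theorem~\ref{thm:normalizngtransformation}.
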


The form of the domains $\Omega_S^j$ in the covering of Theorem~\ref{thm:sectorial} is determined by the dynamics of the model vector field
$\bXmod$ \eqref{eq:Xmodel}.
\Grn{The set $B_S$ \eqref{eq:BS} has two ``essential'' boundary components: ``outer'' one at $\{|\xi_1|=\delta_1\}$ and ``inner'' one $\{|\xi_2|=\delta_1\}$, and the $4kp$ domains $\Omega_{S}^j$ are correspondingly grouped into two sets: $2kp$ cyclically ordered \emph{outer domains} (touching the ``outer'' boundary) and $2kp$ cyclically ordered \emph{inner domains} (touching the ``inner'' boundary), see Figure~\ref{figure:sectors}. }

\begin{figure}[t]
	\centering 
	\includegraphics[width=0.99\textwidth]{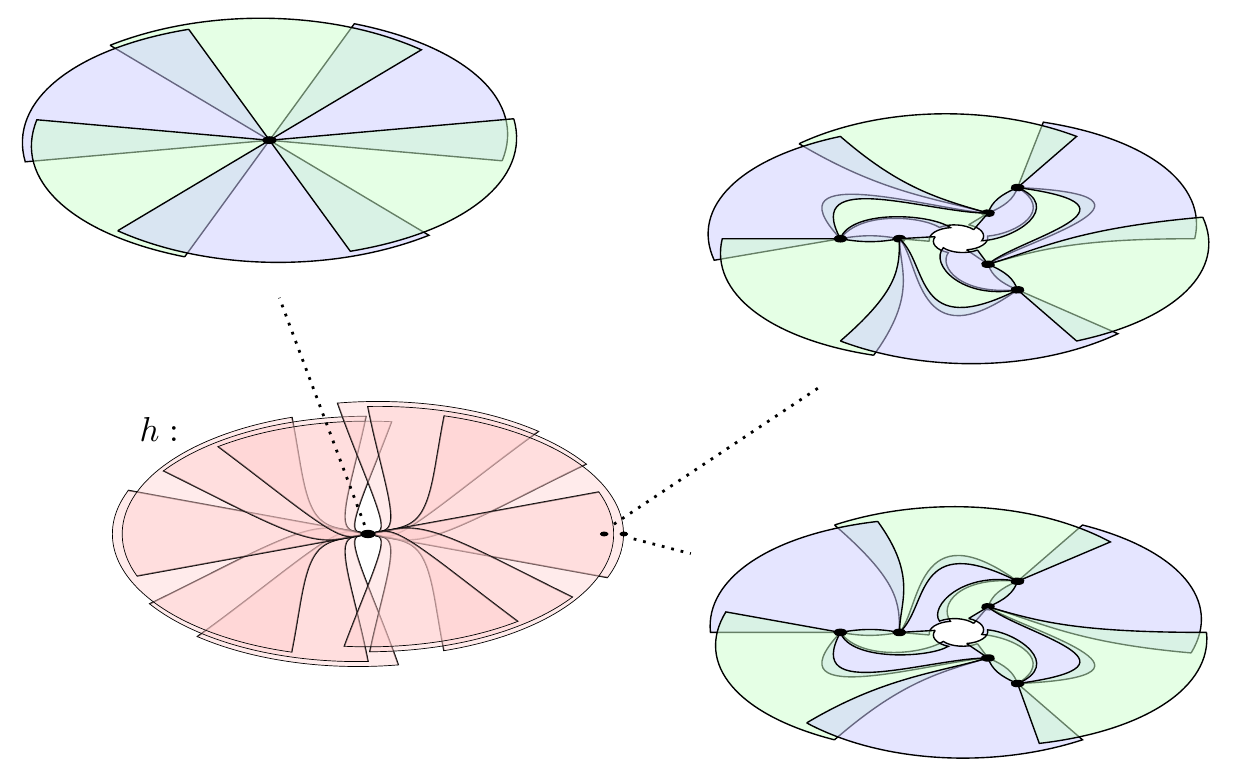}
	\caption{ Example of the domains of Theorem~\ref{thm:sectorial} in the case $p=3$, $k=1$, $s=0$, 
	for a model $\bXmod=i(u^3+h)\big(\xi_1\dd{\xi_1}-\xi_2\dd{\xi_2}\big)$.
	In the center of the figure: a covering of a small disc $\{|h|<\delta_2\}$ by a collection of cuspidal sectors $S$ (in pink). 
	For each sector $S$ and $h\in S$, the leaf $B_h=\{h=\const\}\cap\{|\xi|<\delta_1\}$, which in the coordinate $\xi_1$ has the form of an annulus,
	is covered by $4kp$ Lavaurs domains $\Omega_{S,h}^j=\Omega_S^j\cap B_h$ attached to the fixed points $\Fix(\phi^{\circ p})\cap B_h$. 
	When $h$ belongs to several sectors $S$ the associated coverings differ.
	The zero level set $\{h=0\}$ consists of two irreducible components, the figure shows the covering of only one of them.
	}
	\label{figure:sectors}
\end{figure}

We express the modulus of analytic classification as a countable collection of ``cocycles'' of transition maps on certain intersections of the covering. 
Namely, to each cuspidal sector $S$ in the $h$-plane and its associated normalizing cochain $\{\Psi_{\Omega^j_{S}}\}_{j=1,\ldots 4kp}$ on the $4kp$ Lavaurs domains $\{\Omega_S^j\}$, we  associate  a set of \emph{transition maps} on the intersections of \Grn{two subsequent outer/inner domains}:
\[ \left\{\psi_S^{j,i}:=\Psi_{\Omega^j_{S}}\circ\Psi_{\Omega^i_{S}}^{\circ(-1)}\right\}_{i,j},\quad \text{defined on } \Omega^i_S\cap\Omega^j_S, \]
preserving $\bXmod$ and possessing a $(\sigma,\Lambda)$-equivariance property.
The equivalence class of the set of transition maps 
modulo conjugation by cochains of flow maps \eqref{eq:cochainofflowmaps}
\[\psi_S^{j,i}\simeq \exp(C_{\Omega^j_{S}}(h)\bXmod)\circ\psi_S^{j,i}\circ\exp(-C_{\Omega^i_{S}}(h)\bXmod),\]
is then called a \emph{cocycle}. 

\GRN

\com{This below is an improved version of the classification theorem: the original was equivalence of (1) and (3), but here (4) is more interesting.}

\begin{thm}[Analytic classification]\label{thm:analytic}
Let $\phi,\phi'=\Lambda\xi+\hot$ be two analytic $\sigma$-reversible germs of formal type \textit{(b)} or  \textit{(c)} of Theorem~\ref{thm:1} in the prenormal form of Proposition~\ref{prop:prenormalphi}, both with the same model $\phimod$ \eqref{eq:Xmodel}. The following are equivalent:
\begin{enumerate}[wide=0pt, leftmargin=\parindent]
	\item $(\phi,\sigma)$ and $(\phi',\sigma)$ are analytically conjugated by an element of $\Diff_{\id}(\C^2,0)$. 
	\item $(\phi,\sigma)$ and $(\phi',\sigma)$ are analytically conjugated by an element of 
	\[\Diff_{\id}^{h}(\C^2,0)=\{\psi\in \Diff_{\id}(\C^2,0),\ h\circ\psi=h\}.\]
	\item For every cuspidal sector $S$ of Theorem~\ref{thm:sectorial} their associated cocycles $\{\psi_S^{j,i}\}$, $\{{\psi'}_S^{j,i}\}$ agree.
 	\item For some cuspidal sector $S$ of Theorem~\ref{thm:sectorial} their associated cocycles $\{\psi_S^{j,i}\}$, $\{{\psi'}_S^{j,i}\}$ agree.
\end{enumerate}
\end{thm}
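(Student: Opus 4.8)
The plan is to prove the chain of equivalences $(1)\Leftrightarrow(2)\Leftrightarrow(3)\Leftrightarrow(4)$ by establishing $(2)\Rightarrow(1)$ trivially, $(1)\Rightarrow(3)$ via uniqueness of normalizing cochains, $(3)\Rightarrow(4)$ trivially, and the two substantive implications $(4)\Rightarrow(2)$ and $(1)\Rightarrow(2)$. The only genuinely non-formal content is the passage from agreement of a \emph{single} cocycle to an actual analytic conjugacy preserving $h$; everything else is bookkeeping with the uniqueness clauses of Theorem~\ref{thm:sectorial}.

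\textbf{Step 1: $(2)\Rightarrow(1)$ and $(3)\Rightarrow(4)$.} These are immediate: $\Diff^h_{\id}(\C^2,0)\subset\Diff_{\id}(\C^2,0)$, and agreement of all cocycles implies agreement of one.

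\textbf{Step 2: $(1)\Rightarrow(3)$.} Suppose $\Phi\in\Diff_{\id}(\C^2,0)$ conjugates $(\phi,\sigma)$ to $(\phi',\sigma)$, i.e. $\Phi\circ\phi=\phi'\circ\Phi$ and $\Phi\circ\sigma=\sigma\circ\Phi$. Since both $\phi,\phi'$ are in the prenormal form of Proposition~\ref{prop:prenormalphi} with the same model $\phimod$, and since $h\circ\phi=h=h\circ\phi'$, the conjugacy $\Phi$ must preserve $h$: indeed $h\circ\Phi$ is a first integral of $\phi$ of Morse type agreeing with $h$ to first order, and formal uniqueness (the first bullet after Theorem~\ref{thm:1}, together with Proposition~\ref{prop:prenormalphi}) forces $h\circ\Phi=h$. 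Thus $\Phi\in\Diff^h_{\id}$, which already gives $(1)\Rightarrow(2)$ as a byproduct. Now fix a cuspidal sector $S$. Transporting the normalizing cochain $\{\Psi_{\Omega^j_S}\}$ for $\phi$ by $\Phi$ produces a family $\{\Psi_{\Omega^j_S}\circ\Phi^{\circ-1}\}$ defined on $\Phi(\Omega^j_S)$; one checks that $\Phi$ maps the Lavaurs covering of $B_S\sminus\Fix(\phi^{\circ p})$ to that of $B_S\sminus\Fix(\phi'^{\circ p})$ (using $\Fix(\phi^{\circ p})=\{f=0\}$ and the fact that $\Phi$ carries the dynamics of $\bXmod$ to itself on the model side), and that $\{\Psi_{\Omega^j_S}\circ\Phi^{\circ-1}\}$ is a normalizing cochain for $(\phi',\sigma)$. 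By the uniqueness part of Theorem~\ref{thm:sectorial} it differs from the chosen cochain $\{\Psi'_{\Omega^j_S}\}$ by left composition with a cochain of flow maps \eqref{eq:cochainofflowmaps}. Since the transition maps $\psi^{j,i}_S$ are defined modulo exactly such conjugation, the associated cocycles of $\phi'$ computed from the two cochains coincide, hence $\{\psi^{j,i}_S\}=\{\psi'^{j,i}_S\}$ for every $S$.

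\textbf{Step 3: $(4)\Rightarrow(2)$, the main point.} Fix the sector $S$ for which $\{\psi^{j,i}_S\}=\{\psi'^{j,i}_S\}$. After adjusting $\{\Psi'_{\Omega^j_S}\}$ by a suitable cochain of flow maps \eqref{eq:cochainofflowmaps} (allowed by Theorem~\ref{thm:sectorial}), we may assume the transition maps literally agree: $\Psi_{\Omega^i_S}\circ\Psi_{\Omega^j_S}^{\circ-1}=\Psi'_{\Omega^i_S}\circ{\Psi'}_{\Omega^j_S}^{\circ-1}$ on each $\Omega^i_S\cap\Omega^j_S$. Then the maps $\Phi_{\Omega^j_S}:={\Psi'}_{\Omega^j_S}^{\circ-1}\circ\Psi_{\Omega^j_S}$, defined on $\Omega^j_S$, agree on all overlaps within $B_S$ and therefore glue to a single bounded analytic map $\Phi$ on $B_S\sminus\Fix(\phi^{\circ p})$, conjugating $\phi$ to $\phi'$ and commuting with $\sigma$, with $h\circ\Phi=h$. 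The task is to show $\Phi$ extends analytically across $\Fix(\phi^{\circ p})$ and, crucially, across the boundary of $B_S$ to a full neighborhood of $0$ — i.e. it does not depend on the choice of $S$. Extension across $\Fix(\phi^{\circ p})=\{f=0\}$ follows because $\Phi$ is bounded near this analytic hypersurface (Riemann removable singularity on each leaf $\{h=\const\}$, then globally). For independence of $S$: on the overlap of $B_S$ and $B_{S'}$ the two glued maps $\Phi^{(S)}$, $\Phi^{(S')}$ are both tangent-to-identity conjugacies of $(\phi,\sigma)$ to $(\phi',\sigma)$ preserving $h$; but the only ambiguity in such a conjugacy on a leaf is left composition with a time-$C(h)$ flow of $\bXmod$ that commutes with the cochain data, and the normalization we imposed (agreement of transition maps) pins this down — this must be argued leaf by leaf using that a conjugacy of the model to itself commuting with all transition maps and tangent to the identity is the identity (a rigidity statement that should follow from the one-dimensional uniqueness in Theorem~\ref{thm:kimura} applied to the restriction to each component of $\{h=\const\}$, plus the $(\sigma,\Lambda)$-equivariance). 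Hence $\Phi$ is single-valued on a punctured neighborhood of $\{h=0\}$, bounded, so extends to $\Phi\in\Diff^h(\C^2,0)$; tangency to the identity is inherited from the cochains, giving $(2)$.

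\textbf{Main obstacle.} The delicate step is Step~3, specifically proving that the locally-defined conjugacies glued over different cuspidal sectors $S$ patch into a \emph{single-valued} germ at $0$ rather than something with monodromy around $\{h=0\}$. This is precisely the place where the topological nontriviality of the foliation by $\{H=\const\}$ (Figure~\ref{figure:fol-b}) could in principle obstruct convergence, and it must be handled by the leaf-wise rigidity coming from the one-dimensional Birkhoff--\'Ecalle--Voronin uniqueness together with the $(\sigma,\Lambda)$-equivariance that rules out the residual flow-map ambiguity. I expect the proof to reduce this to showing that a $(\sigma,\Lambda)$-equivariant, $h$-preserving, tangent-to-identity automorphism of the model $(\phimod,\sigma)$ realizing the trivial cocycle on one sector is necessarily the identity, which is the two-dimensional analogue of the ``unique up to flow maps'' clause and should be where most of the real work sits.
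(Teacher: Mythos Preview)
Your overall architecture matches the paper's: the nontrivial content is indeed concentrated in $(4)\Rightarrow(2)$, and your construction of the glued conjugacy $\Phi$ on $B_S$ from matching normalizing cochains is exactly how the paper proceeds (this is Theorem~\ref{thm:analyticclassification}). Your argument for $(1)\Rightarrow(2)$ via uniqueness of the formal normalization is also essentially correct, though it deserves one extra line: since an analytic conjugacy forces $\hat\mu=\hat\mu'$, both germs share the \emph{same} formal normal form $\hatphinf$, and then uniqueness of $\hat\Psi,\hat\Psi'\in\hatDiff_{\id}$ together with $h\circ\hat\Psi=h=h\circ\hat\Psi'$ in the prenormal form gives $h\circ\Phi=h$.

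The genuine gap is in your extension of $\Phi=\Phi^{(S)}$ across the boundary of $B_S$. You write ``on the overlap of $B_S$ and $B_{S'}$ the two glued maps $\Phi^{(S)},\Phi^{(S')}$ are both tangent-to-identity conjugacies\ldots'', but $\Phi^{(S')}$ does not yet exist: its construction requires that the cocycles over $S'$ agree, and condition~(4) only gives this over the single sector $S$. Leaf-wise rigidity from the one-dimensional theory cannot bridge this, because for $h\in S'\sminus S$ you have no conjugacy on the leaf $B_h$ to which to apply it. The rigidity statement you invoke (a $\sigma$-equivariant, $h$-preserving, tangent-to-identity self-conjugacy of $\phi^{\circ p}$ on $B_{S\cap S'}$ is the identity --- this is Proposition~\ref{prop:automorphisms}) is indeed what the paper uses to glue $\Phi^{(S)}$ and $\Phi^{(S')}$ \emph{once both exist}, but it cannot manufacture $\Phi^{(S')}$.

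The paper closes this gap by a \emph{propagation} argument, and this is where it diverges from your sketch. From the cocycle over $S$ one extracts a \emph{sectorial invariant} $\mu_S(h)$ (Proposition~\ref{prop:muLavaurs}), and builds a \emph{canonical} normalizing cochain $\{\tilde\Psi_{\Omega_S}\}$ toward the sectorial normal form $\phinfS=\Lambda\exp(\tfrac1p\bXnfS)$, pinned down uniquely by the vanishing of the zeroth Fourier coefficients $\tilde\beta_{V_S^j}^{(0)}=0$ (Theorem~\ref{thm:reducedcochain}). On $S$, uniqueness forces $\tilde\Psi'_{\Omega_S}=\tilde\Psi_{\Omega_S}\circ G_S$, so the canonical transition maps satisfy $\tilde\psi'_{V_S}=\tilde\psi_{V_S}$. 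Now on the overlap $S\cap\tilde S$, the same relation holds for the canonical cochains over $\tilde S$ (again by uniqueness), hence $\tilde\psi'_{V_{\tilde S}}=\tilde\psi_{V_{\tilde S}}$ on $S\cap\tilde S$; but these transition maps have Fourier coefficients bounded analytic on all of $\tilde S$, so by the identity principle they agree on the whole of $\tilde S$. This is the missing step: analytic continuation of the \emph{cocycle data}, not of the conjugacy itself, is what pushes equality to the neighboring sector. Only then does one construct $\Phi^{(\tilde S)}$ and invoke Proposition~\ref{prop:automorphisms} to see $\Phi^{(S)}=\Phi^{(\tilde S)}$ on the overlap. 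Iterating over the covering gives a single germ $G\in\Diff_{\id}^h(\C^2,0)$.
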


\FGRN

In order to obtain the modulus of analytic equivalence with respect to conjugation by general transformations in $\Diff(\C^2,0)$, one has to further consider the action 
on the cocycles of the group 
\[\Cal Z^{\sigma,\Lambda}(\bXmod)=\{\psi\in\Diff(\C^2,0):\ \psi=\sigma\psi\circ\sigma=\Lambda^{-1}\psi\circ\Lambda,\ \psi^*\bXmod=\bXmod\},\]
which by Theorem~\ref{thm:1} is a subgroup of the group $\{\xi\mapsto e^{\frac{\pi i r}{kp+2s}}\sigma^r\xi,\ r\in\Z_{2kp+4s}\}$,
details are left to Section~\ref{sec:6.3}.

\begin{remark}
The restriction of $\phi$ to either irreducible component of the zero level set $\{h(\xi)=0\}$	 is a parabolic diffeomorphism of $(\C,0)$ whose   Birkhoff--\'Ecalle--Voronin modulus agrees with the corresponding restriction of the classifying  cocycle $\{\psi_S^{j,i}\}$  for each of the cuspidal sectors $S$ in the $h$-plane. In particular, this implies that the modulus is indeed infinite-dimensional (see Example~\ref{example:EV}).
\end{remark}

\com{I've removed a remark, as it is no longer needed.}

\subsection{Antiholomorphic parabolic reversible diffeomorphisms \\ and Moser--Webster tripples of involutions}\label{sec:1-anti}

Let $(\chi,\tau)$ be a pair of an antiholomorphic diffeomorphism $\chi$ (i.e. $\xi\mapsto\ov{\chi(\xi)}$ belongs to $\Diff(\C^2,0)$),
and a \Grn{holomorphic} involution $\tau\in\Diff(\C^2,0)$ such that 
	\begin{equation}\label{eq:reversible2}
		\tau^{\circ 2}=\id,\qquad  \tau\circ\chi\circ\tau=\chi^{\circ(-1)}.
	\end{equation}
Then $\rho=\tau\circ\chi$ is an antiholomorphic involution reversing $\chi$, 
	\[\rho^{\circ 2}=\id,\qquad  \rho\circ\chi\circ\rho=\chi^{\circ(-1)},\]
and the problem of classification of pairs $(\chi,\tau)$ with respect to holomorphic conjugation is equivalent to that of classification of pairs of a holomorphic and anti-holomorphic involution $(\tau,\rho)$, or of \emph{Moser--Webster tripples of involutions} 
\begin{equation}\label{eq:MWtriple}
	(\tau_1,\tau_2,\rho)=(\tau,\tau\circ\chi^{\circ 2},\tau\circ\chi),
\end{equation}
where two holomorphic involutions $\tau_1,\tau_2$ are intertwined by a third antiholomorphic involution $\rho$:
\begin{equation}\label{eq:intertwining1}
	\tau_1\circ\rho=\rho\circ\tau_2.
\end{equation}

\Grn{Assume that the reversible holomorphic diffeomorphism $(\phi,\tau)=(\chi^{\circ 2},\tau)$ satisfies Assumptions~\ref{assumptions}:
it is parabolic, $\chi^{\circ 2p}\in\Diff_{\id}(\C^2,0)$ for some $p\geq 1$, and has a first integral $H=H\circ\chi^{\circ 2}=H\circ\tau$ of Morse type.}
Up to a linear change of coordinate (Lemma~\ref{lemma:chitau}), $(\chi,\tau)$ and $H$ take the form:
\begin{equation}\label{eq:taurho}
	\chi(\xi)=\Lambda^{\frac12}\sigma\ov\xi+\hot(\ov\xi),\qquad \tau(\xi)=\sigma\xi+\hot(\xi), \qquad H(\xi)=\xi_1\xi_2+\hot(\xi),
\end{equation}
where
\[\sigma=\left(\begin{smallmatrix}0& 1\\[3pt] 1&0\end{smallmatrix}\right),\qquad
\Lambda=\left(\begin{smallmatrix}\lambda& 0\\[3pt] 0&\lambda^{-1}\end{smallmatrix}\right)=\ov\Lambda^{-1}.\]

\GRN
Since $\phi=\chi^{\circ 2}$ is a holomorphic diffeomorphism of $(\C^2,0)$ reversed by both $\tau$ and $\rho$,
the classification of pairs $(\chi,\tau)$ is a priori a refinement of that of holomorphic pairs $(\phi,\tau)$ with an additional antiholomorphic symmetry.

\begin{theorem}\label{thm:antiholomorphicclassification}
	Two pairs $(\chi,\tau),\ (\chi',\tau')$ \eqref{eq:reversible2} with $(\chi^{\circ2},\tau)$,  $(\chi'^{\circ2},\tau')$ satisfying Assumptions~\ref{assumptions} 
	are: 
	\begin{enumerate}
		\item Analytically (resp. formally) conjugated by a tangent-to-identity transformation if and only if $(\chi^{\circ 2},\tau)$, $({\chi'}^{\circ 2},\tau')$ are.
		\item Analytically (resp. formally) conjugated by a general transformation if and only if $(\chi^{\circ 2},\tau)$, $({\chi'}^{\circ 2},\tau')$ are analytically conjugated by a transformation with real linear part.
	\end{enumerate}
\end{theorem}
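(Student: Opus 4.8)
The plan is to reduce the classification of antiholomorphic reversible pairs $(\chi,\tau)$ to that of holomorphic reversible pairs $(\phi,\tau)=(\chi^{\circ2},\tau)$ endowed with an extra antiholomorphic symmetry, and then show that this extra symmetry imposes no new obstruction beyond those already recorded by Theorems~\ref{thm:1}, \ref{thm:sectorial} and \ref{thm:analytic}. First I would record the obvious implications: if $\Phi$ conjugates $(\chi,\tau)$ to $(\chi',\tau')$ then it conjugates $(\chi^{\circ2},\tau)$ to $({\chi'}^{\circ2},\tau')$, so the ``only if'' directions of both statements are immediate (for (2) one also notes that a conjugacy of antiholomorphic data has the same linear-part reality constraints as in the normalized forms~\eqref{eq:taurho}, and any change of coordinates compatible with $\sigma,\Lambda^{1/2}\sigma$ must have real linear part up to the residual scalar/$\sigma$ ambiguity of Theorem~\ref{thm:1}). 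The substance is the ``if'' direction: given that $(\phi,\tau)$ and $(\phi',\tau')$ are conjugated (by $\Diff_{\id}$ for part (1), by something with real linear part for part (2)), one must produce a conjugacy of the antiholomorphic pairs.

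The key idea is that $\chi$ is a \emph{square root} of $\phi$ compatible with $\tau$, and such square roots are essentially unique. Concretely: after the prenormalization of Proposition~\ref{prop:prenormalphi} applied to $\phi=\chi^{\circ2}$ (note $\chi^{\circ2}$ is parabolic of type (b) or (c) — the trivial case (o) and the case (a) would be handled by direct inspection of the convergent, resp. Moser--Webster, normal form), the normal form and the model $(\phimod,\sigma)$ carry a natural antiholomorphic model involution, namely $\xi\mapsto\Lambda^{1/2}\sigma\ov\xi$, whose square is $\Lambda\cdot\exp(\tfrac1p\bXmod)=\phimod$ precisely because $\bXmod$ is $\Lambda$-invariant and $\ov\bXmod=$ the appropriate conjugate (this uses that the coefficient $c$ and the polynomial $P$, resp.\ $\tilde P$, can be taken real by the residual scalar normalization, or more honestly that $\chi^{\circ2}=\phi$ forces $\ov c,\ov P$ to match $c,P$ up to that action). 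One then shows: \emph{any} antiholomorphic $\chi$ with $\chi^{\circ2}=\phi$ and $\tau\chi\tau=\chi^{\circ-1}$, after the prenormalizing and sectorial normalizing cochains of Theorems~\ref{thm:sectorial}, is conjugated on each Lavaurs domain to the model antiholomorphic involution, composed with a flow map of $\bXmod$; and the cocycle obstruction for $\chi$ is literally the same object as the cocycle obstruction for $\phi=\chi^{\circ2}$ together with the constraint~\eqref{eq:intertwining1}. Because a normalizing cochain for $\phi$ is unique up to the flow-map ambiguity~\eqref{eq:cochainofflowmaps}, and $\chi$ is determined by $\phi$ and one reversing involution, the data $(\chi,\tau)$ and the data $(\phi,\tau)$ have \emph{the same} classifying cocycle; hence ``$(\phi,\tau)\sim(\phi',\tau')$'' transports the normalizing cochains and yields a conjugacy of $(\chi,\tau)$ with $(\chi',\tau')$. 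For part (1) one checks the produced conjugacy is tangent to the identity (it is built from tangent-to-identity prenormalizations and the $\Diff_{\id}$ conjugacy of $\phi$), and for part (2) one tracks how a non-tangent linear part — necessarily with real linear part by the compatibility with $\sigma,\Lambda^{1/2}\sigma$ — intertwines the two antiholomorphic structures.

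The main obstacle I expect is the uniqueness-of-square-root step done \emph{at the level of the sectorial/Lavaurs normalization rather than the formal one}: one must verify that an antiholomorphic $\chi$ with $\chi^{\circ2}=\phimod$ and the prescribed $\tau$-reversibility is, on each Lavaurs domain $\Omega_S^j$, forced to be $\Psi^{\circ-1}\circ(\text{model }\chi)\circ\Psi$ up to a flow map, and crucially that the flow-map ambiguities for $\chi$ and for $\phi=\chi^{\circ2}$ are \emph{linked} — i.e. fixing the cochain for $\phi$ rigidifies the cochain for $\chi$ — so that no independent ``antiholomorphic modulus'' can appear. This is where the antiholomorphic analogue of the uniqueness clause in Theorem~\ref{thm:sectorial} must be invoked, and some care is needed because conjugating an antiholomorphic map by $\exp(C\bXmod)$ produces $\exp(\ov C\,\ov\bXmod)$ on the other side, so the reality/antiholomorphy constraint cuts the ambiguity group roughly in half in exactly the way that makes the two moduli coincide rather than one refining the other. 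Once that linking is established, the remaining bookkeeping — matching the residual group $\Cal Z^{\sigma,\Lambda}$ actions and the ``real linear part'' condition in part (2) — is routine.
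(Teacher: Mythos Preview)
Your route is far heavier than what the paper does, and it also has a gap in case~(a).

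The paper's argument is a short rigidity trick that never touches the sectorial machinery. Given a holomorphic $\Psi$ with $\chi'^{\circ2}\circ\Psi=\Psi\circ\chi^{\circ2}$ and $\tau'\circ\Psi=\Psi\circ\tau$, set $\tilde\Psi:=\chi'\circ\Psi\circ\chi^{\circ(-1)}$. A two-line check shows that $\tilde\Psi$ also conjugates $(\chi^{\circ2},\tau)$ to $(\chi'^{\circ2},\tau')$, so $\Psi^{\circ(-1)}\circ\tilde\Psi$ lies in the centralizer of $(\chi^{\circ2},\tau)$. If $\Psi$ has real linear part (hence its linear part commutes with $\xi\mapsto\Lambda^{1/2}\ov\xi$), then $\Psi^{\circ(-1)}\circ\tilde\Psi$ is tangent to the identity. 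But Theorem~\ref{thm:1} says the tangent-to-identity centralizer of $(\phi,\tau)$ in types (a),(b),(c) is trivial. Hence $\tilde\Psi=\Psi$, i.e.\ $\chi'\circ\Psi=\Psi\circ\chi$. That is the entire proof: one auxiliary map plus the uniqueness clause of Theorem~\ref{thm:1}.

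By contrast, your plan reproves the analytic classification from scratch for the antiholomorphic data, pushing $\chi$ through prenormalization, Lavaurs domains, and cocycles, and then arguing that the cocycle for $\chi$ coincides with that for $\chi^{\circ2}$. Even if this can be made to work in types (b),(c), it is a substantial detour: the ``linking of flow-map ambiguities'' you flag as the main obstacle is exactly what the paper's centralizer argument makes unnecessary. More seriously, your handling of case~(a) is a genuine gap: you write that (a) ``would be handled by direct inspection of the \ldots\ Moser--Webster normal form'', but by Theorem~\ref{thm:divergent} type~(a) germs are generically \emph{not} analytically normalizable, and there is no sectorial theory for them in this paper. The paper's proof covers (a) and (b) uniformly precisely because it relies only on the formal rigidity of Theorem~\ref{thm:1}, not on any convergence or sectorial statement.
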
 
\FGRN

\GRN

So by virtue of Theorem~\ref{thm:antiholomorphicclassification}, the formal normal form $(\hatphinf,\hattaunf)$ of Theorem~\ref{thm:1} for $(\chi^{\circ 2},\tau)$, provides in fact also a formal normal form $(\hatchinf,\hattaunf)$, namely
\begin{equation}\label{eq:chinormalform1}
	\hatchinf(\xi)=\sigma\hatrhonf,\qquad
	\hatrhonf=\exp(-\tfrac{1}{2p}\hatbXnf)(\Lambda^{-\frac12}\bar\xi), \qquad 
	\hattaunf(\xi)=\sigma\xi,
\end{equation}
where $\hatbXnf$ is as in Theorem~\ref{thm:1}\,(o)--(b), and satisfies $\hatbXnf=-\ov{(\Lambda^{\frac12})^*\hatbXnf}$, see Theorem~\ref{thm:normalform}. 
\FGRN

However we find it more convenient to linearize the antiholomorphic involution $\rho=\tau\circ\chi$ instead of $\tau$, 
\Grn{which leads to a more symmetric formal normal form for the Moser--Webster triple \eqref{eq:MWtriple}. }

\begin{thm}[Formal classification]\label{thm:2}
	Let $(\chi,\tau)$ and $H$ be as above, with $(\chi^{\circ 2},\tau)$ satisfying Assumptions~\ref{assumptions}.
	\Grn{Let $s\geq 0$ be the multiplicity of the zero level set $\{H(\xi)=0\}$ in the fixed point divisor $\Fix(\chi^{\circ 2p})$ which is of the form
		$\{H^s(\xi)\cdot g(\xi)=0\}$ for some analytic germ $g(\xi)$,
		and denote $kp=\ord_0 g(\xi)$ its order of vanishing.}
	
Then $(\chi,\tau)$ is formally conjugated to the following {\bf normal form} 
	$(\hatchinf', \hattaunf')$: 
	\begin{equation}\label{eq:SFNF}
		\hatchinf'=\hattaunf'\circ\rhonf',
		\qquad \hattaunf'(\xi)=\exp(\tfrac{1}{2p}\hatbXnf')(\Lambda^{\frac12}\sigma\xi),
		\qquad \rhonf'(\xi)=\bar\xi, 
	\end{equation}
	where  $\Lambda^{\frac12}=\left(\begin{smallmatrix} \lambda^{\frac12} & 0\\[3pt] 0 & \lambda^{-\frac12} \end{smallmatrix}\right)$,  
	and where 	
	\[\hatbXnf'(\xi)=-\ov{\hatbXnf'}(\xi)=-(\sigma\Lambda^{\frac12})^*\hatbXnf'(\xi)=\Lambda^*\hatbXnf'(\xi)\]
	is one of the following vector fields
	\begin{itemize}[wide=0pt, leftmargin=\parindent]
		\item[(o)] $\hatbXnf'(\xi)=0$, i.e.
		\[\hattaunf'(\xi)=\Lambda^{\frac12}\sigma\xi \quad\text{is a linear map.}\] 
		\Grn{The group $\Cal Z(\hattaunf',\rhonf')=\Cal Z(\sigma\Lambda^{\frac12},\rhonf')$ of formal $\sigma\Lambda^{\frac12},\rhonf'$-equivariant diffeomorphisms consists of maps $\xi\mapsto \zeta(h,u')\cdot\xi$, where $\zeta(h,u')=\bar{\zeta}(h,u'), \zeta(0,0)\neq 0$.}
		
		This case happens if and only if $\chi^{\circ2p}=\id$, and the conjugation is convergent.
		
		\item[(a)]  $\hatbXnf'(\xi)=\pm 2i p\, h^s\big(\xi_1\tdd{\xi_1}-\xi_2\tdd{\xi_2}\big)$, $s\geq 1$,
		i.e.
		\[\hattaunf'(\xi)=\Lambda^{\frac12} e^{\pm ih^sJ}\sigma\xi,
		\qquad\text{where\ }\ J=\left(\begin{smallmatrix} 1& 0\\[3pt]0 & -1 \end{smallmatrix}\right).\]
		
		\Grn{The group  $\Cal Z(\hattaunf',\rhonf')=\Cal Z(\hatbXnf',\sigma\Lambda^{\frac12},\rhonf')$ of formal diffeomorphisms commuting with $\hattaunf',\rhonf'$, which are the same as $\sigma\Lambda^{\frac12},\rhonf'$-equivariant diffeomorphisms preserving $\hatbXnf'$, is generated by the involution $\xi\mapsto -\xi$.}
				
		\item[(b)] $\displaystyle{\hatbXnf'(\xi)= h^s\frac{c\,P'(u',h)}{1+c\,\hat\mu(h)P'(u',h)}\big(\xi_1\tdd{\xi_1}-\xi_2\tdd{\xi_2}\big)}$,  $s\geq 0$,\\
		where $P'(u',h)$ is an analytic polynomial in $u'(\xi)=\xi_1^p+\lambda^{\frac{p}2}\xi_2^p$ (note that $\lambda^{\frac{p}2}=\pm 1$),  
		\[P'(u',h)=u'^k+ P_{k-1}(h) u'^{k-1}+\ldots+ P'_0(h),\quad P'(u',0)=u'^k,\quad k>0,\] 
		$\hat\mu(h)=\sum_{n=0}^{+\infty}\mu_nh^n$ is a formal power series, and
		\[c=\pm 2ip,\quad \ov{P'}(u',h)= P'(u',h), \quad \ov{\hat\mu}(h)=-\hat\mu(h),\]
		are unique up to a change 
		\[\big(c,\,\hat\mu(h),\, P'(u',h)\big)\mapsto \big((-1)^{kp}c,\,\hat\mu(h),\,(-1)^{kp} P'(-u',h)\big).\]
		 \Grn{The group $\Cal Z(\hattaunf',\rhonf')=\Cal Z(\hatbXnf',\sigma\Lambda^{\frac12},\rhonf')$   of formal diffeomorphisms commuting with $\hattaunf',\rhonf'$, which are the same as $\sigma\Lambda^{\frac12},\rhonf'$-equivariant diffeomorphisms preserving $\hatbXnf'$, is either trivial or generated by the involution $\xi\mapsto -\xi$; in particular if $kp$ is odd then it is trivial.}
	\end{itemize}	
\end{thm}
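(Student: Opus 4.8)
The plan is to run the scheme of Theorem~\ref{thm:1} on the reversible data of the Moser--Webster triple, but keeping the \emph{antiholomorphic} involution $\rho=\tau\circ\chi$ in linear form throughout. First I would straighten $\rho$. An antiholomorphic involution is formally (indeed analytically) conjugate to complex conjugation $\kappa:\xi\mapsto\bar\xi$: after the linear change $\xi\mapsto\Lambda^{-1/4}\xi$ (a fixed branch), which turns the linear part of $\rho$ into $\kappa$ (using $|\lambda|=1$), one finishes with the tangent-to-identity map $\Theta=\tfrac12(\id+\kappa\circ\rho)$, which satisfies $\Theta\circ\rho=\kappa\circ\Theta$. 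So we may assume $\rho=\rhonf'=\kappa$, hence $\chi=\tau\circ\kappa$ and $H(\xi)=\xi_1\xi_2+\hot(\xi)$ still, and all remaining data are ``real''. In particular the formal infinitesimal generator $\hat\bX$ of $\phi^{\circ p}=\chi^{\circ2p}$, which is reversed by $\rho$ as well as by $\tau$, now satisfies $\ov{\hat\bX}=-\hat\bX$ together with $\tau^*\hat\bX=-\hat\bX$ and $\hat\bX.H=0$.

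Next I would normalize $\hat\bX$ exactly as in Theorem~\ref{thm:1} (via the vector-field normal form of Theorem~\ref{thm:FNFX1} and the straightening of $H$), but carrying out all steps with $\kappa$-equivariant, i.e. real, conjugacies --- legitimate because the object being normalized, $\hat\bX$, is itself $\kappa$-symmetric. This produces the normal form $\hatbXnf'$, one of the shapes of Theorem~\ref{thm:1}\,(o)--(b); case (c) cannot occur here, since the linear part of $\phi=\chi^{\circ2}$ is $A\ov A$ with $A=D\chi(0)$, whose determinant $|\det A|^2$ is positive, whereas $-\sigma$ has determinant $-1$; equivalently, Lemma~\ref{lemma:chitau} only outputs diagonal $\Lambda$. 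Because $h=\xi_1\xi_2$, $u'=\xi_1^p+\lambda^{p/2}\xi_2^p$ (recall $\lambda^{p/2}=\lambda^{-p/2}=\pm1$) and $\bE=\xi_1\tdd{\xi_1}-\xi_2\tdd{\xi_2}$ all have real coefficients, the identity $\ov{\hatbXnf'}=-\hatbXnf'$ forces $\ov{P'}=P'$, $\ov{\hat\mu}=-\hat\mu$ and $c\in i\R^*$; the value of $c$ is then normalized to $\pm2ip$ by the remaining real scalings $\xi\mapsto r\xi$, $r\in\R^*$, the sign persisting as an invariant since $r^{2s}$, resp. $r^{2s+kp}$, stays positive.

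It remains to identify $\chi$, equivalently $\tau=\chi\circ\kappa$. Write $\chi=\kappa\circ g$ with $g$ holomorphic. Then $\kappa\circ\chi\circ\kappa=\chi^{\circ(-1)}$ becomes $g^{\circ2}=\id$, $\chi^{\circ2}=\phi$ becomes $\bar g\circ g=\phi=\Lambda\exp(\tfrac1p\hatbXnf')$, and $\bar g=\kappa\circ g\circ\kappa=\tau$; so $g$ and $\tau$ are mutually $\kappa$-conjugate holomorphic involutions, each reversing $\phi$, with linear parts $\Lambda^{-1/2}\sigma$ and $\Lambda^{1/2}\sigma$ respectively. A rigidity statement for the reversing involutions of the normal-form map $\phi$ --- they form a single conjugacy class (torsor) inside its centralizer, the ``normalized'' representative being the flow-twisted linear reflection --- pins down $\tau(\xi)=\exp(\tfrac1{2p}\hatbXnf')(\Lambda^{1/2}\sigma\xi)$, i.e. $\hattaunf'$ as in \eqref{eq:SFNF}, and then $\hatchinf'=\hattaunf'\circ\rhonf'$; a direct computation using $(\Lambda^{1/2}\sigma)^2=I$, $\Lambda^*\hatbXnf'=\hatbXnf'$ and $\ov{\hatbXnf'}=-\hatbXnf'$ confirms that this triple satisfies $\hatchinf'^{\circ2}=\Lambda\exp(\tfrac1p\hatbXnf')$ and $\hattaunf'^{\circ2}=\id$. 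Finally, the uniqueness of the normal form up to the listed freedoms, and the centralizers $\Cal Z(\hattaunf',\rhonf')$ in the three cases, follow by transporting the corresponding statements of Theorem~\ref{thm:1} --- the scalar ambiguity $\xi\mapsto\zeta\xi$ surviving only with real $\zeta$, hence reducing to $\xi\mapsto-\xi$ and the induced change $(c,\hat\mu,P')\mapsto((-1)^{kp}c,\hat\mu,(-1)^{kp}P'(-u',h))$ (note $-\id$ fixes $h$ and $\bE$ and sends $u'$ to $(-1)^pu'$) --- and by solving directly the equivariance system $F\circ\hattaunf'=\hattaunf'\circ F$, $\ov F=F$, supplemented by $F^*\hatbXnf'=\hatbXnf'$ in cases (a),(b), using that $h$ and $u'$ generate the relevant ring of invariants.

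The main obstacle, just as in Theorem~\ref{thm:1}, will be Step~2: realizing the vector-field normal form $\kappa$-equivariantly, and making sure that the formal invariant $\hat\mu$ and the polynomial $P'$ are intrinsically defined and faithfully transported through all the reductions; together with the Step~3 rigidity, that there is no residual freedom in $\hattaunf'$ beyond $\xi\mapsto-\xi$ (and $\xi\mapsto\sigma\xi$ in the exceptional small-$p$ cases where $\sigma$ commutes with $\Lambda$). For the \emph{existence} half one may also argue more economically: the $\sigma$-linearized normal form $(\hatchinf,\hattaunf)$ of \eqref{eq:chinormalform1} is already available from Theorem~\ref{thm:1}, Theorem~\ref{thm:antiholomorphicclassification} and Theorem~\ref{thm:normalform}, and it is carried to \eqref{eq:SFNF} by the explicit (generally non-tangent-to-identity) conjugacy $\exp(t(h)\hat\bX)\circ\Lambda^{1/4}$ analyzed in Steps~1--2.
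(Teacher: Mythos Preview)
Your ``more economical'' alternative at the end is exactly the paper's proof: starting from the $\sigma$-linearized normal form of Theorem~\ref{thm:normalform}, the paper applies the explicit change $\hat\Psi:\xi\mapsto\xi'=\Lambda^{1/4}\exp(\tfrac{1}{4p}\hatbXnf)(\xi)$, verifies by direct computation that it sends $\hatrhonf$ to $\rhonf'=\kappa$ and $\hattaunf=\sigma$ to $\hattaunf'(\xi')=\exp(\tfrac{1}{2p}\hatbXnf')(\Lambda^{1/2}\sigma\xi')$ with $\hatbXnf'=(\Lambda^{-1/4})^*\hatbXnf$, and then normalizes $|c|$ by a real dilation. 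The centralizer statement is read off from Theorem~\ref{thm:1} by intersecting with the $\rhonf'$-equivariance condition (only real multipliers survive).

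Your primary route --- linearize $\rho$ first, then re-run the vector-field normalization with real conjugacies --- is a reasonable alternative strategy, but it is not quite ``exactly as in Theorem~\ref{thm:1}''. After linearizing $\rho$, the holomorphic reversing reflection $\tau$ has linear part $\Lambda^{1/2}\sigma$, not $\sigma$; so the whole of Section~\ref{sec:3} (Proposition~\ref{prop:prepared}, Lemma~\ref{lemma:P}, Lemma~\ref{lemma:Q}, Theorem~\ref{thm:FNFX1}) would have to be redone with $(\sigma,\Lambda)$-invariants $(h,u)$ replaced by $(\Lambda^{1/2}\sigma,\Lambda)$-invariants $(h,u')$, and each step checked to be compatible with the extra reality constraint $\ov{\hat\bX}=-\hat\bX$. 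This is plausible but is real work, not a citation. Likewise your Step~3 ``rigidity'' needs the centralizer of $\hatphinf$ in $\hatDiff_{\id}$ (Lemma~\ref{lemma:automorphismofX}), which in the paper is proved only for the $\sigma$-model; you would need the $\Lambda^{1/2}\sigma$-analogue. The paper sidesteps both issues by doing all the structural normalization in the $\sigma$-coordinates, where everything is already established, and only applying one explicit, checkable conjugacy at the end.
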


The associated Moser--Webster triple of involutions $(\hattaunff1',\hattaunff2',\rhonf')=(\hattaunf',\rhonf'\circ\hattaunf'\circ\rhonf',\rhonf')$ takes the form:
\begin{equation}\label{eq:taunf12}
	\hattaunff1'(\xi)=\exp(\tfrac{1}{2p}\hatbXnf')(\Lambda^{\frac12}\sigma\xi),
\qquad \hattaunff2'(\xi)=\sigma\Lambda^{\frac12}\exp(\tfrac{1}{2p}\hatbXnf')(\xi),
\qquad \rhonf'(\xi)=\bar\xi. 
\end{equation}

\begin{remark}
	The variables $h=\xi_1\xi_2$ and $u'=\xi_1^p+\lambda^{\frac{p}{2}}\xi_2^p$ are basic $\Lambda^{\frac12}\sigma$-invariant functions satisfying
	$h\circ\rhonf'(\xi)=\ov{h(\xi)}$, $u'\circ\rhonf'(\xi)=\ov{u'(\xi)}$.
\end{remark}

In the case (a)	of Theorem~\ref{thm:2}, there exist analytic germs that are formally equivalent to the normal form but in general not analytically (Theorem~\ref{thm:divergent}), and there are topological obstructions to convergence.

\GRN
In the case (b) of Theorem~\ref{thm:2}, the model (Definition~\ref{def:model}) associated to the normal form \eqref{eq:chinormalform1}
\[\chimod(\xi)=\sigma\rhomod(\xi),\qquad
\rhomod(\xi)=\exp(-\tfrac{1}{2p}\bXmod)(\ov{\Lambda^{\frac12}\xi}),\qquad \taumod(\xi)=\sigma\xi,\]
is such that
\[ \bXmod=-\rho_0^*\bXmod=-\ov{(\Lambda^{\frac12})\vphantom{\big|}^*\bXmod},\quad\text{where }\ \rho_0(\xi)=\ov{\Lambda^{\frac12}\xi} .\] 
Now in the Theorem~\ref{thm:sectorial} on sectorial normalization of $\phi=\chi^{\circ 2}$ to the model $\phimod=\Lambda\exp(\tfrac1p\bXmod)=(\sigma\rhomod)^{\circ 2}$
by means of a cochain $\{\Phi_{\Omega_{S}^j}\}_{j=1,\ldots,4kp}$,
such cochain also exists that furthermore satisfies
\[\Psi_{\rho_0(\Omega^j_S)}\circ\rho=\rhomod\circ\Psi_{\Omega^j_S},\]
which is equivalent to 
\[\Psi_{\sigma\rho_0(\Omega^j_S)}\circ\chi=\chimod\circ\Psi_{\Omega^j_S},\]
(note that if the Lavaurs domain $\Omega^j_S$ is defined over a sector $S$, then $\rho_0(\Omega^j_S)$ is defined over $\ov S$.)
By Theorem~\ref{thm:antiholomorphicclassification}, the analytic classification is achieved in terms of the same functional modulus as in Theorem~\ref{thm:analytic}. 
\FGRN

Example~\ref{example:EV} below shows that the moduli space  \Grn{in the formal case (b)} is indeed infinite-dimensional.

\begin{example}\label{example:EV}
	Let $z\mapsto f(z)=\lambda^{\frac12}z+\hot(z)$ be any \Grn{holomorphic diffeomorphism of $(\C,0)$ that is reversed by the complex conjugation $z\mapsto\bar z$,}
	i.e. $\bar f(z)=f^{\circ(-1)}(z)$.
	Let 
	\[\psi(\xi)=\left(\lambda^{\frac14}\frac{\bar f(\xi_1)}{f(\xi_2)}\xi_2,\ \lambda^{-\frac14}\frac{f(\xi_2)}{\bar f(\xi_1)}\xi_1 \right).\] 
	We have $(\xi_1\xi_2)\circ\psi=\xi_1\xi_2$ and $\ov\psi(\xi)=\sigma\psi(\sigma\xi)$.
	Therefore, 
	\[\tau_1(\xi)=\sigma\psi\circ\sigma\circ\psi^{\circ(-1)}\circ\sigma(\xi),\quad  
	\tau_2(\xi)=\psi\circ\sigma\circ\psi^{\circ(-1)}(\xi),\quad \rho(\xi)=\ov\xi,\]
	is a Moser--Webster triple, for which the restriction of $\phi=\tau_1\circ\tau_2$ to $\{\xi_2=0\}$ is the diffeomorphism $\phi_0:\xi_1\mapsto f^{\circ 4}(\xi_1)$.
	The space of analytic moduli of such diffeomorphisms $\phi_0$ inside the Birkhoff--\'Ecalle--Voronin moduli space is defined by some symmetry conditions (see \cite{Ahern-Gong2,GR1,Nakai,Trepreau}), nevertheless it is infinite-dimensional.
	Therefore also the analytic moduli space of Moser--Webster triples 
	of formal type (b) of Theorem~\ref{thm:2} (with $s=0$ and $k\in 4\Z_{>0}$) is infinite-dimensional.
\end{example}

\subsection{Non-degenerate CR-singularities of surfaces in $\C^2$}

Let us consider a germ of real analytic surface $M$ in $(\C^2,0)$ 
\begin{equation}\label{eq:M}
	M:\ {z_2=F(z_1,\bar z_1)},\qquad z\in (\C^2,0),
\end{equation}
that is a higher order perturbation of the quadric 
\begin{equation}\label{eq:quadric}
	Q_\gamma:\ \begin{cases} z_2=\gamma^{-1}z_1\bar z_1+z_1^2+\bar z_1^2,\quad&\text{for }\ \gamma\in\,\,]0,\infty],\\
		z_2=z_1\bar z_1,&\text{for }\ \gamma=0.\end{cases} 
\end{equation}
When $\gamma\neq\frac12$ then such surface $M$ is \emph{totally real} outside of the origin: its real tangent space $T_zM\subset\C^2$ has no non-trivial complex subspace, meaning that $T_zM\oplus iT_zM=\C^2$ for $z\neq 0$,
but not at $z=0$, where the tangent $T_0M=iT_0M$ is a complex subspace of $\C^2$.
In another words, $M$ exhibits a \emph{CR-singularity} at the origin.
The problem of interest is that of \emph{formal and analytic classification} of such germs $M\subset(\C^2,0)$ with respect to holomorphic changes of the complex coordinate 
\begin{equation}\label{eq:transformationrule}
	z\mapsto f(z),
\end{equation} 
that preserve the CR singularity.
This problem has a long history going back to the works of E.~Bishop \cite{Bishop} and J.~Moser, S.~Webster \cite{Moser-Webster}. 
\Grn{The type of the quadric \eqref{eq:quadric}} depends on the value of the \emph{Bishop invariant} $\gamma$ -- one commonly distinguishes:
\[\begin{cases}
	\gamma\in[0,\frac{1}{2}[\,: &\textit{elliptic case},\\
	\gamma=\frac{1}{2}: &\textit{parabolic case},\\
	\gamma\in\,\,]\frac{1}{2},\infty]: &\textit{hyperbolic case}.
\end{cases}\]
A hyperbolic case is called \emph{exceptional} if the roots $\lambda,\lambda^{-1}$ of 
\begin{equation}\label{eq:lambda}
	\lambda+\lambda^{-1}=\gamma^{-2}-2 
\end{equation}	
are complex roots of unity.

The basic premise of the seminal work of J.~Moser \& S.~Webster \cite{Moser-Webster} is that for all $\gamma\in\,]0,\infty]$ the moduli space of analytic equivalence classes of surfaces \eqref{eq:M} is isomorphic to the moduli space of \Grn{certain Moser--Webster triples of involutions $(\tau_1,\tau_2,\rho)$ \eqref{eq:intertwining1}.}
To understand this correspondence, we need to complexify the surface.

Let 
\begin{equation}\label{eq:Mcomplex}
	\mathcal M:\ z_2=F(z_1,w_1),\quad w_2=\bar F(w_1,z_1),\qquad (z,w)\in (\C^4,0),
\end{equation}
be the \emph{complexification} of $M$, and let 
\[\rho^{\Cal M}:(z,w)\mapsto (\bar w,\bar z)\]
be the induced antiholomorphic involution acting on $\Cal M$. Then 
\[M=\Cal M\cap\Fix(\rho^{\Cal M}).\]
The transformation rule \eqref{eq:transformationrule} becomes
\[(z,w)\mapsto\big(f(z),\ov f(w)\big),\]
which splits between the two variables $z$ and $w$ and commutes with $\rho^{\Cal M}$. 

\Grn{If $\gamma\neq0$}, then the
\[ \pi_1,\pi_2:\mathcal M\to(\C^2,0),\quad \pi_1:(z,w)\mapsto z,\quad \pi_2:(z,w)\mapsto w, \]
are two-sheeted branched covering maps. 
Associated to them is a pair of holomorphic involutions $\tau_1^{\Cal M},\tau_2^{\Cal M}$ of $\Cal M$, that change the sheet of the projections\footnote{Our naming here of $\tau_1^{\Cal M},\tau_2^{\Cal M}$ is the opposite than in \cite{Moser-Webster}.} 
\[\pi_j\circ\tau_j^{\Cal M}=\pi_j,\quad j=1,2.\] 
They are the \emph{deck transformations} of covering maps, and are intertwined by $\rho^{\Cal M}$ 
\begin{equation*}
	\tau_2^{\Cal M}=\rho^{\Cal M}\circ\tau_1^{\Cal M}\circ\rho^{\Cal M}. 
\end{equation*} 
In the coordinates $(z_1,w_1)$ on $\mathcal M$ the triple of involutions $(\tau_1^{\Cal M},\tau_2^{\Cal M},\rho^{\Cal M})$ is identified 
with a \emph{Moser--Webster triple of involutions} $(\tau_1,\tau_2,\rho)$  of $(\C^2,0)$, of the form
\begin{equation}\label{eq:tau1tau2}
	\tau_1:\left(\begin{smallmatrix} z_1 \\[3pt] w_1\end{smallmatrix}\right)\mapsto
	\left(\begin{smallmatrix} z_1 \\[3pt] -\gamma^{-1}z_1-w_1+ \hot \end{smallmatrix}\right),\qquad 
	\tau_2:\left(\begin{smallmatrix} z_1 \\[3pt] w_1\end{smallmatrix}\right)\mapsto
	\left(\begin{smallmatrix} -z_1-\gamma^{-1}w_1+\hot\\[3pt] w_1 \end{smallmatrix}\right),
\end{equation}		
and $\rho: \left(\begin{smallmatrix} z_1 \\[3pt] w_1\end{smallmatrix}\right)\mapsto 
\left(\begin{smallmatrix} \ov w_1\\[3pt] \ov z_1 \end{smallmatrix}\right)$. \label{page:MW}
The composition
\[ \phi:=\tau_1\circ\tau_2=\chi^{\circ 2},\quad\text{where}\quad \chi=\tau_1\circ\rho, \] 
is a germ of analytic diffeomorphism of $(\C^2,0)$, the linear part of which has eigenvalues $\lambda$ and $\lambda^{-1}$ related to $\gamma$ by \eqref{eq:lambda}.

\GRN
\begin{definition}
A Moser--Webster triple of involutions 	$(\tau_1,\tau_2,\rho)$ consist of a pair of holomorphic reflections $\tau_1,\tau_2\in\Diff(\C^2,0)$ and of an antiholomorphic involution $\rho$ such that
\[\tau_1^{\circ 2}=\tau_2^{\circ 2}=\rho^{\circ 2}=\id,\qquad \tau_2=\rho\circ\tau_1\circ\rho.\]
\end{definition}
\FGRN

\begin{theorem}[Moser, Webster \cite{Moser-Webster}]\label{prop:MW0}
	Two germs of surfaces \eqref{eq:M} with the same Bishop invariant \Grn{$\gamma\neq 0$} are analytically equivalent if and only if their associated Moser--Webster triples of involutions are analytically conjugated.
	Furthermore \Grn{if $\gamma\neq\frac12$}, then there is a bijective correspondence:
	\emph{\small
		\begin{equation*}
			\left\{\!\! \begin{array}{c}\text{Germs of surfaces}\\ (M,0)\ \text{ with}\\ \gamma\in\ ]0,+\infty]\sminus\{\tfrac12\} \end{array}\!\!\right\}_{\!\!\Big/_{\!\!
					\left[\!\!\begin{array}{c}\text{Biholomorphic}\\ \text{equivalence}\end{array}\!\!\right]}}
			\longleftrightarrow\
			\left\{\!\! \begin{array}{c}\text{Moser--Webster}\\ \text{triples}\ (\tau_1,\tau_2,\rho) \\ \text{with }\lambda,\lambda^{-1} \end{array}\!\!\right\}_{\!\!\Big/_{\!\!
					\left[\!\!\begin{array}{c}\text{Conjugation}\\ \text{in } \Diff(\C^2,0)\end{array}\!\!\right]}}
		\end{equation*}	
	}
\end{theorem}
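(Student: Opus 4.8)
The plan is to establish Theorem~\ref{prop:MW0} by tracing carefully the correspondence $M\leftrightarrow\Cal M\leftrightarrow(\tau_1^{\Cal M},\tau_2^{\Cal M},\rho^{\Cal M})$ set up just above the statement, and checking that it is both well-defined and invertible at the level of equivalence classes. First I would verify the forward map: given $(M,0)$ of the form \eqref{eq:M} with $\gamma\neq 0$, the complexification $\Cal M$ of \eqref{eq:Mcomplex} is a smooth $2$-dimensional complex submanifold of $(\C^4,0)$ (the defining equations have independent differentials since $F(z_1,w_1)=\gamma^{-1}z_1w_1+\ldots$ has nonvanishing first-order part in $z_1$), the projections $\pi_1,\pi_2$ are genuinely $2$-to-$1$ branched coverings exactly because $\gamma\neq0$ forces the quadratic part of $F$ to contain a nonzero $w_1^2$ term (here one sees why $\gamma=0$ must be excluded: then $F$ is linear in $w_1$ and $\pi_1$ is a biholomorphism, so no deck involution exists), and hence the deck transformations $\tau_1^{\Cal M},\tau_2^{\Cal M}$ are well-defined holomorphic involutions on $\Cal M$. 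One then reads off from the explicit normal coordinates $(z_1,w_1)$ on $\Cal M$ the linear parts in \eqref{eq:tau1tau2}: $\tau_1$ fixes $z_1$ and sends $w_1\mapsto -\gamma^{-1}z_1-w_1+\hot$, so it is a holomorphic reflection (eigenvalues $1,-1$), similarly for $\tau_2$, and $\rho^{\Cal M}:(z,w)\mapsto(\bar w,\bar z)$ is an antiholomorphic involution with $\tau_2^{\Cal M}=\rho^{\Cal M}\circ\tau_1^{\Cal M}\circ\rho^{\Cal M}$; this is precisely a Moser--Webster triple.

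Next I would check naturality under the transformation rule. A biholomorphism $z\mapsto f(z)$ of $(\C^2,0)$ preserving the CR-singularity conjugates $M$ to $M'$ iff the induced map $(z,w)\mapsto(f(z),\ov f(w))$ on $\C^4$ carries $\Cal M$ to $\Cal M'$; since this map is a product in the two factors it commutes with $\rho^{\Cal M}$ and, being compatible with both projections $\pi_1,\pi_2$, it intertwines the two pairs of deck transformations. Restricting to the coordinate chart $(z_1,w_1)$, we obtain a single biholomorphism $g\in\Diff(\C^2,0)$ conjugating $(\tau_1,\tau_2,\rho)$ to $(\tau_1',\tau_2',\rho')$. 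Conversely — and this is the heart of the ``only if'' direction and the bijectivity claim — I would reconstruct $M$ from a Moser--Webster triple. Given an abstract triple $(\tau_1,\tau_2,\rho)$ of the stated form with prescribed multipliers $\lambda,\lambda^{-1}$, one wants to manufacture a surface. The classical trick (Moser--Webster) is to produce a branched double cover structure: the quotient of $(\C^2,0)$ by the involution $\tau_1$ realizes $\pi_1$, the invariant functions of $\tau_1$ give the target $z$-coordinates, the invariant functions of $\tau_2$ give $w$-coordinates, and the fixed-point curves of $\tau_1$ resp.\ $\tau_2$ become the branch loci; using $\tau_2=\rho\circ\tau_1\circ\rho$ and $\rho^{\circ2}=\id$ one checks the resulting equations can be put in the form \eqref{eq:Mcomplex} with $F$ having the required $\gamma$. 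One then verifies the two constructions are mutually inverse at the level of equivalence classes, and that when $\gamma\neq\tfrac12$ (so that the branch loci of $\pi_1$ and $\pi_2$ meet only at the origin, i.e.\ the CR-singular locus is just $\{0\}$) every Moser--Webster triple with the right $\lambda,\lambda^{-1}$ actually arises, giving the displayed bijection.

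Finally I would address the parameter matching: the eigenvalue relation \eqref{eq:lambda}, $\lambda+\lambda^{-1}=\gamma^{-2}-2$, follows by computing the linear part of $\phi=\tau_1\circ\tau_2$ from \eqref{eq:tau1tau2} — the product of the two reflection matrices $\left(\begin{smallmatrix}1&0\\ -\gamma^{-1}&-1\end{smallmatrix}\right)$ and $\left(\begin{smallmatrix}-1&-\gamma^{-1}\\ 0&1\end{smallmatrix}\right)$ has trace $\gamma^{-2}-2$ — so the Bishop invariant $\gamma$ is determined by, and determines, the conjugacy class of the linear part, ensuring that ``same $\gamma$'' on the left corresponds to ``same $\lambda,\lambda^{-1}$'' on the right. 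I expect the main obstacle to be the reconstruction/surjectivity step: showing that an arbitrary Moser--Webster triple genuinely comes from a surface requires producing the double cover $\Cal M$ intrinsically from the pair of involutions, checking smoothness of the reconstructed $\Cal M$ and that the reality structure from $\rho$ cuts out a bona fide real-analytic surface $M$ of the form \eqref{eq:M}, and — for the bijection when $\gamma\neq\tfrac12$ — arguing that the branch curves of the two projections are transverse (equivalently, that the CR-singular set reduces to the single point $0$), which is exactly the geometric content of $\gamma\neq\tfrac12$. Since this theorem is attributed to Moser--Webster, in practice I would cite \cite{Moser-Webster} for these steps and only spell out the dictionary between their setup and ours, noting our opposite naming convention for $\tau_1^{\Cal M},\tau_2^{\Cal M}$.
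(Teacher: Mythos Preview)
Your outline is correct and follows the same strategy as the paper's proof (given for the slightly more general Proposition~\ref{prop:MW}): both directions hinge on the ring of $\tau_j$-invariant functions. The paper is more explicit in two places you leave vague. First, for the direction ``triples conjugate $\Rightarrow$ surfaces equivalent'' the paper does not go via reconstruction and mutual inverses as you propose; instead it uses directly the observation~\ref{item:hilbertbasis1} that $(z_1,F(z_1,w_1))$ is a Hilbert basis for the $\tau_1$-invariants: if $\Psi$ conjugates $\tau_1$ to $\tau_1'$, then $(z_1'\circ\Psi,\,F'\circ\Psi)$ is another such basis, hence related to $(z_1,F)$ by a diffeomorphism $f$, and similarly $g$ from $\tau_2$-invariants --- this gives the product map $(f(z),g(w))$ immediately. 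Second, for surjectivity the paper makes your ``invariant functions of $\tau_j$ give the target coordinates'' concrete via observation~\ref{item:hilbertbasis2}: for any reflection $\tau$ and generic linear form $s$, the symmetric functions $(s+s\circ\tau,\ s\cdot(s\circ\tau))$ generate the invariants; choosing $s_j=\lambda_j^{1/2}z_1+w_1$ (after linear normalization~\ref{item:hilbertbasis3}) one writes down $\tilde z,\tilde w$ explicitly and checks the resulting surface has the required form. Your plan to cite \cite{Moser-Webster} for the reconstruction is reasonable, but the paper actually carries this out in a page, so you might as well include the explicit Hilbert-basis argument.
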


Analytic classification in the elliptic case with $\gamma\in\,\,]0,\frac{1}{2}[\,$, corresponding to $\lambda\in\R_{>0}\sminus\{1\}$, 
was achieved in the original study by J.~Moser \& S.~Webster \cite{Moser-Webster}. Using the correspondence of Theorem~\ref{prop:MW0}, they showed that in this case the formal classification agrees with the analytic one, and that each such surface is analytically equivalent to  one of the following normal forms
\GRN
\begin{equation}\label{eq:MWnf}
\Mnf:\  z_2=\big(\lambda^{\frac12} e^{\epsilon (-\Re z_2)^s}+\lambda^{-\frac12} e^{-\epsilon (-\Re z_2)^s}\big)z_1\bar z_1+z_1^2+\bar z_1^2,
\end{equation}
with
\[\epsilon=\begin{cases} 0,&s=+\infty,\\ \pm1,& s\geq1,\end{cases}\]
associated to the Moser--Webster tripple
\[\taunff{1}(\xi)=\phinf^{\circ(\frac12)}(\sigma\xi),\qquad \taunff{2}=\sigma\phinf^{\circ(\frac12)}(\xi),\qquad \rhonf(\xi)=\sigma\ov\xi,\]
where
\begin{equation}\label{eq:MWtriplenf}
	\phinf^{\circ\frac12}(\xi)=\left(\begin{smallmatrix} \lambda^{\frac12} e^{\epsilon (\xi_1\xi_2)^s}\!\!\! &0 \\[3pt]0& \!\!\!\lambda^{-\frac12} e^{-\epsilon(\xi_1\xi_2)^s} \end{smallmatrix}\right)\xi,\qquad
\sigma=\left(\begin{smallmatrix} 0\,&1\\[4pt] 1&\,0 \end{smallmatrix}\right).
\end{equation}
The surface \eqref{eq:MWnf} is also analytically equivalent to \cite[p.289]{Moser-Webster}
\begin{equation}\label{eq:MWnf2}
z_2=\begin{cases}
		z_1\bar z_1+\gamma\,(z_1^2+\bar z_1^2),\quad s=+\infty\\[6pt]
		z_1\bar z_1+\big(\gamma\pm(\RE z_2)^s\big)(z_1^2+\bar z_1^2),\quad s\geq 1.\end{cases}
\end{equation}
\FGRN

A complete classification in the limit elliptic case $\gamma=0$ was later obtained by X.~Huang \& W.~Yin \cite{Huang-Yin}, 
who constructed an infinite-dimensional space of formal normal forms, and proved that formally equivalent surfaces are analytically equivalent.

The parabolic case $\gamma=\frac{1}{2}$,  corresponding to $\lambda=1$, \Grn{is slightly different as there might be a whole curve of CR-singularities in $M$, but the Moser--Webster correspondence does nevertheless extend to this case.} Here the classification was described by P.~Ahern \& X.~Gong \cite{Ahern-Gong}
in terms of a functional modulus (cocycle) related to S.M.~Voronin's classification of germs of diffeomorphisms with unipotent linear part \cite{Ilyashenko}.

In the  non-exceptional hyperbolic case, $\gamma>\frac{1}{2}$ with $\lambda\notin e^{\pi i\Q}$, 
the formal classification was also provided by J.~Moser \& S.~Webster \cite{Moser-Webster} with formal normal form \eqref{eq:MWnf} except this time with
\[\epsilon=\begin{cases} 0,&s=+\infty,\\ \pm i,& s\geq1,\end{cases},\qquad \text{and }\ \rho(\xi)=\ov\xi,\]
which is also equivalent to \eqref{eq:MWnf2}.
%
However, the normalizing transformations in this case exhibit a small divisor problem and are in general divergent \cite{Moser-Webster,Gong1,Gong2,Gong3}.
In the case when $M$ is formally equivalent to the quadric $Q_{\gamma}$ and $\lambda$ satisfies a Diophantine condition, or more generally
a Brjuno type condition, then the existence of a convergent normalizing transformation was established by X.~Gong and L.~Stolovitch \cite{Gong1,Gong-Stolovitch_1}.
In the case when $M$ is not formally equivalent to the quadric $Q_{\gamma}$ a KAM-like phenomena arise for all non-exceptional $\lambda$ \cite{Gong2, Stolovitch-Zhao},
where an analytic conjugacy can be achieved between certain real analytic curves in $M$ and the hyperbolas $\{z_2=\omega=\big(\lambda^{\frac12} e^{\pm i\omega^s}+\lambda^{-\frac12} e^{\mp i\omega^s}\big)z_1\bar z_1+z_1^2+\bar z_1^2\}$, $\omega\in(\R,0)$, in \eqref{eq:MWnf} under a Diophantine type condition on the value of $\lambda^{\frac12} e^{\pm i\omega^s}$.

In this paper we are interested in the \emph{\textbf{exceptional hyperbolic case}}, that is when $\gamma\in\,\,]\frac{1}{2},\infty]$ 
and $\lambda,\lambda^{-1}$ \eqref{eq:lambda} are \emph{non-trivial complex roots of unity} of order $p$: 
\begin{equation}\label{eq:p}
	\lambda^{p}=1\quad\text{with}\quad p\geq 2. 
\end{equation}
In this case $\phi^{\circ p}=(\tau_1\circ\tau_2)^{\circ p}=\id+\hot$ and the dynamics of $\phi$ is of resonant parabolic type.
Nothing seems to have been known about normal forms, and formal or analytic classification in this situation.
We will work under an additional assumption that $M$ is \emph{\textbf{holomorphically flat}}, meaning that 
\Grn{it is contained in some Levi flat analytic real hypersurface of $\C^2$. Up to a biholomorphic change of coordinate \eqref{eq:transformationrule}, one can assume that
this hypersurface is  $\{\IM z_2=0\}$, i.e. that}
\begin{equation}\label{eq:flatM}
	M:\ \RE z_2=F(z_1,\bar  z_1),\quad \IM z_2=0.
\end{equation}
This means that $M$ is foliated by the family of real curves $\{\RE z_2=\const\}\cap M$.
The assumption of holomorphic flatness is equivalent to the existence of an \emph{analytic first integral} $H$ for the pair of involutions $(\tau_1,\tau_2)$
\begin{equation}\label{eq:firstintegral}
	H(z_1,w_1)=\gamma^{-1}z_1w_1+z_1^2+w_1^2+\hot,\qquad H\circ\tau_1=H\circ\tau_2=H,
\end{equation} 
which has a Morse point at 0 (Proposition~\ref{prop:flatness}).
In fact, if $M$ is in the form \eqref{eq:flatM} then $H(z_1,w_1):=F(z_1,w_1)=\bar F(w_1,z_1)$ is such first integral.

\Grn{In the elliptic case, every surface $M$ is holomorphically flat. This follows from the holomorphic conjugacy to the Moser--Webster normal form \eqref{eq:MWnf} for $0<\gamma<\frac12$ \cite[Theorem 1]{Moser-Webster}, and for $\gamma=0$ from the work of Huang \& Krantz \cite{Huang-Krantz}.
On the other hand in the hyperbolic case there are surfaces $M$ with any $\frac12<\gamma\leq\infty$ which are not holomorphically flat.}
Examples of such surfaces have been constructed by J.~Moser \& S.~Webster \cite{Moser-Webster}, E.~Bedford \cite{Bedford}, X.~Gong \cite{Gong3}, and others.
Furthermore, it has been known that holomorphic flatness alone is not enough to assure existence of convergent transformation to a formal normal in the hyperbolic case \cite{Gong1}. In fact, X.~Gong \cite[Theorem 1.3]{Gong2}, \cite[Theorem 1.1]{Gong3} and \cite[Theorem~1.2]{Gong23} shows that for each non-exceptional
$\gamma\in\,\,]\frac12,\infty[$ \Grn{and $s\geq 1$} there exists a holomorphically flat surface \eqref{eq:flatM} which is formally but not analytically equivalent to \eqref{eq:MWnf}. 
We will show that this is also true for exceptional $\gamma\in\,\,]\frac12,\infty]$.

\begin{thm}\label{thm:divergence}
	For any $\gamma\in\,\,]\tfrac12,\infty]$ and every $s\geq 1$, there exists a holomorphically flat manifold $M$ that is formally equivalent to the Moser--Webster normal form \eqref{eq:MWnf}, but not analytically.	
\end{thm}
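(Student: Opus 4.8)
The statement covers all $\gamma\in\,]\tfrac12,\infty]$. For non-exceptional $\gamma$, i.e. $\lambda\notin e^{\pi i\Q}$, it is exactly \cite[Theorem 1.3]{Gong2}, \cite[Theorem 1.1]{Gong3} and \cite[Theorem 1.2]{Gong23}, so the plan is to treat the remaining \emph{exceptional} case $\lambda^p=1$, $p\geq2$, and to deduce it from the divergence phenomenon in formal type (a) (Theorem~\ref{thm:divergent}) via the Moser--Webster correspondence (Theorem~\ref{prop:MW0}). Observe first that $\lambda^p=1$, $\lambda\neq1$, forces $\gamma^{-2}=2+\lambda+\lambda^{-1}\in[0,4)$ by \eqref{eq:lambda}, hence $\gamma\in\,]\tfrac12,\infty]$; and conversely every exceptional $\gamma$ in this range comes from such a $\lambda$ (with $\gamma=\infty$ corresponding to $\lambda=-1$, $p=2$), so the two cases together are exhaustive.

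The first step is to identify the Moser--Webster normal form \eqref{eq:MWnf} with $s\geq1$ as an instance of formal type (a). Writing $h=\xi_1\xi_2$, the normal-form triple \eqref{eq:MWtriplenf} satisfies
\[\phi=\taunff{1}\circ\taunff{2}=\phinf^{\circ\frac12}\circ\phinf^{\circ\frac12}=\operatorname{diag}\!\big(\lambda\,e^{2\epsilon h^s},\ \lambda^{-1}e^{-2\epsilon h^s}\big),\]
that is $\phi=\Lambda\exp(\tfrac1p\hatbXnf)$ with $\Lambda=\operatorname{diag}(\lambda,\lambda^{-1})$ and $\hatbXnf=2p\epsilon\,h^s\big(\xi_1\tdd{\xi_1}-\xi_2\tdd{\xi_2}\big)$; since $\epsilon=\pm i\neq0$ and there is no dependence on $u=\xi_1^p+\xi_2^p$ ($k=0$), this is precisely the normal form of Theorem~\ref{thm:1}\,(a) (equivalently Theorem~\ref{thm:2}\,(a)) with $c=\pm2ip$ and the given $s\geq1$. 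Moreover the quadric first integral \eqref{eq:firstintegral} is of Morse type whenever $\gamma\neq\tfrac12$, so it reduces to $\xi_1\xi_2$ after a linear change of coordinates; under the Moser--Webster dictionary this means that ``$M$ is holomorphically flat and formally equivalent to \eqref{eq:MWnf}'' translates into ``the Moser--Webster triple of $M$ is of formal type (a) with these $\lambda$ and $s$''.

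The second step is to build such a triple by Theorem~\ref{thm:divergent}: that result (the topological obstruction to convergence in case (a) of Theorems~\ref{thm:1}--\ref{thm:2}) provides an analytic reversible antiholomorphic pair $(\chi,\tau)$ of formal type (a) with the prescribed linear data $\Lambda=\operatorname{diag}(\lambda,\lambda^{-1})$ and the prescribed $s\geq1$, with the Morse first integral $H=\xi_1\xi_2$ and the antiholomorphic symmetry intact, which is formally — but not analytically, by any element of $\Diff(\C^2,0)$ — conjugate to the type-(a) normal form \eqref{eq:chinormalform1}, i.e. to the triple \eqref{eq:MWtriplenf}. Such a pair defines a genuine Moser--Webster triple $(\tau_1,\tau_2,\rho)=(\tau,\ \tau\circ\chi^{\circ2},\ \tau\circ\chi)$, and since $H\circ\tau=H$ and $H\circ\chi^{\circ2}=H$ imply $H\circ\tau_1=H\circ\tau_2=H$, Theorem~\ref{prop:MW0} realises $(\tau_1,\tau_2,\rho)$ by a germ of surface $M\subset(\C^2,0)$ with Bishop invariant $\gamma$ (by \eqref{eq:lambda}) which, by Proposition~\ref{prop:flatness}, is holomorphically flat. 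Since $\gamma\notin\{0,\tfrac12\}$, Theorem~\ref{prop:MW0} makes analytic equivalence of such surfaces the same as analytic conjugacy of their triples, and the formal classification does the same formally; hence $M$ is formally — but not analytically — equivalent to \eqref{eq:MWnf} (for the appropriate sign of $\epsilon$), which is the assertion.

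I expect the only non-routine point to be the one hidden inside the second step: that Theorem~\ref{thm:divergent}'s example can be produced carrying \emph{simultaneously} the reflection symmetry in $\sigma$ and the antiholomorphic symmetry $\rho_0(\xi)=\ov{\Lambda^{\frac12}\xi}$, with the Morse first integral left exactly equal to $\xi_1\xi_2$ — for only then does the example descend through the Moser--Webster correspondence to a bona fide real surface rather than merely to a holomorphic triple of involutions. This is precisely the reality reduction underlying Theorem~\ref{thm:antiholomorphicclassification}: the topological obstruction of the purely holomorphic type-(a) problem lives in a space carrying a natural conjugation induced by $\rho_0$, whose fixed locus is still nontrivial, so a nonzero symmetric obstruction — i.e. a divergent but formally trivial symmetric perturbation of the model $\phimod=\Lambda\exp(\tfrac1p\bXmod)$, $\bXmod=c\,h^s\big(\xi_1\tdd{\xi_1}-\xi_2\tdd{\xi_2}\big)$ — can be prescribed. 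Everything else (the matching of $s$ and $\gamma$, the Morse property, holomorphic flatness, and the two-way passage through the correspondence) is immediate from the results recalled above.
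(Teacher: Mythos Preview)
Your proposal is correct and follows essentially the paper's approach: the paper states explicitly that Theorem~\ref{thm:divergence} is a corollary of Theorem~\ref{thm:divergent}, via the Moser--Webster correspondence (Proposition~\ref{prop:MW}) and holomorphic flatness (Proposition~\ref{prop:flatness}), exactly as you outline. Two minor remarks: first, your case split is unnecessary, since Theorem~\ref{thm:divergent} is stated for \emph{all} $\lambda$ with $|\lambda|=1$, not just roots of unity, so it already subsumes the non-exceptional hyperbolic case and you need not invoke Gong separately; second, your hedging in the final paragraph is also unnecessary, since the \emph{statement} of Theorem~\ref{thm:divergent} already delivers a genuine Moser--Webster triple $(\tau_1,\tau_2,\rho)$ with the first integral $h=\xi_1\xi_2$ built in (its explicit construction takes $\rho(\xi)=\bar\xi$ and $\tau_1,\tau_2$ preserving $h$), so there is nothing left to verify about the antiholomorphic symmetry or the Morse first integral.
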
	

On the other hand, it has also been known \cite[Theorem 1.1]{Gong1}, and is easy to show, that

\begin{prop}\label{thm:convergence}
	If a manifold $M$ is formally equivalent to the quadric $Q_\gamma$ with exceptional $\gamma\in\,\,]\tfrac12,\infty]$,
	then it is analytically equivalent to it. In particular, it is holomorphically flat. 
\end{prop}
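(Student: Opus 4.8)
The plan is to reduce everything to the observation that, under the hypothesis, the composition $\phi=\tau_1\circ\tau_2$ of the two Moser--Webster deck involutions associated to $M$ is \emph{periodic}, so that the whole Moser--Webster group becomes a finite group of biholomorphic/antiholomorphic germs, which can then be linearized by holomorphic averaging. First I would note that, since $M$ is formally equivalent to $Q_\gamma$ \eqref{eq:quadric}, its Moser--Webster triple $(\tau_1,\tau_2,\rho)$ \eqref{eq:tau1tau2} is formally conjugated to the triple of $Q_\gamma$, which is \emph{linear}. In particular $\phi=\tau_1\circ\tau_2$ is formally conjugated to its linear part, a diagonal matrix with eigenvalues $\lambda,\lambda^{-1}$ which by the exceptional hypothesis satisfy $\lambda^{p}=1$, $p\ge 2$, cf. \eqref{eq:p}. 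Hence $\phi^{\circ p}$ is formally conjugated to the identity; but $\phi^{\circ p}$ is a convergent germ, and an analytic germ whose Taylor series equals $\id$ is equal to $\id$. Therefore $\phi^{\circ p}=\id$ as a germ of biholomorphism of $(\C^2,0)$.

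Next I would exploit the periodicity. The group $\Cal G=\langle\tau_1,\tau_2,\rho\rangle$ is finite: $\langle\tau_1,\tau_2\rangle$ is dihedral of order dividing $2p$ since $(\tau_1\circ\tau_2)^{\circ p}=\id$, and $\rho$ normalizes it with $\rho\circ\tau_j\circ\rho=\tau_{3-j}$, so $|\Cal G|$ divides $4p$. To such a finite group one applies the Bochner averaging trick: the germ $\Psi=\tfrac1{|\Cal G|}\sum_{g\in\Cal G}(Dg(0))^{\circ(-1)}\circ g$ is tangent to the identity and satisfies $\Psi\circ g=Dg(0)\circ\Psi$ for every $g\in\Cal G$. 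The only point deserving a word of care is that $\Psi$ is genuinely \emph{holomorphic} even though $\Cal G$ contains the antiholomorphic element $\rho$ and its conjugates: if $g$ is antiholomorphic, write $g(\xi)=\Theta(\bar\xi)$ with $\Theta$ holomorphic; then $Dg(0):v\mapsto D\Theta(0)\,\bar v$ is $\C$-antilinear, so $(Dg(0))^{\circ(-1)}\circ g$ sends $\xi\mapsto\overline{D\Theta(0)^{-1}\Theta(\bar\xi)}=\bar\Xi(\xi)$ with $\Xi=D\Theta(0)^{-1}\Theta$ holomorphic, i.e. the $\C$-antilinear factor exactly cancels the antiholomorphy. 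Thus $\Psi$ is a sum of holomorphic germs, hence holomorphic, and it conjugates $\Cal G$ to the linear group $\{Dg(0):g\in\Cal G\}$. Composing $\Psi$ with the linear map $D\hat f(0)$ coming from the formal equivalence of the first step — which conjugates the linear parts of $(\tau_1,\tau_2,\rho)$ to those of the Moser--Webster triple of $Q_\gamma$ — we obtain a holomorphic conjugacy of $(\tau_1,\tau_2,\rho)$ to the (linear) Moser--Webster triple of $Q_\gamma$.

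By the Moser--Webster correspondence $\rt{prop:MW0}$, valid since $\gamma>\tfrac12\neq0$, this means $M$ is analytically equivalent to $Q_\gamma$. Finally, $Q_\gamma$ is holomorphically flat: the function $\gamma^{-1}z_1\bar z_1+z_1^2+\bar z_1^2$ is real-valued, so $Q_\gamma$ is contained in the Levi-flat hyperplane $\{\IM z_2=0\}$, and holomorphic flatness is preserved under biholomorphic changes of coordinates, so $M$ is holomorphically flat. (An alternative to the last two paragraphs: once $\phi^{\circ p}=\id$, the Bochner linearization of $\langle\phi,\tau_1\rangle$ produces an analytic Morse first integral for $(\phi,\tau_1)$ by pulling back $\xi_1\xi_2$, so $(\phi,\tau_1)=(\chi^{\circ2},\tau_1)$ satisfies Assumptions~\ref{assumptions}, and since $\chi^{\circ2p}=\phi^{\circ p}=\id$ one is in case (o) of $\rt{thm:2}$, which asserts directly that the normalization converges, the normal form there being precisely $Q_\gamma$, cf. \eqref{eq:MWnf} with $\epsilon=0$.) The whole argument is short; the only non-formal ingredient is the finiteness of the Moser--Webster group forced by $\phi^{\circ p}=\id$, after which the holomorphic Bochner linearization — including its validity for antiholomorphic generators — does all the work.
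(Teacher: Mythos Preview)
Your proof is correct and follows essentially the same approach as the paper, which states only that ``Proposition~\ref{thm:convergence} is then a consequence of the finiteness of the group generated by $(\tau_1,\tau_2,\rho)$ and the linearity of the normal form $(\hattaunff1',\hattaunff2',\rhonf')$.'' Your version supplies the details the paper omits --- in particular the verification that the Bochner average remains holomorphic despite the presence of antiholomorphic elements, and the observation that $\phi^{\circ p}=\id$ follows analytically from its formal vanishing --- and your alternative route via Theorem~\ref{thm:2}\,(o) is exactly how the paper organizes the case $s=+\infty$ (see \eqref{eq:Psichitau}).
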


In view of Theorem~\ref{prop:MW0}, the formal classification of holomorphically flat surfaces $M$ of exceptional hyperbolic 
type is achieved in an implicit way by Theorem~\ref{thm:2}.
In particular, the triple of involutions $(\hattaunff1',\hattaunff2',\rhonf')$ \eqref{eq:taunf12} in formal normal form of the type (o), resp. (a), are associated to the quadric $Q_\gamma$, resp. the surface \eqref{eq:MWnf} with $\epsilon=\pm i$ and $s\geq 1$, see Section~\ref{sec:4oa}. Proposition~\ref{thm:convergence} is then a consequence of the finiteness of the group generated by $(\tau_1,\tau_2,\rho)$ and the linearity of the normal form $(\hattaunff1',\hattaunff2',\rhonf')$. 

\Grn{The formal type (b) of Theorem~\ref{thm:2} corresponds to a whole new formal type of surface.}
In this case we don't provide an explicit formal normal form of the surface.
Instead, we find a \emph{model surface} $\Mmod$ which is a representant of a larger \emph{model class} of surfaces, corresponding to the model class of $(\tau_1,\tau_2,\rho)$.
Theorem~\ref{thm:sectorial} on ``sectorial'' conjugacy between $(\phi,\tau)$ and its model $(\phimod,\taumod)$ has also its analogy
as a ``sectorial'' conjugacy between the Moser--Webster triple  $(\tau_1,\tau_2,\rho)$ and  its model $(\taumodf1',\taumodf2',\rhomod')$,
and can be rephrased directly as a ``sectorial'' equivalence between the complexified surfaces $\Cal M$ and $\CMmod$.

\begin{theorem}[``Sectorial'' equivalence]\label{thm:sectorialM}
Let $M$ be a  germ of a holomorphically flat manifold in $(\C^2,0)$ with an exceptional Bishop invariant $\gamma$,
$\Cal M$ its complexification and $(\tau_1^{\Cal M},\tau_2^{\Cal M},\rho^{\Cal M})$ the associated Moser--Webster triple of involutions acting on $\Cal M$.
Let $p\geq 2$ be the smallest positive integer such that $\lambda^{p}=1$, and let 
\[\Cal D^{\Cal M}=\Fix(\tau_1^{\Cal M})\cup\Fix(\tau_2^{\Cal M})\cup\Fix\big((\tau_1^{\Cal M}\circ\tau_2^{\Cal M})^{\circ p}\big)\] 
be a divisor in $\Cal M$.
Assume that $M$ is not formally equivalent to \eqref{eq:MWnf}.
Then there exist positive reals $\delta_1,\delta_2>0$, and a countable collection of cuspidal sectors\footnote{See Definition~\ref{def:sector}.} 
with vertex at $z_2=0$ covering together the disc $\{|z_2|<\delta_2\}$,
and for each such sector $S$:
\begin{itemize}
	\item there is a family of $4kp$ domains 
	$\big\{\Omega_{S,j}^{\Cal M}\big\}_{j=1,\ldots,4kp}$, covering $\big(\Cal M\sminus\Cal D^{\Cal M}\big)\cap\{|z_1|,|w_1|<\delta_1,\ z_2,w_2\in S\}$, 
	\item and a family of bounded analytic transformations of the form
\[\psi_{\Omega_{S,j}^{\Cal M}}(z,w)=\Big(\big(f_{\pi_1(\Omega_{S,j}^{\Cal M})}(z),\ \varphi(z_2)\big),\ \big(g_{\pi_2(\Omega_{S,j}^{\Cal M})}(w),\ \ov \varphi (w_2)\big)\Big),\]
defined	on the product domains $(z,w)\in \pi_1(\Omega_{S,j}^{\Cal M})\times\pi_2(\Omega_{S,j}^{\Cal M})$,\\
where $z_2\mapsto \varphi(z_2)$ is an analytic diffeomorphism on $\{|z_2|<\delta_2\}$,
\end{itemize}
that map $\Cal M$ to some complexified  model surface $\CMmod$ (see \eqref{eq:Mmodel} in Section~\ref{sec:4b})
 and $M=\Cal M\cap\Fix(\rho^{\Cal M})$ to $\Mmod=\CMmod\cap\Fix(\rho^{\CMmod})$.
\end{theorem}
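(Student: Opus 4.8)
The plan is to deduce Theorem~\ref{thm:sectorialM} directly from Theorem~\ref{thm:sectorial} (and its antiholomorphic refinement discussed in Section~\ref{sec:1-anti}) by translating everything through the Moser--Webster correspondence, using that the complexified surface $\Cal M$ is, in the coordinates $(z_1,w_1)$, completely encoded by the triple $(\tau_1,\tau_2,\rho)$, equivalently by the reversible antiholomorphic pair $(\chi,\tau)=(\tau_1\circ\rho,\tau_1)$ together with the first integral $H(z_1,w_1)=F(z_1,w_1)$ guaranteed by holomorphic flatness (Proposition~\ref{prop:flatness}). First I would record the dictionary: a biholomorphism of the ambient $(\C^2,0)$ preserving the CR-singularity corresponds to a transformation $(z,w)\mapsto(f(z),\ov f(w))$ of $(\C^4,0)$ that splits between the two factors and commutes with $\rho^{\Cal M}$; restricted to $\Cal M$ in the coordinates $(z_1,w_1)$ this is exactly a $\rho$-equivariant conjugacy of the triple $(\tau_1,\tau_2,\rho)$; and since $z_2=F(z_1,w_1)=H(z_1,w_1)$ on $\Cal M$, the first integral $H$ plays the role of the variable ``$h$'' up to the fixed (convergent) prenormalizing change $\hat G(H)=h\circ\hat\Psi$ of Theorem~\ref{thm:1}. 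So ``$h\in S$'' becomes ``$z_2\in S$'' after composing with a fixed analytic diffeomorphism $\varphi$ of the $z_2$-disc, which is where the map $z_2\mapsto\varphi(z_2)$ in the statement comes from.

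Next I would run the machinery: by Proposition~\ref{prop:prenormalphi} bring $(\phi,\tau)=(\chi^{\circ2},\tau)$ into prenormal form (type (b), since $M$ is not formally equivalent to \eqref{eq:MWnf}, ruling out types (o) and (a)), then apply Theorem~\ref{thm:sectorial} to get, for each cuspidal sector $S$ in the $h$-plane, the $(\sigma,\Lambda)$-invariant family of $4kp$ Lavaurs domains $\Omega_S^j$ and the normalizing cochain $\{\Psi_{\Omega_S^j}\}$ conjugating $\phi$ to $\phimod$ with $h\circ\Psi_{\Omega_S^j}=h$. By the refinement explained after Theorem~\ref{thm:2} in Section~\ref{sec:1-anti}, this cochain can be chosen to also intertwine $\rho$ with $\rhomod$, i.e. $\Psi_{\rho_0(\Omega_S^j)}\circ\rho=\rhomod\circ\Psi_{\Omega_S^j}$ where $\rho_0(\xi)=\ov{\Lambda^{1/2}\xi}$; since each $\Psi_{\Omega_S^j}$ fixes the fibers of $h$ and is $\sigma$-equivariant, it has the split product form $\Psi(z_1,w_1)=\big(f_{?}(z_1),\,g_{?}(w_1)\big)$ on the corresponding product domain, with the second component determined by the first via the equivariances. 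Pulling these back to $\Cal M$ through the identification $(z_1,w_1)\leftrightarrow(z,w)$ and adjoining the $z_2$-component $\varphi$ produces precisely the maps $\psi_{\Omega_{S,j}^{\Cal M}}$ in the statement; the divisor $\Cal D^{\Cal M}=\Fix(\tau_1^{\Cal M})\cup\Fix(\tau_2^{\Cal M})\cup\Fix\big((\tau_1^{\Cal M}\circ\tau_2^{\Cal M})^{\circ p}\big)$ corresponds on the $(\C^2,0)$ side exactly to $\Fix(\phi^{\circ p})\cup\Fix(\sigma)\cup\ldots$, which is the complement covered by the $\Omega_S^j$ by Theorem~\ref{thm:sectorial}. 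The model surface $\CMmod$ is then defined (as in \eqref{eq:Mmodel}, Section~\ref{sec:4b}) as the complexification of the surface built from the model triple $(\taumodf1',\taumodf2',\rhomod')$, and $M=\Cal M\cap\Fix(\rho^{\Cal M})$ maps to $\Mmod=\CMmod\cap\Fix(\rho^{\CMmod})$ because the cochain is $\rho$-equivariant.

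The main obstacle is bookkeeping rather than a new analytic difficulty: one must check that the split product structure $(z,w)\mapsto(f(z),\ov f(w))$ is genuinely respected, i.e. that the two projections' data $f_{\pi_1(\Omega_{S,j}^{\Cal M})}$ and $g_{\pi_2(\Omega_{S,j}^{\Cal M})}$ are complex conjugates of each other composed with $\rho$ in the appropriate sense, and that the common $z_2$-coordinate change $\varphi$ (coming from $\hat G$ and the sectorial choice) is single-valued, analytic on the full disc $\{|z_2|<\delta_2\}$, and the same for all domains over a given sector. This requires carefully tracking how the $\sigma$- and $\rho$-equivariances of $\Psi_{\Omega_S^j}$, together with $h\circ\Psi_{\Omega_S^j}=h$, force the claimed product form, and verifying the Moser--Webster identifications of $\Fix$-loci and of the branch divisors are compatible with the indexing $j=1,\ldots,4kp$ and with the outer/inner grouping of the Lavaurs domains. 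One also needs the (standard, since $\gamma\neq0$ in the exceptional hyperbolic case) fact that $\pi_1,\pi_2$ are genuine two-sheeted branched covers so that the domains $\Omega_{S,j}^{\Cal M}$ can be recovered as $\pi_1^{-1}(\cdot)\cap\pi_2^{-1}(\cdot)$ inside the product $\pi_1(\Omega_{S,j}^{\Cal M})\times\pi_2(\Omega_{S,j}^{\Cal M})$ intersected with $\Cal M$. None of these steps introduces new estimates; the analytic content is entirely inherited from Theorem~\ref{thm:sectorial}.
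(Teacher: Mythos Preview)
Your overall strategy matches the paper's exactly: pull the normalizing cochain of Theorem~\ref{thm:sectorial} (with its antiholomorphic refinement) back through the Moser--Webster correspondence to produce the split transformations on~$\Cal M$. The paper's proof is short and does just this.

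There is, however, a genuine gap in the one step you flag as the main obstacle. You assert that ``since each $\Psi_{\Omega_S^j}$ fixes the fibers of $h$ and is $\sigma$-equivariant, it has the split product form $\Psi(z_1,w_1)=(f_?(z_1),g_?(w_1))$''. This is false as a general principle: an $h$-preserving $\sigma$-equivariant map in the $\xi$-coordinates, say $\xi\mapsto(e^{\xi_1-\xi_2}\xi_1,\,e^{\xi_2-\xi_1}\xi_2)$, does \emph{not} become a product map after the linear change~\eqref{eq:A} to $(z_1,w_1)$. The paper's mechanism is different and is the sectorial analogue of the argument in the proof of Proposition~\ref{prop:MW}: away from $\Fix(\tau_1^{\Cal M})$ the projection $\pi_1$ supplies a local $\tau_1$-invariant coordinate $z=(z_1,F(z_1,w_1))$ on $\Cal M$; the relation $\tau_1'\circ\Psi_\Omega=\Psi_{T_1(\Omega)}\circ\tau_1$ together with $\tau_1'$-invariance of $z_1'$ then forces $z_1'\circ\Psi_\Omega$ to descend to a well-defined function $f_{\pi_1(\Omega^{\Cal M})}(z_1,z_2)$ on $\pi_1(\Omega^{\Cal M})$, and likewise $w_1'\circ\Psi_\Omega=g_{\pi_2(\Omega^{\Cal M})}(w_1,w_2)$ via $\tau_2$-equivariance. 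Note that $f$ is a function of $(z_1,z_2)$, not of $z_1$ alone as you wrote; the $z_2$-component $\varphi(z_2)$ arises separately from the relation $F'\circ\Psi_\Omega=\varphi\circ F$ between first integrals. The required $\tau_1$- and $\tau_2$-equivariances do follow from what you have (since $\tau_1=\sigma$ and $\tau_2=\sigma\phi$ after prenormalization), so your ingredients are correct; only the mechanism connecting them to the split form needs to be replaced by the invariant-theoretic one from item~(i) preceding Proposition~\ref{prop:MW}.
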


As a consequence to Theorem~\ref{thm:2}, we also have:

\begin{prop}[Automorphism group]
	The group of formal automorphisms of a holomorphically flat surface $M$ that is not formally equivalent to $Q_\gamma$, with an exceptional Bishop invariant $\gamma$, is either trivial or isomorphic to $\Z_2$.
\end{prop}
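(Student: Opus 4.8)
The plan is to deduce the statement directly from the formal classification in Theorem~\ref{thm:2}, using the Moser--Webster correspondence (Theorem~\ref{prop:MW0}) to identify formal automorphisms of a holomorphically flat surface $M$ with formal centralizers of its associated Moser--Webster triple $(\tau_1,\tau_2,\rho)$. First I would observe that a formal automorphism of $M$ corresponds, via complexification \eqref{eq:Mcomplex}, to a formal transformation $(z_1,w_1)\mapsto(f(z_1),\ov f(w_1))$ of $(\C^2,0)$ that commutes with the triple $(\tau_1,\tau_2,\rho)$, i.e. to an element of the formal centralizer $\Cal Z(\tau_1,\tau_2,\rho)$. Since the group generated by $(\tau_1,\tau_2,\rho)$ contains $\chi=\tau_1\circ\rho$ and $\tau=\tau_1$, and since $\tau_2=\rho\circ\tau_1\circ\rho$ is determined by $\tau_1$ and $\rho$, we have $\Cal Z(\tau_1,\tau_2,\rho)=\Cal Z(\hattaunf',\rhonf')$ after passing to the formal normal form of Theorem~\ref{thm:2}; conjugation by the normalizing transformation is a group isomorphism on the respective centralizers, so it suffices to compute $\Cal Z(\hattaunff1',\hattaunff2',\rhonf')$ for the normal form.

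Next I would invoke the hypothesis that $M$ is \emph{not} formally equivalent to $Q_\gamma$, which by the discussion following Theorem~\ref{thm:2} (and Proposition~\ref{thm:convergence}) rules out the formal type (o), leaving only types (a) and (b). For each of these, Theorem~\ref{thm:2} already records the centralizer group: in case (a) the group $\Cal Z(\hattaunf',\rhonf')=\Cal Z(\hatbXnf',\sigma\Lambda^{\frac12},\rhonf')$ is generated by the involution $\xi\mapsto-\xi$, hence isomorphic to $\Z_2$; in case (b) it is either trivial or generated by $\xi\mapsto-\xi$, hence trivial or $\Z_2$ (and trivial when $kp$ is odd). The only remaining point is to check that the involution $\xi\mapsto-\xi$, when present, actually arises from an automorphism of $M$ of the required split form $(z_1,w_1)\mapsto(f(z_1),\ov f(w_1))$ commuting with $\rhonf'(\xi)=\bar\xi$: since $\xi\mapsto-\xi$ is real and diagonal, it is of this split form with $f(z_1)=-z_1$, and it manifestly commutes with complex conjugation, so it descends to a genuine automorphism of $\Mmod$ (and hence, pulling back by the formal normalization, of $M$). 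This gives the dichotomy: the formal automorphism group of $M$ is $\{\id\}$ or $\Z_2$.

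I would then package these observations: combining the correspondence $\mathrm{Aut}_{\mathrm{formal}}(M)\cong\Cal Z(\tau_1,\tau_2,\rho)\cong\Cal Z(\hattaunff1',\hattaunff2',\rhonf')$ with the explicit centralizers of Theorem~\ref{thm:2}(a),(b) yields the claim. The main obstacle I anticipate is not conceptual but bookkeeping: one must verify carefully that every formal automorphism of $M$ really does lift to a formal map of $\Cal M$ of the split, $\rho$-equivariant form (this uses that $\pi_1,\pi_2$ are branched double covers and that the deck transformations $\tau_1,\tau_2$ are intrinsic), and that the centralizer computed abstractly at the level of $(\hatchinf',\hattaunf')$ is not accidentally larger than the centralizer of the full triple, i.e.\ that commuting with $\hattaunf'$ and $\rhonf'$ is equivalent to commuting with all three involutions $(\hattaunff1',\hattaunff2',\rhonf')$ — which holds because $\hattaunff2'$ is expressed in \eqref{eq:taunf12} purely in terms of $\hattaunff1'=\hattaunf'$ and $\rhonf'$, so no new constraint is imposed. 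Once these identifications are in place the result is immediate from Theorem~\ref{thm:2}.
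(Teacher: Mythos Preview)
Your proposal is correct and follows exactly the route the paper intends: the proposition is stated there simply ``as a consequence to Theorem~\ref{thm:2}'', and you have spelled out precisely those consequences, using the Moser--Webster correspondence (the Corollary on the group of automorphisms in Section~\ref{sec:2}) to identify formal automorphisms of $M$ with the centralizer $\Cal Z(\tau_1,\tau_2,\rho)$, and then reading off that centralizer from the normal-form statements in Theorem~\ref{thm:2}(a),(b). Your extra check that $\xi\mapsto-\xi$ really descends to a split $\rho$-equivariant map is a nice sanity check but not strictly needed for the abstract isomorphism claim, since the correspondence is already bijective.
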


\goodbreak
\newpage

\subsection{Organization of the paper}
\begin{itemize}[wide=0pt, leftmargin=\parindent]

	\item[-] Section~\ref{sec:3}: We obtain a formal classification of singular reversible integrable vector fields (Theorem~\ref{thm:1X}) and of parabolic reversible integrable diffeomorphisms (Theorem~\ref{thm:1}). 
	This is done through a construction of a formal normal form. 
	\item[-] Section~\ref{sec:4}:  We prove Theorem~\ref{thm:2} on formal classification of antiholomorphic parabolic reversible integrable diffeomorphisms. 
	\item[-] Section~\ref{sec:2}: We recall the basics of the Moser--Webster correspondence between singular CR-surfaces $M$ triples of involutions $(\tau_1,\tau_2,\rho)$, and derive the form of model surfaces.
	\item[-] Section~\ref{sec:5}: In \S\ref{sec:5a} we construct an explicit example of divergence in the case (a) $k=0$ (Theorem~\ref{thm:divergent}), a corollary of which is Theorem~\ref{thm:divergence}. In \S\ref{sec:Painleve} we provide a more detailed account of Example~\ref{example:Painleve} on the monodromy of the Sixth Painlev\'e equation. 
	\item[-] Section~\ref{sec:6}: In \S\ref{sec:6.1} we show existence of bounded ``sectorial'' holomorphic transformations to a \emph{model} diffeomeorphism in the formal type (b),(c) $k\geq1$. This is done through the construction of \emph{Fatou coordinates} \Grn{on certain domains, called \emph{Lavaurs domains}} in each level set of the first integral $h$. In these coordinates, the diffeomorphism reads as a translation depending on $h$. \Grn{We make sure that the construction depends well on the level $h$ and extends to the limit $h\to 0$}. 
	
	\Grn{In order to understand the form and topological organization of the Lavaurs domains,
	one needs to understand the dynamics of the model vector field, which is a rational vector field on each level set of $h$.
	On that purpose,  we consider its complex flow when time evolves along real lines in some direction $\theta$. 
	This amount to consider the \emph{real flow} of a family of holomorphic vector fields depending both on the level $h$ and on a parameter of rotation $\theta$. 
	We seek domains on each leaf $\{h=\const\}$ that are \emph{stable} as $(h,\theta)$ varies. 
	This is done in \S\ref{sec:6.2} using theory of the real-time dynamics of rational vector fields on $\CP^1$ which we recall.
	We then provide a precise, albeit a bit technical, construction of the Lavaurs domains and prove that they cover a full neighborhood of $0\in\C^2$ (Theorem~\ref{thm:covering}).}
	
	In \S\ref{sec:6.3} we prove Theorem~\ref{thm:sectorial} (Theorem~\ref{thm:cochain}) and describe the modulus of analytic classification in terms of a bounded cocycle Theorem~\ref{thm:analytic} (Theorem~\ref{thm:analyticclassification}.)
	
	\Grn{In \S\ref{sec:6.3compatibility} we discus certain ``compatibility conditions'' between the classifying cocycles over different sectors in the $h$-space.}
\end{itemize}

\goodbreak
\newpage

\section{Formal invariants for  parabolic integrable reversible diffeomorphisms}  \label{sec:3}

Main goal of this section is to prove Theorem~\ref{thm:1}.

\GRN
\begin{lemma}\label{lemma:phitau}
Let $(\phi,\tau)$ be a pair of a reversible diffeomorphism $\phi$ and its reversing reflection $\tau$, and $H=H\circ\phi=H\circ\tau$ a first integral of Morse type.
There exists an analytic change of coordinates under which $(\phi,\tau)$ and $H$ take the form
\begin{equation}\label{eq:phitau2}
		\phi(\xi)=\Lambda\xi+\hot(\xi),\qquad \tau(\xi)=\sigma\xi, \qquad H(\xi)=\xi_1\xi_2,  
\end{equation}	
with
\begin{equation*}
	\sigma=\left(\begin{smallmatrix}0&1\\[3pt]1&0\end{smallmatrix}\right),
	\qquad 	 \Lambda=\begin{cases}
		\left(\begin{smallmatrix}\lambda & 0 \\[3pt] 0 & \lambda^{-1}\end{smallmatrix}\right),\\[6pt]
		-\sigma.
	\end{cases}
\end{equation*}	
\end{lemma}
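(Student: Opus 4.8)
The plan is to proceed in three stages: first normalize the first integral $H$ by the Morse lemma, then simultaneously linearize the involution $\tau$ while respecting $H$, and finally read off the constraints this puts on the linear part of $\phi$ from the reversibility relation $\tau\circ\phi\circ\tau=\phi^{\circ(-1)}$.

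First I would apply the holomorphic Morse lemma to $H$. Since $H(0)=0$, $DH(0)=0$ and $\det D^2H(0)\neq 0$, there is a local biholomorphism bringing $H$ to the quadratic form $\eta_1\eta_2$ (after a linear change diagonalizing the Hessian into hyperbolic form $\eta_1\eta_2$, which is possible over $\C$). Now, in these coordinates $\tau$ is still an involution with eigenvalues $\{1,-1\}$ preserving $H=\eta_1\eta_2$, and $\phi$ still preserves $H$. Next I would linearize $\tau$: an involution is always analytically conjugate to its linear part (average over the $\Z_2$-action, $\eta\mapsto\frac12(\eta+L^{-1}\tau(\eta))$ where $L=D\tau(0)$, gives a tangent-to-$L$-relative conjugacy). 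The subtlety is to do this while keeping $H=\eta_1\eta_2$; since $\tau$ preserves $\eta_1\eta_2$, the linearization can be chosen $H$-preserving — concretely, because $\eta_1\eta_2\circ\tau=\eta_1\eta_2$, the averaging map commutes with the Morse form, or alternatively one invokes the equivariant Morse lemma for the $\Z_2$-action generated by $\tau$. The linear part $D\tau(0)$ has eigenvalues $1,-1$ and preserves $\eta_1\eta_2$; the eigenspaces are either the coordinate axes (giving $D\tau(0)=\mathrm{diag}(1,-1)$ or its negative, both of which preserve $\eta_1\eta_2$ — wait, $\mathrm{diag}(1,-1)$ does preserve $\eta_1\eta_2$) or interchanged by a symmetry, forcing $D\tau(0)=\sigma=\left(\begin{smallmatrix}0&1\\1&0\end{smallmatrix}\right)$ after a further coordinate swap/scaling. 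One checks that up to a linear change preserving $\eta_1\eta_2$ (the torus action $(\eta_1,\eta_2)\mapsto(t\eta_1,t^{-1}\eta_2)$ together with the swap) we may take $D\tau(0)=\sigma$, and then the $H$-preserving linearization gives $\tau(\xi)=\sigma\xi$, $H(\xi)=\xi_1\xi_2$ exactly.

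Finally I would determine $\Lambda=D\phi(0)$. From $\tau\circ\phi\circ\tau=\phi^{\circ(-1)}$, taking linear parts, $\sigma\Lambda\sigma=\Lambda^{-1}$. Writing $\Lambda$ as a $2\times2$ matrix and using that $\phi$ preserves $H=\xi_1\xi_2$ (so $\det\Lambda=1$ and $\Lambda$ lies in the stabilizer of the quadratic form $\xi_1\xi_2$, i.e. $\Lambda=\mathrm{diag}(\lambda,\lambda^{-1})$ or $\Lambda=\left(\begin{smallmatrix}0&\mu\\\mu^{-1}&0\end{smallmatrix}\right)$), one separates two cases. In the diagonal case $\sigma\Lambda\sigma=\mathrm{diag}(\lambda^{-1},\lambda)=\Lambda^{-1}$ holds automatically. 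In the anti-diagonal case, $\sigma\Lambda\sigma=\left(\begin{smallmatrix}0&\mu^{-1}\\\mu&0\end{smallmatrix}\right)=\Lambda^{-1}$ forces $\mu^2=1$, i.e. $\mu=\pm1$; the parabolicity assumption (minimal $p$ with $\Lambda^p=I$) together with $\phi\neq\tau$ rules out $\mu=1$ (that would make $\Lambda=\sigma=D\tau(0)$), leaving $\Lambda=-\sigma$ with $p=2$. A final rescaling $\xi\mapsto c\,\xi$ (which preserves both $\sigma$ and the form $\xi_1\xi_2$ up to $c^2$, reabsorbed into $H$) and a swap of the two coordinates if necessary puts $\Lambda$ in the stated normal form, with $\lambda$ a primitive $p$-th root of unity in the diagonal case.

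The main obstacle I anticipate is the \emph{simultaneous} normalization in the second stage: ensuring that the linearization of $\tau$ does not spoil the already-achieved Morse normal form $H=\xi_1\xi_2$. This requires either an equivariant Morse lemma (Morse lemma relative to the finite group $\langle\tau\rangle$) or a careful verification that the standard averaging linearization of an $H$-preserving involution can itself be taken $H$-preserving; both are standard but need to be stated cleanly. The constraint on $\Lambda$ is then pure linear algebra and presents no difficulty.
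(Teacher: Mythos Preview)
Your plan is in the right spirit but reverses the paper's order of operations, and the ``main obstacle'' you flag at the end is precisely where the reversal bites. The paper linearizes $\tau$ \emph{first} (via the averaging map $\Psi=\tfrac12(\id+\sigma\tau)$), then applies the equivariant Morse lemma with respect to the now-\emph{linear} action of $\langle\sigma\rangle$ to bring $H$ to $\xi_1\xi_2$. Neither of your proposed fixes for the obstacle works as stated: the averaging map has no reason to preserve $H=\eta_1\eta_2$ (a direct computation shows $\Psi_1\Psi_2=\tfrac14(2\eta_1\eta_2+\eta_1\tau_1+\eta_2\tau_2)$, and the cross terms need not equal $2\eta_1\eta_2$), and the equivariant Morse lemma as stated requires a \emph{linear} group action, which $\tau$ is not yet. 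Swapping the order --- linearize $\tau$, then equivariant Morse --- removes the difficulty entirely.

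Two further slips. First, $\operatorname{diag}(1,-1)$ does \emph{not} preserve $\eta_1\eta_2$ (it sends it to $-\eta_1\eta_2$); in fact this is what forces the linear part of $\tau$ to be off-diagonal, hence conjugate to $\sigma$ by a rescaling preserving $\eta_1\eta_2$. Second, your exclusion of $\Lambda=\sigma$ is incomplete: having $D\phi(0)=D\tau(0)$ does not by itself give $\phi=\tau$. The paper's argument is that $\Lambda=\sigma$ would make $\tau\circ\phi$ an involution tangent to the identity, and any such involution is the identity (since $\tfrac12(\id+\tau\circ\phi)$ conjugates it to $\id$), forcing $\phi=\tau$, which is excluded by hypothesis.
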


\begin{proof}
	Up to a linear change of variables, one can first assume that  $\tau(\xi)=\sigma\xi+\hot(\xi)$.
	The transformation $\Psi=\tfrac12\big(\id+\sigma\tau\big)$ is tangent to the identity (hence invertible) and such that $\Psi\circ\tau=\sigma\Psi$.
	
	So we can now also assume that $\tau(\xi)=\sigma\xi$. 
	If $\phi(\xi)=A\xi+\hot(\xi)$, then $\sigma A\sigma=A^{-1}$, and so if $u$ is an eigenvector of $A$ with eigenvalue $\lambda$, 
	then $\sigma u$ is an eigenvector of $A$ with eigenvalue $\lambda^{-1}$. 
 Hence if $\lambda\neq \pm1$, then the linear transformation $\Psi^{\circ(-1)}(\xi)=(u,\sigma u)\xi$ is $\sigma$-equivariant and such that $\Psi\circ\phi\circ\Psi^{\circ(-1)}=\Lambda\xi+\hot(\xi)$. Then also, up to a multiplicative constant, $H\circ\Psi^{\circ(-1)}=\xi_1\xi_2+\hot(\xi)$.
If $A$ has a double eigenvalue $\lambda=\pm1$, then $A=\pm I$ (the case of non-diagonalizable $A$ is excluded by the assumption on existence of Morse first integral).
Since $H\circ\sigma=H$, up to a multplicative constant, $H(\xi)=(\xi_1+b\xi_2)(\xi_2+b\xi_1)+\hot(\xi)$ with $b^2\neq1$ (since $\det D^2H(0)\neq0$), and we use the
the $\sigma$-equivariant linear transformation $\Psi(\xi)=\begin{pmatrix} 1&b\\ b&1\end{pmatrix}\xi$.
Finally, if $A$ has eigenvalues $\{1,-1\}$, then $A=\pm\sigma$. The case $A=\sigma$ would imply that $\tau_2(\xi):=\tau\circ\phi(\xi)=\xi+\hot(\xi)$ is an involution tangent to the identity, but there is no such involution except the identity itself (because the map $\Psi=\frac12(\id+\tau_2)$ would conjugate it to the identity, $\Psi\circ\tau_2=\id\circ\Psi$). So $A=-\sigma$ and we proceed to normalize the quadratic part of $H$ in the same way as in the case $A=\pm I$.

We finish by means of the $\sigma$-equivariant Morse lemma (Lemma~\ref{lemma:Morse} below with $\Cal G=\langle\sigma\rangle$), which allows to reduce $H(\xi)$ to its quadratic part $\xi_1\xi_2$.
\end{proof}
\FGRN

\begin{lemma}[Equivariant Morse lemma \protect{\cite[chap. 17.3]{AGV1}}]\label{lemma:Morse}~\\
	Let $H:(\C^2,0)\to(\C,0)$ be a formal/analytic germ with a non-degenerate critical point at 0 (Morse point), that is invariant with respect to a linear action of a compact group $\Cal G$ on $\C^2$. Then $H$ is reducible to its quadratic part by a formal/analytic change of variables tangent to identity and commuting with $\Cal G$.	
\end{lemma}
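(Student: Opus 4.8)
The statement to prove is the equivariant Morse lemma (Lemma~\ref{lemma:Morse}), which is cited from Arnold--Gusein-Zade--Varchenko. Since the paper only uses it with $\mathcal G=\langle\sigma\rangle$ a finite (hence compact) group acting linearly on $\C^2$, and the proof method is standard, here is how I would organize a self-contained argument.

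\begin{proof}[Proof sketch]
The plan is to run the standard Moser-type homotopy proof of the Morse lemma while keeping everything $\mathcal G$-equivariant by averaging over the group. Write $H(\xi)=Q(\xi)+R(\xi)$ where $Q$ is the quadratic part (a non-degenerate quadratic form, $\mathcal G$-invariant since $H$ is and the averaging of $H$'s quadratic part over $\mathcal G$ returns $Q$) and $R=\hot(\xi)$ is of order $\geq 3$. Consider the family $H_t(\xi)=Q(\xi)+tR(\xi)$ for $t\in[0,1]$; each $H_t$ is $\mathcal G$-invariant and has a non-degenerate critical point at $0$. We seek a $t$-dependent family of germs of diffeomorphisms $\Phi_t$, tangent to the identity, $\mathcal G$-equivariant, with $\Phi_0=\id$ and $H_t\circ\Phi_t=Q$; then $\Phi_1$ is the desired change of variables. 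Differentiating in $t$ reduces this to solving the homological equation $\langle dH_t,\,v_t\rangle = -R$ for a germ of $t$-dependent vector field $v_t$ vanishing to second order at $0$, and then integrating the non-autonomous flow of $v_t$ from $t=0$ to $t=1$; equivariance of $\Phi_t$ follows if $v_t$ is $\mathcal G$-equivariant (i.e. $\mathcal G$-invariant as a vector field), and the solution can be made $\mathcal G$-equivariant by averaging.

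\textbf{Solving the homological equation.} Since $R$ vanishes to order $\geq 3$, by the "Hadamard/division" lemma one can write $R(\xi)=\sum_{i}\xi_i\,R_i(\xi)$ with each $R_i$ vanishing to order $\geq 2$; more usefully, using that $\partial_1 H_t,\partial_2 H_t$ vanish to order exactly $1$ and their linear parts $\partial_1 Q,\partial_2 Q$ are linearly independent (non-degeneracy of $Q$), the ideal generated by $\partial_1 H_t,\partial_2 H_t$ in $\C\llbracket\xi\rrbracket$ (or the convergent ring) contains the maximal ideal squared, uniformly in $t$. Hence there exist germs $v_t^{(1)},v_t^{(2)}$, depending analytically (polynomially, in fact linearly) on $t$ and vanishing to order $\geq 2$ at $0$, with $v_t^{(1)}\partial_1 H_t + v_t^{(2)}\partial_2 H_t = -R$. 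Replacing $v_t=(v_t^{(1)},v_t^{(2)})$ by its $\mathcal G$-average $\frac{1}{|\mathcal G|}\sum_{g\in\mathcal G} g^*v_t$ (which still solves the equation because $H_t$ and $R$ are $\mathcal G$-invariant) makes $v_t$ $\mathcal G$-equivariant without destroying the order-$\geq 2$ vanishing.

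\textbf{Integration.} Because $v_t$ vanishes to second order at $0$, its non-autonomous flow $\Phi_t$ exists on a common neighborhood of $0$ for all $t\in[0,1]$, fixes $0$, and has $D\Phi_t(0)=\id$ (the linearization of the flow of an order-$\geq 2$ field is trivial), so $\Phi_t\in\Diff_{\id}$; equivariance of $v_t$ gives $\mathcal G$-equivariance of $\Phi_t$. By construction $\frac{d}{dt}\big(H_t\circ\Phi_t\big)=\big(\tfrac{\partial H_t}{\partial t}+ \langle dH_t,v_t\rangle\big)\circ\Phi_t = (R - R)\circ\Phi_t=0$, so $H_1\circ\Phi_1=H_0\circ\Phi_0=Q$. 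Finally a linear $\mathcal G$-equivariant change diagonalizes $Q$; in the paper's situation $\mathcal G=\langle\sigma\rangle$ and $Q$ is already $\xi_1\xi_2$, so no further step is needed. The formal case is identical, working in $\C\llbracket\xi\rrbracket$ and noting the formal flow of an order-$\geq 2$ vector field is well defined term by term.

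\textbf{Main obstacle.} The only genuine technical point is the uniform-in-$t$ division step: producing $v_t$ with the right vanishing order, depending nicely on $t$, and (in the analytic case) convergent on a fixed neighborhood. This is handled by the Morse-lemma-type argument that the Jacobian ideal of $H_t$ contains $\mathfrak m^2$ with bounds uniform in $t$ (e.g. via the implicit function theorem straightening $(\partial_1 H_t,\partial_2 H_t)$, or via a parametrized version of the formal Morse lemma followed by Artin approximation / direct estimates); the equivariance is then a cost-free averaging. Everything else is the routine homotopy method.
\end{proof}
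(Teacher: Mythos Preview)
The paper does not give its own proof of this lemma; it is stated with a reference to \cite[chap.~17.3]{AGV1}. Your argument is the standard Moser homotopy proof (which is indeed the method in the cited reference) and is correct in outline and in the main details: the homological equation is set up correctly, the solvability via non-degeneracy of $Q$ is the right mechanism, and averaging over $\mathcal G$ preserves the equation while enforcing equivariance.

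Two small remarks. First, the lemma as stated allows a general compact group $\mathcal G$, whereas your averaging formula $\frac{1}{|\mathcal G|}\sum_{g\in\mathcal G}$ is written for finite groups; the fix is just to replace the sum by integration against Haar measure, and you already note that only the finite case $\mathcal G=\langle\sigma\rangle$ is used in the paper. Second, the sentence about a further linear diagonalization of $Q$ is superfluous: the lemma asks only to reduce $H$ to its quadratic part $Q$, not to normalize $Q$ itself, so $\Phi_1$ already does the job.
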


\subsection{Formal infinitesimal generator}\label{sec:3infgen}

Let us recall  that any formal diffeomorphism $\hat F(\xi)\in\hatDiff_{\id}(\C^2,0)$ 
has a unique infinitesimal generator, that is
a formal vector field $\hat\bX(\xi)$, vanishing at the origin and with a vanishing linear part, 
whose formal time-1-map is $\hat F(\xi)$ \cite[Theorem~3.17]{IlYa}.

The formal time-1-flow of $\hat\bX$ is a formal diffeomorphism of $(\C^2,0)$ defined by
\begin{equation}\label{eq:exp-def} \exp(\hat\bX)(\xi):=\bexp(\hat\bX).\xi=\begin{pmatrix}
		\bexp(\hat\bX).\xi_1\\\bexp(\hat\bX).\xi_2 \end{pmatrix} 
\end{equation} 
where
\begin{equation}\label{eq:exp-form}
 \bexp(\hat\bX).\hat f=\sum_{n=0}^{+\infty}\tfrac{1}{n!}\hat\bX^{.n}.\hat f,
\end{equation}
which is a well defined formal power series as the order of the $n$-times iterated derivative $\hat\bX^{.n}.\hat f$ growths with $n$:
\begin{equation}\label{eq:lowerorderbd}
		\ord_0 \hat\bX^{.n}.\hat f\geq \ord_0\hat f+n(\ord_0\hat\bX.\xi-1).
\end{equation} 
It satisfies 
\begin{equation}\label{eq:compo-exp} \hat f\circ\exp(\hat\bX)(\xi)=\bexp(\hat\bX).\hat f(\xi),\end{equation} for any formal germ $\hat f\in\C\llbracket\xi\rrbracket$,
and
\[ \tdd{t}\left[\hat f\circ\exp(t\hat\bX)\right]=\hat\bX.\hat f\big|_{\xi=\exp(t\hat\bX)}, \]
\cite[Chapter~3]{IlYa}.

The zeros of $\hat\bX$ are the same as the fixed points of $\hat F(\xi)=\exp(\hat\bX)(\xi)$, more precisely 
there exists a formal matrix valued function $\hat U(\xi)=\id+\hot$ such that 
$\hat\bX.\xi=\hat U(\xi)\cdot\big(\hat F(\xi)-\xi\big)$.
If one identifies formal vector fields with derivation operators on the space of formal series $\C\llbracket\xi\rrbracket$, then 
the infinitesimal generator
$\hat\bX(\xi)$ of $\hat F(\xi)$ is the same as the operator
\[ \hat\bX.\hat f=\log(\id+\Theta)=\sum_{n=1}^{+\infty}(-1)^{n-1}\tfrac{1}{n}\Theta^n(\hat f), \quad\text{where }\ \Theta(\hat f):=\hat f\circ\hat F-\hat f, \quad \hat f\in\C\llbracket\xi\rrbracket, \]
and $\Theta^n(\hat f)=\sum_{j=0}^n(-1)^{n-j}\binom{n}{j}\hat f\circ\hat F^j$ (note that $\Theta$ is linear as operator on $\C\llbracket\xi\rrbracket$).
From this it follows that the map $\hat F(\xi)$ and the vector field $\hat\bX(\xi)$ have the same formal first integrals: 
$\hat f\circ\hat F-\hat f=0$ if and only if $\hat\bX.\hat f=0$ for $\hat f\in\C\llbracket\xi\rrbracket$,\footnote{This would no longer be true for more general formal trans-series first integrals containing exponential terms.} 
and that if
$\hat F(\xi)$ commutes with another formal diffeomorphism $\hat G(\xi)$: $\hat F\circ\hat G=\hat G\circ\hat F$,
then $\hat G$ preserves its infinitesimal generator, $\hat\bX=\hat G^*\hat\bX$ 
(indeed one gets $\Theta^n(\hat G)=\left[\sum_{j=0}^n(-1)^{n-j}\binom{n}{j}F^{\circ j}\right]\circ\hat G=\Theta^n(\id)\circ\hat G$ from which 
$\hat\bX.\hat G=\hat\bX.\xi\big|_{\xi=\hat G}$).

\begin{remark}\label{remark:gevrey}
	If $F$ is analytic, and $\nu(F)+1=\min_{i=1,2}\ord_0 (\xi_i\circ F-\xi_i)$ is the order of tangency of $F$ to $\id$. Then \cite{BMLH} show that
	the formal infinitesimal generator $\hat\bX$ is of Gevrey order $\frac{1}{\nu(F)}$, meaning that if 
	$\hat\bX.\xi_i=\sum_{|\bm m|\geq 0}f_{i\bm m}\xi^{\bm m}$, then 
	$\sum_{|\bm m|\geq 0}\frac{f_{i\bm m}}{\Gamma(1+\frac{|\bm m|}{\nu(F)})}\xi^{\bm m}$ is convergent. 
\end{remark}

\subsection{Poincar\'e--Dulac formal normal form}

In the following let $\hat\bX$  be the formal infinitesimal generator of $\phi^{\circ p}$.

\begin{lemma}[Jordan decomposition]\label{lemma:decomposition}
There exists a formal decomposition 
\[ \phi=\hat\phi_s\circ\hat\phi_u=\hat\phi_u\circ\hat\phi_s,\qquad\text{with}\quad \hat\phi_s^{\circ p}=\id,\quad \hat\phi_u^{\circ p}=\phi^{\circ p}, \]
where $\hat\phi_u(\xi)=\xi+\hot\in\hatDiff_{\id}(\C^2,0)$ is the ``unipotent'' part, and 
$\hat\phi_s(\xi)=\Lambda\xi+\hot$ is the ``semisimple'' part.
If $\phi$ is reversible by $\sigma$, then so are $\hat\phi_u$ and $\hat\phi_s$.
\end{lemma}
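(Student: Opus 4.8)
The plan is to construct the multiplicative Jordan--Chevalley decomposition of $\phi$ at the level of formal power series, exploiting that the linear part $\Lambda$ already satisfies $\Lambda^p = I$ (so it is the semisimple part of the linear germ, with trivial unipotent part). First I would pass to the $p$-th iterate: since $\phi^{\circ p} = \id + \hot \in \hatDiff_{\id}(\C^2,0)$, by Section~\ref{sec:3infgen} it has a unique formal infinitesimal generator $\hat\bX$, and I define $\hat\phi_u := \exp(\tfrac{1}{p}\hat\bX)$, the formal time-$\tfrac1p$ flow of $\hat\bX$. Then $\hat\phi_u(\xi) = \xi + \hot$ lies in $\hatDiff_{\id}(\C^2,0)$ and satisfies $\hat\phi_u^{\circ p} = \exp(\hat\bX) = \phi^{\circ p}$. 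I then set $\hat\phi_s := \phi\circ\hat\phi_u^{\circ(-1)}$; by construction $\hat\phi_s(\xi) = \Lambda\xi + \hot$.

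The key step is to verify the two defining properties: $\hat\phi_s^{\circ p} = \id$, and that $\hat\phi_s$ and $\hat\phi_u$ commute. For commutativity, the crucial observation is that $\phi$ commutes with $\phi^{\circ p}$, hence (by the fact recalled in Section~\ref{sec:3infgen} that a diffeomorphism commuting with $\hat F$ preserves its infinitesimal generator) $\phi$ preserves $\hat\bX$: $\phi^*\hat\bX = \hat\bX$. This gives $\phi\circ\exp(t\hat\bX) = \exp(t\hat\bX)\circ\phi$ for all $t$, in particular $\phi\circ\hat\phi_u = \hat\phi_u\circ\phi$, and therefore $\hat\phi_s = \phi\circ\hat\phi_u^{\circ(-1)}$ commutes with $\hat\phi_u$ as well; consequently $\hat\phi_s\circ\hat\phi_u = \hat\phi_u\circ\hat\phi_s = \phi$. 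For the order of $\hat\phi_s$: using commutativity, $\hat\phi_s^{\circ p} = \phi^{\circ p}\circ\hat\phi_u^{\circ(-p)} = \phi^{\circ p}\circ(\phi^{\circ p})^{\circ(-1)} = \id$.

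For the reversibility claim, suppose $\sigma\circ\phi\circ\sigma = \phi^{\circ(-1)}$. Then $\sigma\circ\phi^{\circ p}\circ\sigma = \phi^{\circ(-p)}$, so $\sigma$ conjugates $\phi^{\circ p}$ to its inverse; since the infinitesimal generator of $(\phi^{\circ p})^{\circ(-1)}$ is $-\hat\bX$, uniqueness of the infinitesimal generator forces $\sigma^*\hat\bX = -\hat\bX$, i.e. $\sigma\circ\exp(t\hat\bX)\circ\sigma = \exp(-t\hat\bX)$. Applying this with $t = \tfrac1p$ gives $\sigma\circ\hat\phi_u\circ\sigma = \hat\phi_u^{\circ(-1)} = \hat\phi_u^{\circ(-1)}$, which is the reversibility of $\hat\phi_u$ by $\sigma$. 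Then $\sigma\circ\hat\phi_s\circ\sigma = \sigma\circ\phi\circ\hat\phi_u^{\circ(-1)}\circ\sigma = (\sigma\phi\sigma)(\sigma\hat\phi_u^{\circ(-1)}\sigma) = \phi^{\circ(-1)}\circ\hat\phi_u = \hat\phi_u\circ\phi^{\circ(-1)} = (\phi\circ\hat\phi_u^{\circ(-1)})^{\circ(-1)} = \hat\phi_s^{\circ(-1)}$, using that $\hat\phi_u$ commutes with $\phi$ and hence with $\phi^{\circ(-1)}$. So $\hat\phi_s$ is reversible by $\sigma$ as well. The main obstacle is essentially bookkeeping with these flow identities; no small-divisor or convergence issue arises since everything is formal, and uniqueness of the infinitesimal generator does all the heavy lifting. (Uniqueness of the decomposition itself, if needed, follows the same way: any commuting factorization $\phi = \psi_s\circ\psi_u$ with $\psi_s^{\circ p}=\id$, $\psi_u$ tangent to $\id$, forces $\psi_u^{\circ p} = \phi^{\circ p}$, hence $\psi_u = \exp(\tfrac1p\hat\bX)$ by uniqueness of $p$-th roots among tangent-to-identity formal diffeomorphisms via the generator.)
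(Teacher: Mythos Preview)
Your proof is correct and follows essentially the same approach as the paper: define $\hat\phi_u=\exp(\tfrac1p\hat\bX)$ via the infinitesimal generator of $\phi^{\circ p}$, set $\hat\phi_s=\phi\circ\hat\phi_u^{\circ(-1)}$, and deduce commutativity from $\phi^*\hat\bX=\hat\bX$, then reversibility from $\sigma^*\hat\bX=-\hat\bX$. You spell out the verification of $\hat\phi_s^{\circ p}=\id$ and the reversibility of $\hat\phi_s$ in more detail than the paper does, and your closing remark on uniqueness is a nice addition not present in the paper's proof.
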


\begin{proof}
Let $\hat\bX$  be the formal infinitesimal generator of $\phi^{\circ p}=\exp(\hat\bX)(\xi)$, and let $\hat\phi_u(\xi):=\exp(\frac1p\hat\bX)(\xi)$. Then 
$\hat\phi_u^{\circ p}=\phi^{\circ p}$, and $\hat\phi_u\circ\phi=\phi\circ\hat\phi_u$ since $\hat\bX=\phi^*\hat\bX$ (because $\phi$ commutes with $\phi^{\circ p}$). 
Let $\hat\phi_s=\phi\circ\hat\phi_u^{\circ(-1)}=\hat\phi_u^{\circ(-1)}\circ\phi$, then $\hat\phi_s(\xi)=\Lambda\xi+\hot$ and $\hat\phi_s^{\circ p}=\id$.
If $\hat\phi$ is reversed by $\sigma$, then so is $\hat\bX$, and therefore also $\hat\phi_u$ and $\hat\phi_s$.
\end{proof}

\begin{lemma}\label{lemma:phivsphip}
	Let $\hat\phi_s:\xi\mapsto\Lambda\xi+\hot$ be a formal/analytic diffeomorphism such that $\hat\phi_s^{\circ p}=\id$.
	Then $\hat\phi_s$ is formally/analytically linearizable. If furthermore  $\hat\phi_s$ is reversed by  $\sigma$, then there exists a  $\sigma$-equivariant formal/analytic transformation linearizing $\hat\phi_s$.
\end{lemma}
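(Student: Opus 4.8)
The plan is to linearize $\hat\phi_s$ by the classical averaging trick for finite-order maps, then make it equivariant. First I would write $\hat\phi_s(\xi)=\Lambda\xi+\hat\psi(\xi)$ with $\hat\psi$ of order $\geq 2$, and set
\[
\hat\Phi=\frac{1}{p}\sum_{j=0}^{p-1}\Lambda^{-j}\circ\hat\phi_s^{\circ j}.
\]
Using $\hat\phi_s^{\circ p}=\id$ one checks directly that $\hat\Phi\circ\hat\phi_s=\Lambda\circ\hat\Phi$, so $\hat\Phi$ conjugates $\hat\phi_s$ to its linear part $\Lambda$, provided $\hat\Phi$ is invertible. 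Invertibility is immediate: the linear part of $\hat\Phi$ is $\frac1p\sum_{j}\Lambda^{-j}\Lambda^{j}=\id$, so $\hat\Phi$ is tangent to the identity. In the analytic case the sum is manifestly a finite sum of analytic (or formal) maps, hence analytic (resp.\ formal), so there is no convergence issue. This already proves the first assertion.

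For the $\sigma$-equivariant refinement, suppose $\sigma\circ\hat\phi_s\circ\sigma=\hat\phi_s^{\circ(-1)}$. The above $\hat\Phi$ need not be $\sigma$-equivariant, but I would symmetrize it: let $\hat\Phi'$ be obtained from $\hat\Phi$ by averaging over the group action. Concretely, since $\sigma\Lambda\sigma=\Lambda^{-1}$ (recall $\sigma$ swaps coordinates and $\Lambda=\operatorname{diag}(\lambda,\lambda^{-1})$ or $\Lambda=-\sigma$, both anticommuting appropriately with $\sigma$), the group $\Cal G$ generated by $\hat\phi_s$ and $\sigma$ is finite (of order $2p$ or a divisor thereof), acting formally/analytically on $(\C^2,0)$, and its linear part is the finite group $\langle\Lambda,\sigma\rangle$. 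The map $\hat\phi_s$ together with $\sigma$ thus generates a finite group, and linearizing this whole action simultaneously is a standard application of Bochner's linearization theorem (or its formal analogue): the averaged map
\[
\hat\Phi'(\xi)=\frac{1}{|\Cal G|}\sum_{g\in\Cal G}\,(\text{linear part of }g)^{-1}\circ g(\xi)
\]
is tangent to the identity and simultaneously conjugates every $g\in\Cal G$ to its linear part; in particular it conjugates $\hat\phi_s$ to $\Lambda$ and is $\sigma$-equivariant because it intertwines $\sigma$ with its own linear part $\sigma$.

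The main point to verify carefully — and the only place any real content lives — is that $\Cal G$ is genuinely \emph{finite} as a group of germs, not merely that its linearization is finite. This follows because, by the first part, $\hat\phi_s$ is conjugate to the finite-order linear map $\Lambda$, so in suitable coordinates $\Cal G$ sits inside the finite linear group $\langle\Lambda,\sigma\rangle$; alternatively one argues directly that $\hat\phi_s$ has order $p$ and $\sigma\hat\phi_s\sigma=\hat\phi_s^{-1}$ force the abstract group to be a quotient of the dihedral-type group of order $2p$. Once finiteness is in hand, Bochner averaging applies verbatim in the analytic category, and the formal category is handled identically since all sums are finite and operations on formal power series preserve formality. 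I would close by noting that the resulting $\hat\Phi'$ is tangent to the identity, hence its inverse is too, giving the desired equivariant linearizing coordinate change.
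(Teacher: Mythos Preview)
Your first part is identical to the paper's proof: the same averaging map $\hat\Phi=\tfrac1p\sum_{j=0}^{p-1}\Lambda^{-j}\hat\phi_s^{\circ j}$, tangent to identity, satisfying $\hat\Phi\circ\hat\phi_s=\Lambda\hat\Phi$.

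For the $\sigma$-equivariant refinement your route diverges from the paper, and you work harder than necessary. You assert that ``the above $\hat\Phi$ need not be $\sigma$-equivariant'' and then invoke Bochner averaging over the full group $\Cal G=\langle\hat\phi_s,\sigma\rangle$. In fact the paper observes that $\hat\Phi$ is \emph{already} $\sigma$-equivariant: using $\sigma\hat\phi_s\sigma=\hat\phi_s^{\circ(-1)}$ and $\sigma\Lambda\sigma=\Lambda^{-1}$ one computes
\[
\sigma\hat\Phi\circ\sigma=\tfrac1p\sum_{j=0}^{p-1}\Lambda^{j}\hat\phi_s^{\circ(-j)}=\tfrac1p\sum_{l=1}^{p}\Lambda^{-l}\hat\phi_s^{\circ l}=\hat\Phi,
\]
the last equality by reindexing $l=p-j$ and using $\Lambda^{p}=I$, $\hat\phi_s^{\circ p}=\id$. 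So no second averaging is needed. Your Bochner argument is correct and self-contained, but if you actually carry out the average over $\Cal G$ you will find that the terms $L_g^{-1}\circ g$ for $g=\sigma\hat\phi_s^{\circ j}$ equal $\Lambda^{-j}\hat\phi_s^{\circ j}$ as well, so your $\hat\Phi'$ coincides with $\hat\Phi$---the detour leads back to the same map.
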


\begin{proof}
	Let  
	\[ \hat\Psi:=\tfrac1p\left(\id+\Lambda^{-1}\hat\phi_s+\ldots+\Lambda^{1-p}\hat\phi_s^{\circ(p-1)}\right)=\id+\hot, \]
	then $\hat\Psi\circ\hat\phi_s=\Lambda\hat\Psi$, i.e. $\hat\Psi$ is a linearizing transformation for $\hat\phi_s$. 
	If $\sigma\hat\phi_s\circ\sigma=\hat\phi_s^{\circ(-1)}$, then also $\sigma\Lambda\sigma=\Lambda^{-1}$,
	and one sees that $\sigma\hat\Psi\circ\sigma=\hat\Psi$ since $\Lambda^{p}=\id=\hat\phi_s^{\circ p}=\id$.
\end{proof}

\begin{prop}[Poincar\'e--Dulac formal normal form]\label{prop:PoincareDulac}
There exists a formal $\sigma$-equivariant change of coordinates 
$\hat\Psi(\xi)\in\hatDiff_{\id}(\C^2,0)$, $\sigma\hat\Psi=\hat\Psi\circ\sigma$,
that transforms the map $\phi(\xi)$ to a Poincar\'e--Dulac normal form
\[ \hat\phi:\xi\mapsto\hat\phi(\xi)=\Lambda\xi+\hot,\quad \hat\phi\circ\Lambda=\Lambda\hat\phi, \]
which is reversible by $\sigma$, $\sigma\hat\phi\circ\sigma=\hat\phi^{\circ(-1)}$,
and which brings the first integral to $h(\xi)=\xi_1\xi_2$.
\end{prop}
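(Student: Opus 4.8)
\textbf{Proof plan for Proposition~\ref{prop:PoincareDulac}.}

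The plan is to combine the formal Jordan decomposition $\phi=\hat\phi_s\circ\hat\phi_u$ of Lemma~\ref{lemma:decomposition} with the equivariant linearization of its semisimple part (Lemma~\ref{lemma:phivsphip}) and then invoke an equivariant Poincar\'e--Dulac theorem to deal with the unipotent part. First I would apply Lemma~\ref{lemma:phivsphip} to $\hat\phi_s$: since $\hat\phi_s^{\circ p}=\id$ and $\hat\phi_s$ is reversed by $\sigma$, there is a $\sigma$-equivariant formal transformation $\hat\Psi_1\in\hatDiff_{\id}(\C^2,0)$ with $\hat\Psi_1\circ\hat\phi_s=\Lambda\,\hat\Psi_1$. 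Conjugating $\phi$ by $\hat\Psi_1$ we may thus assume from now on that $\hat\phi_s=\Lambda$ is linear, while $\phi=\Lambda\circ\hat\phi_u$ with $\hat\phi_u$ tangent to the identity, $\sigma$-reversible, and commuting with $\Lambda$; equivalently the infinitesimal generator $\hat\bX$ of $\phi^{\circ p}$ satisfies $\Lambda^*\hat\bX=\hat\bX$ and $\sigma^*\hat\bX=-\hat\bX$, and $\hat\phi_u=\exp(\tfrac1p\hat\bX)$.

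Next I would normalize $\hat\phi_u$ (equivalently $\hat\bX$) by the Poincar\'e--Dulac procedure while keeping track of the two symmetries. The key point is that the group generated by the linear maps $\Lambda$ and $\sigma$ is finite (in the diagonal case $\langle\Lambda,\sigma\rangle$ is a finite dihedral-type group since $\Lambda^p=I$; in the case $\Lambda=-\sigma$ it is even smaller), hence compact, and the standard homological-equation argument can be carried out in the ring of $\langle\Lambda,\sigma\rangle$-equivariant formal vector fields: at each degree one averages the homological operator over this finite group, so that all successive conjugating maps can be chosen $\sigma$-equivariant and commuting with $\Lambda$. This produces a $\sigma$-equivariant $\hat\Psi_2\in\hatDiff_{\id}(\C^2,0)$ commuting with $\Lambda$ after which $\hat\bX$ (and hence $\hat\phi_u$, and hence $\hat\phi=\Lambda\circ\hat\phi_u$) commutes with $\Lambda$. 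Composing, $\hat\Psi:=\hat\Psi_2\circ\hat\Psi_1$ is $\sigma$-equivariant, tangent to the identity, conjugates $\phi$ to $\hat\phi$ with $\hat\phi\circ\Lambda=\Lambda\circ\hat\phi$, and reversibility $\sigma\hat\phi\circ\sigma=\hat\phi^{\circ(-1)}$ is automatically inherited from that of $\phi$ because $\hat\Psi$ is $\sigma$-equivariant.

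Finally I would adjust $\hat\Psi$ so that it also straightens the first integral. Set $\hat H:=H\circ\hat\Psi^{\circ(-1)}$; this is a formal first integral of both $\hat\phi$ and $\sigma$, with a nondegenerate critical point at $0$ and, after the reduction above, quadratic part proportional to $\xi_1\xi_2$ (the linear parts of $\hat\phi$ and $\sigma$ are $\Lambda$ and $\sigma$, whose common quadratic invariant is $\xi_1\xi_2$ up to scalar; in the double-eigenvalue situations one first rescales as in the proof of Lemma~\ref{lemma:phitau}). Applying the equivariant Morse Lemma~\ref{lemma:Morse} with $\Cal G=\langle\sigma\rangle$ (and noting, as in Lemma~\ref{lemma:phitau}, that the resulting Morse chart can be taken tangent to the identity and commuting with $\sigma$) gives a $\sigma$-equivariant tangent-to-identity map $\hat\Psi_3$ with $\hat H\circ\hat\Psi_3=\xi_1\xi_2$; since $\hat\Psi_3$ preserves $\hat H$ and is $\sigma$-equivariant, one checks it still conjugates $\hat\phi$ to a Poincar\'e--Dulac form commuting with $\Lambda$ (the diagonal, resp.\ $-\sigma$, linear part is unchanged, and $\Lambda$-equivariance of the nonlinear terms follows because any tangent-to-identity change preserving $\xi_1\xi_2$ lies in the relevant normalizing group — alternatively one reorders, performing the Morse normalization \emph{before} the Poincar\'e--Dulac step as in Lemma~\ref{lemma:phitau}). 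Replacing $\hat\Psi$ by $\hat\Psi_3\circ\hat\Psi$ finishes the proof. The main obstacle is bookkeeping: ensuring that every single conjugation used — the linearization of $\hat\phi_s$, the homological-equation iterations, and the Morse reduction — can be realized simultaneously by a map that is tangent to the identity, commutes with $\sigma$ (so reversibility is preserved), and commutes with $\Lambda$ (so the Poincar\'e--Dulac shape is preserved); this is exactly where the finiteness of $\langle\Lambda,\sigma\rangle$ and the averaging trick are essential.
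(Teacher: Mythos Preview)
Your overall scheme matches the paper's, but there are two places where it slips.

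First, your second step is redundant. Once you have linearized the semisimple part so that $\hat\phi_s=\Lambda$, the Jordan decomposition already gives $\hat\phi_u\circ\Lambda=\Lambda\circ\hat\phi_u$, and therefore $\hat\phi=\Lambda\circ\hat\phi_u$ itself commutes with $\Lambda$. That \emph{is} the Poincar\'e--Dulac normal form; no further homological-equation iteration is needed. (You even observe that $\hat\phi_u$ commutes with $\Lambda$, but then proceed to normalize it anyway.)

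Second, and more seriously, your treatment of the first integral has a real gap. The transformed integral $\hat H=H\circ\hat\Psi^{\circ(-1)}$ is $\sigma$-invariant and a first integral of $\hat\phi$, but there is no reason for it to be $\Lambda$-invariant: from $\hat H\circ\hat\phi=\hat H$ and $\hat\phi\circ\Lambda=\Lambda\circ\hat\phi$ you cannot conclude $\hat H\circ\Lambda=\hat H$. Consequently, applying the equivariant Morse lemma with $\Cal G=\langle\sigma\rangle$ only produces a $\sigma$-equivariant $\hat\Psi_3$, and such a map will in general \emph{not} commute with $\Lambda$, so it destroys the Poincar\'e--Dulac form. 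Your claim that ``any tangent-to-identity change preserving $\xi_1\xi_2$ lies in the relevant normalizing group'' is false (e.g.\ $\xi\mapsto\big(\xi_1(1+\xi_1),\,\xi_2/(1+\xi_1)\big)$ preserves $h$ but does not commute with a diagonal $\Lambda$ for $p>1$), and in any case $\hat\Psi_3$ does not preserve $\xi_1\xi_2$ --- it sends $\hat H$ to $\xi_1\xi_2$. The ``reordering'' alternative does not help either: doing Morse before linearizing $\hat\phi_s$ means the linearization step will perturb the first integral again. The paper's fix is simple and clean: since $\hat\phi$ commutes with $\Lambda$, each $\hat H\circ\Lambda^j$ is again a first integral, so replace $\hat H$ by its $\Lambda$-average $\tfrac1p\sum_{j=0}^{p-1}\hat H\circ\Lambda^j$, which is now $(\sigma,\Lambda)$-invariant with the same quadratic part $\xi_1\xi_2$, and then apply the equivariant Morse lemma with $\Cal G=\langle\sigma,\Lambda\rangle$. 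The resulting $\hat\Psi_3$ is $(\sigma,\Lambda)$-equivariant, so it preserves both reversibility and the Poincar\'e--Dulac form.
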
 

\begin{proof}
Let $\hat\phi=\hat\phi_s\circ\hat\phi_u$ be the Jordan decomposition of $\phi$ of Lemma~\ref{lemma:decomposition}.
After a formal $\sigma$-equivariant change of coordinates of Lemma~\ref{lemma:phivsphip}, one can assume that $\hat\phi_s=\Lambda$, which means that 
$\hat\phi$ is in a Poincar\'e--Dulac normal form.

Let $\hat h(\xi)=\xi_1\xi_2+\hot$ be a formal $\sigma$-invariant first integral for $\hat\phi$.
Then up to replacing $\hat h(\xi)$ by $\tfrac1p\big(\hat h+\hat h\circ\Lambda+\ldots+\hat h\circ\Lambda^{p-1}\big)$
we may assume that $\hat h$ is $(\sigma,\Lambda)$-invariant.
Hence by the equivariant Morse lemma (Lemma~\ref{lemma:Morse}), there exists a $(\sigma,\Lambda)$-equivariant change of variables that brings $\hat h(\xi)$ to $\xi_1\xi_2$, while keeping $\hat\phi$ in a Poincar\'e--Dulac normal form. 
\end{proof}

\Grn{If $\hat\phi$ is in the Poincar\'e--Dulac normal form, then the formal infinitesimal generator $\hat\bX(\xi)$ of $\hat\phi^{\circ p}(\xi)=\exp(\hat\bX)(\xi)$ is
such that $\Lambda^*\hat{\bX}=\hat{\bX}=-\sigma^*\hat{\bX}$ and $\hat{\bX}.h=0$.
The problem is that the fixed point divisor $\Fix(\hat\phi^{\circ p})=\{\hat{\bX}=0\}$ is a priory purely formal,
but we need to ensure that it is analytic.} 
To achieve this, we will follow a different route:
\begin{itemize}[wide=0pt, leftmargin=\parindent]
	\item First we will analytically pre-normalize the original germ $\phi$ by repeating the above Poincar\'e--Dulac reduction ``modulo $(\phi^{\circ p}-\id)$''
	in order to obtain $(\sigma,\Lambda)$-invariant analytic divisor $\Fix(\hat\phi^{\circ p})$ (Proposition~\ref{prop:prepared}),
	after which the formal infinitesimal generator $\hat\bX(\xi)$ can be written in a prepared form \eqref{eq:prenormalX}.
	\item Then in a second step we will ``formally remove'' as many terms as possible in the formal infinitesimal generator $\hat\bX(\xi)$ of $\hat\phi^{\circ p}$ while preserving the analytic set $\Fix(\hat\phi^{\circ p})$ (Proposition~\ref{prop:FNFX}).
	\item And finally, we analytically deform the divisor $\Fix(\hat\phi^{\circ p})$ in order to further simplify the form of $\hat{\bX}(\xi)$ (Theorem~\ref{thm:FNFX1}).
\end{itemize}

\subsection{Prepared form}

Let $\phi(\xi)$ be as in Lemma~\ref{lemma:phitau}, reversed by $\sigma$ and with a first integral $h(\xi)=\xi_1\xi_2$.
In particular, $(\xi_1\circ\phi)\cdot(\xi_2\circ\phi)=\xi_1\xi_2$ implies that $\phi$ is of the form
\begin{equation}\label{eq:phi3}
	\phi(\xi)=\Lambda\left(\begin{smallmatrix} \xi_1\cdot(1+\hot)\\[3pt] \xi_2\cdot(1+\hot) \end{smallmatrix}\right).
\end{equation}
Assume that $\phi^{\circ p}\neq\id$, i.e. $\hat\bX\neq 0$ (otherwise $\phi$ would be 
analytically linearizable by Lemma~\ref{lemma:phivsphip}).
Denote
$\Cal I$ the ideal of $\C\{\xi\}=\Cal O(\C^2,0)$ generated by 
\begin{equation}\label{eq:f}
f:=\mfrac{\xi_1\circ\phi^{\circ p}-\xi_1}{\xi_1},
\end{equation}
which is the same as the ideal
generated by 
\begin{equation}\label{eq:f2}
\mfrac{\xi_2\circ\phi^{\circ p}-\xi_2}{\xi_2}=-\mfrac{\xi_1\circ\phi^{\circ p}-\xi_1}{\xi_1\circ\phi^{\circ p}}=-\mfrac{f}{1+f}.
\end{equation}
Let $\hat{\Cal I}$ denote the corresponding formal ideal in $\C\llbracket \xi\rrbracket$. 
We will denote $\xi\Cal I=\begin{pmatrix} \xi_1\Cal I \\ \xi_2\Cal I \end{pmatrix}$, the $\C\{\xi\}$-submodule of $\left(\C\{\xi\}\right)^2$,
and  $\xi\hat{\Cal I}$  its formalization.

\begin{lemma}\label{lemma:I}
The ideals $\Cal I$, $\hat{\Cal I}$ are invariant by composition with $\sigma$ and $\phi$.
\end{lemma}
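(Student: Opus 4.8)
The plan is to establish the three claimed invariances of $\Cal I$ (and hence of $\hat{\Cal I}$) directly from the definition $f = \frac{\xi_1\circ\phi^{\circ p}-\xi_1}{\xi_1}$, using the two facts already in hand: that $\phi$ has the form \eqref{eq:phi3} so that $h=\xi_1\xi_2$ is preserved, and that $\phi$ is reversed by $\sigma$. Since everything in the formal case follows from the analytic case by passing to Taylor expansions, I would phrase all identities at the level of $\C\{\xi\}$ and remark once that they formalize.

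First, invariance under $\sigma$. Writing $\phi^{\circ p}=\id+\hot$ and using \eqref{eq:f2}, which says $\frac{\xi_2\circ\phi^{\circ p}-\xi_2}{\xi_2}=-\frac{f}{1+f}$, I note that applying $\sigma$ (which swaps $\xi_1\leftrightarrow\xi_2$) and using $\sigma\circ\phi\circ\sigma=\phi^{\circ(-1)}$, hence $\sigma\circ\phi^{\circ p}\circ\sigma=\phi^{\circ(-p)}$, gives $f\circ\sigma = \frac{\xi_1\circ\phi^{\circ(-p)}-\xi_1}{\xi_1}$. But the generator of $\Cal I$ can equally be taken to be $\frac{\xi_1\circ\phi^{\circ(-p)}-\xi_1}{\xi_1}$, because $\phi^{\circ(-p)}$ is the inverse of $\phi^{\circ p}$ and the first-order deformation ideal of a tangent-to-identity map agrees with that of its inverse — concretely $\frac{\xi_1\circ\phi^{\circ(-p)}-\xi_1}{\xi_1} = -\big(\frac{\xi_1\circ\phi^{\circ p}-\xi_1}{\xi_1}\big)\circ\phi^{\circ(-p)} = -\frac{f\circ\phi^{\circ(-p)}}{1}$, and since $\phi^{\circ(-p)}=\id+\hot$, composition with $\phi^{\circ(-p)}$ is a unit-preserving automorphism of $\C\{\xi\}$ fixing the maximal ideal, so it maps $\Cal I$ to $\Cal I$. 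Thus $f\circ\sigma\in\Cal I$ and symmetrically $\Cal I\circ\sigma\subseteq\Cal I$; applying $\sigma$ twice gives equality.

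Second, invariance under $\phi$. Here I would compute $f\circ\phi = \frac{\xi_1\circ\phi^{\circ(p+1)}-\xi_1\circ\phi}{\xi_1\circ\phi}$ and rewrite $\xi_1\circ\phi^{\circ(p+1)}-\xi_1\circ\phi = (\xi_1\circ\phi^{\circ p}-\xi_1)\circ\phi$, so that $f\circ\phi = \frac{(\xi_1 f)\circ\phi}{\xi_1\circ\phi} = \frac{(\xi_1\circ\phi)\,(f\circ\phi)}{\xi_1\circ\phi}$ — tautological, so instead I use $\phi^{\circ p}$ commutes with $\phi$: $\xi_1\circ\phi^{\circ p}\circ\phi = \xi_1\circ\phi\circ\phi^{\circ p}$, giving $(\xi_1 f)\circ\phi \cdot \text{(unit)} $. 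The cleanest route: since $\phi$ is an automorphism of $\C\{\xi\}$ fixing $0$, $\Cal I\circ\phi$ is again an ideal, and it is generated by $f\circ\phi$; from $\phi\circ\phi^{\circ p} = \phi^{\circ p}\circ\phi$ one derives $(\xi_1\circ\phi)\cdot(f\circ\phi) = (\xi_1 f)\circ\phi = \xi_1\circ\phi^{\circ p}\circ\phi - \xi_1\circ\phi = \big(\xi_1\circ\phi^{\circ p} - \xi_1\big)\circ\phi$, hence $f\circ\phi = \frac{\xi_1}{\xi_1\circ\phi}\cdot\big(f\circ\phi\big)$... the point is simply that $f\circ\phi$ equals $f$ times a unit after multiplying by the unit $\frac{\xi_1}{\xi_1\circ\phi}$ (which is a unit by \eqref{eq:phi3}), so $f\circ\phi$ and $f$ generate the same ideal. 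I expect the main obstacle to be organizing these unit-factor bookkeeping computations cleanly — none of them is deep, but one must be careful that $\frac{\xi_1}{\xi_1\circ\phi}$ and $\frac{\xi_1\circ\phi}{\xi_1}$ are genuine units in $\C\{\xi\}$ (true by \eqref{eq:phi3}) so that dividing by them is legitimate. Finally, the formal statement follows because all these identities are between convergent power series and remain valid after passing to $\C\llbracket\xi\rrbracket$, and the generators of $\hat{\Cal I}$ are the formalizations of the generators of $\Cal I$.

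\begin{proof}[Proof sketch to be expanded]
We argue in $\C\{\xi\}$; the formal statement follows by taking Taylor expansions. By \eqref{eq:phi3}, $\frac{\xi_j}{\xi_j\circ\phi}$ is a unit in $\C\{\xi\}$ for $j=1,2$. Since $\phi^{\circ p}$ commutes with $\phi$, composing $\xi_1\circ\phi^{\circ p}-\xi_1 = \xi_1 f$ with $\phi$ gives $(\xi_1\circ\phi)\,(f\circ\phi) = \big(\xi_1\circ\phi^{\circ p}-\xi_1\big)\circ\phi$, and composing with $\phi^{\circ p}$ and using \eqref{eq:f2} one checks $f\circ\phi$ differs from $f$ by a unit; hence $\Cal I\circ\phi = \Cal I$. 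For $\sigma$, the relation $\sigma\circ\phi^{\circ p}\circ\sigma = \phi^{\circ(-p)}$ shows that $f\circ\sigma$ generates the deformation ideal of $\phi^{\circ(-p)}$, which coincides with that of $\phi^{\circ p}$ because $\phi^{\circ(-p)}=\id+\hot$ and composition with $\phi^{\circ(-p)}$ is an automorphism of $\C\{\xi\}$ preserving the maximal ideal; hence $\Cal I\circ\sigma = \Cal I$. The same identities, read in $\C\llbracket\xi\rrbracket$, give the invariance of $\hat{\Cal I}$.
\end{proof}
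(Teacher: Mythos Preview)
Your strategy matches the paper's: reduce $\sigma$-invariance to $\phi^{\circ(-p)}$-invariance via reversibility, and handle $\phi$-invariance by showing $f\circ\phi$ is a unit multiple of $f$. But two steps in your sketch are not actually arguments.

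First, the claim that ``composition with $\phi^{\circ(-p)}$ is a unit-preserving automorphism of $\C\{\xi\}$ fixing the maximal ideal, so it maps $\Cal I$ to $\Cal I$'' is a non sequitur. An automorphism tangent to the identity need not preserve a given principal ideal: e.g.\ $(x,y)\mapsto(x+y^2,y)$ does not preserve $(x)$. What makes $\Cal I\circ\phi^{\circ p}=\Cal I$ true here is the specific fact that $\phi^{\circ p}-\id\in\xi\Cal I$ (this is exactly what \eqref{eq:f} and \eqref{eq:f2} say), so that by Taylor expansion $f\circ\phi^{\circ p}-f\in\Cal I$ with the difference vanishing at $0$, hence $f\circ\phi^{\circ p}=f\cdot(\text{unit})$. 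The paper states this first and uses it for both the $\sigma$- and the $\phi$-step; you invoke it without proving it.

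Second, your $\phi$-invariance computation is circular. You correctly write $f\circ\phi=\dfrac{(\xi_1 f)\circ\phi}{\xi_1\circ\phi}$, but then the displayed line $f\circ\phi=\dfrac{\xi_1}{\xi_1\circ\phi}\cdot(f\circ\phi)$ is a tautology (and false unless the unit is $1$); nothing here shows $f\circ\phi\in\Cal I$. The paper breaks the circularity by writing $\xi_1\circ\phi=\lambda\xi_1(1+g)$ with $g(0)=0$ and computing
\[
f\circ\phi=\frac{(1+f)(1+g\circ\phi^{\circ p})}{1+g}-1=\frac{f(1+g\circ\phi^{\circ p})+(g\circ\phi^{\circ p}-g)}{1+g},
\]
then using the already-established $\Cal I\circ\phi^{\circ p}=\Cal I$ to get $g\circ\phi^{\circ p}-g=f\cdot O(\xi)$, whence $f\circ\phi=f\cdot(\text{unit})$. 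This gives $\Cal I\circ\phi\subseteq\Cal I$, and the reverse inclusion follows from the chain $\Cal I=\Cal I\circ\phi^{\circ p}\subseteq\cdots\subseteq\Cal I\circ\phi\subseteq\Cal I$. Your sketch is missing exactly this computation (or any substitute for it).

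A minor slip: $f\circ\sigma$ is $\dfrac{\xi_2\circ\phi^{\circ(-p)}-\xi_2}{\xi_2}$, not the $\xi_1$-version you wrote; this is harmless for the ideal by \eqref{eq:f2}, but worth getting right.
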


\begin{proof}
Let $f$ be the generator \eqref{eq:f} of $\Cal I$. 
Then $\phi^{\circ p}=\xi+\left(\begin{smallmatrix}\xi_1f \\ -\xi_2\tfrac{f}{1+f}\end{smallmatrix}\right)$,
and using the Taylor expansion we see that $f\circ\phi^{\circ p}$ is also a generator of $\Cal I$, therefore $\Cal I\circ \phi^{\circ p}=\Cal I$.
Using \eqref{eq:f2} we have 
\[
f\circ\sigma= \mfrac{\xi_2\circ\phi^{\circ(-p)}-\xi_2}{\xi_2}=-\left(\mfrac{\xi_2\circ\phi^{\circ p}-\xi_2}{\xi_2}\right)\circ \phi^{\circ (-p)}\cdot\mfrac{\xi_2\circ\phi^{\circ(-p)}}{\xi_2}=\mfrac{f}{1+f}\circ\phi^{\circ(-p)}\cdot\mfrac{\xi_2\circ\phi^{\circ(-p)}}{\xi_2},
\]
and since $\mfrac{\xi_2\circ\phi^{\circ p}}{\xi_2}=\mfrac{1}{1+f}$, then 
\begin{equation}\label{eq:fsigma}
f\circ\sigma=\mfrac{f}{1+f}\circ\phi^{\circ(-p)}\cdot(1+f)\circ \phi^{\circ(-p)}=f\circ\phi^{\circ(-p)}.
\end{equation}
Since $\Cal I\circ \phi^{\circ p}= \Cal I$, we also have $\Cal I\circ \phi^{\circ(-p)}= \Cal I$, and hence $\Cal I\circ\sigma=\Cal I$.

\Grn{Assuming $\Lambda=\left(\begin{smallmatrix}\lambda&0\\0&\lambda^{-1}\end{smallmatrix}\right)$,} 
write $\xi_1\circ\phi=\lambda\xi_1\cdot(1+g(\xi))$ with $g(0)=0$, then
\begin{equation}\label{eq:fphi2}
\begin{aligned}
f\circ\phi&=\mfrac{(\xi_1\circ\phi^{\circ p})\cdot(1+g\circ\phi^{\circ p})}{\xi_1\cdot(1+g)}-1=
\mfrac{(1+f)\cdot(1+g\circ\phi^{\circ p})}{1+g}-1\\
&=\mfrac{f\cdot(1+g\circ\phi^{\circ p})+g\circ\phi^{\circ p}-g}{1+g}=f\cdot(1+O(\xi)) \in\Cal I,
\end{aligned}
\end{equation}
since $g\circ\phi^{\circ p}-g=f\cdot O(\xi)\in\Cal I$ using Taylor expansion.
Hence $\Cal I\circ\phi\subseteq\Cal I$, which also means that
$\Cal I=\Cal I\circ\phi^{\circ p} \subseteq \Cal I\circ\phi^{\circ(p-1)}\subseteq\cdots\subseteq \Cal I\circ\phi \subseteq \Cal I$, and hence $\Cal I\circ\phi=\Cal I$.

\Grn{The case when $\Lambda=-\sigma$ is similar, but this time
\begin{equation}\label{eq:fphi3}
		f\circ\phi=f\cdot(-1+O(\xi)) \in\Cal I.
\end{equation}	
}
\end{proof}

\begin{proposition}[Prepared form of $\phi$]\label{prop:prepared}
There exists an analytic change of coordinates $\Psi(\xi)\in\Diff_{\id}(\C^2,0)$ preserving $h=\xi_1\xi_2$ and commuting with $\sigma$,
such that in the new coordinate
\begin{equation}\label{eq:phiprepared}
 \phi(\xi)=\Lambda\xi\mod \Lambda\xi\Cal I, 
 \end{equation}
is linear modulo the ideal $\Cal I$, which now becomes $(\sigma,\Lambda)$-invariant: $f\in\Cal I\Leftrightarrow f\circ\Lambda\in\Cal I \Leftrightarrow f\circ\sigma\in\Cal I$.
\end{proposition}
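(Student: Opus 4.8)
The idea is to carry out the Poincar\'e--Dulac reduction for the semisimple part of $\phi$ \emph{modulo $\Cal I$}, using only transformations that respect the extra structure. A vector field annihilates $h=\xi_1\xi_2$ exactly when it is a multiple of $\bE:=\xi_1\tdd{\xi_1}-\xi_2\tdd{\xi_2}$, and $\sigma^*\bE=-\bE$; hence the tangent-to-identity germs that preserve $h$ and commute with $\sigma$ are precisely the time-one flows $\exp(a\bE)$ of vector fields $a(\xi)\bE$ with $a=\hot$ and $a\circ\sigma=-a$, and these are the only transformations I will use. Writing $\phi$ as in \eqref{eq:phi3}, so $\xi_1\circ\phi=\lambda\xi_1(1+g_1)$ with $g_1=\hot$ and $g_2$ determined by $(1+g_1)(1+g_2)=1$ (the analogue holds when $\Lambda=-\sigma$), the goal becomes exactly $g_1\in\Cal I$, since then $g_2=-g_1/(1+g_1)\in\Cal I$ as well.

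\emph{Step 1 (non-resonant terms).} Working degree by degree, at degree $e$ I conjugate $\phi$ by $\Psi_e=\exp(a_e\bE)$ with $a_e$ homogeneous of degree $e$; the leading effect on the degree-$e$ part $g_1^{(e)}$ is $g_1^{(e)}\mapsto g_1^{(e)}+(a_e\circ\Lambda-a_e)$, so a suitable $a_e$ kills every monomial $\xi^{\bm m}$ of $g_1^{(e)}$ with $\xi^{\bm m}\circ\Lambda\neq\xi^{\bm m}$. That this is possible within the class $a_e\circ\sigma=-a_e$ follows from the reversibility identity $g_1\circ\sigma=g_1\circ\phi^{\circ(-1)}$ (itself a consequence of $\sigma\circ\phi\circ\sigma=\phi^{\circ(-1)}$ together with $h\circ\phi=h$), whose leading part $g_1^{(e)}\circ\sigma=g_1^{(e)}\circ\Lambda^{-1}$ is exactly the symmetry needed to make the prescribed coefficients of $a_e$ consistent. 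The crucial point is that there are \emph{no small divisors}: since $\lambda$ is a primitive $p$-th root of unity, the numbers $\lambda^{m_1-m_2}-1$ (resp.\ $-2$, when $\Lambda=-\sigma$) by which one divides are uniformly bounded away from $0$, so a standard majorant estimate shows that the infinite composition $\Psi=\dots\circ\Psi_2\circ\Psi_1$ converges to an analytic germ in $\Diff_{\id}(\C^2,0)$. After Step 1, $\phi$ commutes with $\Lambda$ and $g_1$ is \emph{resonant}: $g_1\circ\Lambda=g_1$.

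\emph{Step 2 (the resonant part lies in $\Cal I$ automatically).} Write $\phi=\Lambda\circ\theta$ with $\theta(\xi)=\bigl(\xi_1(1+g_1),\xi_2(1+g_2)\bigr)$; since $\phi$ commutes with $\Lambda$, so does $\theta$, hence $\phi^{\circ j}=\Lambda^j\circ\theta^{\circ j}$. An easy induction (using $g_2\in(g_1)$ and that $\theta$ preserves the coordinate axes) gives $\theta^{\circ j}(\xi)-\xi\in\xi\cdot(g_1)$ componentwise, so $\phi^{\circ j}(\xi)-\Lambda^j\xi\in\xi\cdot(g_1)$; combined with $g_1\circ\Lambda^j=g_1$, the mean-value formula for $g_1\circ\phi^{\circ j}-g_1=g_1\circ(\Lambda^j\circ\theta^{\circ j})-g_1\circ\Lambda^j$ shows $g_1\circ\phi^{\circ j}-g_1\in(g_1)$, of order $\geq2\ord g_1$. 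Inserting this into $1+f=\prod_{j=0}^{p-1}(1+g_1\circ\phi^{\circ j})$ (valid because $\xi_1\circ\phi^{\circ p}=\xi_1(1+f)$ and $\lambda^p=1$) gives $f-p\,g_1\in(g_1)$ with $\ord(f-p\,g_1)>\ord g_1$; hence $f=g_1\cdot(p+\hot)$ is $g_1$ times a unit, so $(f)=(g_1)$ and in particular $g_1\in\Cal I$. Thus $\phi$ is already in the prepared form \eqref{eq:phiprepared}.

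\emph{$(\sigma,\Lambda)$-invariance of $\Cal I$, and the main obstacle.} In the prepared form $\phi=\Lambda\circ\theta$ with $\theta=\id$ modulo $\xi\Cal I$ (as $g_1,g_2\in\Cal I$); every term of the Taylor expansion of $f\circ\theta-f$ then carries a factor lying in $\Cal I$ and of strictly higher order, so $f\circ\theta=f\cdot(1+\hot)$ is $f$ times a unit, whence $\Cal I\circ\theta=\Cal I$ (and likewise $\Cal I\circ\theta^{\circ(-1)}=\Cal I$). Together with $\Cal I\circ\phi=\Cal I$ from Lemma~\ref{lemma:I} and $\phi=\Lambda\circ\theta$ this forces $\Cal I\circ\Lambda=\Cal I$, while $\Cal I\circ\sigma=\Cal I$ is Lemma~\ref{lemma:I} directly; so $\Cal I$ is $(\sigma,\Lambda)$-invariant. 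The one genuinely analytic ingredient is the convergence in Step 1 --- the majorant estimate for the non-resonant homological equations, where the root-of-unity hypothesis is used to exclude small divisors; Step 2 and the equivariance are purely algebraic. (A quicker but less clean route: the explicit analytic averaging $\Psi_0:=\tfrac1p\sum_{j=0}^{p-1}\Lambda^{-j}\circ\phi^{\circ j}$ already satisfies $\Psi_0\circ\phi-\Lambda\circ\Psi_0=\tfrac1p\Lambda\circ(\phi^{\circ p}-\id)\in\Lambda\xi\Cal I$, so it conjugates $\phi$ into the prepared form at once; the price is that $\Psi_0$ is neither $h$-preserving nor $\sigma$-equivariant, and restoring $h=\xi_1\xi_2$ and $\tau=\sigma$ afterwards via Lemma~\ref{lemma:phitau} without disturbing \eqref{eq:phiprepared} then requires some care.)
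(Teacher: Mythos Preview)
Your Step~1 contains a genuine gap: the claim that the degree-by-degree Poincar\'e--Dulac reduction converges is not justified by the ``no small divisors'' argument and is in fact false in general. Bounded-from-below divisors ensure each homological equation is solvable with controlled coefficients, but the resonant terms you leave behind feed back into the non-resonant terms at higher degrees through the nonlinearity of conjugation, and this feedback generically causes divergence. Concretely: if Step~1 held, restricting to the invariant leaf $\{\xi_2=0\}$ would show that the 1-dimensional parabolic germ $f(\xi_1):=\phi_1(\xi_1,0)$ is analytically conjugate to a germ commuting with $\xi_1\mapsto\lambda\xi_1$, equivalently that $f^{\circ p}$ admits an analytic tangent-to-identity $p$-th iterate root. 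By \'Ecalle--Voronin theory this would force every transition map of $f^{\circ p}$ to commute with the time-$\tfrac1p$ flow of the normal form, whereas the symmetry actually imposed by $f$ only relates transition maps on different petals and does not constrain their individual Fourier modes; since the reversibility and first-integral hypotheses put no further constraint on $f$ itself (they only relate $\phi|_{\{\xi_2=0\}}$ to $\phi|_{\{\xi_1=0\}}$), such a root generically fails to exist. This is exactly why the paper states the Poincar\'e--Dulac form only formally (Proposition~\ref{prop:PoincareDulac}) and proves the weaker analytic statement modulo $\Cal I$ separately.

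Your parenthetical ``quicker route'' is essentially the paper's approach and is the correct one. The paper uses the \emph{symmetric} average
\[
\Psi=\tfrac{1}{2p}\sum_{j=1}^{p}\bigl(\Lambda^{-j}\phi^{\circ j}+\Lambda^{j}\phi^{\circ(-j)}\bigr),
\]
which --- unlike your asymmetric $\Psi_0$ --- is automatically $\sigma$-equivariant (conjugation by $\sigma$ swaps the two summands for each $j$), so there is no need to restore $\tau=\sigma$ afterwards. The first integral is then restored by the scalar map $\xi\mapsto(\tilde h/\xi_1\xi_2)^{1/2}\xi$, which is $\sigma$-equivariant and $\Lambda$-equivariant modulo $\xi\Cal I$, hence preserves \eqref{eq:phiprepared}. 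With this approach your Step~2 becomes unnecessary, and the $\Lambda$-invariance of $\Cal I$ follows directly from $\Cal I\circ\phi=\Cal I$ (Lemma~\ref{lemma:I}) together with $\phi=\Lambda\bmod\xi\Cal I$.
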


\begin{proof}
By definition $\phi^{\circ p}(\xi)=\xi \mod\xi\Cal I$.
Let 
\[ \Psi(\xi)=\tfrac1{2p}\left(\Lambda^{p}\phi^{\circ(-p)}+\ldots+\Lambda^{1}\phi^{\circ(-1)}+\Lambda^{-1}\phi^{\circ 1}+\ldots+\Lambda^{-p}\phi^{\circ p}\right)(\xi)=\xi+\hot, \] 
then $\Psi\circ\sigma=\sigma\circ\Psi$, and 
\[\Lambda^{-1}\Psi\circ\phi-\Psi=\tfrac{1}{2p}\big(\id-\Lambda^{p}\phi^{\circ(-p)}+\Lambda^{-p-1}\phi^{\circ(p+1)}-\Lambda^{-1}\phi\big)\in\xi\Cal I\]
since $\Lambda^{-1}\left(\xi\Cal I\right)\circ\phi=\Lambda^{-1}\phi\Cal I=\xi\Cal I$ by Lemma~\ref{lemma:I} ($\phi$ is of the form \eqref{eq:phi3}) and hence
$\Lambda^{-j}\Psi\circ\phi^{\circ j}=\Psi \mod\xi\Cal I$, for $j\in\Z$.
Therefore $\tilde\phi(\xi):=\Psi\circ\phi\circ \Psi^{\circ(-1)}(\xi)=\Lambda\xi \mod\Lambda\xi\tilde{\Cal I}$, where $\tilde{\Cal I}:=\Cal I\circ \Psi^{\circ(-1)}$. 
From \ref{lemma:I}, we have $\Cal I\circ \phi=\Cal I$ so that 
$\tilde{\Cal I}\circ\tilde\phi=\Cal I\circ \phi\circ \Psi^{\circ(-1)}= \Cal I\circ \Psi^{\circ(-1)}=\tilde{\Cal I}.$ 
Therefore, we have $\tilde{\Cal I}=\tilde{\Cal I}\circ \tilde \phi=\tilde{\Cal I}\circ(\Lambda\xi)\mod \tilde{\Cal I}$, hence,  $\tilde{\Cal I}\circ\Lambda=\tilde{\Cal I}$.

The first integral $\tilde h:=h\circ\Psi^{\circ(-1)}$ then satisfies $\tilde h=\tilde h\circ\tilde\phi=\tilde h\circ\Lambda \mod\xi_1\xi_2\tilde{\Cal I}$
since $\tilde h=\xi_1\xi_2\cdot(1+\ldots)$ and  $\tilde\phi(\xi)=\Lambda\xi \mod\Lambda\xi\tilde{\Cal I}$.
A further transformation $\tilde\Psi(\xi):= \left(\mfrac{\tilde h(\xi)}{\xi_1\xi_2}\right)^{\frac12}\!\!\xi$ takes $\tilde h=(\xi_1\xi_2)\circ\tilde\Psi$ to $\xi_1\xi_2$, while preserving $\sigma$ and being $\Lambda$-equivariant modulo $\xi\tilde{\Cal I}$.
\end{proof}

\GRN
\begin{lemma}\label{lemma:P}
Suppose $\phi$ is as in Proposition~\ref{prop:prepared}.	
Then the $(\sigma,\Lambda)$-invariant ideal $\Cal I$ is generated by some uniquely determined $\sigma$-invariant germ
\begin{equation}\label{eq:P}
\begin{cases}h^sP(u,h)= 
		h^s\big(u^k+P_{k-1}(h)u^{k-1}+\ldots+P_0(h)\big),& \text{if }\ \Lambda=\left(\begin{smallmatrix}\lambda&0\\[3pt]0&\lambda^{-1}\end{smallmatrix}\right),\\[6pt]
		h^s\tilde P(\tilde u,h)(\xi_1\!+\!\xi_2)=h^s\big(\tilde u^{\tilde k}+
		\ldots+\tilde P_0(h)\big)(\xi_1\!+\!\xi_2),& \text{if }\ \Lambda=-\sigma,
		\end{cases}
\end{equation}
where 
\begin{equation}\label{eq:huv}
	h(\xi):=\xi_1\xi_2,\qquad \begin{cases} u(\xi):=\xi_1^p+\xi_2^p\\ \tilde u(\xi):=(\xi_1+\xi_2)^2.\end{cases} 
\end{equation}
are basic $(\sigma,\Lambda)$-invariant functions, and 
$\begin{cases} P_{k-1}(0)=\ldots=P_0(0)=0,\\ \tilde P_{\tilde k-1}(0)=\ldots=\tilde P_0(0)=0.\end{cases}$
\end{lemma}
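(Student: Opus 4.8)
The plan is to exploit the fact that, after Proposition~\ref{prop:prepared}, the ideal $\Cal I$ is $(\sigma,\Lambda)$-invariant, and to combine this with the invariant-theoretic description of $(\sigma,\Lambda)$-invariant and $(\sigma,\Lambda)$-anti-invariant germs. First I would recall (cf. the Remark after Theorem~\ref{thm:1} and \cite[\S XII-4]{GSS}) that the ring of $(\sigma,\Lambda)$-invariant analytic germs is exactly $\C\{h,u\}$ in the diagonal case and $\C\{h,\tilde u\}$ in the case $\Lambda=-\sigma$. In the diagonal case $\Cal I$ is a $\Lambda$-invariant ideal, and since $\Lambda$ acts on the monomial $\xi_1^a\xi_2^b$ by $\lambda^{a-b}$, any $\Lambda$-invariant ideal decomposes as a direct sum of its graded pieces indexed by $a-b\bmod p$; being moreover $\sigma$-invariant forces it to be generated by its $\sigma$-invariant elements, hence by elements of $\C\{h,u\}$, i.e. $\Cal I\cap\C\{h,u\}$ generates $\Cal I$ over $\C\{\xi\}$. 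In the case $\Lambda=-\sigma$ one argues similarly but the relevant grading is by the eigenspaces of $\sigma\Lambda=-I$ composed with the order, and $\Cal I$ is generated by its $\sigma$-invariant part together with its $\sigma$-anti-invariant part, the latter being $(\xi_1+\xi_2)\cdot\C\{h,\tilde u\}$ since $\xi_1-\xi_2$ is the basic anti-invariant and $(\xi_1-\xi_2)^2=\tilde u-4h$.

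Next I would pin down the structure of the ideal inside the two-variable ring. In the diagonal case $\Cal I\cap\C\{h,u\}$ is an ideal of the regular local ring $\C\{h,u\}$ of dimension $2$. By Proposition~\ref{prop:prepared}, $f$ generates $\Cal I$ and $f\circ\Lambda\in\Cal I$, $f\circ\sigma\in\Cal I$; one gets that $\Cal I$ is generated by the single invariant germ obtained by symmetrizing $f$ over the group $\langle\sigma,\Lambda\rangle$ (up to a unit). Call this generator $F(h,u)$. The key point is then that $\{F=0\}=\Fix(\phi^{\circ p})\sminus\{h=0\}$ meets each level $\{h=\const\neq 0\}$ in exactly $kp$ points of $(\C^2,0)$, which in the $u$-coordinate on the curve $\{h=\const\}$ (a $p$-fold quotient) become exactly $k$ points counted with multiplicity — this uses that $u=\xi_1^p+\xi_2^p$ is a local coordinate transverse to $h$ on the quotient and that $\ord_0 g=kp$. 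Hence for fixed small $h\neq 0$, $u\mapsto F(h,u)$ vanishes to total order $k$, so by the Weierstrass preparation theorem $F(h,u)=U(h,u)\cdot\big(u^k+P_{k-1}(h)u^{k-1}+\dots+P_0(h)\big)$ with a unit $U$ and $P_j(h)$ analytic. Replacing $F$ by $F/U$ (still a valid generator), and extracting the highest power $h^s$ dividing all coefficients — which is forced to be exactly $s$ by the definition of $s$ as the multiplicity of $\{h=0\}$ in $\Fix(\phi^{\circ p})$ — gives the stated form $h^sP(u,h)$ with $P(u,0)=u^k$, i.e. $P_j(0)=0$. Uniqueness of $P$ (and of $s$) follows from uniqueness in Weierstrass preparation together with the fact that the only $\sigma$-invariant units are determined up to the choice we already made. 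The case $\Lambda=-\sigma$ is handled the same way: now the generator is $\sigma$-anti-invariant, so it is $(\xi_1+\xi_2)$ times a $(\sigma,\Lambda)$-invariant germ $\tilde F(h,\tilde u)$, to which Weierstrass preparation in $\tilde u$ applies with total order $\tilde k$, and one factors out $h^s$ as before; here the half-integer bookkeeping $k=\tilde k+\tfrac12$ is just the translation between the number of zeros counted on $\{h=\const\}$ (which is $2\tilde k+1$, the extra $1$ coming from the factor $\xi_1+\xi_2$) and the normalization $kp=\ord_0 g$.

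The main obstacle I anticipate is the counting step: verifying rigorously that $\{g=0\}$ (equivalently $\{f=0\}\sminus\{h=0\}$) is transverse to the foliation by level sets of $h$ and meets a generic leaf in exactly $kp=\ord_0 g$ points, so that the Weierstrass degree in $u$ (resp.\ $\tilde u$) comes out to $k$ (resp.\ $\tilde k$). This requires knowing that $\{h=0\}$ is not a component of $\{g=0\}$ — which is exactly the content of having split off the factor $h^s$ with $s$ the full multiplicity — and an argument that on the complement the divisor projects to the $h$-disc as a $kp$-sheeted branched cover; near $0$ this can be extracted from the Weierstrass polynomial itself once one knows its $u$-degree, so the logic is mildly circular and must be organized carefully, e.g.\ by first applying Weierstrass preparation to $g$ directly in a suitable coordinate (using $\ord_0 g=kp$ and $\sigma,\Lambda$-invariance to see $g$ is regular of order $kp$ in an invariant transverse direction) and only then passing to the invariant coordinate $u$. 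Everything else — the grading/symmetrization argument and the extraction of $h^s$ — is routine commutative algebra.
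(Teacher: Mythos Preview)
Your overall strategy --- symmetrize the generator, then apply Weierstrass preparation --- matches the paper's, but there is a genuine gap at the symmetrization step. You assert that averaging $f$ over $\langle\sigma,\Lambda\rangle$ produces a generator of $\Cal I$, and earlier that a $(\sigma,\Lambda)$-invariant ideal is generated by its invariant part. Neither holds in general: for instance the principal ideal $(\xi_1-\xi_2)$ is $\sigma$-invariant, yet its $\sigma$-invariant elements lie in $(\xi_1-\xi_2)^2$, and averaging $f=\xi_1-\xi_2$ over $\langle\sigma\rangle$ yields zero. What is actually needed is that $f\circ\sigma = f\cdot V_1$ and $f_1\circ\Lambda = f_1\cdot V_2$ with $V_1(0)=+1$ and $V_2(0)=\pm1$ (the sign depending on whether $\Lambda$ is diagonal or $-\sigma$). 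The paper extracts these signs from the specific dynamical identities already established in Lemma~\ref{lemma:I}: equation~\eqref{eq:fsigma}, $f\circ\sigma = f\circ\phi^{\circ(-p)}$, gives $V_1(0)=1$ since $\phi^{\circ(-p)}=\id+\hot$; and equations~\eqref{eq:fphi2},~\eqref{eq:fphi3}, $f\circ\phi = f\cdot(\pm 1+O(\xi))$, give $V_2(0)=\pm1$. Only with these signs in hand does the weighted average $\tfrac1p\sum_j(\pm1)^j f_1\circ\Lambda^j$ produce a nonzero generator with the required transformation law --- and this is precisely why in the case $\Lambda=-\sigma$ the generator comes out $\Lambda$-anti-invariant (hence $\sigma$-invariant and odd in $\xi_1+\xi_2$), forcing the factor $(\xi_1+\xi_2)$. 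Your proposal notices the outcome in that case but does not explain where the sign originates.

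By contrast, your worry about the ``counting step'' is misplaced. The paper does not match $k$ to a geometric intersection number a priori; once the $(\sigma,\Lambda)$-symmetrized generator $f_2$ is in hand, it simply writes $f_2=h^sg(u,h)$ with $s$ maximal, sets $k:=\ord_u g(u,0)$, and applies Weierstrass preparation in $(u,h)$ directly. No transversality or fiberwise counting is needed for the lemma itself; the identification $kp=\ord_0 g(\xi)$ is just bookkeeping after the fact.
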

\FGRN

\begin{proof}
Let $f$ \eqref{eq:f} be a generator of $\Cal I$. Since $\Cal I$ is invariant by $\sigma$, $f\circ\sigma=f\cdot V_1$ for some germ $V_1(\xi)$, satisfying $V_1\cdot(V_1\circ\sigma)=1$, hence $V_1(0)^2=1$. As $f\circ\sigma=f\circ\phi^{\circ(-p)}$ \eqref{eq:fsigma} with $\phi^{\circ(-p)}(\xi)=\xi+\hot(\xi)$, we see that in fact $V_1(0)=1$, hence
$f_1=\frac12(f+f\circ\sigma)=f \frac{1+V_1}{2}$ is a $\sigma$-invariant generator of $\Cal I$.

\Grn{
Similarly, as $\Cal I$ is invariant by $\Lambda$, $f_1\circ\Lambda=f_1\cdot V_2$ for some germ $V_2(\xi)$, 
and from \eqref{eq:fphi2}, resp. \eqref{eq:fphi3}, one sees that $V_2(0)=1$ if $\Lambda$ is diagonal, resp. $V_2(0)=-1$ for $\Lambda=-\sigma$.
Depending on the sign of $V_2(0)=\pm 1$, let  $f_2=\frac1p\Big(f_1\pm f_1\circ\Lambda+ \ldots(\pm 1)^{p-1}f_1\circ\Lambda^{p-1})\Big)$, then $f_2\circ\Lambda=\pm f_2$ and $f_2\circ\sigma=f_2$.
}

So if $\Lambda$ is diagonal, then $f_2$ is a $(\sigma,\Lambda)$-invariant, and as such
it can be expressed in a unique way as a germ of analytic function $u(\xi)$ and $h(\xi)$ \eqref{eq:huv}, which are functionally independent and generate the ring of $(\sigma,\Lambda)$-invariant functions (see e.g. \cite{GSS}).
Write $f_2(\xi)=h(\xi)^sg(u(\xi),h(\xi))$, where $s$ is maximal such that $h^s$ divides $f_2$, and let $k$ be the order of $g(u,0)$ in $u$.
By the Weierstrass preparation theorem \cite[chapter VII, §3, Proposition 6]{Bourbaki} with respect to the variables $(u,h)$ we can write
$g(u,h)= P(u,h)W(u,h)$ for a unique analytic Weierstrass polynomial $P(u,h)=u^k+P_{k-1}(h)u^{k-1}+\ldots+P_0(h)$ and some unity $W(u,h)$, $W(0,0)\neq 0$.

\Grn{If $\Lambda=-\sigma$ then $f_2$ is $\sigma$-invariant and $f_2\circ\Lambda=-f_2$, which means that $f_2$ can be expressed as a function of $(h,\xi_1+\xi_2)$ odd in $\xi_1+\xi_2$. 
So by a Weierstrass preparation theorem, it can be written as $f_2(\xi)=P(\xi_1+\xi_2,h)W(\xi_1+\xi_2,h)$ 
for a unique analytic Weierstrass polynomial odd in $\xi_1+\xi_2$ 
\begin{align*}
	P(\xi_1\!+\!\xi_2,h)&=(\xi_1\!+\!\xi_2)\tilde P\big((\xi_1\!+\!\xi_2)^2,h\big)\\
	&=(\xi_1\!+\!\xi_2)^{2\tilde k+1}+\tilde P_{\tilde k-1}(h)(\xi_1\!+\!\xi_2)^{2\tilde k-1}+\ldots+\tilde P_0(h)(\xi_1\!+\!\xi_2).
\end{align*}
}
\end{proof}

\GRN
To unify the discussion of the two cases: $\Lambda$ diagonal, and $\Lambda=-\sigma$, in the second case we write \eqref{eq:P} as 
\begin{equation}\label{eq:u-sigma}
P(u,h)=u\tilde P(u^2,h),\qquad u=\xi_1+\xi_2,	
\end{equation}
an odd polynomial of order $kp=2\tilde k+1$ in $u$, $P(-u,h)=-P(u,h)$.
\FGRN

\medskip
If $\hat\bX(\xi)$ is the formal infinitesimal generator for $\phi^{\circ p}$, then it can be written as
\begin{equation}\label{eq:genphip}
\hat\bX(\xi)=\frac{h^sP(u,h)}{\hat U(\xi)}\bE,
\end{equation}
where 
\begin{equation}\label{eq:bE}
\bE:=\xi_1\tdd{\xi_1}-\xi_2\tdd{\xi_2},
\end{equation}
and $\hat U(\xi)$ is a formal $\sigma$-invariant unity, $\hat U(0)\neq 0$, $\hat U\circ\sigma=\hat U$.

Rewriting $P(u,h)$ and $\hat U(\xi)$ as functions of the basic $\sigma$-invariants $(\xi_1+\xi_2,h)$,
then we can apply formal Weierstrass division theorem with respect to $(\xi_1+\xi_2,h)$ to write
\begin{equation}\label{eq:QU}
	\hat U(\xi)=Q(\xi)+P(u,h)\hat R(\xi), 
\end{equation}
for a unique $Q(\xi)= q_{pk-1}(h)(\xi_1+\xi_2)^{pk-1}+\ldots+ q_0(h)$, $ q_0(0)\neq 0$, and $\sigma$-invariant $\hat R(\xi)\in\C\llbracket x\rrbracket$.

\begin{lemma}\label{lemma:Q}~
The function $Q(\xi)$ above is analytic and $(\sigma,\Lambda)$-invariant, hence of the form
\[ Q(u,h)=Q_{k-1}(h)u^{k-1}+\ldots+Q_0(h),\quad\text{with }\ Q_0(0)\neq 0. \]
\end{lemma}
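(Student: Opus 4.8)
The $\sigma$-invariance of $Q$ is automatic: by construction $Q$ is a polynomial in $\xi_1+\xi_2$ with coefficients in $\C\llbracket h\rrbracket$, and all such germs are $\sigma$-invariant. So the two points to establish are that $Q$ is \emph{analytic} and that it is \emph{$\Lambda$-invariant}; granting these, the stated form and $Q_0(0)\neq 0$ follow at once, as explained at the end.

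For analyticity the key observation is that the Weierstrass remainder $Q=\hat U\bmod (P)$ records $\hat U$ only along the fixed-point divisor $\Fix(\phi^{\circ p})=\{h^sP=0\}$ — namely the jet of $\hat U$ normal to each irreducible component of $\{P=0\}$ up to its multiplicity — and that this jet is governed by \emph{analytic} data of the analytic map $\phi^{\circ p}$. I would first carry out the computation at a reduced point $\eta^*\in\{P=0\}$ on a leaf $\{h=c\}$, $c\neq0$: restricting $\hat\bX=\frac{h^sP}{\hat U}\bE$ to the leaf and using the leaf coordinate $\xi_1$ gives $\hat\bX|_{\mathrm{leaf}}=\frac{h^sP}{\hat U}\,\xi_1\tdd{\xi_1}$, whose time-$1$ flow is $\xi_1\mapsto\xi_1\circ\phi^{\circ p}=\xi_1(1+f)$; the classical identity ``the linear part of an infinitesimal generator at a fixed point equals the logarithm of the multiplier'' then yields
\[\hat U(\eta^*)=\frac{h^s\,\xi_1\,(dP/d\xi_1)}{\log m}\Big|_{\eta^*}=\frac{p\,h^s\,w\,P_u}{\log m}\Big|_{\eta^*},\qquad w:=\xi_1^p-\xi_2^p,\]
(resp. the analogous expression when $\Lambda=-\sigma$), where $m$ is the transverse multiplier of $\phi^{\circ p}$ along $\{P=0\}$, an analytic function there with $\log m$ invertible away from the origin, and $P_u=\partial P/\partial u$. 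Hence $\hat U|_{\{P=0\}}$ is an analytic germ on $\{P=0\}$. Along a component of higher multiplicity $\mu$ the transverse map is a one-dimensional parabolic germ $\zeta\mapsto\zeta+a_\mu\zeta^\mu+\dots$ with analytically varying coefficients, and a direct computation shows that the relevant $\mu$-jet of $\hat U$ normal to that component is a universal polynomial expression in the Taylor coefficients of $\phi^{\circ p}$ up to order $2\mu-1$, hence analytic — the divergence of $\hat\bX$, living in still higher transverse jets (the residue-type invariants), never enters $Q$. Choosing an analytic $\sigma$-invariant germ $\tilde U$ with $\hat U\equiv\tilde U\bmod(P)$ and performing the \emph{analytic} Weierstrass division of $\tilde U$ by $P$, uniqueness of the remainder forces $Q=\tilde U\bmod(P)$, which is analytic.

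For $\Lambda$-invariance, write $\phi=\Lambda\,\psi_0$ with $\psi_0:=\Lambda^{-1}\phi$; since $\phi$ is in the prepared form of Proposition~\ref{prop:prepared}, $\psi_0\equiv\id\bmod\xi\Cal I$, so $\psi_0$ is the identity on (and, as a subscheme, modulo) the fixed divisor $\{h^sP=0\}$. Inserting $\Lambda^{-j}\Lambda^{j}$ into $\phi^{\circ p}=(\Lambda\psi_0)^{\circ p}$ gives $\phi^{\circ p}=\psi_{p-1}\circ\dots\circ\psi_0$ with $\psi_j:=\Lambda^{-j}\psi_0\Lambda^{j}$, and the same expansion applied to $\Lambda^{-1}\phi\Lambda=\Lambda\psi_1$ yields $\Lambda^{-1}\phi^{\circ p}\Lambda=\psi_0\circ\psi_{p-1}\circ\dots\circ\psi_1$; comparing the two, one gets the cyclic-shift identity
\[\Lambda^{-1}\circ\phi^{\circ p}\circ\Lambda=\psi_0\circ\phi^{\circ p}\circ\psi_0^{\circ(-1)}.\]
Because $\psi_0$ fixes the fixed divisor to the order transversally needed, conjugation by $\psi_0$ leaves unchanged the part of the transverse jet of $\phi^{\circ p}$ along $\{P=0\}$ that enters $Q$; on the other hand, by the chain rule that transverse jet at $\eta^*$, computed for $\Lambda^{-1}\phi^{\circ p}\Lambda$, equals the one for $\phi^{\circ p}$ at $\Lambda\eta^*$, since $\Lambda$ preserves $\{P=0\}$ and maps its normal directions accordingly. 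Combining, $m\circ\Lambda=m$ on $\{P=0\}$ (and similarly for the higher transverse jets along multiple components), so that $\hat U\bmod(P)$ — hence $Q$ — is $\Lambda$-invariant, using also that $h^s$, $w$ and $P_u$ are $\Lambda$-invariant.

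Finally, a $(\sigma,\Lambda)$-invariant germ is an analytic function of the basic invariants $(u,h)$ (resp. $((\xi_1+\xi_2)^2,h)$ when $\Lambda=-\sigma$); since $Q$ is $(\sigma,\Lambda)$-invariant and has degree $<pk$ in $\xi_1+\xi_2$, it must be a polynomial of degree $<k$ in $u$ (resp. the corresponding even polynomial in $\xi_1+\xi_2$), with analytic coefficients $Q_i(h)$; evaluating at $\xi=0$ gives $Q_0(0)=Q(0)=\hat U(0)\neq0$. The main obstacle in making this precise is the analyticity step: one must identify exactly which finite part of the generically divergent generator $\hat\bX$ survives in the remainder modulo the principal, non-$\mathfrak m$-primary ideal $(P)$, check that this part is analytic, and track the transverse orders along multiple components of $\{P=0\}$ carefully enough that the cyclic-shift identity above still controls it; this bookkeeping is the delicate point, while the cyclic-shift identity itself is the clean structural input for the $\Lambda$-invariance.
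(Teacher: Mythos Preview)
Your geometric reading --- that $Q=\hat U\bmod(P)$ only records the transverse multiplier/jet of $\phi^{\circ p}$ along the fixed divisor $\{P=0\}$, hence is analytic and $\Lambda$-invariant --- is exactly the right intuition, and it matches the content of the paper's proof. But as written the argument has two genuine gaps.

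\textbf{Analyticity.} You ``restrict $\hat\bX$ to a leaf $\{h=c\}$'' with $c\neq0$ and ``evaluate $\hat U(\eta^*)$'' at $\eta^*\neq0$. Neither operation makes sense for a formal power series at $0$: $\C\llbracket\xi\rrbracket/(h-c)$ is not the ring of germs on the leaf, and a divergent series cannot be evaluated away from the origin. The paper sidesteps this by never leaving the formal setting: it expands $\phi^{\circ p}=\exp(\hat\bX)$ \emph{modulo $h^sP^2\xi$}, obtaining the exact identity
\[
\phi^{\circ p}(\xi)-\xi\equiv\tfrac{P}{\bE.P}\big(e^{h^s(\bE.P)/Q}-1\big)\left(\begin{smallmatrix}1&0\\0&-1\end{smallmatrix}\right)\xi\ \bmod\ h^sP^2\xi,
\]
which inverts to $Q^{-1}\equiv\tfrac{1}{h^s\bE.P}\log\!\big(1+h^s(\bE.P)\tilde f\big)\bmod P$, with $\tilde f=f/(h^sP)$ analytic. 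This single formula \emph{is} your multiplier identity (indeed on a simple zero on a leaf it reads $\log m=h^s(\bE.P)/Q$), but written so that it holds in $\C\llbracket\xi\rrbracket/(P)$; after $\sigma$-symmetrization it becomes an honest analytic germ in $(\xi_1+\xi_2,h)$, and analytic Weierstrass division finishes. Your sketch would have to produce essentially this formula to be complete; the ``transverse jet along multiple components is a universal polynomial in low-order Taylor data'' step you flag is precisely what this modular computation supplies, uniformly and without stratifying.

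\textbf{$\Lambda$-invariance.} Your cyclic-shift identity $\Lambda^{-1}\phi^{\circ p}\Lambda=\psi_0\circ\phi^{\circ p}\circ\psi_0^{-1}$ is correct and is the map-level counterpart of the paper's $\phi^*\hat\bX=\hat\bX$; both routes yield $Q\equiv Q\circ\Lambda\bmod P$. But you then assert ``hence $Q$ is $\Lambda$-invariant'', which does not follow: $Q\circ\Lambda$ is \emph{not} a polynomial in $\xi_1+\xi_2$, so uniqueness of the Weierstrass remainder does not apply directly. The paper closes this by symmetrizing to $\tfrac12(Q\circ\Lambda+Q\circ\Lambda^{-1})$, which \emph{is} $\sigma$-invariant hence a polynomial in $(\xi_1+\xi_2,h)$ of the right degree, so uniqueness of the remainder gives $Q=\tfrac12(Q\circ\Lambda+Q\circ\Lambda^{-1})$ and forces the coefficients $q_j$ with $j\notin p\Z$ to vanish. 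You need this (or an equivalent) step.
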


\begin{proof}
Let us first show that $Q$ is analytic.	
By definition $\phi^{\circ p}=\exp(\hat\bX)(\xi)=\sum_{n=0}^{+\infty}\tfrac   {1}{n!}\hat\bX^n.\xi$, 
with $\hat\bX=h^s\mfrac{P}{Q}\bE\mod h^sP^2\bE$,
and one has 
\[ \hat\bX^n.\xi=h^{ns}P(\bE.P)^{n-1}Q^{-n}\begin{psmallmatrix}1 & 0 \\[3pt] 0&-1 \end{psmallmatrix}\xi \mod h^sP^2\xi\quad\text{for all }\ n\geq 1, \]
where ``mod $h^sP^2\xi$'' means modulo the $\C\llbracket \xi\rrbracket$-submodule 
$\begin{pmatrix}h^sP^2\xi_1\C\llbracket\xi\rrbracket \\ h^sP^2\xi_2\C\llbracket\xi\rrbracket \end{pmatrix}$ of $(\C\llbracket\xi\rrbracket )^2$.
From this formally
\begin{equation}\label{eq:Q}
\left(\phi^{\circ p}(\xi)-\xi\right)=\mfrac{P}{\bE.P}\big(e^{h^s(\bE.P)Q^{-1}}-1\big) \begin{psmallmatrix}1 & 0 \\[3pt] 0&-1 \end{psmallmatrix}\xi \mod h^sP^2\xi.
\end{equation}
This may also be rewritten as
\begin{equation}\label{eq:Qinverse}
Q^{-1}=\mfrac{1}{h^s\bE.P}\log\left(1+h^s(\bE.P)\tilde f\right) \mod P,\quad\text{where }\ 
\tilde f=\mfrac{(\xi_1\circ\phi^{\circ p}-\xi_1)}{h^sP\xi_1},
\end{equation}
or 
$Q=\mfrac{h^s\bE.P}{\log\left(1+h^sf\bE.P\right)} \mod P$.
Note that $\tilde f$ is analytic since by definition $h^sP$ and $\mfrac{(\xi_1\circ\phi^{\circ p}-\xi_1)}{\xi_1}$ generate the same ideal $\Cal I$ of $\C\{\xi\}$.

We have $(\bE.P)\circ \sigma=-\bE.P$. Indeed, we have $\bE.P= (\bE.u)\partial_uP$ and $\bE.u=p(\xi_1^p-\xi_2^p)$.
As $Q=Q\circ\sigma$, we can then symmetrize the expression of $Q$ as
\begin{equation*}
Q=\mfrac{h^s\bE.P}{2\log\left(1+h^s\tilde f\bE.P\right)}-
\mfrac{h^s\bE.P}{2\log\left(1-h^s(\tilde f\circ\sigma)\bE.P\right)} \mod P,
\end{equation*}
which can now be expressed as an analytic function of $(h,\xi_1+\xi_2)$.
Therefore we obtain $Q$ as the remainder of the formal Weierstrass division of the term on the right side by $P$ (again with respect to the symmetric variables $(\xi_1+\xi_2,h)$). Since the formal Weierstrass division agrees with the analytic one, we see that $Q$ is indeed analytic. 

Now let us show that $Q\circ\Lambda=Q$.
According to Proposition~\ref{prop:prepared}, we have $\Lambda^{-1}\phi(\xi)=\xi\mod \xi\mathcal{I}$, where $\mathcal I$ is generated by $h^sP(u,h)$, i.e. $\Lambda^{-1}\phi(\xi)=\xi\mod h^sP\xi$. Since $h\circ\phi=h$, we can write 
\begin{equation}\label{eq:phiW}
\Lambda^{-1}\phi(\xi)=\begin{pmatrix}\xi_1+\xi_1 h^s PW \\ \xi_2-\xi_2 h^s P\mfrac{W}{1+h^sPW}\end{pmatrix}=
\begin{pmatrix}\xi_1+\xi_1 h^s PW \\ \xi_2-\xi_2 h^s PW\end{pmatrix} \mod h^{2s}P^2\xi, 
\end{equation}
where $W(\xi)\in\C\{\xi\}$ is a unity. We have $\,\hat\bX(\xi)=\mfrac{h^sP}{Q}\bE \mod h^sP^2\bE$.
We know that $\hat\bX(\xi)$ is invariant by $\phi(\xi)$, which means that $\hat\bX.\phi=\hat\bX.\xi\big|_{\xi=\phi}$.
On one side:
\[ \hat\bX.\phi=\mfrac{h^sP}{Q}\left(1+h^s(\bE.P)W\right)\Lambda\begin{psmallmatrix}1 & 0 \\[3pt] 0&-1 \end{psmallmatrix}\xi \mod h^sP^2\xi, \]
on the other side, expressing
$P\circ\phi=P\circ\Lambda+h^sPW\bE.(P\circ\Lambda)\cdot \mod h^sP^2$ by \eqref{eq:phiW}, $Q\circ\phi=Q\circ\Lambda\mod h^sP$, and $\phi=\Lambda\xi\mod h^sP\xi$, we obtain:
\Grn{
\begin{align*}
	 (\hat\bX.\xi)\circ\phi&=\mfrac{h^sP\circ\phi}{Q\circ\phi} \begin{psmallmatrix}1 & 0 \\[3pt] 0&-1 \end{psmallmatrix} \phi\\
&=\mfrac{h^s}{Q\circ\Lambda}\left(P\circ\Lambda+h^sPW\bE.(P\circ\Lambda)\right) \begin{psmallmatrix}1 & 0 \\[3pt] 0&-1 \end{psmallmatrix}\Lambda\xi \mod h^sP^2\xi, \\
&=\mfrac{h^sP}{Q\circ\Lambda}\left(1+h^s(\bE.P)W\right) \Lambda\begin{psmallmatrix}1 & 0 \\[3pt] 0&-1 \end{psmallmatrix}\xi \mod h^sP^2\xi, 
\end{align*}
since in all cases $(P\circ\Lambda)\cdot \begin{psmallmatrix}1 & 0 \\[3pt] 0&-1 \end{psmallmatrix}\Lambda=P\cdot\Lambda\begin{psmallmatrix}1 & 0 \\[3pt] 0&-1 \end{psmallmatrix}$. Hence $Q=Q\circ\Lambda\mod P$. 
}

Writing $Q\circ\Lambda=Q+PV$ then both sides of $\tfrac12\big(Q\circ\Lambda+Q\circ\Lambda^{-1}\big)=Q+\tfrac12P\big(V+V\circ\sigma\big)$
are $\sigma$-invariant, therefore can be written as functions of $(\xi_1+\xi_2,h)$, and by the uniqueness of the Weierstrass division by $P$ in the variables $(\xi_1+\xi_2,h)$ we have $Q=\tfrac12\big(Q\circ\Lambda+Q\circ\Lambda^{-1}\big)$.
This means that $Q=\sum_{j=0}^{kp-1}q_j(h)(\xi_1\xi_2)^j$ contains only the powers $j$ such that $\frac12(\lambda^j+\lambda^{-j})=1$, i.e. $j\in p\Z$. 
Hence $Q=Q\circ\Lambda$.
\end{proof}

\begin{corollary}\label{cor:infinitesimalgenerator}
Let $\phi^{\circ p}=\exp(\hat\bX)$ be reversed by $\sigma$ and with a first integral $h=\xi_1\xi_2$, and let $f:=\frac{\xi_1\circ\phi^{\circ p}-\xi_1}{\xi_1}$, and $s\in\Z_{\geq0}$ maximal such that $h^s$ divides $f$.
Then 
\[\hat\bX=\frac{f\log(1+\bE.f)}{\bE.f}\bE\mod h^{-s}f^2\bE.\]	
\end{corollary}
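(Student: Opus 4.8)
The statement is an immediate consequence of the computations already carried out in the proof of Lemma~\ref{lemma:Q}, together with Lemma~\ref{lemma:P}: the point is simply to repackage the formula for $Q$ obtained there, eliminating the need to divide by $P$. The plan is to start from the expression \eqref{eq:genphip}, $\hat\bX=\frac{h^sP}{\hat U}\bE$, and recall the consequence \eqref{eq:Qinverse} of the defining relation $\phi^{\circ p}=\exp(\hat\bX)$, namely that $\hat U$ (equivalently $Q=\hat U\bmod P$) satisfies $\hat U^{-1}=\frac{1}{h^s\bE.P}\log(1+h^s(\bE.P)\tilde f)\bmod P$, where $\tilde f=\frac{\xi_1\circ\phi^{\circ p}-\xi_1}{h^sP\xi_1}$. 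Since by Lemma~\ref{lemma:P} we have $f=h^sP\cdot(\text{unit})$, so that $f$ and $h^sP$ generate the same ideal $\Cal I$, the quantity $h^s\tilde f$ equals $f$ times a unit; but in fact $h^s P\tilde f = f$ by definition of $\tilde f$, so $h^s(\bE.P)\tilde f = \frac{\bE.P}{P}\cdot f$ — this is not quite $\bE.f$, so a small reconciliation is needed.

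The key step is therefore the following identity modulo $f^2$ (equivalently modulo $h^{2s}P^2$, equivalently modulo $h^{-s}f^2$ after multiplying the vector field by $h^sP$): one has
\[
\bE.f = \bE.(h^sP)\cdot(\text{unit}) + h^sP\cdot\bE(\text{unit}),
\]
and since $f \in \Cal I = (h^sP)$, working modulo $f^2$ one may freely replace $h^sP$ by $f$ and $\bE.(h^sP)$ by $\bE.f$ inside any expression that already carries one factor from $\Cal I$. Concretely, in $\hat\bX = \frac{h^sP}{\hat U}\bE = h^sP\cdot\frac{\log(1+h^s(\bE.P)\tilde f)}{h^s\bE.P}\bE \bmod h^sP^2\bE$, I substitute $h^sP\tilde f=f$ and then observe that $h^sP\cdot\frac{\log(1+\frac{\bE.P}{P}f)}{h^s\bE.P}$ and $\frac{f\log(1+\bE.f)}{\bE.f}$ agree modulo $h^sP^2\bE = h^{-s}f^2\bE$: both are $f + O(f^2)$ with the same quadratic-in-$f$ correction dictated by the derivation property of $\bE$, because $\log(1+X)/X = 1 - X/2 + \dots$ and $\bE.f = \frac{\bE.(h^sP)}{h^sP}f + h^sP\,\bE(\text{unit}) = \frac{\bE.P}{P}f \bmod f$, the discrepancy $\frac{1}{h^s}(\text{unit terms})$ being absorbed once multiplied by the overall factor $f$.

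The main obstacle is purely bookkeeping: making precise the repeated reductions ``modulo $h^{-s}f^2\bE$'' versus ``modulo $h^sP^2\bE$'' versus ``modulo $f^2$ in $\C\llbracket\xi\rrbracket$'', and checking that the unit $W$ (or $\hat R$) relating $f$ to $h^sP$ drops out — i.e. that the right-hand side $\frac{f\log(1+\bE.f)}{\bE.f}\bE$ is genuinely independent of the choice of generator, which it is because under $f\mapsto fW$ one has $\bE.(fW)=W\bE.f + f\bE.W$ and the expression $\frac{fW\log(1+\bE.(fW))}{\bE.(fW)}$ expands to $fW - \tfrac12(fW)\bE.f\cdot\frac{W}{W} + \dots = f + O(f^2)$ matching $\hat\bX\bmod f^2\bE$ term by term. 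I expect no conceptual difficulty: the corollary is essentially Lemma~\ref{lemma:Q} rewritten without performing the Weierstrass division by $P$, and the verification amounts to expanding $\log(1+\cdot)$ to second order and using that $\bE$ is a derivation.
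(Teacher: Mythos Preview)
Your approach is correct and is essentially the paper's own argument: both extract the result from formula \eqref{eq:Q} (equivalently \eqref{eq:Qinverse}) in the proof of Lemma~\ref{lemma:Q}. The paper's route is a touch more economical: from $f=\tfrac{P}{\bE.P}\big(e^{h^s(\bE.P)Q^{-1}}-1\big)\bmod h^sP^2$ it computes $\bE.f=\big(e^{h^s(\bE.P)Q^{-1}}-1\big)\bmod h^sP$ directly, whence $\log(1+\bE.f)=h^s(\bE.P)Q^{-1}\bmod h^sP$ and the identity $\tfrac{f\log(1+\bE.f)}{\bE.f}=h^s\tfrac{P}{Q}\bmod h^sP^2$ drops out immediately --- this sidesteps your reconciliation step entirely.

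One caution: your closing claim that $\tfrac{f\log(1+\bE.f)}{\bE.f}$ is ``independent of the choice of generator'' is not correct --- under $f\mapsto fW$ the leading term becomes $fW$, not $f$, so the displayed expansion ``$fW-\dots=f+O(f^2)$'' is wrong. Fortunately this is inessential: the reconciliation you sketched first, namely $\bE.f-\tfrac{\bE.P}{P}f=h^sP\,\bE.W\in(f)$ and hence $f\big[G(\bE.f)-G(\tfrac{\bE.P}{P}f)\big]\in(f^2)\subset(h^{-s}f^2)$ for $G(X)=\tfrac{\log(1+X)}{X}$, already does the job. Drop the generator-independence remark.
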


This is an analogue of a 1-dimensional formula of X.~Buff \& A.~Ch\'eritat \cite[\S 1]{Cheritat}.

\begin{proof}
If $\hat\bX=h^s\frac{P}{Q}\bE\mod h^sP^2\bE$, then by 	\eqref{eq:Q} $f=\mfrac{P}{\bE.P}\big(e^{h^s(\bE.P)Q^{-1}}-1\big)\mod h^sP^2=h^sP\,(Q(0)^{-1}+\hot)$,
and $\bE.f=\big(e^{h^s(\bE.P)Q^{-1}}-1\big)\mod h^sP$, hence $\frac{f\log(1+\bE.f)}{\bE.f}=h^s\frac{P}{Q}\mod h^sP^2$.
\end{proof}

\subsection{Analytic formal normal form}

The following lemma can be found for example in \cite[Proposition 2.2]{Teyssier} or \cite[Proposition 5.2]{Ribon-a}.

\begin{lemma}\label{lemma:X}
Let $\hat\bX_0,\ \hat\bX_1$ be two formal (resp. analytic) vector fields vanishing at the origin of $\C^2$, and assume there is $\hat\alpha(\xi)\in\C\llbracket\xi\rrbracket$ (resp. $\alpha(\xi)\in\C\{\xi\}$)
such that $\hat\bX_1=\mfrac{\hat\bX_0}{1+\hat\bX_0.\hat\alpha}$.
Then the formal (resp. analytic) flow map of
\begin{equation}\label{eq:YlemmaX}
\hat\bY=\dd{t}-\frac{\hat{\alpha}\hat\bX_0}{1+t\hat\bX_0.\hat\alpha}
\end{equation}
\[\hat\Psi^{\circ(-1)}(\xi)=\xi\circ\exp(\hat\bY)\big|_{t=0},\quad\text{with inverse }\ \hat\Psi(\xi)=\xi\circ\exp(-\hat\bY)\big|_{t=1},\]
conjugates $\hat\bX_0$ and $\hat\bX_1=\hat\Psi^*\hat\bX_0$.
Moreover
\begin{equation}\label{eq:form-exp} 
\hat\Psi(\xi)=\exp(t\hat\bX_0)\big|_{t=\hat\alpha(\xi)}.
\end{equation}

If furthermore $\hat\alpha=-\hat\alpha\circ\sigma=\hat\alpha\circ\Lambda$, then $\hat\Psi(\xi)$ is $(\sigma,\Lambda)$-equivariant.
\end{lemma}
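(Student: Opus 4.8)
The statement asserts that a certain time-dependent vector field $\hat\bY$ on $(\C^2\times\C, 0)$ has a flow map realizing the conjugacy $\hat\bX_1 = \hat\Psi^*\hat\bX_0$, and moreover that this flow map equals $\exp(t\hat\bX_0)|_{t=\hat\alpha(\xi)}$. The natural approach is to verify the formula \eqref{eq:form-exp} directly by differentiation, and then to read off the conjugacy from it. First I would set $\hat\Phi(\xi) := \exp(t\hat\bX_0)(\xi)\big|_{t=\hat\alpha(\xi)}$ and check, using the chain rule together with the defining property $\tdd{t}[\hat f\circ\exp(t\hat\bX_0)] = \hat\bX_0.\hat f\big|_{\xi=\exp(t\hat\bX_0)}$, that $\hat\Phi$ satisfies the ODE governing the flow of $-\hat\bY$ at $t=1$ started from $t=0$; equivalently, I would directly compute $D\hat\Phi(\hat\bX_1.\xi)$ and show it equals $(\hat\bX_0.\xi)\circ\hat\Phi$. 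Since $\hat\Phi = \exp(t\hat\bX_0)|_{t=\hat\alpha}$, differentiating along $\hat\bX_1$ produces a term from the ``$\xi$-slot'' of $\exp(t\hat\bX_0)$ and a term from the ``$t$-slot'' proportional to $\hat\bX_1.\hat\alpha$; the first gives $(\hat\bX_0.\xi)\circ\hat\Phi$ times $(1 + t\,\hat\bX_0.\hat\alpha)|_{t=\hat\alpha}$ correction, and the hypothesis $\hat\bX_1 = \hat\bX_0/(1+\hat\bX_0.\hat\alpha)$ is exactly what makes the two contributions combine to $(\hat\bX_0.\xi)\circ\hat\Phi$.

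The cleaner route, which I would actually carry out, is the homotopy argument encoded in $\hat\bY$: consider the family $\hat\bX_t := \dfrac{\hat\bX_0}{1 + t\,\hat\bX_0.\hat\alpha}$ interpolating $\hat\bX_0$ (at $t=0$) and $\hat\bX_1$ (at $t=1$), and look for a transformation $\hat\Psi_t$ with $\hat\Psi_t^*\hat\bX_0 = \hat\bX_t$, generated by an infinitesimal flow. Writing $\hat\Psi_t$ as the flow of the nonautonomous vector field whose generator at parameter $t$ is a vector field $\hat\bV_t$ on $(\C^2,0)$, the standard computation $\tdd{t}(\hat\Psi_t^*\hat\bX_0) = \hat\Psi_t^*[\hat\bV_t, \hat\bX_0]$ (Lie bracket) must match $\tdd{t}\hat\bX_t = -\dfrac{(\hat\bX_0.\hat\alpha)\hat\bX_0}{(1+t\hat\bX_0.\hat\alpha)^2}$. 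One checks that the choice $\hat\bV_t = -\dfrac{\hat\alpha\,\hat\bX_0}{1+t\,\hat\bX_0.\hat\alpha}$ works, because $\hat\bX_0$ commutes with itself and the only surviving term in $[\,\hat\bV_t,\hat\bX_0]$ comes from $\hat\bX_0$ acting on the scalar coefficient $-\hat\alpha/(1+t\hat\bX_0.\hat\alpha)$; a short computation gives exactly $\tdd{t}\hat\bX_t$. Lifting this to the product space $(\C^2\times\C,0)$ by adjoining $\tdd{t}$ (which is what \eqref{eq:YlemmaX} does) turns the nonautonomous problem into the autonomous flow of $\hat\bY$, and then $\hat\Psi := \hat\Psi_1$ is obtained by flowing $-\hat\bY$ from $t=1$ back to $t=0$, as stated; integrating the scalar relation also yields the closed form \eqref{eq:form-exp}.

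The $(\sigma,\Lambda)$-equivariance is then immediate and I would dispatch it last: since $\hat\bX_0$ is assumed $(\sigma,\Lambda)$-invariant in the ambient context (more precisely, we apply the lemma with $\sigma^*\hat\bX_0 = -\hat\bX_0$, $\Lambda^*\hat\bX_0 = \hat\bX_0$ — for the conjugacy statement only invariance up to sign matters), and $\hat\alpha$ transforms by $\hat\alpha\circ\sigma = -\hat\alpha$, $\hat\alpha\circ\Lambda = \hat\alpha$, the quantity $\hat\bX_0.\hat\alpha$ is genuinely $(\sigma,\Lambda)$-invariant, hence so is $\hat\bV_t$; therefore its flow $\hat\Psi_t$ commutes with $\sigma$ and $\Lambda$. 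Alternatively one argues straight from \eqref{eq:form-exp}: $\hat\Psi\circ\sigma = \exp(t\hat\bX_0)|_{t=\hat\alpha}\circ\sigma = \sigma\circ\exp(t\,\sigma^*\hat\bX_0)|_{t=\hat\alpha\circ\sigma} = \sigma\circ\exp(-t\hat\bX_0)|_{t=-\hat\alpha} = \sigma\circ\exp(t\hat\bX_0)|_{t=\hat\alpha} = \sigma\circ\hat\Psi$, and similarly for $\Lambda$.

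The main obstacle is purely bookkeeping: one must be careful with the meaning of ``substituting $\hat\alpha(\xi)$ for $t$'' — the differentiation in \eqref{eq:form-exp} is a total derivative that hits both the explicit $\xi$ inside $\exp(t\hat\bX_0)(\xi)$ and the $\xi$ inside $t = \hat\alpha(\xi)$ — and with the order-growth estimate \eqref{eq:lowerorderbd} that guarantees all the formal series involved (in particular $\exp$ and the geometric-type series $1/(1+t\hat\bX_0.\hat\alpha)$) are well-defined in $\C\llbracket\xi,t\rrbracket$; in the analytic case one invokes standard existence and holomorphic dependence for ODEs instead. No genuinely hard point arises, which is why the lemma can be cited from \cite{Teyssier} or \cite{Ribon-a}; the proof is a one-parameter deformation (path method / Moser's trick) made explicit.
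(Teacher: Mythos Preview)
Your approach is essentially the same as the paper's: both use the homotopy family $\hat\bX_t=\hat\bX_0/(1+t\,\hat\bX_0.\hat\alpha)$ and the generator $\hat\bV_t=-\hat\alpha\hat\bX_t$, lifted to the extended space as $\hat\bY$. The paper packages the Lie-bracket computation you describe as the single identity $[\hat\bX_t,\hat\bY]=0$ in $(\xi,t)$-space, then verifies independently that $\hat\Psi'_t:=\exp(s\hat\bX_0)|_{s=t\hat\alpha}$ satisfies both $(\hat\Psi'_t)^*\hat\bX_0=\hat\bX_t$ and $\hat\bY.\hat\Psi'_t=0$, and concludes $\hat\Psi_t=\hat\Psi'_t$ by uniqueness of solutions to the same first-order PDE with the same initial condition; this is the same content as your plan, just organized slightly differently. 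One small wrinkle in your write-up: the condition you need is $\hat\Psi_t^*[\hat\bV_t,\hat\bX_0]=\tdd{t}\hat\bX_t$, not $[\hat\bV_t,\hat\bX_0]=\tdd{t}\hat\bX_t$, so the direct Moser-trick bookkeeping is a bit more delicate than you indicate --- this is precisely why the paper's formulation via $[\hat\bX_t,\hat\bY]=0$ (where $\hat\bX_t$ appears on both sides) is cleaner and avoids tracking the pullback.
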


\begin{proof}
On one hand, if $\hat\bX_t:=\mfrac{\hat\bX_0}{1+t\hat\bX_0.\hat\alpha}$ and $\hat\bY$ \eqref{eq:YlemmaX}
are considered as formal vector fields in $\xi,t$, then $[\hat\bX_t,\hat\bY]=0$, which means that the flow of $\hat\bY$ preserves the family $\hat\bX_t$: if  $\hat\Psi_s(\xi,t):=\xi\circ\exp(-s\hat\bY)(\xi,t)$, $\hat\Psi_s^*\hat\bX_{t-s}=\hat\bX_t$.
 
Moreover the map $\hat\Psi_t(x,t)$ satisfies $\hat\bY.\hat\Psi_t=0$:
\begin{align*}
	\tdd{t}\hat\Psi_t(\xi,t)&=\tdd{s}\hat\Psi_s(\xi,t)\big|_{s=t}+\tdd{t}\hat\Psi_s(\xi,t)\big|_{s=t}\\
	&=-\hat\bY.\xi\big|_{\xi=\hat\Psi_t}+\hat\bY.\hat\Psi_s\big|_{s=t}+\hat\alpha\hat\bX_t.\hat\Psi_s\big|_{s=t}=\hat\alpha\hat\bX_t.\hat\Psi_t.
\end{align*}
On the other hand, denoting
$\hat\Psi'_t(\xi)=\exp(s\hat\bX_0)(\xi)\Big|_{s=t\hat\alpha(\xi)},$
then, using the identities 
\[\tdd{s}\exp(s\hat\bX_0)(\xi)=\hat\bX_0.\xi\Big|_{\xi=\exp(s\hat\bX_0)(\xi)}=\hat\bX_0.\exp(s\hat\bX_0)(\xi),\]
we have
\[\hat\bX_0.\hat\Psi'_t=\Big(\hat\bX_0.(t\hat\alpha)\tdd{s} \exp(s\hat\bX_0)+
\hat\bX_0.\exp(s\hat\bX_0)\Big)\Big|_{s=t\hat\alpha(\xi)}=(1+t\hat\bX_0.\hat\alpha)\cdot \hat\bX_0.\xi\big|_{\xi=\hat\Psi'_t},\]
hence $\hat\bX_t.\hat\Psi'_t=\hat\bX_0.\xi\big|_{\xi=\hat\Psi'_t}$, i.e. $\hat\bX_t=(\hat\Psi'_t)^*\hat\bX_0$.
Moreover, $\tdd{t}\hat\Psi'_t=\hat\alpha\cdot\big(\hat\bX_0.\xi\big)\big|_{\xi=\hat\Psi'_t}$,
hence $\hat\bY.\hat\Psi'_t=0$.
Since $\hat\Psi_t$ and $\hat\Psi'_t$ satisfy the same partial differential equation with the same initial condition $\hat\Psi_0=\hat\Psi'_0=\id$,
they are both equal.

In particular, this means that the formal flow map $\hat\Psi(\xi):=\hat\Psi_1(\xi,1)=\hat\Psi'_1(\xi)$ is a well defined formal power series in $\xi$.
\end{proof}

Let $\phi$ be as in Proposition~\ref{prop:prepared}, and let $\hat\bX$ be the formal infinitesimal generator of $\phi^{\circ p}$.
According to \eqref{eq:genphip} and \eqref{eq:QU} it is in a \emph{prepared form}
\begin{equation}\label{eq:prenormalX}
\hat\bX(\xi)=
\begin{cases}
0,\\[6pt]
\mfrac{h^s}{\hat R(\xi)}\bE,\quad \hat R(0)\neq 0, &\text{if }\ k=0,\\[6pt] 
\mfrac{h^sP(u,h)}{Q(u,h)+P(u,h)\hat R(\xi)}\bE,\quad Q(0,0)\neq 0,& \text{if }\ k>0,
\end{cases}
\end{equation}
where $\bE$ is \eqref{eq:bE}, where $P(u,h)$, $Q(u,h)$ are analytic Weierstrass polynomials as in Lemma~\ref{lemma:P} and Lemma~\ref{lemma:Q},
and $\hat R(\xi)=\sum_{\bm m}r_{\bm m}\xi^{\bm m}\in\C\llbracket\xi\rrbracket$ is some formal $\sigma$-invariant germ.
Let $\hat\mu(h):=\sum_{j}r_{jj}h^j$. \label{page:mu}

\begin{proposition}\label{prop:FNFX}~\\
Suppose $\hat\bX$ is as in \eqref{eq:prenormalX}.  
Then there exists a formal $\sigma$-equivariant change of variables $\xi\mapsto\hat\Psi(\xi)=\xi+\hot$,
preserving $h$, that brings $\hat\bX(\xi)$ to
\begin{equation}\label{eq:FNFX}
\hat\bX(\xi)=
\begin{cases}
	0,\\
\mfrac{h^s}{\hat\mu(h)}\bE,\quad \hat\mu(0)=:c^{-1}\neq 0, &\text{if }\ k=0,\\[6pt]
\mfrac{h^sP(u,h)}{Q(u,h)+ P(u,h)\hat\mu(h)}\bE,\quad Q(0,0)=:c^{-1}\neq 0, & \text{if }\ k>0
\end{cases}
\end{equation}
where $\bE$ is \eqref{eq:bE}, with the same analytic Weierstrass polynomials $P(u,h),\ Q(u,h)$, and $2s+kp>0$. 
\end{proposition}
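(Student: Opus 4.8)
The plan is to obtain $\hat\Psi$ as a single flow map supplied by Lemma~\ref{lemma:X}, after first dividing $\hat\bX$ by the common factor $h^s$. Write $\hat\bX=h^s\hat\bX_1$, where $\hat\bX_1=\tfrac1{\hat R}\bE$ if $k=0$ and $\hat\bX_1=\tfrac{P}{Q+P\hat R}\bE$ if $k>0$, and let $\hat\bX_0$ denote the right-hand side of \eqref{eq:FNFX} divided by $h^s$, that is $\hat\bX_0=\tfrac1{\hat\mu(h)}\bE$, resp.\ $\hat\bX_0=\tfrac{P}{Q+P\hat\mu(h)}\bE$. This $\hat\bX_0$ is a genuine formal power-series vector field: when $k=0$ because $\hat\mu(0)=\hat R(0)\neq0$, and when $k>0$ because $Q(0,0)=Q_0(0)\neq0$ whereas $P(0,0)=0$, so $Q+P\hat\mu(h)$ is a unit. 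Since $\hat\bX_0$ is proportional to $\bE$ and $\bE.h=0$, every flow map of $\hat\bX_0$ preserves $h$; the transformation $\hat\Psi$ produced below will be of this kind, so $h\circ\hat\Psi=h$ is automatic.

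The next step is the homological equation. On a monomial $\bE.(\xi_1^{m_1}\xi_2^{m_2})=(m_1-m_2)\,\xi_1^{m_1}\xi_2^{m_2}$, so $\bE$ restricts to a bijection of the space of \emph{off-diagonal} formal series (those with no monomial $\xi_1^j\xi_2^j$) onto itself, inverted by dividing the coefficient of $\xi_1^{m_1}\xi_2^{m_2}$ by $m_1-m_2$. By the very definition $\hat\mu(h)=\sum_j r_{jj}h^j$, the germ $\hat R-\hat\mu(h)$ is precisely the off-diagonal part of $\hat R$, so there is a unique off-diagonal $\hat\beta$ with $\bE.\hat\beta=\hat R-\hat\mu(h)$, namely $\hat\beta=\sum_{m_1\neq m_2}\tfrac{r_{\bm m}}{m_1-m_2}\,\xi^{\bm m}$. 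As $\hat R-\hat\mu(h)$ is $\sigma$-invariant and $\bE.(g\circ\sigma)=-(\bE.g)\circ\sigma$, uniqueness of the off-diagonal primitive forces $\hat\beta\circ\sigma=-\hat\beta$; moreover $\hat\beta(0)=0$ since $\hat\beta$ carries no constant term.

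Now apply Lemma~\ref{lemma:X} to $\hat\bX_0$ and $\hat\alpha:=\hat\beta$. It yields a formal map $\hat\Psi=\exp(t\hat\bX_0)\big|_{t=\hat\beta}$ which preserves $h$, is tangent to the identity (since $\hat\beta(0)=0$ and $\hat\bX_0.\xi_j$ has positive order, $\hat\Psi(\xi)-\xi$ has order $\ge2$), is $\sigma$-equivariant (by the symmetry clause of Lemma~\ref{lemma:X}, as $\hat\beta\circ\sigma=-\hat\beta$), and conjugates $\hat\bX_0$ to $\tfrac{\hat\bX_0}{1+\hat\bX_0.\hat\beta}$. Using $\bE.\hat\beta=\hat R-\hat\mu(h)$ one gets at once
\[\frac{\hat\bX_0}{1+\hat\bX_0.\hat\beta}=\frac{P\,\bE}{Q+P\hat\mu(h)+P\,\bE.\hat\beta}=\frac{P\,\bE}{Q+P\hat R}=\hat\bX_1\]
(read with $P=1$, $Q=0$ when $k=0$), and hence, since $h\circ\hat\Psi=h$,
\[\hat\Psi^*\!\left(\frac{h^sP}{Q+P\hat\mu(h)}\,\bE\right)=\hat\Psi^*\!\left(h^s\hat\bX_0\right)=(h^s\circ\hat\Psi)\,\hat\Psi^*\hat\bX_0=h^s\hat\bX_1=\hat\bX.\]
Thus $\hat\Psi$ brings $\hat\bX$ to \eqref{eq:FNFX}, with $c^{-1}:=\hat\mu(0)$ if $k=0$ and $c^{-1}:=Q(0,0)$ if $k>0$. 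The inequality $2s+kp>0$ is inherited from the prepared form and is exactly what makes \eqref{eq:FNFX} have vanishing linear part.

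The one real subtlety, the step to handle with care, is that one cannot feed $\hat\bX$ and \eqref{eq:FNFX} into Lemma~\ref{lemma:X} directly: the time-change that would be forced is $h^{-s}\hat\beta$, a formal Laurent series in $h$ rather than a power series, so the hypothesis $\hat\alpha\in\C\llbracket\xi\rrbracket$ of that lemma fails; stripping off the common factor $h^s$ first, as above, is precisely what dissolves this. Everything else is the routine bookkeeping of the $\sigma$-symmetry of $\hat\beta$ and of the tangency of $\hat\Psi$ to the identity.
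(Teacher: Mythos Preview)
Your proof is correct and follows essentially the same approach as the paper's own proof: strip off the factor $h^s$, solve the homological equation $\bE.\hat\alpha=\hat R-\hat\mu(h)$ via the explicit off-diagonal primitive $\hat\alpha=\sum_{m_1\neq m_2}\tfrac{r_{\bm m}}{m_1-m_2}\xi^{\bm m}$, and invoke Lemma~\ref{lemma:X}. Your added remarks on why one must divide out $h^s$ before applying the lemma, and on the $\sigma$-antisymmetry of $\hat\alpha$, make explicit what the paper leaves implicit, but the argument is the same.
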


\begin{proof}
Write $\hat R(\xi)=\sum_{m_1,m_2}r_{m_1,m_2}\xi_1^{m_1}\xi_2^{m_2}$, and let $\hat\mu(h):=\sum_{m}r_{mm}h^m$.
Let 
\[ \begin{cases} 
	\hat\bX_1=\mfrac{1}{\hat R}\bE, \qquad \hat\bX_0=\mfrac{1}{\hat\mu}\bE, &\text{if }\ k=0,\\[6pt] 
	\hat\bX_1=\mfrac{P}{Q+P\hat R}\bE, \qquad \hat\bX_0=\mfrac{P}{Q+P\hat\mu}\bE,  &\text{if }\ k>0,
\end{cases} \] 
then
$\bX_1=\mfrac{\hat\bX_0}{1+\hat\bX_0.\hat\alpha}$, for $\bE.\hat\alpha=\hat R-\hat\mu$, i.e. 
$\hat\alpha=\sum_{m_1\neq m_2}\tfrac{r_{m_1,m_2}}{m_1-m_2}\xi_1^{m_1}\xi_2^{m_2}$ satisfies $\hat\alpha\circ\sigma=-\hat\alpha$. 
By Lemma~\ref{lemma:X} there exists a formal $\sigma$-equivariant transformation $\hat\Psi$ preserving $h$, such that $\hat\Psi^*\hat\bX_0=\hat\bX_1$. Then also $\hat\Psi^*(h^s\hat\bX_0)=h^s\hat\bX_1$.
\end{proof}

\begin{lemma}\label{lemma:Q0}
	In the case $k>0$, there exists an analytic  $(\sigma,\Lambda)$-equivariant change of variables $\xi\mapsto\hat\Psi(\xi)=\xi+\hot$,
	preserving $h$, after which the vector field $\hat\bX$ in the form \eqref{eq:FNFX} is such that $Q(u,0)=c^{-1}\in\C\sminus\{0\}$.
\end{lemma}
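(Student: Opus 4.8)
The plan is to exhibit an explicit $(\sigma,\Lambda)$‑equivariant, $h$‑preserving, tangent‑to‑identity transformation whose only effect on the normal form \re{eq:FNFX} is to modify the $h$‑independent part $Q(u,0)$ of the denominator (while keeping $P$ monic Weierstrass of the same degree $k$), and then to invoke Proposition~\ref{prop:FNFX} one more time to restore the exact shape \re{eq:FNFX} without touching $Q$. I treat the diagonal case $\Lambda=\left(\begin{smallmatrix}\lambda&0\\ 0&\lambda^{-1}\end{smallmatrix}\right)$; the case $\Lambda=-\sigma$ is entirely analogous, replacing $\xi_1^p-\xi_2^p$, $u=\xi_1^p+\xi_2^p$, $k$ by $\xi_1^2-\xi_2^2$, $\tilde u=(\xi_1+\xi_2)^2$, $\tilde k$ (and noting that in case (c) there is no $\hat\mu$, so $Q_\star+P_\star R_\star$ is directly a unit).

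For an analytic germ $\beta$ in one variable I would use the transformation
\[
 \hat\Psi_w:\ (\xi_1,\xi_2)\longmapsto\big(\xi_1 e^{w(\xi)},\ \xi_2 e^{-w(\xi)}\big),\qquad
 w(\xi)=(\xi_1^p-\xi_2^p)\,\beta\big(u(\xi)\big).
\]
Since $w$ is $\Lambda$‑invariant and $w\circ\sigma=-w$, the map $\hat\Psi_w$ is $(\sigma,\Lambda)$‑equivariant, preserves $h=\xi_1\xi_2$, and is tangent to the identity to order $p$. One computes $\hat\Psi_{w*}\bE=m\,\bE$ with $m=(1+\bE.w)\circ\hat\Psi_w^{-1}$ a $(\sigma,\Lambda)$‑invariant unit; and, because $\hat\Psi_w$ carries prepared forms to prepared forms, $\hat\Psi_{w*}\hat\bX=h^s\frac{P_\star}{Q_\star+P_\star R_\star}\bE$ is again of the prepared form \re{eq:prenormalX} with $(\sigma,\Lambda)$‑invariant monic Weierstrass polynomial $P_\star$ satisfying $P_\star(u,0)=u^k$, with $Q_\star$ of degree $<k$ in $u$, and with $(\sigma,\Lambda)$‑invariant germ $R_\star$.

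The heart of the argument is to track $Q_\star(u,0)$. Evaluating $\hat\Psi_w$ on the branch $\{\xi_2=0\}$ of $\{h=0\}$, where $u=\xi_1^p$, the induced reparametrization of the $u$‑line is $u\mapsto\Phi(u):=u\,e^{pu\beta(u)}$; write $\Upsilon=\Phi^{-1}$ and $\psi(v):=e^{-kp\,v\,\beta(v)}$, an analytic germ with $\psi(0)=1$. Using $1+pv\beta(v)+pv^2\beta'(v)=v\,\Phi'(v)/\Phi(v)$ and the Weierstrass normalization $\Phi(v)^{-k}=v^{-k}\psi(v)$, a direct computation of $\hat\Psi_{w*}\hat\bX$ at $h=0$ (extracting $h^s$, normalizing $P_\star$ and dividing by $P_\star$) yields
\[
 Q_\star(u,0)\ \equiv\ \frac{Q(\Upsilon(u),0)}{m(u,0)\,(\Upsilon(u)/u)^k}\ \pmod{u^k},
\]
and the substitution $v=\Upsilon(u)$ turns the requirement $Q_\star(u,0)\equiv c^{-1}\pmod{u^k}$ into
\[
 Q(v,0)\ \equiv\ c^{-1}\big(\psi(v)-\tfrac1k v\,\psi'(v)\big)\pmod{v^k}.
\]
Writing $cQ(v,0)=1+\sum_{n=1}^{k-1}q_n v^n$ (recall $Q(0,0)=c^{-1}$) and $\psi(v)=\sum_{n\ge0}\psi_n v^n$ with $\psi_0=1$, the left‑hand side is $\sum_{n\ge0}\tfrac{k-n}{k}\psi_n v^n$, so the condition reads $\tfrac{k-n}{k}\psi_n=q_n$ for $1\le n\le k-1$. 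As $k-n\neq0$ for $0\le n<k$, the polynomial $\psi(v)=1+\sum_{n=1}^{k-1}\tfrac{k\,q_n}{k-n}v^n$ solves it, and $\beta(v):=-\dfrac{\log\psi(v)}{kp\,v}$ is analytic because $\log\psi(v)=O(v)$. This choice of $\beta$ makes $\hat\Psi_w$ the transformation we want, with $Q_\star(u,0)=c^{-1}$.

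Finally, since $R_\star$ is $(\sigma,\Lambda)$‑invariant, Proposition~\ref{prop:FNFX} supplies a further $(\sigma,\Lambda)$‑equivariant, $h$‑preserving, tangent‑to‑identity transformation conjugating $h^s\frac{P_\star}{Q_\star+P_\star R_\star}\bE$ to $h^s\frac{P_\star}{Q_\star+P_\star\hat\mu_\star(h)}\bE$ with the \emph{same} $P_\star,Q_\star$; composing it with $\hat\Psi_w$ gives the change of variables of the statement and brings $\hat\bX$ to the form \re{eq:FNFX} with $Q(u,0)=c^{-1}$. The main technical obstacle is the bookkeeping behind the displayed transformation law for $Q(u,0)$: one must correctly combine the multiplier $m$ with the reparametrization $\Upsilon$ of the $u$‑line and then Weierstrass‑renormalize, the algebra being exactly what collapses to the operator $\psi\mapsto\psi-\tfrac1k v\,\psi'$, whose invertibility modulo $v^k$ (thanks to $k-n\neq0$) is precisely what permits the normalization.
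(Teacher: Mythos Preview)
Your proof is correct and follows the same overall strategy as the paper: construct an analytic $(\sigma,\Lambda)$-equivariant, $h$-preserving transformation that makes $Q_\star(u,0)=c^{-1}$ on the leaf $\{h=0\}$, then invoke Proposition~\ref{prop:FNFX} (which preserves $P,Q$) to restore the form~\eqref{eq:FNFX}. The route to the transformation, however, is different. The paper works with the one-variable restriction $\bX_1=\tfrac{u^k}{Q(u,0)+\hat\mu(0)u^k}\,pu\tdd{u}$ and its target $\bX_0$ (same with $Q(0,0)$ in place of $Q(u,0)$), writes $\bX_1=\bX_0/(1+\bX_0.\alpha)$ with the explicit $\alpha(u)=\tfrac{(-1)^{j+1}}{p}\sum_{n=1}^{k-1}\tfrac{Q_n(0)}{(k-n)u^{k-n}}$, and then obtains the transformation as the time-1 flow of a $(\sigma,\Lambda)$-equivariant vector field $\bY$ in $(t,\xi)$ built from this $\alpha$ via Lemma~\ref{lemma:X}. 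You instead write down the closed-form ansatz $\hat\Psi_w(\xi)=(\xi_1e^{w},\xi_2e^{-w})$ with $w=(\xi_1^p-\xi_2^p)\beta(u)$---which is nothing but $\exp(t\bE)\big|_{t=w}$, i.e.\ Lemma~\ref{lemma:X} applied to $\bX_0=\bE$ rather than to $\bX_0=\tfrac{u^k}{c^{-1}+\hat\mu(0)u^k}\bE$---and compute the induced action on $Q(\cdot,0)$ directly, reducing the problem to the invertibility of the operator $\psi\mapsto\psi-\tfrac{v}{k}\psi'$ on $\C[v]/(v^k)$. The paper's choice makes $\alpha$ immediate (no linear system to solve) but requires lifting the one-variable flow back to $(\sigma,\Lambda)$-equivariant form on $\C^2$; your ansatz is manifestly $(\sigma,\Lambda)$-equivariant from the start but trades this for the algebra of the Weierstrass renormalization, which you handle cleanly. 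One minor phrasing point: the ``change of variables of the statement'' should be the analytic $\hat\Psi_w$ alone; the subsequent formal step from Proposition~\ref{prop:FNFX} is what puts $\hat\bX$ ``in the form~\eqref{eq:FNFX}'' (the paper's proof has the same structure).
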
 
\begin{proof}
	First consider the vector fields 
	\begin{align*}
		\bX_1&=\mfrac{P(u,0)\strut}{Q(u,0)+\hat\mu(0)P(u,0)}\bE=\mfrac{u^k\strut}{Q(u,0)+\hat\mu(0)u^k} (-1)^{j}pu\tdd{u},\\
		\bX_0&=\mfrac{P(u,0) }{Q(0,0)+\hat\mu(0)P(u,0) }\bE=\mfrac{u^k}{Q(0,0)+\hat\mu(0)u^k } (-1)^{j}pu\tdd{u},
	\end{align*}
	in the variable $u=\xi_{3-j}^p$ on the leaf $\{\xi_j=0\}$, $j=1,2$. 
	We then write $\bX_1=\mfrac{\bX_0}{1+\bX_0.\alpha}$ where
	\[\bE.\alpha(u)=(-1)^{j}pu\tdd{u}\alpha(u)=\tfrac{Q(u,0)-Q(0,0)}{u^k}=\tfrac{Q_1(0)}{u^{k-1}}+\ldots+\tfrac{Q_{k-1}(0)}{u},\]
	hence we set
	 $\alpha(u):=-\frac{(-1)^{j}}{pu^k}\Big[\tfrac{Q_1(0)}{(k-1)}u-\ldots-\tfrac{Q_{k-1}}{1}u^{k-1}\Big]$.
	The transformation $u\mapsto\psi(u):=u\circ\exp(-\bZ)\big|_{t=1}$, given by the flow of the vector field
	\begin{equation*}
		\bZ=\tdd{t}-\mfrac{\alpha\bX_0}{1+t\bX_0.\alpha }=
		\tdd{t}+\tfrac{\tfrac{Q_1(0)}{k-1}u+\ldots+\tfrac{Q_{k-1}(0)}{1}u^{k-1}}{(1-t)Q(0,0)+tQ(u,0)+\hat\mu(0)u^k \phantom{\big|}} u\tdd{u}, 
	\end{equation*}
is analytic, and by Lemma~\ref{lemma:X} it conjugates the vector fields $\bX_0$ and $\bX_1=\psi^*\bX_0$. 
	
	Considering the vector field $\hat\bX$ \eqref{eq:FNFX}, we have $\bE=p(\xi_1^p-\xi_2^p)\dd{u}$, where $u=\xi_1^p+\xi_2^p$, and 
	$(\xi_1^p-\xi_2^p)^2=u^2-4h^p$ agrees with $u^2$ on the zero level set $\{h=0\}$, hence $u^2\dd{u}$ agrees with $\tfrac1p(\xi_1^p-\xi_2^p)\bE$, 
	so we can as well replace the above vector field $\bZ$ in $(t,u)$ by the $(\sigma,\Lambda)$-equivariant
	\begin{equation*}
		\bY=\mfrac{\partial}{\partial t}+\mfrac{\tfrac{Q_1(0)}{k-1}+\ldots+\tfrac{Q_{k-1}(0)}{1}u^{k-2}}{(1-t)Q(0,0)+tQ(u,0)+\hat\mu(0)u^k \phantom{\big|}} \tfrac1p(\xi_1^p-\xi_2^p)\bE, 
	\end{equation*}
	in $(t,\xi)$.
	Let $\xi'=\Psi(\xi):=\xi\circ\exp(-\bY)\big|_{t=1}$, then the transform $\hat\bX'$ of $\hat\bX=\Psi^*(\hat\bX')$ in the variable $\xi'$, is such that 
	the restriction of $h^{-s}\hat\bX'$ to $\{h=0\}$ agrees with $\bX_0$,
	Namely, its polynomial $Q'(u',h)$ is such that $Q'(u',0)=c^{-1}=:Q(0,0)$.
	A further formal normalization of Proposition~\ref{prop:FNFX}, brings $\bX'$ to a new form \eqref{eq:FNFX}.
\end{proof}

Either of $\xi_1$ or $\xi_2$ defines a coordinate on each regular leaf $\{h=\const\neq 0\}$.
Define 
\begin{equation}\label{eq:FNFdual}
\hat\bX^{-1}=
\begin{cases}
h^{-s}\hat\mu(h)\bE^{-1}, &\text{if }\ k=0,\\[6pt]
\left(\mfrac{Q(u,h)}{h^sP(u,h)}+h^{-s}\hat\mu(h)\right)\bE^{-1},& \text{if }\ k>0,
\end{cases}
\end{equation}
where 
\[ \bE^{-1}=(-1)^{j-1}\tfrac{d\xi_j}{\xi_j}\mod\tfrac{dh}{h},\qquad j=1,2, \]
to be a formal 1-form dual to $\hat\bX$, defined modulo the formal forms vanishing along the foliation.	 

The fundamental group of each leaf $\{h=\const\neq 0\}$ is generated by a simple loop, encircling $0$ in the coordinate $\xi_j$ on the leaf, in positive direction if $j=1$, or negative direction if $j=2$.
Choosing this loop in a way that it either doesn't encircle any zero of $P$ on the leaf, or encircles them all, then 
the ``formal period'' of $\hat\bX^{-1}$ along this loop should be ``equal'' to $2\pi ih^{-s}\hat\mu(h)$.
While we won't give a precise meaning to the notion of a ``formal period'', we will show that it is a formal invariant.

\begin{lemma}\label{lemma:mu}
The formal meromorphic series $h^{-s}\hat\mu(h)$ is an invariant with respect to formal transformations $\xi\mapsto\hat\Psi(\xi)$ preserving the first integral $h$ and orientation
(i.e. such that $h\circ\hat\Psi=h$ and $\det D\hat\Psi(0)=1$).
\end{lemma}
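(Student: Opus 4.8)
The statement asserts that $h^{-s}\hat\mu(h)$ is unchanged under any formal diffeomorphism $\hat\Psi$ preserving $h$ with $\det D\hat\Psi(0)=1$. The natural strategy is to exhibit $h^{-s}\hat\mu(h)$ intrinsically, as a "formal period" that depends only on the foliation by level curves of $h$ together with the vector field $\hat\bX$, and then to observe that this period transforms correctly. First I would fix the normalized infinitesimal generator $\hat\bX$ of $\phi^{\circ p}$ in the prepared form of Proposition~\ref{prop:FNFX} (resp. Lemma~\ref{lemma:Q0}), together with its dual $1$-form $\hat\bX^{-1}$ of \eqref{eq:FNFdual}. Since $\hat\Psi$ preserves $h$, it maps leaves $\{h=\const\}$ to leaves, hence maps the generating loop of $\pi_1$ of a leaf to a loop homotopic to the generator (the sign fixed by $\det D\hat\Psi(0)=1$, which guarantees $\hat\Psi$ preserves the coorientation/orientation of $\xi_j$ up to the ambiguity $\xi_1\leftrightarrow\xi_2$ that flips the sign of both the loop and $\bE^{-1}$ simultaneously). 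The transformed vector field $\hat\bX'=\hat\Psi^*\hat\bX$ has its own dual $1$-form, and because pullback commutes with $d\xi_j/\xi_j$ modulo $dh/h$ — i.e. $\hat\Psi^*(\bE^{-1})=\bE^{-1}\bmod \tfrac{dh}{h}$ when $h\circ\hat\Psi=h$ and $\det D\hat\Psi(0)=1$ — the "period" of the dual form along the generating loop is preserved.

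Concretely, the key identity to extract is: if $\hat\bX=\tfrac{1}{g(\xi)}\bE$ with $g$ a formal $\sigma$-invariant series (here $g=h^{-s}\mu(h)$ when $k=0$, and $g=h^{-s}(Q+P\hat\mu)/P$ when $k>0$), then $\hat\bX^{-1}=g\,\bE^{-1}$, and the coefficient of $\tfrac{d\xi_j}{\xi_j}$ in $g\,\bE^{-1}$ has a well-defined "residue-type" part. I would make the notion of "formal period" precise as follows: expand $g(\xi)$ and $\hat\bX^{-1}$ in the local coordinate $\xi_j$ on a leaf $\{h=c\}$ (substituting $\xi_{3-j}=c/\xi_j$), and define the period to be $2\pi i$ times the coefficient of $\tfrac{d\xi_j}{\xi_j}$ after this substitution — more invariantly, the "constant term along the diagonal" of $g$. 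For the normal form this constant term is exactly $h^{-s}\hat\mu(h)$: indeed $\hat\mu(h)=\sum_j r_{jj}h^j$ is by definition the diagonal part of $\hat R$, and in the expression $\tfrac{Q+P\hat\mu}{P}=\hat\mu+\tfrac{Q}{P}$ the term $Q/P$ contributes only nonconstant (in $\xi_j$) terms after the substitution $\xi_{3-j}=h/\xi_j$ along a leaf, since $Q(u,h)$ is a polynomial of degree $<kp$ in $u=\xi_1^p+\xi_2^p$ while $P$ has degree $kp$, so $Q/P$ expands in negative powers with no constant term. Thus the formal period equals $2\pi i\,h^{-s}\hat\mu(h)$.

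The main step is then to check invariance of this "diagonal part" functional under $h$-preserving, orientation-preserving formal changes of coordinates. I would write $\hat\Psi(\xi)=(\xi_1 a(\xi),\,\xi_2 b(\xi))$ with $a(\xi)b(\xi)$ such that $\xi_1\xi_2 ab=\xi_1\xi_2$, i.e. $b=1/a$, and $a(0)=1$ (from $\det D\hat\Psi(0)=1$ after the $\sigma$-ambiguity is fixed). Then $\hat\Psi^*\hat\bX = \tfrac{1}{g\circ\hat\Psi}\,\hat\Psi^*\bE$ and $\hat\Psi^*\bE = \bE + (\text{terms tangent to the foliation, i.e. multiples of }\bE.h=0\text{'s complement})$; more precisely $\hat\Psi^*\bE=\bE$ exactly because $\bE=\xi_1\partial_{\xi_1}-\xi_2\partial_{\xi_2}$ is the generator of the torus action fixing $h$ and $\hat\Psi$ commutes with the grading modulo lower order — one computes directly $D\hat\Psi(\bE.\xi)$ and checks it equals $\bE.\hat\Psi$ modulo the kernel, using $h\circ\hat\Psi=h$. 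The dual form transforms as $\hat\Psi^*(g\bE^{-1})=(g\circ\hat\Psi)\bE^{-1}\bmod\tfrac{dh}{h}$, and the diagonal part of $g\circ\hat\Psi=g(\xi_1 a,\xi_2 a^{-1})$ equals the diagonal part of $g$, because substituting and collecting the terms with equal powers of $\xi_1$ and $\xi_2$: writing $g=\sum g_{m_1 m_2}\xi_1^{m_1}\xi_2^{m_2}$, the monomial $\xi_1^{m_1}\xi_2^{m_2}$ of $g$ produces, after multiplying by $a^{m_1-m_2}$, only corrections that are off-diagonal unless $m_1=m_2$ (since $a(0)=1$ and $a^{m_1-m_2}$ contributes the "extra" balancing); one organizes this as: the diagonal part of $g\circ\hat\Psi$, in the variable $h$, is the integral $\tfrac{1}{2\pi i}\oint g(\xi_1 a,h/(\xi_1 a))\,\tfrac{d\xi_1}{\xi_1}$ along $|\xi_1|=\text{small}$, and by the substitution $\eta_1=\xi_1 a(\xi_1,h/\xi_1)$ (a formal change of the loop variable fixing $0$) this equals $\tfrac{1}{2\pi i}\oint g(\eta_1,h/\eta_1)\tfrac{d\eta_1}{\eta_1}$, the diagonal part of $g$.

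The hard part will be making the "formal period / diagonal part" operation rigorous as a formal-series manipulation — in particular justifying the contour-integral / substitution argument at the level of formal power series (after restriction to a leaf $h=\const$ these become formal Laurent series in $\xi_1$, and one must argue the change of variable $\eta_1=\xi_1 a$ is invertible as a formal series and preserves the residue/constant-term functional, which is the formal analogue of invariance of residues under biholomorphism). Everything else — $\hat\Psi^*\bE=\bE$ modulo foliation-tangent terms, the identification of the diagonal part of the normal form with $h^{-s}\hat\mu(h)$, and the sign bookkeeping — is routine. Alternatively, and perhaps more cleanly, I would phrase the whole argument via Lemma~\ref{lemma:X}: the normal form depends on $\hat\mu$ only through the power-log conjugacy of Remark~\ref{remark:power-log}, $\hat\Psi=\exp(tcP\bE)|_{t=\frac{\hat\mu}{2p}(\log\xi_1^p-\log\xi_2^p)}$; two normal forms with different $\hat\mu,\hat\mu'$ differ by such a power-log term, which is genuinely transcendental (not a formal power series) unless $\hat\mu=\hat\mu'$, so no formal $h$-preserving diffeomorphism can relate them — this reduces the lemma to the uniqueness assertion already contained in Theorem~\ref{thm:1}(b).
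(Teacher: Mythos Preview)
Your main approach --- identify $h^{-s}\hat\mu(h)$ as the formal residue of $\hat\bX^{-1}$ at $\xi_1=0$ on a leaf $\{h=\const\neq 0\}$ and then show this residue is invariant under $h$-preserving formal changes of variable --- is exactly the paper's approach. However, two steps in your execution are wrong and happen to cancel, and your proposed alternative is circular.

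\textbf{The pullback of $\bE$.} The claim $\hat\Psi^*\bE=\bE$ is false. With $\hat\Psi=(\xi_1 a,\xi_2 a^{-1})$ one computes $\hat\Psi^*\bE=(1+\bE.\log a)^{-1}\bE$ on the leaf, and correspondingly $\hat\Psi^*\bE^{-1}=\bE^{-1}+d\log a$ modulo $\tfrac{dh}{h}$. The extra term $d\log a$ is exact (since $a(0)\neq 0$ makes $\log a$ a formal power series in $\xi$), and \emph{this} exactness is what guarantees residue invariance. Relatedly, your assertion that the ``diagonal part of $g\circ\hat\Psi$'' equals the ``diagonal part of $g$'' is false in general: take $g=\xi_1$, $a=1+\xi_2$, then $g\circ\hat\Psi=\xi_1+h$ has diagonal part $h\neq 0$. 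What is actually invariant is the residue of the \emph{pulled-back 1-form} $(g\circ\hat\Psi)\hat\Psi^*\bE^{-1}$, not of $(g\circ\hat\Psi)\bE^{-1}$. Your contour-integral substitution $\eta_1=\xi_1 a$ in fact computes the former (since $\tfrac{d\eta_1}{\eta_1}=\hat\Psi^*\tfrac{d\xi_1}{\xi_1}$), so the computation is salvageable once you drop the incorrect framing. The paper does this cleanly: it notes that $d(y\circ\hat\Psi)^n$ and $d\log(y\circ\hat\Psi)-d\log y=d(p\log\hat u)$ are both formally exact, hence have zero formal residue, so the residue of $\hat\Psi^*\hat\bX^{-1}$ at $y=0$ equals that of $\hat\bX^{-1}$. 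Also, $a(0)=1$ is neither implied by $\det D\hat\Psi(0)=1$ (which is automatic from $h\circ\hat\Psi=h$ once the $\sigma$-swap is excluded) nor needed; only $a(0)\neq 0$ matters.

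\textbf{The alternative via Remark~\ref{remark:power-log}.} This is circular: the uniqueness statement in Theorem~\ref{thm:1}(b) is proved via Theorem~\ref{thm:FNFX1}(2), whose proof \emph{invokes} Lemma~\ref{lemma:mu}. You cannot appeal to it here.
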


\begin{proof}
For $h\neq 0$, let $y=\xi_1^p$, $\tfrac{h^p}{y}=\xi_2^p$. Then the restriction of $\hat\bX^{-1}$ on the leaf $h$ is
\begin{equation*}
\hat\bX^{-1}=
\begin{cases}
\tfrac1{p}\,h^{-s}\hat\mu(h)\tfrac{dy}{y}, &\text{if }\ k=0,\\[6pt]
\tfrac1{p}\,h^{-s}\mfrac{y^{k-1}Q(y+\tfrac{h^p}{y},\,h)}{y^{k}P(y+\tfrac{h^p}{y},\,h) \vphantom{\big|}}dy+\tfrac1{p}\,h^{-s}\hat\mu(h)\tfrac{dy}{y},& \text{if }\ k>0,
\end{cases}
\end{equation*}
where the term 
\[ \textstyle
	\mfrac{y^{k-1}Q(y+\frac{h^p}{y},\,h)}{y^{k}P(y+\frac{h^p}{y},\,h) \vphantom{\big|}}=
\mfrac{Q_{k-1}(h)(h^p+y^2)^{k-1}+\ldots+Q_0(h)y^{k-1}}{(h^p+y^2)^{k}+P_{k-1}(h)y(h^p+y^2)^{k-1}+\ldots+P_0(h)y^{k} \vphantom{\big|}}, \]
is  analytic at $y=0$ for each $h\neq 0$ fixed.
So in both cases $\tfrac1{p}\,h^{-s}\hat\mu(h)$ is the formal residue of the form $\hat\bX^{-1}$ at $y=0$.
If  $\xi\mapsto\hat\Psi(\xi)$ is a formal transformation preserving the first integral $h$ and orientation,
then 
$\xi_1\circ\hat{\Psi}=\xi_1\cdot\hat u(\xi)$ for some formal series $\hat u(\xi)$ in $\xi=(y^{\frac1p},hy^{-\frac1p})$ with $\hat u(0)\neq0$,
hence $(y\circ\hat\Psi)^n=y^n\cdot\hat u^{np}(\xi)$ for every positive integer $n$, and $\log(y\circ\hat\Psi)=\log y+ p\log\hat u(\xi)$, where $\log\hat u(\xi)$ is again a formal power series in $\xi=(y^{\frac1p},hy^{-\frac1p})$.
Therefore the formal residue of $d(y\circ\hat\Psi)^n$ is null for every integer $n$, and so is the formal residue of
$d\log(y\circ\hat\Psi)-d\log y$, which means that the pulled-back form $\hat\Psi^*\hat\bX^{-1}$ has the same formal residue as $\hat\bX^{-1}$ for every $h\neq 0$.
\end{proof}

\GRN
\begin{lemma}\label{lemma:automorphismofX}~
Assume $\hat\Psi\in\hatDiff_{\id}(\C^2,0)$ preserves the first integral $h=\xi_1\xi_2$ and a vector field $\hat\bX\neq0$ \eqref{eq:FNFX} with $2s+kp>0$.
Then $\hat\Psi(\xi)=\exp(h^{-s}\hat\beta(h)\hat\bX)$ for some formal power series $\hat\beta(h)$.
In particular, if $\hat\Psi$ is also $\sigma$-equivariant then it is the identity.
\end{lemma}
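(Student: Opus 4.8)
The strategy is to reduce the statement to the one-dimensional residue computation that is already implicit in Lemma~\ref{lemma:mu}, and then exploit the explicit form of the flow maps of $\hat\bX$ provided by Lemma~\ref{lemma:X}. First I would restrict everything to a generic leaf $\{h=\const\neq0\}$. On such a leaf, using the coordinate $y=\xi_1^p$ (so $\xi_2^p=h^p/y$), the vector field $h^{-s}\hat\bX$ becomes, up to the nonvanishing scalar $h^s$, a rational vector field in $y$ whose only singularities are $y=0,\infty$ and the $kp$ zeros of $P$; in the coordinate $y$ it is a meromorphic multiple of $y\,\tdd{y}$, and its flow is single-valued. Any $\hat\Psi$ preserving $h$ and $\hat\bX$ must, leaf by leaf, commute with this vector field. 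The flow of $h^{-s}\hat\bX$ on the leaf is a one-parameter group $\exp(t\,h^{-s}\hat\bX)$, and by general principles (Lemma~\ref{lemma:X}, or directly the fact that the centralizer of a nontrivial vector field on $\CP^1$ with these singularities is exactly its own flow) the germ $\hat\Psi$ restricted to the leaf equals $\exp(t(h)\,h^{-s}\hat\bX)$ for some leafwise constant $t(h)$; since $\hat\Psi$ is a formal diffeomorphism of $(\C^2,0)$ preserving $h$, the parameter $t(h)$ must be a formal power series $h^{-s}\hat\beta(h)$ — the power $h^{-s}$ is forced precisely so that $h^{-s}\hat\beta(h)\hat\bX$ is a genuine (nonsingular-in-$t$) formal vector field as in the displays of Lemma~\ref{lemma:X}. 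This gives the first assertion $\hat\Psi(\xi)=\exp\big(h^{-s}\hat\beta(h)\hat\bX\big)$.

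To make the above rigorous I would proceed as follows. Write $\hat\Psi$ in the $(\sigma,\Lambda)$-adapted coordinates: since $h\circ\hat\Psi=h$, necessarily $\xi_1\circ\hat\Psi=\xi_1\hat v(\xi)$ and $\xi_2\circ\hat\Psi=\xi_2\hat v(\xi)^{-1}$ for a formal unit $\hat v$, i.e. $\hat\Psi$ lies in the "leafwise" group acting by $\xi\mapsto(\xi_1\hat v,\xi_2\hat v^{-1})$. Writing $\hat v=\exp(\hat w)$ with $\hat w=\hot$, we have $\hat\Psi=\exp(\hat w\,\bE)$ with $\bE=\xi_1\tdd{\xi_1}-\xi_2\tdd{\xi_2}$ and $\hat w$ a formal series. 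Now $\hat\Psi^*\hat\bX=\hat\bX$ is equivalent to $[\hat w\bE,\hat\bX]=0$ after exponentiation; since $\hat\bX=\frac{g(\xi)}{1}\bE$ for the appropriate meromorphic-looking factor $g$ (with $g=h^s/\hat\mu(h)$ or $g=h^sP/(Q+P\hat\mu)$), and $\bE$ commutes with any function of $h$, the bracket $[\hat w\bE,g\bE]=(\bE.\hat w\cdot g-\hat w\,\bE.g)\bE$ vanishes iff $\bE.(\hat w/g)=0$, i.e. iff $\hat w/g$ is a function annihilated by $\bE$, hence a formal series in $h$ alone (here I use that $\hat w$, being $\hot$, is divisible by $g$ appropriately, using $2s+kp>0$ so that $g$ has positive order along the divisor — this is the point that needs care). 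Thus $\hat w=h^{-s}\hat\beta(h)\,g$ for a formal power series $\hat\beta(h)$, which is exactly $\hat w\bE=h^{-s}\hat\beta(h)\hat\bX$, and $\hat\Psi=\exp(h^{-s}\hat\beta(h)\hat\bX)$.

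For the second assertion, suppose in addition $\hat\Psi$ is $\sigma$-equivariant. Then the generating function $\hat w$ satisfies $\hat w\circ\sigma=-\hat w$ (since $\sigma^*\bE=-\bE$ and $\sigma^*\hat\bX=-\hat\bX$, the equivariance $\sigma\hat\Psi=\hat\Psi\sigma$ forces the generator $\hat w\bE$ to be $\sigma$-equivariant, hence $\hat w\circ\sigma=-\hat w$). On the other hand $\hat w=h^{-s}\hat\beta(h)g$ with $g\circ\sigma=g$ and $h\circ\sigma=h$, so $\hat w\circ\sigma=\hat w$. Therefore $\hat w=-\hat w$, i.e. $\hat w=0$ and $\hat\Psi=\id$.

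The main obstacle is the divisibility step in the middle paragraph: one must justify that the formal solution $\hat w$ of $\bE.(\hat w/g)=0$ actually yields a $\hat w$ of the form $h^{-s}\hat\beta(h)g$ with $\hat\beta$ a \emph{power series} (no negative powers of $h$), i.e. that the pole of $h^{-s}g^{-1}$ along $\{h=0\}$ is compensated by $\hat w$ vanishing to sufficiently high order there. This is where the hypothesis $2s+kp>0$ (so $\hat\bX\neq0$ and its order along the divisor is controlled) and the fact that $\hat\Psi$ is a bona fide diffeomorphism of $(\C^2,0)$ — not merely a leafwise automorphism — enter. I would handle it by the same bookkeeping as in the proof of Lemma~\ref{lemma:X}: expand $\hat w=\sum_{m_1\neq m_2}w_{m_1m_2}\xi_1^{m_1}\xi_2^{m_2}$ (the diagonal part is killed by the equivariance/normalization), note that $\bE.(\hat w/g)=0$ forces $\hat w/g$ to be diagonal, and read off that $\hat w/g$ is a series in $h=\xi_1\xi_2$ whose order is at least $s$ by comparing lowest-order terms, giving $\hat\beta(h)=h^s\cdot(\hat w/g)$ a power series.
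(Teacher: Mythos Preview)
Your approach is the paper's: pass to the infinitesimal generator, show it commutes with $\hat\bX$, and conclude it is a function of $h$ times $h^{-s}\hat\bX$. There is, however, one real slip in your middle paragraph: from $\hat v=\exp(\hat w)$ you assert $\hat\Psi=\exp(\hat w\,\bE)$, but the time-one flow of $\hat w\bE$ is \emph{not} $\xi\mapsto(e^{\hat w}\xi_1,e^{-\hat w}\xi_2)$ unless $\bE.\hat w=0$ (try $\hat w=\xi_1^2$ to see the discrepancy already at order~5). The fix, which is precisely what the paper does, is to invoke the formal infinitesimal generator of $\hat\Psi\in\hatDiff_{\id}(\C^2,0)$ from Section~\ref{sec:3infgen} directly: since this generator annihilates $h$ it has the form $\hat G\bE$ for some $\hat G\in\C\llbracket\xi\rrbracket$, and $\hat\Psi^*\hat\bX=\hat\bX$ gives $[\hat G\bE,\hat\bX]=0$. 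After this correction your bracket computation and the $\sigma$-equivariance argument for the second assertion are fine.

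On the divisibility issue you flag (which the paper simply asserts): a clean way to see that $\hat G/f\in\C\llbracket h\rrbracket$, with $f=P/(Q+\hat\mu P)$, is to work in the ring of formal power series in $\xi_2$ with coefficients Laurent series in $\xi_1$. There $f$ is a unit (its restriction to $\xi_2=0$ has leading term a nonzero multiple of $\xi_1^{kp}$), so $\hat G/f$ is a well-defined element, and the kernel of $\bE$ on that ring consists exactly of series in $h=\xi_1\xi_2$ with nonnegative exponents (since $\xi_2$-exponents are $\geq 0$). Your inductive/lowest-order approach also works, but the parenthetical ``the diagonal part is killed by the equivariance/normalization'' is out of place --- the first assertion assumes no $\sigma$-equivariance, and $\hat G=\hat\beta(h)f$ will in general have nonzero diagonal part.
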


\begin{proof} 
	As $h\circ\hat\Psi=h$, so the infinitesimal generator of $\hat{\Psi}$ takes the form $\hat G(\xi)\bE$ for some $\hat G(\xi)\in\C\llbracket\xi\rrbracket$, and commutes with $\hat\bX$. This means that $\bE.\frac{\hat G(\xi)}{\frac{P(u,h)}{Q(u,h)+\hat\mu(h)P(u,h)}}=0$,
	hence $\hat G(\xi)=\hat\beta(h)\frac{P(u,h)}{Q(u,h)+\hat\mu(h)P(u,h)}$ for some formal series $\hat\beta(h)$. 
\end{proof}
\FGRN

The following formal normal form is somewhat similar to the normal form of Kostov \cite{Kostov} for parametric deformation of vector fields in one variable (see also 
\cite{Klimes-Rousseau3},
\cite[paragraph 5.5]{Ribon-f} and \cite{Rousseau, Rousseau-Teyssier}). These normal forms are essentially unique -- this is an analogue of \cite[Theorem 3.5]{Rousseau-Teyssier}.

\begin{thm}[Canonical formal normal for of the infinitesimal generator]~\label{thm:FNFX1}
\begin{enumerate}[wide=0pt, leftmargin=\parindent]
\item
	Let $\hat\bX$ be the vector field \eqref{eq:FNFX}.
	Then there exists a formal $(\sigma,\Lambda)$-equivariant change of variables $\hat\Psi(\xi)\in\hatDiff_{\id}(\C^2,0)$  preserving the foliation by $\{h=\const\}$ which brings $\hat\bX$ to the form
	\begin{equation}\label{eq:FNFX1}
		\hatbXnf(\xi)=
		\begin{cases}
		0,\\
		ch^s\bE, &\text{if }\ k=0,\\[6pt]
		ch^s\frac{P(u,h)}{1+c\hat\mu(h)\,P(u,h)}\bE,& \text{if }\ k>0,\quad \Lambda\ \text{diagonal},\\[6pt]
		ch^s\,\tilde P(\tilde u,h)(\xi_1\!+\!\xi_2)\bE,& \text{if }\ k=\tilde k+\frac12>0,\quad \Lambda=-\sigma,
		\end{cases}
	\end{equation}
	with analytic polynomial 
\[\begin{cases}
	P(u,h)=u^k+ P_{k-1}(h)u^{k-1}+\ldots+P_0(h),\quad P(u,0)=u^k,& u=\xi_1^p+\xi_2^p,\\	
	\tilde P(\tilde u,h)=\tilde u^{\tilde k}+ \tilde P_{\tilde k-1}(h)\tilde u^{\tilde k-1}+\ldots+\tilde P_0(h),\quad \tilde P(\tilde u,0)=\tilde u^{\tilde k},
	&\tilde u=(\xi_1+\xi_2)^2,
 	\end{cases}\]
possibly different than the one in Proposition~\ref{prop:FNFX},
	and $\hat\mu(h)$  (same as the one in Proposition~\ref{prop:FNFX}), and $c\in\C\sminus\{0\}$. Furthermore, in the cases $k>0$, $\hat\Psi$ is holomorphic in $(\mathbb{C}^2,0)$.
		
\item
	Assume that two formal vector fields of the form \eqref{eq:FNFX1} are equivalent by means of a formal transformation $\hat\Psi(\xi)\in\hatDiff_{\id}(\C^2,0)$  preserving the foliation by $\{h=\const\}$. Then the two vector fields are equal.
		
\item A general $\sigma$-equivariant formal transformation  $\hat\Psi(\xi)\in\hatDiff(\C^2,0)$  between two vector fields $\hatbXnf,\ \hatbXnf'\neq 0$ \eqref{eq:FNFX1}
	preserving the foliation by $\{h=\const\}$ is a linear transformation $\hat\Psi:\xi\mapsto\zeta\cdot\sigma^\epsilon\xi$, where 
	$\epsilon=0,1$, $\zeta^{kp+2s}=(-1)^{\epsilon}\frac{c'}{c}$.
\end{enumerate}
\end{thm}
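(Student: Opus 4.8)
The plan is to establish the three assertions in turn, building on the prepared form \eqref{eq:prenormalX}--\eqref{eq:FNFX} together with Lemma~\ref{lemma:Q0}, and reducing the analytic content to the rigidity of one-dimensional and one-parametric parabolic normal forms, in the spirit of \cite{Rousseau-Teyssier}. When $k=0$ assertion (1) is proved directly: with $\hat\bX=\tfrac{h^s}{\hat\mu(h)}\bE$ and $\hat\mu(0)=c^{-1}$, choose the formal power series $g(h)=h+\hot$ determined by $g(h)^s=\tfrac1c\tfrac{h^s}{\hat\mu(h)}$ and set $\hat\Psi(\xi):=a(h)\,\xi$ with $a(h):=(g(h)/h)^{1/2}=1+\hot$. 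This $\hat\Psi$ is tangent to the identity and $(\sigma,\Lambda)$-equivariant (both $\sigma$ and $\Lambda$ preserve $h$), it maps the foliation $\{h=\const\}$ to itself reparametrising $h$ by $g$, and since $a$ is a function of the first integral $h$ of $\bE$ it commutes with the flow of $\bE$; hence $\bE$ is unchanged while the coefficient becomes $c\,g(h)^s$, and renaming $g(h)$ back to $h$ gives $\hatbXnf=c\,h^s\bE$. In general $g$, hence $\hat\Psi$, diverges, which is why holomorphy is claimed only for $k>0$.

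For (1) with $k>0$ I would treat $\Lambda$ diagonal (the case $\Lambda=-\sigma$ being identical after writing $P(u,h)=u\,\tilde P(u^2,h)$ as in \eqref{eq:u-sigma}), first using Lemma~\ref{lemma:Q0} to reduce to $\hat\bX=\tfrac{h^sP(u,h)}{Q(u,h)+P(u,h)\hat\mu(h)}\bE$ with $Q(u,0)=c^{-1}$, $P(u,0)=u^k$, and then passing to the dual $1$-form \eqref{eq:FNFdual}. On a regular leaf $\{h=h_0\}$, written in the coordinate $y=\xi_1^p$ so that $\bE=p\,y\partial_y$, $u=y+h_0^p/y$ and $\sigma$ acts by $y\mapsto h_0^p/y$, this is a rational $1$-form on $\CP^1_y$ with simple poles at the $2k$ zeros of $P$ (in $y$, arranged in $\sigma$-pairs) and residues $\pm\tfrac1p h_0^{-s}\hat\mu(h_0)$ at the two points $y=0,\infty$ --- the latter being invariant under any foliation- and end-preserving change, which carries $\hat\mu$ along unchanged. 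The target has the same shape with $P$ replaced by a Weierstrass polynomial $P'$, $P'(u,0)=u^k$; its $k$ free coefficients $P'_0(h),\dots,P'_{k-1}(h)\in h\,\C\{h\}$ govern the $k$ independent residues (modulo the $\sigma$-pairing) of the rational part $\tfrac{c^{-1}}{P'}\tfrac{dy}{y}$, and one solves the $k$ equations matching these to the residues of $\tfrac{Q}{P}\tfrac{dy}{y}$. At $h=0$ the two rational parts already coincide ($\tfrac{Q(u,0)}{P(u,0)}=\tfrac{c^{-1}}{u^k}=\tfrac{c^{-1}}{P'(u,0)}$), so the system is solvable near $h=0$ by a perturbation argument and yields $P'$ analytic in $h$. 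Once the residues agree, the remaining ``tangential'' freedom is a leafwise biholomorphism of $\CP^1_y$ fixing $y=0,\infty$, which globalises through Lemma~\ref{lemma:X} applied to a suitable $(\sigma,\Lambda)$-(anti)invariant $\hat\alpha$ solving $\bE.\hat\alpha=\tfrac1{h^s}\big(\tfrac{Q}{P}-\tfrac{c^{-1}}{P'}\big)$; one checks that the resulting $\hat\Psi$ depends analytically on $h$ up to and including $h=0$ and never involves $\hat\mu$ (compare Remark~\ref{remark:power-log}), so $\hat\Psi$ is holomorphic on $(\C^2,0)$.

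For (2), let $\hat\Psi\in\hatDiff_{\id}(\C^2,0)$ preserve the foliation and conjugate two normal forms \eqref{eq:FNFX1}; then $h\circ\hat\Psi=g(h)$ with $g(h)=h+\hot$. Restricting $\hat\Psi$ to the component $\{\xi_2=0\}$ of $\{h=0\}$ conjugates the one-variable germs $h^{-s}\hatbXnf|_{\{\xi_2=0\}}=\tfrac{c\,\xi_1^{pk}}{1+c\hat\mu(0)\xi_1^{pk}}\xi_1\partial_{\xi_1}$ and its primed analogue by a tangent-to-identity map, so $c=c'$ and $\hat\mu(0)=\hat\mu'(0)$. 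On each regular leaf $\hat\Psi$ extends to a biholomorphism of $\CP^1_y$ fixing $y=0,\infty$ with unit derivative, hence preserves the data at the two ends, the positions of the $2k$ zeros (the zero set of $P$) and the poles; together with the invariance of $h^{-s}\hat\mu(h)$ (Lemma~\ref{lemma:mu}) and an induction on the order in $h$ --- which is the argument of \cite[Theorem~3.5]{Rousseau-Teyssier} --- this forces $g=\id$, $P=P'$ and $\hat\mu=\hat\mu'$, i.e. $\hatbXnf=\hatbXnf'$. For (3), a general $\sigma$-equivariant $\hat\Psi\in\hatDiff(\C^2,0)$ preserving the foliation has linear part commuting with $\sigma$ and permuting the lines $\{\xi_1=0\},\{\xi_2=0\}$, hence equal to $\zeta\sigma^{\epsilon}$ for some $\zeta\in\C^*$, $\epsilon\in\{0,1\}$; writing $\hat\Psi=(\zeta\sigma^{\epsilon})\circ\hat\Psi_1$ with $\hat\Psi_1\in\hatDiff_{\id}$ foliation-preserving, the normal form $(\zeta\sigma^{\epsilon})^*\hatbXnf$ is again of the form \eqref{eq:FNFX1} with constant $(-1)^{\epsilon}\zeta^{kp+2s}c$ (a short computation with $h\mapsto\zeta^2 h$, $u\mapsto\zeta^p u$ resp. $\tilde u\mapsto\zeta^2\tilde u$, $\sigma^*\bE=-\bE$, and scale-invariance of $\bE$), and $\hat\Psi_1^*\big((\zeta\sigma^{\epsilon})^*\hatbXnf\big)=\hatbXnf'$, so (2) gives $\hat\Psi_1=\id$ and $(-1)^{\epsilon}\zeta^{kp+2s}c=c'$; thus $\hat\Psi(\xi)=\zeta\sigma^{\epsilon}\xi$ with $\zeta^{kp+2s}=(-1)^{\epsilon}c'/c$.

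The main obstacle is the existence in (1) for $k>0$: solving the leafwise residue-matching system for the new Weierstrass polynomial $P'$ and assembling the leafwise biholomorphisms into a single holomorphic $(\sigma,\Lambda)$-equivariant $\hat\Psi$ on $(\C^2,0)$, the delicate point being the limit $h\to0$, where the $2k$ zeros collapse to one point of order $k+1$ and the parametrisation of forms by residues degenerates; one must instead use data that stays non-degenerate there (e.g. low-order Laurent coefficients at the end $y=0$) and keep the $\sigma,\Lambda$-symmetries and the analyticity of $\hat\Psi$ and $P'$ under control simultaneously. Parts (2) and (3) are then comparatively soft and follow the one-parametric theory closely.
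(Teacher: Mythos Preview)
Your handling of Part~3 and Part~1 with $k=0$ is essentially the paper's argument. The difficulties are elsewhere.

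\medskip
\textbf{Part 1, $k>0$.} There is a genuine gap. After Lemma~\ref{lemma:Q0} you want to pass from $\hat\bX=\tfrac{h^sP}{Q+\hat\mu P}\bE$ to $\tfrac{ch^sP'}{1+c\hat\mu P'}\bE$ using Lemma~\ref{lemma:X}; this requires an analytic $\hat\alpha$ with $\bE.\hat\alpha=\tfrac{Q}{P}-\tfrac{c^{-1}}{P'}$. But the right-hand side has poles along $\{P=0\}\cup\{P'=0\}$, two \emph{distinct} divisors through the origin, so no analytic germ $\hat\alpha$ can satisfy this. Your residue-matching step only arranges that each rational $1$-form has the same multiset of residues, not that they have the same polar divisor; and the ``leafwise biholomorphism of $\CP^1_y$ fixing $0,\infty$'' you invoke afterward is merely $y\mapsto\lambda(h)y$, which rescales all zeros by a common factor and cannot move one Weierstrass divisor onto another. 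The obstruction you flag in your last paragraph is therefore not just a degeneration issue at $h=0$ but a structural one for every $h$.

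The paper circumvents this by a deformation (path) method: Proposition~\ref{prop:normalization} interpolates $\bX_t=\tfrac{P}{c^{-1}+tZ+\mu P}\bE$ between the two forms (here $Z=Q-c^{-1}$), and solves the commutator equation $[\bY,\bX_t]=0$ for a vector field $\bY=\partial_t+\sum\omega_j\partial_{y_j}+\tfrac{Gv}{c^{-1}+tZ+\mu P}\tfrac1p\bE$ whose $\xi$-part \emph{and} whose $y$-part (the Weierstrass coefficients of $P$) move simultaneously. The resulting homological equation \eqref{eq:homological_eq} is a finite linear system invertible at $h=y=z=0$, so the flow of $\bY$ produces an \emph{analytic} $(\sigma,\Lambda)$-equivariant conjugacy that also outputs the new polynomial $P'$. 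This is what is missing from your scheme: a mechanism that deforms the divisor $\{P=0\}$ analytically at the same time as it normalizes the cofactor.

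\medskip
\textbf{Part 2.} Your leafwise argument asserts that the formal $\hat\Psi$ ``extends to a biholomorphism of $\CP^1_y$ fixing $0,\infty$''. This is unjustified: $\hat\Psi$ is a priori only a formal series and need not converge on any leaf, let alone extend globally; so statements about ``preserving positions of zeros on $\CP^1_y$'' have no meaning. The paper instead first reduces (via Lemma~\ref{lemma:automorphismofX}) to a $\sigma$-equivariant $\hat\Psi_1$, writes it explicitly as $\hat\Psi_1(\xi)=\mathrm{diag}(e^{\hat\alpha(u,h)v},e^{-\hat\alpha(u,h)v})\xi$ with $v=\xi_1^p-\xi_2^p$, and proves $\hat\alpha\equiv0$ by induction on the $h$-order through a purely algebraic computation with the conjugacy equation modulo $h^{n+1}$. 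This is the concrete content behind the reference to \cite[Theorem~3.5]{Rousseau-Teyssier}; your sketch does not supply it.
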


\begin{proposition}\label{prop:normalization}
Let $\bX_0$, $\bX_1$ be two vector fields of the form
\begin{equation}\label{eq:pol_form2}
	\bX_t=\frac{P(u(\xi),y)}{c^{-1}+tZ(u(\xi),z)+\mu P(u(\xi),y))}\bE,	
\end{equation}
depending on parameters $(y,z,\mu)$ with 
\[	P(u,y)=(1+y_k)u^{k}+y_{k-1}u^{k-1}+\ldots+y_0,\quad Z(u,z)= z_{k-1}u^{k-1}+\ldots+z_1u,\]
where $u(\xi)=\xi_1^p+\xi_2^p$.
	Then there exists an analytic $(\sigma,\Lambda)$-equivariant $h$-preserving transformation $(\xi,y)\mapsto\big(\phi(\xi,y,z),\psi(y,z)\big)$ independent of $\mu$, tangent at identity at $(\xi,y)=0$, that transforms
	$\bX_1$ to $\bX_0$.
\end{proposition}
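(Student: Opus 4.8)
I would build $(\phi,\psi)$ leaf by leaf. On each level $\{h=\mathrm{const}\}$ the vector fields $\bX_1$ and $\bX_0$ induce rational vector fields on a curve $\cong\CP^1$, and the plan is: (i) read off the finite conjugacy data of these rational vector fields and observe it is independent of $\mu$; (ii) choose $\psi$, by the implicit function theorem, so that the data of $\bX_0$ with parameter $\psi(y,z)$ matches that of $\bX_1$; (iii) build the leaf-wise conjugacy $\phi$ from the matched data; (iv) verify analytic dependence on $(\xi,y,z)$ and extension across the singular leaf $h=0$. Everything will automatically preserve the foliation by levels of $h$ (as $\bX_t$ is a multiple of $\bE$ and $\bE.h=0$), be $(\sigma,\Lambda)$-equivariant (by working with $\sigma$-odd, $\Lambda$-invariant data throughout), and be tangent to the identity (as all the relevant data vanish at $\xi=0$).

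\textbf{The leaf picture and $\mu$--independence.} On $\{h=\mathrm{const}\neq0\}$ I use the coordinate $w$ with $\xi_1^p=h^{p/2}w$, $\xi_2^p=h^{p/2}w^{-1}$, so $u=h^{p/2}(w+w^{-1})$, $\bE=pw\,\tdd{w}$, $\sigma\colon w\mapsto w^{-1}$, $\Lambda\colon w\mapsto w$, and the region near $\xi=0$ becomes an annulus $A_h$ about $|w|=1$. Then $\bX_t$ becomes $f_t(u(w))\,pw\,\tdd{w}$, with zero divisor in $A_h$ that of $P(u(w),y)$, and dual form $\bX_t^{-1}=\big(\tfrac{c^{-1}+tZ(u,z)}{P(u,y)}+\mu\big)\tfrac{dw}{pw}$; this has residue $\mu/p$ at each puncture $w\in\{0,\infty\}$ and residues at the $2k$ zeros of $P(u(w),y)$ that do not involve $\mu$. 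Thus the conjugacy class of $\bX_t$ near $A_h$, under $(\sigma,\Lambda)$-equivariant, $h$-preserving maps, is governed by the $\sigma$-symmetric configuration of zeros together with the residues of $\bX_t^{-1}$ there, and by the puncture residue $\mu/p$. Only the last of these involves $\mu$, and it equals $\mu/p$ both for $\bX_1$ and for $\bX_0$ with parameter $\psi(y,z)$; this is the conceptual reason why the sought $(\phi,\psi)$ can, and must, be taken independent of $\mu$.

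\textbf{Choice of $\psi$ and of $\phi$.} The residues of $\bX_1^{-1}$ at its zeros involve $z$ through $Z$, while those of $\bX_0^{-1}$ for a parameter $\tilde y$ do not, and no flow can absorb this discrepancy, since the flow of a multiple of $\bE$ preserves those residues. Instead, varying $\tilde y$ moves both the zero positions and $P'$ at them, hence the residues; matching the $k$ independent residues of $\bX_0$ at parameter $\tilde y$ to those of $\bX_1$ is a system of $\dim_{\C}(\text{$y$-space})$ equations in $\tilde y$ over $(y,z,h)$, which is satisfied with invertible $\tilde y$-differential at $(y,z)=0$, $\tilde y=0$, where $\bX_1=\bX_0$. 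By the implicit function theorem one obtains a unique analytic $\tilde y=\psi(y,z)$, tangent to the identity in $y$, with $\psi(y,0)=y$, so that $P(u,\psi)$ remains a Weierstrass polynomial when $y=y(h)$ vanishes at $h=0$. With the data matched, the conjugacy on each $A_h$ is obtained by integrating $\bX_1^{-1}$ and the dual form of $\bX_0$ at parameter $\psi$ to leaf-wise ``time'' coordinates $T_1,T_0$, the multivalued $\mu\log w$ parts cancelling because the puncture residues agree, and then setting $\phi:=T_0^{-1}\circ T_1$; matching of the remaining residues makes this single-valued and holomorphic on $A_h$, $(\sigma,\Lambda)$-equivariant, and tangent to the identity. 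Equivalently, once $\psi$ has aligned the zero divisors, Lemma~\ref{lemma:X} lets one absorb the residual discrepancy by a flow, which is the cleanest route to checking that $\phi$ is analytic.

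\textbf{Main obstacle.} The crux is to pass from this leaf-wise, a priori $h\neq0$ construction to a single holomorphic object on $(\C^2,0)$, depending analytically on $(\xi,y,z)$ and extending across $h=0$, where $A_h$ degenerates and the two punctures collide with part of the zero divisor. As in the proof of Lemma~\ref{lemma:Q}, one works throughout with the $\sigma$-invariants $(u,h)$ and uses a Weierstrass division with respect to them, together with a version of the implicit function theorem uniform in $h$; the resulting estimates, while routine, are the technical heart of the argument.
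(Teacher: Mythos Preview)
Your approach differs substantially from the paper's. The paper uses a direct path method: one seeks a vector field
\[\bY=\tdd{t}+\sum_{j=0}^{k}\omega_j(t,y,z)\tdd{y_j}+\frac{G(u,t,y,z)\cdot v}{c^{-1}+tZ+\mu P}\,\tfrac{1}{p}\bE,\qquad v=\xi_1^p-\xi_2^p,\]
with $[\bY,\bX_t]=0$; after multiplying the commutation relation by $(c^{-1}+tZ+\mu P)^2$ the $\mu$-terms drop out and one is left with a linear system $A(t,h,y,z)\binom{\omega}{g}=b(y,z)$ for the polynomial coefficients of $\Omega=\sum_j\omega_ju^j$ and $G=\sum_jg_ju^j$, whose matrix $A$ is explicitly invertible at $h=y=z=0$. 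The time-$1$ flow of $\bY$ then furnishes $(\phi,\psi)$, holomorphic in all variables by construction, and $\mu$-independence is automatic.

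Your invariant-matching idea correctly explains \emph{why} the answer should be $\mu$-independent (the puncture residue is $\mu/p$ on both sides), but the execution has a genuine gap exactly where you place the ``main obstacle,'' and it is not routine. The residue-matching scheme is well-posed only when the zeros of $P(u,y)$ are simple and $h\neq0$; the statement, however, concerns a germ at $(\xi,y)=0$, where $P(u,0)=u^k$ has a single $k$-fold zero and the annulus $A_h$ has pinched. Your leafwise invariants blow up along this locus, so there is no a~priori reason a family of leafwise conjugacies built from them extends holomorphically across it; that would require a genuine confluence argument, not a Weierstrass division. The matching step is also underspecified: there are $k+1$ parameters $y_0,\ldots,y_k$ but at most $k$ independent residues to match, so $\psi$ is not determined by what you wrote; and your appeal to Lemma~\ref{lemma:X} presupposes that the zero divisors already coincide, which replacing $y$ by $\psi(y,z)\neq y$ precisely destroys. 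The path method sidesteps all of this by never leaving the holomorphic category.
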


\begin{proof}
Let $\bX_t$, $t\in\C$, be as above \eqref{eq:pol_form2}.	
	We want to construct a family of transformations depending analytically on $t\in[0,1]$ between $\bX_0$ and $\bX_t$, defined by a flow of a vector field $\bY$ of the form
	\[
	\bY=\tdd{t}+\sum_{j=0}^{k}\omega_j(t,y,z)\tdd{y_j}+\mfrac{G(u(\xi),\,t,\,y,\,z)\cdot v(\xi)}{c^{-1}+tZ(u(\xi),\,z)+\mu P(u(\xi),\,y)}\mfrac{1}{p}\bE,
	\]
	where $v(\xi)=\xi_1^p-\xi_2^p$, $\tfrac{1}{p}\bE=v\tdd{u}=u\tdd{v}$, 
	for some unknown $\omega_j$ and $G$, such that $[\bY,\bX_t]=0$.
	This means (after multiplying the equation by $(c^{-1}+tZ+\mu P)^2$)
	\begin{equation}\label{eq:homological_eq}
	-ZP+(c^{-1}+tZ)\Omega+v^2G\tdd{u}P-v^2P\tdd{u}G-PGu=0,
	\end{equation}
	where 
	\[\Omega=\omega_0+\ldots+\omega_{k}u^{k}\quad\text{and }\ v^2=u^2-4h^p.\]
	We see that we can choose $G$ as a polynomial of order $k-2$ in $u$:
	\[
	G=g_0(t,y,z)+\ldots+g_{k-2}(t,y,z)u^k.
	\]
	Write $ZP=b_0(y,z)+\ldots+b_{2k-1}(y,z)u^{2k-1}$,
	then the equation \eqref{eq:homological_eq} takes the form of a non-homogeneous linear system for $(\omega,g)=(\omega_0,\ldots,\omega_k,g_0,\ldots,g_{k-2})$:
	\[
	A(t,h,y,z)\begin{pmatrix}\omega\\ g\end{pmatrix}=b(y,z),
	\]
	$b=\transp{(b_0,\ldots,b_{2k-1})}$.
	For $h=0$, $y=z=0$ the equation \eqref{eq:homological_eq} is
	\[
	c\,\Omega+(k-1)u^{k+1}G-u^{k+2}\tdd{u}G=0,
	\]
	hence
	\[
	A(t,0,0,0)=
	\begin{pmatrix} c^{-1}&& &&& \\[-6pt] &\!\!\ddots\!\!& &&& \\[-6pt] &&c^{-1} &&& \\ && &\!\!\!k\!-\!1\!\!\!&& \\[-6pt] && &&\!\!\ddots\!\!& \\[-6pt] &&&&&1 \end{pmatrix}.
	\]
	This means that $A(t,h,y,z)$ is invertible for $t$ from any compact in $\C$ if $|h|,\ |y|,\ |z|$ are small enough.
	Since $b(0,0)=0$, the constructed vector field $\bY(\xi,t,y,z)$ is such that $\bY(0,t,0,0)=\dd{t}$ and its flow is well-defined for all $|t|\leq 1$ as long as  $|h|,\ |y|,\ |u|$ are small enough.
\end{proof}

\begin{proof}[Proof of Theorem~\ref{thm:FNFX1}]~
	\begin{enumerate}[wide=0pt, leftmargin=0pt]
		\item \Grn{
		First we rescale by
			\begin{equation}\label{eq:rescaling} 
			\begin{cases}\xi\mapsto \hat\mu(h)^{-\frac{1}{2s}}\xi&\text{if }\ k=0,\\
				\xi\mapsto\left(\frac{Q_0(h)}{Q_0(0)}\right)^{-\frac{1}{2s+kp}}\xi &\text{if }\ k>0,\end{cases} 
		\end{equation}
		which changes $h$ but preserves the Pfaffian foliation $\{dh=0\}$.
		Then in the case $k>0$, we apply Lemma~\ref{lemma:Q0} and Proposition~\ref{prop:normalization} to bring $\hat\bX$ to the form $\frac{c\,h^sP}{1+c\,\hat{\mu}\,P}\bE$, where in the case of $\Lambda=-\sigma$ we use the convention \eqref{eq:u-sigma}.
				
		Moreover, in the case $\Lambda=-\sigma$, the vector fields $\hatbXnf=\frac{ch^sP}{1+c\hat{\mu}P}\bE$ and $\Lambda^*\hatbXnf=\frac{ch^sP}{1-c\hat{\mu}P}\bE$ are conjugated by
		$\Lambda^{-1}\hat\phi=\id+\hot$, hence by the point 2. of this theorem (proven just below) $\hatbXnf=\Lambda^*\hatbXnf$, meaning that $\hat\mu(h)=0$.
		}

		\item 
		For $k=0$ it is obvious. 
		For $k>0$, let $\xi\mapsto\hat\Psi(\xi)=\xi'$ be a formal transformation preserving the foliation $\Cal F$ such that $\hat\Psi^*\hatbXnf'=\hatbXnf$,
		where $\hatbXnf=h^s\frac{cP}{1+\hat\mu cP}\bE$, $\hatbXnf'=h^s\frac{c'P'}{1+\hat\mu'c'P'}\bE$ are two vector fields \eqref{eq:FNFX1}. 
		Then $h\circ\hat\Psi=e^{\hat a(h)}h$ for some $\hat a\in\C\llbracket h\rrbracket$, $\hat a(0)=0$, and the pullback of $\hatbXnf'$ by the transformation $\xi\mapsto e^{\frac12\hat a(h)}\xi$ is a vector field of the same form $h^s\frac{c'P'}{1+\hat\mu'c'P'}\bE$ except with $P'(u,h)=P_k(h)u^k+\ldots+P_0(h)$, $P_k(h)=1+\hot$.
		It will be enough to show that if two vector fields $\hatbXnf$, $\hatbXnf'$ are of this more general form and $\hat\Psi^*\hatbXnf'=\hatbXnf$ for some $\hat\Psi$ preserving $h=h\circ\hat\Psi$, then $\hatbXnf'=\hatbXnf$.
		
		By Lemma~\ref{lemma:mu} we know $\hat\mu=\hat\mu'$.
		Since $\sigma\hat\Psi\circ\sigma$ is another such transformation, by Lemma~\ref{lemma:automorphismofX} $\sigma\hat\Psi\circ\sigma=\hat\Psi\circ\exp(h^{-s}\hat\beta(h)\hatbXnf)$ for some formal power series  $\hat\beta(h)$. The transformation $\hat\Psi_1=\hat\Psi\circ\exp(\tfrac12 h^{-s}\hat\beta(h)\hatbXnf)$ has the same properties as $\hat\Psi$ and is $\sigma$-equivariant on top of that.
		It has the form $\hat\Psi_1(\xi)=\left(\begin{smallmatrix} e^{\hat\alpha(u,h)v} & 0 \\ 0 & e^{-\hat\alpha(u,h)v}\end{smallmatrix}\right)\xi$, where $v=\xi_1^p-\xi_2^p$,
		so the transformation equation becomes
		\[ \mfrac{cP}{1+\hat\mu cP}\cdot(1+\bE(\hat\alpha v))=\mfrac{cP'}{1+\hat\mu cP'}\circ\hat\Psi_1. \]
		For $h=0$ both vector fields $h^{-s}\hatbXnf$ and $h^{-s}\hatbXnf'$ are equal to $\frac{cu^k}{1+\hat\mu(0)cu^k}\bE$, and by Lemma~\ref{lemma:automorphismofX}
		the restriction of $\hat\Psi_1$ to $h=0$ is identity, i.e. $\hat\alpha(u,0)=0$.
		Let us assume that $\hat\alpha(u,h)=0\mod h^n$ for some $n\geq 0$ and show that it implies $\hat\alpha(u,h)=0\mod h^{n+1}$.
		We have
		\[ \mfrac{cP}{1+\hat\mu cP}\cdot(1+u\hat\alpha+ u^2\tdd{u}\hat\alpha)=\mfrac{cP'}{1+\hat\mu cP'}+ \hat\alpha u^2\tdd{u}\mfrac{cP'}{1+\hat\mu cP'} \mod h^{n+1}, \]
		from which
		\[ \mfrac{cP}{1+\hat\mu cP}-\mfrac{cP'}{1+\hat\mu cP'}=
		\mfrac{cu^{k+1}}{1+\hat\mu(0)cu^{k+1}}\big((k-1)\hat\alpha-u\tdd{u}\hat\alpha-k\hat\alpha\hat\mu(0)\mfrac{cu^{k}}{1+\hat\mu(0)cu^{k+1}} \big)\mod h^{n+1}, \]
		and
		\[ (P-P')=u^{k+1}\big((k-1)\hat\alpha-u\tdd{u}\hat\alpha\big) -\hat\mu(0)cu^{2k+1}\big(\hat\alpha+u\tdd{u}\hat\alpha\big) \mod h^{n+1}. \]
		Left side being a polynomial of order $k$ in $u$ means that both sides vanish modulo $h^{n+1}$.
		Applying a  modulo $h^{n+1}$ version of Lemma~\ref{lemma:automorphismofX}, we see that in fact $\hat\alpha=0\mod h^{n+1}$.
		
		\item A general transformation is a composition of its linear part and a transformation tangent to identity. The linear part must conjugate
		$c\,h^{-s}u^k\bE$ and $c'\,h^{-s}u^k\bE$.
	\end{enumerate}
\end{proof}

\begin{proof}[Proof of Theorem~\ref{thm:1}]
So far we have shown the existence of a formal $\sigma$-equivariant transformation $\xi\mapsto\hat\Psi(\xi)$
that conjugates $\phi^{\circ p}$ to
\[\hat\Psi\circ\phi^{\circ p}\circ\hat\Psi^{\circ(-1)}=\exp(\hatbXnf)(\xi),\]
where $\hatbXnf$ is as in Theorem~\ref{thm:FNFX1}.
Let $\hatphinf:=\hat\Psi\circ\phi\circ\hat\Psi^{\circ(-1)}$. It preserves the vector field $\hatbXnf=\hatphinf^*\hatbXnf$, and since $\hatbXnf=\Lambda^*\hatbXnf$, then also 
$\hatbXnf=(\Lambda^{-1}\hatphinf)^*\hatbXnf$, so by Lemma~\ref{lemma:automorphismofX},  
$\Lambda^{-1}\hatphinf=\exp(h^{-s}\hat\beta(h)\hatbXnf)$ for some $\hat\beta(h)$, and 
$\exp(\hatbXnf)=\hatphinf^{\circ p}=\exp(ph^{-s}\hat\beta(h)\hatbXnf)$ means that $\hatphinf=\Lambda\exp\big(\tfrac1p\hatbXnf\big)$.	
\end{proof}

\begin{proof}[Proof of Proposition~\ref{prop:prenormalphi}]
Proposition~\ref{prop:prepared} brings the formal infinitesimal generator $\hat\bX$ to the form \eqref{eq:prenormalX}.
Afterwards the analytic transformations of Lemma~\ref{lemma:Q0} and Proposition~\ref{prop:normalization}
and the rescaling \eqref{eq:rescaling} of the proof of Theorem~\ref{thm:FNFX1}, deform $P(u,h)$ so that the term $Q(u,h)$ becomes $c^{-1}=Q(0,0)$.
The respective prepared form \eqref{eq:prenormalX} means that 
\[\hat\bX=\hatbXnf\mod h^sP^2\bE=\bXmod\mod h^sP^2\bE.\]
And the formula \eqref{eq:Q} implies that 
\[\begin{aligned}
\phi(\xi)&=\Lambda\Big[I+\tfrac{P}{\bE.P}(e^{ch^s\bE.P/p}-1)\left(\begin{smallmatrix}
	1 & 0 \\ 0 & -1 \end{smallmatrix}\right)\Big]\xi\mod h^sP^2\xi\\
&=\hatphinf(\xi)\mod h^sP^2\xi\\
&=\phimod(\xi)\mod h^sP^2\xi.	
\end{aligned}\]
\end{proof}

As a bonus we obtain also analytic classification integrable reversible vector fields. 

\begin{thm}[Classification of integrable reversible vector fields]\label{thm:1X}~\\
	Let $\bX(\xi)$ be a germ of analytic (resp. formal) vector field in $(\C^2,0)$, with $\bX(0)=0$, which is reversed by $\sigma$ and has a first integral $h=\xi_1\xi_2$, 
	\[\sigma^*\bX=-\bX,\qquad \bX.h=0.\]
	Then $\bX$ is  conjugated by an analytic (resp. formal) tangent-to-identity transformation to one of the following vector fields 
	\[\bXnf=\begin{cases} 0,\\ 
		(c+ah^n)\big(\xi_1\tdd{\xi_1}-\xi_2\tdd{\xi_2}\big), \quad a\in\C,\ n\geq 1,\\
		c\,h^s\big(\xi_1\tdd{\xi_1}-\xi_2\tdd{\xi_2}\big), \quad s\geq1\\ h^s\frac{c\,P(u,h)}{1+c\,\mu(h)P(u,h)}\big(\xi_1\tdd{\xi_1}-\xi_2\tdd{\xi_2}\big),\quad s\geq 0,\end{cases}\]
	where $P(u,h)=u^k+P_{k-1}(h)u^{k-1}+\ldots+P_0(h)$, $u=\xi_1+\xi_2$, $k\geq1$, $P(u,0)=u^k$.
\end{thm}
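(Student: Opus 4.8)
The plan is to strip $\bX$ down to the prepared form already analysed for infinitesimal generators, then apply Proposition~\ref{prop:FNFX} and Theorem~\ref{thm:FNFX1}; the only genuinely new ingredient is the case where $\bX$ has nonzero linear part. First I would read off the shape of $\bX$: from $\bX.h=0$ with $h=\xi_1\xi_2$, writing $\bX=X_1\tdd{\xi_1}+X_2\tdd{\xi_2}$ gives $X_1\xi_2+X_2\xi_1=0$, whence $X_1=\xi_1G$, $X_2=-\xi_2G$ by coprimeness of $\xi_1,\xi_2$ in the UFD $\C\{\xi\}$ (resp. $\C\llbracket\xi\rrbracket$); thus $\bX=G\bE$ with $\bE=\xi_1\tdd{\xi_1}-\xi_2\tdd{\xi_2}$. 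Since $\sigma^*\bE=-\bE$, the reversibility $\sigma^*\bX=-\bX$ amounts to $G\circ\sigma=G$, so $G$ is a function of the basic $\sigma$-invariants $u=\xi_1+\xi_2$ and $h$. If $G\equiv0$ we are in the first normal form; otherwise I split according to whether $G(0)=0$.

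If $G(0)\neq0$: letting $\hat\mu(h)$ be the ``diagonal part'' of $1/G$ (the sum of its monomials $h^m$), the difference $\tfrac1G-\hat\mu(h)$ has no diagonal term and hence lies in the image of the derivation $\bE$, so there is a $\sigma$-anti-invariant $\hat\alpha$ with $\bE.\hat\alpha=\tfrac1G-\hat\mu(h)$; Lemma~\ref{lemma:X} then yields a $\sigma$-equivariant, $h$-preserving, tangent-to-identity conjugacy of $\bX$ onto $\tfrac1{\hat\mu(h)}\bE$, analytic whenever $\bX$ is. To finish I would normalize the germ $h\mapsto\tfrac1{\hat\mu(h)}$, whose value at $0$ is $c:=1/\hat\mu(0)\neq0$: a change $\xi\mapsto g(h)\xi$ with $g(0)=1$ is tangent to identity, $\sigma$-equivariant, preserves $\bE$, and moves the first integral by $h\mapsto g(h)^2h$, hence realizes an arbitrary $F\in\Diff_{\id}(\C,0)$ acting on $h$; and every germ $(\C,0)\to(\C,c)$ is, up to such a right composition, of the form $c+ah^n$ (with $n\geq1$ the order of the first non-constant term, $a$ its coefficient, all higher terms removed inductively). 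This gives the normal form $(c+ah^n)\bE$.

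If $G(0)=0$: write $G=h^s\tilde G$ with $s\geq0$ maximal and set $k=\ord_u\tilde G(u,0)$. For $k=0$ the factor $\tilde G$ is a unit (so $s\geq1$); conjugate $\tilde G\bE$ to $\hat\mu(h)^{-1}\bE$ as above, which, being $h$-preserving, conjugates $\bX=h^s\tilde G\bE$ to $h^s\hat\mu(h)^{-1}\bE$, then reparametrize $h$ to reach $c\,h^s\bE$. For $k\geq1$, Weierstrass preparation of $\tilde G$ and Weierstrass division in the variables $(u,h)$ put $\bX$ in the prepared form \eqref{eq:prenormalX} with $Q(0,0)\neq0$; Proposition~\ref{prop:FNFX} brings it to \eqref{eq:FNFX}, and Theorem~\ref{thm:FNFX1}(1) --- together with Lemma~\ref{lemma:Q0}, Proposition~\ref{prop:normalization} and the rescaling \eqref{eq:rescaling} from its proof, all run here with $u=\xi_1+\xi_2$ and only $\sigma$-equivariance required --- brings it to $h^s\tfrac{cP(u,h)}{1+c\,\hat\mu(h)P(u,h)}\bE$, the conjugacy being holomorphic in the analytic case.

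The hard analytic work is thus inherited from Theorem~\ref{thm:FNFX1}. What needs care is, first, the reduction to \eqref{eq:prenormalX}, which here is lighter than the diffeomorphism analogue Proposition~\ref{prop:prepared} (there is no $\phi^{\circ p}$ to decompose --- just the module computation above plus Weierstrass); and second, the case $G(0)\neq0$, where one must check that the $h$-moving changes $\xi\mapsto g(h)\xi$ stay tangent to identity, $\sigma$-equivariant and $\bE$-preserving, and that the required $g$ exists by formal inversion or the implicit function theorem. I would also point out that, since $\bX$ is a genuine vector field and not an a priori divergent logarithm of a diffeomorphism, the series $\hat\mu(h)$ here is the diagonal part of an analytic germ, hence analytic, so the last normal form is genuinely analytic in the analytic case.
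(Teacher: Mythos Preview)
Your proposal is correct and follows essentially the same route as the paper's proof, which simply says ``by the same formal reduction procedure as above with $\Lambda=I$'' and then handles the new case $k=0$, $s=0$ via a scalar transformation $\xi\mapsto e^{\alpha(h)}\xi$ with $\alpha(0)=0$. You spell out more explicitly the initial reduction $\bX=G\bE$ with $G$ $\sigma$-invariant (which in the paper is implicit in the ``same procedure'' remark, since there the prepared form was reached via the diffeomorphism $\phi^{\circ p}$), and your remark that $\hat\mu(h)$ is genuinely analytic here---being the diagonal part of an analytic $1/W$ rather than of a possibly divergent formal infinitesimal generator---is a useful clarification the paper only covers by the blanket ``if $\bX$ is analytic then all the transformations are analytic.''
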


The four cases are distinguished by the type of the zero divisor of $\bX$ and its position with respect to the invariant foliation by $\{h=\const\}$.

\begin{proof}
By the same formal reduction procedure as above with $\Lambda=I$, the only difference being that we allow also the case $k=0,\ s=0$. In this case the vector field $\hat\bX=\frac{1}{\mu(h)}\bE$ \eqref{eq:FNFX} with $\tfrac{1}{\mu(h)}=c+ah^n+\hot(h)$ is brought to $(c+ah^n)\bE$ by some scalar transformation $\xi\mapsto e^{\alpha(h)}\xi$ with $\alpha(0)=0$. If $\bX$ is analytic then all the transformations are analytic.
\end{proof}

\section{Formal normal form of Moser-Webster triples of involutions}  \label{sec:4}

\Grn{
Let $(\chi,\tau)$, be a pair of a reversible antiholomorphic diffeomorphism $\chi$ and its reversing reflection $\tau\in\Diff(\C^2,0)$: a holomorphic involution whose linear part of $\tau$ has eigenvalues $\{1,-1\}$,  
\[\tau^{\circ 2}=\id, \qquad\tau\circ\chi\circ\tau=\chi^{\circ (-1)}.\]
Let $\rho=\tau\circ\chi$, then $\rho$ is an antiholomorphic involution reversing $\chi$.
Assume that $\chi^{\circ 2p}(\xi)=\xi+\hot(\xi)$, $p\geq 1$ being minimal such integer, 
and assume that $H(\xi)$ is a first integral of Morse type for $(\chi^{\circ 2},\tau)$.
}

\begin{lemma}\label{lemma:chitau}
Let $(\chi,\tau)$ with first integral $H$ be as  above.
	There exists an analytic change of coordinates under which they take the form
	\begin{equation}\label{eq:chitau2}
		\chi(\xi)=\sigma\rho(\xi),\qquad \tau(\xi)=\sigma\xi,\quad\text{with first integral }\  h(\xi)=\xi_1\xi_2,
	\end{equation}
where $\rho(\xi)=\Lambda^{-\frac12}\ov\xi+\hot(\ov\xi)$, and $\Lambda^{\frac12}=\left(\begin{smallmatrix}\lambda^{\frac12} &0\\[3pt] 0& \lambda^{-\frac12}\end{smallmatrix}\right)=\ov\Lambda^{-\frac12}$.  
\end{lemma}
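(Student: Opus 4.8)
The statement is a direct antiholomorphic analogue of Lemma~\ref{lemma:phitau}, so I would follow the same three-step strategy: (1) linearize and diagonalize the involutions, (2) normalize the quadratic part of the first integral, (3) apply the equivariant Morse lemma. The only genuinely new ingredient is keeping careful track of the complex conjugation, since now $\chi$ is antiholomorphic and $\rho=\tau\circ\chi$ is an antiholomorphic involution.

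\textbf{Step 1 (linear normalization of $\tau$ and of the antiholomorphic structure).} First I would bring the holomorphic reflection $\tau$ to linear form exactly as in Lemma~\ref{lemma:phitau}: a linear change puts $\tau(\xi)=\sigma\xi+\hot(\xi)$, and then $\Psi=\tfrac12(\id+\sigma\tau)$ is tangent to the identity with $\Psi\circ\tau=\sigma\Psi$, so one may assume $\tau(\xi)=\sigma\xi$. Now $\chi$ is antiholomorphic with $\tau\circ\chi\circ\tau=\chi^{\circ(-1)}$, equivalently $\rho=\tau\circ\chi=\sigma\chi$ is an antiholomorphic involution. Write $\rho(\xi)=A\bar\xi+\hot(\bar\xi)$ for some matrix $A$; the involution condition $\rho^{\circ2}=\id$ forces $A\bar A=I$. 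A Cayley-type/averaging argument (replace $\rho$ by $\tfrac12(\id + \rho_{\mathrm{lin}}^{-1}\rho)$ after a preliminary linear change) linearizes $\rho$ to $\rho_0(\xi)=A\bar\xi$ while keeping $\tau=\sigma$; here one must check the linearizing map can be chosen holomorphic and $\sigma$-compatible. Since $\phi=\chi^{\circ2}=(\sigma\rho)^{\circ2}$ has linear part conjugate (as in Lemma~\ref{lemma:phitau}, using the Morse first integral to rule out a nontrivial Jordan block) to $\Lambda=\mathrm{diag}(\lambda,\lambda^{-1})$, and $(\sigma A)^2$ must equal this linear part, one can choose the basis so that $A=\Lambda^{-\frac12}$ with $\lambda^{\frac12}$ a fixed square root; the relation $A\bar A=I$ then reads $\Lambda^{\frac12}=\bar\Lambda^{-\frac12}$, i.e.\ $\lambda^{-\frac12}=\bar\lambda^{\frac12}$, which is consistent with $|\lambda|=1$ on the fixed locus but in general just records that $\bar\Lambda=\Lambda^{-1}$ after the normalization. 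This gives $\chi(\xi)=\sigma\rho(\xi)$ with $\rho(\xi)=\Lambda^{-\frac12}\bar\xi+\hot(\bar\xi)$ and $\tau(\xi)=\sigma\xi$.

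\textbf{Step 2 (quadratic part of $H$) and Step 3 (Morse lemma).} Since $H\circ\tau=H$, the quadratic part of $H$ is $\sigma$-invariant, hence of the form $(\xi_1+b\xi_2)(\xi_2+b\xi_1)$ up to a nonzero constant, with $b^2\neq1$ by nondegeneracy; a $\sigma$-equivariant linear map $\begin{pmatrix}1&b\\b&1\end{pmatrix}$ (or the identity in the diagonal case) reduces it to $\xi_1\xi_2$, and — as in the diagonal case of Lemma~\ref{lemma:phitau} — this can be done compatibly with the already-normalized linear parts of $\tau$ and $\rho$; if the $\Lambda$-action is nontrivial one first averages $H$ over $\langle\sigma,\Lambda\rangle$ to make it $(\sigma,\Lambda)$-invariant, which is legitimate because $H\circ\phi=H$. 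Finally I would invoke the equivariant Morse lemma (Lemma~\ref{lemma:Morse}) with the compact group generated by $\sigma$ (and $\Lambda$) to find a tangent-to-identity change of variables commuting with this group that reduces $H$ to its quadratic part $\xi_1\xi_2$, without disturbing $\tau(\xi)=\sigma\xi$ or the linear part of $\rho$. One has to note that a holomorphic $\langle\sigma\rangle$-equivariant change of coordinates automatically conjugates the antiholomorphic $\rho_0(\xi)=\Lambda^{-\frac12}\bar\xi$ to another map of the same linear type, so the normal form $\rho(\xi)=\Lambda^{-\frac12}\bar\xi+\hot(\bar\xi)$ is preserved.

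\textbf{Expected main obstacle.} The routine parts (linearizing $\tau$, normalizing the quadratic form, Morse lemma) are verbatim from Lemma~\ref{lemma:phitau}. The one place that needs care is the simultaneous linear normalization of the \emph{antiholomorphic} involution $\rho$ together with $\tau=\sigma$: one must verify that the averaging/Cayley trick producing the linearizing biholomorphism can be made $\sigma$-equivariant and does not re-introduce higher-order terms into $\tau$, and that the eigenbasis of $\phi$ used in Step~1 is compatible with the square-root normalization $A=\Lambda^{-\frac12}$. This amounts to chasing the conjugation identities $\sigma\chi\sigma=\chi^{\circ(-1)}$, $\chi=\sigma\rho$, $\rho^{\circ2}=\id$ through the linear algebra, and I would present it as a short lemma on pairs $(\sigma,A)$ with $\sigma A\sigma=\bar A^{-1}$ before assembling the full proof.
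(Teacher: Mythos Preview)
Your Step~1b contains a genuine error: you propose to \emph{fully linearize} the antiholomorphic involution $\rho$ to $\rho_0(\xi)=A\bar\xi$ while keeping $\tau=\sigma$ linear. But if both $\tau=\sigma$ and $\rho=\rho_0$ are linear, then $\phi=\chi^{\circ2}=(\sigma\rho)^{\circ2}$ is linear as well, which contradicts the whole setup unless we are in the trivial case~(o). So the averaging map $\Psi=\tfrac12(\id+\rho_0^{-1}\rho)$ cannot be made $\sigma$-equivariant in general, and your ``main obstacle'' is in fact an obstruction, not merely a technical check. The lemma only claims $\rho(\xi)=\Lambda^{-1/2}\bar\xi+\hot(\bar\xi)$, i.e.\ normalization of the \emph{linear part} of $\rho$; nothing more is achievable.

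The paper's route is cleaner and avoids this trap entirely. It first applies Lemma~\ref{lemma:phitau} to the holomorphic pair $(\chi^{\circ2},\tau)$, which in one stroke yields $\tau=\sigma$, $\phi=\Lambda\xi+\hot$, and $H=\xi_1\xi_2$; it also observes that the case $\Lambda=-\sigma$ is excluded since $\tau_2=\rho\tau\rho$ must again be a reflection. Only then does it address the linear part $A$ of $\rho$: writing $B=\Lambda^{1/2}A$, the relations $A\bar A=I$ and $\sigma A\sigma\bar A=\Lambda$ become $B\sigma=\sigma B$ and $\bar B=\Lambda^{-1/2}B^{-1}\Lambda^{1/2}$. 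The first forces $B=e^{i\alpha I+\beta\sigma}$; a short case analysis on $\Lambda$ then shows that the purely \emph{linear} change $\xi=\bar B^{-1/2}\xi'$ commutes with $\sigma$ (and, up to the sign ambiguity in $\Lambda^{1/2}$, with $\Lambda^{1/2}$), hence preserves $\tau=\sigma$ and $h=\xi_1\xi_2$ while sending $A$ to $\Lambda^{-1/2}$. Your anticipated ``short lemma on pairs $(\sigma,A)$ with $\sigma A\sigma=\bar A^{-1}$'' is exactly this computation, but it should be deployed as a final linear adjustment \emph{after} Lemma~\ref{lemma:phitau}, not as part of an attempt to linearize $\rho$ itself.
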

\begin{proof}
By Lemma~\ref{lemma:phitau} there exists an analytic coordinate $\xi$ in which
\[ \tau(\xi)=\sigma\xi,\qquad \chi^{\circ 2}(\xi)=\Lambda\xi+\hot(\xi),\qquad H(\xi)=\xi_1\xi_2.\]	
\Grn{Since $\tau_1=\tau$ is a reflection, then so is $\tau_2=\rho\circ\tau\circ\rho$, hence the case of $\Lambda=-\sigma$ cannot arise, so $\Lambda=\left(\begin{smallmatrix}\lambda &0\\[3pt] 0& \lambda^{-1}\end{smallmatrix}\right)$ is diagonal.}
If $\rho(\xi)=A\ov\xi+\hot(\bar \xi)$, then the relation $\rho^{\circ 2}=\id$ and the form of $(\sigma\rho)^{\circ 2}=\chi^{\circ 2}$ mean that
\[A\ov A=\id,\qquad \sigma A\sigma\ov A=\Lambda.\] 
Let $B=\Lambda^{\frac12}A$, then the above relations are equivalent to
\begin{equation}\label{eq:1764}
	\ov B=\Lambda^{-\frac12}B^{-1}\Lambda^{\frac12},\qquad B\sigma=\sigma B.
\end{equation}
The linear change of variables $\xi=\ov B^{-\frac12}\xi'$ transforms 
$\rho:\xi\mapsto A\ov\xi+\hot(\bar\xi)$ to 
$\xi'\mapsto =A'\ov\xi'+\hot(\bar\xi')$ where 
\[A'=\ov B^{\frac12}AB^{-\frac12}=\ov B^{\frac12}  \Lambda^{-\frac12}B^{\frac12}.\]
\GRN The relation $B\sigma=\sigma B$ means that $B=e^{i\alpha I+\beta\sigma}$ for some $\alpha,\beta\in\C$.
The relation \eqref{eq:1764} implies that
\begin{itemize}
	\item If  $\Lambda\neq\pm I$, then $\ov\alpha=-\alpha$, $\beta=0$,  hence $B^{\frac12}=\ov B^{-\frac12}$ commutes with $\Lambda^{\frac12}$.
	\item If  $\Lambda= I$, then $\ov\alpha=-\alpha$, $\ov\beta=-\beta$,  hence $B^{\frac12}=\ov B^{-\frac12}$ commutes with $\Lambda^{\frac12}=I$.
	\item If  $\Lambda=- I$, then $\ov\alpha=-\alpha$, $\ov\beta-\beta\in 2\pi i\Z$ and 
	$\Lambda^{\frac12}=\pm i\left(\begin{smallmatrix}1 &0\\[3pt] 0& -1\end{smallmatrix}\right)$, hence 
	$\ov B^{\frac12}=\pm \Lambda^{-\frac12}B^{-\frac12}\Lambda^{\frac12}$.
\end{itemize}
Therefore in all cases $A'=\pm\Lambda^{\frac12}$, but the matrix $\Lambda^{\frac12}$ is determined only up to a sign anyways.
\end{proof}
\FGRN

\GRN
\begin{proof}[Proof of Theorem~\ref{thm:antiholomorphicclassification}]
	If $(\chi,\tau)$ is of formal type (o), then it is analytically conjugated by a tangent-to-identity transformation to $(\chinf,\taunf)$ by means the transformation 
	\eqref{eq:Psichitau}.
	
	Assume $(\chi,\tau)$, $(\chi',\tau')$ are of formal type (a) or (b). Let $\Psi$ be analytic such that
	\[\chi'^{\circ 2}\circ\Psi=\Psi\circ\chi^{\circ 2},\qquad \tau'\circ\Psi=\Psi\circ\tau,\]
	and let $\tilde\Psi=\chi'\circ\Psi\circ\chi^{\circ(-1)}$.
	Then also
	\[\chi'^{\circ 2}\circ\tilde\Psi=\tilde\Psi\circ\chi^{\circ 2},\qquad \tau'\circ\tilde\Psi=\tilde\Psi\circ\tau,\]
	which means that 
	$\Psi^{\circ(-1)}\circ\tilde\Psi$ commutes with both $\chi^{\circ 2}$ and $\tau$.
	If $\Psi(\xi)=A\xi+\hot(\xi)$, then the matrix $A$ must commute with $\Lambda$, $\sigma$, and preserve $h=\xi_1\xi_2$ up to multiplicative constant since $\Psi$ must map between the unique leaf-wise invariant foliations $\{H=\const\}$ and $\{H'=\const\}$, hence
	$A=\begin{cases}aI,&\text{if }\ \Lambda\neq\pm I\\aI \text{ or } a\sigma,&\text{if }\ \Lambda=\pm I,\end{cases}$ for some $a\in\C^*$.
	If, as assumed, the matrix $A$ is real then $\xi\mapsto A\xi$ commutes with $\xi\mapsto\Lambda^{\frac12}\ov\xi$, the linear part of $\chi,\chi'$, therefore $\Psi^{\circ(-1)}\circ\tilde\Psi$ is tangent to identity.
	It follows from Theorem~\ref{thm:1} that the only element of $\hatDiff_{\id}(\C^2,0)$ that commutes with both $\chi^{\circ 2}$ and $\tau$ is the identity
	(indeed, up to a formal conjugacy one can assume that $(\chi^{\circ 2},\tau)$ are in the formal normal form $(\phinf,\sigma)$).
	Hence $\Psi^{\circ(-1)}\circ\tilde\Psi=\id$ and $\chi'\circ\Psi=\Psi\circ\chi$.
\end{proof}
\FGRN

\begin{theorem}[Formal normal form]\label{thm:normalform}
	The pair $(\chi,\tau)$ satisfying assumption of Theorem~\ref{thm:2} has a formal normal form $(\hatchinf,\hattaunf)$: 
	\begin{equation}\label{eq:chinormalform}
		\hatchinf(\xi)=\sigma\hatrhonf,\qquad
		\hatrhonf=\exp(-\tfrac{1}{2p}\hatbXnf)(\Lambda^{-\frac12}\bar\xi), \qquad 
		\hattaunf(\xi)=\sigma\xi,
	\end{equation}
	where $\hatbXnf$ \eqref{eq:FNFX1} is as in Theorem~\ref{thm:1} and furthermore satisfies $\hatbXnf=-\ov{(\Lambda^{\frac12})^*\hatbXnf}$.  
\end{theorem}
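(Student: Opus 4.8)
The plan is to run the normalization of Section~\ref{sec:3} for $\phi=\chi^{\circ2}$, but to carry the extra antiholomorphic symmetry $\rho=\tau\circ\chi$ along at every step, so that it both forces the stated shape of $\hatchinf$ and produces the reality constraint on $\hatbXnf$. First I would invoke Lemma~\ref{lemma:chitau} to reduce to $\tau=\sigma$, $h=\xi_1\xi_2$, with $\rho$ an antiholomorphic involution whose linear part is $\rho_0(\xi)=\Lambda^{-\frac12}\bar\xi$ and $\Lambda$ diagonal. Since $\phi=\chi^{\circ2}$ satisfies Assumptions~\ref{assumptions}, let $\hat\bX$ be the formal infinitesimal generator of $\phi^{\circ p}=\exp(\hat\bX)$; it satisfies $\sigma^*\hat\bX=-\hat\bX$ as noted after Proposition~\ref{prop:PoincareDulac}. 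Writing $\rho=A\circ\kappa$ with $A\in\hatDiff(\C^2,0)$ (linear part $\Lambda^{-\frac12}$) and $\kappa(\xi)=\bar\xi$, the fact that $\rho$ reverses $\phi$, hence $\phi^{\circ p}$, gives $\rho\circ\exp(\hat\bX)\circ\rho=\exp(-\hat\bX)$, and expanding $\rho\circ\exp(t\hat\bX)\circ\rho$ as a one-parameter group and using uniqueness of the infinitesimal generator (Section~\ref{sec:3infgen}) yields the \emph{reality relation} $(A^{\circ(-1)})^*\,\overline{\hat\bX}=-\hat\bX$.

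Next I would repeat the normalization of $\hat\bX$ performed in Propositions~\ref{prop:prepared}, \ref{prop:FNFX}, Lemma~\ref{lemma:Q0}, Proposition~\ref{prop:normalization} and Theorem~\ref{thm:FNFX1}, but at each stage symmetrizing the change of variables $\hat\Psi$ (averaging over the finite linear group $\langle\sigma,\rho_0\rangle$, equivalently solving the equivariant version of the relevant cohomological/flow equation). Since those constructions are linear and flow constructions, this produces a single $\hat\Psi\in\hatDiff_{\id}(\C^2,0)$ that is $\sigma$-equivariant, preserves $h=\xi_1\xi_2$, conjugates $\phi$ to $\hatphinf=\Lambda\exp(\tfrac1p\hatbXnf)$ with $\hatbXnf$ of the form \eqref{eq:FNFX1} (case (c) excluded since $\Lambda$ is diagonal), and conjugates $\rho$ to an antiholomorphic involution whose holomorphic part again has linear part $\Lambda^{-\frac12}$. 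Transporting the reality relation through $\hat\Psi$ gives $\overline{(\Lambda^{-\frac12})^*\hatbXnf}=-\hatbXnf$, which, combined with $\sigma^*\hatbXnf=-\hatbXnf$ and $\sigma\Lambda^{\frac12}\sigma=\Lambda^{-\frac12}$, is equivalent to the asserted identity $\hatbXnf=-\overline{(\Lambda^{\frac12})^*\hatbXnf}$. Using this, a direct computation shows that $\hatrhonf=\exp(-\tfrac1{2p}\hatbXnf)\circ\rho_0$ is indeed an antiholomorphic involution and that $\hatchinf=\sigma\hatrhonf$ satisfies $\hatchinf^{\circ2}=\hatphinf$ and $\sigma\hatchinf\sigma=\hatchinf^{\circ(-1)}$, so $(\hatchinf,\sigma)$ is a legitimate target.

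To upgrade from ``conjugate'' to ``equal'' I would introduce the antiholomorphic Jordan decomposition (the analogue of Lemma~\ref{lemma:decomposition}): $\chi=\chi_u\circ\chi_s=\chi_s\circ\chi_u$ with $\chi_u:=\exp(\tfrac1{2p}\hatbXnf)\in\hatDiff_{\id}$ and $\chi_s:=\chi\circ\chi_u^{\circ(-1)}$ antiholomorphic of finite order $\chi_s^{\circ2p}=\id$ (here one uses that $\chi$ commutes with $\chi^{\circ2p}=\exp(\hatbXnf)$, hence with $\chi_u$, by the same one-parameter-group argument). Its linear part is $\mathrm{lin}:\xi\mapsto\Lambda^{\frac12}\sigma\bar\xi$. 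One then linearizes $\chi_s$ by the averaging operator $\hat\Theta:=\tfrac1{2p}\sum_{j=0}^{2p-1}\mathrm{lin}^{\circ(-j)}\circ\chi_s^{\circ j}\in\hatDiff_{\id}$, just as in Lemma~\ref{lemma:phivsphip}; it is $\sigma$-equivariant because $\sigma\chi_s\sigma=\chi_s^{\circ(-1)}$ and $\sigma\,\mathrm{lin}\,\sigma=\mathrm{lin}^{\circ(-1)}$, and — and this is where the reality condition is used — $\chi_u$ commutes with $\mathrm{lin}$, hence with $\chi_s$ and with $\hat\Theta$, so $\hat\Theta$ preserves $\hatbXnf$. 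Therefore $\hat\Theta$ conjugates $\chi=\chi_s\circ\chi_u$ to $\mathrm{lin}\circ\chi_u=\hatchinf$ while fixing $\hatphinf$ and $\sigma$; absorbing $\hat\Theta$ into $\hat\Psi$ finishes the generic case. In case (o) ($\hatbXnf=0$) everything is finite-order and the conjugacy is even convergent, and in case (a) the residual scalar freedom is used to pin down the sign/constant exactly as in the proof of Theorem~\ref{thm:antiholomorphicclassification}.

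The step I expect to be the main obstacle is the compatibility with the first integral $h$. The normalization of Section~\ref{sec:3} only preserves the Pfaffian foliation $\{dh=0\}$ and may rescale $h$ (see \eqref{eq:rescaling}), whereas the normal form \eqref{eq:chinormalform} requires $h=\xi_1\xi_2$ literally; so one must verify that the scalar correction restoring $h$ can be taken simultaneously $\sigma$-equivariant and compatible with the antiholomorphic reversibility (i.e. that the rescaling factor is ``real'' in the appropriate sense, consistent with $h\circ\rho_0=\overline h$), and that this does not introduce any new \emph{formal} invariant beyond $\hat\mu(h)$. Carrying this bookkeeping consistently through the symmetrization of the second step, rather than any single algebraic identity, is the delicate point.
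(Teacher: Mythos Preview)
Your approach is genuinely different from the paper's, and as written it has a gap at the crucial step. The paper does \emph{not} symmetrize the normalization of Section~\ref{sec:3} with respect to $\rho_0$; it simply applies Theorem~\ref{thm:1} to $\phi=\chi^{\circ2}$ to get the \emph{unique} $\sigma$-equivariant tangent-to-identity $\hat\Psi$, and then studies whatever antiholomorphic involution $\hat\rho=\hat\Psi\circ\rho\circ\hat\Psi^{\circ(-1)}$ results. Writing $\hat\rho(\xi)=\overline{\Lambda^{\frac12}\hat\psi(\xi)}$ with $\hat\psi=\id+\hot$, the reversibility gives $\hat\psi^*\hatbXnf=-\overline{(\Lambda^{\frac12})^*\hatbXnf}$. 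The key point, which you do not invoke, is that $-\overline{(\Lambda^{\frac12})^*\hatbXnf}$ is \emph{again} of the shape \eqref{eq:FNFX1}, so the uniqueness statement of Theorem~\ref{thm:FNFX1}\,(2) forces $\hatbXnf=-\overline{(\Lambda^{\frac12})^*\hatbXnf}$; then Lemma~\ref{lemma:automorphismofX} gives $\hat\psi=\exp(h^{-s}\hat\beta(h)\hatbXnf)$, and the squaring identity $(\sigma\hat\rho)^{\circ2}=\Lambda\exp(\tfrac1p\hatbXnf)$ pins down $\hat\psi=\exp(\tfrac1{2p}\hatbXnf)$ exactly.

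Your route bypasses this uniqueness argument and tries to obtain the reality constraint by ``transporting'' through a $\rho_0$-equivariant $\hat\Psi$. But in cases (a),\,(b) the normalizing $\hat\Psi$ is \emph{unique} (Theorem~\ref{thm:1}), so there is no freedom to ``symmetrize'' it; either the unique $\hat\Psi$ already intertwines $\rho$ with $\rho_0$, or it does not. Averaging $\hat\Psi$ over $\langle\sigma,\rho_0\rangle$ produces a holomorphic map, but not one that still normalizes $\hat\bX$ (normalization is nonlinear), and redoing each step of Section~\ref{sec:3} equivariantly is not ``linear and flow'' throughout (Weierstrass preparation, the rescalings \eqref{eq:rescaling}, etc.). Without $\rho_0$-equivariance of $\hat\Psi$, the transported reality relation is $(\tilde A^{-1})^*\overline{\hatbXnf}=-\hatbXnf$ with $\tilde A=\Lambda^{-\frac12}+\hot$, not the linear $(\Lambda^{-\frac12})^*$ version you assert. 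This propagates: your Jordan-decomposition step needs $\chi_u=\exp(\tfrac1{2p}\hatbXnf)$ to commute with $\mathrm{lin}:\xi\mapsto\Lambda^{\frac12}\sigma\bar\xi$, which is precisely the linear reality condition you have not established. The missing ingredient is exactly Theorem~\ref{thm:FNFX1}\,(2) (or equivalently Lemma~\ref{lemma:automorphismofX} applied to $\Lambda^{\frac12}\tilde A$), and once you insert it your Jordan-decomposition argument collapses to the paper's computation.
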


\begin{proof}
\Grn{In the case (o) of	Theorem~\ref{thm:1}, i.e. when $\chi^{\circ 2p}=\id$, and $(\chi,\tau)$ is as in Lemma~\ref{lemma:chitau}, one can construct a normalizing transformation by averaging over the finite group as
\begin{equation}\label{eq:Psichitau}.
	\Psi=\tfrac1{4p}\sum_{n=1}^{2p}\big(\chinf^{\circ n}\circ\chi^{\circ(-n)}+\taunf\circ\chinf^{\circ n}\circ\chi^{\circ(-n)}\circ\tau\big).
\end{equation}
}

Otherwise, by Theorem~\ref{thm:1}, after conjugation by a formal transformation, we may assume that 
\[\hat\chi^{\circ 2}(\xi)=(\sigma\hat\rho)^{\circ 2}= \hatphinf(\xi)=\Lambda\exp(\tfrac{1}{p}\hatbXnf)(\xi) \]
is in the formal normal form, with the formal vector field $\hatbXnf\neq 0$ \eqref{eq:FNFX1} satisfying $\hatbXnf=\Lambda^*\hatbXnf=-\sigma^*\hatbXnf$.
Let us show that 
\[\hat\rho(\xi)=\ov{\Lambda^{\frac12}\exp(\tfrac{1}{2p}\hatbXnf)(\xi)},\quad\text{and}\quad \hatbXnf=-\ov{(\Lambda^{\frac12})^*\hatbXnf},\]
which is equivalent to \eqref{eq:chinormalform}.
Write
\begin{equation}\label{eq:rho1}
	\hat\rho(\xi)=\ov{\Lambda^{\frac12}\hat\psi(\xi)},\qquad\text{for some}\quad\hat\psi(\xi)=\xi+\hot,
\end{equation}
then the identity $\hat\rho^{\circ 2}=\id$, is equivalent to
\begin{equation}\label{eq:ovpsi}
	\ov{\hat\psi}(\xi)=\Lambda^{\frac12}\hat\psi^{\circ(-1)}(\Lambda^{-\frac12}\xi).
\end{equation}	
Since $\hat\rho$ reverses $\hatphinf$ it also reverses the infinitesimal generator $\hatbXnf$	of $\hatphinf^{\circ p}$, hence by \eqref{eq:ovpsi} 
\begin{align*}
\exp(-t\hatbXnf)&=\hat\rho\circ\exp(t\hatbXnf)\circ\hat\rho=\ov{\Lambda^{\frac12}\hat\psi}\circ\ov{\exp(t\hatbXnf)}\circ(\Lambda^{\frac12}\hat\psi)\\
&=\hat\psi^{\circ(-1)}\circ\Lambda^{-\frac12}\circ\exp(t\ov{\hatbXnf})\circ\Lambda^{\frac12}\circ\hat\psi, \qquad\text{for all }\  t\in\R,
\end{align*}
which means that $\hat\psi^*\hatbXnf= -\ov{(\Lambda^{\frac12})^*\hatbXnf}$. As $u\circ\Lambda^{\frac12}=\pm u$, the vector field $-\ov{(\Lambda^{\frac12})^*\hatbXnf}$ is also of the form \eqref{eq:FNFX1}, and by the second point of Theorem~\ref{thm:FNFX1} $-\ov{(\Lambda^{\frac12})^*\hatbXnf}=\hat\psi^*\hatbXnf=\hatbXnf$.
By Lemma~\ref{lemma:automorphismofX} this means that
\[\hat\psi=\exp(h^{-s}\hat\beta(h)\hatbXnf)\]
for some formal power series $\hat\beta(h)$. In particular, $\hat\psi$ is $\Lambda$-equivariant and is reversed by $\sigma$.
We have also the identity 
\begin{align*}
\Lambda\exp(\tfrac{1}{p}\hatbXnf)&=\hat\phi=(\sigma\hat\rho)^{\circ 2}
=\sigma\ov\Lambda^{\frac12}\ov{\hat\psi}\circ(\sigma\Lambda^{\frac12}\hat\psi)=\sigma\hat\psi^{\circ(-1)}\circ\sigma\circ\Lambda\circ\hat\psi=\Lambda\hat\psi^{\circ 2},
\end{align*}
using \eqref{eq:ovpsi}, which means that $\hat\psi=\exp(\tfrac{1}{2p}\hatbXnf)$.
\end{proof}

We can now further transform the above formal normal form to bring $\hatrhonf$ to complex conjugation $\xi\mapsto\ov\xi$ which will
provide the normal form of Theorem~\ref{thm:2}.

\begin{proof}[Proof of Theorem~\ref{thm:2}]
Let $(\hattaunf,\hatrhonf)$ be in the normal form of Theorem~\ref{thm:normalform}.
A formal change of variables 
$\hat\Psi: \xi\mapsto\xi'=\Lambda^{\frac{1}{4}}\exp(\tfrac{1}{4p}\hatbXnf)(\xi)$,
conjugates $\hatrhonf(\xi)=\exp(-\tfrac{1}{2p}\hatbXnf)(\Lambda^{-\frac12}\bar\xi)$ to $\rhonf'(\xi')=\bar\xi'$:
\[ \hat\Psi\circ\hatrhonf(\xi)=\Lambda^{\frac14}\exp(-\tfrac{1}{4p}\hatbXnf)(\Lambda^{-\frac12}\bar\xi)=\bar\xi'=\rhonf'\circ\hat\Psi(\xi).\]
It conjugates
$\hatbXnf$ to $\hatbXnf'=(\Lambda^{-\frac14})^*\hatbXnf$,
and the involution $\hattaunf=\sigma$ to
\[ \hattaunf'(\xi')= \Lambda^{\frac14}\exp(\tfrac{1}{2p}\hatbXnf)(\Lambda^{\frac14}\sigma\xi')=\exp(\tfrac{1}{2p}\hatbXnf')(\Lambda^{\frac12}\sigma\xi').\]
A real dilatation $\xi\mapsto |c|^{-\frac1{kp+2s}}\xi$ sends $|c|\mapsto 2p$, i.e. $c=\pm 2ip$.

\Grn{By Theorem~\ref{thm:1}, 
	$\Cal Z(\hatbXnf', \, \sigma\Lambda^{\frac12},\,\Lambda)\subseteq\{\xi\mapsto e^{\frac{\pi ir}{kp+2s}}(\sigma\Lambda^{\frac12})^r\xi,\ r\in\Z_{2kp+4s}\}$.
But the only element commuting with $\rhonf:\xi\mapsto\ov\xi$ are for $\frac{r}{kp+2s}\in\Z$, i.e. either $\id$ or $\xi\mapsto -(\sigma\Lambda^{\frac12})^{kp}\xi$,
but the second map can be admissible only if $kp$ is even.
}
\end{proof}


\section{Surfaces and involutions}\label{sec:2}

\subsection{Moser--Webster correspondence}
The key point of J.~Moser \& S.~Webster's paper \cite{Moser-Webster} is Theorem~\ref{prop:MW0} which states 
that the formal/analytic classification of germs of surfaces $M$ \eqref{eq:M} agrees with that of the associated triple of involutions $(\tau_1,\tau_2,\rho)$ of $(\C^2,0)$.
This deserves to be explained here.

Let $\Cal M$ a germ of a complex surface in $(\C^4,0)$ of the form
\begin{equation}\label{eq:CM}
	\Cal M: z_2=F(z_1,w_1),\quad w_2=G(z_1,w_1),
\end{equation}
with $F,G$ higher order perturbations of $\gamma^{-1}z_1w_1+z_1^2+w_1^2$.

Two such surfaces $\Cal M$, $\Cal M'$ are equivalent if there exist a map $\psi:\Cal M\to\Cal M'$ which splits as $\psi(z,w)=\big(f(z),g(w)\big)$, i.e.
\begin{equation}\label{eq:fg}
\begin{tikzpicture}
	\node (A) {$(\Cal M,0)$};
	\node (B) [below=of A] {$(\Cal M',0)$};
	\node (C1a) [left=of A] {$(\C^2,0)$};
	\node (C2a) [right=of A] {$(\C^2,0)$};
	\node (C1b) [left=of B] {$(\C^2,0)$};
	\node (C2b) [right=of B] {$(\C^2,0)$};
	\draw[-stealth] (A)-- node[right] {\small $\psi$} (B);
	\draw[-stealth] (A)-- node [above] {\small $\pi_1$} (C1a);
	\draw[-stealth] (A)-- node [above] {\small $\pi_2$} (C2a);
	\draw[-stealth] (B)-- node [above] {\small $\pi_1$} (C1b);
	\draw[-stealth] (B)-- node [above] {\small $\pi_2$} (C2b);
	\draw[-stealth] (C1a)-- node [right] {\small $f$} (C1b);
	\draw[-stealth] (C2a)-- node [right] {\small $g$} (C2b);
\end{tikzpicture}
\end{equation}
where $\pi_1(z,w)=z$ and $\pi_2(z,w)=w$.

Given a germ of a complex surface \eqref{eq:CM} one associates to it a pair of involutions  $(\tau_1^{\Cal M},\tau_2^{\Cal M})$ acting on $\Cal M$, 
such that $\pi_j\circ\tau_j^{\Cal M}=\pi_j$, $j=1,2$,
which in the local coordinate $(z_1,w_1)$ is identified with a pair of involutions
$(\tau_1, \tau_2)$ of $(\C^2,0)$, such that
\begin{align*}
	z_1\circ\tau_1&=z_1, \quad F\circ\tau_1=F,\\
	w_1\circ\tau_2&=w_1, \quad G\circ\tau_2=G,\\
\end{align*}
of the form \eqref{eq:tau1tau2}.

The complex surface $\Cal M$ comes from a complexification of a real surface $M$ \eqref{eq:M} 
if and only if 
\[\rho\circ(F,G)=(F,G)\circ\rho\]
for the antiholomorphic involution
\begin{equation}\label{eq:rho}
	\rho: \begin{pmatrix} z_1 \\ w_1 \end{pmatrix}\mapsto \begin{pmatrix} \bar w_1 \\ \bar z_1 \end{pmatrix},
\end{equation}
that is if $F(z_1,w_1)=\bar G(w_1,z_1)$. In this case $\rho$ 
conjugates $\tau_1$ with $\tau_2=\rho\circ\tau_1\circ\rho$, and the intertwined triple of involutions $(\tau_1,\tau_2,\rho)$ is called a Moser--Webster triple.

\begin{proposition}[Moser \& Webster \cite{Moser-Webster}]~\label{prop:MW}
	\begin{enumerate}[wide=0pt, leftmargin=\parindent]
		\item 	
		Two complex surfaces $\Cal M$ and $\Cal M'$ \eqref{eq:CM} with $\gamma^{-1}=\gamma'^{-1}\in\C$
		are  equivalent by means of a formal (resp. analytic) transformation $(z',w')=\big(f(z),g(w)\big)$,
		if and only if the associated pairs of involutions $(\tau_1,\tau_2)$ and $(\tau_1',\tau_2')$ are conjugated by an element of the group $\widehat\Diff(\C^2,0)$ (resp. $\Diff(\C^2,0)$).
		
		There is a bijective correspondence between the formal (resp. analytic) equivalence classes of the surfaces $\Cal M$ with given $\gamma^{-2}\neq 4$ and  pairs of holomorphic reflections $(\tau_1,\tau_2)$ with $\tr D\phi(0,0)=\gamma^{-2}-2$, where $D\phi(0,0)$ is the matrix of the linear part of $\phi=\tau_1\circ\tau_2$ at the origin.
		
		\item
		Two  real surfaces $M$ and $M'$ \eqref{eq:M} with $\gamma=\gamma'\in\,\,]0,+\infty]$
		are  equivalent by means of a formal (resp. analytic) transformation $z'=f(z)$
		if and only if the  triples of involutions $(\tau_1,\tau_2,\rho)$ and $(\tau_1',\tau_2',\rho')$ 
		associated with the complexifications  $\Cal M$ and $\Cal M'$ of $M$ and $M'$
		are conjugated by an element of the group $\widehat\Diff(\C^2,0)$ (resp. $\Diff(\C^2,0)$).
		
		There is a bijective correspondence between the formal (resp. analytic) equivalence classes of the real surfaces $M$ with given $\gamma\neq\frac12$ and Moser--Webster triples $(\tau_1,\tau_2,\rho)$, with $\tr D\phi(0,0)=\gamma^{-2}-2$ where $D\phi(0,0)$ is the matrix of the linear part of $\phi=\tau_1\circ\tau_2$ at the origin.
	\end{enumerate}
\end{proposition}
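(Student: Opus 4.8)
The proof rests on one structural fact about a holomorphic (or formal) reflection $\tau_1$ of $(\C^2,0)$: its ring of invariant germs is freely generated by two elements, and one may take these to be $z_1$ together with any $\tau_1$-invariant germ $F$ whose $2$-jet is the quadric $Q=\gamma^{-1}z_1w_1+z_1^2+w_1^2$. Indeed $\tau_1$ is linearizable, the quotient $(\C^2,0)/\tau_1$ is again $(\C^2,0)$, and since $Q$ is nondegenerate --- this is exactly where the hypothesis $\gamma\neq 0,\tfrac12$ is used --- the map $(z_1,F)$ is, up to a biholomorphism of the target, the quotient map of $\tau_1$, i.e. a two-sheeted branched cover with deck transformation $\tau_1$; consequently every $\tau_1$-invariant germ is a germ-function of $(z_1,F)$. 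The symmetric statement holds for $\tau_2$ and any $\tau_2$-invariant $G$ with $2$-jet $Q$. Everything below is literally the same in the formal category with $\C\{\cdot\}$ replaced by $\C\llbracket\cdot\rrbracket$, so I only treat the analytic case.

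First I would prove the two ``only if'' directions. Given a surface $\Cal M$ of the form \eqref{eq:CM}, the projections $\pi_1,\pi_2$ exhibit it as two branched double covers whose deck transformations, read in the chart $(z_1,w_1)$, are the involutions $(\tau_1,\tau_2)$ of \eqref{eq:tau1tau2}. If $\psi=(f,g)\colon\Cal M\to\Cal M'$ is a split equivalence, then from the commuting square \eqref{eq:fg} and the characterisation of $\tau_j^{\Cal M}$ as the unique nontrivial involution with $\pi_j\circ\tau_j^{\Cal M}=\pi_j$, one gets $\psi\circ\tau_j^{\Cal M}=\tau_j^{\Cal M'}\circ\psi$ for $j=1,2$; reading $\psi$ in the common $(z_1,w_1)$-chart yields a single $\Psi\in\Diff(\C^2,0)$ with $\Psi\tau_j\Psi^{-1}=\tau_j'$. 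For part (2) one observes that the complexification of $z'=f(z)$ is the split map $(f,\bar f)$, that $\Cal M$ comes from a real $M$ \eqref{eq:M} precisely when $(F,G)$ is intertwined by $\rho\colon(z_1,w_1)\mapsto(\bar w_1,\bar z_1)$ of \eqref{eq:rho} with $\tau_2=\rho\tau_1\rho$, and that demanding $g=\bar f$ is the same as demanding $\Psi\rho=\rho\Psi$; hence $\Psi$ conjugates the triple $(\tau_1,\tau_2,\rho)$ to $(\tau_1',\tau_2',\rho)$.

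For the ``if'' directions, given $\Psi\in\Diff(\C^2,0)$ with $\Psi\tau_j\Psi^{-1}=\tau_j'$, choose germs $F,G$ and $F',G'$ with $2$-jet $Q$ realising $\Cal M,\Cal M'$, and set $\psi:=$ ``$\Psi$ transported from the chart of $\Cal M$ to that of $\Cal M'$''. Since $z_1\circ\Psi$ and $F'\circ\Psi$ are $\tau_1$-invariant (as $z_1,F'$ are $\tau_1'$-invariant and $\Psi$ conjugates $\tau_1$ to $\tau_1'$), the structural fact gives $f\in\Diff(\C^2,0)$ with $(z_1\circ\Psi,\,F'\circ\Psi)=f\circ(z_1,F)$, i.e. $\pi_1'\circ\psi=f\circ\pi_1$; symmetrically $\pi_2'\circ\psi=g\circ\pi_2$, so $\psi$ is split. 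Carrying the $\rho$-constraint through gives the same conclusion for part (2). For the asserted bijections one still needs surjectivity: given a reflection pair with $\tr D(\tau_1\circ\tau_2)(0)=\gamma^{-2}-2$ (so $\lambda\neq1$), a linear change of coordinates --- the same computation as in Lemma~\ref{lemma:phitau}, with the value $\gamma=\infty$, i.e. $\phi\sim -I$, checked separately --- puts $(\tau_1,\tau_2)$ into the form \eqref{eq:tau1tau2}; then $Q$ is $D\tau_j(0)$-invariant, so $F:=\tfrac12(Q+Q\circ\tau_1)$ and $G:=\tfrac12(Q+Q\circ\tau_2)$ are $\tau_j$-invariant with $2$-jet $Q$ and assemble into a surface $\Cal M$ whose deck involutions are $(\tau_1,\tau_2)$. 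For Moser--Webster triples one normalises by a $\rho$-compatible linear map and instead takes $G:=\ov{F\circ\rho}$, which has $2$-jet $Q$ because $Q$ is real and $\sigma$-symmetric. Combined with the two ``iff'' statements, $\Cal M\mapsto(\tau_1^{\Cal M},\tau_2^{\Cal M})$ and $M\mapsto(\tau_1^{\Cal M},\tau_2^{\Cal M},\rho^{\Cal M})$ descend to the stated bijections.

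The one genuinely substantive step is the structural fact of the first paragraph --- that $(z_1,F)$ is the quotient map of $\tau_1$ and hence that $\tau_1$-invariant germs are exactly germ-functions of $(z_1,F)$ --- since it is what converts an abstract conjugating map $\Psi$ into a concrete split map $(f,g)$; it relies entirely on the nondegeneracy of $Q$, equivalently $\gamma\neq 0,\tfrac12$. The remainder, including the routine but case-sensitive linear normalisation at $\gamma=\infty$ and the bookkeeping of $\rho$ through the two ``if''/``only if'' arguments, is straightforward.
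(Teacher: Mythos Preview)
Your approach is essentially the paper's: the ``structural fact'' in your first paragraph is exactly observation~(i) preceding the paper's proof, and your ``if''/``only if'' arguments match the paper's almost verbatim.

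There is, however, a real slip in your surjectivity argument. You claim that ``a linear change of coordinates\ldots puts $(\tau_1,\tau_2)$ into the form~\eqref{eq:tau1tau2}'', but that form demands $z_1\circ\tau_1=z_1$ and $w_1\circ\tau_2=w_1$ \emph{exactly}, which a linear conjugation cannot arrange (it only normalises the linear parts $D\tau_j(0)$; and Lemma~\ref{lemma:phitau}, which you invoke, linearises a \emph{single} involution in a different setup). Your averaged functions $F=\tfrac12(Q+Q\circ\tau_1)$ and $G=\tfrac12(Q+Q\circ\tau_2)$ are indeed $\tau_j$-invariant with $2$-jet $Q$, but for the surface $\{z_2=F,\ w_2=G\}$ to have $\tau_1$ as the deck involution of $\pi_1=(z_1,F)$ you need $(z_1,F)\circ\tau_1=(z_1,F)$, hence $z_1\circ\tau_1=z_1$; without this the deck involution of your surface is some other reflection $\tilde\tau_1\neq\tau_1$, and surjectivity is not established. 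The repair is easy --- first pass to the tangent-to-identity coordinates $\tilde z_1=\tfrac12(z_1+z_1\circ\tau_1)$, $\tilde w_1=\tfrac12(w_1+w_1\circ\tau_2)$ --- but the argument as written does not close. The paper avoids the issue altogether via its observation~(ii): for a suitable linear $s_1$ the pair $\big(s_1+s_1\circ\tau_1,\ s_1\cdot(s_1\circ\tau_1)\big)$ directly furnishes $\tau_1$-invariant generators $(\tilde z_1,\tilde z_2)$ with the required jets, so no preliminary non-linear normalisation of $\tau_1$ is needed.
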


Given the importance of this correspondence, and to keep the paper relatively self-contained, we provide a proof based on \cite{Moser-Webster}.
It rests on the following simple observations.

\begin{enumerate}[label=(\roman*), wide=0pt, leftmargin=\parindent]
	\item \label{item:hilbertbasis1} 
	\textit{The functions 
		\[z_1\quad \text{and}\quad z_2=F(z_1,w_1)=\gamma^{-1}z_1w_1+z_1^2+w_1^2+\hot\] 
		form a functionally independent set of generators of the ring of formal/analytic germs invariant by $\tau_1$.\footnote{In \cite{GSS} such set is called a Hilbert basis of the ring.}		
		This means that any formal/analytic $\tau_1$-invariant germ
		can be expressed uniquely as a formal/analytic function of $(z_1,z_2)=\big(z_1,F(z_1,w_1)\big)$.
		In particular, any other pair $\big(Z_1(z_1,w_1),Z_2(z_1,w_1)\big)$ of generators of the ring of $\tau_1$-invariant germs is related to $(z_1,z_2)$ by a formal/analytic diffeomorphism. Similarly, $(w_1,w_2)=\big(w_1,G(z_1,w_1)\big)$  are generators of the ring of formal/analytic $\tau_2$-invariant germs.}
	
	
	\item \label{item:hilbertbasis2}
	\textit{For any formal/analytic complex reflection\footnote{Involution $\tau$ whose linear part $D\tau(0,0)$ has eigenvalues $\{1,-1\}$.} $\tau$ of $(\C^2,0)$,  
		and for any non-degenerate formal/analytic germ $s:(\C^2,0)\to(\C,0)$, such that its derivative at the origin $Ds(0,0)$ is not an eigenvector for $D\tau(0,0)$,
		the pair $\big(s+s\circ\tau,\ s\cdot(s\circ\tau)\big)$ forms a functionally independent set of generators of the ring of formal/analytic $\tau$-invariant germs.}
	

\item \label{item:hilbertbasis3}
\Grn{\textit{If $T_1,T_2\in\GL_2(\C)$ are linear reflections, $T_1^2=T_2^2=I$, such that $T_1T_2$ is conjugated to 
	$\Lambda=\left(\begin{smallmatrix}\lambda &0\\[3pt]0&\lambda^{-1}\end{smallmatrix}\right)\neq I$, then up to a conjugation 
	$T_1=\left(\begin{smallmatrix}1&0\\[3pt]-\gamma^{-1}&-1\end{smallmatrix}\right)$, 
	$T_2=\left(\begin{smallmatrix}-1&-\gamma^{-1}\\[3pt]0&1\end{smallmatrix}\right)$, where $\gamma^{-2}=\lambda+\lambda^{-1}+2$.}	}
\end{enumerate}

\GRN
\begin{proof}[Proof of Proposition~\ref{prop:MW}]
Let us prove only point 1, point 2 is similar.
 
Suppose the pair $(\tau_1,\tau_2)$ is associated to $\Cal M$ \eqref{eq:CM}, and $(\tau_1',\tau_2')$ to $\Cal M'$.	
If $\tau_j'=\Psi\circ\tau_j\circ\Psi^{\circ(-1)}$, $j=1,2$, are conjugated by a transformation $(z_1',w_1')=\Psi(z_1,w_1)$,
then both $\big(z_1,\, F(z_1,w_1)\big)$ and $\big(z_1'\circ\Psi,\, F'\circ\Psi\big)$ are basic $\tau_1$-invariant functions, therefore there exist a diffeomorphism
$f\in\Diff(\C^2,0)$ such that 
\[(z_1',\, F')\circ\Psi(z_1,w_1)=f\big(z_1,\, F(z_1,w_1)\big).\]
Similarly also
\[(w_1',\, G')\circ\Psi(z_1,w_1)=g\big(w_1,\, G(z_1,w_1)\big),\]
for some $g\in\Diff(\C^2,0)$. Then $\big((z_1',z_2'),\, (w_1',w_2')\big)=\big(f(z_1,z_2),\, g(w_1,w_2)\big)$ is a biholomorphic map between $\Cal M$ and $\Cal M'$.

Conversely, if $(z,w)\mapsto (z',w')=\big(f(z),g(w)\big)$ is a biholomorphic map between two surfaces $\Cal M$ and $\Cal M'$, then its restriction to $\Cal M$ conjugates the associated pairs of involutions of the surfaces,
$(f,g)\circ(\tau_1^{\Cal M},\tau_2^{\Cal M})=(\tau_1^{\Cal M'},\tau_2^{\Cal M'})\circ(f,g)$.
Therefore in the local coordinates $(z_1,w_1)$ on $\Cal M$ and $(z_1',w_1')$ on $\Cal M'$ the map 
\[(z_1,w_1)\mapsto(z_1',w_1')=\Psi(z_1,w_1):= \big(z_1\circ f(z_1,F(z_1,w_1)),\ w_1\circ g(w_1,G(z_1,w_1))\big)\]
conjugates $(\tau_1,\tau_2)$ and $(\tau_1',\tau_2')$.

Given a pair of reflections $\tau_1,\tau_2\in\Diff(\C^2,0)$, if $\lambda+\lambda^{-1}\neq 2$, one can assume that $\tau_{j}=T_j+\hot$, $j=1,2$, where $T_j$ 
are as in the assertion \ref{item:hilbertbasis3}.
Then the functions 
\[s_1(z_1,w_1):=\lambda_1^{\frac12} z_1+w_1,\quad s_2(z_1,w_1):=z_1+\lambda_2^{\frac12}w_1,\quad \text{where}\quad \lambda_j^{\frac12}+\lambda_j^{-\frac12}=\gamma^{-1},\]
satisfy the assumption of the assertion \ref{item:hilbertbasis2}. Letting
\begin{equation*}
	\begin{aligned}
		\tilde z_1&=\tfrac{1}{\lambda_1^{\frac12}-\lambda_1^{-\frac12}}\Big(s_1+s_1\circ\tau_1\Big),&\quad \tilde z_2&=-s_1\cdot(s_1\circ\tau_1),\\
		\tilde w_1&=\tfrac{1}{\lambda_2^{\frac12}-\lambda_2^{-\frac12}}\Big(s_2+s_2\circ\tau_2\Big),&\quad \tilde w_2&=-s_2\cdot(s_2\circ\tau_2),
	\end{aligned}
\end{equation*}
then $\tilde z_1=z_1+\hot(z_1,w_1)$, $\tilde w_1=w_1+\hot(z_1,w_1)$, and
$\tilde z_2,\tilde w_2=z_1^2+w_1^2+\gamma^{-1}z_1w_1+\hot(z_1,w_1)$.
This means that $(\tilde z_1,\tilde z_2,\tilde w_1,\tilde w_2)$ defines a germ of a surface $\tilde{\Cal M}$ in $(\C^4,0)$ of the form \eqref{eq:CM},
parametrized by $(z_1,w_1)$.
The induced action of $\tau_1$, resp. $\tau_2$, on $\tilde{\Cal M}$ fixes $(\tilde z_1,\tilde z_2)$, resp. $(\tilde w_1,\tilde w_2)$, and therefore it is
precisely $\tau_1^{\tilde{\Cal M}}$, resp. $\tau_2^{\tilde{\Cal M}}$.
%
\end{proof}
\FGRN

\begin{cor}[Group of automorphisms]
	\begin{enumerate}[wide=0pt, leftmargin=\parindent]
		\item 	
		For $\gamma^{-1}\in\C$, the group of formal/analytic transformations \eqref{eq:fg} that preserve the complex surfaces $\Cal M$ \eqref{eq:CM} 
		is isomorphic to the group of formal/analytic diffeomorphisms commuting with the associated pair of involutions $(\tau_1,\tau_2)$.
		
		\item
		For $\gamma\in\ ]0,+\infty]$, the group of formal/analytic transformations $z'=f(z)$ that preserve the real surface $M$ \eqref{eq:M} is isomorphic to the group of formal/analytic diffeomorphisms commuting with the associated Moser--Webster triple of involutions $(\tau_1,\tau_2,\rho)$.
	\end{enumerate}
\end{cor}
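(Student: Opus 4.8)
The statement is an immediate corollary of the bijective correspondence of Proposition~\ref{prop:MW}. The plan is to show that the correspondence between surfaces and (pairs, resp. triples of) involutions is not merely a bijection on isomorphism classes but is \emph{functorial}: it sends the automorphism group of a surface isomorphically onto the automorphism group — in the appropriate sense — of the associated involution data. So the work is to carefully track the two directions of Proposition~\ref{prop:MW} in the case where $\Cal M=\Cal M'$ (resp. $M=M'$).

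\textbf{Step 1 (the two group homomorphisms).} First I would fix $\Cal M$ as in \eqref{eq:CM} with its associated pair $(\tau_1,\tau_2)=(\tau_1^{\Cal M},\tau_2^{\Cal M})$ acting in the coordinates $(z_1,w_1)$, and similarly fix $M$ with its Moser--Webster triple $(\tau_1,\tau_2,\rho)$. Define $\Phi$ on an automorphism $(z,w)\mapsto(f(z),g(w))$ of $\Cal M$ (resp. $z\mapsto f(z)$ of $M$) by restricting to $\Cal M$ and reading it in the local coordinate $(z_1,w_1)$, exactly as in the second half of the proof of Proposition~\ref{prop:MW}: the restriction conjugates $(\tau_1,\tau_2)$ to itself, i.e.\ is an element of $\Cal Z(\tau_1,\tau_2)$ (resp.\ $\Cal Z(\tau_1,\tau_2,\rho)$, since $(f,g)$ commutes with $\rho^{\Cal M}$ when it comes from a real map). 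Conversely, define $\Psi$ on a diffeomorphism commuting with $(\tau_1,\tau_2)$ by the construction in the first half of that proof: such a diffeomorphism induces, via the Hilbert-basis observation \ref{item:hilbertbasis1}, a pair $(f,g)\in\Diff(\C^2,0)^2$ with $(z_1',z_2')\circ\Psi=f(z_1,z_2)$ and $(w_1',w_2')\circ\Psi=g(w_1,w_2)$, and since here the two surfaces coincide this $(f,g)$ is an automorphism of $\Cal M$; in the real case one checks it commutes with $\rho$ and hence descends to an automorphism $f$ of $M$. Both $\Phi$ and $\Psi$ are clearly group homomorphisms: composition of maps of $\Cal M$ corresponds to composition in the coordinates $(z_1,w_1)$, and the Hilbert-basis presentation of the invariant ring is compatible with composition.

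\textbf{Step 2 ($\Phi$ and $\Psi$ are mutually inverse).} This is the routine but necessary verification. Starting from $\Psi$ commuting with $(\tau_1,\tau_2)$, the induced automorphism $(f,g)$ of $\Cal M$, restricted back to $\Cal M$ and read in $(z_1,w_1)$, recovers $\Psi$ precisely because $(z_1,z_2)$ and $(w_1,w_2)$ are \emph{coordinates adapted to} $\pi_1,\pi_2$, i.e.\ $z_1$ and $w_1$ alone already separate points of $\Cal M$ generically and determine the map. Conversely, starting from an automorphism of $\Cal M$, its restriction $\Psi$, fed into the first construction, returns the same $(f,g)$ by uniqueness in \ref{item:hilbertbasis1}. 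In the real case one adds the observation that $\rho$-equivariance is preserved under both passages (this is the content of the identity $F=\bar G$ and the splitting of $(f,g)$ between the $z$- and $w$-variables, which commutes with $\rho:(z_1,w_1)\mapsto(\bar w_1,\bar z_1)$). Hence $\Phi$ is a group isomorphism with inverse $\Psi$, which is exactly the assertion of the corollary.

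\textbf{Main obstacle.} There is no deep difficulty here — the theorem is a bookkeeping consequence of Proposition~\ref{prop:MW} — but the one point that needs genuine care is the \emph{well-definedness and naturality of the passage $\Psi\mapsto(f,g)$ and its inverse}: one must use the uniqueness part of the Hilbert-basis statement \ref{item:hilbertbasis1} (any two sets of generators of the invariant ring differ by a diffeomorphism, \emph{uniquely}) to be sure the constructions are inverse to each other and are group homomorphisms, rather than merely set-theoretic bijections of the underlying classes. In the real/antiholomorphic case the extra bit of care is that the correspondence is between $\rho$-\emph{equivariant} data on both sides, so every step must be checked to respect $\rho^{\Cal M}$; this is where the hypothesis that the ambient transformation splits as $(f(z),\bar f(w))$ and commutes with $\rho$ is used, and it is exactly the same argument as in the proof of Theorem~\ref{prop:MW0}.
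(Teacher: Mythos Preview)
Your proposal is correct and follows essentially the same approach as the paper: both directions use the Hilbert-basis observation \ref{item:hilbertbasis1}, with one passage given by restriction of an automorphism of $\Cal M$ to the $(z_1,w_1)$-coordinates and the inverse passage lifting $\Psi$ to $\psi:(z,w)\mapsto\big((z_1,F),(w_1,G)\big)\circ\Psi$ and checking it splits. Your version is simply more detailed about the group-homomorphism and mutual-inverse verifications than the paper's two-sentence proof.
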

\begin{proof}
	Clearly an automorphism of $\Cal M$ of the form \eqref{eq:fg} commutes with the pair $(\tau_1^{\Cal M}, \tau_2^{\Cal M})$.
	Conversely, if $\Psi\in\Diff(\C^2,0)$ commutes with $(\tau_1,\tau_2)$, then the map $\psi:(z,w)\mapsto\big((z_1,F),(w_1,G)\big)\circ\Psi(z_1,w_1)$ commutes with
	$(\tau_1^{\Cal M}, \tau_2^{\Cal M})$ and by the assertion \ref{item:hilbertbasis1} splits as $\psi(z,w)=\big(f(z),g(w)\big)$.
	
\end{proof}

\GRN
\begin{definition}
	A surface $\Cal M$ \eqref{eq:CM} is \emph{holomorphically flat} if up to a biholomorphic change of coordinates \eqref{eq:fg} it lies in the complex hyperplane $\{z_2=w_2\}$.	
\end{definition}
\FGRN

 Holomorphic flatness of the surface is known to correspond to the existence of a non-constant first integral for the pair of involutions (cf. \cite[Proposition 5.2]{Gong1}). More precisely: 

\begin{proposition}\label{prop:flatness}
	Let $\Cal M$ be a surface \eqref{eq:CM} associated with a pair of involutions $(\tau_1,\tau_2)$. Then the following are equivalent:
	\begin{enumerate}[wide=0pt, leftmargin=\parindent]
		\item The diffeomorphism $\phi=\tau_1\circ\tau_2$ has an analytic first integral 
		\begin{equation}\label{eq:H}
			H(z_1,w_1)=\gamma^{-1}z_1w_1+z_1^2+w_1^2+\hot(z_1,w_1).
		\end{equation}
		\item The pair of involutions $(\tau_1,\tau_2)$ has an analytic first integral \eqref{eq:H}. 
		\item The surface $\Cal M$ is holomorphically flat.	
	\end{enumerate}	
	
	If furthermore, $\Cal M=\{z_2=F(z_1,w_1),\ w_2=\ov F(w_1,z_1)\}$ is a complexification of a real surface $M$, then additionally the above are also equivalent to:
	\begin{enumerate}[wide=0pt, leftmargin=\parindent]\setcounter{enumi}{3}
		\item The pair of involutions $(\tau_1,\tau_2)$ has an analytic first integral \eqref{eq:H}, such that $H=\ov{H\circ\rho}$.	
		\item There exists an analytic function $ H(z)=z_2+\hot:(\C^2,0)\to (\C,0)$ whose restriction on the surface $M$ takes real values, $H:M\to(\R,0)$.
	\end{enumerate}	
\end{proposition}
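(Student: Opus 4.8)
The natural strategy is a cyclic chain of implications $(1)\Rightarrow(2)\Rightarrow(3)\Rightarrow(1)$ for the first three statements, and then, in the complexification case, $(3)\Rightarrow(5)\Rightarrow(4)\Rightarrow(2)$ or a similar loop. First I would observe that the hard direction is $(1)\Rightarrow(2)$: a priori a first integral of $\phi=\tau_1\circ\tau_2$ need not be $\tau_1$- or $\tau_2$-invariant. The trick here is \emph{symmetrization}. Given $H$ with $H\circ\phi=H$, set $\tilde H=\tfrac12(H+H\circ\tau_1)$. Then $\tilde H\circ\tau_1=\tilde H$ trivially, and one checks $\tilde H\circ\tau_2=\tilde H$ using $\tau_2=\tau_1\circ\phi$ together with $\tau_1\circ\phi\circ\tau_1=\phi^{\circ(-1)}$ (which follows from $\tau_1^2=\tau_2^2=\id$), so $H\circ\tau_1\circ\tau_2=H\circ\tau_1\circ\tau_1\circ\phi=H\circ\phi=H$; combined with $H\circ\phi=H$ one deduces $\tilde H\circ\tau_2=\tilde H$. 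One must also check that $\tilde H$ still has quadratic part $\gamma^{-1}z_1w_1+z_1^2+w_1^2$: since $\tau_1=T_1+\hot$ with $T_1$ a reflection fixing $z_1$ and the quadratic form $\gamma^{-1}z_1w_1+z_1^2+w_1^2$ is already $T_1$-invariant (indeed it equals the Moser--Webster normal form of the quadric), the leading term is preserved. The implication $(2)\Rightarrow(1)$ is immediate.

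For $(2)\Rightarrow(3)$: suppose $(\tau_1,\tau_2)$ has first integral $H$ of the form \eqref{eq:H}. Using observation \ref{item:hilbertbasis1}, since $H$ is $\tau_1$-invariant we may write $H=\Phi(z_1,z_2)$ for a unique formal/analytic $\Phi$, and since $H$ is also $\tau_2$-invariant, $H=\Psi(w_1,w_2)$ for a unique $\Psi$; moreover the quadratic parts force $\Phi(z_1,z_2)=z_2+\hot$ and $\Psi(w_1,w_2)=w_2+\hot$, so $\Phi,\Psi$ are germs of diffeomorphisms in the second slot. Replacing the coordinates on the two copies of $(\C^2,0)$ by $f:(z_1,z_2)\mapsto(z_1,\Phi(z_1,z_2))$ and $g:(w_1,w_2)\mapsto(w_1,\Psi(w_1,w_2))$ — which is an admissible equivalence of the form \eqref{eq:fg} — transforms $\Cal M$ to a surface on which the new $z_2$ equals $H$ equals the new $w_2$, i.e. $\{z_2=w_2\}$. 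Hence $\Cal M$ is holomorphically flat. The converse $(3)\Rightarrow(2)$ is easy: if $\Cal M\subset\{z_2=w_2\}$, then $z_2=w_2$ restricted to $\Cal M$ is a function $H(z_1,w_1)=F(z_1,w_1)=G(z_1,w_1)$ that is manifestly invariant under both $\tau_1$ (which fixes $z_1,z_2$) and $\tau_2$ (which fixes $w_1,w_2$), and it has the right quadratic part by hypothesis on $F$; in particular it is a first integral for $\phi=\tau_1\circ\tau_2$, giving also $(3)\Rightarrow(1)$.

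For the complexification case, assume $\Cal M=\{z_2=F(z_1,w_1),\ w_2=\ov F(w_1,z_1)\}$ with the intertwining $\rho$ as in \eqref{eq:rho}. For $(2)\Rightarrow(4)$, starting from a $(\tau_1,\tau_2)$-invariant $H$ one replaces it by $\tfrac12(H+\ov{H\circ\rho})$: since $\rho\circ\tau_1=\tau_2\circ\rho$ and $\rho$ is an (anti)involution, $\ov{H\circ\rho}$ is again $(\tau_1,\tau_2)$-invariant, and $\rho$ fixes the quadratic form $\gamma^{-1}z_1w_1+z_1^2+w_1^2$, so the normalization \eqref{eq:H} survives; the result now satisfies $H=\ov{H\circ\rho}$. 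Trivially $(4)\Rightarrow(2)$. For $(4)\Leftrightarrow(5)$: if $H$ is $(\tau_1,\tau_2)$-invariant with $H=\ov{H\circ\rho}$, write $H=\Phi(z_1,z_2)$ as above; the relation $H=\ov{H\circ\rho}$ translates, on the surface, into $\Phi=\ov\Phi$ (using that $\rho$ swaps $(z_1,z_2)\leftrightarrow(\bar w_1,\bar w_2)$ and $F(z_1,w_1)=\ov F(w_1,z_1)$), so $\Phi$ has real coefficients; then $H(z):=\Phi(z_1,z_2)$, as a function of the \emph{ambient} $z=(z_1,z_2)$ (here $z_2$ is the ambient coordinate, equal to $F$ on $M$), restricts to $M=\Cal M\cap\Fix(\rho^{\Cal M})$ to take real values because on $\Fix(\rho^{\Cal M})$ one has $w_1=\bar z_1$, $w_2=\bar z_2$, hence $H=\Phi(z_1,z_2)=\ov{\Phi(w_1,w_2)}=\ov H$; and $H=z_2+\hot$ from the quadratic part. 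Conversely, given such an ambient $H(z)=z_2+\hot$ real on $M$, its pullback $\pi_1^*H=H(z_1,F(z_1,w_1))$ to $\Cal M$ is $\tau_1$-invariant (it depends only on $(z_1,z_2)$), and the reality on $M$ plus the splitting forces it to agree with $\pi_2^*\ov H$, hence also $\tau_2$-invariant, recovering (4).

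\textbf{Main obstacle.} The only genuinely non-routine point is the averaging step $(1)\Rightarrow(2)$ (and its $\rho$-symmetrized variant), specifically verifying that the symmetrized germ is still a first integral of \emph{both} involutions — this uses the reversing identity $\tau_1\circ\phi\circ\tau_1=\phi^{\circ(-1)}$ together with $H\circ\phi=H\Rightarrow H\circ\phi^{\circ(-1)}=H$ — and that the non-degenerate quadratic part with the prescribed Bishop coefficient $\gamma^{-1}$ is preserved under averaging, which relies on the linear normal forms of the reflections $\tau_1,\tau_2$ and of $\rho$ fixing the model quadratic form. Everything else is a bookkeeping exercise with the Hilbert-basis observation \ref{item:hilbertbasis1} and the explicit form \eqref{eq:tau1tau2}, \eqref{eq:rho} of the involutions.
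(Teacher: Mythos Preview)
Your approach is essentially the same as the paper's: symmetrization for $(1)\Rightarrow(2)$ and $(2)\Rightarrow(4)$, the Hilbert-basis observation \ref{item:hilbertbasis1} for $(2)\Rightarrow(3)$ and $(4)\Rightarrow(5)$, and direct verification for the converses. Your chain of implications and the paper's differ only in trivial reorderings (the paper closes the second loop via $(5)\Rightarrow(3)$ rather than $(5)\Rightarrow(4)$).

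There is one slip in your $(4)\Rightarrow(5)$. From $H=\ov{H\circ\rho}$ and $H=\Phi(z_1,z_2)$ you get, on the surface, $\Phi(z_1,z_2)=\ov\Phi(w_1,w_2)$ --- not $\Phi=\ov\Phi$. Combining with the $\tau_2$-invariance $H=\Psi(w_1,w_2)$ yields $\Psi=\ov\Phi$ (this is exactly what the paper records as $H_2=\ov H_1$); $\Phi$ itself need not have real coefficients. The conclusion you want still follows immediately: on $M=\Fix(\rho^{\Cal M})$ one has $(w_1,w_2)=(\bar z_1,\bar z_2)$, hence $\Phi(z_1,z_2)=\ov\Phi(\bar z_1,\bar z_2)=\ov{\Phi(z_1,z_2)}$, so $\Phi|_M$ is real-valued. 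With this correction your argument is complete and matches the paper's.
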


\begin{proof}
	\textit{(1) $\Rightarrow$ (2):}	
	If $H$ \eqref{eq:H} is a first integral for $\phi$, then $H'=\frac12(H+H\circ\tau_1)$ is again of the form \eqref{eq:H} and satisfies $H'\circ\tau_1=H'=H'\circ\tau_2$.
	
	\textit{(2) $\Rightarrow$ (1):}	Obvious.	
	
	\textit{(2) $\Rightarrow$ (3):}	
	If $H$ \eqref{eq:H} is a first integral for $(\tau_1,\tau_2)$, then by being $\tau_1$-invariant it takes the
	form $H(z_1,w_1)=H_1(z_1,F(z_1,w_1))$ for some unique $H_1(z)=z_2+\hot$, and by  being $\tau_2$-invariant $H(z_1,w_1)=H_2(w_1,G(z_1,w_1))$ for $H_2(w)=w_2+\hot$. 	After the change of coordinate 
	$(z,w)\mapsto (z',w')=\Big((z_1,H_1(z)),\ (w_1,H_2(w))\Big)$ the surface $\Cal M$ takes the flat form $\Cal M\subset\{z_2'=w_2'\}$.
	
	\textit{(3) $\Rightarrow$ (2):}
	Conversely, $H(z_1,w_1):=F(z_1,w_1)=G(z_1,w_1)$ is a first integral for $(\tau_1,\tau_2)$.
	
	\textit{(2) $\Rightarrow$ (4):}	
	Let $H$ \eqref{eq:H} be a first integral for $(\tau_1,\tau_2)$, and let $H'(z_1,w_1)=\frac12\big(H(z_1,w_1)+\ov{H\circ\rho(z_1,w_1)}\big)$, then the restriction of $H'$ to $\{\bar z_1=w_1\}$ 	is such that $H'(z_1,w_1)=\ov{H'}(w_1,z_1)$. Since the hyperplane $\{\bar z_1=w_1\}\subset\C^2$ is totally real, the identity is true on a full neighborhood of $0\in\C^2$.
	
	\textit{(4) $\Rightarrow$ (5):}	
	Write $H(z_1,w_1)=H_1(z_1,F(z_1,w_1))=H_2(w_1,G(z_1,w_1))$ as in \textit{``(2) $\Rightarrow$ (3)''}. Since now $G(z_1,w_1)=\ov F(w_1,z_1)$ and $H(z_1,w_1)=\ov H(w_1,z_1)$, 
	then also $H_2(z)=\ov H_1(z)$, and the restriction of $H_1(z)$ to $M=\Cal M\cap\{\bar z=w\}$ takes real values.
	
	\textit{(5) $\Rightarrow$ (3):}
	As in \textit{``(2) $\Rightarrow$ (3)''}, the transformation  $z\mapsto z'=(z_1,H(z))$, $w\mapsto w'=(w_1,\ov H(w))$ does the job.
\end{proof}

\FGRN

\subsection{Model surfaces}

The Moser--Webster triple of involutions $(\tau_1,\tau_2,\rho)$ associated to a holomorphically flat $M$ have the form \eqref{eq:tau1tau2}.
The holomorphic transformation of Lemma~\ref{lemma:chitau} takes the form
\begin{equation}\label{eq:A} 
	\xi=\tfrac{1}{2}\left(A^{-1}+\sigma A^{-1}\tau_1\right)\begin{pmatrix}z_1\\w_1\end{pmatrix}=
	A^{-1}\begin{pmatrix}z_1\\w_1\end{pmatrix}+\hot,
\end{equation}
where $A= \frac{1}{\lambda^{\frac12}-\lambda^{-\frac12}}\left(\begin{smallmatrix} -1 & -1\\[3pt] \lambda^{\frac12} & \lambda^{-\frac12} \end{smallmatrix}\right)$, \
$A^{-1}= \left(\begin{smallmatrix} \lambda^{-\frac12} & 1  \\[3pt]  -\lambda^{\frac12}& -1 \end{smallmatrix}\right)$. 
Afterwards, the formal transformations to the formal normal form \eqref{eq:chinormalform} are tangent to the identity, while the further transformation to the formal normal form of Theorem~\ref{thm:2} in a variable $\xi'$
is of the form $\xi=C\Lambda^{-\frac14}\xi'+\hot(\xi')$, where $C\in\R_{>0}$.
Hence the resulting formal normalizing transformation is of the form
\[\begin{pmatrix}z_1\\w_1\end{pmatrix}=
\frac{C}{\lambda^{\frac12}-\lambda^{-\frac12}}\begin{pmatrix} -\lambda^{-\frac14} & -\lambda^{\frac14} \\ \lambda^{\frac14} & \lambda^{-\frac14} \end{pmatrix}\xi'+\hot(\xi'),\qquad
z_2,w_2=-C^2h+\hot.\]

\subsubsection{Normal form surface in the formal cases (o) and (a)} \label{sec:4oa}

\GRN
\begin{proposition}
Let $(\taunff1',\taunff2',\rhonf')$ be the formal normal form \eqref{eq:taunf12} of a Moser--Webster triple in the formal cases (o) or (a) of Theorem~\ref{thm:2},
namely
\[\taunff1'(\xi')=\begin{pmatrix}\alpha(h)\,\xi_2' \\ \alpha(h)^{-1}\xi_1'\end{pmatrix},\qquad 
\taunff2'(\xi')=\begin{pmatrix}\alpha(h)^{-1}\xi_2' \\ \alpha(h)\,\xi_1'\end{pmatrix},\qquad
\rhonf'(\xi')=\ov\xi',\]
where
\[\alpha(h)=\ov \alpha(h)^{-1}=\begin{cases} (o)\ \lambda^{\frac12}, \\[6pt] (a)\  \lambda^{\frac12} e^{\pm ih^s}.\end{cases} \]
Its associated surface takes the form
\begin{equation}\label{eq:fnfsurface}
\Mnf:\quad	 z_2'=2\RE(\alpha(-z_2'))\,z_1'\bar z_1'+z_1'^2+\bar z_1'^2.
\end{equation}
The original surface holomorphically flat $M$ \eqref{eq:M} is equivalent to $\Mnf$ by a formal transformation of the form $z'=\hat f(z)=\big(Cz_1+\hot(z),\ C^2z_2+\hot(z_2)\big)$, for some $C>0$.
\end{proposition}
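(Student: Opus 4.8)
The plan is to translate the already-proven classification of the Moser--Webster triple $(\taunff1',\taunff2',\rhonf')$ through the Moser--Webster correspondence (Proposition~\ref{prop:MW}, assertion \ref{item:hilbertbasis2}) back to a surface equation. First I would exhibit a complexified surface $\CMnf$ of the form \eqref{eq:CM} whose associated pair of involutions is precisely $(\taunff1',\taunff2')$ and whose antiholomorphic involution $\rho^{\CMnf}$ is $\rhonf'$, and then intersect with $\Fix(\rho^{\CMnf})$ to read off the real surface. Concretely, in the coordinates $\xi'=(\xi_1',\xi_2')$ on $\CMnf$, the function $z_1'$ should be a $\taunff1'$-invariant generator together with $z_2'$, and similarly $w_1',w_2'$ for $\taunff2'$. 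Since $h=\xi_1'\xi_2'=-z_2'$ is already invariant under both involutions, one takes $z_2'=w_2'=-h$; for $z_1'$ one uses assertion \ref{item:hilbertbasis2} applied to $\taunff1'(\xi')=(\alpha(h)\xi_2',\alpha(h)^{-1}\xi_1')$, whose nonzero eigenvector data forces a generating invariant of the form $z_1'=\alpha(-z_2')^{1/2}\xi_1'+\alpha(-z_2')^{-1/2}\xi_2'$ (up to normalization), with $z_1'\circ\taunff1'=z_1'$ automatic because $\taunff1'$ swaps $\alpha^{1/2}\xi_1'\leftrightarrow\alpha^{-1/2}\xi_2'$.

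The key computation is then just algebraic: from $z_1'=\alpha^{1/2}\xi_1'+\alpha^{-1/2}\xi_2'$ (writing $\alpha=\alpha(h)$) one gets $z_1'^2=\alpha\xi_1'^2+2\xi_1'\xi_2'+\alpha^{-1}\xi_2'^2=\alpha\xi_1'^2+\alpha^{-1}\xi_2'^2+2h$, and one needs the analogous conjugate quantity. Here one uses that $\rhonf'(\xi')=\bar\xi'$ and $\ov\alpha(h)=\alpha(h)^{-1}$, $\ov h=\bar h$, so on $M=\CMnf\cap\Fix(\rhonf')$ we have $\xi_2'=\bar\xi_1'$ and $\alpha^{-1}=\ov\alpha$, hence $z_1'=\alpha^{1/2}\xi_1'+\ov{\alpha^{1/2}\xi_1'}$ is real. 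Computing $z_1'^2+\bar z_1'^2$ on $M$ and combining with $z_1'\bar z_1'$, a short manipulation yields $z_2'=2\RE(\alpha(-z_2'))\,z_1'\bar z_1'+z_1'^2+\bar z_1'^2$, which is \eqref{eq:fnfsurface}; I would double-check the coefficient bookkeeping that $\alpha+\alpha^{-1}$ evaluated at $h=-z_2'$ equals $2\RE\alpha(-z_2')$, which holds because $\ov{\alpha(h)}=\alpha(h)^{-1}$ when $h$ is real. For the case (o) this is the quadric $Q_\gamma$ with $\alpha=\lambda^{1/2}$, $2\RE\lambda^{1/2}=\lambda^{1/2}+\lambda^{-1/2}=\gamma^{-1}$; for case (a) one recovers \eqref{eq:MWnf} with $\epsilon=\pm i$.

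Finally, for the statement about the formal equivalence $z'=\hat f(z)$, I would assemble it from the chain of transformations already recorded just above Section~\ref{sec:4oa}: the linear change \eqref{eq:A} of Lemma~\ref{lemma:chitau}, the tangent-to-identity formal normalizations to \eqref{eq:chinormalform}, and the further formal change $\xi=C\Lambda^{-1/4}\xi'+\hot$ of Theorem~\ref{thm:2}, whose composition on the $(z_1,w_1)$-level is the explicit matrix displayed there, giving $z_1'=Cz_1+\hot(z)$ and $z_2',w_2'=-C^2h+\hot=C^2z_2+\hot(z_2)$ (recall $z_2=F(z_1,w_1)=H(z_1,w_1)$ and $-h$ corresponds to $z_2$ under the identifications). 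Invoking Proposition~\ref{prop:MW}(2) that conjugation of the triples corresponds to equivalence of the surfaces, this composed formal transformation of $(\C^2,0)$ in the $z$-variable carries $M$ to $\Mnf$, and it has the asserted form. The main obstacle I anticipate is purely notational: keeping straight the several coordinate changes and the half-integer powers $\alpha^{1/2}$, $\Lambda^{1/4}$ (choosing branches consistently so that the invariants are genuine analytic/formal germs), and verifying that the generator $z_1'$ produced by assertion \ref{item:hilbertbasis2} matches the normalization $z_1'=z_1+\hot$ up to the scalar $C$ rather than introducing an extra unit factor depending on $h$.
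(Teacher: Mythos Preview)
Your approach is the same as the paper's: construct $\taunff{j}'$-invariant generators $(z_1',z_2')$ and $(w_1',w_2')$ via assertion~\ref{item:hilbertbasis2}, compute the complexified surface equation in them, and restrict to $\Fix(\rhonf')$. The last paragraph about assembling the formal transformation from the chain recorded just before Section~\ref{sec:4oa} is also exactly how the paper proceeds. However, two concrete slips would make your computation fail as written.

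First, your candidate $z_1'=\alpha^{1/2}\xi_1'+\alpha^{-1/2}\xi_2'$ is \emph{not} $\taunff1'$-invariant: since $\taunff1'(\xi')=(\alpha\xi_2',\alpha^{-1}\xi_1')$ and $\alpha=\alpha(h)$ is itself invariant, you get $z_1'\circ\taunff1'=\alpha^{3/2}\xi_2'+\alpha^{-3/2}\xi_1'\neq z_1'$; the swap you assert does not hold. The correct invariant is (a multiple of) $\alpha^{-1/2}\xi_1'+\alpha^{1/2}\xi_2'=\ov\alpha^{1/2}\xi_1'+\ov\alpha^{-1/2}\xi_2'$, and the paper normalizes by $\tfrac{-1}{\alpha-\alpha^{-1}}$ precisely so that the linear part matches the matrix displayed before Section~\ref{sec:4oa}. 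Second, your description of the real locus is wrong: $\rhonf'(\xi')=\ov\xi'$ has fixed-point set $\{\xi'\in\R^2\}$, not $\{\xi_2'=\ov\xi_1'\}$. On $M$ both $\xi_1',\xi_2'$ are real, so $h=\xi_1'\xi_2'\in\R$; the complex nature of $z_1'$ comes from the coefficients $\alpha^{\pm1/2}$, and then $w_1'=\ov{z_1'}$ on $M$ follows from $\ov\alpha=\alpha^{-1}$. With these two fixes the algebra gives the paper's complexified equation $z_2'=w_2'=-h=(\alpha+\alpha^{-1})z_1'w_1'+z_1'^2+w_1'^2$, and restricting to $M$ yields \eqref{eq:fnfsurface} since $\alpha+\alpha^{-1}=2\RE\alpha$ when $|\alpha|=1$.
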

\FGRN

\begin{proof}
Let 
\begin{align*}
z_1'&=\mfrac{-\alpha^{-\frac12}}{\alpha-\alpha^{-1}}\big(\xi_1'+\xi_1'\circ\taunff1'\big)
=\mfrac{-1}{\alpha-\alpha^{-1}}\big(\alpha^{-\frac12}\xi_1'+\alpha^{\frac12}\xi_2'\big),\\
w_1'&=\mfrac{\alpha^{-\frac12}}{\alpha-\alpha^{-1}}\big(\xi_2'+\xi_2'\circ\taunff2'\big)
=\mfrac{1}{\alpha-\alpha^{-1}}\big(\alpha^{\frac12}\xi_1'+\alpha^{-\frac12}\xi_2'\big),
\end{align*}
from which,
\[
\xi_1'=\alpha^{-\frac12} z_1'+ \alpha^{\frac12}w_1',\qquad
\xi_2'=-\alpha^{\frac12}z_1'-\alpha^{-\frac12} w_1',
\]
and let
\[z_2'=-\alpha^{-1}\xi_1'\cdot(\xi_1'\circ\taunff1')=-h, \qquad w_2'=-\alpha^{-1}\xi_2'\cdot(\xi_2'\circ\taunff2')=-h.\]
Then $\begin{pmatrix}z'(\xi')\\w'(\xi')\end{pmatrix}=\begin{pmatrix}\bar w'(\xi')\\ \bar z'(\xi')\end{pmatrix}$ and
\begin{equation}\label{eq:fnfcomplexsurface}
z_2'=w_2'=(\alpha+\alpha^{-1})z_1'w_1'+z_1'^2+w_1'^2
\end{equation}
where $\alpha=\alpha(-z_2')$.
The Moser--Webster triple of involutions becomes
\begin{equation*}
\taunff1': \left(\begin{smallmatrix}z_1'\\[3pt] w_1'\end{smallmatrix}\right)\mapsto \left(\begin{smallmatrix} z_1'\\[3pt] -w_1'-(\alpha+\alpha^{-1})\,z_1'\end{smallmatrix}\right),\qquad
\taunff2': \left(\begin{smallmatrix}z_1'\\[3pt] w_1'\end{smallmatrix}\right)\mapsto \left(\begin{smallmatrix} -z_1'-(\alpha+\alpha^{-1})\,w_1'\\[3pt] w_1'\end{smallmatrix}\right),
\end{equation*}
and $\rho:\left(\begin{smallmatrix} z_1' \\[3pt] w_1' \end{smallmatrix}\right)\mapsto \left(\begin{smallmatrix} \bar w_1' \\[3pt] \bar z_1' \end{smallmatrix}\right)$,
which is precisely the Moser--Webster triple associated to the surface $\Mnf$ \eqref{eq:fnfsurface}.
\end{proof}

As is shown in \cite{Moser-Webster}, for $\gamma\neq+\infty$ it is possible to transform analytically the surface \eqref{eq:fnfsurface} to the formal normal form \eqref{eq:MWnf2} of Moser--Webster.

\subsubsection{Model surface in the case formal (b)}\label{sec:4b}

Theorem~\ref{thm:2}\,(b) gives a formal normal form $(\hattaunff1',\hattaunff2',\rhonf')$ \eqref{eq:taunf12} of the Moser--Webster triple in terms of a normal form of its infinitesimal generator $\hatbXnf'$, but it doesn't give an explicit expression of the involutions. 
Likewise, the Moser--Webster correspondence (Proposition~\ref{prop:MW}) provides only an implicit construction of the corresponding normal form surface.
\GRN
Instead of finding a formal normal form surface $\Mnf$ whose associated Moser--Webster triple would be analytically conjugated to $(\hattaunff1',\hattaunff2',\rhonf')$,
our goal more modest: we shall derive an explicit form of some surface $\Mmod$ in any given model class.

By Definition~\ref{def:model}, two Moser--Webster triples belong to  the same model class if the respective generators $\hatbXnf'$ of their
formal normal forms agree modulo $h^sP'^2\bE$, where $\{h^sP'=0\}=\{\hatbXnf'=0\}=\Fix(\phinf^{\circ p})$. 
We will therefore calculate the  formal normal form surface modulo $h^sP'^2$. Discarding in the end ``mod $h^s P'^2$''  part, we obtain another surface in the same model class. 

\begin{proposition}
Consider any model Moser--Webster triple
\[	\taumodf1'=\exp(\tfrac{1}{2p}\bXmod')\circ(\Lambda^{\frac12}\sigma), \quad \taumodf2'=(\sigma\Lambda^{\frac12})\circ\exp(\tfrac{1}{2p}\bXmod'), \quad \rhomod'=\bar\xi,\] 
where $\bXmod'=\pm 2ip\,h^s P'\bE$, with $P'(u',h)=\ov{P'}(u',h)$, $u'=\xi_1^p+\lambda^{\frac{p}2}\xi_2^p$, as in Theorem~\ref{thm:2}.
It belongs to the same model class as the Moser--Webster triple of the surface $\Mmod$ whose complexification takes the form
\begin{multline}\label{eq:Mmodel}
	\CMmod:\quad
	z_2=w_2=	z_1^2+w_1^2+(\lambda^{\frac12}\!+\!\lambda^{-\frac12})z_1w_1 \\
	-(-z_2)^s \tilde P(z,w)\Big(a\,(z_1^2+w_1^2)+ bz_1w_1 \Big)\tilde R(z,w),
\end{multline}
where $\tilde P(z,w)=P'\big(\tilde u(z_1,w_1),-z_2\big)$ with
\begin{equation}\label{eq:tildeu}
	\tilde u=\big(\lambda^{-\frac14}z_1+\lambda^{\frac14}w_1\big)^p+(-1)^p\lambda^{\frac{p}{2}}\big(\lambda^{\frac14}z_1+\lambda^{-\frac14}w_1\big)^p,
\end{equation}
$a=2\tfrac{\lambda^{\frac12}+\lambda^{-\frac12}}{\lambda^{\frac12}-\lambda^{-\frac12}}$, \ 
$b= \tfrac{(\lambda^{\frac12}+\lambda^{-\frac12})^2+4}{\lambda^{\frac12}-\lambda^{-\frac12}}$, \
and \ $\tilde R(0,0)=\pm i$. 
\end{proposition}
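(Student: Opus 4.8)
The strategy is to push through the same reduction that produced the formal normal form $(\hattaunff1',\hattaunff2',\rhonf')$ of Theorem~\ref{thm:2}\,(b), but to work modulo $h^s P'^2\bE$ throughout, using Proposition~\ref{prop:prenormalphi} to control the error terms. Recall from Definition~\ref{def:model} that replacing $\hatbXnf'$ by $\bXmod'=\pm 2ip\,h^sP'\bE$ changes the triple only within its model class, since the generators agree modulo $h^sP'^2\bE$. So it suffices to compute an explicit surface whose Moser--Webster triple is $(\taumodf1',\taumodf2',\rhomod')$ modulo $h^sP'^2$, and then simply discard the remainder, which lands us in the same model class again.

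First I would set up the two-to-one correspondence explicitly. Starting from the formal normal form in the $\xi'$-coordinate of Theorem~\ref{thm:2}, invert the normalizing transformation chain described in Section~\ref{sec:4}: the transformation \eqref{eq:A} of Lemma~\ref{lemma:chitau} using the matrix $A$, followed by the tangent-to-identity formal conjugacies, followed by the scaling $\xi = C\Lambda^{-\frac14}\xi'+\hot$. Combining these gives, up to the ``mod $h^sP'^2$'' terms, a relation $\xi' = A^{-1}\begin{psmallmatrix} z_1\\ w_1\end{psmallmatrix}+\dots$, so that $\xi_1' \equiv \lambda^{-\frac14}z_1+\lambda^{\frac14}w_1$ and $\xi_2'\equiv -\lambda^{\frac14}z_1-\lambda^{-\frac14}w_1$ modulo the error. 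This explains the formula \eqref{eq:tildeu} for $\tilde u$: it is the substitution of these expressions into $u'=\xi_1^p+\lambda^{\frac p2}\xi_2^p$. I would then follow the Moser--Webster recipe used in the proof of the formal cases (o),(a): set $z_1 = \frac{-\alpha^{-1/2}}{\alpha-\alpha^{-1}}(\xi_1'+\xi_1'\circ\taumodf1')$, $w_1 = \frac{\alpha^{-1/2}}{\alpha-\alpha^{-1}}(\xi_2'+\xi_2'\circ\taumodf2')$, and $z_2 = -\alpha^{-1}\xi_1'\cdot(\xi_1'\circ\taumodf1')$, $w_2=-\alpha^{-1}\xi_2'\cdot(\xi_2'\circ\taumodf2')$, except that now $\taumodf1' = \exp(\tfrac1{2p}\bXmod')\circ(\Lambda^{\frac12}\sigma)$ is no longer linear. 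The key computational point is that, since $\exp(\tfrac1{2p}\bXmod')(\xi) = \exp(\pm i h^sP'\bE)(\xi) = \big(e^{\pm i h^sP'}\xi_1,\ e^{\mp i h^sP'}\xi_2\big)$, expanding the exponential to first order gives $\xi_j\circ\taumodf1'$ as a linear involution correction plus a term of order $h^sP'$, and the $h^{2s}P'^2$ terms are discarded. So to first order in $h^sP'$ one gets $\alpha+\alpha^{-1} \equiv (\lambda^{\frac12}+\lambda^{-\frac12}) + (\text{const})\cdot h^sP' + O(h^{2s}P'^2)$, i.e. the same shape as in case (a) but with the scalar $\pm ih^s$ replaced by $\pm i h^sP'$.

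The bulk of the work is then the algebra of substituting back: on the complexified surface one has, exactly as in \eqref{eq:fnfcomplexsurface}, the relation $z_2=w_2=(\alpha+\alpha^{-1})z_1w_1 + z_1^2+w_1^2$ with $\alpha=\alpha(-z_2)$, but now $\alpha+\alpha^{-1}$ carries the extra term linear in $h^sP'$. Writing $h = \xi_1\xi_2$ in terms of $(z_1,w_1)$ — one finds $h \equiv -(z_1^2+w_1^2)\cdot(\text{unit}) - (\text{const})z_1w_1 + \dots$, and on the surface $h$ is related to $z_2$ by $h\equiv -z_2$ modulo the error — I would collect the linear-in-$h^sP'$ contribution into the stated form $-(-z_2)^s\tilde P(z,w)\big(a(z_1^2+w_1^2)+bz_1w_1\big)\tilde R(z,w)$. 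The constants $a = 2\frac{\lambda^{1/2}+\lambda^{-1/2}}{\lambda^{1/2}-\lambda^{-1/2}}$ and $b=\frac{(\lambda^{1/2}+\lambda^{-1/2})^2+4}{\lambda^{1/2}-\lambda^{-1/2}}$ should emerge from the cross-terms produced when $\xi_j'$ in the argument of $\bXmod'$ is expressed via $z_1,w_1$ and combined with the factors $\frac{-\alpha^{-1/2}}{\alpha-\alpha^{-1}}$ in the definitions of $z_1,w_1$ (note $(\alpha-\alpha^{-1})^2 = (\alpha+\alpha^{-1})^2-4$ and $\alpha+\alpha^{-1}=\lambda^{1/2}+\lambda^{-1/2}$ at $h=0$, which is where the denominator $\lambda^{1/2}-\lambda^{-1/2}$ comes from). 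The unit $\tilde R$ with $\tilde R(0,0)=\pm i$ absorbs all higher-order corrections and the sign of $\bXmod'$. Finally I would check reality: since $\ov{P'}(u',h)=P'(u',h)$ and $\ov{\tilde u}$ is the $\rho$-conjugate of $\tilde u$ under $(z_1,w_1)\mapsto(\bar w_1,\bar z_1)$, one verifies $\tilde P(z,w) = \ov{\tilde P}(w,z)$ and $\tilde R(z,w)=\ov{\tilde R}(w,z)$, so the complexified surface \eqref{eq:Mmodel} does indeed satisfy $w_2 = \ov{F}(w_1,z_1)$ with $z_2=F(z_1,w_1)$, hence is the complexification of a genuine real surface $\Mmod$. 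The equivalence to $\Mmod$ being realized by a formal transformation $z'=\hat f(z) = \big(Cz_1+\hot,\ C^2z_2+\hot\big)$ follows from tracking the form of the composite normalizing map, exactly as at the start of Section~\ref{sec:4b}.

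\textbf{Main obstacle.} The real difficulty is purely bookkeeping: keeping the ``mod $h^sP'^2\bE$'' discipline consistent through the non-linear substitutions (exponential of $\bXmod'$, the quadratic expressions defining $z_2,w_2$, and re-expressing $h$, $u'$ in terms of $(z_1,w_1)$ on the surface versus off it), and extracting the precise constants $a,b$ and the unit $\tilde R$ without sign or normalization errors. There is also a subtlety in that $\tilde u$ as defined in \eqref{eq:tildeu} is the honest pull-back of $u'$ only modulo the error, so one must argue that modifying $\tilde u$ by such an error does not change the model class of the resulting surface — which is exactly the content of discarding the ``mod $h^sP'^2$'' part at the end, and is justified by Definition~\ref{def:model} together with Proposition~\ref{prop:prenormalphi}.
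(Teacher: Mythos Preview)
Your strategy is the paper's: write the involutions as $\taumodf{j}'(\xi)=\bigl(\alpha^{\mp1}\xi_2,\alpha^{\pm1}\xi_1\bigr)$, define $(z_1,w_1,z_2,w_2)$ by the Moser--Webster recipe from the cases (o),(a), expand everything modulo $h^sP'^2$, and discard the remainder at the end. The constants $a,b$ and the linear substitution giving $\tilde u$ arise exactly as you describe, from the matrix $A^{-1}=\bigl(\begin{smallmatrix}\lambda^{-1/4}&\lambda^{1/4}\\-\lambda^{1/4}&-\lambda^{-1/4}\end{smallmatrix}\bigr)$.

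There is one genuine slip. The identity you write,
\[
\exp(\pm i h^sP'\bE)(\xi)=\bigl(e^{\pm i h^sP'}\xi_1,\ e^{\mp i h^sP'}\xi_2\bigr),
\]
is false: it would hold only if $\bE.P'=0$, but $u'=\xi_1^p+\lambda^{p/2}\xi_2^p$ is not a first integral of $\bE$. Consequently the flow has, beyond the first-order term $\pm ih^sP'$, a contribution $-\tfrac12 h^{2s}P'\,\bE.P'$ (and higher) which is \emph{not} in the ideal $(h^sP'^2)$ and cannot be discarded. The paper handles this by invoking formula \eqref{eq:Q}, which gives
\[
\alpha(\xi)=\lambda^{1/2}\bigl(1+2h^sP'(\xi)\,t(\xi)\bigr)\bmod h^sP'^2,\qquad
t(\xi)=\tfrac{1}{2h^s\bE.P'}\bigl(e^{\pm i h^s\bE.P'}-1\bigr),
\]
and similarly $\bar\alpha$ with $\bar t$; the factor entering the surface equation is $h^sP'(t-\bar t)$, and $\tilde R$ is then \emph{defined} as $(t-\bar t)/(1+h^s\tfrac{\partial P'}{\partial u'}r)$ composed with the linear substitution $\xi=A^{-1}\bigl(\begin{smallmatrix}z_1\\w_1\end{smallmatrix}\bigr)$. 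Since $t(0)=\pm i/2$ and $\bar t(0)=\mp i/2$, one gets $\tilde R(0,0)=\pm i$. Your first-order expansion recovers exactly this leading value, so the proposition as stated (which only pins down $\tilde R(0,0)$) survives; but to justify the computation modulo $h^sP'^2$ rather than modulo $h^{2s}P'$, you need the correct flow formula \eqref{eq:Q}, not the naive exponential.
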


\FGRN

\begin{proof}
Since $h\circ\taumodf1'=\taumodf2'=h$ and $\taumodf1'(\xi)=\ov{\taumodf2'}(\xi)$, one can write
\[\taumodf1'(\xi)=\begin{pmatrix}\ov\alpha(\xi)^{-1}\xi_2 \\ \ov\alpha(\xi)\,\xi_1\end{pmatrix},\qquad \taumodf2'(\xi)=\begin{pmatrix}\alpha(\xi)^{-1}\xi_2 \\ \alpha(\xi)\,\xi_1\end{pmatrix}, \]
where $\ov\alpha\circ\taumodf1'=\ov\alpha$, \ $\alpha\circ\taumodf2'=\alpha$.
Similarly to Section~\ref{sec:4oa} let us define 
\begin{align*}
	z_1&=\mfrac{-\ov\alpha^{\frac12}}{\lambda^{\frac12}-\lambda^{-\frac12}} \big(\xi_1+\xi_1\circ\taumodf1'\big)
	=\mfrac{-1}{\lambda^{\frac12}-\lambda^{-\frac12}} \big(\ov\alpha^{\frac12}\xi_1+\ov\alpha^{-\frac12}\xi_2\big),\\
	w_1&=\mfrac{\alpha^{-\frac12}}{\lambda-\lambda^{-1}} \big(\xi_2+\xi_2\circ\taumodf2'\big)
	=\mfrac{1}{\lambda^{\frac12}-\lambda^{-\frac12}} \big(\alpha^{\frac12}\xi_1+\alpha^{-\frac12}\xi_2\big),\\[6pt]
	z_2&=-\ov{\alpha}\xi_1\cdot(\xi_1\circ\taumodf1')=-h,\\
	w_2&=-\alpha^{-1}\xi_2\cdot(\xi_2\circ\taumodf2')=-h.
\end{align*}
From \eqref{eq:Q}
\begin{align*} 
\alpha(\xi)&=\lambda^{\frac12}\big(1+2h^sP'(\xi)t(\xi)\big)\mod h^sP'^2, &	t(\xi)&:=\tfrac{1}{2h^s\bE.P'}(e^{\tfrac{ch^s}{2p}\bE.P'}-1),\\
\ov\alpha(\xi)&=\lambda^{-\frac12}\big(1+2h^sP'(\xi)\bar t(\xi)\big)\mod h^sP'^2, &	\bar t(\xi)&:=\tfrac{1}{2h^s\bE.P'}(e^{-\tfrac{ch^s}{2p}\bE.P'}-1),
\end{align*}
where $t\circ(\sigma\Lambda^{\frac12})=-\ov t$. Therefore
\begin{equation*}
	\left(\begin{matrix} z_1 \\ w_1 \end{matrix}\right)=\psi(\xi)= 
	\mfrac{1}{\lambda^{\frac12}-\lambda^{-\frac12}}
	\left(\begin{smallmatrix}  -\lambda^{-\frac14}(1+h^sP'\bar t) & -\lambda^{\frac14}(1-h^sP'\bar t) \\[3pt] \lambda^{\frac14}(1+h^sP't) & \lambda^{-\frac14}(1-h^sP't) \end{smallmatrix}\right)\xi\mod h^sP'^2\xi.
\end{equation*}
From which also
\begin{equation*}
\xi=\Big(1-\mfrac{\lambda^{\frac12}+\lambda^{-\frac12}}{\lambda^{\frac12}-\lambda^{-\frac12}}\{h^sP'(t-\ov t)\}\Big)\!\!	
\left(\begin{smallmatrix}   \lambda^{-\frac14}(1-\{h^sP't\}) & \lambda^{\frac14}(1-\{h^sP'\bar t\}) \\[3pt] 
	-\lambda^{\frac14}(1+\{h^sP't\}) &-\lambda^{-\frac14}(1+\{h^sP'\bar t\}) \end{smallmatrix}\right)\!\!\left(\begin{matrix} z_1 \\ w_1 \end{matrix}\right)\!\!\!
\mod h^sP'^2,
\end{equation*}
where the bracket $\{h^sP't\}$ is functions of $\xi$ and need to be composed with $\psi^{\circ(-1)}$.
Therefore
{\small
\begin{align*}
	z_2&=
	\begin{multlined}[t][.7\displaywidth]
	\Big(1-2\mfrac{\lambda^{\frac12}+\lambda^{-\frac12}}{\lambda^{\frac12}-\lambda^{-\frac12}}\{h^sP'(t\!-\!\ov t)\}\Big)
\Big(z_1^2+w_1^2+z_1w_1\big(\lambda^{\frac12}\!+\!\lambda^{-\frac12}+(\lambda^{\frac12}\!-\!\lambda^{-\frac12})\{h^sP'(t\!-\!\ov t)\}\big) \Big)\\
\mod h^sP'^2
\end{multlined}
\\
&=
\begin{multlined}[t][.7\displaywidth]
z_1^2+w_1^2+(\lambda^{\frac12}\!+\!\lambda^{-\frac12})z_1w_1 - \{h^sP'(t\!-\!\ov t)\}\Big(2\mfrac{\lambda^{\frac12}+\lambda^{-\frac12}}{\lambda^{\frac12}-\lambda^{-\frac12}}(z_1^2+w_1^2)+
\mfrac{(\lambda^{\frac12}+\lambda^{-\frac12})^2+4}{\lambda^{\frac12}-\lambda^{-\frac12}}z_1w_1 \Big)\\
\mod h^sP'^2,
\end{multlined}
\end{align*}
}
where we still need to calculate $\{h^sP'(t\!-\!\ov t)\}\circ\psi^{\circ(-1)}$.
Let $A:=D\psi(0)$, then $A^{-1}=\left(\begin{smallmatrix} \lambda^{-\frac14} & \lambda^{\frac14} \\[3pt] -\lambda^{\frac14} & -\lambda^{-\frac14} \end{smallmatrix}\right)$, 
and denote 
$\tilde u(z_1,w_1)=u'(\xi)\big|_{\xi= A^{-1}\left(\begin{smallmatrix} z_1 \\ w_1 \end{smallmatrix}\right)}$ which equals \eqref{eq:tildeu}.
Expressing
\begin{align*}
A^{-1}\psi(\xi)-\xi&= \mfrac{h^sP'}{\lambda^{\frac12}-\lambda^{-\frac12}}
\begin{pmatrix}  \lambda^{\frac12} t-\lambda^{-\frac12}\ov t & \ov t-t \\ \ov t-t & \lambda^{-\frac12}t-\lambda^{\frac12}\ov t \end{pmatrix}\xi \mod (h^sP')^2\\
&=h^sP'\Big[\tfrac{t+\ov t}{2}I + \tfrac{(\lambda^{\frac12}+\lambda^{-\frac12})(t-\ov t)}{2(\lambda^{\frac12}-\lambda^{-\frac12})} J -\tfrac{t-\ov t}{\lambda^{\frac12}-\lambda^{-\frac12}}\sigma \Big]\xi \mod (h^sP')^2,
\end{align*}
where $J=\left(\begin{smallmatrix}1&0\\0&-1\end{smallmatrix}\right)$, we can develop
\[\tilde u\circ\psi= u' + (\tfrac{\partial u'}{\partial \xi_1},\tfrac{\partial u'}{\partial \xi_2})\big(A^{-1}\psi(\xi)-\xi\big)  \mod h^sP'^2
=u'+ h^sP'r  \mod h^sP'^2,\]
where
\[r(\xi)=p\tfrac{t+\ov t}{2}(\xi_1^p+\lambda^{\frac{p}2}\xi_2^p) + p\tfrac{(\lambda^{\frac12}+\lambda^{-\frac12})(t-\ov t)}{2(\lambda^{\frac12}-\lambda^{-\frac12})} (\xi_1^p-\lambda^{\frac{p}2}\xi_2^p) 
-p\tfrac{t-\ov t}{\lambda^{\frac12}-\lambda^{-\frac12}}h(\xi_1^{p-2}+\lambda^{\frac{p}2}\xi_2^{p-2}).\]
Therefore $\{h^sP'(t\!-\!\ov t)\}\circ\psi^{\circ(-1)}=(-z_2)^s\tilde P\tilde R \mod z_2^{s}\tilde P^2$, where
\[\tilde P=P'\big|_{u'=\tilde u(z_1,w_1),\ h=-z_2},\quad 
\tilde R=\frac{t-\ov{t}}{1+h^s\tfrac{\partial P'}{\partial u'} r}\Big|_{\xi=A^{-1}\left(\begin{smallmatrix} z_1 \\ w_1 \end{smallmatrix}\right)},\]
 are obtained by composition with the linear substitution  $\xi=A^{-1}\left(\begin{smallmatrix} z_1 \\ w_1 \end{smallmatrix}\right)$.
Finally, by discarding the terms ``$\text{mod}\ h^sP'^2$'' we obtain the \emph{complex model surface} \eqref{eq:Mmodel}.
\end{proof}

\subsection{Proof of Theorem~\ref{thm:sectorialM}}

\com{I've deleted the previous generally but somewhat vaguely formulated proposition on sectorial equivalence of surfaces, and replaced it by the following proof.}

\begin{proof}[Proof of Theorem~\ref{thm:sectorialM}]
\Grn{Let $(\tau_1,\tau_2,\rho)$ be the Moser--Webster triple associated to a holomorphically flat surface $M=\{z_2=F(z_1,\ov z_1)=\ov F(\ov z_1,z_1)\}$, and let
$(\tau_1',\tau_2',\rho')$ be another Moser--Webster triple in the same model class (e.g. the model itself) and $M'=\{z_2'=F'(z_1',\ov z_1')=\ov F'(\ov z_1',z_1')\}$ its associated holomorphically flat surface.
}

Let $\{\Omega \mapsto \Psi_{\Omega}\}$ be a conjugating cochain between $(\tau_1,\tau_2)$ and $(\tau_1',\tau_2')$
	\[\Psi_{T_j(\Omega)}\circ\tau_j=\tau_j'\circ \Psi_{\Omega},\qquad T_j=D\tau_j(0,0),\quad j=1,2,\]
whose existence follows from Theorem~\ref{thm:sectorial},
that maps between the first integrals as $F'\circ\Psi_{\Omega}(z_1,w_1)=\varphi\circ F(z_1,w_1)$ for some $\varphi\in\Diff(\C,0)$ independent of $\Omega$.

The 2-sheeted projection $\pi_1:(z,w)\mapsto z$ provides a local $\tau_1$-invariant coordinate $z=\big(z_1,F(z_1,w_1)\big)$ on $\Cal M\sminus\Fix(\tau_1^{\Cal M})$.
Hence, if $\Omega^{\Cal M}$ is a simply connected domain in $\Cal M\sminus\Fix(\tau_1^{\Cal M})$ corresponding to a domain $\Omega$ in $\C^2\sminus\Fix(\tau_1)$,
then $z_1'\circ\Psi_{\Omega}(z_1,w_1)=f_{\pi_1(\Omega^{\Cal M})}(z_1,F(z_1,w_1))$ for some function $f_{\pi_1(\Omega^{\Cal M})}(z_1,z_2)$ on $\pi_1(\Omega^{\Cal M})$, and the relation $\tau_1'\circ\Psi_{\Omega}=\Psi_{T_1(\Omega)}\circ\tau_1$ means that it is well defined.
Similarly, $w$ is a local $\tau_2$-invariant coordinate on $\Cal M\sminus\Fix(\tau_2^{\Cal M})$, and $w_1'\circ\Psi_{\Omega}=g_{\pi_2(\Omega^{\Cal M})}(w_1,\ov F(w_1,z_1))$ for some function $g_{\pi_2(\Omega^{\Cal M})}(w_1,w_2)$ on $\pi_2(\Omega^{\Cal M})$, is a well defined map.
The restriction of the product map 
$\Big(\big(f_{\pi_1(\Omega^{\Cal M})},\varphi\big),\ \big(g_{\pi_2(\Omega^{\Cal M})},\ov\varphi\big)\Big)$
to the set $\Omega^{\Cal M}\subset\Cal M$ then agrees with the lifting $\psi_{\Omega^{\Cal M}}:\Omega^{\Cal M}\to\Cal M'$ of $\Psi_{\Omega}:\Omega\to\C^2$.
\end{proof}

\goodbreak

\section{The formal type (a) $k=0$}\label{sec:5}

\subsection{Topological obstructions to convergence}\label{sec:5a}

\begin{theorem}\label{thm:divergent}
For any $\lambda\in\{|\lambda|=1\}$ and $s\geq 1$, there exists a Moser--Webster triple of involutions $(\tau_1,\tau_2,\rho)$  that is formally equivalent to
\begin{equation}\label{eq:taunfa}
\taunff1'(\xi)=\left(\begin{smallmatrix} 0 & \alpha(h) \\[3pt] \alpha(h)^{-1} & 0\end{smallmatrix}\right)\xi,\qquad 
\taunff2'(\xi)=\left(\begin{smallmatrix} 0& \alpha(h)^{-1} \\[3pt] \alpha(h) & 0\end{smallmatrix}\right)\xi,\qquad 
\rhonf'(\xi)=\bar\xi,
\end{equation}
where 
\[\alpha(h)=\lambda^{\frac12} e^{\pm ih^s},\]
but not analytically.
In particular, when $\lambda\in e^{\pi i \Q}$ is a root of unity, then \eqref{eq:taunfa} is the formal normal form of Theorem~\ref{thm:2}\,(a). 
\end{theorem}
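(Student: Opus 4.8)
The strategy is to construct the Moser--Webster triple by prescribing an explicit non-trivial functional modulus and then invoking the realization direction of the Birkhoff--\'Ecalle--Voronin theory, or rather its reversible/anti-holomorphic analogue. The formal normal form \eqref{eq:taunfa} is of type (a) with $k=0$, so the associated vector field $\hatbXnf' = \pm 2ip\,h^s\bE$ has fixed-point divisor $\{h^s=0\}$ which is exactly the zero level set of the first integral; in particular each leaf $\{h=\const\neq 0\}$ carries no singularity of the normal-form dynamics, and the divergence obstruction must be concentrated at $h=0$, i.e. on the two irreducible components $\{\xi_1=0\}$ and $\{\xi_2=0\}$. Restricting $\phi=\taunff1\circ\taunff2$ to $\{\xi_2=0\}$ gives a parabolic germ of $(\C,0)$ of the form $\xi_1\mapsto\lambda^2 e^{\pm 2i\cdot 0}\xi_1 + \hot$ tangent (after iteration) to the identity, and the point is to arrange that this restricted germ has a prescribed non-trivial Birkhoff--\'Ecalle--Voronin modulus, subject to the symmetry constraints imposed by reversibility under $\sigma$ and by the anti-holomorphic involution $\rho$.

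\textbf{Key steps, in order.} First, I would identify precisely which 1-dimensional moduli are compatible with the full structure $(\tau_1,\tau_2,\rho)$: the involution $\tau_1$ (resp. $\tau_2$) exchanges the two components $\{\xi_1=0\}$ and $\{\xi_2=0\}$ up to the linear twist $\alpha(h)$, and $\rho$ is an anti-holomorphic symmetry; together these force the modulus on one component to determine (via conjugation and complex conjugation) the modulus on the other, and impose a reality/symmetry condition on each. This is the same kind of symmetry subspace of the Birkhoff--\'Ecalle--Voronin moduli space that appears in Example~\ref{example:EV} and in the references cited there (Ahern--Gong, Gorodetski--Roussarie-type, \cite{GR1,Nakai,Trepreau}); crucially this subspace is still infinite-dimensional and in particular non-trivial. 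Second, I would pick any such compatible cocycle that is not the trivial one — concretely, take $f(z)=\lambda^{\frac12}z+\hot$ with $\bar f = f^{\circ(-1)}$ as in Example~\ref{example:EV}, or more directly prescribe a single transition map $\psi_j$ commuting with the normal form and asymptotic to the identity, chosen non-trivial. Third, using the realization theorem (the analogue for reversible/equivariant parabolic germs of the \'Ecalle--Malgrange--Voronin surjectivity statement recalled in the excerpt, adapted to include the $\sigma$- and $\rho$-symmetry; one glues sectorial charts over the leaves $\{h=\const\}$ with the prescribed transition maps, depending analytically and equivariantly on $h$, and takes the limit $h\to 0$), I obtain an analytic $\phi$, hence an analytic triple $(\tau_1,\tau_2,\rho)$, whose restriction to $\{\xi_2=0\}$ has the prescribed non-trivial modulus. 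Fourth, I observe that $(\tau_1,\tau_2,\rho)$ is formally equivalent to \eqref{eq:taunfa} — because $k=0$, the formal normal form is exactly of that shape (the formal invariant $\hat\mu$ is absent in case (a)), and the formal classification of Theorem~\ref{thm:2}\,(a) applies. Finally, I argue non-convergence by contradiction: if the conjugacy to \eqref{eq:taunfa} were analytic, its restriction to the invariant curve $\{\xi_2=0\}$ would be an analytic conjugacy of the parabolic restricted germ to its linear-twist normal form $\xi_1\mapsto \lambda^2\xi_1$ (which has trivial Birkhoff--\'Ecalle--Voronin modulus), contradicting the non-triviality arranged in step two. The ``topological obstruction'' phrasing can be made precise: non-triviality of the cocycle on a leaf is detected by a genuine monodromy/holonomy invariant that survives the limit $h\to 0$.

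\textbf{Main obstacle.} The delicate point is step three: constructing the analytic realization in a $\sigma$- and $\rho$-equivariant way over the whole family of leaves $\{h=\const\}$ and controlling the limit $h\to 0$ so that the glued object is genuinely an analytic germ at $0\in\C^2$ and genuinely a Moser--Webster triple (i.e. $\tau_1,\tau_2$ reflections intertwined by the anti-holomorphic $\rho$). The sectorial charts over each leaf are the classical Leau--Fatou petals, but one must check that the gluing cochain can be chosen analytic in $h$ uniformly down to $h=0$ and respecting the equivariance — this is where the ``Lavaurs domain'' machinery of \S\ref{sec:6.1}--\S\ref{sec:6.3} of the paper (or, for the $k=0$ case, a simpler direct argument) does the real work. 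Once the realization is in hand, the formal equivalence and the non-convergence are essentially immediate from Theorem~\ref{thm:2}\,(a) and the 1-dimensional theory; the reduction to 1-dimensional data on the invariant divisor $\{h=0\}$ is what makes both the construction and the obstruction transparent.
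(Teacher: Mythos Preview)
Your approach has a fundamental gap. In formal type~(a) the fixed-point divisor of $\phi^{\circ p}$ is exactly $\{h^s=0\}$, so $\phi^{\circ p}$ restricted to either irreducible component of $\{h=0\}$ is \emph{identically} equal to the identity map --- not merely formally, but analytically, for every germ in the formal class. There is therefore no Birkhoff--\'Ecalle--Voronin modulus to prescribe on $\{\xi_2=0\}$: the restricted dynamics is trivial, and your step~2 (``pick a non-trivial compatible cocycle on the invariant curve'') is vacuous. This is precisely the distinction between case~(a) and cases~(b),(c): the functional-modulus machinery of \S\ref{sec:6} requires $k>0$, i.e.\ that the fixed-point divisor be transverse to the foliation, so that the restriction to each leaf has isolated parabolic points. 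Example~\ref{example:EV}, which you invoke, lives in case~(b), not~(a). Your proposed realization via Lavaurs domains likewise presupposes $k>0$.

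The paper's proof is completely different and uses a global topological obstruction rather than a local analytic modulus. The key observation is that the normal form \eqref{eq:taunfa} has a rigid dynamical property~$(\mathcal P)$: whenever $\alpha(h)^{2n}=1$ the \emph{entire} leaf $\{h=\const\}$ consists of $n$-periodic points of $\phinf'$, and such leaves accumulate at $h=0$. The authors then write down an explicit rational $\psi$ (a M\"obius-type map in each variable), set $\tau_1=\sigma\psi\circ\sigma\circ\psi^{\circ(-1)}\circ\sigma$, $\tau_2=\psi\circ\sigma\circ\psi^{\circ(-1)}$, $\rho(\xi)=\bar\xi$, and exploit the fact that the construction extends algebraically to the compactification $(\CP^1)^2$. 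Near the point $\xi=(\infty,0)$ on the boundary of every leaf, the map $\phi^{\circ\frac12}$ in the local coordinate $\xi_1^{-\frac12}$ is a rotation by $a/\bar a$ plus higher order terms; choosing $a$ with $\tfrac{1}{2\pi i}\log\tfrac{a}{\bar a}\notin\Q$ forces every leaf $\{h=\const\neq0\}$ to be non-periodic. Hence the constructed triple fails property~$(\mathcal P)$ and cannot be analytically conjugate to \eqref{eq:taunfa}, while formal equivalence is checked directly from the form of $\psi$ modulo $h^s\Cal J$. The obstruction lives on the generic leaves $h\neq 0$ and is detected at infinity in the leaf --- the opposite of where you were looking.
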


Whenever $h$ is such that $\alpha(h)^{2n}=1$ for some $n\in\Z_{>0}$ then the restriction of the germ 
$\phinf'^{\circ n}=(\taunff1'\circ \taunff2')^{\circ n}=\left(\begin{smallmatrix} \alpha(h)^{2n} &0 \\[3pt] 0& \alpha(h)^{-2n}\end{smallmatrix}\right)\xi$ to the corresponding leaf $\{h=\const\}$
is equal to identity, that is, it satisfies the property:
\begin{center}
$(\mathcal P)$\hspace{1.5cm}$\phinf'$ is periodical of equal period on the whole leaf.
\end{center}

The set of all periodical leaves corresponds exactly to the values of $h$ such that $\tfrac12\arg\lambda \pm h^s\in \pi\Q$,
in particular it accumulates at the origin with $2s$ equidistributed asymptotic directions $e^{\frac{i\ell\pi}{s}}\mathbb{R}^+$, $\ell=0,\ldots, 2s-1$. Any germ $\phi$ that is analytically equivalent to $\phinf'$ needs to have the same property.

We will construct a triple $(\tau_1,\tau_2,\rho)$ that is formally equivalent to the above normal form $(\taunff1',\taunff2',\rhonf')$,
but has no periodic leaves other than the level set $\{h=0\}$.
The basic idea is to construct $\tau_1,\tau_2$ that extend on the whole $\C^2$ as algebraic maps 
and consider them on the compactification $(\CP^1)^2$.
Since each leaf of the foliation $\Cal F$ except of $\{\xi_1=0\}$ (resp. $\{\xi_2=0\}$) accumulates to the point $\xi=(\infty,0)$ (resp. $(0,\infty)$) in the compactification,
it is enough to show that the map $\phi=\tau_1\circ\tau_2$ has no leaves consisting of periodic points near this point other than 
$\{\xi_2=0\}$ (resp. $\{\xi_1=0\}$).

\begin{proof}[Proof of Theorem~\ref{thm:divergent}]
Let 
\[\tau_1(\xi):=\phi^{\circ\frac12}\circ\sigma(\xi),\qquad \tau_2(\xi):=\sigma\phi^{\circ\frac12}(\xi),\qquad \rho(\xi)=\bar\xi, \]
where
$\phi^{\circ\frac12}:=\sigma\psi\circ\sigma\circ\psi^{\circ(-1)}$, and where $\psi$ is an analytic germ such that $h\circ\psi=h$ and $\bar\psi=\sigma\psi\circ\sigma$, which will then mean that 
$\rho\circ\tau_1\circ\rho=\tau_2=\sigma\circ\tau_1\circ\sigma$.

Namely, let us take
\[\psi(\xi):=\begin{pmatrix}\frac{\alpha^{-\frac12}+ah^s\xi_1}{1+\bar ah^s\alpha^{-\frac12}\xi_2}\xi_1\\[6pt]
\frac{\alpha^{\frac12}+\bar ah^s\xi_2}{1+ah^s\alpha^{\frac12}\xi_1}\xi_2\end{pmatrix},\qquad a\in\C.\]
Then $\psi(\xi)=\left(\begin{smallmatrix}\alpha(h)^{-\frac12} &0 \\[3pt] 0& \alpha(h)^{\frac12}\end{smallmatrix}\right)\xi\mod h^s\Cal J\cdot\xi$, where $\Cal J$ is the ideal of analytic functions vanishing at $\xi=0$. 
Hence $\phi(\xi)=\left(\begin{smallmatrix}\alpha(h)^2 & 0\\[3pt] 0 & \alpha^{-2}(h)\end{smallmatrix}\right)\xi\mod h^s\Cal J\cdot\xi$.
If $\lambda$ is  a root of unity, then $\phi(\xi)$ is of formal type (a) of Theorems~\ref{thm:1} and~\ref{thm:2}: 
indeed $\frac{\xi_1\circ\phi-\xi_1}{\xi_1}=\alpha(h)^{2p}-1 \mod  h^s\Cal J=\pm 2iph^s \mod h^s\Cal J$ hence $k=0$. 
This means that the Moser--Webster triple $(\tau_1,\tau_2,\rho)$ is formally equivalent to \eqref{eq:taunfa}.
If $|\lambda|=1$ but $\lambda$ is not a root of unity then the same follows from the proof of \cite[Theorem 3.4]{Moser-Webster}.

Near the point $\xi=(\infty,0)$, the restriction of $\psi$ to each leaf $h=\const\neq 0$ acts on the local coordinate $\xi_1^{-1}=\frac{\xi_2}{h}$ as 
\[\psi:\xi_1^{-1}\mapsto \xi_1^{-2}\frac{1+\bar a\alpha^{-\frac12} h^{s+1}\xi_1^{-1}}{ah^s+\alpha^{-\frac12}\xi_1^{-1}}=
\frac{1}{ah^s}\xi_1^{-2}+O(\xi_1^{-3}),\]
and it follows that 
\[\phi^{\circ\frac12}=\bar{\psi}\circ\psi^{\circ(-1)}: \xi_1^{-\frac12}\mapsto \frac{a}{\bar a}\xi_1^{-\frac12}+O(\xi_1^{-1}) \]
is analytic in $\xi_1^{-\frac12}$ near $\xi_1^{-\frac12}=0$.
Choosing $a\in\C$ such that $\frac{1}{2\pi i}\log\frac{a}{\bar a}$ is irrational then no iterate of $\phi^{\circ\frac12}$ is equal to identity on any leaf
except of the local leaf $\{h=0\}=\{\xi_2=0\}$, on which $\phi^{\circ\frac12}\big|_{\{h=0\}}:\xi_1^{-1}\mapsto\lambda^{-\frac12}\xi_1^{-1}$.
Hence, $\phi$ does not have the property $(\mathcal P)$.
\end{proof}

\GRN

\subsection{Example of convergence: Monodromy of the Sixth Painlev\'e equation}\label{sec:Painleve}

Here we provide a more detailed account of Example~\ref{example:Painleve}.

	Following the works of Okamoto, the Sixth Painlev\'e equation can be expressed in the form of a non-autonomous Hamiltonian system
	\[\tfrac{dq}{dt}=\tdd{p}H(q,p,t;\kappa),\quad \tfrac{dp}{dt}=-\tdd{q}H(q,p,t;\kappa),\] 
	depending on some parameter $\kappa\in\C^4$.
	The solutions define a singular foliation in the $(q,p,t)$-space, transverse to the fibration $(q,p,t)\mapsto t$ away from the singular fibers $t=0,1,\infty$.
	All the solutions are endlessly meromorphically continuable (this is the Painlev\'e property), and Okamoto has shown that the foliation allows a 
	semi-compactification $\Cal M(\kappa)$  fibered over the $t$-space $\CP^1$, in which all non-vertical leaves are coverings of  $\CP^1\sminus\{0,1,\infty\}$.  
	As a consequence each loop in $\pi_1(\CP^1\sminus\{0,1,\infty\},t_0)$, gives rise to a Poincar\'e return map, a.k.a. \emph{nonlinear monodromy map}, acting as a symplectic isomorphism of the fiber $\Cal M_{t_0}(\kappa)$ above $t_0$ (called \emph{Okamoto's space of initial conditions}).
	
	It is well known (see e.g. \cite{Inaba-Iwasaki-Saito}) that by the Riemann--Hilbert correspondence, the Okamoto's space of initial condition $\Cal M_{t_0}(\kappa)$ is isomorphic to the minimal desingularization of the cubic surface $\Cal R_{t_0}(\theta)=\{x\in\C^3: F(x,\theta)=0\}$,
	\[ F(x,\theta)= x_1x_2x_3+x_1^2+x_2^2+x_3^2-\theta_1x_1-\theta_2x_2-\theta_3x_3+\theta_4,\]
	where $\theta=\theta(\kappa)\in\C^4$, known as the \emph{character variety} (of representations $\pi_1(\CP^1\sminus\{0,1,\infty,t_0\})\to\SL_2(\C)$).
	Under this correspondence, the nonlinear monodromy map associated to a simple loop around either of the singularities takes the form
	\begin{equation*}
		\begin{aligned}
			\phi_{ij}:	  x_i&\mapsto x_i-F_i+x_kF_j, \\
			x_j&\mapsto x_j-F_j,	 \\
			x_k&\mapsto x_k,   
		\end{aligned}	
	\end{equation*} 
	where $F_i(x,\theta)=\tdd{x_i}F(x,\theta)$,
	and can be expressed as a composition of two involutions $\phi_{ij}=\tau_i\circ\tau_j$, 
	\begin{equation*}
		\begin{aligned}
			\tau_{i}:	  x_i&\mapsto x_i-F_i, & \qquad \tau_{j}:	  x_i&\mapsto x_i,\\
			x_j&\mapsto x_j, & x_j&\mapsto x_j-F_j,	 \\
			x_k&\mapsto x_k, & 	x_k&\mapsto x_k, 
		\end{aligned}	
	\end{equation*} 
	$\{i,j,k\}=\{1,2,3\}$, both preserving $x_k$.
	The pair $(\tau_i,\tau_j)$ has up to 4 fixed points $x^*$ on the surface $\Cal R_{t_0}(\theta)$, the solutions of $F_i(x^*,\theta)=F_j(x^*,\theta)=0=F(x^*,\theta)$, 
	which are non-singular points of $\Cal R_{t_0}(\theta)$ under the condition that $F_k(x^*,\theta)\neq 0$.
	Let $\alpha(x_k)=\big(\alpha_i(x_k),\ \alpha_j(x_k),\ \alpha_k(x_k)\big)$
	\[\alpha_i(x_k)=\frac{\theta_j x_k-2\theta_i}{x_k^2-4}, \quad \alpha_j(x_k)=\frac{\theta_i x_k-2\theta_j}{x_k^2-4}, \quad \alpha_k(x_k)=x_k\]
	be the solutions of $F_i(\alpha(x_k),\theta)=F_j(\alpha(x_k),\theta)=0$,
	then $x^*=\alpha(x_k^*)$ where
	\[({x_k^*}^2-4)({x_k^*}^2-\theta_k x_k^*+\theta_4)+\theta_i^2+\theta_j^2-\theta_i\theta_j x_k^*=0.\]
	In the local coordinates $(y_i,y_j)=\big(x_i-\alpha_i(x_k),\ x_j-\alpha_j(x_k)\big)$ near the point $x^*$ the surface $\Cal R_{t_0}(\theta)$ takes the form
	\[ 0=F(x,\theta)-x_iF_i(\alpha(x_k),\theta)-x_jF_j(\alpha(x_k),\theta)=y_iy_jx_k+y_i^2+y_j^2+ F(\alpha(x_k),\theta),\]
	and the two involutions become
	\begin{equation*}
		\begin{aligned}
			\tau_{i}: y_i&\mapsto -y_i-y_jx_k,  & \qquad \tau_{j}: y_i&\mapsto y_i,\\
			y_j&\mapsto y_j, 					&				   y_j&\mapsto -y_j-y_ix_k,	 
		\end{aligned}	
	\end{equation*} 
	with first integral 
	\begin{equation}\label{eq:hPVI}
		h=y_iy_jx_k+y_i^2+y_j^2= -F(\alpha(x_k),\theta)=-F_k(x^*,\theta)\cdot(x_k-x_k^*)+O\big((x_k-x_k^*)^2\big).
	\end{equation}
	Both $\tau_i,\tau_j$ are linear maps depending on $x_k$, therefore if ${x_k^*}\neq\pm 2$ then the pair $(\phi_{ij},\tau_i)$  is analytically conjugated by a linear change of variable to the formal normal form $(\phinf,\taunf)$ \eqref{eq:phinf}. By comparing the traces of $\phinf$ and $\phi_{ij}$
	\[\lambda e^{\tilde ch^s}+\lambda^{-1} e^{-\tilde ch^s}=-2+x_k^2=-2+\big({x_k^*}-\tfrac{h}{F_k(x^*,\theta)}+O(h^2)\big)^2,\] 
	using \eqref{eq:hPVI}, namely  this means that 
	\[s=1,\qquad \lambda+\lambda^{-1}=-2+{x_k^*}^2,\qquad (\lambda-\lambda^{-1})\,\tilde c=\tfrac{-2x_k^*}{F_k(x^*)}.\] 
	Depending on the parameter $\theta(\kappa)$ and the point $x^*\neq\pm2$, the multiplier $\lambda\neq 1$ may or may not be a root of unity, 
	but the local analytic conjugacy of the pair $(\phi_{ij},\tau_i)$, and therefore of the corresponding nonlinear monodromy of Painlev\'e VI, to the normal form $(\phinf,\taunf)$ \eqref{eq:phinf} exists in any case as long as the condition $({x_k^*}^2-4)F_k(x^*)\neq0$ is satisfied.

\FGRN

\section{Analytic classification in the case $k>0$}\label{sec:6}

The rest of the paper is devoted to a description of the modulus of analytic classification in the formal cases (b) and (c) of Theorem~\ref{thm:1}.

Rather than working within each formal equivalence class, \emph{we will work in the larger \textbf{model class}} \Grn{(Definition~\ref{def:model})} 
\emph{consisting in the case (b) of the union of the formal classes over all possible invariants $\hat\mu(h)$.} 
The reason for such approach is that instead of working with the a priori purely formal normal form $\hatbXnf=h^s\mfrac{cP}{1+\hat\mu cP}\bE$ we will work with the analytic model
$\bXmod=h^scP\bE$. 
Conveniently, also the description of the dynamics of $\bXmod$ upon which the domains of normalization of Theorem~\ref{thm:sectorial} will be constructed in Section~\ref{sec:6.2} is slightly more simple than that of $\hatbXnf$ (that is if $\hatbXnf$ was convergent).

\subsection{Construction of normalizing transformations to a  model}\label{sec:6.1}

Let $\phi(\xi)=\Lambda\xi+\hot$, $\tau(\xi)=\sigma\xi=\left(\begin{smallmatrix} 0&1\\1&0 \end{smallmatrix}\right)\xi$ and $h(\xi)=\xi_1\xi_2$ be in 
the prenormal form of Proposition~\ref{prop:prenormalphi}.
Assume that $\phi$ is of the formal type (b) or (c) of Theorem~\ref{thm:1}, 
and let $\hatphinf=\Lambda\exp(\frac1p \hatbXnf)$ with
\[\hatbXnf=\frac{ch^sP(u,h)}{1+\hat\mu(h) cP(u,h)}\bE,\qquad\text{where}\quad 
\begin{cases}
	\text{(b)}\  \hat{\mu}(h)\in\C\llbracket h\rrbracket,\\
	\text{(c)}\  \hat{\mu}(h)=0,\\
\end{cases}
\]
and
\[\begin{cases}
	\text{(b)}\ P(u,h)=u^k+P_{k-1}(h)u^{k-1}+\ldots+P_0(h),& u=\xi_1^p+\xi_2^p,\\
	\text{(c)}\ P(u,h)=u^{2\tilde k+1}+\tilde P_{\tilde k-1}(h)u^{2\tilde k-1}+\ldots+\tilde P_0(h)u,& u=\xi_1+\xi_2,\\
\end{cases}
\]
be the formal normal form,
and let $\phimod=\Lambda\exp(\frac1p \bXmod)$ with
\begin{equation}\label{eq:bY}
	\bXmod:=ch^sP(u,h)\bE=h^s\bY,\qquad \bY:=cP(u,h)\bE.
\end{equation}
be the associated model \Grn{(Definition~\ref{def:model}).}
Denoting $\hat\bX$ \eqref{eq:prenormalX} the formal infinitesimal generator of $\phi^{\circ p}=\exp(\hat\bX)$, then
since $\phi$ is in the prenormal form
\[\hat\bX=ch^sP(u,h)\bE \mod h^sP(u,h)^2\bE,\]
and
 \begin{equation}\label{eq:adaptedmodel}
	\phi^{\circ p}=\exp(ch^sP(u,h)\bE)\mod h^sP(u,h)^2\xi,  
\end{equation}
which by Corollary~\ref{cor:infinitesimalgenerator} is equivalent to 
\begin{equation}\label{eq:flog}
	\frac{f\log(1+\bE.f)}{\bE.f}=ch^sP\mod h^sP^2,\quad\text{where }\ f:=\frac{\xi_1\circ\phi^{\circ p}-\xi_1}{\xi_1}.
\end{equation}

We denote $\bY_h$ the restriction  of $\bY$ on the leaf $\{h=\xi_1\xi_2=\const\}$.
In the local coordinate $\xi_1$ on the leaf it can be written as
\begin{align}\label{eq:bYh}
\bY_h&=c\,P_h(\xi_1)\,\xi_1\tdd{\xi_1},\quad\text{where }\ 
P_h(\xi_1):=\begin{cases}\text{(b)}\ P(\xi_1^p+\tfrac{h^p}{\xi_1^{p}},h),& kp>0\\ \text{(c)}\ P(\xi_1+\tfrac{h}{\xi_1},h),& kp=2\tilde k+1, \end{cases}
\end{align}
and $\xi_1^{kp}P_h(\xi_1)$ is a polynomial in $\xi_1$ of order $2kp$.
The vector field $\bY_h$ is a rational in $\xi_1\in\CP^1$, with poles at $\xi_1=0,\infty$ of orders $kp-1$, and depends analytically on the parameter $h$. 
For $h\neq 0$ it is reversed
by the involution 
\begin{equation}\label{eq:sigma1}
\sigma:\xi_1\mapsto\frac{h}{\xi_1}.
\end{equation}

Consider a neighborhood of the origin
\begin{equation}\label{eq:B}
B=\big\{|\xi_1|,|\xi_2|<\delta_1,\ |h|<\delta_2\big\}, 
\end{equation}
for some $\delta_1,\delta_2>0$, with $\delta_2$ small enough so that all zeros of $\bY_h$ lie inside $B_h$ for all $|h|<\delta_2$,
where $B_{h}=B\cap\{h=\const\}$.
In the coordinate $\xi_1$, when $h\neq 0$, $B_h$ is identified with the annulus 
\begin{equation}\label{eq:Bh}
B_h=\Big\{\tfrac{|h|}{\delta_1}<|\xi_1|<\delta_1\Big\}.
\end{equation}
The level set $B_0$ has two irreducible components $\{|\xi_1|<\delta_1\}$ and $\{|\xi_2|<\delta_1\}$ symmetric one to the other by the involution $\sigma$ \eqref{eq:sigma1}.

\emph{Our goal is to construct normalizing transformations $\Psi_{\Omega}$ on some ramified domains \Grn{$\Omega=\coprod_h\Omega_h$ in $B=\coprod_h B_h$,}
such that }
\[ \Psi_{\Omega}\circ\phi^{\circ p}=\phimod^{\circ p}\circ\Psi_{\Omega}. \]
We shall construct such $\Psi_{\Omega}$ \Grn{leaf-by-leaf} treating $h$ as a parameter.
This will be easier to do in  the \emph{rectifying coordinate} of $\bY$ defined as follows:
\Grn{ Let 
\[\bY_h^{-1}=\mfrac{d\xi_1}{c\,\xi_1\,P_h(\xi_1)}\] 
be a meromorphic 1-form dual to $\bY_h$ on each leaf $\{h=\const\}$, and let  
\begin{equation}\label{eq:bt}
\bt_h(\xi)=\int \bY_h^{-1}
\end{equation}
be the a priori multivalued rectifying coordinate for $\bY_h$ on $\{h=\const\}\sminus\{P_h=0\}$,
\[\bY_h=\tfrac{\partial}{\partial \bt_h} \qquad\text{and}\qquad \bt_h\circ\phimodh^{\circ p}=\bt_h\circ\exp(h^s\bY_h)=\bt_h+h^s, \]
defined up to addition of some constant $C(h)$. 
We denote $\bt(\xi)=\int \bY^{-1}$ a choice of the rectifying coordinate for $\bY$ depending analytically on $h$.
}

\subsubsection{Construction of a Fatou coordinate}
For each $h$ the Riemann surface of $\bt_h(\xi)$ is a covering surface of the punctured leaf $\{h=\const\}\sminus\{P_h=0\}$, where $\{P_h=0\}$ denotes the fixed point set of $\bY_h$ \eqref{eq:bYh}.
When endowed with the coordinate $\bt_h$ it becomes a \emph{translation surface}\footnote{A \emph{translation surface} is a Riemann surface with an atlas whose transition maps are affine translations. Equivalently, it is a Riemann surface endowed with a holomorphic abelian differential -- in our case $d\bt_h$.}
containing \emph{saddle points}\footnote{A \emph{saddle point} (or conical singularity) of a translation surface is a point whose neighborhood is a topological disc of angular opening $2m\pi$ for some $m\in\Z_{>0}$. See e.g. \cite{Wright}. }
of angle $2kp\pi$ situated over the points $\xi_1=0,\infty$, \Grn{which are poles of order $kp$ of $\bt_h$}.
By restricting to $B_h$, the image $\bt_h(B_h)$ acquires  \emph{holes} around the saddle points corresponding to either of the complements
$\{|\xi_1|\geq\delta_1\}$ and $\{|\xi_2|=\frac{|h|}{|\xi_1|}\geq\delta_1\}$ of $B_h$ in the leaf $\{h=\const\}$.

Denote $\phi_h^{\circ p}$, resp. $\phimodh^{\circ p}=\exp(\bXmodh)=\exp(h^s\bY_h)$, the restriction  of $\phi^{\circ p}$, resp. $\phimod^{\circ p}$ on $B_h$.
We want to construct a normalizing transformation $\Psi_{\Omega_h}$ on some ramified domain\footnote{More precisely, the ramified domain $\Omega_h$ may be understood as a domain in the covering surface of $B_h\sminus\{P_h=0\}$ on which $\bt_h$ lives.} 
 $\Omega_h\subseteq B_h\setminus\{P_h=0\}$,
depending locally analytically on $h$,
such that
\[\phimodh^{\circ p}\circ\Psi_{\Omega_h}=\Psi_{\Omega_h}\circ\phi_h^{\circ p}.\]
This will be obtained by constructing a \emph{Fatou coordinate}
$\bT_{\Omega_h}$ for $\phi_h^{\circ p}$ on $\Omega_h$ 
satisfying\footnote{Usually a Fatou coordinate is defined as conjugating $\phi_h^{\circ p}$ to translation by $1$. However in our situation, as we need to keep track of the dependency in $h$, it is natural to conjugate to translation by $h^s$. We still call it Fatou coordinate.}
\[ \bT_{\Omega_h}\circ\phi_h^{\circ p}=\bT_{\Omega_h}+h^s. \]
\Grn{Then $\Psi_{\Omega_h}$ will be defined by the identity}
\[\bT_{\Omega_h}=\bt_h\circ\Psi_{\Omega_h}.\]

The construction of the Fatou coordinate $\bT_{\Omega_h}$ is more-less the same as that used by Voronin \cite{Voronin} as well as \cite{MRR,Ribon-a,Shi}:
first construct a quasi-conformal conjugacy between $\bt_h\circ\phi_h^{\circ p}$ and $\bt_h\mapsto\bt_h+h^s$ on a strip in $\bt_h(B_h)$, afterwards correct it to a holomorphic one using Ahlfors--Bers theorem, and then extend it to a bigger domain through iteration of $\phi_h^{\circ p}$.
The shape of the domain will depend only on the position of the strip in the surface $\bt_h(B_h)$ with respect to its holes.

\begin{definition}~\label{def:stable}
\begin{itemize}[wide=0pt, leftmargin=\parindent]
	\item[-] Let $\theta\in\R$. A \emph{real trajectory} of the vector field $e^{i\theta}h^s\bY_h$ through a point $\xi$ is the curve $t\mapsto\exp(te^{i\theta}h^s\bY_h)(\xi)$, $t\in\R$, which corresponds to the line $\bt_h(\xi)+e^{i\theta}h^s\R$ in the coordinate $\bt_h$.
	It is the same as a the trajectory of $\bXmodh=h^s\bY_h$ as the complex time evolves in the direction $\theta$.
	\item[-] For a given $h$, an angle $\theta\in\,\,]0,\pi[$ is called \emph{stable relative to $B_h$} if for all $\xi\in B_h$ the real trajectory of  $e^{i\theta}h^s\bY_h$ through $\xi$ stays in $B_h$ for either all positive (forward time $t>0$) or all negative (backward time $t<0$) time,
	i.e. it is not allowed to leave $B_h$ in both directions.
		
	In this case the forward, resp. backward, limit of the trajectory is necessarily one of the equilibrium points of $\bY_h$ in $B_h$ (more on this in Section~\ref{sec:6.2}).
\end{itemize}
\end{definition}

\begin{definition}\label{def:admissible}
For a fixed $h$, small $\delta_3>0$ (to be precised later), and a stable angle $\theta\in\,\,]\delta_3,\pi-\delta_3[$, 
let $\Gamma_h$ be a real trajectory of $e^{i\theta}h^s\bY_h$ through some non-singular point $\xi_*(h)\in B_h$,
such that both $\Gamma_h,\phi^{\circ p}(\Gamma_h)\subset B_h$, and denote $\Sigma_h\subset B_h$ the region bounded by the two curves $\Gamma_h$ and $\phi^{\circ p}(\Gamma_h)$.
Let $\bt_h(\Sigma_h)$ be its bijective image by a branch of $\bt_h$ in the translation surface $\bt_h(B_h)$ (see Figure \ref{figure:lavaursdomain}) -- we call it an \emph{admissible strip}. 
\end{definition}

The existence of a stable $\theta$ and of an admissible strip will be proved in Section~\ref{sec:6.2}.\label{page:admissiblestrip}
Since the curve $\Gamma_h$ is transversal to the real flow of the model vector field $\bXmodh=h^s\bY_h$, the region $\Sigma_h$
is also ``transversal'' to the dynamics of $\phimodh^{\circ p}$ and therefore also to $\phi_h^{\circ p}$.

\begin{figure}[t]
	\centering
	\includegraphics[width=0.99\textwidth]{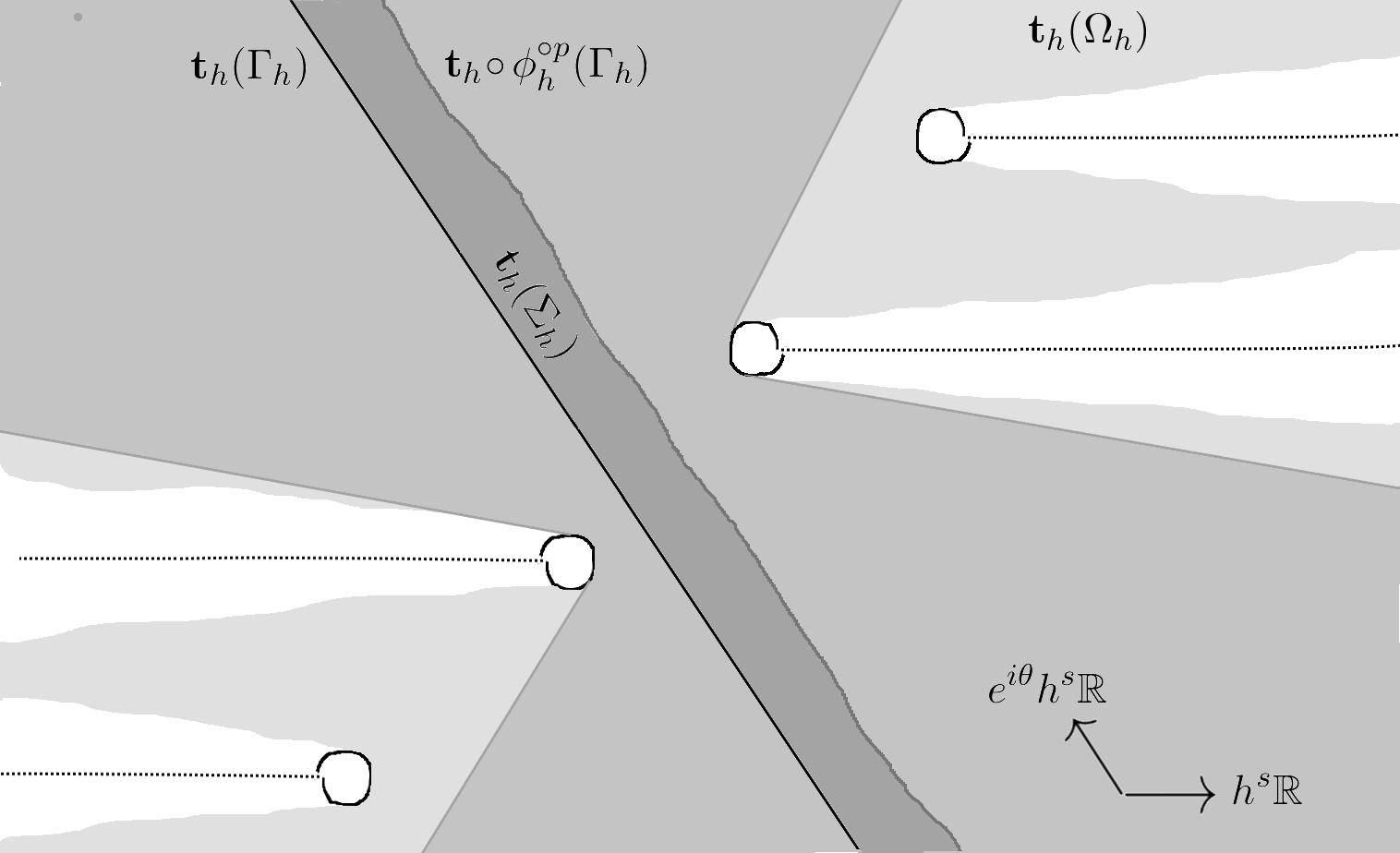}
	\caption{An admissible strip $\bt_h(\Sigma_h)$ in dark gray, and the saturated Lavaurs domain $\bt_h(\Omega_h)$ in light gray. In the
		medium gray is the maximal subdomain of $\bt_h(\Omega_h)$ spanned by all the lines $t+e^{i\theta'}h^s\R$ of varying slope $\theta'\in\,\,]\delta_3,\pi-\delta_3[$ contained inside $\bt_h(\Omega_h)$.
		This is a ``naive version'' of the Lavaurs domain which will be constructed later in Section~\ref{sec:6.2}: 
		such subdomain is easier to understand, doesn't depend on $\phi^{\circ p}$ but only on $\phimod^{\circ p}$, and the constructed conjugacy
		$\Psi_{\Omega_h}$ will be bounded on it. }
	\label{figure:lavaursdomain}
\end{figure}

\begin{definition}\label{def:Omegasaturation}
\Grn{	Let $\bt_h(\Sigma_h)$ be an admissible strip and let us define a ramified domain $\Omega_{h}$ in $B_h\sminus\{P_h(\xi_1)=0\}$ 
	as the ``saturation'' of $\Sigma_h$ through iteration of $\phi_h^{\circ p}$. This is more clear in the coordinate $\bt_h$:
 	Let $\Phi_h:=\bt_h\circ \phi_h^{\circ p}\circ\bt_h^{\circ(-1)}$, and define
 	\[\bt_h(\Omega_h):=\{t: \exists n\in\Z,\ \Phi_h^{\circ n}(t)\in\bt_h(\Sigma_h)\ \&\ 
 	\forall l=0,\ldots,n,\ \Phi_h^{\circ l}(t)\in \bt_h(B_h)\}, \]
 	see Figure~\ref{figure:lavaursdomain}.
 	Then the ramified domain $\Omega_h$, which we call a \emph{saturated Lavaurs domain}, is defined as a simply connected bijective preimage of $\bt_h(\Omega_h)$ in 
 	a (covering surface) of $B_h\sminus\{P_h(\xi_1)=0\}$.
  }
\end{definition}

\com{I've left out the dependence of $\Omega_h$ on $\theta$ in the notation, because it depends on $\Sigma_h$ which depends not only on $\theta$ but also on $\xi_*$ and $\delta_1,\delta_2,\delta_3$ and $\phi_h^{\circ p}$, but in the end neither of it matters under a small perturbation.}

\begin{proposition}[Existence of a Fatou coordinate]\label{prop:Fatou}
There are constants $\delta_1,\delta_2,\delta_3>0$ (as in \re{eq:B} and \rd{def:admissible}), such that for 
each $h\neq 0$ \Grn{and a saturated Lavaurs domain $\Omega_{h}$ (Definition~\ref{def:Omegasaturation}): }	
\begin{enumerate}[wide=0pt, leftmargin=\parindent]
\item
On the domain $\Omega_h$ there exists an analytic function $\bT_{\Omega_h}:\Omega_h\to \C$ that is a Fatou coordinate for $\phi_h^{\circ p}$
\begin{equation}\label{eq:Fatourelation}
	\bT_{\Omega_h}\circ\phi_h^{\circ p}=\bT_{\Omega_h}+h^s, 
\end{equation}
and such that 
\begin{enumerate}
	\item $\bT_{\Omega_h}$ is univalent (injective) on $\Omega_{h}$,
	\item $\lim_{\bt_h(\xi)\to\pm\infty\cdot e^{i\theta}h^s}\IM\big(h^{-s}\bT_{\Omega_h}(\xi)\big)=\pm\infty$,
	where the limit is taken along any trajectory of $e^{i\theta}h^s\bY_h$ in $\Omega_{h}$, corresponding to a line $\bt_h\in t_0\pm  e^{i\theta}h^s\R_{>0}$ in $\bt_h(\Omega_h)$, 
	\item $\bT_{\Omega_h}\circ\bt_h^{\circ(-1)}$ has a moderate growth\footnote{At most polynomial-like in $\bt_h$.} when $\bt_h\to\pm\infty\cdot e^{i\theta}h^s$ along any such line.
\end{enumerate}

\item
If $\,\bT'_{\Omega_h}:\Omega_h\to \C$ is another analytic function satisfying \eqref{eq:Fatourelation} with either of the properties \textit{(a), (b), (c)}
then $\bT'_{\Omega_h}-\bT_{\Omega_h}=C(h)$ is constant on the leaf.
\end{enumerate} 
\end{proposition}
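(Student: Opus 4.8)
\textbf{Plan for the proof of Proposition~\ref{prop:Fatou}.}

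The plan is to follow the classical Voronin--Shishikura--Ribón scheme adapted to keep uniformity in $h$. First I would work entirely in the rectifying coordinate $\bt_h$, writing $\Phi_h=\bt_h\circ\phi_h^{\circ p}\circ\bt_h^{\circ(-1)}$. Using \eqref{eq:flog} together with Corollary~\ref{cor:infinitesimalgenerator}, the infinitesimal generator of $\phi_h^{\circ p}$ is $h^s\bY_h$ modulo $h^sP_h^2\bE$, so in the coordinate $\bt_h$ the map $\Phi_h$ takes the form $\Phi_h(t)=t+h^s+r_h(t)$ where $r_h$ is small and decays (polynomially in $t$) as $\bt_h\to\pm\infty\cdot e^{i\theta}h^s$ along the lines of slope $\theta$ inside $\bt_h(B_h)$; this is where the prenormalization of Proposition~\ref{prop:prenormalphi} is essential, since the discrepancy between $\phi^{\circ p}$ and $\phimod^{\circ p}$ is an element of $h^{-s}f^2\xi_j\C\{\xi\}\subseteq h^sP_h^2\xi\C\{\xi\}$, which translates into a bounded, decaying error term. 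The admissible strip $\bt_h(\Sigma_h)$ is, by construction (Definition~\ref{def:admissible}), a fundamental domain for the action of $t\mapsto t+h^s$ up to the perturbation $r_h$, transversal to the lines of slope $\theta$.

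Second, on the admissible strip I would define a quasiconformal homeomorphism $w_h$ onto a genuine strip $\{0\le \IM(h^{-s}w_h)\le \IM(h^{-s}\cdot h^s)\}$-type model, interpolating between the two boundary curves $\bt_h(\Gamma_h)$ and $\bt_h(\phi_h^{\circ p}(\Gamma_h))$ so that it intertwines $\Phi_h$ with the translation $w\mapsto w+h^s$; extend it by the dynamics of $\Phi_h$ (iterating forward and backward) over all of $\bt_h(\Omega_h)$, which is well-defined and single-valued precisely because $\bt_h(\Omega_h)$ is the $\Phi_h$-saturation of the strip inside $\bt_h(B_h)$ (Definition~\ref{def:Omegasaturation}). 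The Beltrami coefficient of this $w_h$ has support only on (the $\Phi_h$-orbit of) the strip, is bounded away from $1$ uniformly in $h$ because $r_h$ is small, and vanishes near $\pm\infty\cdot e^{i\theta}h^s$; solving the Beltrami equation by the measurable Riemann mapping theorem (Ahlfors--Bers), with the solution normalized to behave like the identity at the two ends, produces a holomorphic correction, and the composition $\bT_{\Omega_h}:=(\text{solution})\circ w_h\circ\bt_h$ is the desired Fatou coordinate. Properties (a), (b), (c) then follow: univalence from the fact that $w_h$ and the Beltrami solution are homeomorphisms and the saturated domain is simply connected; the asymptotic behaviour (b) from the normalization at the ends together with the decay of $r_h$; and the moderate growth (c) from the explicit polynomial-like growth of $\bt_h$ itself near its poles of order $kp$ at $\xi_1=0,\infty$ combined with the fact that $\bT_{\Omega_h}\circ\bt_h^{\circ(-1)}$ differs from the identity by a bounded amount.

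Third, for the uniqueness statement (part 2), I would argue as follows. If $\bT_{\Omega_h}$ and $\bT'_{\Omega_h}$ both satisfy \eqref{eq:Fatourelation}, then $g_h:=\bT'_{\Omega_h}-\bT_{\Omega_h}$ is invariant under $\phi_h^{\circ p}$, hence descends to a holomorphic function on the quotient $\Omega_h/\langle\phi_h^{\circ p}\rangle$, which (via either Fatou coordinate) is biholomorphic to a cylinder $\C/h^s\Z$, possibly with one or two punctures at the ends corresponding to $\bt_h\to\pm\infty\cdot e^{i\theta}h^s$. Any of the properties (a), (b), (c) forces $g_h$ to have at worst moderate (polynomial) growth at those punctures --- for (b) because $\IM(h^{-s}\bT_{\Omega_h})$ and $\IM(h^{-s}\bT'_{\Omega_h})$ both tend to $\pm\infty$ along the same lines at comparable rates, for (c) directly, and for (a) via a normal-families/distortion argument --- so $g_h$ extends meromorphically to the compact cylinder $\C/h^s\Z\cup\{\text{two points}\}\cong\CP^1$ with no poles, hence is a constant $C(h)$ depending only on the leaf.

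\textbf{Expected main obstacle.} The routine parts are the Beltrami surgery and the cylinder argument; the genuinely delicate point is proving that all the estimates (smallness of $r_h$, the uniform bound on the Beltrami coefficient, the behaviour at the ends, and hence the constants $\delta_1,\delta_2,\delta_3$) can be chosen \emph{uniformly in $h$} down to $h\to 0$, where $\bt_h(B_h)$ degenerates (the annulus $B_h$ shrinks in the $\xi_1$-coordinate and its translation-surface picture changes), and where the rectifying coordinate $\bt_h$ develops its two $2kp\pi$ saddle points that collide in a controlled way. Handling this requires the detailed analysis of the real-time dynamics of the family $e^{i\theta}h^s\bY_h$ and the precise construction of the Lavaurs domains, which is exactly what is deferred to Section~\ref{sec:6.2}; here I would state the needed uniformity as consequences of that analysis and concentrate on the per-leaf construction.
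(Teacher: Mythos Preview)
Your plan is correct and follows essentially the same Voronin--Shishikura quasiconformal surgery as the paper: build an explicit quasiconformal interpolation on the admissible strip, pass to the sphere via $z=e^{2\pi i t/h^s}$, straighten with Ahlfors--Bers, and then extend the resulting holomorphic Fatou coordinate by iteration; the uniqueness argument via the quotient cylinder is also the same. The only notable difference is the order of operations: the paper solves the Beltrami equation on the quotient sphere \emph{before} extending by iteration (so that Ahlfors--Bers is applied on a compact surface with a clean three-point normalization), whereas you extend first and then solve---your version works once you observe that the Beltrami coefficient descends to the cylinder, but the paper's order avoids that extra step; also, for the uniqueness case~(a) the paper argues directly that a univalent entire function commuting with a translation must be a translation, which is simpler than the distortion argument you sketch.
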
 

Our proof below follows the same general strategy as \cite{Voronin,Shi,MRR,Ribon-a,Rousseau}.

\begin{lemma}\label{lemma:Delta}
Denote
\begin{equation}\label{eq:Delta}
	h^s\Delta:=\bt\circ\phi^{\circ p}-\bt\circ\phimod^{\circ p}=\bt\circ\phi^{\circ p}-\bt-h^s,
\end{equation}
meaning that $\phi^{\circ p}=\exp\big((1+s)\bXmod\big)\big|_{s=\Delta}$. 
Then $\Delta(\xi)=cP(u,h)U(\xi)$ for some analytic germ $U(\xi)$, with $U(0)=-\hat{\mu}(0)$, where $\hat\mu(h)$ is the formal invariant of $\phi^{\circ p}$.
\end{lemma}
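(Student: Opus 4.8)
\textbf{Proof plan for Lemma~\ref{lemma:Delta}.}

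The plan is to read off the claim directly from the prepared form of $\phi$ and of its infinitesimal generator $\hat\bX$, using the relation $\phi^{\circ p}=\exp(\hat\bX)$ together with the formula \eqref{eq:Q} of Lemma~\ref{lemma:Q}. Recall that in the prenormal form of Proposition~\ref{prop:prenormalphi} the formal infinitesimal generator of $\phi^{\circ p}$ is $\hat\bX = ch^sP(u,h)\bE \bmod h^sP^2\bE$ and, more precisely, $\hat\bX = \dfrac{ch^sP}{1+c\hat\mu(h)P+ h^sP\,\hat R_1(\xi)}\bE$ for some formal $\sigma$-invariant germ $\hat R_1(\xi)$, where $\hat\mu(h)$ is the formal invariant extracted from the diagonal part of $\hat R$ (cf. \eqref{eq:prenormalX}, \eqref{eq:QU}, \eqref{eq:FNFX}). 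Since $\bXmod = ch^sP\bE$, the two vector fields $\hat\bX$ and $\bXmod$ are collinear on each leaf $\{h=\const\}$: $\hat\bX = \dfrac{\bXmod}{1 + cP(u,h)\,\hat\alpha(\xi)}$ with $\hat\alpha = \hat\mu(h) + h^s\hat R_1(\xi)$, an analytic germ since $\phi$ is analytic and $h^sP$ and $\tfrac{\xi_1\circ\phi^{\circ p}-\xi_1}{\xi_1}$ generate the same ideal.

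First I would pass to the rectifying coordinate $\bt = \int\bXmod^{-1}/h^s = \tfrac1{h^s}\int\bY^{-1}$, in which $\bXmod$ reads $h^s\partial_{\bt}$ and $\phimod^{\circ p} = \exp(\bXmod)$ acts by $\bt\mapsto \bt + h^s$. In these coordinates $\hat\bX$ becomes the vector field $\dfrac{h^s}{1+cP\hat\alpha}\,\partial_{\bt}$; but $cP\hat\alpha$ is a function of $(u,h)$ only (plus a piece that, as a function on the leaf, is again expressible through $\bt$), hence $\hat\bX$ is \emph{still rectifiable by a change of $\bt$ alone on each leaf}. Concretely one writes $cP(u,h)\hat\alpha(\xi) = \bE.\hat\beta(\xi)$ for a suitable $\hat\beta$ (this is exactly the computation in Proposition~\ref{prop:FNFX} / Remark~\ref{remark:power-log}), so that $\exp(\hat\bX)$ expressed in $\bt$ is the translation $\bt\mapsto \bt + h^s(1+\text{correction})$; evaluating the correction at the base point and tracking that the whole thing is divisible by $P$ gives the stated shape. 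The cleanest route, though, avoids $\bt$ altogether: use formula \eqref{eq:Q}, which gives
\[
\xi_1\circ\phi^{\circ p} - \xi_1 = \frac{P}{\bE.P}\big(e^{h^s(\bE.P)Q^{-1}} - 1\big)\,\xi_1 \bmod h^sP^2\xi_1,
\]
with $Q(0,0) = c^{-1}$ after the prenormalization of Proposition~\ref{prop:prenormalphi} (so that $Q = c^{-1}\bmod P$, equivalently $Q = c^{-1} + P\,(\cdots)$). Substituting this into the definition \eqref{eq:Delta} of $h^s\Delta = \bt\circ\phi^{\circ p} - \bt - h^s$ and expanding $\bt$ to first order along $\bXmod$, one gets $h^s\Delta = \bt\circ\phi^{\circ p} - \bt - h^s = h^s\big(\text{residual term}\big)$, and the residual term is manifestly of the form $P(u,h)\cdot(\text{analytic})$ because $\phi^{\circ p} = \exp(ch^sP\bE) \bmod h^sP^2\xi$ forces $\bt\circ\phi^{\circ p} - \bt - h^s \in h^sP\cdot\C\{\xi\}$ (the leading term cancels exactly, being the model). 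Writing $\Delta = cP(u,h)U(\xi)$ then \emph{defines} $U$, and $U$ is analytic since both $\Delta$ and $cP$ are and $cP$ is not a zero divisor.

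The remaining point is the normalization of the constant: $U(0) = -\hat\mu(0)$. I would obtain this by comparing $\phi^{\circ p} = \exp((1+\Delta)\bXmod)$ with the exponential of the true infinitesimal generator $\hat\bX = \dfrac{\bXmod}{1+cP\hat\alpha}$. These two must agree, so on each leaf the time reparametrization matches: integrating $\bXmod^{-1}$ against $\hat\bX$ gives $1+\Delta = \dfrac{1}{1+cP\hat\alpha}\bmod (\text{flow of } \bXmod)$ up to the ambiguity of a constant of integration, and at $\xi=0$, where $h=0$ and $P(0,0)=0$ forces all $h^sP$-terms to vanish while the purely diagonal part survives, one reads $1+\Delta(0) = 1 - cP(0)\hat\mu(0)/\! \cdots$; more carefully, matching the coefficient of $h^sP$ in $\hat\bX$ (which is $-c\hat\mu(h)$ modulo higher order, from $\tfrac{1}{1+c\hat\mu P} = 1 - c\hat\mu P + \cdots$) with the coefficient of $h^sP$ in $(1+\Delta)\bXmod = (1+cPU)ch^sP\bE$ yields $cU(0) = -c\hat\mu(0)$, i.e. $U(0) = -\hat\mu(0)$. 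The main obstacle I anticipate is purely bookkeeping: keeping straight the two sources of the ``$P$'' divisibility (the prenormal form's ``$\bmod h^sP^2$'' and the collinearity $\hat\bX \parallel \bXmod$ on leaves) and making sure the constant-of-integration ambiguity in passing between $\bt$-pictures does not spoil the evaluation $U(0) = -\hat\mu(0)$; this is handled by always evaluating at $\xi=0$, where the rectifying coordinate's base point can be fixed once and for all, consistently with the choice of $\bt$ depending analytically on $h$.
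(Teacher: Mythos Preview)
Your proposal has the right intuitions but contains two genuine gaps, and the paper's proof takes a different, more direct route.

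\textbf{Gap 1 (analyticity of $U$).} Your key sentence is that ``$\phi^{\circ p}=\exp(ch^sP\bE)\bmod h^sP^2\xi$ forces $\bt\circ\phi^{\circ p}-\bt-h^s\in h^sP\cdot\C\{\xi\}$'', after which ``$U$ is analytic since both $\Delta$ and $cP$ are''. But $\bt=\int\bY_h^{-1}$ is \emph{not} a germ in $\C\{\xi\}$: it has pole/logarithmic singularities along $\{P=0\}$, in particular at $\xi=0$. So the difference $\bt\circ\phi^{\circ p}-\bt$ is a priori defined only off $\{P=0\}$, and showing it extends analytically across \emph{and} is divisible by $h^sP$ is exactly the content of the lemma; you are assuming what you need to prove. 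Relatedly, in your first approach the claim that $\hat\alpha=\hat\mu(h)+h^s\hat R_1$ is analytic is simply false: $\hat\mu(h)$ is a \emph{formal} invariant (the paper explicitly leaves its convergence open), and $\hat R$ in \eqref{eq:prenormalX} is only a formal series.

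\textbf{Gap 2 (the constant $U(0)$).} You propose to match the coefficient of $h^sP$ in $\hat\bX$ with that in $(1+\Delta)\bXmod$. But $(1+\Delta)\bXmod$ is \emph{not} the infinitesimal generator of $\phi^{\circ p}$: the relation $\phi^{\circ p}=\exp\bigl((1+s)\bXmod\bigr)\big|_{s=\Delta(\xi)}$ substitutes the $\xi$-dependent time $1+\Delta(\xi)$ \emph{after} computing the flow, which is not the same as exponentiating the vector field $(1+\Delta)\bXmod$. Your matching happens to give the right answer because the discrepancy is $O(h^{2s}P^3)$, but you do not justify this.

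\textbf{What the paper does instead.} The paper avoids $\bt$ entirely and applies the implicit function theorem in the regular coordinate $\xi_1$. It sets
\[
F(t,\xi):=\xi_1\circ\exp\bigl((1+t)\bXmod\bigr)-\xi_1\circ\exp(\bXmod),
\]
so the defining relation for $\Delta$ becomes $F(\Delta,\xi)=\xi_1\circ\phi^{\circ p}-\xi_1\circ\phimod^{\circ p}$. From the explicit flow expansion one checks that $G(U,\xi):=\frac{1}{c^2h^sP^2\xi_1}F(cPU,\xi)=U+\hot(U,\xi)$ is analytic in $(U,\xi)$. The prenormal form \eqref{eq:adaptedmodel} gives $\xi_1\circ\phi^{\circ p}-\xi_1\circ\phimod^{\circ p}=c^2h^sP^2\xi_1\cdot\tilde r(\xi)$ with $\tilde r$ analytic, so $G(U(\xi),\xi)=\tilde r(\xi)$ has a unique analytic solution with $U(0)=\tilde r(0)$. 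Finally $\tilde r(0)$ is computed directly from $\tilde r=\sum_{n\geq1}\frac{1}{n!}\frac{\hat\bX^{.n}.\xi_1-\bXmod^{.n}.\xi_1}{c^2h^sP^2\xi_1}$ using $\hat\bX=\frac{ch^sP}{1+cP\hat R}\bE$, giving $\tilde r(0)=-\hat R(0)=-\hat\mu(0)$.
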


\begin{proof}
 	This follows from the implicit function theorem.
 	Let 
 	\begin{align*}
 		F(t,\xi):\!&=\xi_1\circ \exp((1+t)\bXmod) - \xi_1\circ \exp(\bXmod)\\
 		&= t\bXmod.\big[\sum_{n=1}^{+\infty}\tfrac{1}{n!}(1+t+\ldots+t^{n-1})\bXmod^{.(n-1)}.\xi_1\big], 
 	\end{align*}
 	see \eqref{eq:exp-form}.
 	The identity \eqref{eq:Delta} is equivalent to $\xi_1\circ\phi^{\circ p}-\xi_1\circ\phimod^{\circ p}=F(\Delta,\xi)$.
 	 As $\bXmod=h^scP\bE$, we see that the function  $G(U,\xi):=\frac{1}{c^2h^sP^2\xi_1}F(cPU,\xi)=U+\hot(U,\xi)$
 	is analytic in the variables $(U,\xi)$.
 	By the assumption \eqref{eq:adaptedmodel}, write  
 	\[\xi_1\circ\phi^{\circ p}-\xi_1\circ\phimod^{\circ p}=c^2h^sP^2\xi_1\cdot\tilde r(\xi)\] 
 	for some analytic germ $\tilde r(\xi)$.
 	Then the implicit equation $G(U(\xi),\xi)=\tilde r(\xi)$ has a unique analytic solution $U(\xi)$ with $U(0)=\tilde r(0)$.
 	
  	Let $\hat{\bX}=\frac{h^scP}{1+cP\hat R}\bE$ be the formal infinitesimal generator of $\phi^{\circ p}=\exp(\hat{\bX})$.
 	Then $\frac{\hat\bX.\xi_1-\bXmod.\xi_1}{\xi_1 h^sc^2P^2}=-\hat R(0)+\hot=-\hat{\mu}(0)+\hot$,
 	and one can see that
 	\[\tilde r(\xi)=\frac{\xi_1\circ\phi^{\circ p}-\xi_1\circ\phimod^{\circ p}}{\xi_1 h^sc^2P^2}=\sum_{n=1}^{+\infty}\frac1{n!}\frac{\hat\bX^{.n}.\xi_1-\bXmod^{.n}.\xi_1}{\xi_1 h^sc^2P^2}=-\hat{\mu}(0)+\hot, \]
 	therefore $U(0)=\tilde r(0)=-\hat{\mu}(0)$.	
\end{proof}

\begin{lemma}\label{lemma:omega} 
\Grn{	Let $\xi_*$, $\theta$ and $\Sigma_h$ be as in Definition~\ref{def:admissible}, and let us choose a determination of $\bt_h(\xi)$ \eqref{eq:bt} such that $\bt_h(\xi_*(h))=0$, and hence the left boundary line of the admissible strip $\bt_h(\Sigma_h)$ is $\bt_h(\Gamma_h)=e^{i\theta}h^s\R$.}
    Let $\bt_h(\tilde\Sigma_h)=e^{i\theta}h^s\R+[0,h^s[$ be the semi-closed strip between the lines $\bt_h(\Gamma_h)=e^{i\theta}h^s\R$ on the left (included) and $\bt_h\circ\phimodh^{\circ p}(\Gamma_h)=h^s+e^{i\theta}h^s\R$ on the right (excluded).
	Then the map $\omega_1:\bt_h(\tilde\Sigma_h)\to\bt_h(\Sigma_h)$ defined by 
	\[t=h^s\left(r+e^{i\theta}q\right)\mapsto\omega_1(t,\ov t)=t+rh^s\Delta_h\circ\bt^{\circ(-1)},\quad r\in[0,1[,\ q\in\R,\]
	where $\Delta_h$ is the restriction of \eqref{eq:Delta} to $\{h=\const\}$, is a smooth bijection.
	Moreover, let
	\begin{equation}\label{eq:Beltrami}
	 \bmu(t):=\frac{\dd{\ov t}\omega_1}{\dd{t}\omega_1},\quad\text{then}\quad |\bmu\circ\bt_h|=O(|P_h|), 
	\end{equation}
	that is, for every $\delta_3>0$ there exist $\delta_1,\delta_2>0$ from the definition of $B$ \eqref{eq:B} such that for every $\theta\in[\delta_3,\pi-\delta_3]$ and every admissible strip $\bt_h(\Sigma_h)\subset\bt_h(B_h)$ the above is satisfied uniformly.
\end{lemma}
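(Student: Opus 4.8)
\textbf{Plan of proof for Lemma~\ref{lemma:omega}.}

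The statement splits into two parts: (i) that the map $\omega_1$ is a smooth bijection from $\bt_h(\tilde\Sigma_h)$ onto $\bt_h(\Sigma_h)$, and (ii) the Beltrami-coefficient estimate $|\bmu\circ\bt_h|=O(|P_h|)$, uniform in the sense stated. The whole point of $\omega_1$ is to interpolate linearly (in the ``horizontal'' real parameter $r$) between the identity on the left boundary line $\bt_h(\Gamma_h)$ and the model time-$h^s$ map $t\mapsto t+h^s$ pushed to $\phi_h^{\circ p}$ on the right boundary line, so that it descends to a quasiconformal conjugacy between $\Phi_h=\bt_h\circ\phi_h^{\circ p}\circ\bt_h^{\circ(-1)}$ and $t\mapsto t+h^s$ on the strip. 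So I would first record the defining relation: on the left line ($r=0$) $\omega_1(t)=t$, and a direct substitution using $\bt\circ\phi^{\circ p}=\bt+h^s+h^s\Delta$ (Definition of $\Delta$, \eqref{eq:Delta}) shows that $\omega_1\big(t+h^s,\overline{t+h^s}\big)=\Phi_h\big(\omega_1(t,\bar t)\big)$ whenever both sides are defined; i.e. $\omega_1$ conjugates the translation by $h^s$ on $\tilde\Sigma_h$ to $\Phi_h$. This is the identity that makes the later gluing across the strip boundaries coherent, and it is essentially a one-line check once the parametrization $t=h^s(r+e^{i\theta}q)$ is fixed.

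For part (i), bijectivity: parametrize the source strip by $(r,q)\in[0,1[\times\R$ via $t=h^s(r+e^{i\theta}q)$. Then $\omega_1$ is, in these coordinates, the map $(r,q)\mapsto t+r\,h^s\,(\Delta_h\circ\bt_h^{\circ(-1)})(t)$. I would argue that this is a diffeomorphism onto $\bt_h(\Sigma_h)$ by a perturbation-of-identity argument: by Lemma~\ref{lemma:Delta} one has $\Delta(\xi)=cP(u,h)U(\xi)$ with $U$ analytic, hence $\Delta_h\circ\bt_h^{\circ(-1)}$ is bounded on $\bt_h(B_h)$ uniformly, and moreover $h^s\Delta_h$ and its $t$-derivative are $O(|h^s P_h|)$, which can be made as small as we like on $B$ by shrinking $\delta_1,\delta_2$ (after possibly also using that $\delta_3$ bounds $\theta$ away from $0,\pi$ so the strip parametrization is uniformly non-degenerate). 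Thus the Jacobian of $(r,q)\mapsto\omega_1$ is a small perturbation of that of the affine parametrization of $\tilde\Sigma_h$, hence invertible, and $\omega_1$ is a local diffeomorphism; globally it is injective because on each horizontal segment $q=\mathrm{const}$ it is a small $C^1$-perturbation of the affine segment, and its images of the two boundary lines are exactly $\bt_h(\Gamma_h)$ and $\bt_h(\phi_h^{\circ p}(\Gamma_h))=\bt_h\circ\phi_h^{\circ p}(\Gamma_h)$, which bound $\bt_h(\Sigma_h)$; so by degree/boundary considerations $\omega_1$ is a bijection onto the region between them. I would be slightly careful here to phrase everything in the covering surface $\bt_h(B_h)$ where $\bt_h$ is genuinely univalent on the strip.

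For part (ii), the Beltrami estimate: compute $\partial_{\bar t}\omega_1$ and $\partial_t\omega_1$ directly from $\omega_1=t+r\,h^s\,g(t,\bar t)$ with $g=\Delta_h\circ\bt_h^{\circ(-1)}$. The key observation is that $r=r(t,\bar t)=\RE(h^{-s}t)/\,$(something) depends on both $t$ and $\bar t$, so $\partial_{\bar t}$ produces a term from differentiating $r$ (which is $O(|h^s g|)=O(|h^s P_h|)$ since $g=O(|P_h|)$ by Lemma~\ref{lemma:Delta}, as $U$ is bounded) plus a term $r h^s\partial_{\bar t}g$ which is again $O(|h^s P_h|)$ because $g$ is the restriction to a leaf of the analytic function $cPU$ and so $\partial_{\bar t}g$ — taken with respect to the leaf coordinate — is controlled by $|P_h|$ times a bounded factor (here $P$ is holomorphic, so the only $\bar t$-dependence of $g$ enters through $\bar\xi$ and $\bar h$, but $\Delta$ itself is holomorphic in $\xi$, so in fact $\partial_{\bar t}g$ comes purely from the chain rule through $\bt_h^{\circ(-1)}$ and $\bar h$, still $O(|P_h|)$). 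Meanwhile $\partial_t\omega_1=1+O(|h^sP_h|)$ is bounded away from $0$. Dividing, $\bmu = O(|P_h|)$, and the uniformity over $\theta\in[\delta_3,\pi-\delta_3]$ and over all admissible strips is inherited from the uniform bound $g=O(|P_h|)$ on all of $B$ together with the uniform non-degeneracy of the strip parametrization guaranteed by $\theta$ being $\delta_3$-away from $0,\pi$. I expect the main technical obstacle to be bookkeeping the $\bar t$-dependence cleanly — in particular making precise that $\omega_1$, although built from the holomorphic $\Delta$, is only \emph{real}-differentiable because of the piecewise-linear interpolation in $r$, and that every stray $\partial_{\bar t}$ that appears carries a factor $P_h$ — rather than any deep difficulty; once that is organized the estimate $|\bmu|=O(|P_h|)$ drops out, and this is exactly what is needed so that $\bmu$ extends by $0$ across $\{P_h=0\}$ and the Ahlfors--Bers / measurable Riemann mapping theorem can be applied in the next step to upgrade $\omega_1$ to a holomorphic Fatou coordinate.
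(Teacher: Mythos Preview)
Your strategy matches the paper's: compute $\partial_t\omega_1$, $\partial_{\bar t}\omega_1$ directly and invoke $\Delta_h=cP_hU_h=O(|P_h|)$ from Lemma~\ref{lemma:Delta}; argue bijectivity as a small perturbation of the affine strip map. Two points need correction, though neither breaks the overall argument.

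First, $g=\Delta_h\circ\bt_h^{\circ(-1)}$ is \emph{holomorphic} in $t$ on the leaf (for fixed $h$): both $\Delta$ and $\bt_h^{\circ(-1)}$ are analytic in $\xi$, and $h$ is a parameter, not a function of $t$. So $\partial_{\bar t}g=0$; the only $\bar t$-dependence of $\omega_1$ sits in the real coordinate $r$. The paper makes this explicit by writing $r=\RE(h^{-s}\bt_h)-\cot\theta\,\IM(h^{-s}\bt_h)$ and obtaining the closed formula
\[
\bmu\circ\bt_h=\frac{\tfrac{\ov h^{-s}h^s}{2}(1-i\cot\theta)\,\Delta_h}{1+\tfrac12(1+i\cot\theta)\,\Delta_h+rh^s\,\bY_h.\Delta_h},
\]
from which $|\bmu|=O(|P_h|)$ is immediate (note $|\ov h^{-s}h^s|=1$ and $|1-i\cot\theta|=1/\sin\theta\le1/\sin\delta_3$; your stray $|h^s|$ in the numerator is wrong).

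Second, the conjugacy $\omega_1(t+h^s)=\Phi_h(\omega_1(t))$ you record is \emph{false}: on the left boundary $\omega_1(t)=t$, so the right-hand side is $t+h^s+h^s\Delta_h(\bt_h^{-1}(t))$, whereas extending $\omega_1$ to $r=1$ gives $t+h^s+h^s\Delta_h(\bt_h^{-1}(t+h^s))$, and $\Delta_h$ is not $\phimodh^{\circ p}$-invariant. This identity is neither claimed nor needed in the paper: in the proof of Proposition~\ref{prop:Fatou} the descended map $\tilde\omega_1(z)=\omega_1(\tfrac{h^s}{2\pi i}\log z)$ is explicitly allowed a discontinuity along the spiral, and only its Beltrami coefficient is fed into Ahlfors--Bers. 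For the present lemma what matters is just that $\omega_1$ fixes the left line, sends nearly-horizontal segments to nearly-horizontal curves, and the right boundary curve has tangent close to $e^{i\theta}h^s$, which together give bijectivity onto the region between $\bt_h(\Gamma_h)$ and $\bt_h(\phi_h^{\circ p}(\Gamma_h))$.
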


\begin{proof}
Writing $h^{-s}t=r+e^{i\theta}q$, then $q=\frac{\IM(h^{-s}t)}{\sin\theta}$, 
$r=\RE(h^{-s}t)-\IM(h^{-s}t)\cotg\theta$, therefore
\[\omega_1(\bt_h,\ov\bt_h)=\bt_h\,(1+\tfrac{\Delta_h}{2}(1+i\cotg\theta))+\ov\bt_h\ov h^{-s}h^s \tfrac{\Delta_h}{2}(1-i\cotg\theta),\]
from which
\[ \bmu\circ\bt_h=\frac{\dd{\ov\bt_h}\omega_1(\bt_h,\ov\bt_h)}{\dd{\bt_h}\omega_1(\bt_h,\ov\bt_h)}
=\frac{\frac{\ov h^{-s}h^s}2(1-i\cotg\theta)\Delta_h}{1+\frac{1}{2}(1+i\cotg\theta)\Delta_h+rh^s\bY_h.\Delta_h}, \]
since $\tfrac{\partial}{\partial \bt_h}=\bY_h$.
By Lemma~\ref{lemma:Delta}, $|\Delta_h|=O(|P_h|)$ and $|\bY_h.\Delta_h|=O(|P_h|)+O(|\bE.P_h|)$, from which $|\bmu\circ\bt_h|=O(|P_h|)$.
Let us show that $\omega_1$ is a bijection $\bt_h(\tilde\Sigma_h)\to\bt_h(\Sigma_h)$.
It sends segments parallel to $[0,h^s[$ in $\bt_h(\tilde\Sigma_h)$ to segments in $\bt_h(\Sigma_h)$ of slope close to $\arg h^s$ 
(if $|P|$ is small enough then the difference of the angles, which is at most $\arcsin|\Delta_h|$, can be assumed $\ll\frac{\delta_3}{2}$). 
According to \eqref{eq:Delta}, the curve $\bt_h\circ \phi_h^{\circ p}\circ\bt_h^{\circ(-1)}(e^{i\theta}\R)$ is $|h^s\Delta_h\circ\bt_h^{\circ(-1)}|$-close to $h^s+e^{i\theta}h^s\R$, and it can be assumed that the argument of its tangent is close to $\theta+\arg h^s$  (if $|P_h|$, and therefore $|\Delta_h|$, is small enough then it can be assumed the arguments differ by $\ll\frac{\delta_3}{2}$). 
It follows that the images of two different segments by $\omega_1$ don't intersect, and thus $\omega_1$ is bijective.
\end{proof}

\begin{proof}[Proof of Proposition~\ref{prop:Fatou}]
1) 
The map $t\mapsto z=e^{\frac{2\pi i}{h^s}t}$ identifies the strip $\bt_h(\tilde\Sigma_h)$ with $\C^*=\CP^1\sminus\{0,\infty\}$. 
Let $\tilde\omega_1(z):=\omega_1(\tfrac{h^s}{2\pi i}\log z)$ be a map $\C^*\to \bt_h(\Sigma_h)$ with a discontinuity along the spiral that is the preimage of the line $e^{i\theta}h^s\R$. 
By \eqref{eq:Beltrami}, its Beltrami constant
\[ \tilde\bmu(z):=\frac{\dd{\ov z}\tilde\omega_1}{\dd{z}\tilde\omega_1}=\frac{z}{\ov z}\bmu(\tfrac{h^s}{2\pi i}\log z),\qquad \tilde\bmu(0)=\tilde\bmu(\infty)=0, \]
can be (up to restricting the size of the polydisc $B$) assumed small enough so that its essential supremum is $\|\bmu\|_\infty=\sup_{t\in\bt_h(\tilde\Sigma_h)}|\bmu|<1$.
So by the Ahlfors--Bers theorem, there exists a unique quasi-conformal map $\tilde\omega_2:\CP^1\to\CP^1$ fixing the points $0,1,\infty$ such that $\frac{\dd{\ov z}\tilde\omega_2}{\dd{z}\tilde\omega_2}=\tilde\bmu(z)$ almost everywhere.
Let 
$\omega_2(t)=\frac{h^s}{2\pi i}\log\tilde\omega_2(e^{2\pi ih^{-s}t})$ be the lifting of $\tilde{\omega}_2$ by $t=\tfrac{h^s}{2\pi i}\log z$, and put
\[ \bT_{\Omega_h}:=\omega_2\circ\omega_1^{\circ(-1)}\circ\bt_h.\]
Then the map $\omega_2\circ\omega_1^{\circ(-1)}=\tfrac{h^s}{2\pi i}\log\big(\tilde\omega_2\circ\tilde\omega_1^{\circ(-1)}\big)$
is then a holomorphic conjugacy from $\bt_h(\Sigma_h)$ to some curved strip $\bT_{\Omega_h}(\Sigma_h)$ bounded by $\bT_{\Omega_h}(\Gamma_h)$ and $\bT_{\Omega_h}(\Gamma_h)+h^s$, 
and is such that $\lim_{t\to \pm\infty\cdot e^{i\theta}h^s} \RE\big(\frac{2\pi i}{h^s}\,\omega_2\circ\omega_1^{\circ(-1)}(t)\big)=\pm\infty$.
Moreover $\bT_{\Omega_h}$ is such that $\bT_{\Omega_h}\circ\phi_h^{\circ p}=\bT_{\Omega_h}+h^s$ on the left boundary $\Gamma_h$, and  
it has a well-defined analytic continuation to the domain $\bt_h(\Omega_h)$ of Definition~\ref{def:Omegasaturation} satisfying the Fatou relation 
$\bT_{\Omega_h}(\xi)=\bT_{\Omega_h}\circ\phi_h^{\circ np}(\xi)-nh^s$, 
whenever $\phi_h^{\circ np}(\xi)\in\Sigma_h$ for some $n\in\Z$, and all the iterates $\phi_h^{\circ lp}(\xi)\in B_h$, $l\in[0,n]\subset\Z$. 

By the construction $\bT_{\Omega_h}$ satisfies the properties \textit{(a), (b), (c)} on $\Sigma_h$, and the analytic extension to $\Omega_h$ through iteration preserves them as well.

2) The map $\psi:t\mapsto \bT'_{\Omega_h}\circ\bT_{\Omega_h}^{\circ(-1)}(t)$, it commutes with the translation $t\mapsto t+h^s$ and therefore extends to an analytic map
on $\C$, with $\psi(t)-t$ periodic of period $h^s$. If $\bT_{\Omega_h}'$ is univalent on $\Omega_{h}$, then $\psi:\C\to\C$ is univalent as well, hence $\psi(t)=t+C(h)$.
If $\lim_{t\to \pm\infty\cdot e^{i\theta}h^s}\RE\big(\frac{2\pi i}{h^s}\,\bT_{\Omega_{h}}'(t)\big)=\pm\infty$ then also
$\lim_{t\to \pm\infty\cdot e^{i\theta}h^s}\RE\big(\frac{2\pi i}{h^s}\,\psi(t)\big)=\pm\infty$, and $\psi$ is a lift by $z=e^{\frac{2\pi i}{h^s}t}$ of a diffeomorphism of $\CP^1$ in the $z$-coordinate fixing $0$ and $\infty$, hence of a rotation by some multiplicative constant $e^{\frac{2\pi i}{h^s}C(h)}$, meaning that 
$\psi:t\mapsto t+C(h)$. 
If $\bT_{\Omega_{h}}'$ has a moderate growth in $\bt_h$, then so does $\psi(t)-t$ which can be written as a Fourier series $\sum_{n\in\Z}C_n e^{\frac{2\pi i nt}{h^s}}$, and the moderate growth at both $t\to+e^{i\theta}\infty$ and $t\to-e^{i\theta}\infty$ means that $C_n=0$ for all $n\neq 0$.
\end{proof}

\begin{proposition}\label{prop:Fatou-behavior}
The Fatou coordinate $\bT_{\Omega_h}$ of Proposition~\ref{prop:Fatou} satisfies	
\begin{itemize}
	\item[(i)] $\displaystyle \lim_{\bt_h\to\pm\infty\cdot e^{i\theta}h^s}\bT_{\Omega_h}-\bt_h-\hat\mu(0)\log\xi_1\in\C$,
	\item[(ii)] $\displaystyle \lim_{\bt_h\to\pm\infty\cdot e^{i\theta}h^s}\tdd{\bt_h}\big(\bT_{\Omega_h}-\bt_h\big)=0,$
\end{itemize}
both limits considered along any line $\bt_h\in t_0\pm  e^{i\theta}h^s\R_{>0}$ in $\bt_h(\Omega_h)$ with $\theta\in\,\,]\delta_3,\pi-\delta_3[$.
\end{proposition}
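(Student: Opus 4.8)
\textbf{Proof plan for Proposition~\ref{prop:Fatou-behavior}.}
The plan is to analyse the difference $g_h:=\bT_{\Omega_h}-\bt_h$ along a trajectory line $\bt_h\in t_0\pm e^{i\theta}h^s\R_{>0}$ and show it has the asserted asymptotics. The key point is that $g_h$ is, by the Fatou relation \eqref{eq:Fatourelation} together with \eqref{eq:Delta}, a ``slow'' function: one has
\[
g_h\circ\phi_h^{\circ p}-g_h=\bT_{\Omega_h}\circ\phi_h^{\circ p}-\bT_{\Omega_h}-\big(\bt_h\circ\phi_h^{\circ p}-\bt_h\big)=h^s-\big(h^s+h^s\Delta_h\big)=-h^s\Delta_h,
\]
so the increment of $g_h$ under one step of $\phi_h^{\circ p}$ is $-h^s\Delta_h=-h^s cP_h U$, which by Lemma~\ref{lemma:Delta} is $O(|P_h|)$, hence summable along the orbit as $\bt_h\to\pm\infty\cdot e^{i\theta}h^s$ because on a stable trajectory the iterates $\phi_h^{\circ np}(\xi)$ converge to a zero of $P_h$ at a geometric rate (the orbit spends comparable ``translation time'' $h^s$ per step and $|P_h|$ decays exponentially in that time along the flow of $\bXmodh$). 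This already gives that $g_h$ has a limit \emph{modulo} the logarithmic term; the precise constant $\hat\mu(0)$ in front of $\log\xi_1$ comes from identifying the residue. First I would recall from \eqref{eq:Delta} and Lemma~\ref{lemma:Delta} that $\phi^{\circ p}=\exp\big((1+\Delta)\bXmod\big)$ with $\Delta=cPU$, $U(0)=-\hat\mu(0)$, so that along the real flow the ``extra time accumulated'' between $\bt_h$ and $\bT_{\Omega_h}$ behaves like $\int \Delta_h\,d\bt_h$; since near the saddle $\xi_1=0$ one has $\bY_h^{-1}=\frac{d\xi_1}{c\xi_1 P_h}$ and $\Delta_h\bY_h^{-1}=U_h\frac{d\xi_1}{\xi_1}$ with $U_h(0)=-\hat\mu(0)$, the divergent part of $\int\Delta_h\,d\bt_h$ is exactly $-\hat\mu(0)\log\xi_1$ up to a bounded term, which is (i). Differentiating, $\tdd{\bt_h}g_h=\bY_h.g_h$, and the same summation/telescoping argument applied to $\bY_h.g_h$ (whose one-step increment is $-h^s\bY_h.\Delta_h=O(|P_h|)+O(|\bE.P_h|)$, again geometrically small along a stable trajectory and tending to $0$) gives (ii).

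Concretely I would carry this out in the following steps. \textbf{Step 1:} Fix $h\neq0$ and a line $L=t_0\pm e^{i\theta}h^s\R_{>0}$ in $\bt_h(\Omega_h)$; parametrise points $\xi_n:=\phi_h^{\circ np}(\xi)$ on the orbit lying over $L$ (by Definition~\ref{def:Omegasaturation} all these iterates stay in $B_h$), and note $\bt_h(\xi_{n})-\bt_h(\xi_{n-1})\to h^s$ while $\xi_n\to\xi_\infty\in\{P_h=0\}$. \textbf{Step 2:} Estimate the rate: in the rectifying coordinate $\bt_h$ the saddle point at $\xi_1=0$ (or $\infty$) is at $\infty$, and $P_h\circ\bt_h^{\circ(-1)}(t)$ decays like $e^{-\const\,|t|/k}$ along $L$; combined with $|\Delta_h|=O(|P_h|)$ this makes $\sum_n |h^s\Delta_h(\xi_n)|$ and $\sum_n|h^s\bY_h.\Delta_h(\xi_n)|$ convergent. \textbf{Step 3:} Telescoping $g_h(\xi)-g_h(\xi_n)=\sum_{j=1}^{n}h^s\Delta_h(\xi_{j})$ (sign adjusted for the direction) shows $g_h(\xi_n)$ has a finite limit, and similarly $\bY_h.g_h(\xi_n)\to0$; this gives (ii) directly and (i) up to identifying the limit. \textbf{Step 4:} To pin down the $\hat\mu(0)\log\xi_1$ term, integrate the infinitesimal relation: since $\bT_{\Omega_h}$ and $\bt_h$ both rectify flows differing by the time reparametrisation $d\bt_h\mapsto(1+\Delta_h)d\bt_h$ up to the discrete normalisation, one has $\bT_{\Omega_h}=\bt_h+\int_{\xi_*}^{\xi}\Delta_h\,d\bt_h+o(1)$ along $L$; now $\Delta_h\,d\bt_h=\Delta_h\bY_h^{-1}=\big(U_h\big)\tfrac{d\xi_1}{\xi_1}$ with $U_h(0)=-\hat\mu(0)$ by Lemma~\ref{lemma:Delta}, so $\int_{\xi_*}^\xi\Delta_h\,d\bt_h=-\hat\mu(0)\log\xi_1+(\text{bounded})$ as $\xi_1\to0$ (resp. the $\xi_2$ side gives the same with the opposite sign of $\log$, consistent with $\bE^{-1}$), yielding (i).

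The main obstacle I expect is making the ``geometric decay along a stable trajectory'' rigorous and uniform, i.e. controlling $|P_h\circ\bt_h^{\circ(-1)}(t)|$ and the position of the orbit $\xi_n$ relative to the saddle as $|t|\to\infty$ along $L$, in a way that does not degenerate as $h\to0$ or as $\theta\to\delta_3$. This requires the description of the real-time dynamics of $\bY_h$ near its poles of order $kp-1$ (the translation-surface picture with saddle points of angle $2kp\pi$ over $\xi_1=0,\infty$) and the fact — to be supplied in Section~\ref{sec:6.2} — that a stable trajectory in direction $\theta$ genuinely converges to an equilibrium with the flow-box/normal-form estimate $\bt_h\sim\const\cdot\xi_1^{-kp}$ there; granting that, the exponential decay of $P_h$ in $|\bt_h|$ and hence summability follow. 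A secondary technical point is that $g_h$ lives on the ramified domain $\Omega_h$, so one must check the telescoping is performed along a single sheet, which is automatic since $\Omega_h$ was chosen simply connected.
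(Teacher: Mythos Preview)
There is a geometric confusion running through your plan that breaks Steps~2 and~4 and makes the remaining steps hard to salvage as written.

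\textbf{Where the limit actually lives.} As $\bt_h\to\pm\infty\cdot e^{i\theta}h^s$ along a trajectory line, the point $\xi_1$ converges to an \emph{equilibrium} $a(h)$, i.e.\ a zero of $P_h$, not to the pole $\xi_1=0$ (or $\infty$). The poles are the \emph{saddle points} of the translation surface and correspond to \emph{finite} values of $\bt_h$; near a pole one has $\bt_h\sim\const\cdot\xi_1^{-kp}$, which you quote, but that is the behaviour as $\xi_1\to\infty$, not as $\bt_h\to\infty$. Near a simple equilibrium the correct local model is $\bt_h\sim\frac{\nu_a}{2\pi i}\log(\xi_1-a)$. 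Consequently your Step~2 decay estimate is based on the wrong normal form, and Step~4 collapses: for $h\neq0$ one has $\xi_1\to a(h)\neq0$, so $\int U_h\,\frac{d\xi_1}{\xi_1}$ has no pole there, and in fact $U_h(\xi_1)=U(\xi_1,h/\xi_1)$ is not even defined at $\xi_1=0$. (The r\^ole of the $\hat\mu(0)\log\xi_1$ correction is not to cancel a divergence at fixed $h\neq0$, but to make the construction uniform as $h\to0$; see below.)

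\textbf{Orbits versus trajectory lines.} In Step~1 you write that the iterates $\xi_n=\phi_h^{\circ np}(\xi)$ lie over $L$. They do not: in the $\bt_h$-coordinate one step of $\phi^{\circ p}$ moves by $\approx h^s$, i.e.\ in the direction $\theta=0$, which is explicitly \emph{excluded} from $]\delta_3,\pi-\delta_3[$. Telescoping along orbits therefore controls $g_h$ in a direction transverse to $L$, and you would still need a separate argument to transfer this to limits along $L$. Relatedly, the identity $\bT_{\Omega_h}=\bt_h+\int_{\xi_*}^{\xi}\Delta_h\,d\bt_h+o(1)$ you invoke in Step~4 is not a consequence of the Fatou relation alone; $\bT_{\Omega_h}$ was built via Ahlfors--Bers, not by integrating $\Delta_h$.

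\textbf{What the paper does.} For (i) the paper replaces $\bt_h$ by $\check\bt_h=\bt_h+\hat\mu(0)\log\frac{\xi_1}{h^{1/2}}$, so that the corresponding $\check\Delta_h=cP_h\check U_h$ has $\check U(0)=0$; this is exactly what makes the Lehto--Virtanen integrability criterion hold for the Beltrami coefficient, forcing the normalizing quasiconformal map $\tilde\omega_2$ to be \emph{conformal} at $z=0,\infty$. The finite limit in (i) is then just $\frac{h^s}{2\pi i}\log\partial_z\tilde\omega_2|_{z=0}$ (resp.\ at $\infty$). For (ii) the obstacle is that $g_h=\bT_{\Omega_h}-\bt_h$ is a~priori \emph{multivalued} near the equilibrium $a(h)$: its monodromy jump $\bT_{\Omega_h}\circ\mathrm{mon}_a-\nu_a-\bT_{\Omega_h}$ is a transition map, expressible as a one-sided Fourier series in $e^{2\pi i\bT_{\Omega_h}/h^s}$ and hence exponentially flat in $\bt_h$. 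Subtracting a Cauchy integral of this flat jump produces a function univalent (hence bounded) near $a(h)$, and boundedness near a zero of $\bY_h$ forces $\partial_{\bt_h}$ of it to vanish there; the Cauchy correction contributes nothing in the limit either. This monodromy-and-correction step is the part your telescoping for $\bY_h.g_h$ would have to replace, and it is not clear how to do so.
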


\begin{proof}
First let us prove \textit{(i)}. 
\Grn{Let $\bt_h(\Sigma_h)$ and $\bt_h(\tilde\Sigma_h)$ be as in Lemma~\ref{lemma:omega}.}
Denote $\check\bt_h=\bt_h+\hat\mu(0)\log\tfrac{\xi_1}{h^{\frac12}}$
and 
\[h^s\check\Delta_h=\check\bt_h\circ\phi_h^{\circ p}-\check\bt_h-h^s=h^s\Delta_h+\hat\mu(0)\log\frac{\xi_1\circ\phi_h^{\circ p}}{\xi_1},\]
then by Lemma~\ref{lemma:Delta} $\check\Delta(\xi)=cP(\xi)\check U(\xi)$ with $\check U(0)=0$.
In the proof of Proposition~\ref{prop:Fatou}, we can replace $\bt_h$ by $\check\bt_h$ and $\Delta_h$ by $\check\Delta_h$ and thanks to the uniqueness property we obtain the same Fatou coordinate up to a constant.

We need to calculate the limit $\lim_{\check\bt_h\to\pm \infty\cdot e^{i\theta}h^s}\bT_{\Omega_h}-\check\bt_h$.
We have 
\begin{equation}\label{eq:limt}
	\begin{aligned}
		\bT_{\Omega_h}-\check\bt_h&=\tfrac{h^s}{2\pi i}\log\mfrac{\tilde\omega_2\circ \tilde\omega_1^{\circ(-1)}(\check\bt_h)}{\tilde\omega_1^{\circ(-1)}(\check\bt_h)}+\tfrac{h^s}{2\pi i}\log\tilde\omega_1^{\circ(-1)}(\check\bt_h)-\check\bt_h\\
		&=\tfrac{h^s}{2\pi i}\log\tfrac{\tilde\omega_2(\check\bz_h)}{\check\bz_h}\big|_{\check\bz_h=\tilde\omega_1^{\circ(-1)}(\check\bt_h)}+
		\omega_1^{\circ(-1)}(\check\bt_h)-\check\bt_h,
	\end{aligned}
\end{equation}
where $\check\bz_h=e^{\frac{2\pi i}{h^s}\check\bt_h}$.
As $|\omega_1^{\circ(-1)}(\check\bt_h)-\check\bt_h|=O(|h^sP_h|)$, this means that 
the limit equals to 
\[\lim_{\check\bt_h\to+ \infty\cdot e^{i\theta}h^s}(\bT_{\Omega_h}-\check\bt_h)=\lim_{z\to0} \tfrac{h^s}{2\pi i}\log\tfrac{\tilde\omega_2(z)}{z}=
\tfrac{h^s}{2\pi i}\log\tdd{z}\tilde\omega_2\big|_{z=0},\] 
resp. 
\[\lim_{\check\bt_h\to- \infty\cdot e^{i\theta}h^s}(\bT_{\Omega_h}-\check\bt_h)=\lim_{z\to\infty} \tfrac{h^s}{2\pi i}\log\tfrac{\tilde\omega_2(z)}{z}=
-\tfrac{h^s}{2\pi i}\log\tdd{y}\tfrac{1}{\tilde\omega_2(y^{-1})}\big|_{y=0}.\]
We have $\tilde\bmu(0)=\tilde\bmu(\infty)=0$, we need to check that $\tilde\omega_2(z)$ is conformal at $z=0,\infty$ and therefore that
the derivative $\tdd{z}\tilde\omega_2(z)$ exists and is finite at $z=0,\infty$.
Let us look at $z=0$ only. By \cite[Theorem 7.1]{Lehto-Virtanen} the sufficient condition is that the integral
\[\iint\displaylimits_{|z|<e^{-R}}\tfrac{|\tilde\bmu(z)|}{1-|\tilde\bmu(z)|}\tfrac{d\RE(z)d\IM(z)}{|z|^2}
=(2\pi)^2\!\!\!\!\iint\displaylimits_{\substack{0<\RE(th^{-s})<1,\\ \IM(th^{-s})>\frac{R}{2\pi}}}\tfrac{|\bmu(t)|}{1-|\bmu(t)|}{ d\RE(th^{-s})d\IM(th^{-s})}
\]
is finite for some $R>0$.
This is equivalent (for $h^s\neq 0$) to $\int|\check\Delta_h||\bY_h^{-1}|=\int|\check U_h|\bE_h^{-1}$ to be bounded in $\Sigma_h$, which is in fact satisfied since for $h\neq 0$ the point $\check\bz_h=0$ correspond to some singularity $\xi=a_h\neq 0$, and for $h=0$ we have $\check U_0(0)=0$.

Let us now prove \textit{(ii)}. 
Let $a(h)$ be the zero of $P_h$ \eqref{eq:bYh} such that $\xi_1\to a(h)$ as $\bt_h\to\pm\infty\cdot e^{i\theta}h^s$.
Assume first that $a(h)$ is simple.
Let $\nu_a=2\pi i\res_{\xi_1=a(h)}\bY_h^{-1}$ and let $\mon_a:\xi_1\mapsto a(h)+e^{2\pi i}(\xi_1-a(h))$ be the monodromy operator of analytic continuation along a simple positive loop around $a(h)$,
hence $\bt_h\circ\mon_a=\bt_h+\nu_a(h)$ (and also $\check\bt_h\circ\mon_a=\check\bt_h+\nu_a(h)$ since $a(h)\neq 0$). 
The map $\bT':=\bT_{\Omega_h}\circ\mon_a-\nu_a$ is a Fatou coordinate for $\phi_h^{\circ p}$ on the shifted domain $\mon_a(\Omega_h)$,
which has a nonempty intersection with $\Omega_h$ on the Riemann surface of $\bt_h$ (i.e. $\bt_h(\Omega_h)$ and $\bt_h(\Omega_h)+\nu_a$ intersect).
The map $t\mapsto\bT'\circ\bT_{\Omega_h}^{\circ(-1)}(t)$ a diffeomorphism that commutes with the translation $t\mapsto t+h^s$,
and such that $\lim_{t\to \pm\infty\cdot e^{i\theta}h^s}\bT'\circ\bT_{\Omega_h}^{\circ(-1)}(t)-t=0$ by (i).
Therefore  $\bT'\circ\bT_{\Omega_h}^{\circ(-1)}(t)-t$  is represented by a convergent Fourier series $\sum_{n\in\pm\Z_{> 0}}C_n e^{\frac{2\pi i nt}{h^s}}$ and
\[g(\bt_h):=\bT_{\Omega_h}\circ\mon_a-\nu_a-\bT_{\Omega_h}=\sum_{n\in\pm\Z_{>0}}C_n e^{\frac{2\pi i n\bT_{\Omega_h}}{h^s}}\]
defined on a neighborhood of the ray $\bt_h\in t_0\pm e^{i(\theta+s\arg h)}\R_{>0}$ is exponentially flat in $\bt_h$.
The Cauchy integral
\[G(t):=\tfrac{1}{2\pi i}\int_{t_0\pm e^{i(\theta+s\arg h)}\R_{>0}}\tfrac{g(r)}{t-r}dr, \]
satisfies $G(t+\nu_a)-G(t)=g(t)$.
Hence the function $F=\bT_{\Omega_h}-\bt_h-G\circ\bt_h$ is such that
$F\circ\mon_a=F$, i.e. it is univalent on a neighborhood of $\xi_1=a(h)$.
Moreover it is bounded and therefore
$\lim_{\bt_h\to\pm e^{i\theta}h^s\infty}\tdd{\bt_h}F=0$ since $\tdd{\bt_h}=\bY_h$ vanishes at $a_h$. Hence
\begin{align*} 
\lim_{\bt_h\to\pm e^{i\theta}h^s\infty}\tdd{\bt_h}(\bT_{\Omega_h}-\bt_h)
&=\lim_{\bt_h\to\pm e^{i\theta}h^s\infty}\tdd{t}G(t)\\
&=\lim_{t\to\pm e^{i\theta}h^s\infty}-\tfrac{1}{2\pi i t^2}\int_{t_0\pm e^{i(\theta+s\arg h)}\R_{>0}}\tfrac{g(r)}{(1-\frac{r}{t})^2}dr=0.
\end{align*}

The case of a multiple zero $a(h)$ is done similarly, except in this case several different domains $\Omega_h$ are necessary to cover sectorially the neighborhood of $a(h)$. The argument is essentially just a variation on Ramis--Sibuya theorem (e.g. \cite[Theorem 1.3.2.1]{Malgrange}).

\end{proof}

\begin{proposition}\label{prop:Fatou-dependence}
\begin{enumerate}[wide=0pt, leftmargin=\parindent]
	\item The Fatou coordinate $\bT_{\Omega_h}$ and the domain $\Omega_h$ are independent of a small variation of admissible $\theta$ and of the  point $\xi_*(h)$ (the two of them determining $\Sigma_h$, p.~\pageref{page:admissiblestrip}) as long as the strip $\bt_h(\Sigma_h)$ stays admissible, except for an addition of some constant $C_0(h)\in\C$ to $\bT_{\Omega_h}$.
	\item The Fatou coordinate $\bT_{\Omega_h}$ depends analytically/continuously on $h$ as long as the point $\xi_*(h)$ depends analytically/continuously on $h$ and the angle $\theta$ varies continuously in $h$.
	\item When $h\to 0$ with an asymptotic direction (i.e. so that $\arg(h)$ has a limit), and an admissible strip $\bt_h(\Sigma_h)$ has a limit, then the construction of the function $\bT_{\Omega_h}$ extends to the limit.
	
	In particular, in the the case $s>0$ when a limit $h\to 0$ of admissible strip is just a line, one has $\tdd{\bt_0}(\bT_{\Omega_0}-\bt_0)=\frac{1}{1+\Delta_0}-1$ with $\Delta_0(\xi_1)=\Delta(\xi_1,0)$ \eqref{eq:Delta}, 
	\Grn{i.e. 
		\[\bT_{\Omega_0}-\bt_0-\hat\mu(0)\log\xi_1=\int\Big(\tfrac{-U_0}{1+cP_0U_0}-\hat\mu(0)\Big)\tfrac{d\xi_1}{\xi_1}\quad\text{is analytic on $B_0$,}\] 
		where $U_0(\xi_1)=U(\xi_1,0)=-\hat\mu(0)+\hot(\xi_1)$ (Lemma~\ref{lemma:Delta}).}
	\end{enumerate}	
\end{proposition}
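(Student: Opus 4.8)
The statement collects three facts about the Fatou coordinates $\bT_{\Omega_h}$ constructed in Proposition~\ref{prop:Fatou}: a normalizing-ambiguity statement (1), an analytic/continuous dependence on $h$ (2), and the extension to the limit $h\to 0$ (3). The plan is to extract each from the explicit formula for $\bT_{\Omega_h}$ produced in the proof of Proposition~\ref{prop:Fatou}, namely $\bT_{\Omega_h}=\omega_2\circ\omega_1^{\circ(-1)}\circ\bt_h$, where $\omega_1$ is the quasiconformal chart of Lemma~\ref{lemma:omega} and $\omega_2$ is its Ahlfors--Bers correction, together with the saturation under $\phi_h^{\circ p}$ of Definition~\ref{def:Omegasaturation}.

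\textbf{Step (1): independence of the choice of $\Sigma_h$.} Suppose $\bt_h(\Sigma_h)$ and $\bt_h(\Sigma_h')$ are two admissible strips, with associated Fatou coordinates $\bT_{\Omega_h}$, $\bT'_{\Omega_h}$ on the corresponding saturated Lavaurs domains $\Omega_h$, $\Omega_h'$. First I would argue that if the two strips stay admissible along a continuous deformation (sliding $\xi_*(h)$ and varying $\theta$) then the saturated domains $\Omega_h$, $\Omega_h'$ coincide, since by Definition~\ref{def:Omegasaturation} the saturation depends only on which forward/backward $\phi_h^{\circ p}$-orbits remain in $B_h$, not on which particular transversal $\Sigma_h$ is used to anchor it; a strip contained in one saturated domain is carried into itself under $\Phi_h$-iteration, so both constructions saturate to the same set. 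Then $\bT_{\Omega_h}$ and $\bT'_{\Omega_h}$ are two analytic functions on $\Omega_h$ both satisfying the Fatou relation \eqref{eq:Fatourelation} and both enjoying properties \textit{(a),(b),(c)} of Proposition~\ref{prop:Fatou}; by the uniqueness part (Proposition~\ref{prop:Fatou}, part 2) their difference is a constant $C_0(h)\in\C$.

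\textbf{Step (2): analytic/continuous dependence on $h$.} Here I would track the $h$-dependence through the three building blocks. The rectifying coordinate $\bt_h(\xi)=\int\bY_h^{-1}$ depends analytically on $h$ once a base point depending analytically on $h$ is fixed; the map $\omega_1$ of Lemma~\ref{lemma:omega} is an explicit expression in $\bt_h$, $\ov\bt_h$, $\theta$ and $\Delta_h$ (which is analytic in $h$ by Lemma~\ref{lemma:Delta}), hence depends real-analytically/continuously on $h$ when $\xi_*(h)$ and $\theta$ do; finally the solution $\tilde\omega_2$ of the Beltrami equation depends continuously (indeed analytically in the sense of the Ahlfors--Bers theorem with parameters) on the Beltrami coefficient $\tilde\bmu$, which varies continuously with $h$. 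Composing, $\bT_{\Omega_h}=\omega_2\circ\omega_1^{\circ(-1)}\circ\bt_h$ depends analytically, resp. continuously, on $h$. The only subtlety is normalizing the additive constant consistently: one fixes $\bT_{\Omega_h}(\xi_*(h))=0$ (or pins it by the limit in Proposition~\ref{prop:Fatou-behavior}(i)), which is an analytic/continuous normalization in $h$ since $\xi_*(h)$ is.

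\textbf{Step (3): the limit $h\to 0$.} This is the main obstacle, because at $h=0$ the leaf $B_0$ degenerates into two irreducible components and the annulus $B_h$ of \eqref{eq:Bh} opens up. The plan is: (i) check that all ingredients of the construction have limits as $h\to0$ along a fixed asymptotic direction $\arg h$. The vector field $\bY_h$ and its rectifying coordinate extend: $\bY_0=cP_0(\xi_1)\xi_1\partial_{\xi_1}$ with $P_0(\xi_1)=P(\xi_1^p,0)=\xi_1^{kp}$ (case (b)) on the component $\{\xi_2=0\}$, and likewise on $\{\xi_1=0\}$. The quantity $\Delta_h$ extends to $\Delta_0$ by Lemma~\ref{lemma:Delta} since $U(\xi)$ is analytic at $0$; the admissible strip, assumed to have a limit, becomes a strip (or, when $s>0$, a full line) in the $\bt_0$-coordinate; and the Beltrami coefficient estimate $|\bmu\circ\bt_h|=O(|P_h|)$ of \eqref{eq:Beltrami} is uniform down to $h=0$, so $\tilde\bmu$ and hence $\tilde\omega_2$ pass to the limit by Ahlfors--Bers with parameters. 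Therefore $\bT_{\Omega_0}=\omega_2\circ\omega_1^{\circ(-1)}\circ\bt_0$ is well defined. (ii) In the subcase $s>0$ one sees that the limiting admissible ``strip'' degenerates to the line $\bt_0\in e^{i\theta}\R$ (the two boundary curves $\Gamma_0$ and $\phi_0^{\circ p}(\Gamma_0)$ merge since the translation amount $h^s\to0$), so that the strip $\bt_0(\tilde\Sigma_0)$ collapses and the map $\omega_1$ degenerates; in that limit the Fatou coordinate equation reduces to the single first-order ODE $\partial_{\bt_0}\bT_{\Omega_0}=\tfrac{1}{1+\Delta_0}$ coming from the defining relation $\phi_0^{\circ p}=\exp\big((1+\Delta)\bXmod\big)\big|_{h=0}$, whence $\partial_{\bt_0}(\bT_{\Omega_0}-\bt_0)=\tfrac{1}{1+\Delta_0}-1=\tfrac{-\Delta_0}{1+\Delta_0}$. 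Writing $\Delta_0=cP_0U_0$ (Lemma~\ref{lemma:Delta}) and $\bt_0^{-1}\,$'s differential as $\tfrac{d\xi_1}{c\xi_1 P_0}$ gives $d(\bT_{\Omega_0}-\bt_0)=\tfrac{-U_0}{1+cP_0U_0}\tfrac{d\xi_1}{\xi_1}$, and subtracting $\hat\mu(0)\tfrac{d\xi_1}{\xi_1}$ (noting $U_0(0)=-\hat\mu(0)$, so the integrand is analytic at $\xi_1=0$) yields the displayed formula
\[
\bT_{\Omega_0}-\bt_0-\hat\mu(0)\log\xi_1=\int\Big(\tfrac{-U_0}{1+cP_0U_0}-\hat\mu(0)\Big)\tfrac{d\xi_1}{\xi_1},
\]
which is analytic on $B_0$. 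The hard part, as indicated, is the uniform control of the quasiconformal surgery as $h\to0$ — making precise that the holes of the translation surface $\bt_h(B_h)$ move to a definite limiting configuration and that the Ahlfors--Bers solution together with the saturation procedure converges locally uniformly — but the needed uniformity is exactly the content of the estimate \eqref{eq:Beltrami} being $\delta_3$-uniform, which was already arranged in Lemma~\ref{lemma:omega}, together with Proposition~\ref{prop:Fatou-behavior} giving the boundary behavior uniformly in $h$.
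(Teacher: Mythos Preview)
Your treatment of parts (1) and (2) is essentially the same as the paper's and is correct: saturated domains coincide for nearby admissible strips, so the uniqueness clause of Proposition~\ref{prop:Fatou} gives (1); and tracking the $h$-dependence of $\bt_h$, $\omega_1$, $\tilde\bmu$, and the Ahlfors--Bers solution $\tilde\omega_2$ gives (2).

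For part (3), however, there is a genuine gap in the case $s>0$. You correctly observe that the admissible strip $\bt_h(\tilde\Sigma_h)$ has width $\sim|h^s|$ and collapses to a line, so the quasiconformal map $\omega_1$ degenerates. But then you simply \emph{declare} that at $h=0$ the Fatou relation becomes the ODE $\partial_{\bt_0}\bT_{\Omega_0}=\tfrac{1}{1+\Delta_0}$ and solve it. This identifies what the limit \emph{should} be, but it does not establish that $\bT_{\Omega_h}$ actually converges to that solution as $h\to 0$. The uniform bound $|\bmu|=O(|P_h|)$ from \eqref{eq:Beltrami} does not help here: the domain of $\omega_1$ is shrinking to a set of zero area, so Ahlfors--Bers with parameters cannot be invoked directly. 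Moreover, your relation ``$\phi_0^{\circ p}=\exp((1+\Delta)\bXmod)|_{h=0}$'' is vacuous when $s>0$, since $\bXmod=h^s\bY$ vanishes at $h=0$ and $\phi_0^{\circ p}=\id$.

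The paper's remedy is a renormalization trick that you are missing: one approaches $0$ along a sequence $h_n=n^{-1/s}h_0$ and replaces $\phi_{h_n}^{\circ p}$ by its $n$-th iterate $\varphi_n:=\phi_{h_n}^{\circ np}$. By uniqueness, $\varphi_n$ has the same Fatou coordinate as $\phi_{h_n}^{\circ p}$, but now the translation amount is $nh_n^s=h_0^s$, so the admissible strip for $\varphi_n$ has non-vanishing width $\sim\sin\theta\cdot|h_0^s|$, and the whole quasiconformal construction passes to the limit. A separate lemma (Lemma~\ref{lemma:hn}) shows, via convergence of the Euler scheme, that $\varphi_n\to\varphi_\infty=\exp\big(h_0^s(1+\Delta_0)\bY_0\big)$ locally uniformly; the Fatou coordinate for $\varphi_\infty$ is then (up to a constant) $h_0^s\int\tfrac{1}{1+\Delta_0}\,d\bt_0$, which yields the displayed formula. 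Without this renormalization step, the passage to the limit in (3) is not justified.
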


In order to prove this proposition, we need first some preliminary considerations.

The Fatou coordinate $\bT_{\Omega_h}$ conjugates $\phi_h^{\circ p}$ to translation by $h^s$ on the admissible strip $\bt_h(\Sigma_h)$, which is roughly of width $\sim\sin\theta\cdot|h^s|$.
When $s=0$, the construction is uniform when $h\to 0$. On the other hand, if $s>0$, then as $h\to 0$ the admissible strip  shrinks to the line $\bt_0(\Gamma_0)=e^{i(\theta+s\arg h)}\R$, and the translation by $h^s$ degenerates to identity, so one can no longer extend by iteration.
The remedy is to approach $h\to 0$ along a sequence $h_n=n^{-\frac1s}h_0$ and consider instead the iterates $\phi_{h_n}^{\circ np}$.
The Fatou coordinate for $\phi_{h_n}^{\circ np}$ is the same as the one for $\phi_{h_n}^{\circ p}$ (by uniqueness), except now it is defined on an admissible strip of non-vanishing width $\sim\sin\theta\cdot|h_0^s|$, and conjugates $\phi_{h_n}^{\circ np}$ to translation by $nh_n^s=h_0^s$.

\begin{lemma}\label{lemma:hn}
For any given $h_0$ let $h_n=n^{-\frac1s}h_0$ and define
\[\varphi_n(\xi_1):=\xi_1\circ\phi^{\circ np}(\xi_1,\tfrac{h_n}{\xi_1}),\qquad \xi_1\in B_{h_n}.\]
Then the sequence of diffeomorphisms $\varphi_n(\xi_1)$ converges locally uniformly on $B_0\sminus\{0\}$ to 
$\varphi_\infty(\xi_1)=\exp\big(h_0^s(1+\Delta_0)\bY_0\big)(\xi_1)$ where $\Delta_0(\xi_1)=\Delta(\xi_1,0)$ \eqref{eq:Delta}.
\end{lemma}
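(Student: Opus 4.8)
\textbf{Proof plan for Lemma~\ref{lemma:hn}.}

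The plan is to track what happens to the rescaled iterates $\varphi_n$ as $n\to\infty$ by passing to the flow of the model vector field. Recall from Lemma~\ref{lemma:Delta} that $\phi^{\circ p}=\exp\big((1+\Delta)\bXmod\big)=\exp\big(h^s(1+\Delta)\bY\big)$, where $\Delta(\xi)=cP(u,h)U(\xi)$ and $U$ is analytic with $U(0)=-\hat\mu(0)$. Since each $\bXmod$-flow map commutes with itself, the $n$-th iterate is $\phi^{\circ np}=\exp\big(nh^s(1+\Delta)\bY\big)$, \emph{provided} one is careful that $\Delta$ changes along the orbit; more precisely one should think of $\phi^{\circ np}$ as the time-$1$ flow of the vector field $n h^s(1+\Delta)\bY$ only when $\Delta$ is constant along trajectories, which it is not. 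The correct statement is that $\phi^{\circ np}$ is the time-$n$ map of the time-dependent-free autonomous field $h^s(1+\Delta)\bY$, i.e. the flow at time $n$ of the single vector field $\bZ_h:=h^s(1+\Delta)\bY$ restricted to the leaf $\{h=\const\}$, because $\phi^{\circ p}=\exp(\bZ_h)$ and iterating the time-$1$ map $n$ times gives the time-$n$ map of the \emph{same} autonomous field. So $\varphi_n(\xi_1)=\xi_1\circ\exp(n\bZ_{h_n})(\xi_1)$ on the leaf $\{h=h_n\}$.

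First I would rescale time: $\exp(n\bZ_{h_n})=\exp\big(n\,h_n^s(1+\Delta_{h_n})\bY_{h_n}\big)$ as autonomous flow at time $1$ of $n h_n^s(1+\Delta_{h_n})\bY_{h_n}$. Now $n h_n^s=n\cdot n^{-1}h_0^s=h_0^s$ by the definition $h_n=n^{-1/s}h_0$, so the field to be flowed for unit time is $h_0^s(1+\Delta_{h_n})\bY_{h_n}$. As $n\to\infty$ we have $h_n\to0$, and on any compact subset $K$ of $B_0\sminus\{0\}$ (where $\xi_1$ avoids $0$), the coefficients of $\bY_{h_n}=cP(\xi_1^p+\tfrac{h_n^p}{\xi_1^p},h_n)\bE$ converge uniformly on $K$ to those of $\bY_0=cP(\xi_1^p,0)\bE$ (polynomial dependence on $h_n$, with $\xi_1$ bounded away from $0$), and likewise $\Delta_{h_n}(\xi_1,\tfrac{h_n}{\xi_1})=cP U$ converges uniformly on $K$ to $\Delta_0(\xi_1)=cP(\xi_1^p,0)U(\xi_1,0)$. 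Hence the vector fields $h_0^s(1+\Delta_{h_n})\bY_{h_n}$ converge uniformly on $K$ (together with first derivatives in $\xi_1$) to $h_0^s(1+\Delta_0)\bY_0$.

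Second, I would invoke continuous dependence of solutions of ODEs on the vector field and parameters: if a sequence of holomorphic vector fields converges locally uniformly (in $C^1$) on an open set and the limit flow $\exp\big(h_0^s(1+\Delta_0)\bY_0\big)(\xi_1)$ is defined on a neighborhood of $K$ for unit time, then the time-$1$ maps converge locally uniformly on $K$ to the limit time-$1$ map. One must check the flows stay inside $B_0\sminus\{0\}$ (so the limit is well-defined and orbits don't escape); this is where I would use that $\theta$ (or more simply that we only need the conclusion on the region where iteration is performed, namely inside $B_{h_n}$ which limits to $B_0$) keeps trajectories in the annulus — but since the statement is only local uniform convergence on $B_0\sminus\{0\}$, it suffices to shrink $K$ and the flow time is fixed at $1$, so for $n$ large the orbits of the perturbed field starting in $K$ remain in a slightly larger compact subset of $B_0\sminus\{0\}$ by Grönwall-type estimates. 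This yields $\varphi_n\to\varphi_\infty=\exp\big(h_0^s(1+\Delta_0)\bY_0\big)$ locally uniformly on $B_0\sminus\{0\}$, which is the claim.

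\textbf{Main obstacle.} The delicate point is not the ODE-convergence machinery (standard) but correctly interpreting the iterate: justifying $\phi^{\circ np}=\exp(n\bZ_h)$ as the autonomous time-$n$ flow of $\bZ_h=h^s(1+\Delta)\bY$ and then the rescaling $n\bZ_{h_n}\leftrightarrow$ unit-time flow of $h_0^s(1+\Delta_{h_n})\bY_{h_n}$. One must be careful that $\Delta$ genuinely depends on the point (so $\bZ_h$ is a single honest vector field whose time-$n$ map is $\phi^{\circ np}$, rather than naively multiplying exponents), and that the rescaling $h_n=n^{-1/s}h_0$ is exactly engineered so that $n h_n^s=h_0^s$ — this is the whole reason the limit is nontrivial and not the identity. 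A secondary technical nuisance is uniform control near the boundary $\partial B_0$ and making sure the domains $B_{h_n}$ on which $\varphi_n$ is defined exhaust $B_0\sminus\{0\}$ in the limit, but on any fixed compact $K\subset B_0\sminus\{0\}$ this is immediate for large $n$.
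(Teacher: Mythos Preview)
There is a genuine gap in your argument. Your key identity ``$\phi^{\circ p}=\exp\big(h^s(1+\Delta)\bY\big)$ as the time-$1$ flow of the autonomous field $\bZ_h=h^s(1+\Delta)\bY$'' is a misreading of Lemma~\ref{lemma:Delta}. What that lemma says is $\phi^{\circ p}(\xi)=\exp(t\bXmod)(\xi)\big|_{t=1+\Delta(\xi)}$, i.e.\ the flow of $\bXmod$ for the \emph{initial-point-dependent} time $1+\Delta(\xi)$. In the rectifying coordinate $\bt$ this reads $t\mapsto t+h^s\big(1+\Delta\circ\bt^{\circ(-1)}(t)\big)$, which is not the time-$1$ map of the autonomous field $h^s(1+\Delta\circ\bt^{\circ(-1)})\tdd{t}$. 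The actual infinitesimal generator of $\phi^{\circ p}$ is the formal $\hat\bX$ (generically divergent) or the sectorial Lavaurs field $\bX_\Omega$ of \eqref{eq:LavaursX}, neither of which equals $(1+\Delta)\bXmod$. Consequently your identity $\phi^{\circ np}=\exp(n\bZ_{h_n})$ is false, and the rest of the argument collapses.

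The paper's proof fixes exactly this: working in the $\bt_h$-coordinate, the $n$-fold iterate is $t\mapsto t+h_n^s\sum_{j=0}^{n-1}\big(1+\Delta_{h_n}\circ\phi^{\circ jp}\big)$, which is recognised not as an exact flow but as the \emph{Euler approximation} with step $\tfrac1n$ of the time-$1$ flow of $n h_n^s(1+\Delta_{h_n})\tdd{t}=h_0^s(1+\Delta_{h_n})\tdd{t}$. One then invokes locally uniform convergence of the Euler scheme (together with uniform dependence on the parameter $h_n\to 0$) to obtain the limit $\exp\big(h_0^s(1+\Delta_0)\tdd{\bt_0}\big)$. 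Your ODE-continuity step is fine once the correct interpretation is in place; the missing idea is the Euler-method identification, which is precisely what bridges the discrete iterate and the continuous flow.
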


\begin{proof}
Denoting $\Delta_{h_n}(\xi_1)=\Delta(\xi_1,\tfrac{h_n}{\xi_1})$, we have by iteration of \eqref{eq:Delta}
\[\bt_{h_n}\circ\varphi_n(\xi_1)=\bt_{h_n}+h_n^s\sum_{j=0}^{n-1}\big(1+\Delta_{h_n}\!\!\circ\xi_1\circ\phi^{\circ jp}(\xi_1,\tfrac{h_n}{\xi_1})\big).\] 
Consider the vector field
$nh_n^s(1+\Delta_{h_n}\!\!\circ\bt_{h_n}^{\circ(-1)}(t))\tdd{t}$ on the translation surface $\bt_{h_n}(B_{h_n})$. The approximation of its flow at a time $\frac{j}{m}$ by the Euler method with step-size $\frac{1}{m}$ is
$y_{h_n,m}^j(t)$, where $y_{h_n,m}^0(t)=t$ and 
\[y_{h_n,m}^{j+1}(t)=y_{h_n,m}^{j}(t)+\tfrac{1}{m} nh_n^s\big(1+\Delta_{h_n}\!\!\circ\bt_{h_n}^{\circ(-1)}(y_{h_n,m}^{j}(t))\big),\quad j=0,\ldots,m-1.\]
In particular $\bt_{h_n}\circ\varphi_n=y_{h_n,n}^n\circ\bt_{h_n}$ is the time 1 approximation with step $\tfrac1n$.
By well-known results on locally uniform convergence of the Euler approximation to the actual solution (in our case the flow of $h_n^s(1+\Delta_{h_n}\!\!\circ\bt_{h_n}^{\circ(-1)}(t))\tdd{t}$ on the time interval $[0,1]$),
see e.g. \cite[Theorems~6.2.2~\&~4.5.2]{Hubbard-West}, and its uniform dependence on a parameter, we have
\begin{equation*}
	\begin{array}{rcl}
		y_{h_n,m}^m & \xrightarrow{m\to+\infty} & \exp\big(nh_n^s(1+\Delta_{h_n}\!\!\circ\bt_{h_n}^{\circ(-1)}(t))\tdd{t}\big) 	\\[6pt]
		{}^{n\to+\infty}\Big\downarrow  && \qquad\quad\Big\downarrow{}^{n\to+\infty} \\[6pt]
		y_{0,m}^m & \xrightarrow{m\to+\infty} & \exp\big(h_0^s(1+\Delta_{0}\!\circ\bt_{0}^{\circ(-1)}(t))\tdd{t}\big). 	\\
	\end{array}	
\end{equation*}
Therefore $\bt_{h_n}\circ\varphi_n=y_{h_n,n}^n\circ\bt_{h_n}\to \exp\big(h_0^s(1+\Delta_{0})\tdd{\bt_{0}}\big)$ as $n\to+\infty$.
\end{proof}

\begin{proof}[Proof of Proposition~\ref{prop:Fatou-dependence}]
\begin{enumerate}[wide=0pt, leftmargin=\parindent]
	\item Moving the point $\xi_*(h)$ and varying the admissible angle $\theta$ moves the strip $\bt_h(\Sigma_h)\subset \bt_h(B_h)$ in a continuous way. Every (partial) orbit of $\bt_h\circ \phi_h^{\circ p}\circ\bt_h^{\circ(-1)}$ in $\bt_h(B_h)$ that hits $\bt_h(\Sigma_h)$, will also hit the moved strip, which means that they both give rise to the same domain $\bt_h(\Omega_h)$.
	
	\item Treating $h$ as parameter, the coordinate $\bt_h$ such that $\bt_h(\xi_*(h))=0$, and the map $\omega_1$ depend 
	analytically/continuously on $h$, and so does the Beltrami constant $\tilde\bmu$. Then also the solution $\tilde\omega_2$ of the Beltrami equation depends analytically/continuously  on $h$, see e.g. \cite[Theorem~7.6]{Carleson-Gamelin}, and therefore so does $\bT_{\Omega_h}$ as well. (See \cite[Appendix]{Shi} for a more detailed argument).
		
	\item
		In the case $s=0$ this follows from the previous point. 
	In the case when $s>0$, we consider a sequence $h_n$ as in Lemma~\ref{lemma:hn} and the maps $\varphi_n$
	in place of $\phi_h^{\circ p}$. Then the admissible strip for $\varphi_{n}$ and the associated Beltrami function $\bmu$ \eqref{eq:Beltrami} have both well defined limits as $n\to\infty$. Therefore also $\bT_{\Omega_{0}}=\lim_{n\to+\infty}\bT_{\Omega_{h_n}}$ is the Fatou coordinate for 
	$\varphi_\infty=\exp\big(h_0^s(1+\Delta_{0})\tdd{\bt_{0}}\big)$ which is (up to a constant) necessarily equal to
	$\bT_{\Omega_{0}}=h_0^s\int \tfrac{1}{1+\Delta_{0}} d\bt_{0}$. 
	\end{enumerate}		
\end{proof}

\subsubsection{Normalizing transformation to the model}

Let $S$  be the maximal domain in the $h$-space over which one can choose an admissible strip $\bt_h(\Sigma_h)$ in a continuous fashion \Grn{(varying its position and angle $\theta$)} and thus
construct the domain $\Omega_h$ (more details about this in Section~\ref{sec:6.2}).
We will denote 
\[ \Omega=\coprod_{h\in S}\Omega_h, \]
a ramified domain in the $\xi$-space with ramification locus at a subset of $\{P(\xi)=0\}\cup\{h=0\}$.
Let $\bT_{\Omega}$ be a Fatou coordinate for $\phi^{\circ p}$ on $\Omega$ constructed in Propositions~\ref{prop:Fatou}~\&~\ref{prop:Fatou-dependence},
and let 
\begin{equation}\label{eq:alpha}
\alpha_{\Omega}(\xi):=\bT_{\Omega}(\xi)-\bt(\xi),\qquad \xi\in\Omega.
\end{equation}
Since $\bT_{\Omega}=\bt+\alpha_{\Omega}=\exp(t\tdd{\bt})\big|_{t=\alpha_{\Omega}(\xi)}$,
the map 
\begin{equation}\label{eq:LavaursPsi}
\Psi_{\Omega}(\xi)=\exp(t\bY)(\xi)\big|_{t=\alpha_{\Omega}(\xi)},
\end{equation}
is analytic on $\Omega$ and such that
\[ \bT_{\Omega}=\bt\circ\Psi_{\Omega},\qquad\text{and}\qquad
\Psi_{\Omega}\circ\phi^{\circ p}=\exp(h^s\bY)\circ\Psi_\Omega, \]
therefore it is a normalizing transformation for $\phi^{\circ p}$ which conjugates it to the model $\phimod^{\circ p}=\exp(h^s\bY)$. 
Let us stress that in general the transformation $\Psi_{\Omega}$ is multivalued in $\xi$, since the domain $\Omega$ is ramified.

\begin{theorem}\label{thm:normalizngtransformation}
\begin{enumerate}[wide=0pt, leftmargin=\parindent]
\item The normalizing transformation $\Psi_{\Omega}$ \eqref{eq:LavaursPsi} is analytic on $\Omega$, and 
	such that $\Psi_{\Omega}(\xi)-\xi=O(P\xi)$ is bounded and tends to identity along any complete real trajectory of $e^{i\theta}h^s\bY_h$ in $\Omega_h$.
\item If $\Psi_{\Omega}'$ is another analytic conjugating transformation on $\Omega$, $\Psi_{\Omega}'\circ\phi^{\circ p}=\phimod^{\circ p}\circ\Psi_{\Omega}'$, 
that is bounded and tends to identity along some complete real trajectory of $e^{i\theta}h^s\bY_h$ in $\Omega_h$, 
for each $h\in S=h(\Omega)$, then $\Psi_{\Omega}'=\exp(C_{\Omega}(h)\bY)\circ\Psi_{\Omega}$ for some analytic $C_{\Omega}(h)$ on $S$.
\end{enumerate} 
\end{theorem}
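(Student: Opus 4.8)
\textbf{Proof plan for Theorem~\ref{thm:normalizngtransformation}.}

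The plan is to deduce everything from the already–established Proposition~\ref{prop:Fatou}, Proposition~\ref{prop:Fatou-behavior}, Proposition~\ref{prop:Fatou-dependence} and Lemma~\ref{lemma:X}. First I would observe that the map $\Psi_{\Omega}$ defined by \eqref{eq:LavaursPsi} is analytic on $\Omega$ because $\alpha_{\Omega}(\xi)=\bT_{\Omega}(\xi)-\bt(\xi)$ is analytic there (Proposition~\ref{prop:Fatou}, with analytic/continuous dependence on $h$ from Proposition~\ref{prop:Fatou-dependence}), and the flow $\exp(t\bY)(\xi)$ is analytic jointly in $(t,\xi)$; substituting the analytic function $t=\alpha_{\Omega}(\xi)$ keeps analyticity. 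By Lemma~\ref{lemma:X} (applied leaf-by-leaf, with the role of $\hat\alpha$ played by $\alpha_{\Omega}$), one has $\bt\circ\Psi_{\Omega}=\bt+\alpha_{\Omega}=\bT_{\Omega}$ and hence $\Psi_{\Omega}$ conjugates $\phi^{\circ p}$ to $\exp(h^s\bY)=\phimod^{\circ p}$, because $\bT_{\Omega}\circ\phi^{\circ p}=\bT_{\Omega}+h^s$ is exactly the Fatou relation \eqref{eq:Fatourelation}. For the estimate $\Psi_{\Omega}(\xi)-\xi=O(P\xi)$: on a neighborhood of the admissible strip $\Sigma_h$ this follows from $|\alpha_{\Omega}|=O(|h^sP_h|)$ coming out of the construction in the proof of Proposition~\ref{prop:Fatou} (the quasiconformal correction is controlled by the Beltrami coefficient, whose size is $O(|P_h|)$ by Lemma~\ref{lemma:omega}), together with the fact that $\bY=cP\bE$ vanishes to the corresponding order along $\{P=0\}$, so that $\exp(t\bY)(\xi)-\xi=O(tP\xi)$. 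That $\Psi_{\Omega}(\xi)\to\xi$ along a complete real trajectory of $e^{i\theta}h^s\bY_h$ is precisely the statement that $\tdd{\bt_h}\big(\bT_{\Omega_h}-\bt_h\big)\to0$ at the ends of such a trajectory, which is Proposition~\ref{prop:Fatou-behavior}\,(ii): integrating, $\alpha_{\Omega}$ has a finite limit at the end (Proposition~\ref{prop:Fatou-behavior}\,(i) controls the constant), and since the end is an equilibrium $a(h)$ of $\bY_h$ where $P_h$ vanishes, $\exp(\alpha_{\Omega}\bY)$ fixes it, giving $\Psi_{\Omega}(\xi)\to\xi$.

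For part (2), let $\Psi_{\Omega}'$ be another analytic conjugacy on $\Omega$ with the stated boundedness. The composition $\Xi:=\Psi_{\Omega}'\circ\Psi_{\Omega}^{\circ(-1)}$ is then an analytic self-map of the (ramified) image domain commuting with $\phimod^{\circ p}=\exp(h^s\bY)$; it also preserves $h$, so on each leaf it commutes with the translation $\bt_h\mapsto\bt_h+h^s$ in the rectifying coordinate. Pulling $\Xi$ over to the $\bt_h$-coordinate, $\psi_h:=\bt_h\circ\Xi\circ\bt_h^{\circ(-1)}$ satisfies $\psi_h(t+h^s)=\psi_h(t)+h^s$, so $\psi_h(t)-t$ is $h^s$-periodic; it also extends analytically across the (ramified) strip. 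The boundedness hypothesis on $\Psi_{\Omega}'$ translates, via the same computation as in the proof of part (1), into $\tdd{\bt_h}(\bt_h\circ\Psi_{\Omega}'-\bt_h)\to0$ at the ends of the chosen trajectory, hence $\psi_h(t)-t\to$ a constant at $t\to\pm e^{i\theta}h^s\infty$ along that trajectory. A bounded $h^s$-periodic holomorphic function whose Fourier expansion $\sum_n C_n(h)e^{2\pi i n t/h^s}$ has limits at both ends of the trajectory must be reduced to its constant term $C_0(h)$ (the argument used in Proposition~\ref{prop:Fatou}\,(2): the nonconstant Fourier modes blow up at one or the other end). Thus $\psi_h:t\mapsto t+C_{\Omega}(h)$, i.e. $\bt\circ\Psi_{\Omega}'=\bt\circ\Psi_{\Omega}+C_{\Omega}(h)$, which by Lemma~\ref{lemma:X} (writing $C_{\Omega}(h)=\bY.[\,C_{\Omega}(h)\bt\,]$ on each leaf, or more directly $\exp(C_{\Omega}(h)\bY)$ translates $\bt$ by $C_{\Omega}(h)$) means $\Psi_{\Omega}'=\exp(C_{\Omega}(h)\bY)\circ\Psi_{\Omega}$. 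Finally, analyticity of $C_{\Omega}(h)$ in $h\in S$ follows from $C_{\Omega}(h)=\bt\circ\Psi_{\Omega}'(\xi)-\bt\circ\Psi_{\Omega}(\xi)$ for any fixed analytic section $\xi=\xi(h)\in\Omega_h$, both terms being analytic in $h$ by Proposition~\ref{prop:Fatou-dependence}.

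The main obstacle I anticipate is the careful handling of the multivaluedness/ramification of $\Omega$ around the zeros of $P$ and around $\{h=0\}$: the functions $\bt$, $\bT_{\Omega}$, and hence $\Psi_{\Omega}$, live on a covering surface of $B_h\sminus\{P_h=0\}$, so statements like ``$\psi_h(t)-t$ is $h^s$-periodic and extends analytically'' and the Fourier-mode argument must be made on the appropriate translation surface rather than naively in $\xi$; the boundedness along a \emph{complete} real trajectory is exactly the information that pins down the two ends needed for the Fourier argument, and one must make sure a complete trajectory exists inside each $\Omega_h$ (which is guaranteed by the construction of the saturated Lavaurs domain in Definition~\ref{def:Omegasaturation}). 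The limit $h\to0$ in the case $s>0$ requires invoking the sequential limit of Lemma~\ref{lemma:hn} and Proposition~\ref{prop:Fatou-dependence}\,(3) to make sense of ``$\exp(C_{\Omega}(0)\bY_0)$'', but no new idea beyond what is already proved there.
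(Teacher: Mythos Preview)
Your Part~2 is correct and is essentially the paper's own argument: you set $\bT_{\Omega}'=\bt\circ\Psi_{\Omega}'$, observe it is a Fatou coordinate with moderate growth along the trajectory, and invoke Proposition~\ref{prop:Fatou}\,(2). The paper does exactly this in one line; you spell out the Fourier argument that is already inside that proposition.

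For Part~1 there is a genuine gap. Your claim that $|\alpha_{\Omega}|=O(|h^sP_h|)$ ``from the Beltrami coefficient'' is not justified: the Ahlfors--Bers solution $\tilde\omega_2$ is a \emph{global} object, and a pointwise bound $|\tilde\bmu|=O(|P_h|)$ does not yield a pointwise bound of the same order on $\tilde\omega_2-\id$. More importantly, $\alpha_{\Omega}$ is in general \emph{not} bounded uniformly on $\Omega$: by Proposition~\ref{prop:Fatou-behavior}\,(i) one has $\alpha_{\Omega}=\hat\mu(0)\log\xi_1+(\text{bounded})$ along trajectories, and as $h\to0$ the equilibria coalesce at $\xi_1=0$, so $\alpha_{\Omega}$ picks up a logarithmic divergence. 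Your formula $\exp(t\bY)(\xi)-\xi=O(tP\xi)$ then fails to give $O(P\xi)$ because $t=\alpha_{\Omega}$ is unbounded.

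The paper circumvents this by a trick you are missing: rewrite
\[
\Psi_{\Omega}(\xi)=\exp\!\Big(t\,\tfrac{cP}{1+\hat\mu(0)cP}\bE\Big)(\xi)\Big|_{t=\alpha_{\Omega}(\xi)-\hat\mu(0)\tfrac12\log\tfrac{\xi_1}{\xi_2}},
\]
i.e.\ as the flow of the \emph{modified} vector field $\check\bY=\tfrac{cP}{1+\hat\mu(0)cP}\bE$ (whose rectifying coordinate is $\bt+\hat\mu(0)\tfrac12\log\tfrac{\xi_1}{\xi_2}$) for a time that \emph{is} bounded, precisely by Proposition~\ref{prop:Fatou-behavior}\,(i). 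Since $\check\bY$ is still $O(P)\bE$, flowing for bounded time gives $\Psi_{\Omega}(\xi)-\xi=O(P\xi)$ uniformly. This absorption of the $\hat\mu(0)\log$ term into the vector field is the missing idea in your argument.
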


\begin{proof}[Proof of Theorem~\ref{thm:normalizngtransformation}]
1) Writing $\bT_{\Omega}=\big(\bt+\hat\mu(0)\tfrac12\log\tfrac{\xi_1}{\xi_2}\big)+\big(\alpha_\Omega(\xi)-\hat\mu(0)\tfrac12\log\tfrac{\xi_1}{\xi_2}\big)$, 
where $\bt+\hat\mu(0)\tfrac12\log\tfrac{\xi_1}{\xi_2}$ is the rectifying coordinate of the vector field $\frac{cP}{1+\hat{\mu}(0)cP}\bE$,
and where by Proposition~\ref{prop:Fatou-behavior} the function $\alpha_\Omega(\xi)-\hat\mu(0)\tfrac12\log\tfrac{\xi_1}{\xi_2}$ is bounded along any trajectory of $e^{i\theta}h^s\bY_h$.
We have 
\[\Psi_{\Omega}(\xi)=\exp\Big(t\tfrac{cP}{1+\hat{\mu}(0)cP}\bE\Big)(\xi)\Big|_{t=\alpha_\Omega(\xi)-\hat\mu(0)\tfrac12\log\tfrac{\xi_1}{\xi_2}},\]
and for every $R>0$ there exist $\delta_1>0$ such that
 $(t,\xi)\mapsto \exp\big(t\frac{cP}{1+\hat{\mu}(0)cP}\bE\big)(\xi)$ is analytic on the polydisc $|t|<R$, $|\xi|<\delta_1$.
Hence the result.

2) Since $\bT_{\Omega}':=\bt\circ\Psi_{\Omega}'$ is another Fatou coordinate	on $\bt(\Omega)$ with $\bT_{\Omega}'$ having a moderate growth 
when $\bt\to \infty$ along the trajectory on both ends, then by Proposition~\ref{prop:Fatou} $\bT_{\Omega}'-\bT_{\Omega}=C_\Omega(h)$.
\end{proof}

\subsubsection{``Sectorial'' holomorphic infinitesimal generator}
From Lemma~\ref{lemma:X} and \eqref{eq:form-exp} together with \eqref{eq:LavaursPsi}, we have that the vector field 
\begin{equation}\label{eq:LavaursX}
	\bX_{\Omega}:=h^s(\Psi_\Omega)^*\bY=\frac{h^s\bY}{1+\bY.\alpha_\Omega}=\frac{h^scP}{1+cP\bE.\alpha_{\Omega}},
\end{equation}
where $1+\bY.\alpha_\Omega=\dd{\bt}\bT_\Omega\neq 0$, is a bounded infinitesimal generator for $\phi^{\circ p}$ on $\Omega$, called
\emph{Lavaurs vector field},
\[ \phi^{\circ p}=\exp\big(\bX_{\Omega_S}\big), \]
and
\[ h^s\bX_{\Omega}^{-1}-\bY^{-1}=d\alpha_{\Omega}\mod\tfrac{dh}{h}. \]
Since $\bT_\Omega$ is unique up to addition of some $C(h)$, $h\in S$, the Lavaurs vector field $h^s\bX_\Omega$ \eqref{eq:LavaursX} on $\Omega$ is uniquely defined.

We recall that a germ of of holomorphic diffeomorphism tangent to identity at the origin, its fixed point, such as our $\phi^{\circ p}$, has a formal infinitesimal generator $\hat\bX$ (see Section~\ref{sec:3infgen}), albeit it may not have any  infinitesimal generator holomorphic at the origin. The Lavaurs vector field defined above serves as \emph{holomorphic ``sectorial'' infinitesimal generator}. It is defined uniquely on a ``sectorial'' Lavaurs domain 
and is asymptotic to the formal infinitesimal generator as next proposition shows.
As such, it can be seen as a ``sectorial'' realization of the formal infinitesimal generator $\hat\bX$ of $\phi^{\circ p}$.
This is analogical to the 1-dimensional situation, where the formal infinitesimal generator of a parabolic diffeomorphism of $(\C,0)$ is Borel summable on sectors (petals)
\cite{Ecalle,Ecalle2,Voronin,Malg-diffeo}.

\begin{proposition}\label{prop:asymptotic}
Let $\hat\bX=h^s\frac{cP(u,h)}{1+cP(u,h)\hat R(\xi)}\bE$ be the formal infinitesimal generator of $\phi^{\circ p}$, with $\hat R(\xi)=\sum r_{\bm m}\xi^{\bm m}$, and let $\bX_\Omega$ \eqref{eq:LavaursX} be the Lavaurs vector field. 
Let $\tilde{\Omega}\subseteq\Omega$ be a subdomain on which $\bX_\Omega$ is uniformly bounded, and assume that $0\in\partial{\tilde\Omega}$. 
Then
$\bX_\Omega$ is uniformly asymptotic to $\hat\bX$ on $\tilde{\Omega}$, i.e. the function $\bE.\alpha_\Omega(\xi)$ has a formal asymptotic expansion $\hat R(\xi)$ uniformly when $\tilde\Omega\ni\xi\to 0$.
\end{proposition}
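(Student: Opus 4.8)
\textbf{Plan for the proof of Proposition~\ref{prop:asymptotic}.}
The strategy is to compare the Lavaurs vector field $\bX_\Omega$, which is a holomorphic ``sectorial'' infinitesimal generator of $\phi^{\circ p}$ on $\Omega$, with the formal infinitesimal generator $\hat\bX$, using the uniqueness of the infinitesimal generator in an appropriate sense. Both $\bX_\Omega$ and $\hat\bX$ are of the shape $\frac{h^scP}{1+cP\cdot(\text{something})}\bE$, with the ``something'' being $\bE.\alpha_\Omega$ in the first case and the formal series $\hat R(\xi)$ in the second; since $h^scP$ is a fixed analytic function, proving that $\bX_\Omega$ is asymptotic to $\hat\bX$ reduces to proving that $\bE.\alpha_\Omega(\xi)$ admits $\hat R(\xi)$ as uniform asymptotic expansion as $\tilde\Omega\ni\xi\to 0$. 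So the whole statement is a Gevrey/Poincaré-asymptotics statement about the single scalar function $\bE.\alpha_\Omega$.

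First I would recall from Remark~\ref{remark:gevrey} (the result of \cite{BMLH}) that the formal infinitesimal generator $\hat\bX$ of the analytic map $\phi^{\circ p}$ is of Gevrey type, hence in particular $\hat R(\xi)$ is a Gevrey-$\frac{1}{\nu}$ series, where $\nu+1$ is the order of tangency of $\phi^{\circ p}$ to $\id$; in our prepared situation this order is governed by $h^sP$, so the relevant ``critical variable'' along which summation takes place is exactly the rectifying coordinate $\bt$ of $\bY$ (on each leaf $\bt_h$ behaves like a pole of order $kp$ at $\xi_1=0,\infty$, and $h^s\bt$ blows up like $\xi_1^{-kp}$, etc.). Then I would set up the standard argument: truncate $\hat R$ at order $N$, write $\hat R_{\le N}(\xi)$ for the partial sum, and consider the analytic vector field $\hat\bX_{\le N}=\frac{h^scP}{1+cP\hat R_{\le N}}\bE$. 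By the defining property of the formal infinitesimal generator, $\exp(\hat\bX_{\le N})(\xi)-\phi^{\circ p}(\xi)=O(\xi^{c N})$ for a constant $c>0$ depending on the order of $\bE.\xi-\xi$, uniformly on a fixed polydisc; translating through the rectifying coordinate (as in Lemma~\ref{lemma:Delta} and the definition \eqref{eq:Delta} of $\Delta$), this says that $\bt\circ\phi^{\circ p}-\bt-h^s$ differs from the analogous quantity for $\exp(\hat\bX_{\le N})$ by a term that vanishes to high order in $\xi$. Running the Fatou-coordinate construction of Proposition~\ref{prop:Fatou} with $\hat\bX_{\le N}$ in place of $\bY$ produces a Fatou coordinate $\bT^{(N)}$ with $\bT^{(N)}-\bt$ bounded by a constant times the sup of the high-order difference; by the uniqueness part of Proposition~\ref{prop:Fatou} (and Proposition~\ref{prop:Fatou-behavior}, Proposition~\ref{prop:Fatou-dependence}), $\bT_\Omega$ and $\bT^{(N)}$ differ only by a constant $C_N(h)$ plus a term that is $O(|P|^{N'})$ with $N'\to\infty$. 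Differentiating along $\bY$ (i.e. applying $\tdd{\bt}=\bY$, equivalently $\bE$ up to the factor $cP$), the constants drop out and one gets that $\bE.\alpha_\Omega(\xi)-\bE.\alpha^{(N)}(\xi)=\hat R_{\le N}(\xi)$ up to a remainder that is $O(\xi^{c'N})$ uniformly on $\tilde\Omega$. Since $\bX_\Omega$ is assumed uniformly bounded on $\tilde\Omega$ with $0\in\partial\tilde\Omega$, the denominator $1+cP\bE.\alpha_\Omega$ stays bounded away from $0$, so these estimates transfer to $\bX_\Omega$ itself and give the uniform asymptotic expansion $\bE.\alpha_\Omega\sim\hat R$.

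The main obstacle, and where I would spend most care, is making the comparison estimate between $\bT_\Omega$ and $\bT^{(N)}$ genuinely \emph{uniform} as $\xi\to 0$ in $\tilde\Omega$, including across the collapsing scales when $s>0$ and $h\to 0$. The difficulty is that the Fatou coordinate is only defined up to the leaf-dependent constant $C(h)$, and the ``width'' of the admissible strips degenerates as $h\to0$; one must invoke the $h\to 0$ limiting construction of Proposition~\ref{prop:Fatou-dependence}(3) (the rescaling $h_n=n^{-1/s}h_0$ and the convergence $\varphi_n\to\varphi_\infty$ of Lemma~\ref{lemma:hn}) to control the remainder uniformly down to $h=0$, and then combine it with the Gevrey bound on $\hat R$ to choose $N=N(\xi)$ optimally (standard Borel-summation bookkeeping). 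A secondary technical point is the behaviour near the zeros of $P$ where several Lavaurs domains are needed and a Ramis--Sibuya-type gluing (already used in the proof of Proposition~\ref{prop:Fatou-behavior}) must be invoked to patch the local asymptotic expansions into a single uniform one on $\tilde\Omega$. Once these uniformity issues are handled, the conclusion is immediate: $\bX_\Omega$ is the Borel sum, on the ``sectorial'' Lavaurs domain, of the formal infinitesimal generator $\hat\bX$, exactly as in the one-dimensional parabolic theory.
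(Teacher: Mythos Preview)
Your plan takes a substantially different and more circuitous route than the paper, and it has a real gap at the comparison step.

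You want to re-run the Fatou construction with the truncated model $\hat\bX_{\le N}$ and compare the resulting $\bT^{(N)}$ with $\bT_\Omega$. But the uniqueness part of Proposition~\ref{prop:Fatou} says that two Fatou coordinates \emph{for the same map} $\phi^{\circ p}$ on the same domain differ by \emph{exactly} a constant $C(h)$ --- there is no extra ``$O(|P|^{N'})$'' remainder. So if your $\bT^{(N)}$ is a Fatou coordinate for $\phi^{\circ p}$, then $\bT_\Omega-\bT^{(N)}=C_N(h)$ and after applying $\bE$ you learn nothing. If instead $\bT^{(N)}$ is meant to be the rectifying coordinate $\bt^{(N)}$ of $\hat\bX_{\le N}$ (a Fatou coordinate for $\exp(\hat\bX_{\le N})$, not for $\phi^{\circ p}$), then the uniqueness statement does not apply: you would need a quantitative stability estimate for the Fatou coordinate under an $O(\xi^{cN})$-perturbation of the map, uniformly on $\tilde\Omega$ and down to $h=0$. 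That estimate is not available from Propositions~\ref{prop:Fatou}--\ref{prop:Fatou-dependence} and would be the entire content of the argument. The Gevrey/Borel-summation machinery you invoke is also beside the point: the proposition only asserts a Poincar\'e asymptotic expansion, not summability.

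The paper bypasses all of this by never touching the Fatou construction. It uses only the algebraic fact that $\bX_\Omega$ is an \emph{exact} holomorphic infinitesimal generator of $\phi^{\circ p}$ on $\tilde\Omega$: since $\exp(\bX_\Omega)=\phi^{\circ p}$ and $j^{(n)}\phi^{\circ p}=j^{(n)}\exp(j^{(n)}\hat\bX)$, one has $\exp(\bX_\Omega)-\exp(j^{(n)}\hat\bX)\in\xi\Cal J_{\tilde\Omega}^n$, where $\Cal J_{\tilde\Omega}$ is the ideal of bounded analytic functions on $\tilde\Omega$ vanishing at $0$. The difference of vector fields $\bZ_{\Omega,n}:=\bX_\Omega-j^{(n)}\hat\bX$ is then extracted from the difference of flows via the interpolation $F_t=\exp(t\bX_\Omega)\circ\exp(-t\,j^{(n)}\hat\bX)$: one checks $\tdd{t}F_t\big|_{\xi=F_t^{\circ(-1)}}=\exp(-t\bX_\Omega)^*\bZ_{\Omega,n}.\xi$, expands $F_1=\xi+\bZ_{\Omega,n}.\xi+\ldots$, and bootstraps $\bZ_{\Omega,n}\in\Cal J_{\tilde\Omega}^l\cdot\bE\Rightarrow\bZ_{\Omega,n}\in\Cal J_{\tilde\Omega}^n\cdot\bE$. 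This is purely local, uses only boundedness of $\bX_\Omega$ on $\tilde\Omega$, and the uniformity issues you flagged (the $h\to0$ rescaling, Ramis--Sibuya patching) simply do not arise.
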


\begin{proof}	
For $n\in\Z_{>0}$, let $j^{(n)}\hat\bX$, resp. $j^{(n)}\phi^{\circ p}$, denote the $n$-jet of the formal vector field $\hat\bX$, resp. of the analytic diffeomorphism $\phi^{\circ p}$, with respect to the variable $\xi$.
Then formally,
\begin{equation}\label{eq:jX}
	 j^{(n)}\phi^{\circ p}=j^{(n)}\exp\big(j^{(n)}\hat\bX\big)
\end{equation}
and since both sides are analytic, and since $\phi^{\circ p}=\exp(\bX_{\Omega})(\xi)$ for $\xi\in\tilde\Omega$, this means that
\[ \exp(\bX_{\Omega})(\xi)-\exp(j^{(n)}\hat\bX)(\xi)\in\xi\Cal J_{\tilde\Omega}^n,\]
where $\Cal J_{\tilde\Omega}$ denotes the ideal $\xi_1\Cal B_{\tilde\Omega}+\xi_2\Cal B_{\tilde\Omega}$ of the ring $\Cal B_{\tilde\Omega}$ of bounded analytic functions on $\tilde\Omega$.
We want to conclude that
\[ \bZ_{\Omega,n}:=\bX_{\Omega}(\xi)-j^{(n)}\hat\bX(\xi)\in\Cal J_{\tilde\Omega}^n\cdot\bE.\]

Denote $F_t:=\exp(t\bX_{\Omega})\circ\exp(-tj^{(n)}\hat\bX)$, then 
\begin{align*}
	\tdd{t}F_t\big|_{\xi=F_t^{\circ(-1)}}&=\bX_{\Omega}.\xi-j^{(n)}\hat\bX.\exp(t\bX_{\Omega})\big|_{\xi=\exp(-t\bX_{\Omega})}\\
	&=\bX_{\Omega}.\xi - \exp(-t\bX_{\Omega})^*\big(j^{(n)}\hat\bX\big).\xi\\
	&=\exp(-t\bX_{\Omega})^*\bZ_{\Omega,n}.\xi,
\end{align*}
hence
\[F_{r}=\xi\circ\exp\big(r\tdd{t}+r\exp(-t\bX_{\Omega})^*\bZ_{\Omega,n}\big)\big|_{t=0}\]
is the time-$r$-flow of the non-autonomous vector field $\exp(-t\bX_{\Omega})^*\bZ_{\Omega,n}$.

We know that  $\bZ_{\Omega,n}$ is a bounded multiple of $h^sP\bE$, and in particular it belongs to $\Cal J_{\tilde\Omega}\cdot\bE$.
From \eqref{eq:jX} we also know that $F_1(\xi)=\xi\mod \xi\Cal J_{\tilde\Omega}^n$,
and since
\[F_1= \sum_{m=0}\tfrac{1}{m!}\big(\tdd{t}+\exp(-t\bX_{\Omega})^*\bZ_{\Omega,n}\big)^m.\xi\,\Big|_{t=0}=\xi+\bZ_{\Omega,n}.\xi+\ldots,  \]
is a convergent sum, in which if $\bZ_{\Omega,n}\in\Cal J_{\tilde\Omega}^l\cdot\bE$ for some $l\geq 1$ then the terms ``$\ldots$'' belong to $\Cal J_{\tilde\Omega}^{l+1}\cdot\xi$, this means that $\bZ_{\Omega,n}\in\Cal J_{\tilde\Omega}^n\cdot\bE$.
\end{proof}

\subsection{Dynamics of the vector field $\bY$ and the Lavaurs domains}\label{sec:6.2}

The goal of this section is to:
	\begin{itemize}[wide=0pt, leftmargin=\parindent]
	\item[-] show that a stable $\theta$ and an admissible strip $\bt_h(\Sigma_h)$ indeed exist for all $|h|<\delta_2$ for some $\delta_2>0$, and that the collection of Lavaurs domains $\Omega$ covers all $B$ (Theorem~\ref{thm:covering}). 
	\item[-] understand the organization of these domains $\Omega_h$ in $B_h$, and how they depend on $h$ and $\theta$ and the position of the strip 
	$\bt_h(\Sigma_h)$ in $\bt_h(B_h)$.
\end{itemize}
The modulus of analytic classification will be described afterwards in Section~\ref{sec:6.3}.

We will replace the saturated Lavaurs domains $\Omega_h$ of Section~\ref{sec:6.1} by slightly smaller domains,
called simply ``Lavaurs domains'' the form of which will depend only on the dynamics of the model vector field $h^s\bY$, and on the constants $\delta_1,\delta_2,\delta_3$ 
(defining size's constraints on $|\xi_1|$, $|h|$ and on the variation of the angle $\theta$)
but not on $\phi^{\circ p}$. The basic idea would be to construct these new domains so that their $\bt_h$-image would be spanned by all the lines $t_0+e^{i\theta}h^s\R$ of varying angle $\theta$ in  $\bt_h(\Omega_h)\subset\bt_h(B_h)$, see Figure~\ref{figure:lavaursdomain}. 
The preimage of each such line $t_0+e^{i\theta}h^s\R$ is nothing else then a complete real-time trajectory of the vector field $e^{i\theta}h^s\bY_h$ in $\Omega_h\subset B_h$, \Grn{which is why we need to have some understanding of their organization.
The exact construction of the Lavaurs domains in Section~\ref{sec:6.2omega} will be slightly more technical in order to ensure they cover $B_h$ in a uniform way, but the rough idea stays the same.}

So let $\bY=cP\bE$ be the rational vector field \eqref{eq:bY}, and $\bY_h=cP_h\bE$ \eqref{eq:bYh} its restriction to a leaf $\{h=\const\}$.
In the coordinate $\xi_1$ on a leaf with $h\neq 0$, the vector field $\bY_h$ is rational in $\xi_1\in\CP^1$, with poles at $\xi_1=0,\infty$,
and with coefficients $P_j(h)$ depending analytically on $h$, $|h|<\delta_2$.
Up to restricting $\delta_1^2\gg\delta_2>0$, we can assume that all the $2kp$ zeros of $\bY_h$ (counted with multiplicity) lie inside the annulus
\[ B_h=\Big\{\frac{|h|}{\delta_1}<|\xi_1|<\delta_1\Big\} \qquad\text{for}\qquad |h|<\delta_2, \]
while the poles of $\bY_h$ lie outside of the annulus, symmetrically in the outer and inner components of its complement.
The limit vector field 
\begin{equation}\label{eq:bY_0}
\bY_0=pc\,\xi_1^{(k+1)p}\tdd{\xi_1^p},
\end{equation}
on the irreducible component $\{\xi_2=0,\ 0\leq|\xi_1|<\delta_1\}$ of $B_0$,
is obtained by the merging of $2kp$ zero points (counted with multiplicity) with the pole of order $kp-1$ at the origin into a zero point of order $kp+1$.
The situation on the other irreducible component $\{\xi_1=0,\ 0\leq|\xi_2|<\delta_1\}$ of $B_0$ is symmetric by means of $\sigma$.

We want to understand the topological organization of the real trajectories in the rotating family of vector fields
\begin{equation}\label{eq:rotated-vf}
e^{i\theta}h^s\bY_h,\qquad\theta\in\R,
\end{equation}
in dependence on the parameter $h$ and the angle $\theta$.
We will first consider it globally as a family of rational vector fields on $\CP^1$, and later look at their restriction to $B_h$.

Let us recall some basis properties of the real dynamics of rational vector fields on $\CP^1$
(see e.g. \cite{Benziger,Brickman-Thomas,GGJ,Hajek, Klimes-Rousseau2, Strebel, Tomasini, Tahar}).

\begin{figure}[t]
\centering
\begin{subfigure}{.4\textwidth} \includegraphics{tikz-endsa.pdf} \caption{$h=0$} \label{figure:endsa} \end{subfigure}
\qquad
\begin{subfigure}{.4\textwidth} \includegraphics{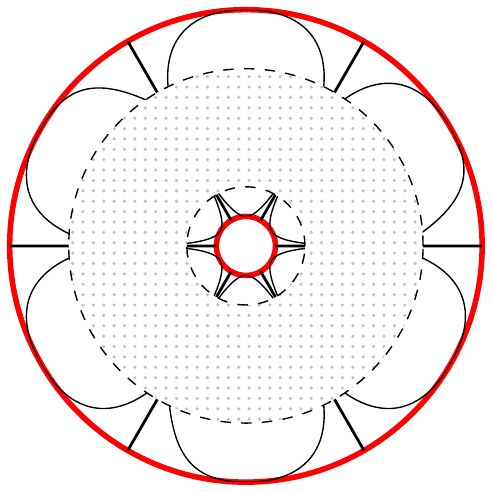} \caption{$h\neq 0$} \label{figure:endsb} \end{subfigure}	
\caption{(a) The real phase portrait of $\bY_0$ on the irreducible component $\{\xi_2=0\}$ of $B_0$, showing the $2kp$ ``outer'' sepal zones (here $kp=3$).
		(b) The real phase portrait of $\bY_h$ near the boundary of $B_h$, showing the ends of $2kp$ ``outer half-zones'' (see Section~\ref{sec:6.2halfzones}) and $2kp$ ``inner half-zones'' (here $kp=3$). The middle part (dotted) is where all the equilibria are situated and where the global organization of the phase portrait is determined.	}
\label{figure:ends}
\end{figure}

\subsubsection{Critical points.}\label{sec:6.2.1}
Let $a(h)$ be a zero (an \emph{equilibrium} point) of $\bY_h$, and denote
\begin{equation}\label{eq:nu_a}
\nu_a(h)=2\pi i\,\res_{\xi_1=a(h)}\bY_h^{-1},
\end{equation}
the \emph{period} (also called \emph{dynamical residue}) of $\bt_h$ \eqref{eq:bt} around it.

For each $h,\theta$, the real-time-flow curves of  the vector field $e^{i\theta}h^s\bY_h$ define a real-analytic singular foliation on $\CP^1$, also known as the 
\emph{real phase portrait} of $e^{i\theta}h^s\bY_h$, or the \emph{horizontal foliation} of the meromorphic differential $e^{-i\theta}h^{-s}\bY_h^{-1}$. 

Near a \emph{simple equilibrium} the vector field $e^{i\theta}h^s\bY_h$ is locally biholomorphically equivalent to 
$e^{i\theta}h^s\frac{2\pi i}{\nu_a}z\tdd{z}$. Its real dynamics is
\begin{itemize}
	\item[-] \emph{attractive} if $\IM(e^{-i\theta}h^{-s}\nu_a)<0$,
	\item[-] \emph{repulsive} if $\IM(e^{-i\theta}h^{-s}\nu_a)>0$,
	\item[-] \emph{center} if $e^{-i\theta}h^{-s}\nu_a\in\R$, \
$		\begin{cases}
		\textit{odd (counter-clockwise)}:&  e^{-i\theta}h^{-s}\nu_a>0,\\
		\textit{even (clockwise)}:& e^{-i\theta}h^{-s}\nu_a<0.
	\end{cases}$
\end{itemize}	

Near a \emph{multiple equilibrium} (parabolic point) of a multiplicity $m+1$ the vector field is locally biholomorphically equivalent to 
$e^{i\theta}h^s\frac{z^{m+1}}{1+\frac{\nu_a}{2\pi i}z^m}\tdd{z}$ (cf. e.g. \cite[Theorem 5.25]{IlYa}). Its real phase portrait exhibits $2m$ sepal zones, 
consisting of asymptotically closed trajectories (i.e. those whose both positive and negative time limit is the equilibrium),
separated alternatingly by $m$ 
\emph{attractive} and $m$ \emph{repulsive directions} (such as in Figure~\ref{figure:endsa}).

\begin{figure}[t]
	\centering
	\includegraphics[width=0.3\textwidth]{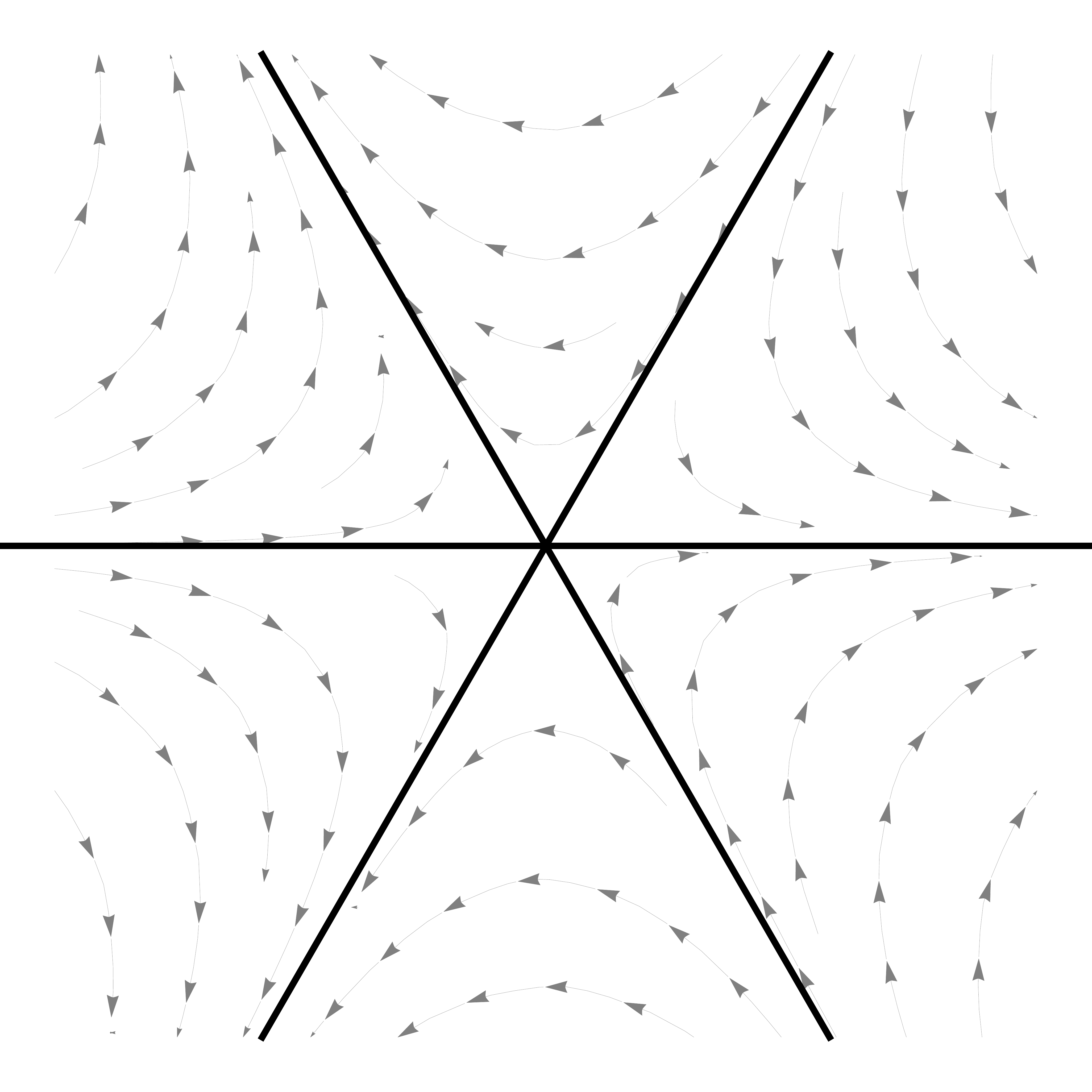}
	\caption{The real phase portrait near a pole of order $kp-1$ with $2kp$ ends (here $kp=3$).}
	\label{figure:pole}
\end{figure}

The vector field $e^{i\theta}h^s\bY_h$ \eqref{eq:bYh} has two poles for $h\neq 0$: an \emph{outer pole} at $\xi_1=\infty$ and an \emph{inner pole} at $\xi_1=0$,
while for $h=0$ there is one pole on each irreducible component of $B_0$.
Each pole is of order $kp-1$, and so the vector field has $2kp$ \emph{separatrices}\footnote{Separatrices are trajectories having a pole in its closure and passing through it in a finite time. The local model for a pole of order $n-1$ is $\frac{1}{z^{n-1}}\frac{\partial}{\partial z}$ for $n\in \mathbb{N}^*$. It has $2n$ separatrices at $0$, namely $z_{\ell}(t):=e^{\frac{i\pi\ell}{n}}(t-t_0)^{1/n}$, $t-t_0\in\mathbb{R}_+^*$, $\ell=0,\ldots, 2n-1$.} emanating from the pole, alternatingly \emph{incoming} and \emph{outgoing},
separating $2kp$ sectors, called \emph{ends}, on which the real flow is of hyperbolic form (Figure~\ref{figure:pole}). \label{page:ends}  
The ends are either
\[ \begin{cases}
	\textit{odd end}& \hskip-6pt \Leftrightarrow\ \text{the flow at the corner is ``counter-clockwise''},\\
	\textit{even end}& \hskip-6pt \Leftrightarrow\ \text{the flow at the corner is ``clockwise''.}
\end{cases} \]
A separatrix can be either landing at an equilibrium point, or it coincides with another separatrix to form a 
\emph{homoclinic} or \emph{heteroclinic} connection (depending on whether the trajectory goes back to the same pole or to a different one).
The time $\bt_h$ \eqref{eq:bt} it takes to arrive at the pole from a regular point is always finite.

The following well known result concerns real flow of rational vector fields on $\mathbb{CP}^1$:

\begin{proposition}
\begin{enumerate}[wide=0pt, leftmargin=\parindent]
\item A rational vector field on $\CP^1$ has no limit cycles: all non-periodic trajectories 
are landing in the positive/negative time limit at an equilibrium point, or they reach a pole as a separatrix. 
Every periodic trajectory belongs to a maximal open domain consisting of periodic trajectories (all of the same period), the boundary of which is formed by a union of homoclinic or heteroclinic separatrices. (See \cite[Theorem~3]{Hajek}.)
\item Each attractive/repulsive simple equilibrium is the landing point of at least one separatrix, and each direction separating sepals of a parabolic equilibrium is tangent to at least one landing separatrix.	
\item Counted with multiplicity, the number of zeros minus the number of poles surrounded by a periodic orbit is equal $1$. (This is just a Poincar\'e--Hopf index theorem.)
\end{enumerate}
\end{proposition}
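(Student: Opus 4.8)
The three assertions are classical; the plan is to derive each one from the local normal forms at singular points recalled just above, together with the structure of the translation surface attached to the rectifying coordinate $\bt_h$.

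For part (1) — absence of limit cycles — the plan is as follows. Fix $h,\theta$ and work with the meromorphic $1$-form $\omega=e^{-i\theta}h^{-s}\bY_h^{-1}$, whose horizontal foliation is the real phase portrait of $e^{i\theta}h^s\bY_h$. First I would recall that along any trajectory the ``imaginary time'' $\IM\!\int\omega$ is constant, so a trajectory that is neither periodic nor a separatrix has a well-defined $\omega$-image which is a horizontal half-line; I would then invoke a Poincaré–Bendixson type argument on $\CP^1$ (a compact surface): the $\omega$-limit set of such a trajectory is a nonempty compact invariant set containing no periodic orbit (else we would be inside the periodic domain), hence by the finiteness of the singular set it must be a single equilibrium or a union of separatrix connections; and since the flow is holomorphic away from singularities, spiralling onto a separatrix loop is impossible because that would force the period integral of $\omega$ around the loop to be purely real while the approach forces it to have nonzero imaginary part — contradiction. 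This gives the trichotomy ``lands at an equilibrium / reaches a pole / is periodic''. For the statement about periodic domains, I would take a periodic trajectory, let $D$ be the connected component of the set of points lying on periodic trajectories that contains it, note $D$ is open, and observe that all trajectories in $D$ are freely homotopic in $D$, hence have the same period $\int_\gamma\omega$ by residue considerations (the period is locally constant and $D$ is connected). The boundary $\partial D$ cannot contain a regular point (such a point would have a periodic trajectory through it by the local flow box, contradicting maximality unless that trajectory is already in $D$ — and then an annular neighbourhood of it would be, too), so $\partial D$ is a union of singular points and the separatrices joining them; closedness of $D\cup\partial D$ forces these separatrices to be homoclinic or heteroclinic connections. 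This is exactly Hájek's theorem \cite{Hajek}, and I would just cite it after sketching the argument.

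For part (2), I would argue by the index/period bookkeeping around a small disc $U_a$ about the equilibrium $a=a(h)$. At a simple attractive (resp. repulsive) point the local model $e^{i\theta}h^s\tfrac{2\pi i}{\nu_a}z\tdd z$ has the property that \emph{every} trajectory in the sink (resp. source) basin limits onto $a$; if no separatrix landed at $a$ then a punctured neighbourhood of $a$ would consist entirely of trajectories that are both attracted to $a$ and, going backward, must leave $U_a$ and hence (by part (1)) land at another singularity or reach a pole — but running this for all points on a small circle around $a$ and using that the backward trajectories foliate the punctured disc, one of them must be unbounded backward in $\bt_h$-time inside $U_a$, which in the linear model is impossible unless it is a separatrix of $a$ itself; contradiction. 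The parabolic case is the same computation carried out in the local model $e^{i\theta}h^s\tfrac{z^{m+1}}{1+\frac{\nu_a}{2\pi i}z^m}\tdd z$: each attractive (resp. repulsive) direction bounds sepal zones whose trajectories are asymptotically closed, and the analytic continuation of the rectifying coordinate along the direction shows that the direction ray, if it carried no landing separatrix, would be an honest trajectory escaping $U_a$, again forbidden by the asymptotics. I would phrase this uniformly by noting that an attractive/repulsive separatrix direction at $a$ is precisely an end of a horizontal ray of $\omega$ emanating from $a$, and on a compact surface such a ray, unable to return, must terminate at another singularity — i.e.\ it \emph{is} a separatrix.

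For part (3), this is the Poincaré–Hopf index theorem applied to the disc bounded by the periodic orbit $\gamma$. Let $D$ be the open disc bounded by a periodic trajectory $\gamma$; the vector field $e^{i\theta}h^s\bY_h$ (equivalently $\bY_h$, since the index is insensitive to the nonzero scalar $e^{i\theta}h^s$) is tangent to $\gamma$ along $\partial D$ with winding number $+1$, so the sum of local indices of $\bY_h$ inside $D$ equals $1$. At a holomorphic zero of multiplicity $m+1$ the index is $m+1$, and at a pole of order $n-1$ the index is $-(n-1)$; summing and comparing with the boundary winding number $1$ gives (zeros counted with multiplicity) $-$ (poles counted with multiplicity, i.e.\ with the order) $=1$ inside $\gamma$. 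I would simply state this, since it is the standard index computation for meromorphic vector fields on surfaces.

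The main obstacle is the rigorous Poincaré–Bendixson step in part (1): ruling out a trajectory spiralling onto a separatrix polygon. The clean way is the observation that the ``imaginary part of time'' $\IM\!\int\omega$ is a continuous, nonconstant, locally monotone first-integral-like function transverse to the flow near a regular arc of the limit set, which contradicts the trajectory accumulating on that arc from one side only; near the singular vertices of the polygon one uses the explicit local normal forms to see that the time to traverse a neighbourhood stays bounded, so no infinite spiralling can occur. Since all of this is well documented in the references cited (\cite{Benziger,Hajek,Strebel,Klimes-Rousseau2}), in the final text I would keep the argument to a short paragraph and lean on those citations rather than reproving Hájek's theorem from scratch.
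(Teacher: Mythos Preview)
The paper does not prove this proposition: it is stated as a well-known fact, with the references to \cite{Hajek} for part~(1) and to Poincar\'e--Hopf for part~(3) embedded directly in the statement, and the text moves on immediately to the zone decomposition. So your plan already goes well beyond what the paper does. Your sketches for parts~(1) and~(3) are standard and fine.

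Your argument for part~(2), however, has a genuine gap. In the local model $e^{i\theta}h^s\tfrac{2\pi i}{\nu_a}z\tdd{z}$ of a simple sink, \emph{every} trajectory in a small disc $U_a$ converges to $a$ forward and exits $U_a$ in finite backward time; none is ``unbounded backward in $\bt_h$-time inside $U_a$''. So the contradiction you claim does not arise from the local picture alone. In fact the statement is simply false without the implicit hypothesis that the vector field has at least one pole: for $z\tdd{z}$ on $\CP^1$ there is a sink at $\infty$, a source at $0$, no poles, and no separatrices whatsoever. A correct argument is necessarily global: one shows that the boundary of the basin of attraction $B(a)$, which is a nonempty compact invariant subset of $\CP^1\sminus B(a)$, must contain a pole --- for instance via the translation-surface picture (the basin is a half-plane-like region whose boundary horizontal lines must meet a saddle on a finite-area surface with saddles), or via the zone decomposition (an attractive equilibrium cannot lie in the interior of any zone, hence lies in the closure of the separatrix graph). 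Your closing sentence about a horizontal ray terminating at another singularity is the right idea for the \emph{parabolic} case, where distinguished attracting/repelling rays exist, but at a simple sink there is no such canonical ray to follow, and the global basin-boundary step cannot be bypassed.
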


\begin{figure}[t]
\begin{subfigure}[t]{.4\textwidth} \includegraphics[width=0.45\textwidth]{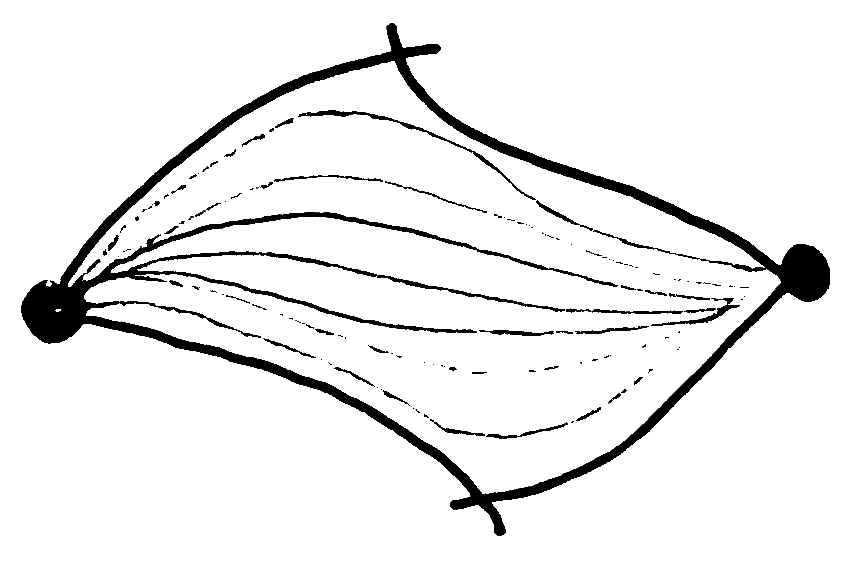}\includegraphics[width=0.45\textwidth]{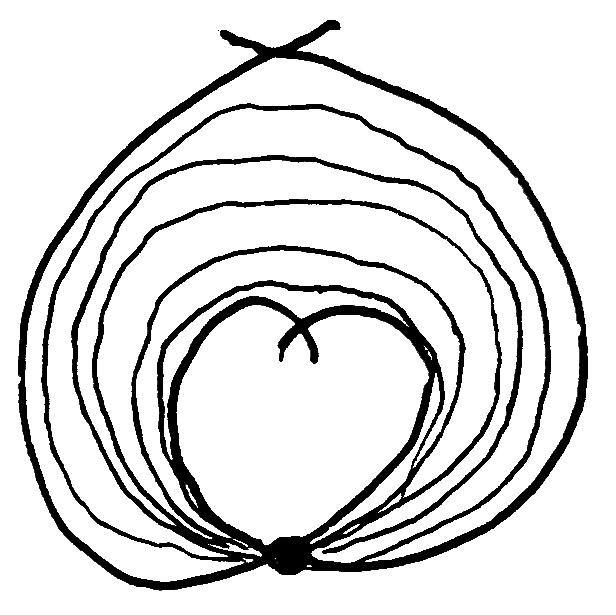}	\caption{$\alpha\omega$-zone} \label{figure:zonex-a} \end{subfigure}	
\begin{subfigure}[t]{.16\textwidth} \includegraphics[width=\textwidth]{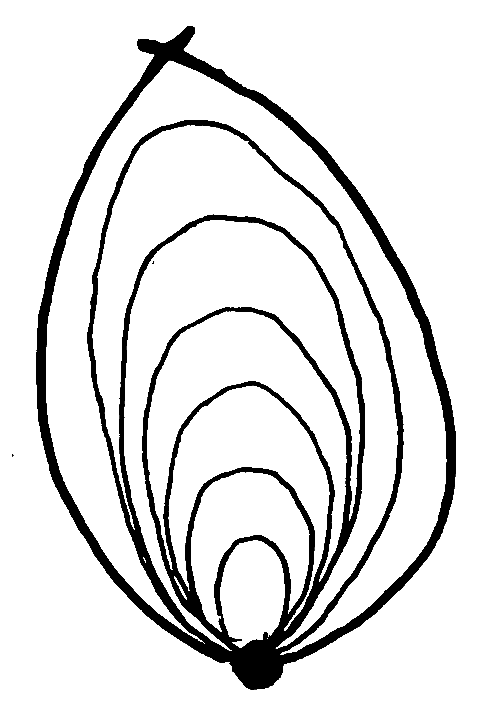}	\caption{Sepal zone} \label{figure:zonex-b} \end{subfigure}
\ 	
\begin{subfigure}[t]{.18\textwidth} \includegraphics[width=\textwidth]{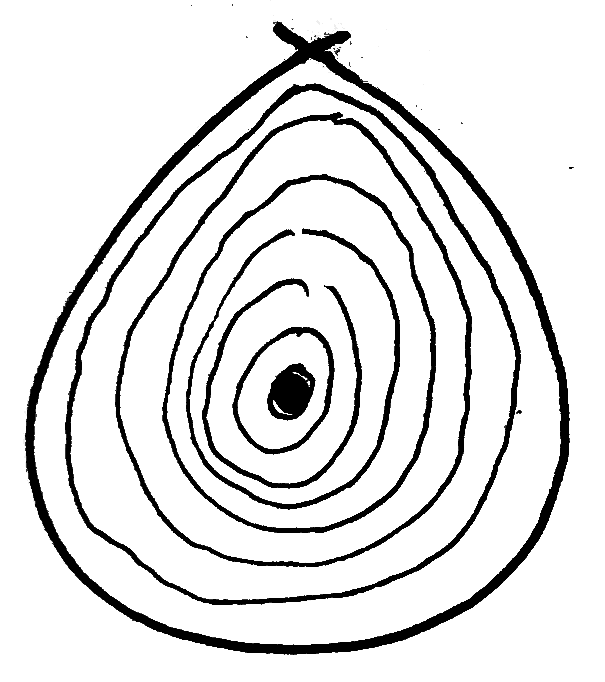}	\caption{Center zone} \label{figure:zonex-c} \end{subfigure}
\ 
\begin{subfigure}[t]{.2\textwidth} \includegraphics[width=\textwidth]{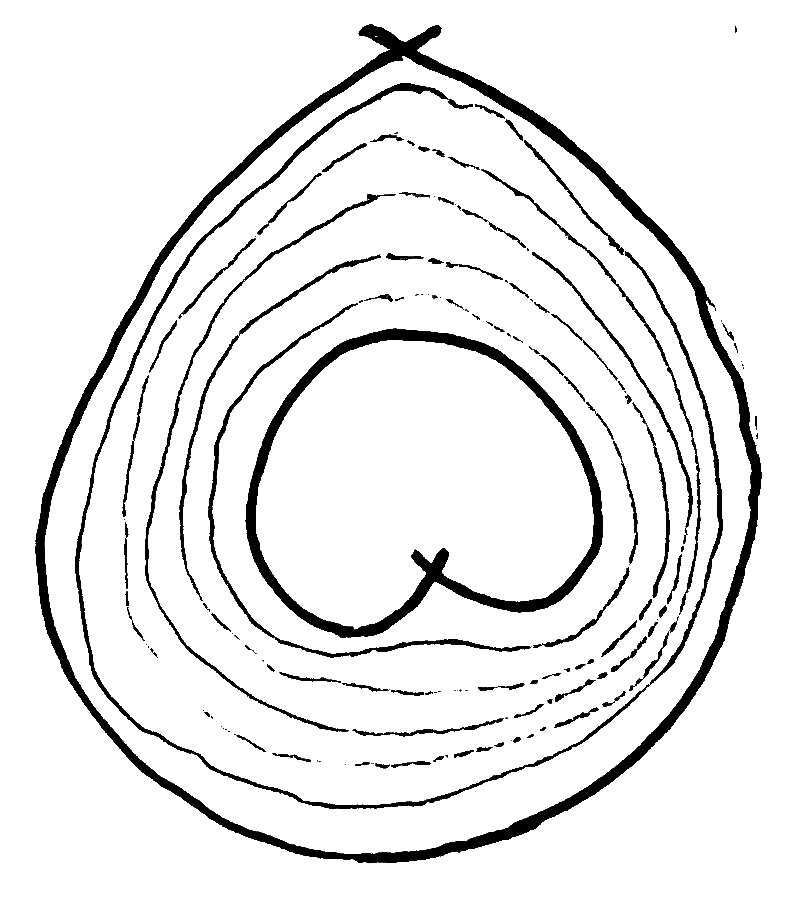}	\caption{Annular zone} \label{figure:zonex-d} \end{subfigure}
\caption{The different kind of zones in the $\xi_1$-coordinate. }
\label{figure:zonex}
\end{figure}

\subsubsection{Zone decomposition.}
The \emph{separatrix graph} is defined as the closure in $\CP^1$ of the union of all separatrices of all poles.
The connected components in $\CP^1$ of the complement of the separatrix graph are called \emph{zones of $e^{i\theta}h^s\bY_h$ in $\CP^1$}.
In each zone all the trajectories are homotopic by a homotopy fixing the equilibria. 
There are four types of zones that can occur for a rational vector field according to their form in the $\bt_h$-coordinate (Figure~\ref{figure:zone}):
\begin{enumerate}
	\item \emph{$\alpha\omega$-zone:} All trajectories share the same $\alpha$-limit and the same $\omega$-limit  which are either two different equilibria (simple or multiple) or the same multiple equilibrium, and the closure of the zone is not contractible by a homotopy fixing the equilibria (Figure~\ref{figure:zonex-a}).
 	In the $\bt_h$-coordinate it corresponds to an open infinite strip parallel to $e^{i\theta}h^s\R$ (Figure~\ref{figure:zone-a}). 
	
	\item \emph{Odd/even sepal zone:} All trajectories share the same $\alpha$-limit and the $\omega$-limit which are the same multiple equilibrium,
	and the closure of the zone is contractible by a homotopy fixing the equilibrium (Figure~\ref{figure:zonex-b}).
 	In the $\bt_h$-coordinate it corresponds to an open half-plane $e^{i\theta}h^s\H^{\pm}$ (Figure~\ref{figure:zone-b}). 
		
	\item \emph{Odd/even center zone:} It consists of periodic counter-clockwise/clockwise periodic orbits of the same period 
	$\pm e^{-i\theta}h^{-s}\nu_a>0$ around a center equilibrium point $a$ -- the zone has the form of a pierced disc  (Figure~\ref{figure:zonex-c}).
 	In the $\bt_h$-coordinate it corresponds to a quotient of a half-plane $e^{i\theta}h^s\H^\pm$ by $\nu_a\Z$ (Figure~\ref{figure:zone-c}). 
	
	\item \emph{Annular zone} (\emph{periodic annulus}:) It consists of periodic orbits of the same period $\pm e^{-i\theta}h^{-s}\nu>0$ 
	(where $\nu$ is equal to the sum of the periods of the equilibria encircled by the trajectory) -- the zone has the form of an annulus (Figure~\ref{figure:zonex-d}).
 	In the $\bt_h$-coordinate it corresponds to a quotient of an open infinite strip parallel to $\nu\R$ by $\nu\Z$ (Figure~\ref{figure:zone-d}). 
\end{enumerate}

\begin{figure}[t]
\centering	
\begin{subfigure}[t]{.2\textwidth}  \includegraphics{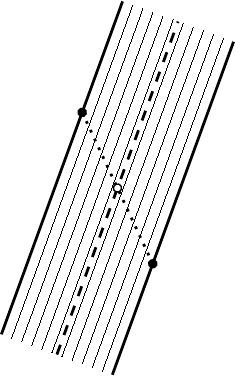} \caption{$\alpha\omega$-zone} \label{figure:zone-a}	\end{subfigure}	
\quad
\begin{subfigure}[t]{.2\textwidth}  \includegraphics{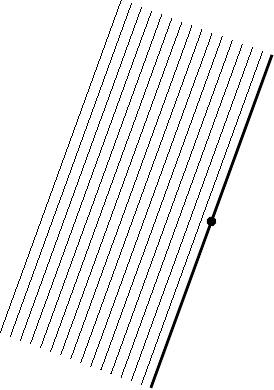}	\caption{sepal zone (odd)} \label{figure:zone-b} \end{subfigure}	
\quad
\begin{subfigure}[t]{.2\textwidth}  \includegraphics{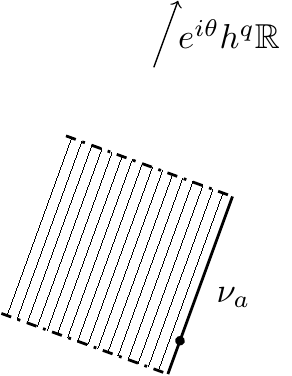} \caption{center zone (odd)} \label{figure:zone-c} \end{subfigure}	
\quad
\begin{subfigure}[t]{.2\textwidth}  \includegraphics{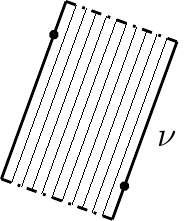} \caption{annular zone} \label{figure:zone-d} \end{subfigure}
\caption{The different types of zones in the $\bt_h$-coordinate. Generically there is only one end of a pole on each boundary line (as in the picture), but non-generically can be more. In (a): the gate trajectory (dashed) of the $\alpha\omega$-zone passes through the center-point (white point) of the transversal (dotted) joining the two ends (black points). In (c) and (d): the dash-dotted lines are glued together forming a cylinder. In (d): there are $\Z$-many saddle connections/transversals on the cylinder between the two ends (not depicted).}
\label{figure:zone}
\end{figure}

\begin{figure}[t]
	\centering	
	\begin{subfigure}[t]{.4\textwidth}  \includegraphics[width=\textwidth]{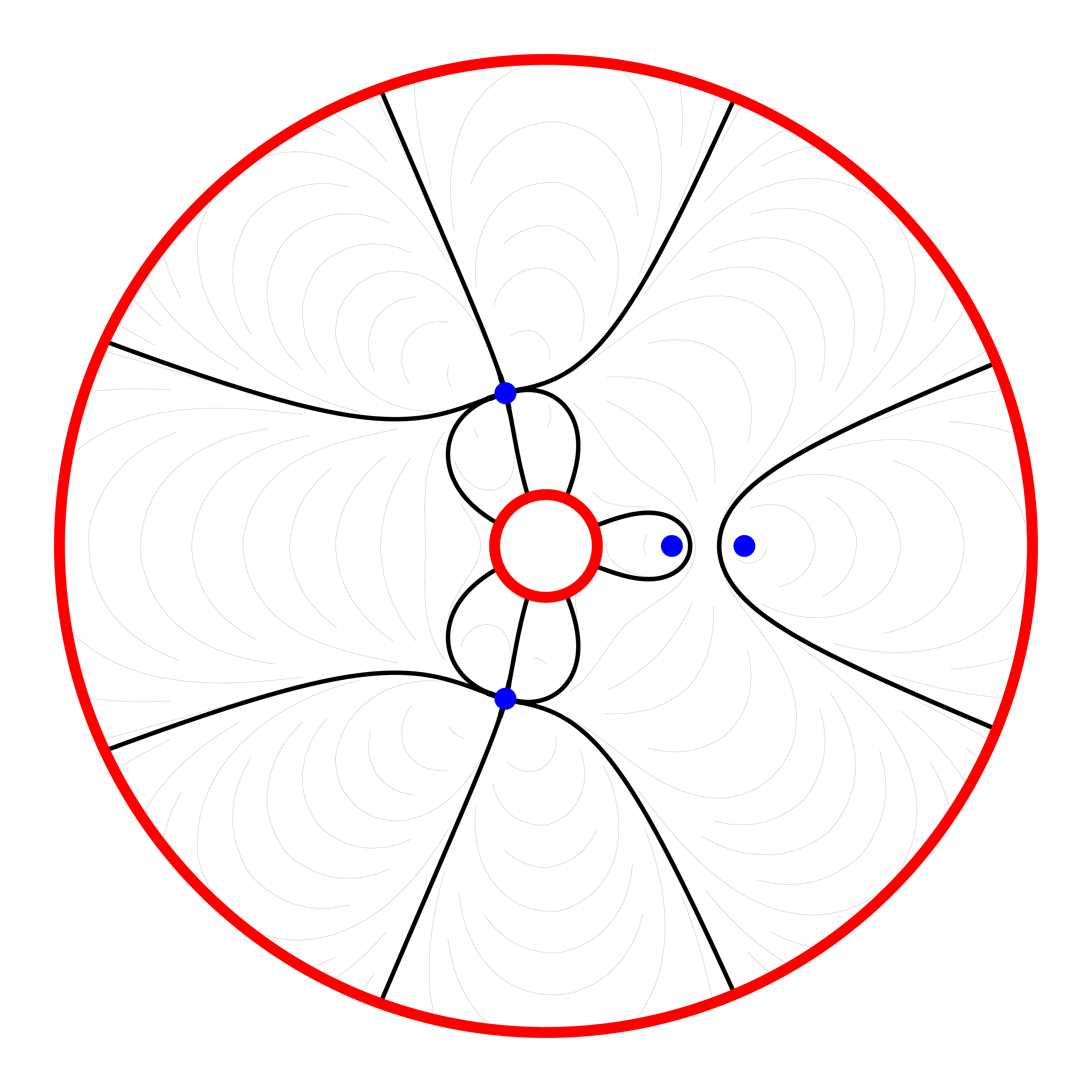} \caption{} \label{figure:vf1}	\end{subfigure}	
	\qquad
	\begin{subfigure}[t]{.4\textwidth}  \includegraphics[width=\textwidth]{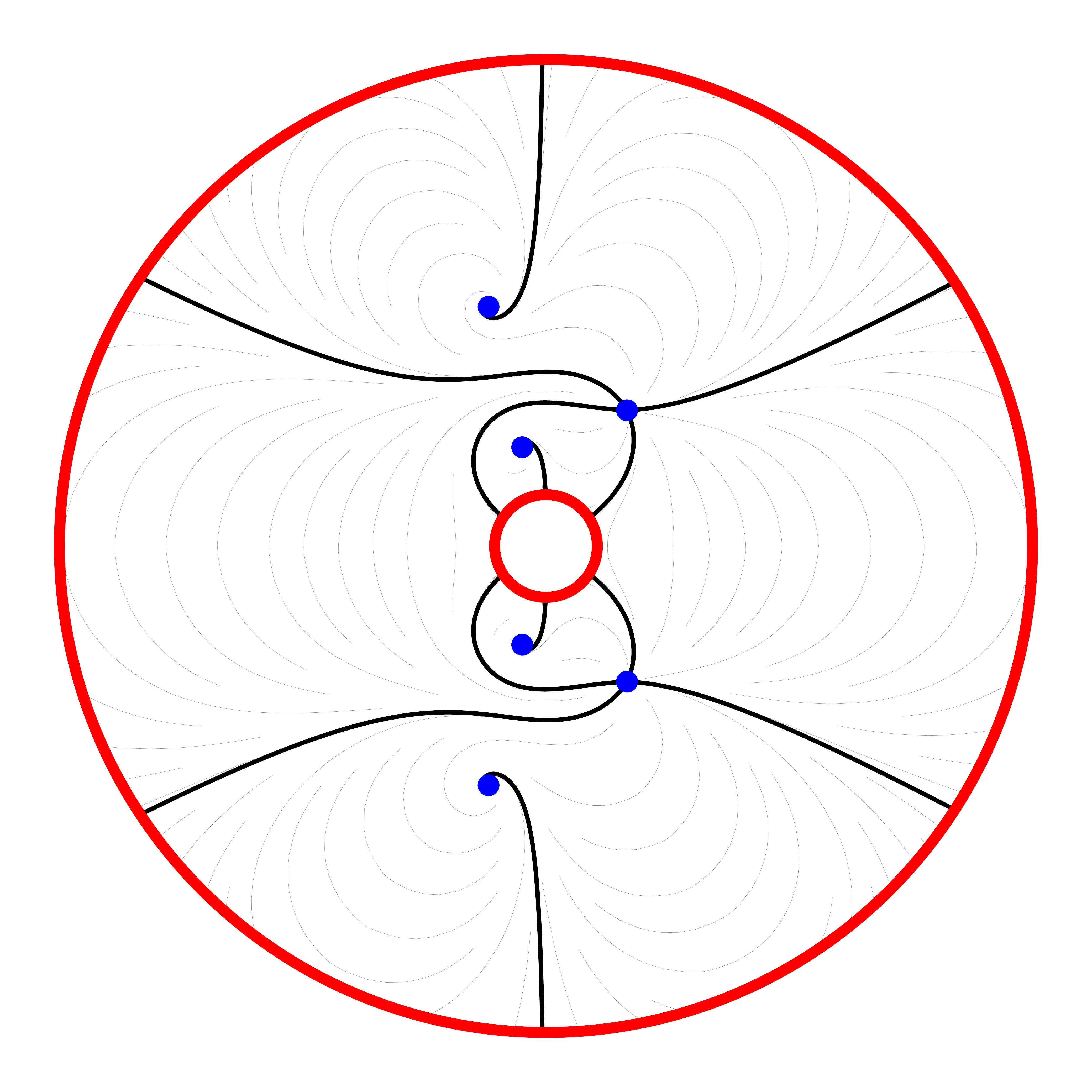}	\caption{} \label{figure:vf2} \end{subfigure}	
	\caption{Examples of the zone decomposition of a real phase portrait (separatrices in black). 
		\newline (a) \protect{$\bY_h=i\big(\xi_1+\frac{h}{\xi_1}-2\big)\big(\xi_1+\frac{h}{\xi_1}+\frac12\big)^3\xi_1\frac{\partial}{\partial\xi_1}$ with $h=0.95$: the phase portrait exhibits 2 center zones, 8 sepal zones and 2 $\alpha\omega$-zones.}
		\newline (b) \protect{$\bY_h=i\big(\big(\xi_1+\frac{h}{\xi_1}\big)^3-1\big)\xi_1\frac{\partial}{\partial\xi_1}$ with  $h=0.95$: all 6 zones are $\alpha\omega$-zones.}}
	\label{figure:vf}
\end{figure}

\begin{lemma}
The images by $\sigma:\xi_1\mapsto\frac{h}{\xi_1}$ and by 
$\begin{cases}\textit{(b)}\ \Lambda:\xi_1\mapsto\lambda\xi_1,\\
\textit{(c)}\ \sigma\Lambda=-I:\xi_1\mapsto-\xi_1,
\end{cases}$ of a separatrix of $e^{i\theta}h^s\bY_h$ are again separatrices of $e^{i\theta}h^s\bY_h$ (up to an orientation). Hence $\sigma$ and $\Lambda$ map zones to zones.	
\end{lemma}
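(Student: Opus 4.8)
The plan is to verify that $\sigma$ and $\Lambda$ (in the two cases) send the vector field $e^{i\theta}h^s\bY_h$ to a scalar real multiple of itself, since a real-positive rescaling preserves real trajectories and a real-negative one only reverses their orientation. Recall $\bY_h=cP_h(\xi_1)\,\xi_1\tdd{\xi_1}$, where $P_h(\xi_1)=P(\xi_1^p+\tfrac{h^p}{\xi_1^p},h)$ in case (b), resp. $P_h(\xi_1)=P(\xi_1+\tfrac{h}{\xi_1},h)$ in case (c), and recall from Theorem~\ref{thm:1X} / Theorem~\ref{thm:FNFX1} that the full vector field $\bY=cP(u,h)\bE$ on $(\C^2,0)$ satisfies $\sigma^*\bY=-\bY$ and $\Lambda^*\bY=\bY$ (indeed $\bE=\xi_1\tdd{\xi_1}-\xi_2\tdd{\xi_2}$ satisfies $\sigma^*\bE=-\bE$, $\Lambda^*\bE=\bE$, while $u=\xi_1^p+\xi_2^p$ (case (b)) or $u=\xi_1+\xi_2$ (case (c)) and $h=\xi_1\xi_2$ are $(\sigma,\Lambda)$-invariant). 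Restricting these identities to a fixed leaf $\{h=\const\}$, whose coordinate is $\xi_1$ and on which $\sigma$ acts by $\xi_1\mapsto \tfrac{h}{\xi_1}$ and $\Lambda$ acts by $\xi_1\mapsto\lambda\xi_1$ in case (b), resp. $\sigma\Lambda=-I$ acts by $\xi_1\mapsto-\xi_1$ in case (c), gives directly $\sigma^*\bY_h=-\bY_h$ and $\Lambda^*\bY_h=\bY_h$; the scalar $e^{i\theta}h^s$ being constant on the leaf, the same holds for $e^{i\theta}h^s\bY_h$ up to the sign $-1$.

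First I would make the leaf computation fully explicit in order to see that nothing goes wrong at the poles. For $\sigma:\xi_1\mapsto\tfrac{h}{\xi_1}$ one has $\sigma^*(\xi_1\tdd{\xi_1})=-\xi_1\tdd{\xi_1}$, and $P_h\circ\sigma=P_h$ because $u$ is $\sigma$-invariant on the leaf (in case (b), $\xi_1^p+\tfrac{h^p}{\xi_1^p}$ is invariant under $\xi_1\mapsto\tfrac{h}{\xi_1}$; in case (c), $\xi_1+\tfrac{h}{\xi_1}$ is invariant), hence $\sigma^*\bY_h=-\bY_h$. Likewise, in case (b), $\Lambda:\xi_1\mapsto\lambda\xi_1$ fixes $\xi_1\tdd{\xi_1}$ and fixes $u=\xi_1^p+\tfrac{h^p}{\xi_1^p}$ since $\lambda^p=1$ and $\tfrac{h^p}{(\lambda\xi_1)^p}=\tfrac{h^p}{\xi_1^p}$, wait—one must be careful: $h\circ\Lambda=\lambda\xi_1\cdot\lambda^{-1}\xi_2=h$, so $h$ is fixed and the leaf is mapped to itself, and on it $u\circ\Lambda=u$, giving $\Lambda^*\bY_h=\bY_h$. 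In case (c) the relevant involution is $\sigma\Lambda=-I:\xi_1\mapsto-\xi_1$, which sends $\xi_1\tdd{\xi_1}$ to itself and, since in case (c) $P(u,h)=u\tilde P(u^2,h)$ is odd in $u=\xi_1+\tfrac{h}{\xi_1}$ while $u\mapsto -u$ under $\xi_1\mapsto-\xi_1$, we get $P_h(-\xi_1)=-P_h(\xi_1)$, hence $(\sigma\Lambda)^*\bY_h=-\bY_h$.

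Next I would record the consequence for trajectories: if $\Psi$ is a biholomorphism of (a neighbourhood of the closure of) $B_h$ in $\CP^1$ with $\Psi^*(e^{i\theta}h^s\bY_h)=\pm e^{i\theta}h^s\bY_h$, then $\Psi$ maps real-time trajectories of $e^{i\theta}h^s\bY_h$ to real-time trajectories (reversing the time parametrisation in the $-$ case); in particular it maps separatrices to separatrices, because a separatrix is characterised as a trajectory whose closure meets a pole, and $\Psi$ permutes the poles $\{0,\infty\}$ of $\bY_h$ (for $h\neq0$; for $h=0$ it exchanges the two irreducible components of $B_0$, each carrying one pole, which is exactly the content of the statement there). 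Consequently $\Psi$ maps the separatrix graph to the separatrix graph and therefore permutes its complementary components, i.e. sends zones to zones. Since the zone type (the four cases $\alpha\omega$, sepal, center, annular) is a biholomorphic invariant of the zone together with the restricted foliation, $\Psi$ moreover preserves the type, though only the weaker "zones to zones" assertion is claimed in the lemma.

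I expect no real obstacle here; the only point demanding minor care is the behaviour at $h=0$ and the bookkeeping of orientations (whether $\sigma$ reverses trajectory orientation, which it does, since $\sigma^*\bY_h=-\bY_h$). For $h=0$ one simply observes that $\sigma$ interchanges the two irreducible components $\{\xi_2=0\}$ and $\{\xi_1=0\}$ of $B_0$ and, on each, conjugates $\bY_0$ to $-\bY_0$ of the form \eqref{eq:bY_0} on the other component; and $\Lambda$ (case (b)), resp. $\sigma\Lambda=-I$ (case (c)), fixes each component and acts there as above, using again $\lambda^p=1$ so that $\bY_0=pc\,\xi_1^{(k+1)p}\tdd{\xi_1^p}$ is $\Lambda$-invariant. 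This finishes the verification, modulo the routine identities just listed.

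\medskip
\emph{Remark.} In the write-up I would present the above as: state the identities $\sigma^*\bY=-\bY$, $\Lambda^*\bY=\bY$ (case (b)) / $(\sigma\Lambda)^*\bY=-\bY$ (case (c)) on $(\C^2,0)$ as immediate consequences of Theorem~\ref{thm:1X}, restrict to leaves, note that a biholomorphism conjugating a vector field to $\pm$ itself sends trajectories to trajectories and poles to poles, hence separatrices to separatrices and zones to zones, and handle $h=0$ by the component-exchange remark. The whole argument is about half a page.
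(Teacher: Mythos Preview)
Your proof is correct and is precisely the intended argument: the paper's own proof reads simply ``Trivial,'' since the identities $\sigma^*\bY=-\bY$ and $\Lambda^*\bY=\bY$ (resp.\ $(\sigma\Lambda)^*\bY=-\bY$ in case (c)) are immediate from the form of $\bY=cP(u,h)\bE$, and everything else follows. Your write-up spells out exactly these verifications, so it is essentially the same approach made explicit.
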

\begin{proof}
	Trivial.
\end{proof}

\begin{remark}
	While the zone decomposition of $e^{i\theta}h^s\bY_h$ is preserved by the action of $\tau_1=\sigma$, in general this is not true for $\tau_2=\sigma\Lambda\exp(h^s\bY)$ unless $\theta\in\pi\Z$. 
\end{remark}

\subsubsection{The rotating family and rotational stability.}

\begin{definition}\label{def:saddleconnections}
All the possible homoclinic or heteroclinic connections that appear in the rotating family  $e^{i\theta}h^s\bY_h$ for some $\theta\in\R$ are called \emph{saddle connections}.	
They correspond to oriented straight segments (geodesic segments) on the translation surface of $\bt_h$ between the saddle points corresponding to the poles.	
\end{definition}

The \emph{period map} is given by integration of the form $d\bt_h=\bY_h^{-1}$ along paths between saddles.
Namely, to each saddle connection $\gamma$, one associates its \emph{period} along $\gamma$:\label{page:period}
\[ \nu_\gamma=\int_\gamma \bY_h^{-1}\neq 0. \]
The period along the symmetric saddle connection $\sigma(\gamma)$ is the same, $\nu_{\sigma(\gamma)}=\nu_\gamma$.
(On the other hand, if $\xi_1=a$ is an equilibrium, then the dynamical residue \eqref{eq:nu_a} satisfies  $\nu_{\sigma(a)}=-\nu_a$ since $\sigma$ reverses orientation.)

\begin{definition}\label{def:rotstable}
	The vector field $e^{i\theta}h^s\bY_h$ is called \emph{rotationally stable} if it contains no homoclinic or heteroclinic separatrix connections. We call such pair of values $(h,\theta)$ \emph{stable}.
\end{definition}

The idea is that homoclinic/heteroclinic connections disappear under a small perturbation of the angle of rotation $\theta$ in \eqref{eq:rotated-vf} while landing separatrices are stable. This means that the topological organization of the phase portrait of a rotationally stable vector field doesn't change under a small change of $\theta$.

\begin{proposition}[Muciño-Raymundo, Valero \cite{Mucino-Valero}]\label{prop:rotstable}
The set of $\theta\in[0,\pi]$ for which $e^{i\theta}h^s\bY_h$ (with fixed $h$) is \emph{not} rotationally stable 
is at most countable with only finitely many accumulation points at exactly those $\theta$ for which $e^{i\theta}h^s\bY_h$
has an annular zone.

In fact, for each annular zone that appears in the rotating family for some angle $\theta$ and each pair of ends on opposite boundaries of the zone, there are $\Z$-many saddle connections between these ends inside the zone, and there is only a finite number\footnote{In our case at most $kp(2kp-1)$ counted without orientation.} of other saddle connections. 
\end{proposition}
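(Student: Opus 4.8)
\textbf{Proof proposal for Proposition~\ref{prop:rotstable}.}

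The plan is to reduce the statement to the elementary combinatorics of geodesic segments on a translation surface, exploiting the fact that $\bt_h(B_h)$ (or rather the full translation surface of $\bt_h$ on the leaf $\{h=\const\}$) carries finitely many saddle points, each of a fixed cone angle $2kp\pi$, and that the rotating family $e^{i\theta}h^s\bY_h$ just rotates the horizontal foliation of this one fixed surface. First I would fix $h\neq0$ and pass to the translation-surface picture: a homoclinic or heteroclinic connection of $e^{i\theta}h^s\bY_h$ corresponds exactly to a geodesic segment $\gamma$ joining two of the (finitely many) saddle points whose holonomy vector $\nu_\gamma=\int_\gamma\bY_h^{-1}$ lies on the line $e^{i\theta}h^s\R$. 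So the set of ``bad'' angles $\theta\in[0,\pi]$ is precisely $\{\theta:\arg(e^{i\theta}h^s\nu_\gamma^{-1})\in\pi\Z$ for some saddle connection $\gamma\}$, i.e. the image under $\nu\mapsto\arg(h^s\nu)\bmod\pi$ of the set of periods of saddle connections. It therefore suffices to understand the set $\{\nu_\gamma\}\subset\C^*$ of all possible periods.

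Next I would classify the saddle connections into two families according to the topological type of the region they bound, using the zone decomposition recalled just above (Figure~\ref{figure:zone}). A saddle connection $\gamma$ that, for the critical angle $\theta$, appears on the boundary of an $\alpha\omega$-zone, a sepal zone, or a center zone is ``isolated'': by the usual rotation argument (rotating $\theta$ slightly splits the connection because landing separatrices are stable under rotation while homoclinic/heteroclinic ones are not, cf. the discussion preceding Definition~\ref{def:rotstable}), for every such $\gamma$ there is only one critical $\theta$, and — since there are at most $kp(2kp-1)$ unoriented pairs of saddle points with at most one geodesic of ``minimal'' type joining them in a given free homotopy class, and these classes are finite in number — there are only finitely many such periods. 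This gives the ``finite number of other saddle connections'' clause. The remaining saddle connections are exactly those lying inside an annular zone: on an annular zone of circumference $\nu$ (the sum of the periods of the enclosed equilibria), for a pair of ends on the two boundary circles there is a whole $\Z$-family of geodesics differing by the core curve, so their periods form an arithmetic progression $\nu_0+\Z\nu$ all with the same argument $\arg(h^s\nu_0)$. Hence each annular zone contributes a single accumulation point of bad angles, namely $\theta$ with $e^{i\theta}h^s\in\R\cdot\nu$, and conversely an accumulation of bad angles forces infinitely many saddle connections with arguments tending to a limit, which by the arithmetic-progression structure can only come from an annular zone; there being finitely many annular zones possible (each enclosing a nonempty set of the $2kp$ equilibria with nonzero total period), the set of accumulation points is finite. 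Assembling these two observations: the bad set is a countable union of singletons (from the finitely many isolated connections and from the $\Z$-progressions), its accumulation points are exactly the finitely many annular-zone angles, hence it is at most countable with finitely many accumulation points, which is the assertion.

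The main obstacle I expect is the bookkeeping needed to bound the number of ``isolated'' saddle connections and to justify that only annular zones produce accumulation — in particular making precise that in a fixed free homotopy class, and between a fixed ordered pair of saddle points, the holonomy vectors realized by \emph{straight} geodesic segments form at most a finite set together with, in the annular case, one arithmetic progression. For this I would invoke the structure theory of the real phase portraits of rational vector fields on $\CP^1$ as recalled in Section~\ref{sec:6.2} (the finiteness of the zone decomposition, Hájek's theorem on the absence of limit cycles, and the Poincaré--Hopf count), together with the explicit degree bound: $\bY_h$ has exactly $2kp$ zeros and two poles, each pole of order $kp-1$ hence with $2kp$ separatrices, so the separatrix graph has a uniformly bounded number of edges and the number of distinct unoriented pairs of saddle points with a minimal connecting geodesic is at most $kp(2kp-1)$. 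The rotational-stability argument itself (small perturbation of $\theta$ breaks a homoclinic/heteroclinic connection) is standard and is exactly the content of Muciño-Raymundo--Valero \cite{Mucino-Valero}, so for that part I would simply cite their result rather than reprove it; the only thing to check carefully is that our one-parameter family $\theta\mapsto e^{i\theta}h^s\bY_h$ is literally the rotation family of a single meromorphic differential, which it manifestly is, so their theorem applies verbatim.
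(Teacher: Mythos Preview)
The paper does not prove this proposition at all: it is stated with attribution to Muci\~no-Raymundo and Valero \cite{Mucino-Valero} and used as a black box, with only the subsequent remark (that in our specific situation at most one annular zone can occur in the rotating family) added as commentary. Your proposal therefore goes strictly beyond what the paper does --- you are sketching the mechanism behind the cited result, not reproducing an argument the paper gives.

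Your sketch captures the right picture (unstable angles are the directions of holonomy vectors of saddle connections on the translation surface of $\bt_h$; an annular zone of circumference $\nu$ produces a $\Z$-family of saddle connections between a fixed pair of ends, and the remaining saddle connections are finitely many). One slip: you write that the periods $\nu_0+n\nu$ ``all have the same argument $\arg(h^s\nu_0)$'', but of course they do not --- their arguments are pairwise distinct and \emph{accumulate} to $\arg\nu$ as $|n|\to\infty$, which is exactly what makes the annular-zone angle an accumulation point of bad $\theta$'s. Your next sentence uses this correctly, so this looks like a wording error rather than a conceptual one. The finiteness of the ``other'' saddle connections is where your argument is loosest: the free-homotopy-class count you give is heuristic, and the clean statement is that outside a maximal periodic cylinder each saddle connection is the unique geodesic in its class rel endpoints, with the number of such classes bounded combinatorially by the separatrix graph (this is where the $kp(2kp-1)=\binom{2kp}{2}$ in the footnote comes from). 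Since you explicitly defer to \cite{Mucino-Valero} for the core result anyway, none of this is a genuine gap --- and it is already more than the paper itself supplies.
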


\Grn{
In our case, there can be no more then one annular zone that appears in the rotating family (and therefore it is necessarily invariant by $\Lambda$ and $\sigma$):
There needs to be at least one pole on each boundary component of the zone and we have only two poles, $0$ and $\infty$,
hence if two different zones appeared in the rotating family for two different angles $\theta_1\neq\theta_2\mod\pi\Z$, they would have to intersect.
However this is not possible since each trajectory in the first annular zone is closed (periodic), dividing $\CP^1$ into two parts,
and all the trajectories of the other rotated field that cross it do so with the same angle $\theta_1-\theta_2$, so none of them can be closed.}
Therefore, in the Proposition~\ref{prop:rotstable} there is at most one value of $\theta$ modulo $\pi\Z$ to which the rotationally unstable angles can accumulate.

\begin{figure}[t]
\begin{subfigure}{.3\textwidth} \includegraphics[width=\textwidth]{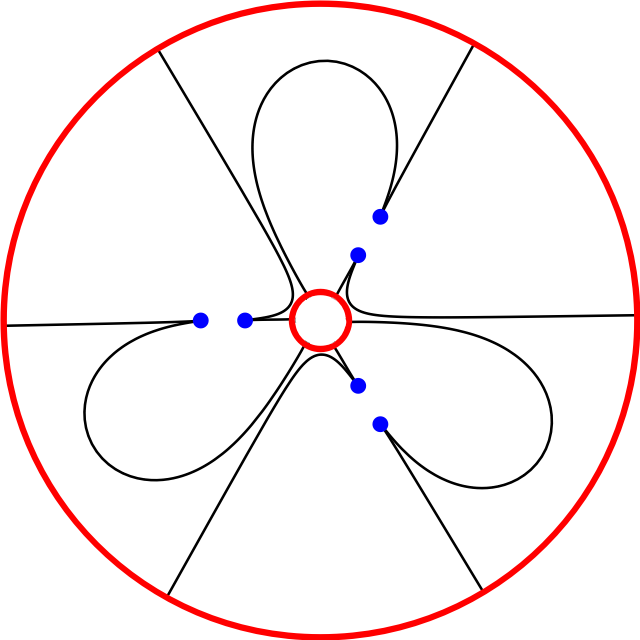} \caption{} \label{figure:vf3-a}\end{subfigure}
\quad
\begin{subfigure}{.3\textwidth} \includegraphics[width=\textwidth]{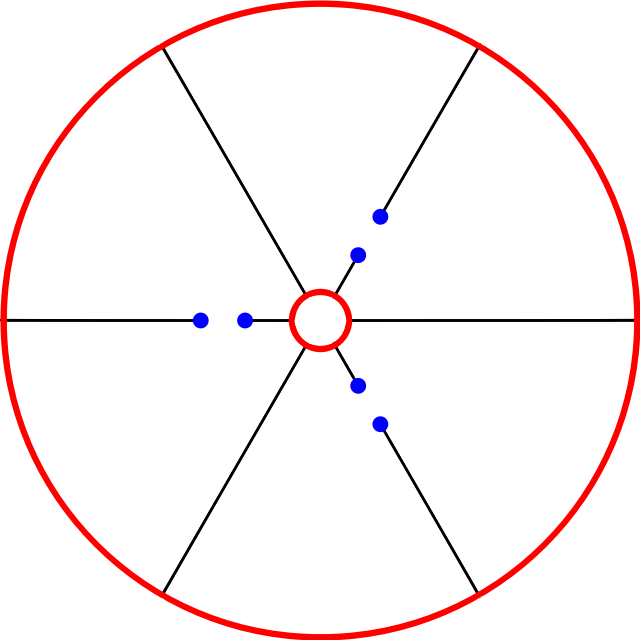} \caption{$\theta=\frac{\pi}{2}$} \end{subfigure}
\quad
\begin{subfigure}{.3\textwidth} \includegraphics[width=\textwidth]{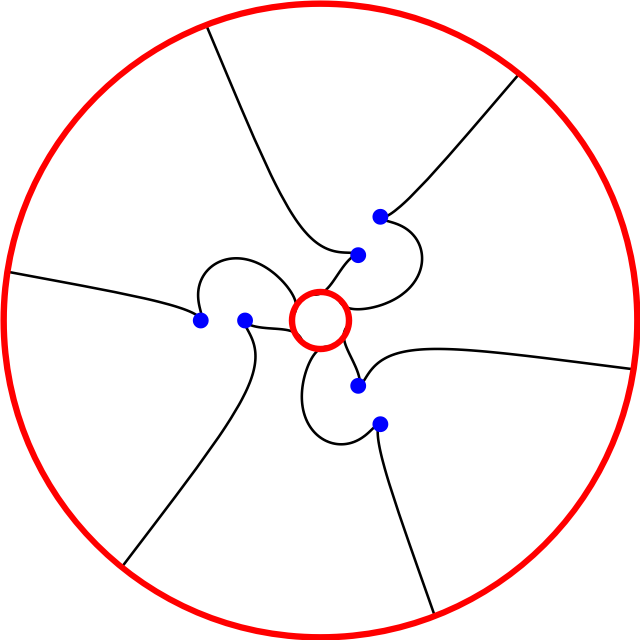} \caption{} \label{figure:vf3-c} \end{subfigure}
\\[12pt]
\begin{subfigure}{.3\textwidth} \includegraphics[width=\textwidth]{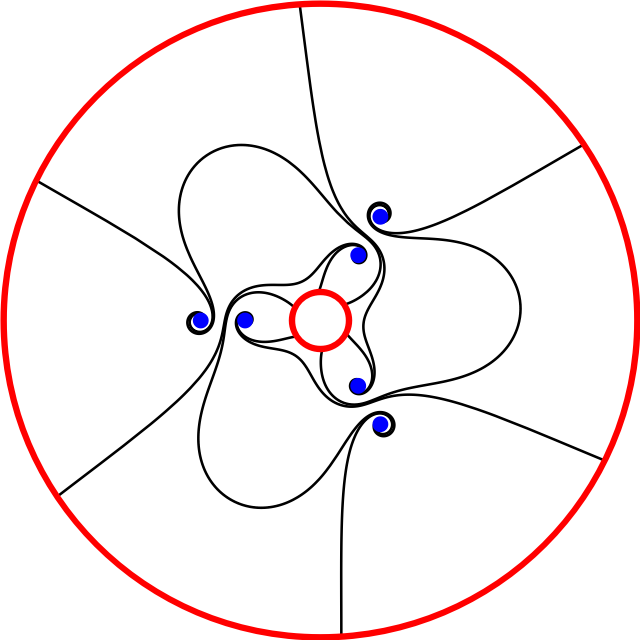} \caption{} \end{subfigure}
\quad		
\begin{subfigure}{.3\textwidth} \includegraphics[width=\textwidth]{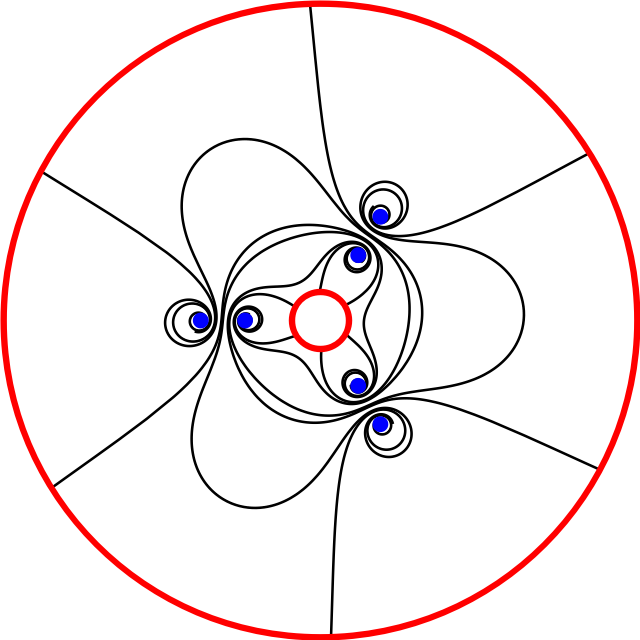} \caption{} \end{subfigure}	
\quad
\begin{subfigure}{.3\textwidth} \includegraphics[width=\textwidth]{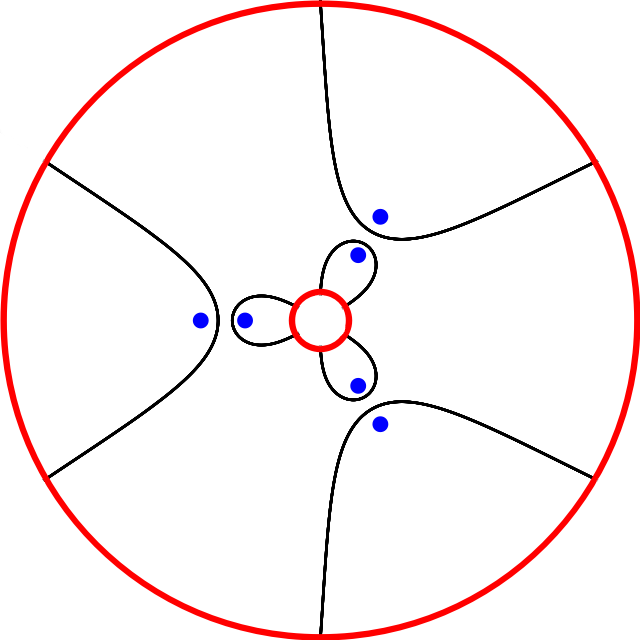} \caption{$\theta=\pi$} \label{figure:vf3-f} \end{subfigure}		
\caption{Example of separatrices in the real phase portrait of a rotating family $e^{i\theta}h^s\bY_h$ in the case $k=1$ and $p=3$ (Example~\ref{example:k=1}) for a fixed $h>0$ and with $c=i$, $P_0(h)>0$, according to some increasing values of $\theta$.
There is an infinite sequence of bifurcations at unstable values of $\theta\in[0,\pi]$ which accumulate towards the angle $0,\pi$ a the annular zone.
In the figure, a bifurcation occurs in the transition from (a) to (c), from (c) to (d), from (d) to (e). The portrait (b) is rotationally unstable with 3 heteroclinic separatrices, and so is (f) which exhibits an annular zone and 6 center zones.}
\label{figure:vf3}
\end{figure}

\begin{example}[$k=1$]\label{example:k=1}
For $k=1$, the vector field is of the form $\bY=c\big(u+P_0(h)\big)\bE$, i.e.
\[\bY_h=pc\big(\xi_1^{2p}+P_0(h)\xi_1^p+h^p\big)\tdd{\xi_1^p}.\]
Here if $P_0(h)^2\neq4h^p$ then an annular zone always appears in the rotating family for some value of $\theta$ (see Section~\ref{sec:6.2example}), although it could potentially be degenerated to a (poly)cycle.
An example of the dependence of the real phase portrait of $e^{i\theta}h^s\bY_h$ on $\theta$ is shown in Figure~\ref{figure:vf3}.
\end{example}

\begin{definition}
The only kinds of zones that a rotationally stable vector field can have are 
	\begin{enumerate}
		\item sepal zones with one end\footnote{Recall that an \emph{end} refers to one of the  hyperbolic sectors at a pole, see page \pageref{page:ends}.}  at a pole (Figure~\ref{figure:zonex-b}),
		\item $\alpha\omega$-zones with exactly two ends (at either two distinct poles or at one same pole) (Figure~\ref{figure:zonex-a}),
	\end{enumerate}
\Grn{as any other kind of zone would have on its boundary either a homoclinic or a heteroclinic connection (which goes against Definition~\ref{def:rotstable}).}
Let us call such zones \emph{rotationally stable}. 
\end{definition}

\subsubsection{Half-zone decomposition.}\label{sec:6.2halfzones}
\Grn{
Rotationally stable $\alpha\omega$-zones have 2 ends, while rotationally stable sepal zones have only 1 end each.
What can happen in a parametric family is that at a limit some singularities merge, and an $\alpha\omega$-zone splits into a pair of sepal zones: 
the bifurcation happens near the singular locus, away from the boundary, so ends are not affected.
To get a unified description that passes well to the limit, we shall split each rotationally stable $\alpha\omega$-zone into a pair of ``half-zones'': each having one end.
There is a canonical way to do so.
}

\begin{definition}\label{def:gates}
A rotationally stable $\alpha\omega$-zone contains exactly one saddle connection\footnote{See Definition~\ref{def:saddleconnections}.} 
which joins its two ends -- \Grn{in the coordinate $\bt_h$ it corresponds to the finite segment joining the saddle points on the two boundary lines,} 
see Figure~\ref{figure:zone-a}. It is called a \emph{transversal} of the zone.
Since  the transversal has a finite period, it has a well defined \emph{midpoint} (note that the transversal, and therefore also the midpoint, is independent of a small variation of the angle $\theta$).  
Following \cite{HLR}, the trajectory through the midpoint is called \emph{gate}.
In the coordinate $\bt_h$, the $\alpha\omega$-zone is an infinite strip and the gate divides it to two parallel strips of half the width (see Figure~\ref{figure:k1}).

The \emph{gate graph} of $e^{i\theta}h^s\bY_h$ for a stable $\theta$ is the closure of the union of the unoriented gates of all $\alpha\omega$-zones.\footnote{It is a topological graph in $\C^*$ with vertices at the equilibria and the gates as edges.}
\end{definition}

\begin{definition}\label{def:halfz}
The	\emph{half-zones} of a rotationally stable $e^{i\theta}h^s\bY_h$ are the connected components of the complement of the union of the separatrix graph and the gate graph. They are either 
\begin{enumerate}
	\item a rotationally stable sepal zone (bounded by 2 separatrices), or
	\item a half of a rotationally stable  $\alpha\omega$-zone cut in two halves by the gate trajectory (bounded by 2 separatrices and the gate).
\end{enumerate}

Thus each half-zone has exactly one end located at either the outer pole $\xi_1=\infty$, or the inner pole $\xi_1=0$.  
Accordingly, the half-zone are called either \emph{outer} or \emph{inner}. 
\end{definition}

To each outer half-zone there is a symmetric by $\sigma$ inner half-zone.
The terminology clearly depends on the choice of the coordinate $\xi_1$ on $B_h$: choosing $\xi_2$ instead would lead to opposite naming, but this is not important.

\begin{proposition}\label{prop:gate}
For a stable $(h,\theta)$, $h\neq 0$, the gate graph 
\begin{enumerate}
	\item is connected and homotopic to a simple non-contractible loop in the leaf $\{h=\const\}$,
	\item is preserved by $\sigma:\xi_1\mapsto\frac{h}{\xi_1}$ \eqref{eq:sigma1} and by 
	$\begin{cases}\text{(b)}\ \Lambda:\xi_1\mapsto\lambda\xi_1,\\ 
		\text{(c)}\ \sigma\Lambda=-I:\xi_1\mapsto-\xi_1, 
	\end{cases}$
	\item contains all the equilibria, 
	and in the formal case (b) also all the fixed points $\xi_1=\pm\lambda^{\frac{n}2} h^{\frac12}$ of $\Lambda^{n}\sigma=\left(\begin{smallmatrix}0&\lambda^n \\ \lambda^{-n}&0\end{smallmatrix}\right)$, $n\in\Z_{p}$, which are either equilibria or  midpoints of $\alpha\omega$-zones.
	. 
	\com{I don't remember if we use this statement anywhere. 
		However these points are some natural special points that can be considered. For example if one asks whether there is a $\sigma$-invariant $\alpha\omega$-zone, the answer is yes if and only if the fixed point of $\sigma$ is not an equilibrium (hence it is a midpoint of such zone). }
\end{enumerate}
\end{proposition}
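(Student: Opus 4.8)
The plan is to prove Proposition~\ref{prop:gate} by analyzing the period map and exploiting the symmetries $\sigma$ and $\Lambda$. First I would recall that for a stable $(h,\theta)$ with $h\neq0$ the separatrix graph of $e^{i\theta}h^s\bY_h$ consists, by the half-zone decomposition (Definition~\ref{def:halfz}), of $2kp$ outer half-zones and $2kp$ inner half-zones, each with exactly one end. The gate graph is a finite topological graph in $\C^*=\CP^1\sminus\{0,\infty\}$ whose vertices are the $2kp$ equilibria (counted with multiplicity) and whose edges are the gates of the $\alpha\omega$-zones; a rotationally stable sepal zone contributes no gate. For point (3), I would argue that every equilibrium lies on the closure of the separatrix graph \emph{and} on the closure of the union of all transversals: indeed each equilibrium of $\bY_h$ in $B_h$ bounds both half-zones and $\alpha\omega$-zones, and by the Poincaré--Hopf index count each simple/multiple equilibrium is the landing point of at least one separatrix and is an endpoint of at least one transversal (the transversal being the unique saddle connection inside a stable $\alpha\omega$-zone). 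Hence each equilibrium is a vertex of the gate graph. For the special points $\xi_1=\pm\lambda^{n/2}h^{1/2}$, which are the fixed points of $\Lambda^n\sigma$, one observes that $\Lambda^n\sigma$ reverses the orientation of $\bY_h$ while fixing this point; consequently either the point is itself an equilibrium, or the local involution forces the trajectory through it to be symmetric, so the point lies on a saddle connection fixed setwise by $\Lambda^n\sigma$, i.e. it is the midpoint of the transversal of an $\alpha\omega$-zone, and therefore lies on the gate.

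Next, for point (2), since $\sigma$ and $\Lambda$ (resp.\ $\sigma\Lambda=-I$ in case (c)) map the vector field $e^{i\theta}h^s\bY_h$ to itself up to orientation (by the Lemma following Definition~\ref{def:halfz} on zones, and the fact that $P\circ\Lambda=\pm P$, $P\circ\sigma=\pm P$), they permute the zones, hence permute the $\alpha\omega$-zones, hence permute their transversals; since a period integral $\nu_\gamma=\int_\gamma\bY_h^{-1}$ is carried to $\pm\nu_{\gamma'}$ by these maps and midpoints are characterized metrically (the unique point dividing the transversal into two halves of equal period), midpoints go to midpoints and gates go to gates. Thus the gate graph is invariant under $\sigma$ and $\Lambda$.

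For point (1), the connectivity and homotopy type, I would use an Euler-characteristic / index argument. The complement of the gate graph in $\CP^1$ is the disjoint union of the $4kp$ half-zones, each of which is a topological disc (a sepal half-zone is a half-plane in the $\bt_h$ coordinate, a cut $\alpha\omega$-half-zone is a half-strip — both contractible) with exactly one puncture at a pole; so removing the two poles, the complement of the gate graph in $\C^*$ consists of $4kp$ simply connected pieces. Since $\C^*$ is connected and has the homotopy type of a circle, and the gate graph is closed, a standard Mayer--Vietoris / Euler-characteristic computation — $\chi(\C^*)=0$, each of the $4kp$ faces contributes $\chi=1$ after filling the puncture but the two poles at $0,\infty$ subtract — shows the gate graph must be connected and carry the full first homology of $\C^*$, i.e.\ it is homotopic to a simple non-contractible loop. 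Concretely: if the gate graph were disconnected or contractible, one of the complementary half-zones would fail to be simply connected or the two poles would lie in the same complementary component, contradicting that each half-zone has exactly one end and the outer/inner half-zones are exchanged by $\sigma$ (so $0$ and $\infty$ are separated by the graph).

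The main obstacle I expect is point (1): making the topological count rigorous requires keeping careful track of how the $4kp$ half-zones fit together along separatrices and gates — in particular that the boundary identifications along separatrices do not create extra handles or disconnect the graph — and handling the non-generic cases where several ends of a pole lie on the boundary of a single zone (Figure~\ref{figure:zone}) or where $\alpha\omega$-zones degenerate. I would handle this by passing to the translation surface picture of $\bt_h$: the gate graph lifts to the union of the perpendicular-bisector segments of the geodesic transversals joining the two conical singularities (of cone angle $2kp\pi$ each), and the half-zones are exactly the $4kp$ ``horizontal'' half-infinite regions cut out by these bisectors together with the separatrices; the symmetry $\sigma$ acts as the deck involution exchanging the two cone points, which immediately gives that the bisector graph separates them and is invariant, and the cone-angle bookkeeping $2kp+2kp = 4kp$ gives the count of half-zones and hence connectivity of the graph.
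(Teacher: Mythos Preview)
Your argument for point~(3) on the fixed points of $\Lambda^n\sigma$ contains a genuine gap. You write that since $\Lambda^n\sigma$ reverses $\bY_h$ and fixes $p=\pm\lambda^{n/2}h^{1/2}$, ``the local involution forces the trajectory through it to be symmetric, so the point lies on a saddle connection fixed setwise by $\Lambda^n\sigma$''. But the trajectory through $p$ is a real trajectory of $e^{i\theta}h^s\bY_h$, and for a rotationally \emph{stable} $(h,\theta)$ no real trajectory is a saddle connection (that is precisely the stability hypothesis). The transversal of an $\alpha\omega$-zone is a saddle connection in a \emph{different} direction than $\theta$; there is no reason the trajectory through $p$ should coincide with it. So the implication ``symmetric trajectory $\Rightarrow$ on a transversal'' fails.

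The paper's route is both simpler and avoids this trap by proving~(1) first and then \emph{using}~(1) to get~(3). For~(1), rather than an Euler-characteristic count, the paper observes directly that the union of the closures of the outer half-zones is a contractible neighbourhood of $\infty$, and symmetrically for the inner half-zones around $0$; their common boundary is then a simple loop made of gates, and the full gate graph is connected and homotopic to it. This bypasses all the bookkeeping you flag as the ``main obstacle''. For~(3), once~(1) is known, the argument is immediate: $\Lambda^n\sigma$ swaps $0\leftrightarrow\infty$, hence swaps the outer and inner contractible regions, so any fixed point must lie on the separating gate graph. If it is not an equilibrium it lies on a gate; the ambient $\alpha\omega$-zone is then $\Lambda^n\sigma$-invariant, its unique transversal is mapped to itself with endpoints swapped, and the two half-zones (one outer, one inner) are exchanged --- forcing the fixed point to be on the transversal at its midpoint. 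The key logical dependency (3)$\leftarrow$(1) is what your proposal misses.
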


\begin{proof}
1) The union of the closures of the outer half-zones is a contractible neighborhood of $\infty$, while that of inner half-zones is a contractible neighborhood of $0$. The intersection of these two neighborhoods is a simple loop consisting of gates between outer and inner half-zones. 
Connectedness follows from the rotational stability.

2) Clear.

3) \Grn{In the formal case (b), the fixed point  $\xi_1=\pm\lambda^{\frac{n}2}h^{\frac12}$  of $\Lambda^{n}\sigma$ cannot belong to neither the outer nor the inner component of the complement of the gate graph, as these two get swapped by $\Lambda^{n}\sigma$, 
hence it lies on the gate graph. If it is not an equilibrium, then it lies on the gate of some $\alpha\omega$-zone invariant by $\Lambda^{n}\sigma$.
	The unique transversal of this zone is then also mapped onto itself by $\Lambda^{n}\sigma$, and the fixed point belongs to it: the transversal divides the zone into two halves which are swapped by $\Lambda^{n}\sigma$. Hence it is the midpoint of the transversal. Likewise in the formal case (c).}
\end{proof}

\begin{cor}
	For every stable $(h,\theta)$, $h\neq 0$, there is always at least $p$ of $\alpha\omega$-zones with one end at $0$ and other end at $\infty$.
\end{cor}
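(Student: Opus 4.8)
The statement to prove is the Corollary: for every stable $(h,\theta)$ with $h\neq 0$, there are at least $p$ $\alpha\omega$-zones each having one end at $\xi_1=0$ (inner pole) and the other at $\xi_1=\infty$ (outer pole). My plan is to deduce this directly from Proposition~\ref{prop:gate}, in particular from parts (1)--(3), by a counting argument tied to the $\Lambda$-symmetry.

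\textbf{Key steps.} First, I would recall from Proposition~\ref{prop:gate}(1) that the gate graph is connected and homotopic to a simple non-contractible loop in the leaf $\{h=\const\}$; its complement has exactly two components, a neighborhood $U_\infty$ of $\xi_1=\infty$ (the union of closures of the outer half-zones) and a neighborhood $U_0$ of $\xi_1=0$ (the union of closures of the inner half-zones). An $\alpha\omega$-zone has \emph{both} of its ends at poles that lie, after cutting by the gate, on opposite sides: a zone with one end at $0$ and one end at $\infty$ is precisely a zone whose gate is one of the edges of the gate graph separating $U_0$ from $U_\infty$ (equivalently, the gate of such a zone is part of the loop, not an internal edge). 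So the claim becomes: the simple loop forming the gate graph is composed of at least $p$ gates coming from such ``bridging'' $\alpha\omega$-zones.

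Next, the symmetry input. By Proposition~\ref{prop:gate}(2) the linear map $\Lambda$ (in case (b), $\xi_1\mapsto\lambda\xi_1$, of order $p$; in case (c), $-I$) preserves the gate graph, and it swaps $U_0$ with $U_\infty$ only through the $\sigma$-part — actually $\Lambda$ fixes $0$ and $\infty$, so $\Lambda$ maps the loop to itself preserving each side, hence acts as a cyclic rotation by $\frac{1}{p}$ of the turn on the topological circle formed by the gate graph. Consider the fixed points of $\Lambda^{n}\sigma$, namely $\xi_1=\pm\lambda^{n/2}h^{1/2}$ for $n\in\Z_p$: by Proposition~\ref{prop:gate}(3) each of these $2p$ points lies on the gate graph, and each is either an equilibrium or the midpoint of the gate of a $\Lambda^{n}\sigma$-invariant $\alpha\omega$-zone. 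Because $\Lambda^{n}\sigma$ swaps $U_0$ and $U_\infty$, any $\alpha\omega$-zone invariant under $\Lambda^{n}\sigma$ necessarily has one end in $U_0$ and one end in $U_\infty$ — i.e. it is a bridging zone — and its gate lies on the separating loop. These $2p$ special points are distinct (as $h\neq0$), and since $\Lambda$ permutes them in $p$-element orbits and the gate graph loop is $\Lambda$-invariant, each orbit contributes at least one gate on the loop coming from a bridging $\alpha\omega$-zone (or the points are equilibria, which themselves sit at vertices between such gates). A careful accounting — walking once around the loop and noting that between two consecutive $\Lambda$-translates of a special point there must be at least one bridging-zone gate — yields at least $p$ distinct bridging $\alpha\omega$-zones.

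\textbf{Main obstacle.} The delicate point is the last counting step: one must rule out that all the separating structure between $U_0$ and $U_\infty$ is carried by equilibria and sepal-type pieces with \emph{fewer} than $p$ genuine $\alpha\omega$-gates, or that several of the special fixed points of the $\Lambda^n\sigma$ coincide with the same zone's gate. The clean way around this is to observe that $\Lambda$ acts freely on the $p$ half-lines $\{\arg\xi_1 = \arg(h^{1/2}) + \tfrac{2\pi m}{p}\}$ through the special points, and that the gate graph, being a $\Lambda$-invariant topological circle passing through all $2p$ of these points, must cross the fundamental annular sector for $\Lambda$ and return — forcing the number of bridging gates to be a positive multiple of $p$, hence $\geq p$. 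I would present this as: ``By Proposition~\ref{prop:gate} the gate graph is a $\Lambda$-invariant simple loop through all the points $\xi_1=\pm\lambda^{n/2}h^{1/2}$, $n\in\Z_p$; since $\Lambda$ has order $p$ and rotates this loop, and each such point that is not an equilibrium is a midpoint of a bridging $\alpha\omega$-zone while each equilibrium on the loop lies between two bridging gates, the loop contains at least $p$ gates of $\alpha\omega$-zones with one end at $0$ and one end at $\infty$.'' The routine verifications (distinctness of points, that $\Lambda^n\sigma$-invariance of a zone implies it bridges the two poles) are short and I would not grind through them here.
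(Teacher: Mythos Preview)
Your core argument is correct, but you are working much harder than necessary. The paper's proof is two lines: by Proposition~\ref{prop:gate}(1) the gate graph is a $\Lambda$-invariant non-contractible loop, hence contains at least one gate; the images of this gate under $\Lambda^n$, $n=0,\ldots,p-1$ (resp.\ under $\sigma\Lambda=-I$ in case~(c)), are $p$ distinct gates. Implicit in this is the observation---which you correctly isolate---that the edges of the loop separating $U_0$ from $U_\infty$ are precisely the gates of $\alpha\omega$-zones bridging the two poles, and that $\Lambda$ acts freely on the $2kp$ ends at each pole (so no gate is $\Lambda^n$-fixed for $0<n<p$).

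Your detour through Proposition~\ref{prop:gate}(3) and the fixed points of $\Lambda^n\sigma$ is unnecessary and creates the obstacle you yourself flag: those $2p$ points may all be equilibria, in which case Prop.~\ref{prop:gate}(3) gives no $\alpha\omega$-zone directly, and you are forced back onto the loop argument anyway. Your final formulation (``the number of bridging gates is a positive multiple of $p$, hence $\geq p$'') is exactly the paper's argument, reached after a roundabout trip. Also, a minor imprecision: the gate graph is only \emph{homotopic} to a simple loop---it may have tree branches (gates of non-bridging $\alpha\omega$-zones with both ends at the same pole)---so you should speak of the loop part, not claim the graph is literally a circle.

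In short: drop parts~(3) and the fixed-point counting entirely; keep only your first key step (bridging zones $\leftrightarrow$ gates on the separating loop) together with the free $\Lambda$-action on the ends, and you have the paper's proof.
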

\begin{proof}
\Grn{By point 1) of Proposition~\ref{prop:gate}	the gate graph must contain at least one gate, which then has $p$ distinct images by $\Lambda^n$, $n=0,\ldots,p-1$, (resp. by $\sigma\Lambda=-I$ in the formal case (c)), which are also gates.}
\end{proof}

An advantage of working with half-zones rather then zones is that half-zones are in a 1--1 correspondence with the ends, and as such they have a well defined limit when $h\to 0$.

\begin{proposition}\label{prop:halfz-param}
	The outer half-zones of $e^{i\theta}h^s\bY_h$ depend continuously on $(h,\theta)$ on each connected component of
	\[ \{(h,\theta): |h|<\delta_2,\ \theta\in\R/\pi\Z,\ e^{i(\theta+s\arg(h))}\bY_h\ \text{is rotationally stable}\}, \]
	and  have a limit as outer half-zones of $e^{i(\theta_0+s\tilde\theta_0)}\bY_0$ when $(h,\theta)\to (0\cdot e^{i\tilde\theta_0},\theta_0)$ (i.e. when $h\to 0$  with an asymptotic direction, $\arg(h)\to \tilde\theta_0$) within this connected component. 
	Note that the vector field $\bY_0$ \eqref{eq:bY_0} is defined on the irreducible component $\{\xi_2=0,\ |\xi_1|<\delta_1\}$ of $B_0$,
	and as such all its half-zones are outer. A symmetric statement by means of $\sigma$ is true for the inner half-zones, which have a limit in the other irreducible component $\{\xi_1=0,\ |\xi_2|<\delta_1\}$ of $B_0$. 
\end{proposition}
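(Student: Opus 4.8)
The plan is to analyse the family $e^{i(\theta+s\arg h)}\bY_h$ on the annulus $B_h$ near its boundary, where the picture is simple and $h$-independent in the limit, and to lift this to a statement about the global half-zone decomposition. First I would fix attention on a connected component $U$ of the set of rotationally stable pairs, and recall from Section~\ref{sec:6.2.1} and Figure~\ref{figure:ends} that for $h\neq0$ the vector field $e^{i(\theta+s\arg h)}\bY_h$ has exactly two poles, at $\xi_1=0$ and $\xi_1=\infty$, each of order $kp-1$, so each pole carries exactly $2kp$ ends, and that for $h=0$ the limit field $\bY_0$ \eqref{eq:bY_0} on the component $\{\xi_2=0,\ |\xi_1|<\delta_1\}$ of $B_0$ has a single pole of order $kp-1$ at $\infty$ with $2kp$ ends. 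The key observation is that, after restricting $\delta_1^2\gg\delta_2>0$, all zeros of $\bY_h$ stay in a fixed compact annulus well inside $B_h$ while all poles stay outside $B_h$ (one in each boundary component of its complement), so near $\{|\xi_1|=\delta_1\}$ the real phase portrait of $e^{i(\theta+s\arg h)}\bY_h$ is a small perturbation, continuous in $(h,\theta)$, of the hyperbolic corner model $\tfrac{1}{z^{kp-1}}\tdd{z}$, and this perturbation converges as $h\to0$ with $\arg h\to\tilde\theta_0$ to the corresponding local model of $e^{i(\theta_0+s\tilde\theta_0)}\bY_0$. Thus the ``ends at the outer pole'' and their cyclic order are a locally constant, and limit-stable, datum.

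Next I would use the correspondence, valid by Definition~\ref{def:halfz}, between half-zones and ends: each outer half-zone has exactly one end at $\xi_1=\infty$, so the $2kp$ outer half-zones are indexed by the $2kp$ outer ends. What must be shown is that the \emph{full} half-zone attached to a given outer end varies continuously with $(h,\theta)$ on $U$ and has a limit. For this I would trace, from the chosen end, the two boundary separatrices of the half-zone: by rotational stability (Definition~\ref{def:rotstable}) each separatrix either lands at an equilibrium or, together with the gate (Definition~\ref{def:gates}), bounds the half-zone without forming a homoclinic/heteroclinic connection; since landing separatrices and the midpoints of transversals are stable under small perturbation of $(h,\theta)$ (this is exactly the content of Proposition~\ref{prop:rotstable} together with the remark that within $U$ there are no saddle connections), the separatrix graph and gate graph, hence the half-zone, move continuously on $U$. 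For the limit $h\to0$ along an asymptotic direction, I would invoke Proposition~\ref{prop:Fatou-dependence}-style continuity of the coordinate $\bt_h$ in $h$ together with the observation that the zeros of $\bY_h$ converge (with multiplicity) to the single zero of order $kp+1$ of $\bY_0$ at the origin of $\{\xi_2=0\}$: the separatrix and gate graphs, being determined by integrals of $\bY_h^{-1}$ along geodesic segments between the (fixed) saddle points, converge to the corresponding graphs of $e^{i(\theta_0+s\tilde\theta_0)}\bY_0$, and since $\bY_0$ has only the outer pole, all its half-zones are outer, which matches the count $2kp$. The symmetric statement for inner half-zones then follows by applying $\sigma:\xi_1\mapsto h/\xi_1$ \eqref{eq:sigma1}, which maps outer half-zones to inner ones (Proposition~\ref{prop:gate}(2)) and, in the limit, interchanges the two irreducible components of $B_0$.

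The main obstacle I expect is controlling the behaviour of the boundary separatrices globally, i.e.\ showing that as $(h,\theta)$ moves within $U$ no separatrix ``escapes'' — that the endpoint of each separatrix (an equilibrium, or the transversal it crosses) depends continuously and does not jump between distinct equilibria. Away from bifurcation values this is standard structural stability of the real flow of a rational vector field, but one must check uniformity: that $\delta_2$ can be chosen once and for all so that on every component $U$ the structural stability constants are uniform down to $h=0$. I would handle this by working on the compact annulus $\{\,\tfrac{\delta_2}{\delta_1}\le|\xi_1|\le\delta_1\,\}$ in which all zeros lie, where $e^{i(\theta+s\arg h)}\bY_h$ and its limit are uniformly close in $C^\infty$, and then quoting the structural stability theorem for rational vector fields on $\CP^1$ restricted to a region with no saddle connections (Proposition~\ref{prop:rotstable} and \cite{Mucino-Valero}); the passage to the limit $h\to0$ is then just continuity of the flow and of the period map on this compact family, together with the merging-of-singularities picture already described in Section~\ref{sec:6.2}.
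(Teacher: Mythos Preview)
Your proposal is essentially correct and follows the same approach as the paper: outer half-zones are bounded by separatrices emanating from the pole at $\xi_1=\infty$ together with gates, and both of these data vary continuously on the rotationally stable set because the only possible bifurcation is the appearance of a saddle connection, which is excluded by definition of $U$; in the limit $h\to 0$ the equilibria and the gate graph degenerate into the point $\xi_1=0$, while the separatrices of the outer pole persist. The paper's proof is more terse than yours but makes one point explicit that you leave implicit: the limit vector field $e^{i(\theta_0+s\tilde\theta_0)}\bY_0$ is rotationally stable for \emph{every} value of $\theta_0+s\tilde\theta_0$ (it is a monomial field with a single pole and a single multiple equilibrium, hence no saddle connections are possible), which is what guarantees that the limit half-zone decomposition exists and that no bifurcation occurs at $h=0$ itself. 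Your uniformity worry about the structural-stability constants is therefore resolved not by a compactness argument on an annulus but simply by the fact that the limit configuration is itself stable. Also, avoid invoking Proposition~\ref{prop:Fatou-dependence} here, since in the paper it comes later and depends on the present section; the continuity of $\bt_h$ in $h$ that you need is just the elementary continuous dependence of the antiderivative $\int\bY_h^{-1}$ on the rational coefficients of $\bY_h$.
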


\begin{proof}
	The outer half-zones are delimited by separatrices of the pole at $\xi_1=\infty$ and by gates. Both depend continuously on $(h,\theta)$ as long as $e^{i(\theta+s\arg(h))}\bY_h$ is rotationally stable:
	the only way separatrices can undergo a bifurcation is through the appearance of homoclinic/heteroclinic connection.\footnote{In fact, each equilibrium possesses a neighborhood that is a trapping domain: any trajectory of $e^{i(\theta+s\arg(h))}\bY_h$ for any $\theta$ that crosses its boundary will end at the equilibrium, and the boundary of the maximal domain is a union of saddle connections \cite{Klimes-Rousseau2}. This implies that landing points of separatrices are stable by small change of $\theta$.}
At the limit when $h\to  0$ the equilibria merge with the pole $\xi_1=0$, and the gate graph degenerates to this point as well. 
Note that the limit vector field $e^{i(\theta_0+s\tilde\theta_0)}\bY_0$ is rotationally stable for any $\theta_0+s\tilde\theta_0$.
The separatrices of  $\xi_1=\infty$ persist to the limit and so do the outer half-zones.
\end{proof}

Another advantage of working with half-zones is that on each of them there is a unique determination of the flat coordinate $\bt_h$ \eqref{eq:bt} that vanishes at the end of the zone.

\begin{lemma}\label{lemma:t}
On each half-zone there is a uniquely defined translation coordinate $\bt_h=\int\bY_h^{-1}$ that has a vanishing limit at its end.
When two neighboring half-zones are halves of the same $\alpha\omega$-zone, then their coordinates differ on the gate trajectory by the 
period of the transversal\footnote{Definition~\ref{def:gates}} of the zone. This system of coordinates depends analytically on the parameter $h$, in particular it passes to the limit when $h\to 0$.
\end{lemma}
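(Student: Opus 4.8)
\textbf{Proof plan for Lemma~\ref{lemma:t}.}

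The plan is to build the translation coordinate half-zone by half-zone, using the fact that each half-zone has a \emph{unique} end at one of the poles $\xi_1=0$ or $\xi_1=\infty$, and that the integral $\bt_h=\int\bY_h^{-1}$ has a finite value at a pole (a point whose distance in the $\bt_h$-metric from any regular point is finite, since $\bY_h$ has a pole of order $kp-1\geq0$ there, so $\bY_h^{-1}$ has a zero of order $kp-1$). First I would fix a half-zone $Z$ and its end at the pole $a\in\{0,\infty\}$. Since $Z$ is simply connected (it is either a sepal zone or a half of an $\alpha\omega$-zone cut by the gate trajectory — both contractible by a homotopy fixing the equilibria), the multivalued primitive $\int\bY_h^{-1}$ restricts to a genuine single-valued holomorphic function on $Z$, determined up to an additive constant. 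To pin down the constant, note that the pole $a$ lies on the boundary of $Z$ and is reached from any interior point along the separatrices bounding $Z$ in finite $\bt_h$-time; so I would \emph{define} $\bt_h$ on $Z$ by requiring $\lim_{\xi\to a}\bt_h(\xi)=0$, the limit taken along (say) one of the two bounding separatrices. Independence of the choice of separatrix follows because both bounding separatrices meet at $a$ and the difference of the two limiting values is the integral of $\bY_h^{-1}$ along an arc near $a$ contained in $Z$, which shrinks to $0$ as the arc shrinks to $a$ (the local model of the pole being $z^{-(kp-1)}\tdd{z}$, whose rectifying coordinate $z^{kp}/kp$ vanishes at $z=0$). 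This gives the claimed unique determination.

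Next I would check the gate-compatibility statement. Let $Z_1,Z_2$ be the two halves of a rotationally stable $\alpha\omega$-zone $Z$, separated by the gate trajectory $G$ through the midpoint of the transversal $\gamma$ of $Z$. On $Z$ as a whole the coordinate $\bt_h$ is multivalued only up to the additive period $\nu_\gamma=\int_\gamma\bY_h^{-1}$ of the transversal (the $\alpha\omega$-zone is an infinite strip in the $\bt_h$-picture with exactly one saddle connection $\gamma$ crossing it, Figure~\ref{figure:zone-a}). Choosing a branch of $\bt_h$ on $Z_1$ normalized so that $\bt_h\to0$ at the end of $Z_1$, its analytic continuation across $G$ into $Z_2$ has, at the end of $Z_2$, the value $\pm\nu_\gamma$ (the two ends of $Z$ are the two saddle points at the ends of the strip, at $\bt_h$-distance $|\nu_\gamma|$ apart). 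Since the coordinate intrinsically attached to $Z_2$ by the previous paragraph is the one vanishing at the end of $Z_2$, the two coordinates on $Z_2$ differ by this constant $\nu_\gamma$; restricting the comparison to the common boundary $G$ gives the asserted shift by the period of the transversal. I would spell out the sign convention according to the orientation fixed in the definition of $\nu_\gamma$ (Definition~\ref{def:gates} together with page~\pageref{page:period}).

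Finally, analytic dependence on $h$ and passage to the limit $h\to0$ follow by combining Proposition~\ref{prop:halfz-param} with the normalization just described. For a stable $(h,\theta)$ the separatrices of the pole, the gate graph, the transversals and their midpoints all depend continuously (indeed analytically, off the rotationally unstable angles) on $h$, hence so does each half-zone and its end; the defining condition $\lim_{\xi\to a}\bt_h(\xi)=0$ is itself analytic in $h$ because $\bY_h^{-1}$ and the local rectifying coordinate at the pole depend analytically on $h$. As $h\to0$ with an asymptotic direction, the outer half-zones converge to those of $\bY_0$ on $\{\xi_2=0\}$ (and the inner ones, via $\sigma$, to those on $\{\xi_1=0\}$) by Proposition~\ref{prop:halfz-param}, and the pole $\xi_1=\infty$ (resp. $\xi_1=0$) persists to the limit; the normalized coordinate therefore also converges. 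The main obstacle I anticipate is the gate-compatibility bookkeeping: making sure the additive constant relating the two half-zone coordinates is \emph{exactly} the period $\nu_\gamma$ of the transversal with the correct sign and orientation, and that this is stable under the limit $h\to0$ even though the transversal's length $|\nu_\gamma|$ may stay bounded away from $0$ while the zone's position degenerates toward the pole — this requires care that the midpoint construction, which is by Definition~\ref{def:gates} independent of small variation of $\theta$, indeed behaves well in the limit, which is where Proposition~\ref{prop:halfz-param} is essential.
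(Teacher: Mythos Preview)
Your approach is correct and is essentially the same as the paper's, which simply records the formula $\bt_h(\xi_1)=\int_{\infty}^{\xi_1}\bY_h^{-1}$ for an outer half-zone and $\bt_h(\xi_1)=\int_{0}^{\xi_1}\bY_h^{-1}$ for an inner one. Your plan amounts to a fuller justification of why these formulas yield the claimed properties, while the paper treats them as self-evident from the local model at the pole.
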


\begin{proof}
For an outer half-zone one takes $t_h(\xi_1)=\int_{\infty}^{\xi_1}\bY_h^{-1}$, and for an inner half-zone one takes $t_h(\xi_1)=\int_{0}^{\xi_1}\bY_h^{-1}$.
\end{proof}

\subsubsection{Enlargement of half-zones through stable rotation.}\label{sec:6.2domains}

Let $\{|h|<\delta_2\}$ be a small neighborhood of the origin in the $h$-space.
It will be useful to consider its \emph{polar blow-up}, that is to identify $h$ with $|h|\cdot e^{i\arg h}$, and $|h|=0$ is a circle of asymptotic directions.
\Grn{By Proposition~\ref{prop:rotstable} the set of $(h,\theta)$ for which $e^{i(\theta+s\arg(h))}\bY_h$ is rotationally unstable consists of at most countably many curves
$h\mapsto\theta(h)$. Lemma~\ref{lemma:asymptotic-h} below shows that these curves extend asymptotically at the limit to the circle $|h|=0$. }

Let $K$ be one of the at most countably many connected components of the complementary set
\begin{equation}\label{eq:K}
\{(h,\theta): |h|<\delta_2,\ \theta\in\,\,]\delta_3,\pi-\delta_3[,\ e^{i(\theta+s\arg(h))}\bY_h\ \text{rotationally stable}\}, 
\end{equation}
where  $\delta_3>0$ is some arbitrarily small fixed constant (cf. Lemma~\ref{lemma:omega}), and $h$ is understood as being from the polar blow-up.
\Grn{And let us assume that the closure of $K$ intersects the circle $|h|=0$. }

Let
\[S\subseteq \{|h|<\delta_2\}\ \text{ be the projection of $K$ into the $h$-plane}\]
by $(h,\theta)\mapsto h$ (possibly a ramified set defined on the covering surface of the polar blowup of $(\C,0)$).
For each $h\in S$ denote $K_h=\{\theta: (h,\theta)\in K\}$ the associated open interval of stable angles, hence
$K=\coprod_{h\in S}\{h\}\times K_h$.

For any $(h,\theta)\in K$ let $Z_{h,\theta}$ be an outer or an inner half-zone of $e^{i(\theta+s\arg(h))}\bY_h$, depending continuously on $(h,\theta)\in K$,
and let
\begin{equation}\label{eq:domainZ}
Z_h=\bigcup_{\theta\in K_h}Z_{h,\theta},
\end{equation}
be their union over $\theta\in K_h$, defined in way that it is simply connected, i.e.  on the covering surface of the pierced leaf $\{h=\const\}\sminus\{P=0\}$
identified with $\C^*\sminus\{P_h=0\}$ in the coordinate $\xi_1$.
It is  an \emph{enlargement of the half-zone $Z_{h,\theta}$ through stable rotation} (see Figure~\ref{figure:k2}).
In another words, $\bt_h(Z_h)$ is the union of the maximal strips in $\bt_h(\C^*\sminus\{P_h=0\})$ of direction $\theta$ varying continuously with $\theta\in K_h$.
For $|h|=0$, put 
\[Z_{0\cdot e^{i\arg h}}=\bigcup_{\theta\in K_{0\cdot e^{i\arg h}}}Z_{{0\cdot e^{i\arg h}},\theta},\qquad Z_0=\bigcup_{0\cdot e^{i\arg h}\in S}Z_{0\cdot e^{i\arg h}}.\]

\begin{proposition}
	When $S\ni h\to 0\cdot e^{i\tilde\theta}$ with an asymptotic direction, $\arg h\to\tilde{\theta}$, then each outer domain $Z_h$ \eqref{eq:domainZ} tends to $Z_{0\cdot e^{i\tilde\theta}}$,
	which is a subdomain of $Z_0$.
\end{proposition}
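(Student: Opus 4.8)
The statement is a ``continuity up to the limit'' assertion for the enlarged outer half-zone $Z_h$ defined in \eqref{eq:domainZ}, so the natural strategy is to reduce it to the two building blocks already established: the continuity/limit of a single outer half-zone $Z_{h,\theta}$ (Proposition~\ref{prop:halfz-param}), and the behaviour of the stable interval $K_h$ as $h\to0\cdot e^{i\tilde\theta}$. First I would fix an asymptotic direction $\tilde\theta$ and a sequence (or continuous path) $S\ni h\to 0\cdot e^{i\tilde\theta}$, and recall that by construction $\bt_h(Z_h)$ is the union over $\theta\in K_h$ of the maximal open strips of direction $\theta+s\arg h$ in $\bt_h\big(\C^*\sminus\{P_h=0\}\big)$, while $\bt_{0\cdot e^{i\tilde\theta}}(Z_{0\cdot e^{i\tilde\theta}})$ is the analogous union over $\theta\in K_{0\cdot e^{i\tilde\theta}}$ of maximal strips of direction $\theta+s\tilde\theta$ in the limit translation surface $\bt_0\big(\C^*\sminus\{P_0=0\}\big)$ (the component $\{\xi_2=0\}$ of $B_0$). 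So the statement is really the assertion that these unions of strips converge, which I would phrase as: every compact subset of $Z_{0\cdot e^{i\tilde\theta}}$ is eventually contained in $Z_h$, and conversely every point not in $\overline{Z_{0\cdot e^{i\tilde\theta}}}$ is eventually outside $Z_h$.

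The first ingredient is the limiting behaviour of $K_h$. By Lemma~\ref{lemma:asymptotic-h} (invoked just above in the excerpt), the rotationally unstable curves $h\mapsto\theta(h)$ extend asymptotically to the circle $|h|=0$, and the set $K$ is open in the polar blow-up; hence the connected component $K$ we started with has a well-defined trace $K_{0\cdot e^{i\tilde\theta}}$ on $|h|=0$, and $\liminf_{h\to 0\cdot e^{i\tilde\theta}} K_h \supseteq K_{0\cdot e^{i\tilde\theta}}$ with the boundary angles converging. More precisely: for any closed subinterval $[\theta_1,\theta_2]\subset K_{0\cdot e^{i\tilde\theta}}$ one has $[\theta_1,\theta_2]\subset K_h$ for all $h$ in $S$ sufficiently close to $0\cdot e^{i\tilde\theta}$, because the unstable curves stay away from $[\theta_1,\theta_2]$ near the limit. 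This is exactly what lets us pass from a ``pointwise in $\theta$'' statement to a ``union over $\theta$'' statement.

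The second ingredient is Proposition~\ref{prop:halfz-param}: for each fixed $\theta$ in the open interval $K_{0\cdot e^{i\tilde\theta}}$, the outer half-zone $Z_{h,\theta}$ of $e^{i(\theta+s\arg h)}\bY_h$ depends continuously on $(h,\theta)$ on the connected component $K$ and converges to the outer half-zone $Z_{0\cdot e^{i\tilde\theta},\theta}$ of $e^{i(\theta+s\tilde\theta)}\bY_0$, since the delimiting separatrices of the pole $\xi_1=\infty$ and the delimiting gates depend continuously on $(h,\theta)$ and persist to the limit (the equilibria and the gate graph merging with $\xi_1=0$). Combining the two ingredients: given a compact $L\subset Z_{0\cdot e^{i\tilde\theta}}$, cover $L$ by finitely many sets of the form $Z_{0\cdot e^{i\tilde\theta},\theta_j}$ with $\theta_j\in K_{0\cdot e^{i\tilde\theta}}$ (possible since $Z_{0\cdot e^{i\tilde\theta}}=\bigcup_{\theta}Z_{0\cdot e^{i\tilde\theta},\theta}$ is an open cover and $L$ is compact); shrink each slightly to a compact $L_j\subset Z_{0\cdot e^{i\tilde\theta},\theta_j}$ still covering $L$; then for $h$ close to the limit each $\theta_j\in K_h$ and $L_j\subset Z_{h,\theta_j}\subset Z_h$ by Proposition~\ref{prop:halfz-param}, so $L\subset Z_h$. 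The reverse inclusion (points outside $\overline{Z_{0\cdot e^{i\tilde\theta}}}$ eventually leave $Z_h$) follows symmetrically from the continuity of the delimiting separatrices and gates, using that a point outside $\overline{Z_{0\cdot e^{i\tilde\theta},\theta}}$ for \emph{every} $\theta\in K_{0\cdot e^{i\tilde\theta}}$ stays, for $h$ near the limit, outside $Z_{h,\theta}$ for every $\theta\in K_h$; here one uses that $K_h\to K_{0\cdot e^{i\tilde\theta}}$ also controls the \emph{new} angles in $K_h$ not present at the limit, which only appear near the endpoints of the interval and hence do not sweep past a fixed interior point. Finally, $Z_{0\cdot e^{i\tilde\theta}}\subset Z_0$ is immediate from the definition $Z_0=\bigcup_{0\cdot e^{i\arg h}\in S}Z_{0\cdot e^{i\arg h}}$.

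\textbf{Main obstacle.} The routine part is the half-zone-by-half-zone convergence, which is Proposition~\ref{prop:halfz-param}. The delicate point is the control of the \emph{family} of stable intervals $K_h$ near $|h|=0$: one must know not only that $\liminf K_h\supseteq K_{0\cdot e^{i\tilde\theta}}$ but that the newly appearing unstable angles in $K_h$ (coming from saddle connections that degenerate only in the limit, cf. the annular-zone accumulation in Proposition~\ref{prop:rotstable}) accumulate only at the endpoints of $K_{0\cdot e^{i\tilde\theta}}$ and do not cut the limit domain $Z_{0\cdot e^{i\tilde\theta}}$ into smaller pieces near an interior angle. This is where Lemma~\ref{lemma:asymptotic-h} on the asymptotic extension of the unstable curves to the blow-up circle, together with the fact that at most one annular zone can appear in the rotating family (established in the discussion after Proposition~\ref{prop:rotstable}), is essential; it guarantees that the topological type of the half-zone decomposition is locally constant near $(0\cdot e^{i\tilde\theta},\theta)$ for interior $\theta$, so no extra bifurcation locus is squeezed in at the limit.
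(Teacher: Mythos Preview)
Your proposal is correct and follows essentially the same route as the paper: the paper's proof is a one-line remark that the statement ``is a consequence of the definition and the continuity of the zones on $(h,\theta)\in K$ which extends to $|h|=0$ by Lemma~\ref{lemma:asymptotic-h},'' and you have spelled out precisely what this means---using Proposition~\ref{prop:halfz-param} for the continuity of each $Z_{h,\theta}$ and Lemma~\ref{lemma:asymptotic-h} to control the stable interval $K_h$ near the blow-up circle, then assembling the union via a finite-cover compactness argument. Your identification of the delicate point (that newly appearing unstable angles accumulate only at the endpoints of $K_{0\cdot e^{i\tilde\theta}}$) is exactly the content the paper packages into its forward reference to Lemma~\ref{lemma:asymptotic-h}.
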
 

\begin{proof}
	This is a consequence of the definition and the continuity of the zones on $(h,\theta)\in K$ which extends to $|h|=0$ by Lemma~\ref{lemma:asymptotic-h} below.	
\end{proof}

Define
\begin{equation} \label{eq:Z_S}
	Z_S=\coprod_{h\in S}\{h\}\times Z_h, 
\end{equation}
as the corresponding ramified domain in the whole $\xi$-space, i.e. a subdomain of a covering surface of $\C^2\sminus(\{P=0\}\cup\{h=0\})$.

These domains $Z$ represent a ``global version'' of the Lavaurs domains that we want to construct and understand. 
In order to obtain the actual Lavaurs domains we will later ``localize'' the above construction to a neighborhood $B$ \eqref{eq:B} of the origin.

\begin{remark}\label{remark:covering}
	Let us remark that for each $(h,\theta)$ fixed the union of all the \Grn{different} half-zones $Z_{h,\theta}$ of $e^{i(\theta+s\arg(h))}\bY_h$ covers all except of the closure of the separatrices and of the gates (Definition~\ref{def:gates}). 
	This means that for a given $K,S$ as above, and each fixed $h\in S$, the different domains $Z_h$ \eqref{eq:domainZ} overlap and cover together the whole leaf $\{h=\const\}$, identified with $\C^*$, except of the equilibria and of the midpoints of $\alpha\omega$-zones.
	The exclusion of the midpoints has been a bit arbitrary consequence of our definition of half-zones: we could have instead defined the half-zones of an $\alpha\omega$-zones in such a way that would overlap a bit along the gate. So the midpoints can be considered as covered as well.\footnote{Let us note that in the proof of Theorem~\ref{thm:sectorialM} we will actually want to exclude the fixed points $\xi_1=\lambda^{\frac{n}2}\pm h^{\frac12}$ of $\Lambda^{n}\sigma$, $n\in\Z_{p}$, (resp. $\xi_1=\pm ih^{\frac12}$ of $\sigma\Lambda=-I$ in the formal case (c)), which, unless equilibria, are midpoints by Proposition~\ref{prop:gate}.} 
\end{remark}

\begin{remark}
	When crossing the bifurcation locus, which consists of those $(h,\theta)$ for which $e^{i\theta}h^s\bY_h$ is rotationally unstable, the topological organization of the phase portrait changes. This change may affect only some of the half-zones while other may persist unchanged. It would be therefore natural to consider for each half-zone its maximal ``chamber'' in the $(h,\theta)$-space over which it is defined (i.e. over which it evolves continuously). However, we find it simpler for the discussion to always stop at the bifurcation locus for all the half-zones, whether they bifurcate or not.
\end{remark}

\subsubsection{Asymptotic behavior when $h\to 0$.}

We recall \re{eq:bYh}, that 
\begin{align*}
	\bY_h&=cp\left((\xi_1^p\!+\tfrac{h^p}{\xi_1^p})^k+P_{k-1}(h)(\xi_1^p\!+\tfrac{h^p}{\xi_1^p})^{k-1}+\ldots+P_{0}(h)\right)\xi_1^p\tdd{\xi_1^p}\\
	&=cp\,\xi_1^{-kp}P_h(\xi_1)\xi_1^p\tdd{\xi_1^p}.
\end{align*}

\begin{lemma}\label{lemma:asymptotic-h}
	\begin{enumerate}
		\item The $\bt_h$-length $|\nu_\gamma(h)|=\left|\int_\gamma \bY_h^{-1}\right|$ of any saddle connection $\gamma(h)$ in the rotational family $e^{i\theta}h^s\bY_h$ is uniformly of order at least \Grn{$|h|^{-\check s k}$} when $h\to 0$, where
		\[ \check s=\min\left(\left\{\frac{\ord_0 P_j(h)}{(k-j)}\mid j=0,\ldots, k-1\right\}\cup\left\{\frac{p}2\right\}\right). \]
		\item  For each fixed $\theta$, the rotationally unstable values of $h$ form at most countable number of real curves, each approaching $|h|=0$ with an asymptotic direction.
	\end{enumerate}	
\end{lemma}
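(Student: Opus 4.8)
The idea is to read off the claimed asymptotics directly from the explicit shape of $\bY_h$ and from the structure of the period map. For part (1), fix any saddle connection $\gamma(h)$ appearing in the rotating family, so that it is a geodesic segment on the translation surface of $\bt_h$ joining saddle points lying over $\xi_1=0$ and/or $\xi_1=\infty$. Its period is $\nu_\gamma(h)=\int_\gamma\bY_h^{-1}=\int_\gamma\frac{d\xi_1}{cp\,\xi_1^p P_h(\xi_1)\,\xi_1^{-kp}\cdot(\text{factor from }\xi_1^p\partial_{\xi_1^p}\!\to\!\xi_1\partial_{\xi_1})}$, which after simplification is $\int_\gamma\frac{\xi_1^{kp-1}}{c\,P_h(\xi_1)}\,d\xi_1$ up to a harmless constant. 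The polynomial $\xi_1^{kp}P_h(\xi_1)$ (of degree $2kp$ in $\xi_1$) has $2kp$ roots, all of which, together with the poles of $\bY_h$, stay in the annulus $B_h=\{|h|/\delta_1<|\xi_1|<\delta_1\}$; more precisely, I would show that each root $a_\ell(h)$ satisfies $|a_\ell(h)|\gtrsim|h|^{\check s}$ and $|a_\ell(h)|\lesssim 1$. This is a Newton-polygon / Puiseux-series computation: writing $P_h(\xi_1)=(\xi_1^p+h^p/\xi_1^p)^k+\sum_{j<k}P_j(h)(\xi_1^p+h^p/\xi_1^p)^{k-j}$, and substituting a putative branch $\xi_1\sim c_\ell h^\beta$, the dominant balance between the $(\xi_1^p+h^p\xi_1^{-p})^k$ term and the lowest-order $P_j(h)(\cdots)^{k-j}$ term forces $\beta\le\check s$, where $\check s$ is exactly the minimum appearing in the statement (the $p/2$ entry coming from the case where $\xi_1^p$ balances $h^p\xi_1^{-p}$, i.e.\ $\xi_1\sim h^{1/2}$). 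Once the roots are pinned between $|h|^{\check s}$ and a constant, a lower bound for $|\nu_\gamma(h)|$ follows: on the geodesic $\gamma$ the integrand $\xi_1^{kp-1}/(c\,P_h(\xi_1))$ has modulus at least of order $|h|^{\check s(kp-1)}\cdot|h|^{-\check s\cdot 2kp\cdot(\text{something})}$... — more cleanly, since $\deg(\xi_1^{kp}P_h)=2kp$ with all roots of size $\gtrsim|h|^{\check s}$, and since $\gamma$ has Euclidean length $\gtrsim|h|^{\check s}$ (two of the roots it connects are that far apart, or it winds around a region of that scale), one gets $|\nu_\gamma(h)|\gtrsim |h|^{\check s}\cdot|h|^{-\check s\cdot kp}=|h|^{-\check s(kp-1)}$; I will need to reconcile this with the stated exponent $-\check s k$, most likely by being careful that the relevant comparison is with $u=\xi_1^p$ rather than $\xi_1$, so that the form is $\frac{u^{k-1}\,du}{c\,P(u+h^p/u,h)}$, degree $2k$ denominator in $u$, roots $\gtrsim|h|^{p\check s}$ in $u$, giving $|\nu_\gamma|\gtrsim|h|^{-\check s k}$ after the substitution — this bookkeeping is the step I expect to be fussiest and where the precise constant $\check s k$ must be extracted.

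For part (2), fix $\theta$ and use Proposition~\ref{prop:rotstable}: for each value of $h$, $e^{i(\theta+s\arg h)}\bY_h$ is rotationally unstable precisely when some saddle connection $\gamma$ is homoclinic/heteroclinic, i.e.\ when $e^{-i(\theta+s\arg h)}h^{-s}\nu_\gamma(h)\in\R$ (the connection's period becomes real in the rotated time). Since there are only finitely many ``combinatorial types'' of saddle connections other than those winding inside a potential annular zone, and the period $\nu_\gamma(h)$ of each type is an analytic (multivalued, of Puiseux type) function of $h$ on the relevant sectorial region, the condition $\IM\!\big(e^{-i(\theta+s\arg h)}h^{-s}\nu_\gamma(h)\big)=0$ cuts out a real-analytic curve in the $h$-plane (or in its polar blow-up). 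The annular-zone connections form the $\Z$-family $\nu_{\gamma_n}=\nu_{\gamma_0}+n\cdot\nu_{\text{cyl}}$, giving countably many such curves, which accumulate only at the single direction where the annular zone appears. So altogether the unstable locus is a countable union of real-analytic arcs. That each such arc approaches $|h|=0$ with an asymptotic direction follows because $\nu_\gamma(h)$, being of Puiseux type, behaves like $C\,h^{-\check s k}(1+o(1))$ near $h=0$ (using part (1) for the leading exponent and that the leading coefficient is nonzero), so the equation $\IM(e^{-i(\theta+s\arg h)}h^{-s}\cdot C h^{-\check s k})=0$ becomes, to leading order, a condition $\arg h\cdot(\text{const})\equiv\text{const}\pmod\pi$, i.e.\ the arc is asymptotically radial. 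I would assemble this from: (i) the Puiseux expansion of each $\nu_\gamma$ near $h=0$, (ii) the implicit function theorem on the polar blow-up to see the arcs are graphs $\theta\mapsto |h|(\theta)$ or $|h|\mapsto\arg h(|h|)$ with a limit, and (iii) Proposition~\ref{prop:rotstable} for the finiteness/countability count. The main obstacle, as noted, is purely the exponent bookkeeping in part (1) — getting the Newton-polygon estimate to yield exactly $\check s k$ and not an off-by-$p$ variant — after which part (2) is a fairly mechanical consequence of analyticity of the period map plus the already-cited structure theorem for the rotating family.
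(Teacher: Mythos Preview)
Your overall strategy matches the paper's: for (1), bound the equilibria into a small region and then estimate ``length over speed'' along the saddle connection; for (2), use that each period $\nu_\gamma(h)$ is an explicit algebraic/transcendental function of $h$ so that the instability condition $h^{-s}\nu_\gamma(h)\in e^{i\theta}\R$ cuts out real-analytic arcs with asymptotic tangents. Two points deserve tightening.

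\textbf{Part (1).} Your ``$\gamma$ has Euclidean length $\gtrsim|h|^{\check s}$ because two of the roots it connects are that far apart'' is not quite right: $\gamma$ connects the \emph{poles} $\xi_1=0,\infty$, not the roots. The paper's cleaner version is: show all equilibria lie in the ring $\tfrac1L|h|^{p-\check s}<|\xi_1|^p<L|h|^{\check s}$, then argue that any saddle connection must pass through a slightly larger ring (a heteroclinic one crosses it; a homoclinic one must encircle an equilibrium on each side, else it would be contractible in $\C^*\smallsetminus\{P_h=0\}$). Inside the ring one has $d|\xi_1^p|$-length $\gtrsim|h|^{\check s}$ and $|\bY_h.\xi_1^p|=|cp\,P(u,h)\,\xi_1^p|\lesssim|h|^{\check s(k+1)}$, giving time $\gtrsim|h|^{-\check sk}$. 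This is exactly the bookkeeping in the $u$-variable you were reaching for; once you frame it as speed-of-$\xi_1^p$ rather than integrand estimates on $\gamma$, the exponent falls out cleanly.

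\textbf{Part (2).} Your asymptotic $\nu_\gamma(h)\sim C\,h^{-\check sk}(1+o(1))$ overstates what (1) gives: that is only a \emph{lower} bound on $|\nu_\gamma|$, so the true order could be $-\tilde s k$ with $\tilde s\ge\check s$, and there is no reason the leading coefficient $C$ is nonzero for the exponent $\check sk$ specifically. More importantly, for a \emph{heteroclinic} connection between $0$ and $\infty$ the period is not of pure Puiseux type: writing $\bt_h=R_h(\xi_1)+\tfrac{1}{2\pi i}\sum_j\nu_{a_j}(h)\log(\xi_1-a_j(h))$ and evaluating at the endpoints, the paper finds $\nu_\gamma(h)=\zeta_0(h)+\zeta_1(h)\log h$ with $\zeta_0,\zeta_1$ meromorphic Puiseux series. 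The conclusion about asymptotic tangents survives because any such $\nu_\gamma$ is still of some negative order in $h$ (by (1)), so $\arg(h^{-s}\nu_\gamma(h))$ has a limit as $|h|\to0$; but you should not claim an exact leading power, and you should account for the logarithm.
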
	 

\begin{proof}
\textit{1. }\ 		The proof is the same as in \cite[Lemma~4.7]{HLR}. First let us note that all the equilibria are situated in some ring 
		\[\tfrac{1}{L}|h|^{p-\check s}<|\xi_1|^p<L|h|^{\check s}\quad \text{for some } \ L>0.\]
		Indeed, if $u_*(h)=\xi_{1,*}(h)^p+\frac{h^p}{\xi_{1,*}(h)^p}$ is a root of $P(u,h)=u^k+\sum_{j=0}^{k-1}P_j(h)u^j$, then 
		$|u_*|^k\leq \sum_{j=0}^{k-1}|P_j||u_*|^j\leq k\max_j|P_j||u_*|^j$,
		and there exist some $C>0$ such that $|u_*|\leq  \Grn{ \max_j|kP_j|^{\frac1{k-j}} } \leq C|h|^{\check s}$ on the disc $\{|h|<\delta_1\}$,
		which means that $|\xi_{1,*}|^{2p}\leq |u||\xi_{1,*}|^p+|h^p|\leq \max\{ 2C|h|^{\check s}|\xi_{1,*}|^p,\ 2|h|^p\}$ and 
		\[|\xi_{1,*}|^p\leq\Grn{ \max\{2C|h|^{\check s},\sqrt 2|h|^{\frac{p}{2}}\} } 
		\leq |h|^{\check s}\max\{2C,\sqrt2\delta_2 ^{\frac{p}2-\check s}\}=|h|^{\check s}L,\]
		 and similarly also  $\frac{|h|^p}{|\xi_{1,*}|^p}\leq |h|^{\check s}L$.
		
		Any saddle connection must pass through this ring (a saddle connection between two different poles $0$, $\infty$ has no other choice than cross the ring,
		while a saddle connection with poles on its both ends equal encircles at least one equilibrium on each side since otherwise it would be contractible in $\C^*\smallsetminus\{P_h(\xi_1)=0\}$ which is not possible).
		Up to a $\sigma$-symmetry (which preserves periods up to a sign), we may assume that the saddle connection emanates from the pole $\xi_1=\infty$. Then its $d|\xi_1|^p$-length inside the ring of ``double the size'' 
		\begin{equation}\label{eq:ring}
			\tfrac{1}{2L}|h|^{p-\check s}<|\xi_1|^p<2L|h|^{\check s}
		\end{equation} 
	is at least $L|h|^{\check s}$, while its speed is
		\Grn{$\big|\bY_h.\xi_1^p\big|=cp\,|P(u,h)||\xi_1|^p\leq 2cp\,\check CL|h|^{\check s(k+1)}$, as the maximum of $|P(u,h)|$ in the ring \eqref{eq:ring} can be majorated by $\check C|h|^{k\check s}$ for some $\check C>0$,
		hence the $\bt_h$-time the saddle connection spends in \eqref{eq:ring} exceeds $\frac{1}{2|cp|\check C}|h|^{-\check sk}$. }

\textit{2. }\  The  period $\nu_\gamma(h)$ (page~\pageref{page:period})  along any closed loop $\gamma(h)$ of $e^{i\theta}h^s\bY_h$ can be expressed as a sum of residues at roots of $P_h$ (see \re{eq:bt}) and therefore has an expansion in terms of a meromorphic Puiseux power series in $h$. Hence the set on which
		$\nu_\gamma(h)\in e^{i\theta}h^s\R$ has a form of a singular real analytic subvariety on a finite covering of $(\C,0)$, and each branch has an asymptotic tangent at $0$.
		
		Similarly, let us show that the period $\nu_\gamma(h)$ along any heteroclinic saddle connection of $e^{i\theta}h^s\bY_h$  between $0$ and $\infty$ is 
		of the form $\nu_\gamma(h)=\zeta_0(h)+\zeta_1(h)\log(h)$ where $\zeta_0,\zeta_1$ are meromorphic Puiseux power series.
		Indeed, $\bt_h=\int\bY_h^{-1}$ takes the form $R_h(\xi_1)+\tfrac{1}{2\pi i}\sum_{\{P_h(a_j(h))=0\}}\nu_{a_j}(h)\log(\xi_1-a_j(h))$ where $R_h(\xi_1)$ is a rational function of $\xi_1$ with coefficients Puiseux series in $h$, and with poles at zeros of $P_h$, hence the term $\big[R_h(\xi_1)\big]_{\xi_1=0}^\infty$ is a Puiseux series, while the term 
		\begin{align*}
		\Big[\tfrac{1}{2\pi i}\sum_{a_j}&\nu_{a_j}(h)\log(\xi_1-a_j(h))\Big]_{\xi_1=0}^\infty=\\
		&=	-\tfrac{1}{2\pi i}\sum_{a_j}\nu_{a_j}(h)\log(-a_j(h)) + \tfrac{1}{2\pi i}\lim_{\xi_1\to\infty}\sum_{a_j}\nu_{a_j}(h)\log(\xi_1-a_j(h))\\
		&= 	-\tfrac{1}{2\pi i}\sum_{a_j}\nu_{a_j}(h)\log(-a_j(h)),
		\end{align*} 
	\Grn{as $\lim_{\xi_1\to\infty}\sum_{a_j}\nu_{a_j}(h)\log(\xi_1-a_j(h))=\lim_{\xi_1\to\infty}\sum_{a_j}\nu_{a_j}(h)\log(1-\frac{a_j(h)}{\xi_1})=0$ since $\sum_{a_j}\nu_{a_j}(h)=0$,}
	 hence one gets a meromorphic  Puiseux series possibly in combination with $\log(h)$.  
		Anyway, each branch of the set where $\nu_\gamma(h)=\zeta_0(h)+\zeta_1(h)\log(h)\in e^{i\theta}h^s\R$ is again a real curve with tangent at $0$ since $\nu(h)$ is of negative order in $h$.
		
\end{proof}

\begin{figure}[t]
\centering
\includegraphics{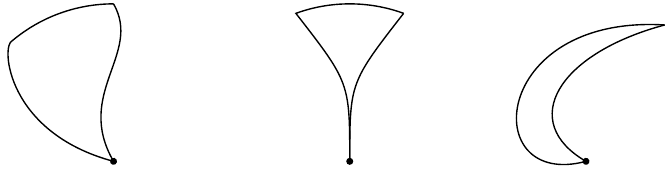}	
\caption{Examples of cuspidal sectors. The one in the middle has zero angular opening contrary to the others.}
	\label{figure:cuspidal}
\end{figure}

\begin{definition}\label{def:sector}
	A \emph{cuspidal sector} with vertex at $h=0$ is a simply connected planar domain $S$ bounded by two real analytic curves, each of which has an asymptotic tangent at the vertex, and by an arc of a fixed radius (see Figure~\ref{figure:cuspidal}).
	The \emph{angular opening} of the cuspidal sector is the angle between the tangent rays of the two bounding curves at the vertex.	
	
	\emph{We shall consider the vertex as included in the cuspidal sector, $0\in S$.} \Grn{That way the limit situation at $h=0$ will be part of our description.}
\end{definition}

\Grn{\emph{Let us stress that a covering of a neighborhood of 0 by a collection of cuspidal sectors of positive angular openings may not contain any finite subcovering.}
	This is strikingly different from the case of usual sectorial coverings. The reason is that there might be directions of rays who have no initial segment covered by a single cuspidal sector from the collection.}

\begin{cor}
	The domains $S$ of Section~\ref{sec:6.2domains} in the $h$-space have a form of cuspidal sectors at the origin of a positive angular opening $\geq\frac{\pi-2\delta_3}{s+\tilde sk}$
	for some $\tilde s\geq \check s$.
\end{cor}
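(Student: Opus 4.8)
The statement is a corollary of the analysis carried out in Section~\ref{sec:6.2domains}, combined with the asymptotic estimates of Lemma~\ref{lemma:asymptotic-h}. The goal is to show that the projection $S$ of a connected component $K$ of the rotational-stability locus \eqref{eq:K} onto the $h$-plane is a cuspidal sector (Definition~\ref{def:sector}) whose angular opening is bounded below by $\tfrac{\pi-2\delta_3}{s+\tilde s k}$ for some $\tilde s\geq\check s$. The idea is: (i) describe the boundary of $K$ in the polar blow-up of $(\C,0)$ as a union of curves $h\mapsto\theta(h)$ coming from the rotationally unstable angles; (ii) show these curves limit onto the circle $|h|=0$ with well-defined asymptotic directions; (iii) compute the rate at which the stable interval $K_h$ ``rotates'' as $|h|\to 0$, i.e. estimate $\tfrac{d\theta}{d\arg h}$ along the boundary curves, which controls the angular opening of $S$.

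\textbf{Key steps, in order.} First, I would recall from Proposition~\ref{prop:rotstable} and Lemma~\ref{lemma:asymptotic-h}(2) that for each fixed $\theta$ the rotationally unstable values of $h$ form at most countably many real-analytic curves, each with an asymptotic tangent at $h=0$; dually, for each fixed $h$ the unstable angles $\theta$ form at most a countable set accumulating only at the unique direction (mod $\pi$) carrying the annular zone. Fixing the defining inequality for an unstable $(h,\theta)$, namely that some saddle-connection period satisfies $\nu_\gamma(h)\in e^{i(\theta+s\arg h)}\R$, I solve for $\theta$ in terms of $h$: writing $\nu_\gamma(h)=\zeta_0(h)+\zeta_1(h)\log h$ with $\zeta_0,\zeta_1$ meromorphic Puiseux series (as in the proof of Lemma~\ref{lemma:asymptotic-h}(2)), the condition becomes $\theta+s\arg h\equiv \arg\nu_\gamma(h)\pmod\pi$, i.e. $\theta=\arg\nu_\gamma(h)-s\arg h\pmod\pi$. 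Since $\nu_\gamma$ has a definite order $-\tilde s k$ in $h$ for some $\tilde s\geq\check s$ (by part~(1) of the lemma the length is at least $|h|^{-\check s k}$, and a precise leading-term computation pins down the rational exponent $\tilde s k$), we get $\arg\nu_\gamma(h)= -\tilde s k\,\arg h+O(1)$ as $h\to0$ along a ray, hence along each boundary curve $\theta(h)\sim -(s+\tilde s k)\arg h+\mathrm{const}$. Thus in the blown-up $(|h|,\arg h,\theta)$ picture, moving $\arg h$ by $\Delta$ shifts the whole ``unstable spectrum'' of angles by $-(s+\tilde s k)\Delta+o(1)$.

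\textbf{Deriving the opening.} The component $K$ is bounded by two such curves $\theta_-(h)\leq\theta\leq\theta_+(h)$ (or by the boundary of the strip $\theta\in\,]\delta_3,\pi-\delta_3[$). Projecting to the $h$-plane: $h\in S$ iff $K_h=\{\theta:(h,\theta)\in K\}$ is a nonempty subinterval of $]\delta_3,\pi-\delta_3[$. Because the whole spectrum of forbidden angles rotates at rate $-(s+\tilde s k)$ with respect to $\arg h$, and because $K$ has some positive ``angular width'' $w>0$ in $\theta$ at $|h|$ small (which is inherited from the component being open and limiting onto $|h|=0$, using the continuity statement of Proposition~\ref{prop:halfz-param} and that $\bY_0$ is rotationally stable for every direction), the constraint $K_h\cap\,]\delta_3,\pi-\delta_3[\ \neq\emptyset$ translates into $\arg h$ ranging over an interval of length at least $\tfrac{(\pi-2\delta_3)+w}{s+\tilde s k}$; discarding $w\geq0$ gives the claimed lower bound $\tfrac{\pi-2\delta_3}{s+\tilde s k}$ for the angular opening. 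Finally, the two bounding curves of $S$ are the images of $\theta_\pm(h)$ under $(h,\theta)\mapsto h$ restricted to the boundary, which are real-analytic with asymptotic tangents at $h=0$ by Lemma~\ref{lemma:asymptotic-h}(2), and together with an arc $|h|=\delta_2$ they bound $S$; hence $S$ is a cuspidal sector in the sense of Definition~\ref{def:sector}.

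\textbf{Expected main obstacle.} The delicate point is not the countability or the existence of asymptotic tangents — that is already packaged in Lemma~\ref{lemma:asymptotic-h} — but pinning down the exact exponent $\tilde s k$ (equivalently the constant $\tilde s\geq\check s$) governing the leading order of the periods $\nu_\gamma(h)$, and checking that \emph{all} relevant saddle connections bounding the component $K$ have periods of this same order, so that the rotation rate $-(s+\tilde s k)$ is uniform across both boundary curves. This requires a careful leading-order expansion of $\int_\gamma\bY_h^{-1}$ in the ring $\tfrac1{2L}|h|^{p-\check s}<|\xi_1|^p<2L|h|^{\check s}$ where the zeros of $P_h$ concentrate, tracking both the rational part and the logarithmic part of the Puiseux expansion, and arguing that near-degenerate connections (when several zeros of $P_h$ collide) still contribute at the predicted order. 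Once that uniform order estimate is in hand, the angular-opening bound follows by the elementary geometric bookkeeping sketched above.
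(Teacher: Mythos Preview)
Your approach is essentially the same as the paper's: identify the unstable curves via the condition $h^{-s}\nu_\gamma(h)\in e^{i\theta}\R$, extract the asymptotic relation $\theta\sim -(s+\ord_0\nu_\gamma^{-1})\arg h + \mathrm{const}$ from Lemma~\ref{lemma:asymptotic-h}, and convert the $\theta$-width $\pi-2\delta_3$ into an $\arg h$-width for $S$.

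The one place where you overcomplicate things is your ``main obstacle''. You set up the argument as if a \emph{single} $\tilde s$ must govern the rotation rate of the entire unstable spectrum uniformly, and then worry about verifying this. The paper sidesteps this entirely: it simply \emph{defines} $\tilde s k := \max_\gamma \ord_0 \nu_\gamma(h)^{-1}$ (which is $\geq \check s k$ by Lemma~\ref{lemma:asymptotic-h}(1)), so that every individual boundary curve has rotation rate $s+\ord_0\nu_\gamma^{-1} \leq s+\tilde s k$. Then you only need to follow \emph{one} boundary curve of $K$: as it sweeps across the strip $]\delta_3,\pi-\delta_3[$, $\arg h$ moves over an interval of length $\frac{\pi-2\delta_3}{s+\ord_0\nu_\gamma^{-1}} \geq \frac{\pi-2\delta_3}{s+\tilde s k}$, and $S$ contains at least this interval of asymptotic directions. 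No uniformity across curves is needed, and there is no delicate leading-order collision analysis to carry out. Your extra ``$+w$'' term is harmless but also unnecessary once the argument is set up this way.
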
 	

\begin{proof}
For fixed $\theta$ the set of rotationally unstable values of $h$ has the form of union of curves $h^{-s}\nu_\gamma(h)\in e^{i\theta}\R$, where $\nu_\gamma(h)$ is the period of some saddle connection $\gamma$, $\ord_0 \nu_\gamma(h)^{-1}\geq \check sk$ by Lemma~\ref{lemma:asymptotic-h}.
\Grn{Asymptotically $\theta=\arg\big(h^{-s}\nu_\gamma(h)\big) \sim \big(s+\ord_0\nu_\gamma(h)^{-1}\big)\arg(h)$ as $|h|\to 0$, hence allowing $\theta\in\ ]\delta_3,\pi-\delta_3[$ to vary in interval of length $\pi-2\delta_3$ translates to varying the asymptotic direction $\arg(h)$ in an interval of length 
	$\frac{\pi-2\delta_3}{s+\ord_0\nu_\gamma(h)^{-1}}\geq\frac{\pi-2\delta_3}{s+\tilde sk}$ where 
	$\tilde sk=\max_{\gamma}\ord_0\nu_\gamma(h)^{-1}\geq \check sk$.
}
\end{proof}

\subsubsection{Example~\ref{example:k=1} continued (case $k=1$).} \label{sec:6.2example}

\begin{figure}[t]
\begin{subfigure}[t]{.5\textwidth} \includegraphics{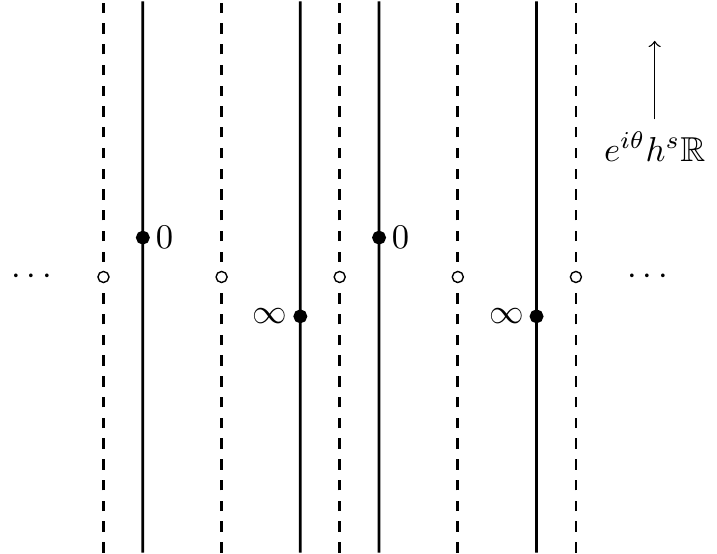}	\caption{$P_0(h)^2\neq 4h^p$} \label{figure:k1a} \end{subfigure}
\quad
\begin{subfigure}[t]{.3\textwidth} \includegraphics{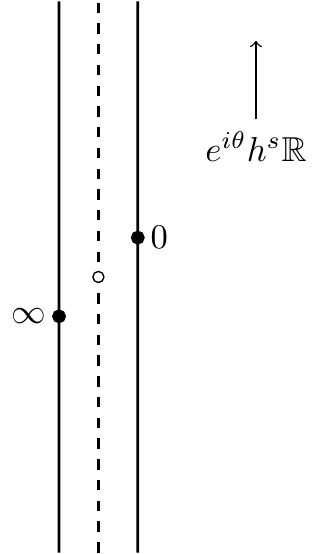} \caption{$P_0(h)^2=4h^p\neq 0$} \label{figure:k1b} \end{subfigure}
\quad
\begin{subfigure}[t]{.1\textwidth} \includegraphics{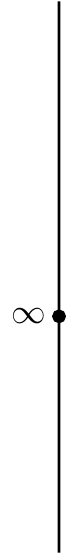}  \caption{$h=0$} \label{figure:k1c} \end{subfigure}
\caption{The translation surface of $\bt_h$ for $k=1$. The flow is parallel to $e^{i\theta}h^s\R$ (which in the figure is vertical directed upwards).
		The thick lines are the separatrices of the saddle points corresponding to $y=0,\infty$ (black points) for $e^{i\theta}h^s\bY_h$ , 
		determining (a) 2 $\alpha\omega$-zones modulo translation by $\nu_{1,2}\Z$, resp. (b) 2 sepal zones and 1  $\alpha\omega$-zone, resp. (c) 2 sepal zones.
		The dashed lines are the gates passing through the midpoints (white points) and further dividing the real phase portrait into (a), (b) 4 half-zones: 2 outer half-zones attached to the point $\infty$ and 2 inner half-zones attached to the point $0$ modulo translation by $\nu_{1,2}\Z$, resp. (c) 2 outer half-zones attached to the point $\infty$.
		The angle $\theta$ is unstable whenever some pair of saddle points ``$0$'' and ``$\infty$'' lie on the same line $e^{i\theta}h^s\R$.
		In the case (a) there are countably many unstable directions $\theta$, accumulating to $\arg(h^{-s}\nu_{1,2})\mod \pi\Z$, while in (b) there is only one unstable direction modulo $\pi\Z$, and in (c) all directions are stable.
	}
\label{figure:k1}
\end{figure}

Let us look at the case $k=1$, $\bXmod=ch^s\big(u+P_0(h)\big)\bE$, in detail. We have 
\[\bY_h=pc\big(y^2+P_0(h)y+h^p\big)\tdd{y},\quad\text{where }\ y=\xi_1^p,  \]
with equilibria at $y=a_{j}(h)$, $j=1,2$
\[a_{j}(h)=\tfrac{1}{2}\big(-P_0(h)+(-1)^{j-1}\sqrt{P_0(h)^2-4h^p}\big).\]
While the vector field $e^{i\theta}h^s\bY_h$ in the coordinate $y$ is analytic on all $\CP^1$, we still need to treat the points $y=0,\infty$ as poles of order 0, 
that is to consider the trajectory through the point as a union of one incoming and one outgoing separatrix. 

The rectifying coordinate $\bt$ in this case is quite simple,
\[\bt_h=\int\bY_h^{-1}=\begin{cases}
\frac{1}{pc(a_1-a_2)}\log\frac{y-a_1}{y-a_2}, & a_1(h)\neq a_2(h),\\
-\frac{1}{pc(y-a)}, & a_1(h)=a_2(h)=:a(h),
\end{cases}\]
and the translation surface of $\bt_h$ is $\C$ (Figure~\ref{figure:k1}). 
The dynamical residue \eqref{eq:nu_a} at $a_{j}(h)$ is 
\begin{equation}\label{eq:nu12}
\nu_{j}(h)=\frac{(-1)^{j-1} 2\pi i }{pc(a_{1}(h)-a_2(h))}=\frac{(-1)^{j-1}2\pi i}{pc\sqrt{P_0(h)^2-4h^p}}, \quad a_1(h)\neq a_2(h).
\end{equation}
For $h\neq 0$ the period of a heteroclinic connection between $0$ and $\infty$, determined modulo $\nu_{j}\Z$,
is
\begin{equation}\label{eq:nu08}
\nu_{0\infty}(h)=\int_\infty^0\bY^{-1}=
\begin{cases}
\frac{1}{pc(a_1-a_2)}\log\frac{a_1}{a_2}=\frac{\nu_{1}}{2\pi i}\log\frac{a_1}{a_2}, & a_1\neq a_2,\\
\frac{1}{pca}, & a_1=a_2=:a.
\end{cases}
\end{equation}

\begin{figure}
\begin{subfigure}{.45\textwidth} \includegraphics[scale=0.8]{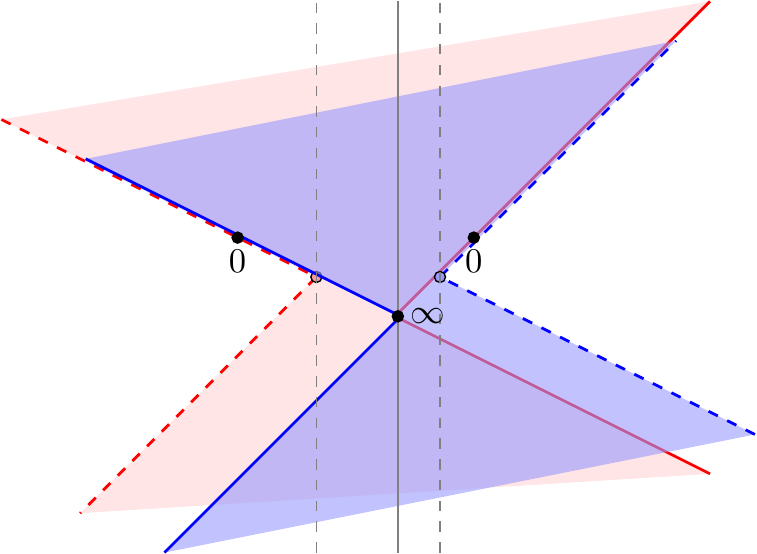} \caption{$P_0(h)^2\neq 4h^p$} \label{figure:k2a} \end{subfigure}
\quad
\begin{subfigure}{.45\textwidth} \includegraphics[scale=0.8]{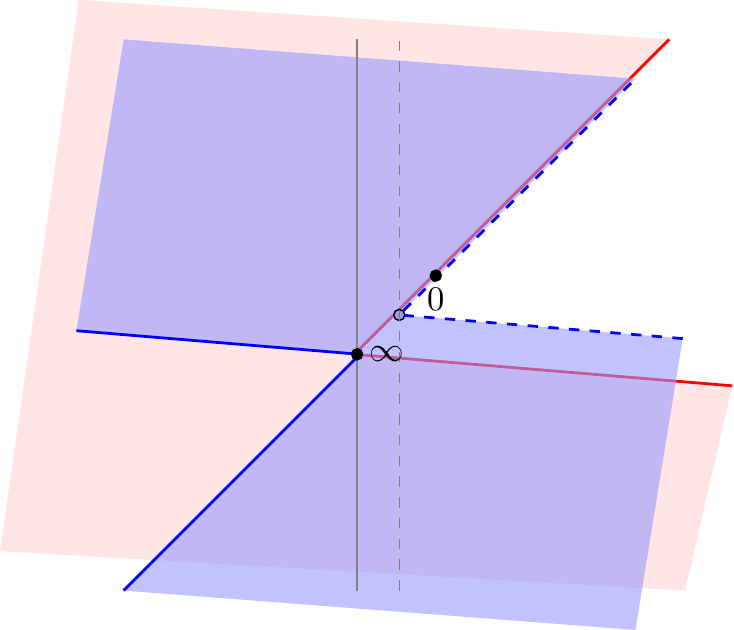} \caption{$P_0(h)^2=4h^p\neq 0$} \label{figure:k2b} \end{subfigure}
\\
\centering
\begin{subfigure}{.5\textwidth} \includegraphics[scale=0.8]{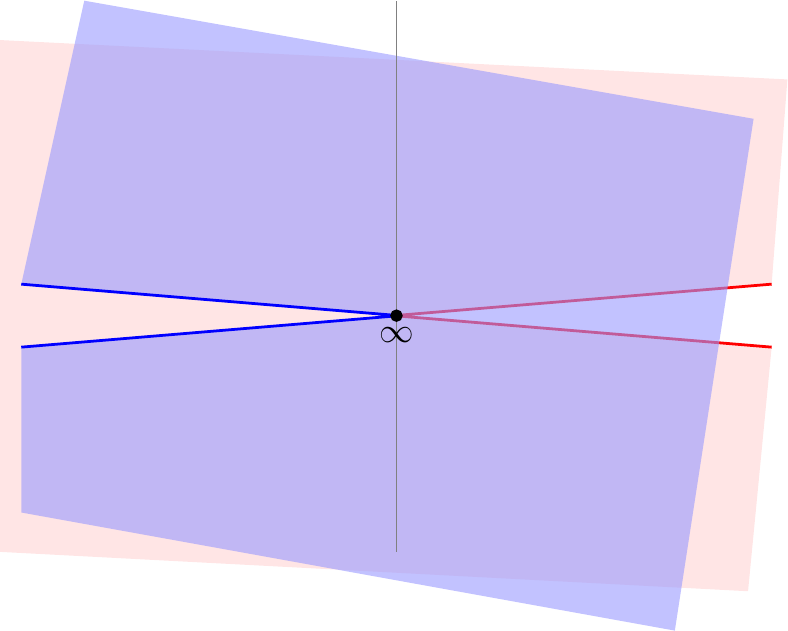} \caption{$h=0$} \label{figure:k2c} \end{subfigure}
	\caption{The rotational enlargement $Z_h$ of a pair of outer half-zones (i.e. those attached to the saddle point $\infty$) on the translation surface of $\bt_h$ for $k=1$. The gray vertical lines, corresponding to a pair of separatrices and to gates for a fixed $\theta$ can vary their direction as long as the angle $\theta\in\,\,]\delta_3,\pi-\delta_3[$ stays stable. The forbidden direction $h^s\R$ is horizontal in this figure. Compare with Figure~\ref{figure:k1}.}
	\label{figure:k2}
\end{figure}

Let us look at the set of unstable $(h,\theta)$ in the case $P_0(h)^2\neq 4h^p$, i.e. when $a_1(h)\neq a_2(h)$ are simple equilibria.
If $(h,\theta)$ is such that $\nu_{j}\in e^{i\theta}h^s\R$, that is if $\theta=\arg\nu_{1,2}(h)-s\arg h\mod\pi\Z$, then both equilibria are simultaneously centers,
one bounded by the homoclinic saddle connection of $y=\infty$ (the line $y=-\frac{P_0}{2}+i\sqrt{P_0^2-4h^p}\R$) and the other bounded by
the homoclinic saddle connection of $y=0$ (\Grn{which is the image of the previous one by $\sigma:y\mapsto \frac{h^p}{y}$, hence a circle in the $y$-coordinate through the points $0$ and $-\frac{2h^p}{P_0}$ on its diameter).}
The zone between these two homoclinic saddle connections is an annular zone for this $\theta$ (see Figure~\ref{figure:vf3-f} in coordinate $\xi_1=y^{1/3}$).
In the coordinate $\bt_h$ (Figure~\ref{figure:k1a}) this annulus corresponds to the strip bounded by two parallel lines (horizontal in Figure~\ref{figure:k2a}), one containing all the saddle points ``$0$'', the other containing all the saddle points ``$\infty$''.
There is an infinite number of saddle connections between ``$0$'' and ``$\infty$'' 
of different unstable directions $\theta$, all contained inside this strip. 
The set of their directions accumulates for each fixed $h$ towards $\arg\nu_{1,2}(h)-s\arg h\mod\pi\Z$. 

Now let us look at the asymptotic behavior of the set of unstable $(h,\theta)$ when $h\to 0$ radially (with fixed $\arg h$).
There are three kinds of situations that can arise depending on the asymptotic behavior of the simple roots $a_1(h)$ and $a_2(h)$, which have Puiseux series representation in $h$.
Let 
\[\check s=\min\big\{\ord_0 P_0(h),\tfrac{p}{2}\big\},\qquad \tilde s=\tfrac1{2} \ord_0\big(P_0(h)^2-4h^p\big),\] 
and consider the weighted blow-up $z=h^{-\tilde sp}\big(y+\frac12 P_0(h)\big)$.
It transforms $h^s\bY_h$ into
\[h^s\bY_h=pch^{s+\tilde s}\big(z^2-D(h)\big)\tdd{z},\]
where $D(h)=\tfrac14h^{-2\tilde s}\big(P_0(h)^2-4h^p\big),\ D(0)\neq 0$.
Then $\nu_{j}=(-1)^{j-1} h^{-\tilde s}\tfrac{\pi i }{pc\sqrt{D(h)}}$, and the direction to which the set of unstable directions accumulate for each $h\neq 0$ fixed is
\begin{equation}\label{eq:accumulation}
\arg\big(h^{-s}\nu_{j}(h)\big)=\tfrac{\pi}{2} -\arg(c)-(s+\tilde s)\arg h-\tfrac12\arg D(h)\mod\pi\Z.
\end{equation}
\begin{figure}[t]
\renewcommand\thesubfigure{\roman{subfigure}}
\centering 
\begin{subfigure}{.3\textwidth} \includegraphics{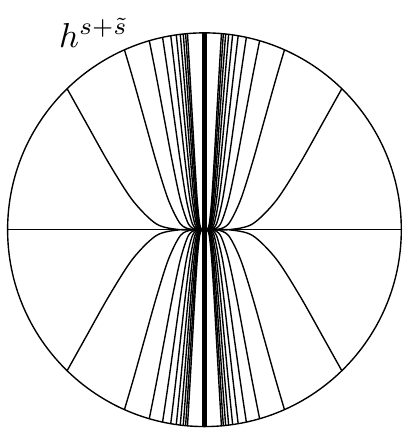} \caption{$\check s=\tilde s<\frac{p}{2}$} \label{figure:k3b} \end{subfigure}
\quad
\begin{subfigure}{.3\textwidth} \includegraphics{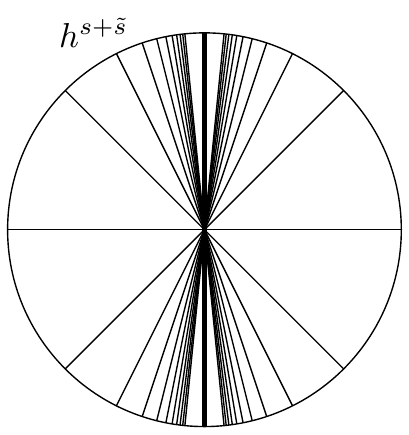} \caption{$\check s=\tilde s=\frac{p}{2}$} \label{figure:k3a} \end{subfigure}
\quad
\begin{subfigure}{.3\textwidth} \includegraphics{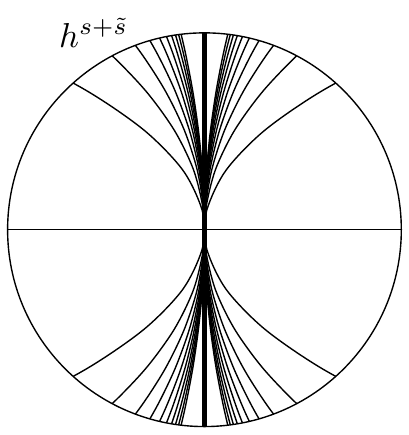} \caption{$\check s=\frac{p}{2}<\tilde s$} \label{figure:k3c} \end{subfigure}
	\caption{For $k=1$, the values of $h^{s+\tilde s}$ for which $e^{i(\theta+s\arg h)}\bY_h$ is rotationally unstable for a fixed value of $\theta$
		(figure for $P_0(h)=\ov P_0(h)$, $c=-i$, $\theta=\frac{\pi}{2}$).
	The sectors $S$ are enlargements of the above sectors of stability by varying $\theta\in\,\,]\delta_3,\pi-\delta_3[$, which asymptotically corresponds to enlargement by a rotation $h^{s+\tilde s}\mapsto e^{i\theta}h^{s+\tilde s}$.}	\label{figure:k3}
\end{figure}	
\begin{enumerate}[label=(\roman*), wide=0pt, leftmargin=\parindent]
	\item If $\check s=\tilde s<\frac{p}{2}$, then $\ord_0 a_1(h)=\check s\neq p-\check s=\ord_0 a_2(h)$, and 
	\GRN
	by \eqref{eq:nu08}: $\frac{\nu_{0\infty}(h)}{\nu_{1}(h)}=\frac{1}{2\pi i}\log\frac{a_1(h)}{a_2(h)}\sim\frac{2\check s-p}{2\pi i}\log(h)$  tends to infinity as $h\to 0$, hence $|\nu_{0\infty}(h)|$ grows faster than $|\nu_{1,2}(h)|$.
	This means that for each saddle connection $\nu_\gamma(h)=\nu_{0\infty}(h)\mod\nu_{1}\Z$ between $0$ and $\infty$: 
		\begin{align*}
			\lim_{|h|\to 0}\arg \big(h^{-s}\nu_\gamma(h)\big)&=\lim_{|h|\to 0}\arg\big(h^{-s}\nu_{0\infty}(h)\big)\mod\pi\Z\\
			&=	-(s+\tilde s)\lim\arg(h)-\arg(c\sqrt{D(0)})\mod\pi\Z.
		\end{align*}
Therefore the associated branch of the set of unstable $(h,\theta)$ has the same asymptotic limit $(0\cdot e^{i\arg(h)},\theta)$ with
\[ \theta+(s+\tilde s)\arg(h) \in -\arg(c)-\tfrac12\arg D(0)\mod\pi\Z,\]
which is perpendicular to the limit $h\to 0$ of the accumulation direction \eqref{eq:accumulation}. See Figure~\ref{figure:k3b}.	
\FGRN

	\item If $\check s=\tilde s=\frac{p}{2}$ ($p$ has to be even), then $\lim_{h\to 0}\frac{a_1(h)}{a_2(h)}\in\C^*\sminus\{1\}$, which means that both $\nu_{1,2}(h)$ and $\nu_{0\infty}(h)$ grow with the same rate $h^{-\frac{p}{2}}$ when $h\to 0$ and asymptotically in the same relative position:
	$\lim_{h\to 0}\frac{\nu_{0\infty}(h)}{\nu_{1}(h)}=\lim_{h\to 0}\frac{1}{2\pi i}\log\frac{a_1(h)}{a_2(h)}\in\C^*$,
	\Grn{and the same is true for all saddle connections $\nu_\gamma(h)=\nu_{0\infty}(h)\mod\nu_{1}\Z$ between $0$ and $\infty$.}  See Figure~\ref{figure:k3a}.
		
	\item If $\check s=\frac{p}{2}<\tilde s$  ($p$ has to be even), then 
	$\lim_{h\to 0}\frac{a_1(h)}{a_2(h)}=1$, and $\lim_{h\to 0}\frac{\nu_{0\infty}(h)}{\nu_{1}(h)}=\lim_{h\to 0}\frac{1}{2\pi i}\log\frac{a_1(h)}{a_2(h)}=0$,
	which means that $|\nu_{j}(h)|$ grows faster than $|\nu_{0\infty}(h)|$  when $h\to 0$.
	In this case, not only the set of unstable directions accumulate to \eqref{eq:accumulation} for each $h$, 
	but also when $h\to0$ radially then each particular branch of the set of unstable $(h,\theta)$ tends asymptotically to the same radial limit 
		\[ \theta+(s+\tilde s)\arg(h) \in \tfrac{\pi}{2}-\arg(c)-\tfrac12\arg D(0)\mod\pi\Z.\]
	See Figure~\ref{figure:k3c}.
\end{enumerate}

\subsubsection{Local zones relative to $B_h$.}\label{sec:6-localzones}
The same theory as we have sketched on the preceding pages can be adapted to restriction of the vector field $e^{i\theta}h^s\bY_h $ to the domain $B_h$ \eqref{eq:Bh}. 
In this case the role of the poles $\xi_1=0$ and $\xi_1=\infty$ is played by the two whole exterior discs $\CP^1\sminus B_h=\{|\xi_1|\leq\frac{|h|}{\delta_1}\}\cup \{|\xi_1|\geq\delta_1\}$.
Their images by $\bt_h$ in the Riemann surface of $\bt_h$ are referred to as \emph{holes} in the surface \Grn{(Figure~\ref{figure:lavaursdomain}).}

\begin{definition}\label{def:rotstableBh}
	The vector field $e^{i\theta}h^s\bY_h$ is \emph{rotationally stable relative to $B_h$} if no trajectory through a point $\xi_*\in B_h$ escapes $B_h$ in both positive and negative time.
	\Grn{Such pair $(h,\theta)$ is then also called \emph{stable relative to $B_h$} if no trajectory of $e^{i\theta}h^s\bY_h$ escapes $B_h$ in both positive and negative time (cf. Definition~\ref{def:stable}).}
\end{definition}

\begin{lemma}\label{lemma:rotstableB} 
	For $|h|<\delta_2$, with $\delta_2$ small enough, the vector field $e^{i\theta}h^s\bY_h$ is rotationally stable relative to $B_h$ if and only if 
	no straight line $t_*+e^{i\theta}h^s\R$ in the Riemann surface of $\bt_h$ intersects two different holes.
\end{lemma}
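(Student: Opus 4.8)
\textbf{Proof plan for Lemma~\ref{lemma:rotstableB}.}

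The statement asserts an equivalence between two properties of the rotated model vector field $e^{i\theta}h^s\bY_h$ restricted to the annulus $B_h$: (i) rotational stability relative to $B_h$ in the sense of Definition~\ref{def:rotstableBh} (no trajectory through a point of $B_h$ leaves $B_h$ in both time directions), and (ii) a purely ``flat-geometry'' condition, namely that in the translation surface of $\bt_h$ no straight line $t_*+e^{i\theta}h^s\R$ meets two distinct holes. The plan is to reformulate both conditions in the rectifying coordinate $\bt_h$, where trajectories of $e^{i\theta}h^s\bY_h$ are exactly the straight lines of direction $e^{i\theta}h^s$, and then argue via the dichotomy for real trajectories of rational vector fields recalled in Section~\ref{sec:6.2}.

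First I would transport everything to the $\bt_h$-picture. A (maximal) trajectory of $e^{i\theta}h^s\bY_h$ through a non-singular point $\xi_*\in B_h$ corresponds, in the translation surface of $\bt_h$, to a maximal straight segment of slope $\arg(e^{i\theta}h^s)$ issued from $\bt_h(\xi_*)$. Such a segment stays inside $\bt_h(B_h)$ until it either hits one of the two holes (the images of $\{|\xi_1|\le |h|/\delta_1\}$ and $\{|\xi_1|\ge\delta_1\}$), or it limits onto a saddle point (pole of $\bt_h$), or it tends to one of the equilibria of $\bY_h$ inside $B_h$. ``Escaping $B_h$'' in forward, resp. backward, time is then precisely the event that the forward, resp. backward, half of this straight segment meets a hole. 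Thus property (i) fails exactly when there is a straight line of the given slope whose two ends meet holes — which is property (ii) failing — unless one of the ``bad'' ends is not actually a hole but an equilibrium or a saddle point reached in finite or infinite time. So the content of the lemma is that for $\delta_2$ small these degenerate alternatives cannot make (i) fail without (ii) failing.

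The key step, and where the hypothesis $|h|<\delta_2$ with $\delta_2$ small is used, is to rule out the degenerate ends. For the equilibria: by Lemma~\ref{lemma:asymptotic-h}(1) (together with the estimate in its proof localizing all zeros of $P_h$ in a ring $\tfrac1L|h|^{p-\check s}<|\xi_1|^p<L|h|^{\check s}$), all equilibria of $\bY_h$ lie deep in the interior of $B_h$, and by Section~\ref{sec:6.2.1} each attractive/repulsive/parabolic equilibrium is reached along a trajectory only in infinite $\bt_h$-time except through a separatrix; since the equilibria sit strictly inside $B_h$, a straight segment limiting onto one of them does not leave $B_h$ on that side, so it cannot contribute to an ``escape'', and on the other side it either limits onto another interior equilibrium (again no escape) or meets a hole. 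Either way such a trajectory is consistent with (i) holding, and meanwhile it meets at most one hole. For the saddle points (poles $\xi_1=0,\infty$): these lie in the complement of $B_h$, so a straight segment reaching a pole in finite $\bt_h$-time has by then already crossed into the corresponding hole; hence ``reaching a pole'' is a special case of ``meeting a hole'' and carries no new information. Assembling these observations: if (ii) holds, then every straight line of slope $\arg(e^{i\theta}h^s)$ meets at most one hole, so no trajectory can exit $B_h$ on both ends, giving (i); conversely if (ii) fails, the offending straight line, restricted to the component of $\bt_h(B_h)$ between the two holes it meets, is the $\bt_h$-image of a trajectory through a point of $B_h$ escaping in both directions, so (i) fails.

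The main obstacle I anticipate is bookkeeping the boundary behavior carefully: a straight segment in $\bt_h(B_h)$ can in principle graze the boundary $\{|\xi_1|=\delta_1\}$ or $\{|\xi_1|=|h|/\delta_1\}$ tangentially, or pass between a hole and an equilibrium in a way that must be excluded by a genuine uniform estimate (independent of $\theta$ and of $h$ with $|h|<\delta_2$). This is exactly the kind of uniform control already packaged in Lemma~\ref{lemma:omega} and in Lemma~\ref{lemma:asymptotic-h}, so I would invoke those to guarantee that for $\delta_2$ small enough the separatrix graph and all equilibria stay in a fixed sub-annulus well inside $B_h$, and that the holes have, in the $\bt_h$-coordinate, uniformly ``large'' separation in every direction $e^{i\theta}h^s$ with $\theta\in\,]\delta_3,\pi-\delta_3[$ away from the accumulation direction; the remaining directions are handled by the same argument since the poles still lie outside $B_h$. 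Once these uniformities are in place, the equivalence is the elementary dichotomy described above.
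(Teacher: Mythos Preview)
Your reformulation in the $\bt_h$-coordinate is correct, and the implication ``(ii) fails $\Rightarrow$ (i) fails'' is indeed immediate. But your argument for ``(ii) holds $\Rightarrow$ (i) holds'' has a real gap: condition (ii) only forbids a line from meeting two \emph{different} holes, it does not forbid a line from meeting the \emph{same} hole twice. If a straight line enters a hole, exits it back into $\bt_h(B_h)$, and then enters the same hole again, the trajectory through a point of $B_h$ on the middle segment escapes $B_h$ in both time directions, so (i) fails while (ii) still holds. Ruling this out is precisely the content of the lemma, and it is the only thing the paper actually proves.

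The paper's argument is quite different from what you propose. It distinguishes two cases for a trajectory through $\xi_*\in B_h$ escaping both forward and backward through the outer boundary (the inner case is symmetric): either the two exit points lie on two \emph{different} arcs of $\{|\xi_1|=\delta_1\}$ (one arc per end at the pole $\infty$), or on the \emph{same} arc. In the first case, the period of $\bY_h^{-1}$ along the trajectory extended to $\infty$ through each end is nonzero, so the two ends lift to distinct holes in the translation surface --- hence two different holes are hit after all. In the second case, one uses a tangency count (Lemma~\ref{lemma:tangency}): the boundary circle $\{|\xi_1|=\delta_1\}$ has exactly $2kp$ tangencies with $e^{i\theta}h^s\bY_h$, one per arc; but a trajectory that enters and leaves $B_h$ through the same arc would force at least two tangencies on that arc, a contradiction.

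Your discussion of equilibria and saddle points as ``degenerate ends'' is correct but beside the point: those cases do not create counterexamples. The lemmas you plan to invoke (Lemma~\ref{lemma:omega} and Lemma~\ref{lemma:asymptotic-h}) control the size and mutual separation of holes, which is useful elsewhere but does not prevent a single hole from being non-convex enough in the $\bt_h$-picture for a line to re-enter it. The missing ingredient is the tangency-counting Lemma~\ref{lemma:tangency}.
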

 
\begin{lemma}\label{lemma:tangency} 
For $|h|<\delta_2$, with $\delta_2$ small enough, the vector field $e^{i\theta}h^s\bY_h$ has exactly $2kp$ tangency points with the outer, resp. inner, boundary circle of $B_h$.	
\end{lemma}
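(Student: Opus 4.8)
\textbf{Proof plan for Lemma~\ref{lemma:tangency}.}

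The plan is to count tangency points of the real foliation of $e^{i\theta}h^s\bY_h$ with a circle $\{|\xi_1|=\delta_1\}$ (the outer case; the inner case $\{|\xi_1|=\tfrac{|h|}{\delta_1}\}$ follows by the $\sigma$-symmetry \eqref{eq:sigma1}, which maps one circle to the other and conjugates $\bY_h$ to $-\bY_h$). A point $\xi_1$ on the circle is a tangency point precisely when the vector $e^{i\theta}h^s\bY_h.\xi_1 = e^{i\theta}h^s c\,\xi_1 P_h(\xi_1)$ is tangent to the circle, i.e. when $\RE\!\big(\ov{\xi_1}\cdot e^{i\theta}h^s c\,\xi_1 P_h(\xi_1)\big)=0$, which on $\{|\xi_1|=\delta_1\}$ reads $\RE\!\big(e^{i\theta}h^s c\, P_h(\xi_1)\big)=0$. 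Equivalently, writing $\xi_1 = \delta_1 e^{i\varphi}$, the real-analytic function $\varphi\mapsto \RE\!\big(e^{i\theta}h^s c\, P_h(\delta_1 e^{i\varphi})\big)$ must vanish, and I will count its zeros on $\varphi\in\R/2\pi\Z$.

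First I would recall from \eqref{eq:bYh} that $\xi_1^{kp}P_h(\xi_1)$ is a polynomial in $\xi_1$ of degree $2kp$; hence $P_h(\xi_1) = \xi_1^{-kp}R_h(\xi_1)$ where $R_h$ is a polynomial with $R_h(0)\neq 0$ and degree exactly $2kp$ (the top coefficient being the leading coefficient of $P$, namely $1$ after the normalization $P(u,0)=u^k$, perturbed in $h$). On the circle $\xi_1 = \delta_1 e^{i\varphi}$ one has $P_h(\delta_1 e^{i\varphi}) = \delta_1^{-kp}e^{-ikp\varphi}R_h(\delta_1 e^{i\varphi})$, and $e^{-ikp\varphi}R_h(\delta_1 e^{i\varphi})$ is a finite Fourier (Laurent) polynomial in $e^{i\varphi}$ with frequencies ranging over $\{-kp,-kp+1,\dots,kp\}$. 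Thus the function $g(\varphi):=\RE\!\big(e^{i\theta}h^s c\,\delta_1^{-kp} e^{-ikp\varphi}R_h(\delta_1 e^{i\varphi})\big)$ is a real trigonometric polynomial of degree $kp$ in $\varphi$, and a nonzero real trigonometric polynomial of degree $N$ has at most $2N$ zeros on $\R/2\pi\Z$ (counted with multiplicity). This gives the upper bound of $2kp$ tangency points; I would note $g$ is not identically zero as soon as $h^s\neq 0$, because its highest-frequency coefficient is a nonzero multiple of the leading coefficient of $R_h$ (and for $h\neq 0$ one has $h^s\neq 0$).

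To get exactly $2kp$ and not fewer, I would argue as follows: the zeros of $g$ that are not tangency points of $\bY_h$ with the circle would have to be zeros where $\xi_1$ is a pole or equilibrium on the circle, but for $\delta_1$ fixed and $\delta_2$ small the equilibria lie strictly inside $B_h$ (as arranged in \eqref{eq:B}) and the poles $0,\infty$ lie strictly outside, so all zeros of $g$ are genuine tangency points. For the lower bound, observe that the claim is open and deformation-stable: it suffices to verify it for the unperturbed model $h=0$ first made sense of via the limit, or more directly by a homotopy argument. For $h$ small, $R_h(\delta_1 e^{i\varphi})$ is $C^1$-close to $\delta_1^{2kp}e^{2ikp\varphi}\cdot(1+\text{lower order})$; after dividing by $\delta_1^{kp}$ and extracting the real part with the rotation factor $e^{i\theta}h^s c$, $g(\varphi)$ is a small perturbation of $|c||h^s|\delta_1^{kp}\cos(kp\varphi + \psi_0)$ for a suitable phase $\psi_0$ (the dominant term being the extreme-frequency part $\pm kp$; here I use that on $|\xi_1|=\delta_1$ with $\delta_1$ fixed and $h$ small the $\xi_1^{2kp}$ and constant terms of $R_h$ dominate). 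Since $\cos(kp\varphi+\psi_0)$ has exactly $2kp$ simple zeros on $\R/2\pi\Z$, and simple zeros persist under small $C^1$-perturbations while the total count cannot jump (it is bounded above by $2kp$ and the perturbed function is still of trigonometric degree $kp$), the count is exactly $2kp$ for all $|h|<\delta_2$ with $\delta_2$ small. I would make the ``dominant term'' estimate quantitative by fixing $\delta_1$ and choosing $\delta_2\ll\delta_1^{2kp}$ so that $|R_h(\delta_1 e^{i\varphi}) - \delta_1^{2kp}e^{2ikp\varphi}|$ and its derivative stay $\ll \delta_1^{2kp}$; the main obstacle is precisely bookkeeping this estimate uniformly in $\theta$ and in $h$ (including the limit $h\to 0$ along asymptotic directions, where one should instead compare with $\bY_0$ on the relevant irreducible component of $B_0$), but this is routine once the trigonometric-polynomial structure is in place. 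Finally the inner boundary case is obtained verbatim after applying $\sigma:\xi_1\mapsto h/\xi_1$, which sends $\{|\xi_1|=\delta_1\}$ to $\{|\xi_1|=|h|/\delta_1\}$ and $\bY_h$ to a nonzero scalar multiple of $\sigma^*\bY_h = -\bY_h$, preserving the set of tangency points.
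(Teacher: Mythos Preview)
Your argument is correct. The upper bound is essentially identical to the paper's: both observe that the tangency condition on $\{|\xi_1|=\delta_1\}$ becomes, after clearing denominators, a polynomial equation of degree $2kp$ in $\zeta=e^{i\varphi}$ (you phrase it as ``real trigonometric polynomial of degree $kp$'', which is the same thing). Your remark that equilibria do not lie on the boundary circle, so all zeros of $g$ are genuine tangencies, matches the paper's setup.

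The difference is in the lower bound. You run a perturbation argument: for small $h$ the dominant term of $P_h(\delta_1 e^{i\varphi})$ is $\delta_1^{kp}e^{ikp\varphi}$, so $g$ is $C^1$-close to a pure cosine with exactly $2kp$ simple zeros, and simple zeros are stable. The paper instead uses a short topological argument: there are $2kp$ separatrices of the pole at $\infty$ crossing the outer circle alternately inward and outward, so between any two consecutive crossings there must be at least one tangency, giving $\geq 2kp$. The paper's route is quicker and avoids the bookkeeping you flag (uniformity in $\theta$, the $h^s$ factor vanishing at $h=0$, which you correctly note should be handled by dividing out $|h^s|$ and working with $e^{i(\theta+s\arg h)}\bY_h$). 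Your route has the virtue of being self-contained, not relying on the separatrix picture established earlier; it also makes explicit that the tangencies are simple (transverse) for small $h$, which the paper's argument does not directly give. Both are valid; the paper's is more economical given the surrounding context.
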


\begin{proof}
\Grn{Let $\xi_1=\delta_1\zeta$ with $|\zeta|=1$ be a point on the outer boundary circle of $B_h$ at which  
 the vector field $e^{i\theta}h^s\bY_h$ is tangent to the circle, and therefore perpendicular to the radial direction of $\xi_1$,}
so the equation of tangency is $e^{i(\theta+s\arg h+\arg c)}P_h(\delta_1\zeta)\in i\R$. 
This is equivalent to
\[e^{i(\theta+s\arg h+\arg c)}P_h(\delta_1\zeta)=-e^{-i(\theta+s\arg h+\arg c)}\ov{P_h}(\delta_1\zeta^{-1}),\]
as $\ov\xi_1=\delta_1\zeta^{-1}$,
which after a multiplication by $\zeta^{kp}$ becomes a polynomial equation of order $2kp$ for $\zeta$,
depending analytically on $(h,\ov h)$, so it cannot have more then $2kp$ roots.
\Grn{At the same time we know there are at least $2kp$ such tangencies, since there are $2kp$ separatrices which cross the boundary circle alternatingly entering and leaving, hence there must be at least one point of tangency in between each two separatrices.}  	
\end{proof}

\begin{proof}[Proof of Lemma~\ref{lemma:rotstableB}]
The only thing to prove is that for  $|h|<\delta_2$ small enough, no  straight line $t_*+e^{i\theta}h^s\R$ in the Riemann surface of $\bt_h$ 
intersects the same hole twice. We know that for $\delta_2$ small, the real phase portrait of $e^{i\theta}h^s\bY_h$ near the boundary of $B_h$ looks like in 
Figure~\ref{figure:ends} with the outer boundary of $B_h$ intersecting the $2kp$ outer ends in $2kp$ arcs. 
If a trajectory enters and leaves $B_h$ through two different arcs 
then the period $\nu_\gamma$ along a curve $\gamma$ consisting of the intersection of the trajectory with $B_h$ extended on each side towards the pole within the given end
is non-null, therefore the two ends correspond to two different holes in the Riemann surface of $\bt_h$, which goes against the assumptions of Definition~\ref{def:rotstableBh}.
So assume that the trajectory slices twice through a boundary arc associated to the same end. By a simple topological consideration this means that there would have to be at least 2 points of tangency between the arc and the vector field. But by Lemma~\ref{lemma:tangency} the total number of tangencies between the outer boundary circle $\{|\xi_1|=\delta_1\}$ and the vector field is $2kp$ for all $h$, and therefore each of the $2kp$ arc has exactly one such point. Symmetrically for the inner boundary circle.
\end{proof}

\begin{definition}
The role of the separatrix graph is played by the set of all trajectories of $e^{i\theta}h^s\bY_h$ that escape $B_h$ in either positive or negative time.
\Grn{We call the connected components of its complement \emph{local zones relative to $B_h$} -- they are spanned by complete real trajectories inside $B_h$.} 

\Grn{An \emph{inner/outer end of a local zone} is (a neighborhood of) the point where it touches the inner/outer boundary of $B_h$,
i.e. one of the $2kp$ tangency points between  $e^{i\theta}h^s\bY_h$ and the boundary (Lemma~\ref{lemma:tangency}).} 
A sepal (resp. $\alpha\omega$-) zone relative to $B_h$ of a rotationally stable vector field relative to $B_h$ has exactly one (resp. two) such tangencies. 
A local $\alpha\omega$-zone is split to two \emph{local half-zones} following the same gate trajectory as before:
if $|h|$ is small enough then 
the width of the $\alpha\omega$-zone, which tends to infinity as $|h|\to 0$ (Lemma~\ref{lemma:asymptotic-h}), is substantially larger than the diameters of the holes, which are bounded, thus the gate trajectory of each local $\alpha\omega$-zone is contained in $B_h$ and therefore in the zone. 
\end{definition}

\begin{remark}\label{rem:size-hole}
In the construction of the Fatou coordinate on a Lavaurs domain $\Omega_{h}$ (page \pageref{page:admissiblestrip}), we need not only a line $t_*+e^{i\theta}h^s\R$ inside 
$\bt_h(B_h)$, the existence of which is assured by the relative rotationally stability (Definition~\ref{def:rotstableBh}), but a whole admissible strip of $\bt_h(\Sigma_h)$ of width $\sim \sin\theta \cdot h^s$ contained in $\bt_h(B_h)$. 
The size of the holes is uniformly bounded when $h\to 0$ and is commensurate to $\delta_1^{-kp}$ 
\Grn{(since the restriction of $\bY_h^{-1}$ to the outer complement of $B_h$ is uniformly bounded and tends to $\bY_0^{-1}=\frac{d\xi_1}{c\,\xi_1^{kp+1}}$, and likewise on the inner complement),
and their distance tends to infinity wit order at least $|h|^{-\check sk}$ by Lemma~\ref{lemma:asymptotic-h},
so one can easily arrange by changing $\delta_1$ tiny little bit that for each local zone there is always an admissible strip as well.}
\end{remark}

\subsubsection{Lavaurs domains revisited.}\label{sec:6.2omega}

The naive idea of construction of the Lavaurs domains would be to follow the same construction as in \ref{sec:6.2domains} of enlarging the relative half-zones through stable variation of $\theta\in\,\,]\delta_3,\pi-\delta_3[$, to obtain again a collection of sectors $S$ in the $h$-space, and for each $S$ a collection of $2kp$ inner and $2kp$ outer  domains relative to $B_h$, which are simply connected and possibly ramified over the equilibrium set $\{P(\xi)=0\}$.
However, the union of such constructed inner and outer domains associated with sectors $S$ may fail to cover any neighborhood $\tilde B_h\subset B_h$ of a fixed radius $\tilde\delta_1$ (independent of $h$ when $S\ni h\to 0$).
Luckily, this issue can be remedied by extending these domains through iteration by $\phi^{\circ p}$ as in Definition~\ref{def:Omegasaturation}. 
There is no need to extend them beyond the boundaries of the respective ``global'' domains $Z_h$ of Section~\ref{sec:6.2domains}, since the union of these domains $Z_h$ already covers $\C^*=B_h$ (Remark~\ref{remark:covering}).

But we shall proceed in the opposite way, and construct the Lavaurs domains $\Omega_h$ in two steps:
\begin{itemize}
	\item construct a ``global'' domain $Z_h$ the same way as in Section~\ref{sec:6.2domains} but with variation of the angle $\theta\in K_h$ that is stable relative to $B_h$, 
	\item restrict this domain $Z_h$ to $B_h$ and remove also all the points that may potentially not be accessible through the iteration by $\phi^{\circ p}$ from the relative domain: in the coordinate $\bt_h$, these inaccessible points lie in a ``shade'' of the hole corresponding to the end of $Z_h$. 
\end{itemize}

\begin{figure}[t]
	\centering
	\includegraphics{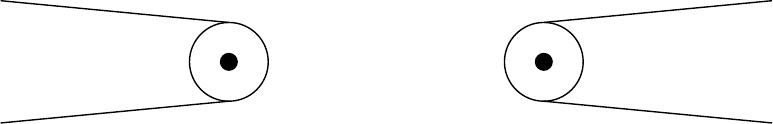}
	\caption{The negative shade (left) and the positive shade (right) of a hole in the coordinate $\bt_h$ (picture with $s\arg h=0\mod\pi\Z$).}
	\label{figure:shade}
\end{figure}  

\begin{definition}\label{def:shade}
For a hole $H_h\subset \bt_h(\CP^1\sminus B_h)$, its \emph{positive}, resp. \emph{negative}, \emph{shade} is the set of points $\bigcup_{t_*\in H_h}\{t:-\delta_3\leq\arg(h^{-s}(t-t_*))\leq\delta_3\}$, resp.  $\bigcup_{t_*\in H_h}\{t:-\delta_3\leq\arg(h^{-s}(t_*-t))\leq\delta_3\}$ (see Figure~\ref{figure:shade}).	
\end{definition}

The following construction of the Lavaurs domains is best understood from Figure~\ref{figure:k4}.

\begin{definition}[Lavaurs domains]\label{def:LavaursOmega}
	Given $\delta_1,\delta_2,\delta_3>0$, let $B_h$ be \eqref{eq:Bh} for $|h|<\delta_2$, let $K$ be a connected component of
	\begin{equation}\label{eq:Krelative}
	\{(h,\theta): |h|<\delta_2,\ \theta\in\,\,]\delta_3,\pi-\delta_3[,\ e^{i(\theta+s\arg(h))}\bY_h\ \text{rot. stable w.r.t. }B_h \},
	\end{equation}
	and let $S\subseteq \{|h|<\delta_2\}$ be the image of $K$ by the projection $(h,\theta)\mapsto h$ (possibly a ramified set defined on the covering surface of $\C^*$).
	For $h\in S$ let $K_h:=\{\theta: (h,\theta\in K)\}$ be the maximal open interval of stable angles relative to $B_h$.
	For any $(h,\theta)\in K$ let $Z_{h,\theta}$ be an outer or inner half-zone of $e^{i(\theta+s\arg(h))}\bY_h$ (i.e. global, relative to $\C^*$), depending continuously on $(h,\theta)\in K$,
	and let
	\begin{equation}\label{eq:domainZrelative}
	Z_h=\bigcup_{\theta\in K_h}Z_{h,\theta}.
	\end{equation}
	By the construction, each $\bt_h(Z_{h,\theta})$ intersects the same unique hole $H_h$, corresponding to the end of the zone.
	Depending whether the this hole is on the right or the left of $\bt_h(Z_{h,\theta})$ with respect to the direction  $e^{i\theta}h^s\R_{>0}$,
	let $\Omega_{S,h}$ be the complement in $Z_h$ \eqref{eq:domainZrelative} of the positive/negative shadow of the hole $H_h$ (Figure \ref{figure:k4}).
	Finally, let 
	\[\Omega_S=\coprod_{h\in S}\Omega_{S,h}.\]
	We call $\Omega_S$ a \emph{Lavaurs domain}.
\end{definition}

\begin{figure}
\begin{subfigure}{.45\textwidth} \includegraphics[scale=0.8]{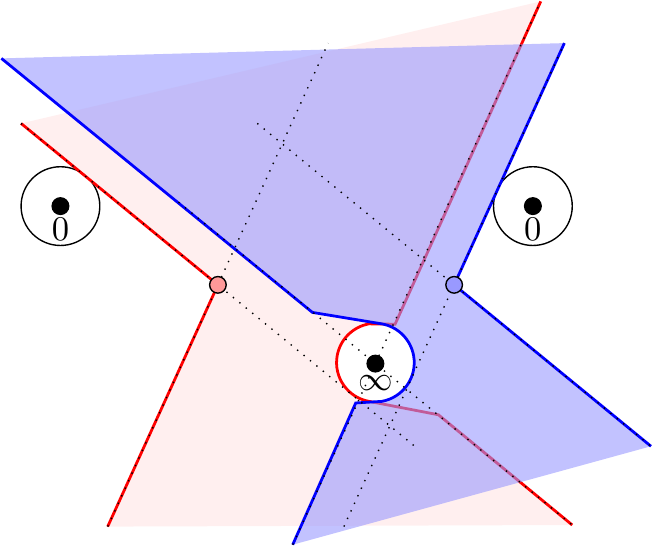} \caption{$P_0(h)^2\neq 4h^p$} \label{figure:k4a} \end{subfigure}
\quad
\begin{subfigure}{.45\textwidth} \includegraphics[scale=0.8]{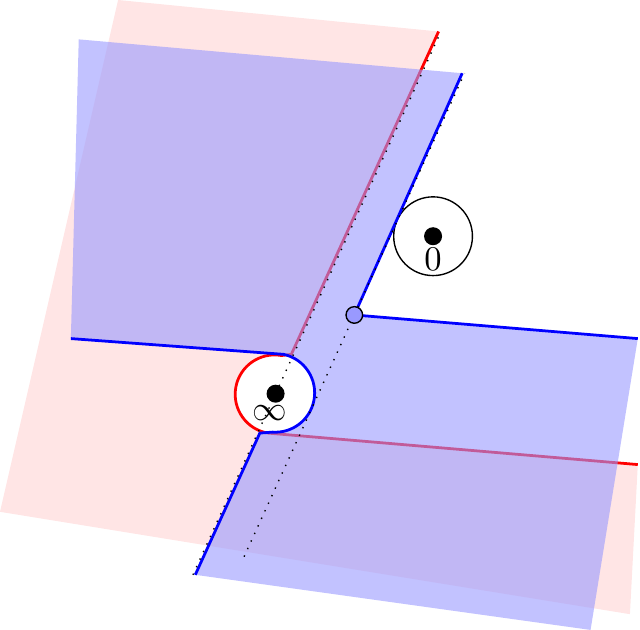} \caption{$P_0(h)^2=4h^p\neq 0$} \label{figure:k4b} \end{subfigure}
\\
\centering
\begin{subfigure}{.5\textwidth} \includegraphics[scale=0.8]{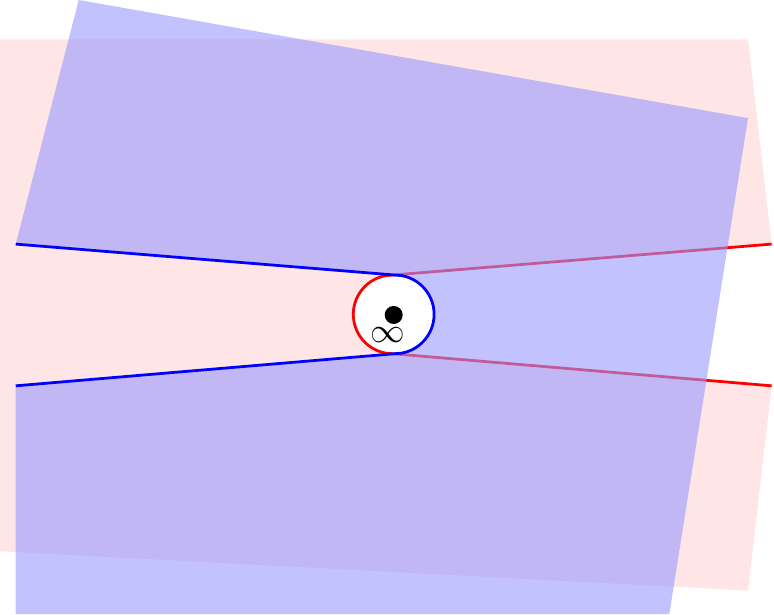} \caption{$h=0$} \label{figure:k4c} \end{subfigure}
\caption{Image of a pair of outer Lavaurs domains on the translation surface of $\bt_h$ for $k=1$.
		Compare with Figure~\ref{figure:k2}.}
	\label{figure:k4}
\end{figure}

\begin{lemma}
For $\delta_2$ small enough, each of the above Lavaurs domains $\Omega_S$ of Definition~\ref{def:LavaursOmega} is contained inside one of the saturated Lavaurs domains of Definition~\ref{def:Omegasaturation}.
\end{lemma}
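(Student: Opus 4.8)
The claim compares the two notions of Lavaurs domain introduced in the paper: the ``saturated'' one from Definition~\ref{def:Omegasaturation}, built by saturating an admissible strip $\bt_h(\Sigma_h)$ through iteration of $\phi^{\circ p}$, and the more explicit one from Definition~\ref{def:LavaursOmega}, built as an enlargement of a half-zone through stable rotation with the shade of the relevant hole removed. The plan is to fix a Lavaurs domain $\Omega_S$ of Definition~\ref{def:LavaursOmega} with associated sector $S$ and, for each $h\in S$, produce an admissible strip $\bt_h(\Sigma_h)$ such that $\Omega_{S,h}$ is contained in the saturated domain $\Omega_h$ generated by it. First I would choose, for a fixed $\theta\in K_h$, a half-zone $Z_{h,\theta}$ whose $\bt_h$-image is a (half-)strip of direction $e^{i\theta}h^s\R$ with a single hole $H_h$ adjacent to one side. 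Inside the complement of the shade of $H_h$ one can place a line $t_*+e^{i\theta}h^s\R$ entirely inside $\bt_h(B_h)$ (relative rotational stability, Lemma~\ref{lemma:rotstableB}), and by Remark~\ref{rem:size-hole}, after shrinking $\delta_1$ slightly, a whole strip $\bt_h(\Sigma_h)$ of width $\sim\sin\theta\cdot|h^s|$ bounded by a trajectory $\Gamma_h$ of $e^{i\theta}h^s\bY_h$ and its image $\phi^{\circ p}(\Gamma_h)$. This is an admissible strip in the sense of Definition~\ref{def:admissible}.

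The core step is then to show $\bt_h(\Omega_{S,h})\subseteq\bt_h(\Omega_h)$, where $\bt_h(\Omega_h)$ is the $\phi^{\circ p}$-saturation of $\bt_h(\Sigma_h)$ inside $\bt_h(B_h)$ as in Definition~\ref{def:Omegasaturation}. By construction $\bt_h(\Omega_{S,h})$ is the part of the rotational enlargement $\bigcup_{\theta\in K_h}Z_{h,\theta}$ lying in $B_h$ with the positive/negative shade of $H_h$ removed. Take a point $t\in\bt_h(\Omega_{S,h})$: it lies on a line $t+e^{i\theta'}h^s\R$ for some $\theta'\in K_h$ contained in some half-zone, and since $t$ is not in the shade of $H_h$, the forward or backward $e^{i\theta'}h^s$-trajectory from $t$ stays in $B_h$ until it meets $\bt_h(\Sigma_h)$ — because that trajectory, being transversal to the direction $e^{i\theta}h^s\R$ of $\Sigma_h$ (the angle difference $|\theta-\theta'|$ is bounded below away from $0$ and $\pi$, as both lie in $]\delta_3,\pi-\delta_3[$), must cross the strip, and the shade exclusion guarantees it does not escape through the hole $H_h$ first. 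The discrete iteration $\Phi_h=\bt_h\circ\phi_h^{\circ p}\circ\bt_h^{\circ(-1)}$ is, by \eqref{eq:Delta}, a $|h^s\Delta_h|$-perturbation of the translation by $h^s$, so the orbit $\{\Phi_h^{\circ n}(t)\}$ shadows the continuous flow of $h^s\bY_h$ and hence, after finitely many steps staying inside $\bt_h(B_h)$, lands in $\bt_h(\Sigma_h)$. That is precisely the membership condition $\exists n:\Phi_h^{\circ n}(t)\in\bt_h(\Sigma_h)$ and $\Phi_h^{\circ l}(t)\in\bt_h(B_h)$ for $0\leq l\leq n$ defining $\bt_h(\Omega_h)$.

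Two technical points need care. The first is the compatibility of the dependence on $h$: the admissible strip $\bt_h(\Sigma_h)$ must be chosen to vary continuously (analytically off $\{h=0\}$) with $h\in S$ and to admit a limit as $S\ni h\to 0$ with an asymptotic direction, so that the saturated domain $\Omega=\coprod_h\Omega_h$ is the object produced by Propositions~\ref{prop:Fatou}~\&~\ref{prop:Fatou-dependence}; this follows from Proposition~\ref{prop:halfz-param} (continuity and limit of half-zones) together with the uniform control of hole sizes in Remark~\ref{rem:size-hole}. The second, and the main obstacle, is the passage to the limit $h\to 0$ in the case $s>0$: there the strip $\bt_h(\Sigma_h)$ has width $\sim\sin\theta\cdot|h^s|\to 0$ and the translation by $h^s$ degenerates to the identity, so one cannot saturate by $\phi^{\circ p}$ directly. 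The remedy is the same as in Section~\ref{sec:6.1}: approach along $h_n=n^{-1/s}h_0$ and replace $\phi^{\circ p}$ by $\varphi_n(\xi_1)=\xi_1\circ\phi^{\circ np}$, whose saturation is over a strip of non-vanishing width $\sim\sin\theta\cdot|h_0^s|$ (by Lemma~\ref{lemma:hn}, $\varphi_n\to\varphi_\infty=\exp(h_0^s(1+\Delta_0)\bY_0)$ locally uniformly on $B_0\sminus\{0\}$), and to check that the containment $\Omega_{S,h_n}\subseteq\Omega_{h_n}$ is uniform in $n$ and survives the limit. Once these are in place, shrinking $\delta_2$ to absorb all the finitely-relevant estimates completes the argument.
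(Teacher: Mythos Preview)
Your overall plan is the same as the paper's, but the core step is muddled and carries one outright error, while the two ``technical points'' at the end are not needed for this lemma.

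First, the error. You write that the $e^{i\theta'}h^s$-trajectory through $t$ is transversal to the strip $\bt_h(\Sigma_h)$ because ``the angle difference $|\theta-\theta'|$ is bounded below away from $0$ and $\pi$, as both lie in $]\delta_3,\pi-\delta_3[$''. That is false: two angles in $]\delta_3,\pi-\delta_3[$ can be equal. More importantly, this whole detour through $e^{i\theta'}h^s\bY_h$-trajectories is beside the point: the saturated domain of Definition~\ref{def:Omegasaturation} is built by iterating $\Phi_h=\bt_h\circ\phi_h^{\circ p}\circ\bt_h^{\circ(-1)}$, which moves in direction $h^s\R$ (angle $0$), not in direction $e^{i\theta'}h^s\R$. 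The shade in Definition~\ref{def:shade} was designed with opening $\delta_3$ around the direction $h^s\R$ precisely to match the discrete iteration, not the rotated flow. The clean argument (and the paper's) is: by \eqref{eq:Delta}, after $n$ steps one has $\Phi_h^{\circ n}(t)-t = nh^s + h^s\!\sum_{j=0}^{n-1}\Delta\circ\Phi_h^{\circ j}(t)$, so $\arg\big(h^{-s}(\Phi_h^{\circ n}(t)-t)\big)\leq\arcsin(\sup|\Delta|)$. Shrinking $\delta_1,\delta_2$ so that $\sup_B|\Delta|<\sin\delta_3$, the orbit from $t$ towards $\Sigma_h$ stays inside the cone of half-angle $\delta_3$ about the $h^s\R$-direction; since $t$ lies outside the corresponding shade of the single hole $H_h$ visible in $\bt_h(Z_h)$, the orbit avoids $H_h$ and therefore stays in $\bt_h(B_h)$ until it lands in $\bt_h(\Sigma_h)$. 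That is the whole proof.

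Second, the superfluous part. Your two ``technical points'' (continuous dependence of $\Sigma_h$ on $h$, and the $s>0$ degeneration via $h_n=n^{-1/s}h_0$) are not needed here. The lemma is a leafwise containment $\Omega_{S,h}\subseteq\Omega_h$ for each fixed $h$ with $|h|<\delta_2$; the analytic dependence on $h$ and the limit $h\to0$ are the business of Proposition~\ref{prop:Fatou-dependence}, already established, and are invoked later (e.g.\ in Proposition~\ref{prop:boundedness}), not in this lemma.
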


\begin{proof}
	The distance between two holes (corresponding to images of the complement of $B_h$ in the leaf $\{h=\const\}$ by $\bt_h$) on the same sheet of the surface $\bt_h(B_h)$ tends to infinity when $|h|\to 0$ (Lemma~\ref{lemma:asymptotic-h})
	so if $|h|$ is small enough then the $\bt_h$-image of the saturated Lavaurs domain contains an admissible strip $\bt_h(\Sigma_h)$. Moreover, $|\phi_h^{\circ np}-\bt_h|\leq n\sup|\Delta|$,
	hence if $\sup|\Delta|<\sin\delta_3$ then all the points in $\bt_h(\Omega_{S,h})$ are accessible from $\bt_h(\Sigma_h)$ through iteration.
\end{proof}

\begin{theorem}[Covering theorem]\label{thm:covering}
	For any $\delta_1,\delta_2,\delta_3>0$ sufficiently small there exist $\tilde{\delta_1},\tilde{\delta_2}$, such that 
	the collection of the cuspidal sectors $S$ cover a disc $\{|h|<\tilde\delta_2\}$, and for each given sector $S$ and for all $h\in S$ the $4kp$ associated Lavaurs domains $\Omega_{S,h}$ cover together 
	$\tilde B_h\sminus\{P_h=0\}$, $\tilde B_h=\{\frac{|h|}{\tilde\delta_1}<|\xi_1|<\tilde\delta_1\}$, i.e. the union of the $\Omega_S$'s covers 
	$\tilde B_S\sminus\{P=0\},\qquad \tilde B_S:=\coprod_{h\in S}\tilde B_h$. See Figure~\ref{figure:sectors}.
\end{theorem}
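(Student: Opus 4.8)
\textbf{Proof strategy for Theorem~\ref{thm:covering}.}
The plan is to first establish the covering of the $h$-disc by the cuspidal sectors $S$, and then, for each fixed $h$ in a given sector $S$, establish that the $4kp$ Lavaurs domains $\Omega_{S,h}$ cover $\tilde B_h\sminus\{P_h=0\}$. These two statements are essentially independent once the machinery of Sections~\ref{sec:6.1} and \ref{sec:6.2} is in place, and both rely on the topological picture already developed.

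For the covering of the $h$-disc, I would argue as follows. For a fixed asymptotic direction $\arg h=\tilde\theta_0$, consider the rays $h\in \R_{>0}\cdot e^{i\tilde\theta_0}$. By Lemma~\ref{lemma:asymptotic-h}(2) and Lemma~\ref{lemma:tangency} combined with Lemma~\ref{lemma:rotstableB}, the set of $(h,\theta)$ for which $e^{i(\theta+s\arg h)}\bY_h$ is rotationally \emph{unstable relative to} $B_h$ is, for each fixed $\theta$, a countable union of real-analytic curves, each with an asymptotic tangent at $h=0$; moreover the instability near the boundary of $B_h$ is governed only by saddle connections crossing $B_h$, whose periods blow up of order at least $|h|^{-\check s k}$. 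Consequently, inside the strip $\theta\in\,]\delta_3,\pi-\delta_3[$ the connected components $K$ of \eqref{eq:Krelative} project down to cuspidal sectors $S$ of positive angular opening (bounded below by $\frac{\pi-2\delta_3}{s+\tilde sk}$, as in the corollary following Definition~\ref{def:sector}), whose vertices include $h=0$. The key point is that for \emph{every} asymptotic direction $\tilde\theta_0$ there is at least one admissible angle $\theta\in\,]\delta_3,\pi-\delta_3[$ such that $e^{i(\theta+s\tilde\theta_0)}\bY_0$ is rotationally stable relative to $B_0$ — indeed by Proposition~\ref{prop:rotstable} only finitely many $\theta$ modulo $\pi\Z$ can be unstable for the limit field, so there is an open set of good $\theta$'s. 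Hence each ray $\R_{>0}\cdot e^{i\tilde\theta_0}$, truncated at some small radius $\tilde\delta_2(\tilde\theta_0)$, lies in some $S$. A compactness argument over $\tilde\theta_0\in\R/2\pi\Z$ — using the lower bound on the angular openings together with the uniform lower bound on $\tilde\delta_2(\tilde\theta_0)$ coming from the uniform (in $h$) control of hole sizes and separations in Remark~\ref{rem:size-hole} — then yields a uniform $\tilde\delta_2>0$ so that $\{|h|<\tilde\delta_2\}$ is covered.

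For the leaf-wise covering, fix $h\in S$ with $|h|<\tilde\delta_2$. By Remark~\ref{remark:covering}, for any single admissible $\theta\in K_h$ the distinct half-zones $Z_{h,\theta}$ of $e^{i(\theta+s\arg h)}\bY_h$ already cover $\C^*$ minus the separatrix graph, the gates, and finitely many isolated points (equilibria and $\alpha\omega$-midpoints); passing to local half-zones relative to $B_h$ (Section~\ref{sec:6-localzones}, using Lemma~\ref{lemma:rotstableB} and Lemma~\ref{lemma:tangency}) the same holds with $\C^*$ replaced by $B_h$. There are exactly $2kp$ outer and $2kp$ inner half-zones relative to $B_h$, matching the $2kp$ tangency points on each boundary circle of Lemma~\ref{lemma:tangency}, which is why we get $4kp$ domains. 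The issue is that the Lavaurs domains $\Omega_{S,h}$ of Definition~\ref{def:LavaursOmega} are obtained from the enlarged half-zones $Z_h$ \emph{restricted to $B_h$} and with the positive/negative shade of the end-hole removed, so I must check that removing the shade does not destroy the covering of a slightly smaller annulus $\tilde B_h=\{\tfrac{|h|}{\tilde\delta_1}<|\xi_1|<\tilde\delta_1\}$. The point is that a point of $\tilde B_h\sminus\{P_h=0\}$ lies in the shade of the hole of \emph{one} half-zone $Z_{h,\theta}$ only because it is ``blocked'' from the admissible strip in that zone; but since the $Z_{h,\theta}$'s overlap, the same point lies in a neighboring half-zone where the relevant hole is on the other side and hence the point is not in that zone's shade — this is exactly the content of the geometry in Figure~\ref{figure:k4}, and follows from the fact (Lemma~\ref{lemma:asymptotic-h} and Remark~\ref{rem:size-hole}) that the $\bt_h$-separation of holes on a given sheet is $\gtrsim |h|^{-\check s k}$ while the hole diameters stay bounded ($\sim\tilde\delta_1^{-kp}$), so the shades of the holes do not accumulate to fill up $\tilde B_h$ provided $\tilde\delta_1$ is chosen slightly smaller than $\delta_1$. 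Making this precise amounts to: (i) in the coordinate $\bt_h$, cover $\bt_h(\tilde B_h)$ by the complements-of-shades of all the half-zone holes; (ii) observe each point avoids at least one such shade by the overlap + separation estimate.

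\textbf{Main obstacle.} The genuinely delicate part is the \emph{uniformity} of all constants as $S\ni h\to 0$, i.e.\ producing $\tilde\delta_1,\tilde\delta_2$ independent of $h$ (and, for the first half, of the asymptotic direction). This requires carefully combining: the uniform bounds on hole sizes and their mutual $\bt_h$-distances (Remark~\ref{rem:size-hole}, Lemma~\ref{lemma:asymptotic-h}); the uniform non-degeneracy of the $2kp$ tangencies on $\partial B_h$ (Lemma~\ref{lemma:tangency}, whose proof gives a polynomial equation of degree $2kp$ depending real-analytically on $(h,\bar h)$ down to $h=0$); and the continuity and limit behavior of the half-zones and the enlargements $Z_h$ as $h\to 0$ with an asymptotic direction (Propositions~\ref{prop:halfz-param} and the one following \eqref{eq:domainZ}). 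Once these uniform estimates are assembled the covering of both the $h$-disc and each leaf $\tilde B_h$ follows by the overlap arguments sketched above; the limit case $h=0$ is then handled by the very same estimates, the cuspidal sectors being closed at their vertices by convention (Definition~\ref{def:sector}).
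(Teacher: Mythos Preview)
Your strategy is sound and runs parallel to the paper's, but the paper's execution is more direct on both fronts, and you miss one crisp observation that makes the leaf-wise covering immediate.

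For the leaf-wise covering, the paper's key point is that each hole $H_h$ is the end of exactly \emph{two} adjacent half-zones in the cyclic ordering, and by Definition~\ref{def:LavaursOmega} one of them has the \emph{positive} shade of $H_h$ removed while the other has the \emph{negative} shade removed. Hence the only points of $B_h\sminus\{P_h=0\}$ not covered by the union of Lavaurs domains are precisely those lying in the \emph{intersection} of the positive and negative shades of some hole. But that intersection is contained in a disc of radius $R/\cos\delta_3$ about the hole (with $R$ the uniform hole radius), and this disc is absorbed into a slightly enlarged hole upon replacing $\delta_1$ by a smaller $\tilde\delta_1$. Your overlap argument is the same mechanism, but you never isolate this intersection-of-shades characterization, which is exactly what makes the uncovered set explicitly small and the choice of $\tilde\delta_1$ uniform.

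For the covering of the $h$-disc, the paper bypasses your ray-plus-compactness scheme entirely and argues directly, for each fixed $h$ with $|h|$ small, that some $\theta\in\,]\delta_3,\pi-\delta_3[$ is stable relative to $B_h$, via a Lebesgue-measure estimate. The bad directions come in two flavors (Proposition~\ref{prop:rotstable}): countably many from saddle connections inside the (at most one) annular zone, and finitely many others. For the first kind, the annulus in $\bt_h$ is a strip with a periodic array of holes on each boundary; the measure of directions $\theta$ for which a line $t_*+e^{i\theta}h^s\R$ hits two holes is controlled by the ratio (hole diameter)/(hole separation), which tends to $0$ by Remark~\ref{rem:size-hole} and Lemma~\ref{lemma:asymptotic-h}. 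The finitely many remaining bad directions contribute intervals of instability whose total measure is likewise small. For $\delta_2$ small enough the total bad measure is below $\pi-2\delta_3$, so a good $\theta$ exists. This yields a uniform $\tilde\delta_2$ directly, without ever passing to the limit field $\bY_0$ or invoking compactness over asymptotic directions. Your approach can be completed, but the measure argument is shorter and sidesteps precisely the uniformity issue you flag as the main obstacle.
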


\begin{proof}
Whenever the sector $S$ is non-trivial, the interval $K_h$ of variation of $\theta$ is non-empty for $h\in S$, and the union of the $2kp$ inner and $2kp$ outer domains $Z_h$ 
\eqref{eq:domainZrelative} covers \Grn{the whole leaf $\{h=\const\}\sminus\{P_h=0\}$} (cf. Remark~\ref{remark:covering}).	
Therefore, the Lavaurs domains $\Omega_{S,h}$ cover all $B_h\sminus\{P_h=0\}$ except of the points whose image in the coordinate $\bt_h$ lie in the intersection of the positive and the negative shade of a hole.  
Each hole is contained in some disc of uniformly bounded radius $\leq R\sim \frac{1}{kp|c|}\delta_1^{-kp}$ \Grn{(cf. Remark~\ref{rem:size-hole}),}
and it is easy to see that the intersection of the positive and the negative shades of the hole is therefore contained in the disc of radius $\frac{R}{\cos\delta_3}$
(Figure~\ref{figure:shade}), which in turn lies in some bigger hole for some $\tilde\delta_1<\delta_1$ for all $|h|<\delta_2$.

What we need to show is that the different sectors $S$ cover together some disc $\{|h|<\tilde\delta_2\}$,
i.e. that for each $h$ there is at least one rotationally stable direction $\theta$ relative to $B_h$.	
Since the holes have uniformly bounded radii $\leq R$, and their distance tends to infinity with rate at least $|h|^{-\check sk}$ (Lemma~\ref{lemma:asymptotic-h}), their effect on the local zones and on the relative rotational stability is smaller the smaller $|h|$ is.	
By Proposition~\ref{prop:rotstable} there are two kinds of saddle connections in the rotating family: countably many of those that lie inside an annular domain (if such annulus appears, then in our situation it is unique), and a finite number of other ones.
In the surface of $\bt_h$, the periodic annulus correspond to an infinite strip with a periodic series of poles on each boundary line, with a hole around each pole.
If $|h|$ is small enough, the size of the holes is small compared to the distance between the holes, and there are plenty of ways to choose a direction $\theta$ transverse to the strip such that no line $t_*+e^{i\theta}h^s\R$ intersects two holes, \Grn{meaning that $\theta$ is stable relative to $B_h$ (Definition~\ref{def:stable})}: in fact the Lebesgue measure of the set of bad directions can be made as small as one likes by restricting $\delta_2$.
The finite number of unstable directions corresponding to the other saddle connections gives rise to an additional set of intervals of instability relative to $B_h$,
but again the Lebesgue measure of them can be made small, and therefore one can always find an angle $\theta$ that avoids them too.
\end{proof}

\begin{proposition}\label{prop:boundedness}
	Let $\Omega_{S}$ be one of the domains of Theorem~\ref{thm:covering}, and $\Psi_{\Omega_{S}}$ the normalizing transformation \eqref{eq:LavaursPsi} on $\Omega_S$. Then $\Psi_{\Omega_{S}}$ is bounded on $\Omega_{S}$.
\end{proposition}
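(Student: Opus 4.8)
The plan is to reduce the boundedness of $\Psi_{\Omega_S}$ to boundedness of the function $\alpha_{\Omega_S}(\xi)=\bT_{\Omega_S}(\xi)-\bt(\xi)$ appearing in the exponential formula \eqref{eq:LavaursPsi}, more precisely to boundedness of $\bT_{\Omega_S}-\bt-\hat\mu(0)\tfrac12\log\tfrac{\xi_1}{\xi_2}$, and then to invoke the same argument already used in the proof of Theorem~\ref{thm:normalizngtransformation}(1). Recall from there the rewriting
\[\Psi_{\Omega_S}(\xi)=\exp\!\Big(t\,\tfrac{cP}{1+\hat\mu(0)cP}\bE\Big)(\xi)\Big|_{t=\alpha_{\Omega_S}(\xi)-\hat\mu(0)\tfrac12\log\tfrac{\xi_1}{\xi_2}},\]
and the fact that for every $R>0$ there is a $\delta_1>0$ such that $(t,\xi)\mapsto\exp\big(t\tfrac{cP}{1+\hat\mu(0)cP}\bE\big)(\xi)$ is analytic and bounded on the polydisc $\{|t|<R,\ |\xi|<\delta_1\}$ (the vector field $\tfrac{cP}{1+\hat\mu(0)cP}\bE$ being analytic near $0$, with $\{P=0\}$ in its zero set, so its flow extends holomorphically through the fixed locus). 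Hence it suffices to show that $\alpha_{\Omega_S}-\hat\mu(0)\tfrac12\log\tfrac{\xi_1}{\xi_2}$ is uniformly bounded on $\Omega_S$ by a constant $R$ independent of $h\in S$ and of the point of $\Omega_{S,h}$.

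First I would use the explicit description of the Lavaurs domains from Section~\ref{sec:6.2omega}: each $\Omega_{S,h}$ is obtained from the rotational enlargement $Z_h=\bigcup_{\theta\in K_h}Z_{h,\theta}$ of a single relative half-zone by removing a shade of one hole, so every point $\xi\in\Omega_{S,h}$ lies on some complete real trajectory of $e^{i\theta}h^s\bY_h$ inside $B_h$ with $\theta\in K_h\subset\,\,]\delta_3,\pi-\delta_3[$, and that trajectory runs all the way to the end of the half-zone, i.e. to the hole. Along such a trajectory, parametrized by $\bt_h\in t_0+e^{i\theta}h^s\R$ with $\bt_h\to \pm e^{i\theta}h^s\infty$ at the end, Proposition~\ref{prop:Fatou-behavior}(i) gives that $\bT_{\Omega_{S,h}}-\bt_h-\hat\mu(0)\log\xi_1$ has a finite limit, and Proposition~\ref{prop:Fatou}(c)/(b) plus the uniqueness statement pin $\bT_{\Omega_{S,h}}$ down up to a constant; the construction in the proof of Proposition~\ref{prop:Fatou} via the Ahlfors--Bers map $\tilde\omega_2$ shows moreover that on the admissible strip $\bt_h(\Sigma_h)$ the difference $\bT_{\Omega_{S,h}}-\bt_h$ is $O(|h^sP_h|)$ (this is \eqref{eq:limt} combined with $|\omega_1^{\circ(-1)}(\check\bt_h)-\check\bt_h|=O(|h^sP_h|)$ and the $O(|P|)$-smallness of $\bmu$), hence $\alpha_{\Omega_S}-\hat\mu(0)\tfrac12\log\tfrac{\xi_1}{\xi_2}$ is bounded on $\Sigma_h$; the Fatou relation $\bT_{\Omega_{S,h}}\circ\phi_h^{\circ p}=\bT_{\Omega_{S,h}}+h^s$ and the corresponding $\phi_h^{\circ p}$-quasi-invariance of $\bt_h+\hat\mu(0)\tfrac12\log\tfrac{\xi_1}{\xi_2}$ (whose cocycle $\check\Delta_h$ is $O(|P|)$ by Lemma~\ref{lemma:Delta}) then propagate boundedness from $\Sigma_h$ through the saturation to all of $\Omega_{S,h}$, since the number of iterates needed to reach $\Sigma_h$ is controlled and each step changes the quantity only by $O(|P|)$ which is summable along the trajectory.

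The step I expect to be the real obstacle is the \textbf{uniformity in $h$, including the limit $h\to0$}. For $s=0$ the strip $\bt_h(\Sigma_h)$ has non-degenerate width and everything is uniform by Proposition~\ref{prop:Fatou-dependence}(2); for $s>0$ the strip shrinks as $h\to0$ and one must, as in Lemma~\ref{lemma:hn} and Proposition~\ref{prop:Fatou-dependence}(3), pass to the sequence $h_n=n^{-1/s}h_0$ and the iterated maps $\varphi_n=\phi^{\circ np}$, whose associated admissible strips and Beltrami coefficients $\bmu$ have non-vanishing limits; one then has to check that the bound $R$ obtained from the compactness argument on the polydisc, together with the $O(|P|)$ estimates which by Lemma~\ref{lemma:omega} are uniform over all admissible strips and all $\theta\in[\delta_3,\pi-\delta_3]$, does not blow up as $h\to0$. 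Concretely the point is that $\|\alpha_{\Omega_S}-\hat\mu(0)\tfrac12\log\tfrac{\xi_1}{\xi_2}\|_{\infty}$ is majorized by a finite limit equal to the value $\tfrac{h^s}{2\pi i}\log\tfrac{d}{dz}\tilde\omega_2|_{z=0}$ (resp. at $z=\infty$) from Proposition~\ref{prop:Fatou-behavior}, and the finiteness of this quantity uniformly in $h$ follows from the uniform $L^1$-bound $\int|\check U_h|\,\bE_h^{-1}<\infty$ on $\Sigma_h$ (with $\check U_0(0)=0$) already established in that proof, invoked now with constants independent of $h\in S$ thanks to Lemma~\ref{lemma:omega}. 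Putting the pieces together: $\alpha_{\Omega_S}-\hat\mu(0)\tfrac12\log\tfrac{\xi_1}{\xi_2}$ is bounded on $\Omega_S$ by a constant $R$ independent of $h$, choose $\delta_1$ accordingly, and $\Psi_{\Omega_S}$ is bounded on $\Omega_S$, with $\Psi_{\Omega_S}(\xi)-\xi=O(P\xi)$ tending to the identity along complete trajectories, exactly as claimed.
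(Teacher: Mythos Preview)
Your strategy---bounding $\alpha_{\Omega_S}-\hat\mu(0)\tfrac12\log\tfrac{\xi_1}{\xi_2}$ directly via the quasi-conformal construction on $\Sigma_h$ and then propagating by iteration---is plausible but, as written, has two genuine gaps.

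First, the claim that ``every point $\xi\in\Omega_{S,h}$ lies on some complete real trajectory of $e^{i\theta}h^s\bY_h$ inside $B_h$'' is not correct. The Lavaurs domain of Definition~\ref{def:LavaursOmega} is obtained from the \emph{global} half-zone enlargement $Z_h$ by removing only the shade of one hole; points near the outer boundary of $B_h$ (on the far side of the hole from the equilibria) are in $\Omega_{S,h}$ but lie on no complete trajectory inside $B_h$. This is precisely the gap between $\Omega_S$ and the smaller subdomain $\tilde\Omega_S$ (the localized rotational enlargement) that the paper introduces. Second, the propagation step (``each step changes the quantity only by $O(|P|)$ which is summable along the trajectory'') is not justified: the number of iterates from a point near an equilibrium to $\Sigma_h$ is unbounded, and you would need a uniform bound on $\sum_j|\check\Delta(\phi^{\circ jp}(\xi))|$ over all orbits and all $h\in S$, including $h\to 0$. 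Your own discussion of the $h\to 0$ obstacle confirms you don't have this under control.

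The paper's proof avoids these estimates entirely by a compactness argument. It introduces the subdomain $\tilde\Omega_S\subseteq\Omega_S$ actually spanned by complete trajectories in $B_h$, observes (after shrinking $\delta_1$ slightly so that $K$ and $S$ become compact) that the closure of $\tilde\Omega_S$ in the $\xi$-space is compact and equals $\tilde\Omega_S\cup\{P=0\}$, and then simply invokes Theorem~\ref{thm:normalizngtransformation}(1) to conclude that $\Psi_{\Omega_S}$ extends continuously to $\{P=0\}$ as the identity---hence is bounded on the compact closure. The remaining extension from $\tilde\Omega_S$ to $\Omega_S$ is handled by iteration, but now this is easy: the $\bt_h$-width of $\Omega_{S,h}\setminus\tilde\Omega_{S,h}$ is uniformly bounded (of order the hole diameter $\sim\delta_1^{-kp}$), so finitely many iterations of $\phi^{\circ p}$ suffice when $s=0$, and the $h_n=n^{-1/s}h_0$ trick of Lemma~\ref{lemma:hn} handles $s\geq 1$. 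The point you are missing is that Theorem~\ref{thm:normalizngtransformation}(1) already packages the delicate trajectory-wise asymptotics you are trying to re-derive, and compactness converts ``tends to identity along each trajectory'' into a uniform bound without any explicit summation.
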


\begin{proof}
Let $\Omega_S$ be one of the domains of Theorem~\ref{thm:covering} as constructed in Definition~\ref{def:LavaursOmega}, and let $\tilde\Omega_S\subseteq\Omega_S$
be the localized version of Section~\ref{sec:6-localzones} of the domain $Z_S$ \eqref{eq:Z_S} of Section~\ref{sec:6.2domains}, both associated to the same connected component $K$ of \eqref{eq:Krelative} consisting of those $(h,\theta)$ that are stable with respect to given neighborhood $B$ of $0\in\C^2$.
So $\bt_h(\tilde\Omega_{S,h})$ can expressed as a union of open strips of varying angles $\theta\in K_h$ in the surface $\bt_h(B_h)$.
The claim is that it is enough to show that $\Psi_{\Omega_{S}}$ is bounded on $\tilde\Omega_{S}$.
Indeed, the width of the complement $\bt_h(\Omega_{S,h})\sminus \bt_h(\tilde\Omega_{S,h})$ is uniformly bounded. Therefore if $s=0$, it takes only a finite number of iterations by $\phi^{\circ p}$ to extend $\Psi_{\Omega_{S}}$ from $\tilde\Omega_S$ to $\Omega_S$ and the boundedness is preserved.
If $s\geq 1$, then one divides the $h$-space into concentric rings and iterates on each by $\phi^{\circ np}$, $n\in \Z_{>0}$, as in Lemma~\ref{lemma:hn}. 

So let us show that $\Psi_{\Omega_{S}}$ is bounded on $\tilde\Omega_{S}$. Up to restricting the radius $\delta_1$ of $B$ a bit, one can assume that the set $K$ is compact,
and therefore so is its projection $S$ and each interval $K_h$, $h\in S$, and that $\bt_h(\tilde\Omega_{S,h})$ is also closed. Hence the compact closure of $\tilde\Omega_S$ in the $\xi$-space consist of $\tilde\Omega_{S}$ and of (a part of) the divisor $\{P(\xi)=0\}$ (note that $0\in S$ is included by definition).
All we need to show is that $\Psi_{\Omega_{S}}$ extends continuously to $\{P(\xi)=0\}$ as identity. But this follows from the form of $\tilde\Omega_{S}$ and Theorem~\ref{thm:normalizngtransformation}.
\end{proof}

\subsection{Modulus of analytic classification}\label{sec:6.3}

 \subsubsection{Normalizing cochains, outer cocycles and analytic classification.}\label{sec:6.3.1}

 Constructed in the previous section, Theorem~\ref{thm:covering}, we have a covering of a neighborhood of 0 in the $h$-space by a collection of at most countably many 
 cuspidal sectors (shortly just \emph{sectors}). Over each sector $S$ we have a family of $2kp$ outer and $2kp$ inner Lavaurs domains, $\Omega_S=\coprod_{h\in S}\Omega_{S,h}$, covering together each local leaf $B_h$ \eqref{eq:Bh}, $h\in S$, and hence the domain
 \[B_S=\coprod_{h\in S} B_h,\]
 in the $\xi$-space.
 \Grn{This covering is $(\sigma,\Lambda)$-invariant: if $\Omega_S$ is a domain from this covering then so are its images $\Lambda^n(\Omega_S)$ and $\sigma\Lambda^n(\Omega_S)$ for all $n=1,\ldots,p$.}
 
 By the results of Section~\ref{sec:6.1} and Proposition~\ref{prop:boundedness}, on each of these domains $\Omega_S$ there is a bounded normalizing transformation $\Psi_{\Omega_{S}}$ such that
 \[\Psi_{\Omega_{S}}\circ\phi^{\circ p}=\phimod^{\circ p}\circ\Psi_{\Omega},\qquad \phimod^{\circ p}=\exp(h^s\bY).\]

  For a given sector $S$, and $h\in S$, a pair of neighboring Lavaurs domains $\Omega_{S,h}$, $\Omega_{S,h}'$ (i.e. corresponding to neighboring half-zones) can have two kinds of intersections (Figure~\ref{figure:intersections}): 
  \begin{itemize}
  	\item an intersection corresponding to a separatrix, going from an equilibrium point to outer (resp. inner) boundary, called \emph{outer intersection} (resp. \emph{inner intersection}),
  	\item an intersection corresponding to a gate between two halves of the same $\alpha\omega$-zone, going from one equilibrium to another, called \emph{gate intersection}.
  \end{itemize}

\begin{figure}[t]
\begin{subfigure}[t]{0.5\textwidth} \hskip-12pt \includegraphics[scale=0.8]{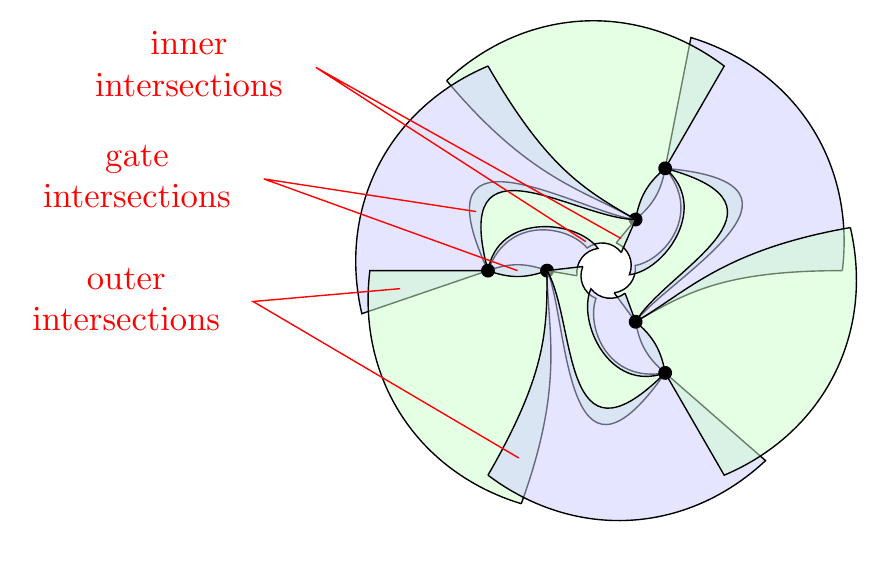} \caption{}\label{figure:intersections}\end{subfigure}
\qquad
\begin{subfigure}[t]{0.4\textwidth}  \includegraphics[scale=0.8]{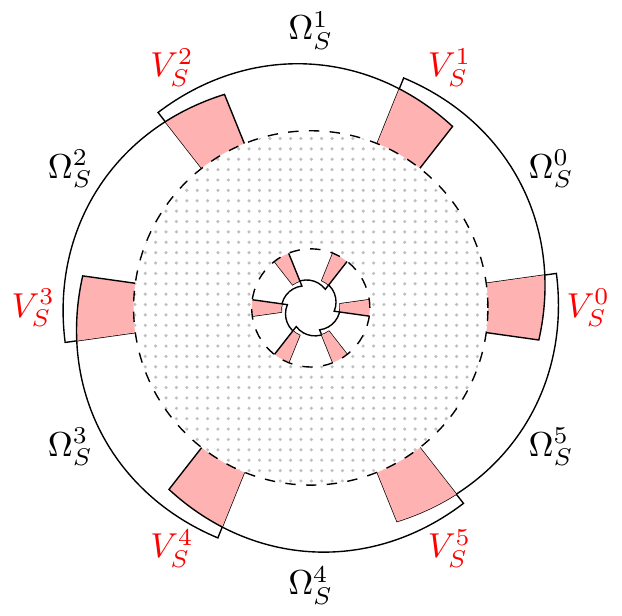} \caption{}\label{figure:cocycle}\end{subfigure}
\caption{(a) Example (with $k=1$, $p=3$) of a covering of $B_h\sminus\{P_h=0\}$ by the Lavaurs domains $\Omega_S$ and their intersections.  
	 (b) The outer and inner Lavaurs domains near the boundary of $B_h$, and the corresponding outer and inner intersections (in red). Inside the middle ring (dotted) the organization of the domains may be complicated; this is where the gate intersections are situated.}
\end{figure}

 \begin{theorem}[Existence of an equivariant normalizing cochain]\label{thm:cochain} 
 	For each of the sectors $S$, on the associated covering of $B_S\sminus\{P=0\}$ by
 	the $4kp$ Lavaurs domains $\Omega_S$, there exists a \emph{normalizing cochain} $\{\Omega_S\mapsto\Psi_{\Omega_S}\}$ 
 	consisting of bounded analytic transformations such that\footnote{ \Grn{Note that in \textit{(1)} and \textit{(2)} the way the conjugation by a normalizing cochain works is by composition with maps associated to different domains from the same covering on the two sides of the identies.}}
  	\begin{enumerate}[label=(\arabic*)]
 		\item  it conjugates $\phi$ and the model $\phimod=\Lambda\exp(\tfrac1p h^s\bY)$ 
 		\[\Psi_{\Lambda^{n}(\Omega_S)}\circ\phi^{\circ n}=\phimod^{\circ n}\circ\Psi_{\Omega_S}, 
 		\quad n=1,\ldots, p, \] %
 		\item  it is $\sigma$-equivariant, that is
 		\[\Psi_{\sigma(\Omega_S)}\circ\sigma=\sigma\Psi_{\Omega_S}, \] 
 		\item  \Grn{if $\Omega_S$ and $\Omega_S'$ share a gate intersection then}
 		\[\Psi_{\Omega_S}=\Psi_{\Omega_S'}\qquad \text{on the gate intersection.}\] 
 	\end{enumerate}

    This normalizing cochain is unique up to left composition with some cochain $\{\Omega_S\mapsto\exp(C_{\Omega_S}(h)\bY)\}$ of flow maps of $\bY$:
 	\begin{equation}\label{eq:changecochain}
 	\Psi_{\Omega_S}'=\exp(C_{\Omega_S}(h)\bY)\circ\Psi_{\Omega_S},
 	\end{equation}
 	where each $C_{\Omega_S}$ is bounded analytic on $S$ and
\begin{enumerate}[label=(\arabic*')]
	\item  $C_{\Lambda^{n}(\Omega_S)}=C_{\Omega_S}$, $n=1,\ldots,p$, 
	\item  $C_{\sigma(\Omega_S)}=-C_{\Omega_S}$,
	\item  $C_{\Omega_S}=C_{\Omega_S'}$ \Grn{whenever $\Omega_S$ and $\Omega_S'$ share a gate intersection.}
\end{enumerate}
\end{theorem}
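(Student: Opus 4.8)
\textbf{Proof plan for Theorem~\ref{thm:cochain}.}

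The plan is to build the normalizing cochain by first producing \emph{any} normalizing family on the Lavaurs domains (which already exists by the results of \S\ref{sec:6.1}), and then exploiting the uniqueness statement of Theorem~\ref{thm:normalizngtransformation}(2) to correct it, domain by domain, so that the equivariance and gate-matching conditions (1)--(3) hold. The point is that on each Lavaurs domain $\Omega_S$, Theorem~\ref{thm:normalizngtransformation}(1) gives a bounded analytic $\Psi_{\Omega_S}$ conjugating $\phi^{\circ p}$ to $\phimod^{\circ p}=\exp(h^s\bY)$ and tending to identity along complete real trajectories, and Theorem~\ref{thm:normalizngtransformation}(2) says such a map is unique up to left composition with $\exp(C_{\Omega_S}(h)\bY)$ for a bounded analytic $C_{\Omega_S}$ on $S$. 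So the whole construction amounts to pinning down the constants $C_{\Omega_S}$ by a finite system of constraints on each leaf.

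First I would fix, for each $(\sigma,\Lambda)$-orbit of outer Lavaurs domains in the covering of $B_S$, one representative $\Omega_S$ and one choice of bounded normalizing $\Psi_{\Omega_S}$. For the remaining domains in the orbit, \emph{define} $\Psi_{\Lambda^n(\Omega_S)}:=\phimod^{\circ n}\circ\Psi_{\Omega_S}\circ\phi^{\circ(-n)}$ and $\Psi_{\sigma\Lambda^n(\Omega_S)}:=\sigma\circ\Psi_{\Lambda^n(\Omega_S)}\circ\sigma$. One must check these are again bounded normalizing transformations on the correct domains: that $\phi^{\circ n}$ maps $\Omega_S$ to $\Lambda^n(\Omega_S)$ up to the prenormalization error, and that $\sigma$ maps the covering to itself (which is part of its $(\sigma,\Lambda)$-invariance, Remark before Theorem~\ref{thm:cochain}), and that $\sigma^*\bXmod=-\bXmod$ so that conjugating $\phimod^{\circ p}=\exp(h^s\bY)$ by $\sigma$ gives $\phimod^{\circ(-p)}$, matching $\phi^{\circ(-p)}$ on the $\sigma$-image (using $\sigma\phi\sigma=\phi^{\circ(-1)}$); one also uses that $h\circ\phi=h$, $h\circ\sigma=h$ so everything is compatible with the fibration. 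By construction (1) and (2) then hold automatically on outer domains. The inner domains are $\sigma$-images of outer ones, so they are covered. The only thing not yet forced is the gate-matching (3): on each gate intersection between two halves $\Omega_S,\Omega_S'$ of a single $\alpha\omega$-zone, the two provisional normalizations differ by $\exp(C(h)\bY)$ for some bounded analytic $C(h)$, and I must absorb these discrepancies into a simultaneous replacement $\Psi_{\Omega_S}\mapsto\exp(C_{\Omega_S}(h)\bY)\circ\Psi_{\Omega_S}$ that is consistent with the equivariance relations $C_{\Lambda^n(\Omega_S)}=C_{\Omega_S}$, $C_{\sigma(\Omega_S)}=-C_{\Omega_S}$.

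The heart of the argument is therefore the combinatorial/sheaf-theoretic step: solving a coboundary equation on the gate graph. The gates form a connected graph (Proposition~\ref{prop:gate}(1)) homotopic to a simple non-contractible loop on the leaf, $\Lambda$- and $\sigma$-invariant, with the equilibria as vertices; its quotient by $\langle\sigma,\Lambda\rangle$ is a graph on the quotient space. Assigning to each oriented gate edge the discrepancy constant $C(h)$ (a cochain) and to each domain an unknown $C_{\Omega_S}(h)$, condition (3) becomes that the unknowns form a \emph{potential} for the given edge-cochain, subject to the $\sigma$-antisymmetry and $\Lambda$-invariance. Existence of such a potential is equivalent to the vanishing of the obstruction around each independent loop of the gate graph — here essentially one loop — and I expect this obstruction to vanish for the structural reason that going once around the loop and applying $\sigma$ produces a sign that forces the relevant sum to be zero, exactly as in Proposition~\ref{prop:gate}(3) the $\sigma\Lambda^n$-fixed points sit on the gate graph. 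Concretely one can cut the loop at a $\sigma$- or $\Lambda^n\sigma$-fixed gate (which exists by Proposition~\ref{prop:gate}) where $\sigma$-antisymmetry pins $C_{\Omega_S}=0$, then propagate the definition of the $C_{\Omega_S}$ along the gate tree, and verify that the induced value at the cut point is consistent (again by $\sigma$-antisymmetry). The analyticity and boundedness of all $C_{\Omega_S}(h)$ on $S$ is inherited from that of the discrepancy constants, which in turn come from the uniqueness part of Theorem~\ref{thm:normalizngtransformation}(2) applied leaf by leaf with analytic dependence on $h$ (Proposition~\ref{prop:Fatou-dependence}). The final uniqueness statement — that the resulting cochain is unique up to cochains of flow maps satisfying (1')--(3') — follows by running the same obstruction analysis on the \emph{difference} of two normalizing cochains: it is a cochain of flow maps $\exp(C_{\Omega_S}(h)\bY)$ by Theorem~\ref{thm:normalizngtransformation}(2), and relations (1'),(2'),(3') are precisely the linearizations of (1),(2),(3).

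The step I expect to be the main obstacle is making the gate-graph coboundary argument genuinely rigorous while keeping track of three things at once: the $(\sigma,\Lambda)$-symmetry of the graph, the correct orientation conventions for the edge-discrepancies (which enter with signs under $\sigma$), and the fact that near multiple equilibria several Lavaurs domains meet and the "gate" combinatorics degenerates — here one needs the observation, already implicit in \S\ref{sec:6.2}, that the gate trajectory of every $\alpha\omega$-zone lies well inside $B_h$ for $|h|$ small, so the gate graph is stable and its combinatorial type is locally constant on each sector $S$. Handling the limit $h\to 0$ uniformly on $S$ (so the $C_{\Omega_S}$ extend analytically to the vertex $h=0$) also requires invoking Proposition~\ref{prop:Fatou-dependence}(3) and Proposition~\ref{prop:halfz-param}, but that is by now routine given the setup.
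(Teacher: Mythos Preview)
Your overall strategy matches the paper's: start with the bounded normalizing transformations from \S\ref{sec:6.1}, impose conditions (1) and (2) by defining $\Psi_{\Lambda^n(\Omega_S)}$ and $\Psi_{\sigma(\Omega_S)}$ via the equivariance formulas, and then adjust by flow-constants $C_{\Omega_S}(h)$ to enforce (3). The uniqueness part is also handled the same way.

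Where you diverge from the paper is in the combinatorial step for (3), and here your picture is somewhat muddled. You invoke Proposition~\ref{prop:gate}(1) to say the gate graph is ``homotopic to a simple non-contractible loop'' and propose to solve a coboundary equation on that loop by cutting at a $\sigma\Lambda^n$-fixed gate. But the gate graph of Proposition~\ref{prop:gate} has \emph{equilibria} as vertices, whereas the unknowns $C_{\Omega_S}$ are indexed by \emph{domains}; these are different graphs. The relevant structure for your coboundary problem has domains as vertices and gates as edges, and since each half-zone lies in at most one $\alpha\omega$-zone, each domain has at most one gate --- this graph is a disjoint union of isolated vertices and simple edges, not a loop. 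Passing to $(\sigma,\Lambda)$-orbits as the paper does, each orbit has degree at most one, so the components of the orbit--gate graph are exactly: isolated orbits (no gates, sepal type), self-loops (the gate pairs $\Omega_S$ with $\sigma\Lambda^n\Omega_S$ in the same orbit), and simple edges (two distinct orbits). The paper handles these three cases explicitly: nothing to do in the first; in the self-loop case the constraint reads $-2C_{\Omega_S}=D$ and one takes $C_{\Omega_S}=-\tfrac12 D$; in the edge case one fixes one orbit and propagates to the other. Your ``cut at a fixed gate'' idea is precisely the self-loop case, but there is no long propagation along a tree and no global loop obstruction to check. Once you correct this combinatorial picture, your argument and the paper's coincide.
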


 \begin{proof}
 	\Grn{Let us prove it in the formal case (b) when $\Lambda=\left(\begin{pmatrix}\lambda&0\\0&\lambda^{-1}\end{pmatrix}\right)$ is diagonal. In the formal case (c) one needs to replace $\Lambda=-\sigma$ by $\sigma\Lambda=-I$ and $\phi$, $\phimod$ by $\sigma\phi$, $\sigma\phimod$ in some of the arguments.}
 	
 	The existence of bounded normalizing transformations $\Psi_{\Omega_S}$ such that $\Psi_{\Omega_S}\circ\phi^{\circ p}=\exp(h^s\bY)\circ\Psi_{\Omega_S}$ have been proved in Section~\ref{sec:6.1} and in Proposition~\ref{prop:boundedness}.
 	Let us show that they can be chosen so that they satisfy the conditions \textit{(1)--(3)}.
 	
 	First of all, let us note there is no potential conflict between the conditions \textit{(1)} and \textit{(2)}, since rotation by $\Lambda$ maps outer domains to outer domains and inner to inner, while $\sigma$ switches between inner and outer.
 	So we divide the domains into their $(\sigma,\Lambda)$-orbits: there is $2k$ of them each consisting of $2p$ domains, and on each orbit we ensure the conditions 
 	\textit{(1)} \& \textit{(2)}.
 	Now we consider a graph structure on the space of orbits where two orbits are connected by an edge if a domain in one share a gate intersection with a domain in the other. This partitions the space of orbits into components, which can be only of the following types:
 	\begin{itemize}
 		\item[-] cycle of order 0: a single orbit whose domains are of sepal type, i.e. have no gates,
 		\item[-] cycle of order 1: a single orbit whose $2p$ domains are organized into $p$ pairs sharing $p$ gates,
 		\item[-] cycle of order 2: two orbits with $2p$ gates between the $2p$ domains of one orbit and the $2p$ domains of the other orbit. 
 	\end{itemize} 
 	Let us show that the condition \textit{(3)} can be satisfied on each component in either of the above cases.
 
 	\begin{itemize}
 	\item[-] 	There is nothing to show in the case of a cycle of order 0. 	
 	\item[-] In the case of cycle of order 1, let  $\Omega_S$ and $\tilde\Omega_S$ be two different domains in the same orbit sharing a gate intersection. It is impossible that
 	$\tilde\Omega_S=\Lambda^{n}\Omega_S$ for some $n\in\Z_{p}\sminus\{0\}$ since if two outer, resp. inner, domains share a gate then they need to be of opposite parity
 	in the cyclic ordering of outer, resp. inner, domains (also the ends at $\infty$, resp. $0$, of the corresponding half-zones of the same $\alpha\omega$-zone have opposite parities, see p.~\pageref{page:ends}). 
 	Therefore $\tilde\Omega_S=\sigma\Lambda^{n}\Omega_S$ for some $n\in\Z_{p}$.
 	Assuming the conditions \textit{(1)} \& \textit{(2)}, let $\Psi_{\Omega_S}$ and $\Psi_{\tilde\Omega_S}=\sigma\exp(\frac{n}{p}h^s\bY)\circ\Psi_{\Omega_S}\circ\phi^{\circ(-n)}\circ\sigma$ be the bounded normalizing transformations.
 	Since $\Omega_S$ and $\tilde\Omega_S$ share a gate intersection (i.e. are basically two halves of the same domain), and $\Psi_{\Omega_S}(\xi)=\Psi_{\tilde\Omega_S}(\xi)\mod P\xi$ (Theorem~\ref{thm:normalizngtransformation}), this means that
 	$\Psi_{\tilde\Omega_S}=\exp(C(h)\bY)\circ\Psi_{\Omega_S}$ for some $C(h)$ bounded analytic on $S$ (by Propositions~\ref{prop:Fatou} and~\ref{prop:boundedness}),
 	which means we can simply replace $\Psi_{\Omega_S}$ by $\exp(\frac12C\bY)\circ\Psi_{\Omega_S}$ and $\Psi_{\tilde\Omega_S}$ by 
 	$\exp(-\frac12C(h)\bY)\circ\Psi_{\tilde\Omega_S}=\sigma\exp(\frac{n}{p}h^s\bY)\circ[\exp(\frac12C(h)\bY)\circ\Psi_{\Omega_S}]\circ\phi^{\circ(-n)}\circ\sigma$.
 	We make the same change on the whole $\Lambda$-orbit which fixes the problem on all the $p$ gate intersections.
 	\item[-] The case of cycle of order 2 is easy: one takes a normalizing transformation on one of the orbits satisfying \textit{(1)} \& \textit{(2)}, and extends it by \textit{(3)} to the other orbit.
 	\end{itemize} 
 	 	
 	If $\{\Omega_S\mapsto\Psi_{\Omega_S}\}$ and $\{\Omega_S\mapsto\Psi'_{\Omega_S}\}$ are two normalizing cochains then
 	by Proposition~\ref{prop:Fatou} there exists a unique cochain $\{\Omega_S\mapsto C_{\Omega_S}(h)\}$ of bounded analytic maps on $S$ such that \eqref{eq:changecochain}. If they both satisfy \textit{(2)} then
 	\begin{align*}
 	\exp(C_{\sigma(\Omega_S)}\bY)\circ\Psi_{\sigma(\Omega_S)}=
 	{\Psi'}_{\sigma(\Omega_S)}=\sigma{\Psi'}_{\Omega_S}\circ\sigma&=	\sigma\exp(C_{\Omega_S}\bY)\circ\Psi_{\Omega_S}\circ\sigma\\ &=\exp(-C_{\Omega_S}\bY)\circ\Psi_{\sigma(\Omega_S)},
 	\end{align*}
 	hence  \textit{(2')}: $C_{\sigma(\Omega_S)}=-C_{\Omega_S}$.
 	Similarly for \textit{(1')} and \textit{(3')}.
\end{proof}

The point \textit{(2)} of Theorem~\ref{thm:cochain} means that the normalizing cochain is fully determined by the \emph{outer normalizing cochain} consisting of the normalizing transformations associated to the outer Lavaurs domains.
 
Let us label the outer Lavaurs domain in a counterclockwise cyclic order as
 \[\Omega_{S}^0,\ldots,\Omega_{S}^{2kp-1},\]
and the outer intersections as (see Figure~\ref{figure:cocycle})
\[V_{S}^0,\ldots,V_{S}^{2kp-1},\qquad V_{S}^j\subseteq\Omega_{S}^j\cap\Omega_{S}^{j-1}. \]

\begin{definition}[Outer cocycle]\label{def:outercocycle}
On the outer intersections $V_{S}^j$, $j\in\Z_{2p}$, we have transition maps between the normalizing transformations
\begin{equation}\label{eq:psitransition}
	\psi_{V_{S}^j}= \Psi_{\Omega_{S}^{j-1}}\circ\Big(\Psi_{\Omega_{S}^{j}}\Big)^{\circ(-1)}, 
\end{equation}
which commute with the model map: 
\[\psi_{V_{S}^j}\circ\exp(\bXmod)=\exp(\bXmod)\circ\psi_{V_{S}^j}.\]
By \textit{(1)} of Theorem~\ref{thm:cochain}, they satisfy 
\begin{equation}\label{eq:cocyclesymmetry}
\begin{cases}\text{(b)}\ \psi_{\Lambda^{n}(V_{S})}=\phimod^{\circ n}\circ\psi_{V_{S}^j}\circ\phimod^{\circ(-n)},& 
	\Lambda=\left(\begin{smallmatrix}\lambda & 0 \\ 0 &\lambda^{-1}	\end{smallmatrix}\right),\\
\text{(c)}\ \psi_{-I(V_{S})}=\sigma\phimod\circ\psi_{V_{S}^j}\circ\big(\sigma\phimod\big)^{\circ(-1)},& 
\sigma\Lambda=-I.
\end{cases}
\end{equation}
The collection of these transition maps is defined up to conjugation by a cochain of flow maps
$\{\exp(C_{\Omega_{S}^0}(h)\bY),\ldots,\exp(C_{\Omega_{S}^{2kp-1}}(h)\bY) \}$, where $C_{\Omega_{S}^j}$ are bounded analytic on $S$:
\begin{equation}\label{eq:conjugationpsi}
	\psi_{V_{S}^j}'=\exp(C_{\Omega_{S}^{j-1}}(h)\bY)\circ\psi_{V_{S}^j}\circ\exp(-C_{\Omega_{S}^{j}}(h)\bY). 
\end{equation}
We call the equivalence class of the collection $\{\psi_{V_{S}^0},\ldots,\psi_{V_{S}^{2kp-1}}\}$ by this conjugacy an \emph{outer cocycle} associated to the sector $S$.
\end{definition}

\begin{remark}
It is natural to ask that the conjugating cochain $\{\exp(C_{\Omega_{S}^j}(h)\bY)\}$ should also be subject to conditions \textit{(1')--(3')} of Theorem~\ref{thm:cochain}. 
One can show that if two collections of transition maps $\{\psi_{V_{S}^j}\}$ and $\{\psi'_{V_{S}^j}\}$, associated to
two normalizing cochains $\{\Psi_{\Omega_{S}^j}\}$ and $\{\Psi'_{\Omega_{S}^j}\}$ satisfying conditions \textit{(1)--(3)} of Theorem~\ref{thm:cochain}, are conjugated,
then the conjugating cochain $\{\exp(C_{\Omega_{S}^j}(h)\bY)\}$ is unique and satisfies the conditions \textit{(1')--(3')}.
\end{remark}

\begin{definition}[Centralizer of the model]\label{def:centralizer}
Let $ \Cal Z^{\sigma,\Lambda}(\bXmod)$ be the group of analytic $(\sigma,\Lambda)$-equivariant transformations preserving $\bXmod=h^s\bY$.
By Theorem~\ref{thm:FNFX1}, it is a subgroup of the multiplicative group of transformations $\xi\mapsto e^{\frac{r\pi i}{2s+kp}}\sigma^r\xi$, $r\in\Z_{4s+2kp}$,
and agrees with the group of analytic $\sigma$-equivariant transformations preserving $\phimod$.
Note that if $\Lambda^2\neq\ I$ then $r$ must be even for $\sigma^r$ to commute with $\Lambda$. 
\end{definition}

Each element $e^{\frac{r\pi i}{2s+kp}}\sigma^r$ of the group $\Cal Z^{\sigma,\Lambda}(\bXmod)$ acts on the whole collection of the Lavaurs domains by a mutation:
sending a Lavaurs domain $\Omega_S$ over sector $S$ to a Lavaurs domain $\Omega'_{S'}=e^{\frac{r\pi i}{2s+kp}}\sigma^r(\Omega_S)$ over sector $S'=e^{\frac{2r\pi i}{2s+kp}}S$.
Its action on the collection of outer cocycles is the following
\begin{equation}\label{eq:mutation}
\psi_{V_{S}}\mapsto \sigma^r\psi_{e^{\frac{r\pi i}{2s+kp}}(V_S)}\circ\sigma^r.
\end{equation}

\begin{theorem}[Analytic classification]\label{thm:analyticclassification}~
 Two $\sigma$-reversible germs $\phi,\phi'=\Lambda\xi+\hot$ with a first integral $h$ from the same model class are:	
\begin{enumerate}[wide=0pt, leftmargin=\parindent]	
\item analytically equivalent by a $\sigma$-equivariant transformation that is tangent to the identity if and only if for every sector $S$ their associated outer cocycles are equal.
\item analytically equivalent by a $\sigma$-equivariant transformation  if and only if 
there exists an element of the centralizer $\Cal Z^{\sigma,\Lambda}(\bXmod)$ such that the collection of outer cocycles of $\phi$ is equal to the image of that of $\phi'$ by the action \eqref{eq:mutation}.
\end{enumerate}
\end{theorem}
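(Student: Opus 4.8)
\textbf{Proof plan for Theorem~\ref{thm:analyticclassification}.}
The plan is to follow the classical Birkhoff--\'Ecalle--Voronin strategy, adapted to the ``sectorial'' (in $h$) and ``Lavaurs'' (in the leaf) geometry built in Sections~\ref{sec:6.1} and~\ref{sec:6.2}. The skeleton is: a $\sigma$-equivariant analytic conjugacy between $\phi$ and $\phi'$ induces an equality (resp. a $\Cal Z^{\sigma,\Lambda}$-mutation) of outer cocycles; conversely, equality of outer cocycles allows one to glue the two sectorial normalizing cochains into a single analytic conjugacy, bounded near the fixed-point divisor, hence extending analytically across it by Riemann's removable singularity theorem. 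The key objects are already in place: the equivariant normalizing cochains $\{\Psi_{\Omega_S}\}$, $\{\Psi'_{\Omega_S}\}$ of Theorem~\ref{thm:cochain}, their uniqueness up to cochains of flow maps of $\bY$ (conditions \textit{(1')--(3')}), the covering theorem (Theorem~\ref{thm:covering}), and the boundedness (Proposition~\ref{prop:boundedness}).

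For the direct implication of \textit{(1)}: suppose $\Psi\in\Diff_{\id}^{\sigma}(\C^2,0)$ conjugates $\phi$ to $\phi'$, with $h\circ\Psi=h$ (this last normalization can always be arranged since $\Psi$ must map the unique leaf-wise invariant foliation $\{H=\const\}$ to itself, up to an $h$-reparametrization that, being tangent to the identity, is trivial). Then for each sector $S$ the composition $\Psi'_{\Omega_S}\circ\Psi\circ(\Psi_{\Omega_S})^{\circ(-1)}$ is an analytic conjugacy of $\phimod$ with itself on $\Psi_{\Omega_S}(\Omega_S)$, bounded and asymptotic to the identity along the relevant trajectories; by the uniqueness part of Theorem~\ref{thm:normalizngtransformation} it is $\exp(C_{\Omega_S}(h)\bY)$ for some bounded analytic $C_{\Omega_S}(h)$ on $S$, and these $C_{\Omega_S}$ satisfy \textit{(1')--(3')}. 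Substituting into the transition maps \eqref{eq:psitransition} shows $\psi_{V_S^j}$ and ${\psi'}_{V_S^j}$ differ exactly by the conjugation \eqref{eq:conjugationpsi}, i.e.\ the outer cocycles agree. Conversely, given equality of outer cocycles over every $S$, one first replaces $\{\Psi'_{\Omega_S}\}$ by a suitably $\bY$-flow-modified equivariant cochain (using \textit{(1')--(3')}) so that on \emph{every} intersection $\Psi'_{\Omega_S}\circ(\Psi_{\Omega_S})^{\circ(-1)}$ and $\Psi'_{\Omega_{S'}}\circ(\Psi_{\Omega_{S'}})^{\circ(-1)}$ coincide; here one must also check compatibility across different sectors $S$ (the ``compatibility conditions'' alluded to in \S\ref{sec:6.3compatibility}) --- this is handled by noting that two normalizing cochains over overlapping sectors differ by a cochain of $\bY$-flow maps whose coefficients are analytic on the overlap, and arranging the correction on a single sector propagates. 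The resulting family $\{\Psi'_{\Omega_S}\circ(\Psi_{\Omega_S})^{\circ(-1)}\}$ then glues to a single analytic map $\Psi$ on $B_S\sminus(\{P=0\}\cup\{h=0\})$, for each $S$, and these agree on sector overlaps, hence define $\Psi$ on a punctured neighborhood of $0\in\C^2$; it is bounded (Proposition~\ref{prop:boundedness}), $\sigma$-equivariant, preserves $h$, conjugates $\phi$ to $\phi'$, and tends to the identity, so by Riemann's theorem it extends to a germ in $\Diff_{\id}^{\sigma}(\C^2,0)$.

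For \textit{(2)}: an arbitrary $\sigma$-equivariant analytic conjugacy $\Psi$ has linear part commuting with $\sigma$ and preserving $h$ up to a scalar, hence (by Lemma~\ref{lemma:phitau} / Theorem~\ref{thm:1}) of the form $\xi\mapsto e^{\frac{r\pi i}{2s+kp}}\sigma^r\xi+\hot$ after the normalizations already imposed on $\phi,\phi'$; composing $\Psi$ with the inverse of this linear map reduces to the tangent-to-identity case \textit{(1)}, and the linear prefactor $e^{\frac{r\pi i}{2s+kp}}\sigma^r\in\Cal Z^{\sigma,\Lambda}(\bXmod)$ acts on the outer cocycles precisely by the mutation \eqref{eq:mutation} (because it permutes the Lavaurs domains as described there and conjugates transition maps by $\sigma^r$). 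Conversely, given the mutation relation, compose $\phi'$ with the corresponding element of $\Cal Z^{\sigma,\Lambda}(\bXmod)$ to bring the two collections of outer cocycles into equality, then invoke \textit{(1)}.

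The main obstacle is the gluing step in the converse direction: ensuring that the sector-wise conjugacies $\Psi|_{B_S}$ patch together consistently \emph{across all the (countably many) cuspidal sectors $S$}, including the limit sectors reaching $h=0$. Unlike the classical finite sectorial cover, here the cover is infinite and may admit no finite subcover (Definition~\ref{def:sector} and the remark following it), so the usual ``finitely many overlaps, compose the transition functions'' argument must be replaced by a careful use of the facts that (i) over each overlap the two normalizing cochains differ by a $\bY$-flow cochain with coefficients analytic on the overlap and uniformly bounded, and (ii) by Proposition~\ref{prop:Fatou-dependence} everything depends analytically/continuously on $h$ up to and including $h=0$. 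Once the gluing is under control, boundedness plus removable-singularity finishes the argument; the equivariance and $h$-preservation are preserved automatically by the conditions \textit{(1')--(3')} built into Theorem~\ref{thm:cochain}.
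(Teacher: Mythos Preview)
Your direct implication and the treatment of part \textit{(2)} are essentially the paper's arguments. The gap is in the converse of \textit{(1)}, precisely at the cross-sector gluing you flag as the main obstacle.

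First a minor point: the map you want to glue is $G_{\Omega_S}:=\Psi_{\Omega_S}^{\circ(-1)}\circ\Psi'_{\Omega_S}$, which lives on the source side and conjugates $\phi'$ to $\phi$; your $\Psi'_{\Omega_S}\circ(\Psi_{\Omega_S})^{\circ(-1)}$ is a self-map in model coordinates and does not define a map on $B_S$. With the correct order, equality of cocycles over a fixed $S$ together with conditions \textit{(1)--(3)} of Theorem~\ref{thm:cochain} does make the $G_{\Omega_S}$ glue to a single bounded $\sigma$-equivariant $G_S$ on $B_S$.

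The real issue is your proposed mechanism for making the $G_S$ agree on overlaps $S\cap\tilde S$. You suggest correcting the cochains so that ``arranging the correction on a single sector propagates''. This does not work: the flow-coefficients $D_{\Omega}(h)$ relating cochains over $S$ and $\tilde S$ are analytic only on $S\cap\tilde S$, not on all of $\tilde S$, so the propagated correction is defined only on the overlap and has no reason to extend. With a countable cover and no finite subcover, this kind of sequential adjustment cannot be closed up.

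The paper avoids this entirely by a rigidity statement you are missing: the group of bounded $\sigma$-equivariant diffeomorphisms on $B_S$ tangent to the identity and commuting with $\phi^{\circ p}$ is trivial (Proposition~\ref{prop:automorphisms}). This is proved by a Fourier argument on the Fatou coordinates: such an automorphism acts as $\bT_{\Omega}\mapsto\bT_{\Omega}+C_{\Omega}(h)$, matching across intersections forces all $C_{\Omega}$ equal and forces either the cocycle to be trivial (then $\phi^{\circ p}$ is a flow) or $C$ to be a rational multiple of $h^s$; $\sigma$-equivariance then kills $C$. Once you have this, $G_S\circ G_{\tilde S}^{\circ(-1)}$ is such an automorphism on $B_{S\cap\tilde S}$, hence the identity, and all the $G_S$ glue automatically without any coordination of choices. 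You should prove and invoke this rigidity rather than attempt to synchronize infinitely many cochain corrections.
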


The proof will be given in Section~\ref{sec:6proof}.

\subsubsection{Fourier representation of a cocycle.}\label{sec:6.3.2}

For an outer domain $\Omega_{S}^j$ let the coordinate $\bt$ be as in Lemma~\ref{lemma:t}, i.e. one that  vanishes at the outer pole $\xi_1=\infty$, 
which is the same one for all the outer domains,
and 
let $\bT_{\Omega_{S}^j}$ defined by
\[\bt\circ\Psi_{\Omega_{S}^j}=\bT_{\Omega_{S}^j}\]
be the Fatou coordinate for $\phi^{\circ p}$, $\bT_{\Omega_{S}^j}\circ\phi^{\circ p}=\bT_{\Omega_{S}^j} +h^s$.
Define $ \beta_{V_{S}^j}(t,h)$ by
\begin{equation}\label{eq:beta}
 t+\beta_{V_{S}^j}(t,h) :=\bT_{\Omega_{S}^{j-1}}\circ\Big(\bT_{\Omega_{S}^j}\Big)^{\circ(-1)}=\bt\circ\psi_{V_{S}^j}\circ\bt^{\circ(-1)},
\end{equation}
where $\psi_{V_{S}^j}$ is the transition map \eqref{eq:psitransition}.
Then $\beta_{V_{S}^j}(t,h)=\beta_{V_{S}^j}(t+h^s,h)$ is $h^s$-periodic and therefore can be expressed by its \emph{Fourier series}
\begin{equation}\label{eq:betafourier}
\beta_{V_{S}^j}(t,h)=\sum_{n\in\Z}\beta_{V_{S}^j}^{(n)}(h)e^{\frac{2\pi i nt}{h^s}}. 
\end{equation} 
Since $\beta_{V_{S}^j}(t,h)$ has at most a moderate growth when $t\to\infty$ in $\bt(V_{S}^j)$, the above sum is only either over $n\in\Z_{\geq0}$ or over $n\in\Z_{\leq0}$, depending whether the equilibrium to which the intersection domain $V_S^j$ is attached is attractive ($\IM(\frac{t}{h^s})\to+\infty$) or repulsive ($\IM(\frac{t}{h^s})\to-\infty$).

\begin{lemma}
	The Fourier coefficients $\beta_{V_{S}^j}^{(n)}(h)$ of a cocycle are bounded analytic on $S$ (i.e. bounded analytic on $S^*:=S\sminus\{0\}$ and continuous on $S$).
\end{lemma}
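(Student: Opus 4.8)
The statement to be proven is that the Fourier coefficients $\beta_{V_{S}^j}^{(n)}(h)$ extracted from the cocycle are bounded analytic functions on each cuspidal sector $S$ (in the sense: analytic on $S^* = S \sminus\{0\}$ and continuous up to $h=0$). The plan is to read this off from three ingredients already established: (i) the boundedness of the normalizing transformations $\Psi_{\Omega_S}$ on the Lavaurs domains $\Omega_S$ (Proposition~\ref{prop:boundedness}), (ii) their analytic/continuous dependence on $h$ (Proposition~\ref{prop:Fatou-dependence}, together with the behavior as $h\to 0$ from Proposition~\ref{prop:Fatou-behavior} and Lemma~\ref{lemma:hn}), and (iii) the standard Cauchy-integral formula for Fourier coefficients of a periodic function, which preserves both boundedness and analytic dependence on parameters.

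\textbf{Step 1: the transition map and its rectified form.} First I would fix an outer intersection domain $V_S^j \subseteq \Omega_S^{j-1}\cap\Omega_S^j$ and recall from Definition~\ref{def:outercocycle} that $\psi_{V_S^j} = \Psi_{\Omega_S^{j-1}}\circ(\Psi_{\Omega_S^j})^{\circ(-1)}$, and from \eqref{eq:beta} that $t+\beta_{V_S^j}(t,h) = \bT_{\Omega_S^{j-1}}\circ(\bT_{\Omega_S^j})^{\circ(-1)}(t) = \bt\circ\psi_{V_S^j}\circ\bt^{\circ(-1)}(t)$. Since both $\Psi_{\Omega_S^{j-1}}$ and $\Psi_{\Omega_S^j}$ are bounded on their respective domains and the rectifying coordinate $\bt$ is chosen uniformly (Lemma~\ref{lemma:t}, the determination vanishing at the outer pole $\xi_1=\infty$, common to all outer domains), the difference $\beta_{V_S^j}(t,h)$ is a bounded analytic function of $t$ on $\bt(V_S^j)$ with the stated moderate growth at infinity along the relevant direction $e^{i\theta}h^s\R$, uniformly for $h\in S$. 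This is precisely the input needed for the Fourier expansion \eqref{eq:betafourier}.

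\textbf{Step 2: Cauchy formula for the Fourier coefficients.} For each fixed $h\in S^*$, the $h^s$-periodic function $t\mapsto\beta_{V_S^j}(t,h)$ descends, via $z=e^{2\pi i t/h^s}$, to a bounded holomorphic function on a punctured disc or punctured exterior-of-disc in the $z$-variable (depending on whether $V_S^j$ is attached to an attractive or repulsive equilibrium, which determines the sign of the admissible Fourier frequencies). Its Fourier coefficients are then given by a contour integral
\[
\beta_{V_S^j}^{(n)}(h) = \frac{1}{h^s}\int_{t_0}^{t_0+h^s}\beta_{V_S^j}(t,h)\,e^{-\frac{2\pi i nt}{h^s}}\,dt,
\]
where the segment of integration can be taken inside $\bt(V_S^j)$ (translated as needed). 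Boundedness of $\beta_{V_S^j}$ on this segment, uniform in $h\in S$, gives the bound $|\beta_{V_S^j}^{(n)}(h)|\leq \sup|\beta_{V_S^j}|\cdot e^{-2\pi n\,\IM(t_0)/|h^s|\cdot(\pm 1)}$ — in particular $\beta_{V_S^j}^{(n)}$ is bounded on $S^*$. Analyticity in $h$ on $S^*$ follows because the integrand depends analytically on $h$ (the coordinate $\bt$, the domains, and the transformations $\Psi_{\Omega_S}$ all do, by Proposition~\ref{prop:Fatou-dependence}(1)--(2)) and one may differentiate under the integral sign; the contour may be deformed analytically with $h$ since it stays inside $\bt(V_S^j)$.

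\textbf{Step 3: continuity up to $h=0$ and the main obstacle.} The remaining, and genuinely delicate, point is continuity of $\beta_{V_S^j}^{(n)}(h)$ as $S\ni h\to 0$ (with the vertex $0\in S$ included). Here I would invoke Proposition~\ref{prop:Fatou-dependence}(3): when $h\to 0$ with an asymptotic direction and the admissible strips have a limit, the Fatou coordinates $\bT_{\Omega_S^j}$ extend to the limit (via the sequence $h_n = n^{-1/s}h_0$ and the maps $\varphi_n\to\varphi_\infty$ of Lemma~\ref{lemma:hn} when $s>0$, directly when $s=0$), hence so does $\beta_{V_S^j}(t,h)$, and the Cauchy integral passes to the limit by dominated convergence using the uniform bound from Step 2. \textbf{The main obstacle} is bookkeeping the $s>0$ case: as $h\to 0$ the period $h^s\to 0$, so the "strip" $\bt_h(V_S^j)$ and the natural exponential variable $z=e^{2\pi i t/h^s}$ degenerate, and one must pass to the rescaled iterate $\varphi_n = \phi^{\circ np}$ for which the strip has non-vanishing width $\sim\sin\theta\cdot|h_0^s|$ and which is conjugated to translation by $h_0^s$, exactly as in the proof of Proposition~\ref{prop:Fatou-dependence}(3); the Fourier coefficient $\beta_{V_S^j}^{(n)}(h)$ must be matched with its rescaled counterpart and the convergence $\bT_{\Omega_{h_n}}\to\bT_{\Omega_0}$ transferred through this rescaling. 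Once this matching is set up, the continuity at $h=0$ is a routine consequence of the uniform estimates and the convergence results already in hand, completing the proof.
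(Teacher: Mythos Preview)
Your proposal is correct and takes essentially the same approach as the paper, which dispatches the lemma in a single line: ``Follows from the analyticity of the construction over the sector $S$ (cf.\ Proposition~\ref{prop:Fatou-dependence}).'' You have unpacked that one-liner into the explicit Cauchy-integral argument for the Fourier coefficients and correctly flagged the $s>0$ limit as the only subtle point, handled exactly as in the paper via the rescaled iterates of Lemma~\ref{lemma:hn} and Proposition~\ref{prop:Fatou-dependence}(3).
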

\begin{proof}
	Follows from the analyticity of the construction over the sector $S$ (cf. Proposition~\ref{prop:Fatou-dependence}.)	
\end{proof}

The effect of conjugation \eqref{eq:conjugationpsi} of the cocycle on $\beta_{V_{S}^0},\ldots,\beta_{V_{S}^{2kp-1}}$ is:
\begin{equation}\label{eq:conjugationbeta}
\beta_{V_{S}^j}'(t,h)=\beta_{V_{S}^j}(t-C_{\Omega_{S}^{j}}(h),h)+C_{\Omega_{S}^{j-1}}(h)-C_{\Omega_{S}^{j}}(h),
\end{equation}
in particular
\[{\beta_{V_{S}^j}'}^{(0)}(h)=\beta_{V_{S}^j}^{(0)}(h)+C_{\Omega_{S}^{j-1}}(h)-C_{\Omega_{S}^{j}}(h).\]
Since $\bt\circ\phimod=\bt+\frac{h^s}{p}$, the rotational symmetry \eqref{eq:cocyclesymmetry} means that 
\[
\begin{cases}\text{(b)}\ \beta_{\Lambda^{n}(V_{S}^j)}(t+\tfrac{n}{p}h^s,h)=\beta_{V_{S}^j}(t,h),& 
	\Lambda=\left(\begin{smallmatrix}\lambda & 0 \\ 0 &\lambda^{-1}	\end{smallmatrix}\right),\\
	\text{(c)}\ \beta_{-I(V_{S}^j)}(-t-\tfrac{1}{2}h^s,h)=-\beta_{V_{S}^j}(t,h),& 
	\sigma\Lambda=-I.
\end{cases}
\]

\begin{remark}
	Following Martinet \& Ramis \cite{MR}, in the coordinate $\bz=e^{\frac{2\pi i}{h^s}\bt}$ the outer cocycle maps 
	\[\bz\circ\psi_{V_{S}^j}\circ\bz^{\circ(-1)}=\bz\, e^{\frac{2\pi i}{h^s}\sum_{n}\beta_{V_{S}^j}^{(n)}(h)\bz^n}\]
	can be interpreted as a parametric family of analytic germs of diffeomorphisms of $(\CP^1,0)$, resp. $(\CP^1,\infty)$, called \emph{horn maps}, that serve as gluing maps of a string of $2kp$ spheres $\CP^1$ identified at their points $0$, resp. $\infty$.
	The inner cocycle defines another symmetric string of $2kp$ spheres. The spheres in the two strings are for $h\neq 0$ further identified through gate maps
	(which are the period shifts $\bt\mapsto\bt+\nu_\gamma(h)$ of Lemma~\ref{lemma:t} acting as $\bz\mapsto\bz\, e^{\frac{2\pi i}{h^s}\nu_\gamma(h)}$) to give a global representation of the orbit space of $\phi^{\circ p}$, albeit one that can be fairly complicated.
\end{remark}

\subsubsection{Proof of Theorem~\ref{thm:analyticclassification}.}\label{sec:6proof}

\begin{proposition}\label{prop:automorphisms}
	The group $\Cal Z_{\id,S}(\phi^{\circ p})$ of bounded analytic diffeomorphisms on $B_S=\coprod_{h\in S}B_h$ that are tangent to identity and commute with $\phi^{\circ p}$ is either:
	\begin{itemize}
		\item[-] continuous, if and only if \Grn{$\phi^{\circ p}=\exp(\bX)$ is embeddable in the flow of a vector field $\bX$ analytic on a neighborhood of $0\in\C^2$,} 
		in which case
		\[ \Cal Z_{\id,S}(\phi^{\circ p})=\{\exp(h^{-s} C(h)\bX):\ C(h)\ \text{bounded analytic on $S$}\},\] 
		or
		\item[-] discrete and equal to 
		\[ \Cal Z_{\id,S}(\phi^{\circ p})=\{\Theta^{\circ m}:\ m\in\Z\},\]
		where $\Theta$ is an analytic germ on a full neighborhood of $0\in\C^2$ such that $\phi^{\circ p}=\Theta^{\circ n}$ for some $n\in\Z_{>0}$.
	\end{itemize} 
	The subgroup $\Cal Z_{\id,S}^\sigma(\phi^{\circ p})$ of $\sigma$-equivariant diffeomorphisms tangent to identity is trivial.
\end{proposition}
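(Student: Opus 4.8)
\textbf{Proof plan for Proposition~\ref{prop:automorphisms}.}

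The plan is to work leaf-by-leaf on $B_S=\coprod_{h\in S}B_h$, treating $h$ as a parameter, and to transfer the analysis to the translation surface of the rectifying coordinate $\bt_h$ where $\phi_h^{\circ p}$ becomes the translation $t\mapsto t+h^s$. First I would fix a Lavaurs domain $\Omega_S$ and its Fatou coordinate $\bT_{\Omega_S}$ from Theorem~\ref{thm:normalizngtransformation}, so that any $\Theta\in\Cal Z_{\id,S}(\phi^{\circ p})$, when read in the Fatou coordinate as $\bT_{\Omega_S}\circ\Theta\circ\bT_{\Omega_S}^{\circ(-1)}$, commutes with $t\mapsto t+h^s$ and hence has the form $t\mapsto t+c(t,h)$ with $c(\cdot,h)$ being $h^s$-periodic. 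Because $\Theta$ is tangent to the identity, $\Theta(\xi)-\xi=O(\xi\Cal J)$ near $0$, so (using Proposition~\ref{prop:asymptotic} and the boundedness of $\Theta$) $c(t,h)$ is bounded on $\bt(\Omega_S)$ and the multivalued function $c$ patches on gate intersections; this already forces, on each outer/inner domain and each $h\ne0$, the Fourier expansion $c(t,h)=\sum_n c_n(h)e^{2\pi i nt/h^s}$ with moderate growth, so the argument used in the second part of the proof of Proposition~\ref{prop:Fatou} applies.

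Next I would run the dichotomy. On each outer domain the equilibria at the outer boundary are split among attractive and repulsive ones (Section~\ref{sec:6.2.1}); if $\Theta$ is tangent to identity and bounded on $\Omega_S$ then $c(t,h)$ is bounded along a complete real trajectory in \emph{both} time directions, which by the moderate-growth Fourier argument (as in Proposition~\ref{prop:Fatou}(2)) forces $c_n(h)=0$ for all $n\ne0$, i.e. $c(t,h)=c_0(h)$ is constant on the leaf. Thus in the Fatou coordinate $\Theta$ is the translation $t\mapsto t+c_0(h)$, equivalently $\Theta=\exp(h^{-s}c_0(h)\bX_{\Omega_S})$ with $\bX_{\Omega_S}$ the Lavaurs vector field \eqref{eq:LavaursX}. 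Now globalize: the family $\{\Theta\}$ is defined on the whole $B_S$, so the local expressions $\exp(h^{-s}c_0(h)\bX_{\Omega_S})$ must agree on all overlaps, which by Theorem~\ref{thm:cochain}\,(3') forces the $c_0$'s to coincide across gate intersections; and since $\Theta$ is single-valued and analytic on $B_S$ (not merely on a ramified domain), the monodromy computation of Proposition~\ref{prop:Fatou-behavior}(ii) around each equilibrium shows that the multivalued vector field $\bX_{\Omega_S}$ composes into a single-valued object on $B_S$ precisely when $\phi^{\circ p}$ is embeddable in an analytic flow. If it is, one gets the continuous group $\{\exp(h^{-s}C(h)\bX)\}$. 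If it is not, then $\Theta$ cannot be an arbitrary small translation: the set of admissible $c_0(h)$ is a discrete subgroup of $\C$ (a lattice or $\{0\}$) varying analytically in $h$, and its generator yields an analytic germ $\Theta$ on a full neighborhood of $0$ with $\phi^{\circ p}=\Theta^{\circ n}$; the classical argument (as in Lemma~\ref{lemma:decomposition} and \cite[Theorem 3.17]{IlYa} style reasoning, or the uniqueness of infinitesimal generator) shows every element of the group is an integer power of $\Theta$.

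Finally, for the $\sigma$-equivariant subgroup: if $\Theta\in\Cal Z_{\id,S}^\sigma(\phi^{\circ p})$ then it preserves the first integral $h$ (being tangent to identity and $\sigma$-equivariant it must map leaves to leaves, and $h\circ\Theta=h$ up to the linear part, which is the identity), so its leaf-wise infinitesimal generator is a multiple of $\bE$ commuting with $\phi_h^{\circ p}$; by the above it is $\exp(h^{-s}c_0(h)\bX_{\Omega_S})$ with $\bX_{\Omega_S}$ asymptotic to $\hatbXnf$, and the $\sigma$-equivariance forces $c_0\circ\sigma=-c_0$, i.e. $c_0=-c_0$, hence $c_0\equiv0$ and $\Theta=\id$; this is exactly the sectorial analogue of Lemma~\ref{lemma:automorphismofX}. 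The main obstacle I expect is the globalization step — passing from the leaf-wise/sectorial statement $\Theta=\exp(h^{-s}c_0(h)\bX_{\Omega_S})$ to the clean trichotomy on all of $B_S$ — because one must control the multivaluedness of $\bX_{\Omega_S}$ around the (possibly multiple, possibly colliding as $h\to0$) equilibria and check that the discrete/continuous alternative is governed exactly by analytic embeddability of $\phi^{\circ p}$; the estimates of Lemma~\ref{lemma:asymptotic-h} and Proposition~\ref{prop:Fatou-dependence}(3) are what make the $h\to0$ limit behave, but assembling them into the embeddability criterion is the delicate part.
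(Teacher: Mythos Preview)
Your opening move is the same as the paper's: pass to the Fatou coordinate on each Lavaurs domain and use Proposition~\ref{prop:Fatou}(2) to conclude that $\Theta$ acts as a pure shift $t\mapsto t+C_{\Omega_S^j}(h)$ there. That part is fine. The gap is in how you derive the dichotomy. You invoke gate intersections and ``the monodromy computation of Proposition~\ref{prop:Fatou-behavior}(ii)'' to decide when the $\bX_{\Omega_S^j}$ glue, but that proposition concerns the vanishing of $\tdd{\bt_h}(\bT-\bt)$ at an equilibrium and says nothing about gluing or embeddability. The actual obstruction lives on the \emph{outer/inner} intersections $V_S^j$, where the transition maps $\psi_{V_S^j}$ are nontrivial. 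Writing the compatibility of the two shifts through the cocycle gives
\[
\beta_{V_S^j}(t,h)=\beta_{V_S^j}(t-C_{\Omega_S^j},h)+C_{\Omega_S^{j-1}}-C_{\Omega_S^j},
\]
which, Fourier mode by mode, forces first $C_{\Omega_S^{j-1}}=C_{\Omega_S^j}=:C(h)$ and then $\beta_{V_S^j}^{(l)}\big(e^{2\pi i l C/h^s}-1\big)=0$ for every $l$. This is the engine of the dichotomy: either all nonzero Fourier modes of every transition map vanish (so the cocycle is trivial, the Lavaurs vector fields glue to an analytic $\bX$, and $C$ is unconstrained), or some $\beta^{(l)}\neq 0$ pins $C\in \tfrac{h^s}{n}\Z$ and simultaneously forces $\beta^{(l)}=0$ for $l\notin n\Z$, so that each $\psi_{V_S^j}$ commutes with $\exp(\tfrac1n\bXmod)$ and the local maps $\exp(\tfrac1n\bX_{\Omega_S^j})$ glue to a global $\Theta$ with $\Theta^{\circ n}=\phi^{\circ p}$.

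Your sketch never reaches this constraint, so it cannot explain \emph{why} the admissible $C$ form either all of $\C$ or a rank-one lattice $\tfrac{h^s}{n}\Z$ (not ``a lattice or $\{0\}$''), nor why in the discrete case the generator $\Theta$ exists analytically on all of $B_S$ and then on a full neighborhood. The $\sigma$-equivariant statement is essentially right in spirit: once you know all the $C_{\Omega_S^j}$ coincide, $\sigma$-equivariance combined with $\sigma^*\bX_{\sigma(\Omega_S)}=-\bX_{\Omega_S}$ gives $C=-C$, hence $C=0$ (and $m=0$ in the discrete case); the paper just cites Lemma~\ref{lemma:automorphismofX}. But your ``$c_0\circ\sigma=-c_0$'' is misphrased since $c_0$ depends only on $h=h\circ\sigma$.
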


\begin{proof}
	Let $F\in \Cal Z_{\id,S}(\phi^{\circ p})$, $F\circ \phi^{\circ p}=\phi^{\circ p}\circ F$.
	Given the Fatou coordinate $\bT_{\Omega_{S}^j}$ for $\phi^{\circ p}$  on an outer Lavaurs domain $\Omega_{S}^j$, then
	$\bT_{\Omega_{S}^j}\circ F$ is also a Fatou coordinate for $\phi^{\circ p}$ on $\Omega_{S}^j$ with at most moderate growth, so by Proposition~\ref{prop:Fatou}, 
	\begin{equation}\label{eq:3805}
		\bT_{\Omega_{S}^j}\circ F=\bT_{\Omega_{S}^j}+C_{\Omega_{S}^j}(h),
	\end{equation}
	for some $C_{\Omega_{S}^j}(h)$ bounded analytic on $S$.
	Similarly on a neighboring Lavaurs domain $\Omega_{S}^{j-1}$, so on the intersection $V_{S}^j\subseteq \Omega_{S}^{j}\cap\Omega_{S}^{j-1}$ we have by \eqref{eq:3805}, \eqref{eq:beta} and \eqref{eq:conjugationbeta}
	\[\beta_{V_{S}^j}(t,h)=\beta_{V_{S}^j}(t-C_{\Omega_{S}^j},h)+C_{\Omega_{S}^{j-1}}(h)-C_{\Omega_{S}^j}(h).\]
	This means that $C_{\Omega_{S}^{j-1}}(h)=C_{\Omega_{S}^j}(h)=:C(h)$ is such that 
	$\beta_{V_{S}^j}^{(l)}(h)e^{\frac{2\pi i l C(h)}{h^s}}=\beta_{V_{S}^j}^{(l)}(h)$ for all $l$,
	i.e. 
	\begin{itemize}
		\item[-] either $\beta_{V_{S}^j}^{(l)}=0$ for all $l\in\Z\sminus\{0\}$, meaning that $\psi_{V_{S}^j}=\id$,
		\item[-] or $C(h)=h^s\frac{m}{n}$ for some $m,n$ relatively prime, and $\beta_{V_{S}^j}^{(l)}=0$ for all $l\in\Z\sminus n\Z$,
		meaning that $\psi_{V_{S}^j}$ commutes with $\exp(\frac{1}{n}\bXmod)$.
	\end{itemize}
	And this has to be true for all $j=0,\ldots,2kp-1$.
	
	In the first case, if $\psi_{V_{S}^j}=\id$ for all $j$, then the Lavaurs vector fields $\bX_{\Omega^j_{S}}$ \eqref{eq:LavaursX} 
	glue up together as one analytic vector field $\bX_S$ on $B_S$, such that $\phi^{\circ p}=\exp(\bX_S)$.
	If $\tilde S$ is another sector with a nontrivial intersection $\tilde S\cap S$, then  on this intersection the Lavaurs vector fields 	$\bX_{\tilde\Omega_{\tilde S}^j}$ associated to domains of $\tilde S$ must agree with $\bX_S$ due to their uniqueness, and therefore they glue up to $\bX_{\tilde S}=\bX_S$. 
	In the end this means that there is just one analytic Lavaurs vector field $\bX$ on a full neighborhood of $0\in\C^2$, such that $\phi^{\circ p}=\exp(\bX)$.
	
	In the second case, if $\psi_{V_{S}^j}$ commutes with $\exp(\frac{1}{n}\bXmod)$ for each index $j$, then the different maps 
	$\Theta_{\Omega_S^j}:=\exp(\frac{1}{n}\bX_{\Omega_{S}^{j}})=\Psi_{\Omega_{S}^{j}}^{\circ(-1)}\circ\exp(\frac{1}{n}\bXmod)\circ\Psi_{\Omega_{S}^{j}}$ glue up together to a single map $\Theta_S$ defined on $B_S$, such that $\Theta_S^{\circ n}=\phi^{\circ p}$ and $\Theta_S^{\circ m}=F$.
	Now again, if $\tilde S$ is another sector with nontrivial intersection $\tilde S\cap S$, then  on this intersection the Fatou coordinates on the
	domains associated to $\tilde S$ satisfy $\bT_{\tilde\Omega_{\tilde S}^j}\circ \Theta=\bT_{\tilde\Omega_{\tilde S}^j}+\frac{1}{n}h^s$,
	which means that also the outer cocycle $\psi_{\tilde V_{\tilde S}^j}$ commutes with $\exp(\frac{1}{n}\bXmod)$, and that  
	$\Theta_{\tilde\Omega_{\tilde S}^j}:=\exp(\frac{1}{n}\bX_{\tilde\Omega_{\tilde S}^{j}})$ agrees for all $j$ defining a single map $\Theta_{\tilde S}$ on $B_{\tilde S}$, and $\Theta_{\tilde S}=\Theta_S$ on $B_{S\cap\tilde S}$.
	In the end this means that the different $\Theta_S$ glue up to a single analytic $\Theta$ on a full neighborhood of $0\in\C^2$, such that $\Theta^{\circ n}=\phi^{\circ p}$.
	
The triviality of $\sigma$-equivariant elements follows from Lemma~\ref{lemma:automorphismofX}.
\end{proof}

\begin{proof}[Proof of Theorem~\ref{thm:analyticclassification}]
	Assume first that $\phi$ and $\phi'= G^{\circ(-1)}\circ\phi\circ G$ are equivalent by means of a $\sigma$-equivariant analytic transformation $G$.
	Let $G_0$ be the linear part of $G$. Then $G_0$ must commute with both $\sigma$ and $\Lambda$ (which is the linear part of both $\phi$, $\phi'$), and also to preserve the vector field $\bXmod$: in fact since both the infinitesimal generators of $\phi^{\circ p}$ and $\phi'^{\circ p}$ are formally conjugated to some $\hatbXnf=h^s\frac{cP}{1+\hat\mu cP}\bE$, then also $G_0^*\hatbXnf$ is formally conjugted to $\hatbXnf$, and since $G_0$ is linear and $G_0^*\hatbXnf$ is ``of the same form'' as $\hatbXnf$, then by Theorem~\ref{thm:FNFX1} $G_0^*\hatbXnf=\hatbXnf$, and therefore also $G_0^*\bXmod=\bXmod:=h^scP\bE$. 
	Hence $G_0\in\Cal Z^{\sigma,\Lambda}(\bXmod)$.
	Therefore if $\{\Omega_S\mapsto\Psi_{\Omega_S}\}$ is a normalizing cochain for $\phi$, $\Psi_{\Omega_S}\circ\phi^{\circ p}=\phimod^{\circ p}\circ\Psi_{\Omega_S}$
	satisfying the conditions \textit{(1)-(3)} of Theorem~\ref{thm:cochain},	
	then $\{\Omega_S\mapsto\Psi_{G_0(\Omega_S)}\circ G\}$ is a normalizing cochain for $\phi'$ which obviously gives rise to the same cocycle as $\{\Omega_S\mapsto\Psi_{\Omega_S}\}$ except on domains transported by $G_0$.
	And if $\{\Omega_S\mapsto\Psi'_{\Omega_S}\}$ is another normalizing cochain for $\phi'$ as in Theorem~\ref{thm:cochain}, then $G_0^{-1}\circ\Psi_{\Omega_S}\circ G\circ{\Psi'_{\Omega_S}}^{\circ(-1)}(\xi)=\xi+\mod P\xi$ commutes with $\phimod^{\circ p}=\exp(h^s\bY)$,
	and by Theorem~\ref{thm:normalizngtransformation},
	$G_0^{-1}\circ\Psi_{\Omega_S}\circ G\circ{\Psi'_{\Omega_S}}^{\circ(-1)}=\exp\big(C_{\Omega_S}(h)\bY\big)$ for some $C_{\Omega_S}(h)$ bounded analytic on $S$.
	Therefore the two cocycles are conjugated.
		
	Conversely, let $\phi^{\circ p}$, $\phi'^{\circ p}$ be two germs with the same model $\exp(h^s\bY)$, and assume the outer cocycles  $\{\psi_{V_{S}^0},\ldots,\psi_{V_{S}^{2kp-1}}\}$ and  $\{\psi'_{V_{S}^0},\ldots,\psi'_{V_{S}^{2kp-1}}\}$ associated to the normalizing cochains of Theorem~\ref{thm:cochain} are conjugated by \eqref{eq:conjugationpsi}.
	Then the corrected normalizing cochain $\{\Omega_S\mapsto\exp(-C_{\Omega_S}(h)\bY)\circ\Psi'_{\Omega_S}\}$ for $\phi'^{\circ p}$ also satisfies the conditions of Theorem~\ref{thm:cochain}, and defines the same cocycle as $\{\Omega_S\mapsto\Psi_{\Omega_S}\}$. 
	Similarly for the action of an element of $\Cal Z^{\sigma,\Lambda}(\bXmod)$. 
	So we can assume that the two cocycles are equal and $\psi_{V_{S}}=\psi'_{V_{S}}$ for all intersections $V_S$.
	Then the composition $G_{\Omega_{S}}(\xi):=\Psi_{\Omega_{S}}^{\circ(-1)}\circ\Psi'_{\Omega_{S}}(\xi)$ of the two normalizing cochains glue up together on the union of the outer domains, and by the $\sigma$-symmetry also on the union of the inner domains, and by the condition \textit{(3)} of Theorem~\ref{thm:cochain} the two agree also on the gate intersections.
	Therefore the cochain $\{\Omega_S\mapsto G_{\Omega_S}\}$ glues up together to a single bounded analytic $\sigma$-equivariant transformation
	$G_S$ on $B_S$, tangent to identity, and such that $G_S\circ\phi'^{\circ p}=\phi^{\circ p}\circ G_S$.
	Now if $G_S$ and $G_{\tilde S}$ are two such conjugating transformations above two different sectors $S$, $\tilde S$ with a nontrivial intersection, then 
	$F_{S,\tilde S}:=G_S\circ G_{\tilde S}^{\circ(-1)}$ defined on $B_{S\cap\tilde S}$ is $\sigma$-equivariant and commutes with $\phi^{\circ p}$, so according to Proposition~\ref{prop:automorphisms} it is equal to identity.
	Hence all the transformations $G_S$ over different sectors $S$ glue up to a single analytic $\sigma$-equivariant transformation $G$ on a full neighborhood of $0\in\C^2$.
\end{proof}

\subsubsection{Sectorial realization of the formal invariant $\hat\mu(h)$.}\label{sec:6.3.4}

By choosing to work in a model class (Definition~\ref{def:model}) we have forgotten about the formal invariant $\hat\mu(h)$
\Grn{in the formal case (b) (in the formal case (c) $\hat\mu(h)=0$).}
Let us show that we can not only recover this
formal invariant from each one of the classifying cocycles, but moreover we also obtain its sectorial realization $\mu_S(h)$ over each of the (cuspidal) sectors $S$.

For an outer Lavaurs domain $\Omega_{S}^j$ let a \emph{gate path} $\gamma_j$ be the oriented path between the two (possibly equal) equilibria to which the domain is attached, formed by a part of the positively oriented boundary  $\Omega_{S}^j$ that lies in the gate intersection domain of  $\Omega_{S}^j$ (if  $\Omega_{S}^j$ is sepal then  $\gamma^j$ is defined to be trivial, i.e. constant path consisting of the equilibrium only). 
Let $\alpha_{\Omega_{S}^j}=\bt\circ\Psi_{\Omega_{S}^j}-\bt$ be as in \eqref{eq:alpha} and let $\bX_{\Omega_{S}^j}$ be the associated Lavaurs vector field \eqref{eq:LavaursX}, then define
\begin{equation}\label{eq:gateintegral}
 \eta_{\Omega_{S}^j}(h):=\int_{\gamma_j}(h^s\bX_{\Omega_{S}^j,h}^{-1}-h^s\bXmodh^{-1})=\int_{\gamma_j}\bE.\alpha_{\Omega_{S}^j,h}\bE^{-1}=\big[\alpha_{\Omega_{S}^j}\big]_{\gamma_j},
\end{equation}
be an integral over the gate path, independent of the choice of the Fatou coordinate.

\begin{proposition}\label{prop:muLavaurs}
Given a sector $S$ and the associated cochain of normalizing transformations, let $ \eta_{\Omega_{S}^j}(h)$ be the gate integrals \eqref{eq:gateintegral},
and let $\beta_{V_{S}^j}^{(0)}(h)$ be the constant terms in the Fourier representation of the cocycle \eqref{eq:betafourier}.
Then
\begin{equation}\label{eq:muLavaurs} 
\tfrac{1}{2\pi i}\sum_{j=0}^{2kp-1} \eta_{\Omega_{S}^j}(h)=\tfrac{1}{2\pi i}\sum_{j=0}^{2kp-1}\beta_{V_{S}^j}^{(0)}(h):= \mu_S(h).
\end{equation}
Clearly this sum $\mu_S(h)$ is an invariant of the cocycle with respect to the conjugation \eqref{eq:conjugationbeta}.

By the $\Lambda$-symmetry of the cocycle \eqref{eq:cocyclesymmetry}, 
\Grn{in the formal case (b)
\begin{equation}\label{eq:muSa}
	\tfrac{1}{2\pi i}\sum_{j=N}^{N+2k-1} \eta_{\Omega_{S}^j}(h)=\tfrac{1}{2\pi i}\sum_{j=N}^{N+2k-1}\beta_{V_{S}^j}^{(0)}(h)=\tfrac1p \mu_S(h)\quad\text{for any }\ N\in\Z_{2kp},
\end{equation}
while in the formal case (c) $\mu_S(h)=0$.
}
\end{proposition}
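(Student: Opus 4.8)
The plan is to identify $\mu_S(h)$ as a ``sectorial period'' of the Lavaurs vector field along a closed cycle generating the fundamental group of the leaf, and then compute this period in two ways: by integrating the dual form of $\bX_{\Omega^j_S}$ along the gate paths, and by collecting the ``jumps'' of the rectifying coordinate across the outer intersections, which are exactly the constant Fourier terms $\beta^{(0)}_{V^j_S}$. First I would assemble from the $2kp$ outer gate paths $\gamma_j$ and the $2kp$ outer intersection domains $V^j_S$ a closed loop $\Gamma_h$ in the leaf $\{h=\const\neq 0\}$ that is homotopic to a simple positive loop around $\xi_1=\infty$ (equivalently around $\xi_1=0$), exploiting Proposition~\ref{prop:gate}: the gate graph is connected and non-contractible, so the concatenation of all gate paths and the arcs of the outer Lavaurs domains closes up to such a loop after going once around.

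The computation then proceeds as follows. On each outer domain $\Omega^j_S$ we have the Fatou coordinate $\bT_{\Omega^j_S}$ and $\alpha_{\Omega^j_S}=\bT_{\Omega^j_S}-\bt$; along the gate path $\gamma_j$ (which stays inside $\Omega^j_S$) the variation of $\alpha_{\Omega^j_S}$ is precisely $\eta_{\Omega^j_S}(h)$ by definition \eqref{eq:gateintegral}. Crossing from $\Omega^j_S$ to $\Omega^{j-1}_S$ through the outer intersection $V^j_S$, the two Fatou coordinates are related by $\bT_{\Omega^{j-1}_S}=\bt\circ\psi_{V^j_S}\circ\bt^{\circ(-1)}\circ\bT_{\Omega^j_S}$, i.e. they differ by $t\mapsto t+\beta_{V^j_S}(t,h)$. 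The point is that $\bt$ itself is single-valued along $\Gamma_h$ only up to the residue $2\pi i\res_{\xi_1=\infty}\bY_h^{-1}$, and by \eqref{eq:FNFdual} this residue equals $2\pi i h^{-s}\hat\mu(h)$ for the formal normal form; for the model $\bY$ it is zero because $\bXmod=h^scP\bE$ has $\hat\mu\equiv 0$. Going around $\Gamma_h$ once and summing: the total change of $\bt$ is $0$ (the model has no period), while the total change of $\bT_{\Omega^j_S}=\bt+\alpha_{\Omega^j_S}$ picks up the sum of the gate variations $\sum_j\eta_{\Omega^j_S}(h)$ plus the sum of the jumps $\sum_j\beta_{V^j_S}(t,h)$ across the intersections. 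But $\bX_{\Omega^j_S}$ is the genuine (sectorial) infinitesimal generator of $\phi^{\circ p}$, so its period around $\Gamma_h$ is the true period $2\pi i h^{-s}\mu_S(h)$, which must match. Matching the purely non-periodic (constant in $t$) part shows $\sum_j\eta_{\Omega^j_S}(h)=\sum_j\beta^{(0)}_{V^j_S}(h)$, since the oscillatory Fourier modes $e^{2\pi i n t/h^s}$, $n\neq 0$, average out over the loop (they glue to single-valued functions on the relevant horn-map cylinders and contribute no net period). This is the content of \eqref{eq:muLavaurs}.

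For the $\Lambda$-symmetry refinement: in the formal case (b), $\phimod=\Lambda\exp(\tfrac1p\bXmod)$ conjugates the collection of outer domains to itself by the mutation $\Omega^j_S\mapsto\Lambda^n(\Omega^j_S)$, and by \eqref{eq:cocyclesymmetry} one has $\beta_{\Lambda^n(V^j_S)}(t+\tfrac np h^s,h)=\beta_{V^j_S}(t,h)$, hence $\beta^{(0)}_{\Lambda^n(V^j_S)}=\beta^{(0)}_{V^j_S}$; likewise $\eta_{\Lambda^n(\Omega^j_S)}=\eta_{\Omega^j_S}$ because $\Lambda$ preserves $\bXmod$ and the gate graph (Proposition~\ref{prop:gate}). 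Thus the full sum of $2kp$ terms is $p$ times the sum of any $2k$ consecutive terms, giving \eqref{eq:muSa}. In the formal case (c), the relevant symmetry $\sigma\Lambda=-I$ acts by $\beta_{-I(V^j_S)}(-t-\tfrac12 h^s,h)=-\beta_{V^j_S}(t,h)$, so $\beta^{(0)}_{-I(V^j_S)}=-\beta^{(0)}_{V^j_S}$, and since $-I$ permutes the outer intersections in pairs the total sum cancels, forcing $\mu_S(h)=0$ (consistent with $\hat\mu\equiv 0$ there). The invariance of $\mu_S(h)$ under the conjugation \eqref{eq:conjugationbeta} is immediate: $\beta^{(0)}_{V^j_S}\mapsto\beta^{(0)}_{V^j_S}+C_{\Omega^{j-1}_S}-C_{\Omega^j_S}$, a telescoping correction that sums to zero around the cyclically ordered outer domains.

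The main obstacle I expect is making rigorous the claim that only the constant ($n=0$) Fourier modes contribute to the period around $\Gamma_h$, i.e. that the loop $\Gamma_h$ can be chosen (homotoped) so that the oscillatory terms $e^{2\pi i nt/h^s}$ integrate to zero; equivalently, that $\sum_j\eta_{\Omega^j_S}(h)=\sum_j\beta^{(0)}_{V^j_S}(h)$ with no extra contribution from the $n\neq 0$ parts. The clean way is to view $h^s\bX^{-1}_{\Omega^j_S}-h^s\bXmodh^{-1}=d\alpha_{\Omega^j_S}\bmod\tfrac{dh}{h}$ as an exact form on each domain and to note that along $\Gamma_h$ the sum of the boundary contributions $[\alpha_{\Omega^j_S}]_{\gamma_j}$ plus the jumps $[\beta_{V^j_S}]$ at the gluing points equals the total period of the globally-defined (but multivalued) primitive, which is $2\pi i h^{-s}\mu_S(h)$; the identity $\sum_j\eta_{\Omega^j_S}=\sum_j\beta^{(0)}_{V^j_S}$ then drops out by separating the $h^s$-periodic jump functions into their mean value and their zero-mean part, the latter being a coboundary for the \v{C}ech complex of the covering and hence contributing nothing to the total period. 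I would carry out this bookkeeping carefully on the translation surface of $\bt_h$, using Lemma~\ref{lemma:t} for the normalization of $\bt$ on each half-zone and Proposition~\ref{prop:Fatou-behavior} for the asymptotics $\bT_{\Omega^j_S}-\bt-\hat\mu(0)\log\xi_1\to\const$ that controls the behaviour at the equilibria.
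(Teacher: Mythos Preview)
Your approach is considerably more elaborate than what is needed, and the ``main obstacle'' you identify --- isolating the constant Fourier modes from the oscillatory ones via a period/\v{C}ech argument --- is precisely what the paper avoids by a much simpler pointwise device.

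The paper's proof is a two-line telescoping computation. On the intersection $V^j_S$ one has the identity
\[
\beta_{V^j_S}\!\circ\bigl(\bt+\alpha_{\Omega^j_S}\bigr)=\alpha_{\Omega^{j-1}_S}-\alpha_{\Omega^j_S},
\]
which follows directly from the definitions of $\alpha_{\Omega_S}$ and $\psi_{V^j_S}$. Now simply \emph{evaluate this at the limit $\xi\to a_j(h)$}, the equilibrium to which $V^j_S$ is attached. There $\bt\to\infty$ along the appropriate direction, so the nonconstant Fourier terms $e^{2\pi i n t/h^s}$ ($n\neq 0$) tend to $0$, and the identity collapses to
\[
\beta^{(0)}_{V^j_S}=\alpha_{\Omega^{j-1}_S}(a_j)-\alpha_{\Omega^j_S}(a_j).
\]
On the other hand $\eta_{\Omega^j_S}=[\alpha_{\Omega^j_S}]_{\gamma_j}=\alpha_{\Omega^j_S}(a_{j+1})-\alpha_{\Omega^j_S}(a_j)$ directly from \eqref{eq:gateintegral}. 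Summing either expression over $j\in\Z_{2kp}$ gives the same telescoping sum $\sum_j\bigl(\alpha_{\Omega^{j-1}_S}(a_j)-\alpha_{\Omega^j_S}(a_j)\bigr)=\sum_j\bigl(\alpha_{\Omega^j_S}(a_{j+1})-\alpha_{\Omega^j_S}(a_j)\bigr)$ after reindexing. That is the whole proof of \eqref{eq:muLavaurs}.

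Note also that you are reading more into the statement than is there: the proposition only asserts the \emph{equality} of the two sums and \emph{defines} $\mu_S(h)$ as this common value. It does not claim that $\mu_S(h)$ equals an independently defined ``true period'' of the Lavaurs vector field; the connection with the formal invariant $\hat\mu(h)$ is the content of the \emph{next} proposition (Proposition~\ref{prop:muasymptotic}), and indeed that is where the loop $\sum_j\gamma_j$ homotopic to a generator is used. So your period computation along $\Gamma_h$, while morally in the right spirit, is both unnecessary for the present statement and leaves you with exactly the technical gap you flag. The paper's trick --- evaluate at the equilibrium, where only $\beta^{(0)}$ survives --- dissolves that obstacle entirely.

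Your treatment of the $\Lambda$-symmetry (case (b)) and the $-I$-antisymmetry (case (c)), and of the conjugation invariance via telescoping, is correct and matches the paper.
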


\begin{proof}
We have 
$\bt+\alpha_{\Omega_{S}^{j-1}}=\bt\circ\Psi_{\Omega_{S}^{j-1}}=\bt\circ\psi_{V_{S}^j}\circ\Psi_{\Omega_{S}^{j}}=\big(\bt+\beta_{V_{S}^j}\circ\bt\big)\circ\Psi_{\Omega_{S}^{j}} =\bt+\alpha_{\Omega_{S}^j} +\beta_{V_{S}^j}\circ(\bt+\alpha_{\Omega_{S}^j})$, hence
$\beta_{V_{S}^j}\circ(\bt+\alpha_{V_{S}^j}) =\alpha_{\Omega_{S}^{j-1}}-\alpha_{\Omega_{S}^j}$. 
Evaluating at the limit at the equilibrium $a_j(h)$ to which the intersection  $V_{S}^j$ is attached we obtain
\begin{equation}\label{eq:betaalpha}
\beta_{V_{S}^j}^{(0)}=\alpha_{\Omega_{S}^{j-1}}(a_j)-\alpha_{\Omega_{S}^j}(a_j).  
\end{equation}
At the same time $ \eta_{\Omega_{S}^{j}}=\alpha_{\Omega_{S}^j}(a_{j+1})-\alpha_{\Omega_{S}^j}(a_{j})$. Summing over $j\in\Z_{2kp}$ gives \eqref{eq:muLavaurs}.	
\end{proof}

\begin{proposition}\label{prop:muasymptotic}
	The function $\mu_S(h)$ is asymptotic to the formal invariant $\hat\mu(h)=\sum_{l=0}^{+\infty} \mu_lh^l$ on the sector $S$, i.e. for all $N\in\Z_{\geq0}$
	\[\big|\mu_S(h)-\sum_{l=0}^N\mu_lh^l \big|=O(h^{N+1}),\quad h\in S. \]
\end{proposition}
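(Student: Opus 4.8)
\textbf{Proof plan for Proposition~\ref{prop:muasymptotic}.}
The plan is to exploit the identity \eqref{eq:muLavaurs} together with Proposition~\ref{prop:asymptotic}, which already asserts that the Lavaurs vector field $\bX_{\Omega}$ is uniformly asymptotic to the formal infinitesimal generator $\hat\bX=h^s\frac{cP}{1+cP\hat R}\bE$ on subdomains $\tilde\Omega$ reaching $0$. First I would observe that the gate integral $\eta_{\Omega_S^j}(h)$ in \eqref{eq:gateintegral} can be rewritten, using $\hat\bX^{-1}=\big(\frac{Q}{h^sP}+h^{-s}\hat R\big)\bE^{-1}$ and the definition of $\hat\mu(h)$ on page~\pageref{page:mu}, as the difference between the actual period $\int_{\gamma_j}(h^s\bX_{\Omega_S^j,h}^{-1}-h^s\bXmodh^{-1})$ and the ``formal'' contribution coming from $\hat\mu$. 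More precisely, by Proposition~\ref{prop:asymptotic} the function $\bE.\alpha_{\Omega_S^j}$ has formal asymptotic expansion $\hat R(\xi)$ uniformly as $\xi\to0$ inside the relevant subdomain, so along the gate path $\gamma_j$ — which runs between equilibria of $\bY_h$ lying in the ring $\frac1L|h|^{p-\check s}<|\xi_1|^p<L|h|^{\check s}$ — the integrand $\bE.\alpha_{\Omega_S^j}\,\bE^{-1}$ is, up to controlled error, the ``formal'' differential whose period around the vanishing cycle is $2\pi i h^{-s}\hat\mu(h)$ (Lemma~\ref{lemma:mu}).

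The key steps in order would be: (i) Use \eqref{eq:muLavaurs} to reduce the claim to estimating $\frac1{2\pi i}\sum_j\eta_{\Omega_S^j}(h)$. (ii) For each $N$, replace $\bX_{\Omega_S^j}$ by the $n$-jet $j^{(n)}\hat\bX$ for $n$ large enough (depending on $N$, $k$, $p$, $s$, $\check s$), using Proposition~\ref{prop:asymptotic}: the difference $\bE.\alpha_{\Omega_S^j}-j^{(n)}\hat R$ lies in $\Cal J_{\tilde\Omega}^n\cdot$(bounded), hence is $O(|\xi_1|^n)$ uniformly on the subdomain $\tilde\Omega_S$ where boundedness holds. (iii) Integrate this estimate along the gate path. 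Here one must control the $\bt_h$-length (equivalently the $\bE^{-1}$-length) of $\gamma_j$; since the equilibria sit in a ring of radius $\sim|h|^{\check s/p}$ and $\bE^{-1}=\frac{d\xi_1}{\xi_1}$ has bounded length there, while $|\xi_1|^n\lesssim|h|^{n\check s/p}$ on that ring, one gets $\eta_{\Omega_S^j}(h)-(\text{$n$-jet period})=O(|h|^{n\check s/p})$, which is $O(|h|^{N+1})$ once $n\check s/p> N$. (iv) Finally identify the sum of the $n$-jet periods over all $2kp$ intersections with $\sum_{l=0}^N\mu_l h^l+O(h^{N+1})$: by Lemma~\ref{lemma:mu} the total formal residue of $\hat\bX^{-1}$ around the full collection of vanishing cycles on the leaf $\{h=\const\}$ equals $2\pi i h^{-s}\hat\mu(h)$, and truncating $\hat\bX$ to its $n$-jet changes this residue only by $O(h^{n\check s/p-s})$; multiplying by $h^s$ and the $\frac1{2\pi i}$ normalization gives the assertion.

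The main obstacle I expect is Step (iii): making the length estimate for the gate path genuinely uniform as $h\to0$ and simultaneously tracking how the error $O(|\xi_1|^n)$ from Proposition~\ref{prop:asymptotic} interacts with the blow-up of the periods (which grow like $|h|^{-\check s k}$ by Lemma~\ref{lemma:asymptotic-h}). One needs that the \emph{localized} subdomain $\tilde\Omega_S$ of Proposition~\ref{prop:boundedness}, on which $\bX_{\Omega_S^j}$ is uniformly bounded and $0\in\partial\tilde\Omega_S$, actually contains the gate path $\gamma_j$ for all $h\in S$, including in the limit $h\to0$ — this is essentially built into Definition~\ref{def:gates} and Proposition~\ref{prop:gate}(3) (the gate graph contains all equilibria and passes through $\alpha\omega$-midpoints, hence stays in the region controlled by $\tilde\Omega_S$), but one has to check the uniformity carefully, possibly by shrinking $\delta_1$ slightly as in the proof of Theorem~\ref{thm:covering}. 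A secondary point is bookkeeping the exact exponent: one should verify that $\check s>0$ whenever $k>0$ (which holds since $\check s\geq\min\{1,\frac p2\}\geq\frac12$ as the $P_j$ vanish at $0$), so that $n\check s/p\to\infty$ with $n$ and the scheme closes.
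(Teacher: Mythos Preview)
Your plan is correct and follows essentially the same route as the paper: reduce via \eqref{eq:muLavaurs} to the sum of gate integrals, invoke Proposition~\ref{prop:asymptotic} to replace $\bE.\alpha_{\Omega_S^j}$ by the $n$-jet of $\hat R$, and compute the resulting integral to get $j^{(n)}\hat\mu(h)$.

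Two places where the paper is cleaner than your outline. First, in your step~(iv) the paper does not go through Lemma~\ref{lemma:mu}; instead it observes directly (Proposition~\ref{prop:gate}(1)) that the concatenation $\sum_{j=0}^{2kp-1}\gamma_j$ is homotopic on each leaf to a simple positive loop around $\xi_1=0$, so by the ordinary residue theorem
\[
\tfrac1{2\pi i}\sum_j\int_{\gamma_j} j^{(n)}\hat R(\xi)\,\bE^{-1}
=\res_{\xi_1=0}\big(j^{(n)}\hat R(\xi_1,\tfrac h{\xi_1})\tfrac{d\xi_1}{\xi_1}\big)
=\sum_{l\le n/2} r_{l,l}h^l=j^{(n)}\hat\mu(h),
\]
which is exactly what you want, with no truncation error. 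Second, your worry in step~(iii) about the blow-up of periods from Lemma~\ref{lemma:asymptotic-h} is misplaced: those are $\bY_h^{-1}$-lengths (i.e.\ $\bt_h$-lengths), whereas the gate integral \eqref{eq:gateintegral} is taken against $\bE^{-1}=\tfrac{d\xi_1}{\xi_1}$, and the $\bE^{-1}$-length of the concatenated loop is just $2\pi$, independent of $h$. So the only estimate needed is that the integrand is $O(|\xi|^{n+1})$ on the gate paths, which sit in the ring $|\xi_1|\lesssim|h|^{\check s/p}$, giving the required $O(|h|^{(n+1)\check s/p})$; taking $n$ large enough finishes.
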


\begin{proof}
	By \eqref{eq:muLavaurs} and \eqref{eq:gateintegral}
	$\mu_S(h)=
	\tfrac{1}{2\pi i}\sum_{j=0}^{2kp-1}\int_{\gamma_j}\bE.\alpha_{\Omega_{S}^j}\bE^{-1}.$
	By Proposition~\ref{prop:asymptotic} each Lavaurs  vector field $\bX_{\Omega_{S}^j}$ is asymptotic to the formal infinitesimal generator $\hat\bX=\frac{h^scP}{1+cP\hat R}\bE$ of $\phi^{\circ p}$, i.e. each $\bE.\alpha_{\Omega_{S}^j}(\xi)$ is asymptotic to $\hat R(\xi)=\sum r_{\bm m}\xi^{\bm m}$,
	i.e. for every $n\in\Z_{>0}$, $\bE.\alpha_{\Omega_{S}^j}-j^{(n)}\hat R(\xi)=O(|\xi|^{n+1})$ uniformly on each $\Omega$,
	where $j^{(n)}\hat R(\xi)$ denotes the $n$-jet of $\hat R(\xi)$ with respect to te variable $\xi$.
	By definition $\hat \mu(h)=\sum_{l=0}^{+\infty} r_{l,l}h^{l}$ (cf. \pageref{page:mu}), which by residue theorem means that
\[\tfrac{1}{2\pi i}\sum_{j=0}^{2kp-1}\int_{\gamma_j}j^{(n)}\hat R(\xi)\bE^{-1}=\sum_{l=0}^{\lfloor\frac{n}{2}\rfloor} r_{l,l}h^{l}=j^{(n)}\hat\mu(h),\]
as on each leaf $\{h=\const\neq 0\}$ the path $\sum_{j=0}^{2kp-1}\gamma_j$ is homotopic to a simple positive loop around $0$ (Proposition~\ref{prop:gate}).
 	Therefore 
	\[\mu_S(h)-j^{(n)}\hat{\mu}(h)=\tfrac{1}{2\pi i}\sum_{j=0}^{kp-1}\int_{\gamma_j}\big(\bE.\alpha_{\Omega_{S}^j}-j^{(n)}\hat R(\xi)\big)\bE^{-1}=O(|\xi|^{n+1}),\]
	and the statement follows.
\end{proof}

\subsubsection{Canonical normalizing cochains}
 
For each (cuspidal) sector $S$ the vector field
\begin{equation}\label{eq:bXS}
	\bXnfS:=h^s\frac{cP(u,h)}{1+\mu_S(h)cP(u,h)}\bE=h^s\frac{\bY}{1+\mu_S(h)\bY.\log\xi_1},\qquad h\in S,
\end{equation}
with $\mu_S(h)$ \eqref{eq:muLavaurs}, can be thought of as a sort of \emph{sectorial realization} on $B_S$ of the formal normal form $\hatbXnf=h^s\frac{cP(u,h)}{1+\hat\mu(h)cP(u,h)}\bE$ of Theorem~\ref{thm:1} (with $\hat\mu(h)=0$ in the formal case (c)), to which it is asymptotic by Proposition~\ref{prop:muasymptotic}. 
This means that we have a sectorial realization 
\[\phinfS:=\Lambda\exp(\tfrac1p\bXnfS)\] 
of the formal normal form $\hatphinf=\Lambda\exp(\tfrac1p\hatbXnf)$ of Theorem~\ref{thm:1}.
Therefore we can now also construct a cochain of normalizing transformations for $\phi$ towards this sectorial normal form.
The great advantage of such normalizing cochain over the one of Theorem~\ref{thm:cochain} is that it can be chosen in a completely \emph{canonical way} 
\Grn{as we shall see in Theorem~\ref{thm:reducedcochain} below. Moreover it turns out that this new canonical normalizing cochain is in fact asymptotic to
 the formal normalizing transformation of Theorem~\ref{thm:1} and therefore it can be viewed as its \emph{sectorial realization}.}

As $h^s\int\big(\bXnfS^{-1}-\bXmod^{-1}=\mu_S(h)\int\bE^{-1}$, where $\int\bE^{-1}=\tfrac12\log\big(\tfrac{\xi_1}{\xi_2}\big)$, 
\footnote{Under this choice of the primitive $\int\bE^{-1}=\tfrac12\log\big(\tfrac{\xi_1}{\xi_2}\big)$
we have	$\big(\int\bE^{-1}\big)\circ\sigma=-\int\bE^{-1}$.} 
which means that the map 
\begin{equation}\label{eq:FS}
F_{S}(\xi):=\exp(t\bY)(\xi)\big|_{t=\tfrac12\mu_S(h)\log\big(\tfrac{\xi_1}{\xi_2}\big)}
\end{equation}
is such that (cf. Lemma~\ref{lemma:X})
\[\bXnfS=F_{S}^*(\bXmod).\] 
Note that $\tfrac12\log\big(\tfrac{\xi_1}{\xi_2}\big)$ is multi-valued on each leaf $\{h=\const\}$, and so is $F_S$ \eqref{eq:FS},
therefore we shall denote $F_{\Omega_S}$ the restriction of its branch to each domain $\Omega_S$.

Given a normalizing cochain $\{\Omega_S\mapsto\Psi_{\Omega_S}\}$ as in Theorem~\ref{thm:cochain} over a sector $S$, and some functional cochain $\{\Omega_S\mapsto C_{\Omega_S}(h)\}$ as in Theorem~\ref{thm:cochain}, then the transformation cochain $\{\Omega_S\mapsto\tilde\Psi_{\Omega_S}\}$:
\begin{equation}\label{eq:normalizedcochain}
\tilde\Psi_{\Omega_S}:= F_{\Omega_S}^{\circ(-1)}\circ\exp(C_{\Omega_S}(h)\bY)\circ \Psi_{\Omega_S}
\end{equation} 
is normalizing for $\phi^{\circ p}$ with respect to the sectorial normal form $\phinfS^{\circ p}=\exp(\bXnfS)$,
\[ \tilde\Psi_{\Omega_S}\circ\phi^{\circ p} =\exp(\bXnfS)\circ\tilde\Psi_{\Omega_S}.\]
As in Section~\ref{sec:6.3.1} associated to this cochain there is a cocycle of transition maps
\begin{equation}\label{eq:reducedcocycle} 
\tilde\psi_{V_{S}^j}= \tilde\Psi_{\Omega_{S}^{j-1}}\circ\Big(\tilde\Psi_{\Omega_{S}^{j}}\Big)^{\circ(-1)}, 
\end{equation}
which commute with $\phinfS^{\circ p}$ on the intersection sectors.
Letting 
\[\tilde\bt_S:=\bt\circ F_S=\bt+\tfrac12\mu_S(h)\log\big(\tfrac{\xi_1}{\xi_2}\big),\]
then we have again a Fourier representation of each cocycle $\{\tilde\psi_{V_{S}^j}\}$:
\begin{equation}\label{eq:tildebeta}
\tilde\beta_{V_{S}^j}\circ\tilde\bt_S:=\tilde\bt_S\circ\tilde\psi_{V_{S}^j}-\tilde\bt_S,\qquad
\tilde\beta_{V_{S}^j}(t,h)=\sum_{n\in\Z}\tilde\beta_{V_{S}^j}^{(n)}(h)e^{\frac{2\pi i nt}{h^s}}.
\end{equation}
We shall show that we can choose the constants $C_{\Omega_S}$ in \eqref{eq:normalizedcochain} in such a way so that the constant terms 
$\tilde\beta_{V_S^j}^{0}(h)$ in the Fourier representation \eqref{eq:tildebeta} are null for all $j\in\Z_{2kp}$.
Such cochain $\{\Omega_S\mapsto\tilde\Psi_{\Omega_S}\}$ will be uniquely determined.

\begin{theorem}[Existence of a canonical normalizing cochain]\label{thm:reducedcochain}
There exists a unique cochain of ``constants'' $\{\Omega_S\mapsto C_{\Omega_S}(h)\}$ such that the outer normalizing cochain $\tilde\Psi_{\Omega_S^j}(\xi)=\xi+\hot$ \eqref{eq:normalizedcochain}
satisfies:
	\begin{enumerate}[label=(\arabic*)]
	
	\item  it conjugates $\phi$ to the sectorial normal form $\phinfS=\Lambda\exp(\tfrac1p \bXnfS)$ 
	\[\tilde\Psi_{\Lambda^{n}(\Omega_S)}\circ\phi^{\circ n}=\phinfS^{\circ n}\circ\tilde\Psi_{\Omega_S}, \quad n=1,\ldots, p, \] %
	\item  it is $\sigma$-equivariant, that is	
	\[\tilde\Psi_{\sigma(\Omega_S)}\circ\sigma=\sigma\tilde\Psi_{\Omega_S}, \]
	\item  \Grn{if $\Omega_S$ and $\Omega_S'$ share a gate intersection then}
	\[\tilde\Psi_{\Omega_S}=\tilde\Psi_{\Omega_S'}\qquad \text{on the gate intersection.},\] 
	\setcounter{enumi}{-1}
	\item has vanishing constant Fourier coefficients of the transition maps, $\tilde\beta_{V_S^j}^{(0)}(h)=0$ in \eqref{eq:tildebeta}.
\end{enumerate}
Such normaizing cochain $\{\Omega_S\mapsto \tilde\psi_{\Omega_S}\}$ is \textbf{unique}\footnote{Under the assumption of tangency to identity, otherwise it would be determined up to left composition with elements of the centralizer $\Cal Z^{\sigma,\Lambda}(\bXnfS)=\Cal Z^{\sigma,\Lambda}(\bXmod)$ (Definition~\ref{def:centralizer}).} 
and \textbf{asymptotic} to the unique $\sigma$-equivariant formal normalizing transformation $\hat\Psi(\xi)=\xi+\hot$ that conjugates
$\phi\circ\hat\Psi=\hat\Psi\circ\hatphinf$.	
\end{theorem}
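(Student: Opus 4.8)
The plan is to construct the cochain $\{\Omega_S\mapsto C_{\Omega_S}(h)\}$ explicitly by solving a linear ``coboundary'' problem for the constant Fourier coefficients, then verify uniqueness and the asymptotic statement. First I would fix a normalizing cochain $\{\Omega_S\mapsto\Psi_{\Omega_S}\}$ as in Theorem~\ref{thm:cochain} satisfying conditions \textit{(1)--(3)}, form the associated outer cocycle $\{\psi_{V_S^j}\}$, and pass to the modified cochain $\tilde\Psi_{\Omega_S}$ \eqref{eq:normalizedcochain}. The key observation is \eqref{eq:betaalpha}--\eqref{eq:betaalpha}: in the representation \eqref{eq:tildebeta} one has, by a computation identical to \eqref{eq:conjugationbeta} but now tracking the extra factor $F_{\Omega_S}$, that the constant coefficient transforms as $\tilde\beta_{V_S^j}^{(0)}(h)=\gamma_j(h)+C_{\Omega_S^{j-1}}(h)-C_{\Omega_S^j}(h)$ for some explicit bounded analytic ``obstruction'' $\gamma_j(h)$ on $S$ built from $\beta_{V_S^j}^{(0)}$, the gate integrals $\eta_{\Omega_S^j}$, and the difference $\tilde\bt_S-\bt=\tfrac12\mu_S(h)\log(\xi_1/\xi_2)$. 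Requiring $\tilde\beta_{V_S^j}^{(0)}\equiv 0$ for all $j\in\Z_{2kp}$ is therefore a system of $2kp$ linear difference equations $C_{\Omega_S^{j}}(h)-C_{\Omega_S^{j-1}}(h)=\gamma_j(h)$. This is solvable precisely because the cyclic sum of the right-hand sides vanishes: by Proposition~\ref{prop:muLavaurs}, $\sum_{j} \beta_{V_S^j}^{(0)}=\sum_j\eta_{\Omega_S^j}=2\pi i\,\mu_S(h)$, and the correction from $\tilde\bt_S-\bt$ around the full cycle contributes exactly $-2\pi i\,\mu_S(h)$ because $\sum_j\gamma_j$ is homotopic to a simple positive loop around $\xi_1=0$ (Proposition~\ref{prop:gate}) along which the residue of $\tfrac12 d\log(\xi_1/\xi_2)$ is $1$. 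So the $\gamma_j$'s sum to zero around the cycle and the difference system has a solution, unique up to an additive function $c(h)$ common to all domains in a $\sigma$-orbit.

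Next I would pin down this remaining freedom using conditions \textit{(1)--(3)}. Condition \textit{(2)} ($\sigma$-equivariance) forces $C_{\sigma(\Omega_S)}=-C_{\Omega_S}$, which kills the common constant on each cycle that is interchanged by $\sigma$ (an outer/inner pair always is, since $\sigma$ swaps the two boundary components of $B_h$); condition \textit{(1)} ($\Lambda^n$-equivariance) forces $C_{\Lambda^n(\Omega_S)}=C_{\Omega_S}$, consistent with solving orbit by orbit; and condition \textit{(3)} on gate intersections is automatically satisfied because on a gate intersection the transition map $\psi_{V_S^j}$ is already the identity up to $P\xi$ (Theorem~\ref{thm:normalizngtransformation}), so $\tilde\beta^{(0)}$ on a gate carries no constraint and the $C$'s agree there. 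Putting these together shows the solution $\{C_{\Omega_S}\}$ exists and is unique; the resulting $\{\tilde\Psi_{\Omega_S}\}$ then satisfies \textit{(0)--(3)} by construction, and \textit{(0)} combined with uniqueness of a Fatou coordinate (Proposition~\ref{prop:Fatou}) — now applied to $\phinfS^{\circ p}$ — gives that any two cochains with these four properties and tangent to the identity must coincide on every domain, hence the cochain is unique. (Dropping tangency to identity leaves only the action of the finite centralizer $\Cal Z^{\sigma,\Lambda}(\bXnfS)=\Cal Z^{\sigma,\Lambda}(\bXmod)$, as noted in the footnote.)

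For the asymptotic statement I would argue as follows. By Theorem~\ref{thm:1} there is a unique $\sigma$-equivariant formal transformation $\hat\Psi(\xi)=\xi+\hot$ with $\phi\circ\hat\Psi=\hat\Psi\circ\hatphinf$, equivalently $\hat\Psi^{\circ(-1)}\circ\phi^{\circ p}\circ\hat\Psi=\exp(\hatbXnf)$. Each $\tilde\Psi_{\Omega_S}$ conjugates $\phi^{\circ p}$ to $\exp(\bXnfS^{})$ with $\bXnfS$ asymptotic to $\hatbXnf$ on $S$ by Proposition~\ref{prop:muasymptotic}. Then $\hat\Psi^{\circ(-1)}\circ\tilde\Psi_{\Omega_S}$ formally commutes with $\exp(\hatbXnf)$ (modulo the asymptotic error), so $n$-jet comparison as in Proposition~\ref{prop:asymptotic} gives that $\tilde\Psi_{\Omega_S}$ has $\hat\Psi$ as its asymptotic expansion on any subdomain whose closure contains $0$ and on which $\tilde\Psi_{\Omega_S}$ is bounded — and by Proposition~\ref{prop:boundedness} (and the construction in Definition~\ref{def:LavaursOmega}, which includes $0\in S$) the Lavaurs domains have this property. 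Concretely, from $\tilde\Psi_{\Omega_S}(\xi)=F_{\Omega_S}^{\circ(-1)}\circ\exp(C_{\Omega_S}\bY)\circ\Psi_{\Omega_S}(\xi)$ and the boundedness of $C_{\Omega_S}$ one reads off $\tilde\Psi_{\Omega_S}(\xi)-\xi=O(P\xi)$ tending to identity, then bootstraps jet by jet exactly as in the proof of Proposition~\ref{prop:asymptotic} using the identity $j^{(n)}(\hat\Psi^{\circ(-1)}\circ\phi^{\circ p}\circ\hat\Psi)=j^{(n)}\exp(j^{(n)}\hatbXnf)$; the error terms lie in $\xi\Cal J_{\tilde\Omega_S}^n$ and the vanishing of the constant Fourier coefficients from property \textit{(0)} is precisely what pins the free constant so that the limiting formal object is $\hat\Psi$ and not $\hat\Psi\circ\exp(h^{-s}\hat\beta(h)\hatbXnf)$ for some nonzero $\hat\beta$ (cf. Lemma~\ref{lemma:automorphismofX}).

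The main obstacle I expect is bookkeeping the exact form of the obstruction $\gamma_j(h)$ and checking that its cyclic sum really cancels against the monodromy of $\tfrac12\mu_S(h)\log(\xi_1/\xi_2)$: this requires being careful that $\mu_S(h)$ in \eqref{eq:muLavaurs} is defined by the \emph{same} cochain used to build $\tilde\Psi_{\Omega_S}$, so that the two contributions are genuinely the same quantity with opposite sign, and that the logarithmic term is handled on the ramified domain $\Omega_S$ with the correct branch (the footnoted choice $\int\bE^{-1}=\tfrac12\log(\xi_1/\xi_2)$, odd under $\sigma$, making property \textit{(2)} compatible with property \textit{(0)}). Once this cancellation is in place the rest is a routine assembly of earlier results.
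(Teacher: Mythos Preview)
Your overall architecture is right --- solve a cyclic difference system for the $C_{\Omega_S^j}$'s using Proposition~\ref{prop:muLavaurs} to close the cycle, then pin the remaining freedom --- but there is a genuine gap in how you handle the gate condition~\textit{(3)} and, relatedly, in how you fix the overall additive constant.

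You claim that condition~\textit{(3)} ``is automatically satisfied'' and that ``the $C$'s agree there''. This is not automatic. Condition~\textit{(3)} for $\tilde\Psi$ requires $C_{\Omega_S}=C_{\Omega_S'}$ (with compatible branches of $F_{\Omega_S}$) whenever $\Omega_S,\Omega_S'$ share a gate, and nothing in your difference system enforces this: the system $C_{\Omega_S^j}-C_{\Omega_S^{j-1}}=\gamma_j$ runs along the outer cycle of \emph{outer/inner intersections} $V_S^j$, not along gates. The paper supplies the missing ingredient: Lemma~\ref{lemma:beta} (the sum of the $\beta_{V_S}^{(0)}$ over all intersections attached to a single equilibrium vanishes), combined with the fact (Proposition~\ref{prop:gate}) that removing one gate edge makes the gate graph a tree. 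These two facts together give exactly the compatibility $C_{\Omega_S}=C_{\Omega_S'}$ across every remaining gate. Your proposal never invokes Lemma~\ref{lemma:beta}, and without it the verification of~\textit{(3)} fails.

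The same missing piece affects how you fix the free constant $c(h)$. Your argument that $\sigma$-equivariance ``kills the common constant'' is not quite right: $C_{\sigma(\Omega_S)}=-C_{\Omega_S}$ determines the inner constants from the outer ones but does \emph{not} constrain the common additive shift along the outer cycle. The paper fixes $C_{\Omega_S^0}$ by choosing a gate between an outer domain $\Omega_S^0$ and an inner neighbour $\sigma(\Omega_S^l)$ and imposing $C_{\Omega_S^0}=-C_{\Omega_S^l}$; this is where gates, not just $\sigma$, enter the uniqueness argument. (Also, your claim ``$C_{\Lambda^n(\Omega_S)}=C_{\Omega_S}$'' is off: in the paper $C_{\Lambda(\Omega_S^j)}=C_{\Omega_S^j}+\mu_S\log\lambda$, with the shift cancelled by the corresponding monodromy of $F_{\Omega_S}$.)

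Your asymptotic argument is close to the paper's: one shows $\bE.a_{\Omega_S}$ is asymptotic to $\bE.\hat a$ (via Proposition~\ref{prop:asymptotic}), hence $a_{\Omega_S}-j^{(n)}\hat a=c_{n,\Omega_S}(h)\bmod\Cal J^n$; then the vanishing $\tilde\beta_{V_S^j}^{(0)}=0$ makes the transition maps exponentially flat in $\bt$, forcing $c_{n,\Omega_S^{j-1}}=c_{n,\Omega_S^j}$, and $\sigma$-antisymmetry gives $c_{n,\Omega_S}=0$. You should make the exponential-flatness step explicit rather than appealing vaguely to ``jet-by-jet bootstrap''.
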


\begin{corollary}
	If two $\sigma$-reversible analytic germs $\phi(\xi)$, $\phi'(\xi')$ are formally equivalent by a transformation $\xi'=\hat\Psi(\xi)$, then there 
	exists a uniquely determined $\sigma$-equivariant  cochain of transformations $\{\Omega_S\mapsto\Phi_{\Omega_{S}}\}$ asymptotic to the formal transformation $\hat\Phi$ that conjugates $\phi$ to $\phi'$.
\end{corollary}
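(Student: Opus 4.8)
The strategy is to combine Theorem~\ref{thm:reducedcochain} with the fact that the canonical normalizing cochain there is uniquely determined and asymptotic to the formal normalizing transformation. Suppose $\xi' = \hat\Phi(\xi) = \xi + \hot$ conjugates $\phi$ to $\phi'$, i.e. $\phi'\circ\hat\Phi = \hat\Phi\circ\phi$. First I would apply Theorem~\ref{thm:1} to both $\phi$ and $\phi'$: since they are formally equivalent, they share the same formal normal form $\hatphinf$, and there exist unique $\sigma$-equivariant formal transformations $\hat\Psi, \hat\Psi'$ tangent to identity with $\phi\circ\hat\Psi = \hat\Psi\circ\hatphinf$ and $\phi'\circ\hat\Psi' = \hat\Psi'\circ\hatphinf$; by uniqueness, $\hat\Phi = \hat\Psi'\circ\hat\Psi^{\circ(-1)}$. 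Moreover both $\phi$ and $\phi'$ then belong to the same model class (Definition~\ref{def:model}), since the model depends only on $c$ and $P(u,h)$, which are formal invariants.

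Next, for each cuspidal sector $S$ from the covering of Theorem~\ref{thm:covering}, Theorem~\ref{thm:reducedcochain} provides a unique canonical normalizing cochain $\{\Omega_S\mapsto\tilde\Psi_{\Omega_S}\}$ for $\phi$ and a unique one $\{\Omega_S\mapsto\tilde\Psi'_{\Omega_S}\}$ for $\phi'$, each conjugating $\phi$, resp.\ $\phi'$, to the \emph{same} sectorial normal form $\phinfS = \Lambda\exp(\tfrac1p\bXnfS)$ — here I need the fact that the sectorial realization $\mu_S(h)$ of the formal invariant (Proposition~\ref{prop:muLavaurs}, Proposition~\ref{prop:muasymptotic}) is the same for $\phi$ and $\phi'$, which follows because $\hat\mu(h)$ is a formal invariant shared by the two germs and $\mu_S(h)$ is determined by it together with the (common) sectorial data; more carefully, one uses that $\hat\Phi$ being formal forces the classifying cocycles of $\phi$ and $\phi'$ to coincide, hence their $\mu_S$ coincide by Proposition~\ref{prop:muLavaurs}. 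Then set
\[
\Phi_{\Omega_S} := (\tilde\Psi'_{\Omega_S})^{\circ(-1)}\circ\tilde\Psi_{\Omega_S}.
\]
On each Lavaurs domain $\Omega_S$ this is a bounded analytic transformation, tangent to identity, $\sigma$-equivariant (by property (2) of Theorem~\ref{thm:reducedcochain}), and it conjugates $\phi$ to $\phi'$: indeed $\Phi_{\Omega_S}\circ\phi = (\tilde\Psi'_{\Omega_S})^{\circ(-1)}\circ\phinfS\circ\tilde\Psi_{\Omega_S}$ wait — one must be careful with the indexing via $\Lambda^n(\Omega_S)$, but property (1) of Theorem~\ref{thm:reducedcochain} handles exactly this, giving $\Phi_{\Lambda^n(\Omega_S)}\circ\phi^{\circ n} = \phi'^{\circ n}\circ\Phi_{\Omega_S}$.

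Finally I would check that the $\Phi_{\Omega_S}$ glue across intersections into a single transformation over $B_S$ and that the resulting transformations over different sectors glue into one analytic germ on a full neighborhood of $0$. On the outer intersections $V_S^j$, since both canonical cochains have vanishing constant Fourier coefficients (property (0)) and give rise to conjugated cocycles — in fact equal ones, because $\hat\Phi$ being a formal conjugacy forces cocycle equality via Theorem~\ref{thm:analyticclassification} applied implicitly — the transition maps agree, so $\Phi_{\Omega_S^{j-1}} = \Phi_{\Omega_S^j}$ on $V_S^j$; the $\sigma$-symmetry and property (3) handle the inner and gate intersections. Thus $\{\Phi_{\Omega_S}\}$ glues to a bounded analytic $\sigma$-equivariant $\Phi_S$ on $B_S$ with $\Phi_S\circ\phi^{\circ p} = \phi'^{\circ p}\circ\Phi_S$. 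For two overlapping sectors $S,\tilde S$, the map $\Phi_S\circ\Phi_{\tilde S}^{\circ(-1)}$ is $\sigma$-equivariant, tangent to identity and commutes with $\phi^{\circ p}$, so by Proposition~\ref{prop:automorphisms} it is the identity; hence the $\Phi_S$ glue to a single analytic germ $\Phi$ on $(\C^2,0)$. Its Taylor expansion conjugates $\phi$ to $\phi'$, and by uniqueness of the formal conjugacy it equals $\hat\Phi$; in particular each $\Phi_{\Omega_S}$ is asymptotic to $\hat\Phi$, since each $\tilde\Psi_{\Omega_S}$, $\tilde\Psi'_{\Omega_S}$ is asymptotic to $\hat\Psi$, $\hat\Psi'$ respectively by Theorem~\ref{thm:reducedcochain}. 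The uniqueness of the cochain $\{\Phi_{\Omega_S}\}$ under tangency to identity follows from that of the two canonical cochains and of the formal conjugacy. The main obstacle I anticipate is the bookkeeping needed to verify that the formal conjugacy $\hat\Phi$ really does force equality (not just conjugacy) of the two classifying cocycles and of the sectorial invariants $\mu_S$ — this is where one has to invoke Theorem~\ref{thm:analyticclassification} at the formal level and track the normalizations carefully.
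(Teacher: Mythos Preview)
There is a genuine gap. Your argument hinges on the claim that the sectorial invariants satisfy $\mu_S(h)=\mu'_S(h)$, so that $\tilde\Psi_{\Omega_S}$ and $\tilde\Psi'_{\Omega_S}$ conjugate $\phi$ and $\phi'$ to the \emph{same} sectorial normal form $\phinfS$. This is false in general. By Proposition~\ref{prop:muasymptotic}, each of $\mu_S$ and $\mu'_S$ is asymptotic to the common formal invariant $\hat\mu$, but asymptotic equality does not give equality as functions on $S$: $\mu_S$ is built from the cocycle of $\phi$ via \eqref{eq:muLavaurs}, and $\mu'_S$ from that of $\phi'$, and these cocycles have no reason to coincide. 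Your attempted justification --- that formal equivalence forces the classifying cocycles to agree --- is exactly backwards: by Theorem~\ref{thm:analyticclassification}, equality of cocycles is equivalent to \emph{analytic} equivalence, so assuming it for merely formally equivalent germs would collapse the entire moduli space. Consequently your $\Phi_{\Omega_S}:=(\tilde\Psi'_{\Omega_S})^{\circ(-1)}\circ\tilde\Psi_{\Omega_S}$ does not conjugate $\phi$ to $\phi'$ (it lands in the wrong sectorial normal form), and the subsequent gluing argument, which would prove that any two formally equivalent germs are analytically equivalent, cannot be correct.

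The paper's fix is to insert the bridging factor ${F'_{\Omega_S}}^{\circ(-1)}\circ F_{\Omega_S}$ between the two canonical cochains, where $F_S$, $F'_S$ are the maps \eqref{eq:FS} built from $\mu_S$, $\mu'_S$ respectively, giving
\[
\Phi_{\Omega_S}=(\tilde\Psi'_{\Omega_S})^{\circ(-1)}\circ {F'_{\Omega_S}}^{\circ(-1)}\circ F_{\Omega_S}\circ\tilde\Psi_{\Omega_S}
={\Psi'_{\Omega_S}}^{\circ(-1)}\circ\exp\big((C_{\Omega_S}-C'_{\Omega_S})\bY\big)\circ\Psi_{\Omega_S}.
\]
This genuinely conjugates $\phi$ to $\phi'$, and because $\mu_S-\mu'_S$ is asymptotically flat the factor ${F'_{\Omega_S}}^{\circ(-1)}\circ F_{\Omega_S}$ is asymptotic to identity, so $\Phi_{\Omega_S}$ is asymptotic to $(\hat\Psi')^{\circ(-1)}\circ\hat\Psi=\hat\Phi$. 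Note also that the corollary only asserts the existence of a \emph{cochain} asymptotic to $\hat\Phi$; it does not claim, and one should not attempt to prove, that the $\Phi_{\Omega_S}$ glue to a single analytic germ on a full neighborhood of the origin.
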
	

\begin{proof}
If $\tilde\Psi_{\Omega_S}= F_{\Omega_S}^{\circ(-1)}\circ\exp(C_{\Omega_S}\bY)\circ \Psi_{\Omega_S}$  and $\tilde\Psi_{\Omega_S}'= {F_{\Omega_S}'}^{\circ(-1)}\circ\exp(C'_{\Omega_S}\bY)\circ \Psi'_{\Omega_S}$ are the normalizing cochains \eqref{eq:normalizedcochain} of Theorem~\ref{thm:reducedcochain} for $\phi$ and $\phi'$, then 
\[\Phi_{\Omega_{S}}:={\Psi'_{\Omega_S}}^{\circ(-1)}\circ\exp\big((C_{\Omega_S}-C'_{\Omega_S})\bY\big)\circ \Psi_{\Omega_S}=
\big(\tilde\Psi_{\Omega_S}'\big)^{\circ(-1)}\circ {F_{\Omega_S}'}^{\circ(-1)}\circ F_{\Omega_S}\circ\tilde\Psi_{\Omega_S}\] 
is a conjugating transformation between $\phi$ and $\phi'$.
And since their invariants $\mu_S$, $\mu'_S$ are asymptotic to the same formal $\hat\mu$ the composition $F_S'\circ F_S^{\circ(-1)}$ is asymptotic to identity, therefore $\Phi_{\Omega_{S}}$ is asymptotic to the composition of the formal normalizing transformations $\hat\Phi=\big(\hat\Psi'\big)^{\circ(-1)}\circ \hat\Psi$ of Theorem~\ref{thm:1}.
\end{proof}

\begin{lemma}\label{lemma:beta}
	For each equilibrium $a(h)$ of $\bY_h$ the sum of $\beta_{V_S}^{(0)}(h)$ over all the intersections attached to $a(h)$ is null.
	Here the sum is taken over both outer and inner intersections, where for an inner intersection $V_S$, $\beta_{V_S}^{(0)}(h):=-\beta_{\sigma(V_S)}^{(0)}(h)$. 
\end{lemma}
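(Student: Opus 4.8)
\textbf{Proof plan for Lemma~\ref{lemma:beta}.}

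The statement is a ``closing up'' identity around a single equilibrium $a(h)$ of $\bY_h$, analogous to the global identity $\frac{1}{2\pi i}\sum_j \eta_{\Omega_S^j}(h)=\frac{1}{2\pi i}\sum_j\beta_{V_S^j}^{(0)}(h)$ of Proposition~\ref{prop:muLavaurs}, but localized to one vertex of the Lavaurs covering. The plan is to run the same bookkeeping argument that proves Proposition~\ref{prop:muLavaurs}, but restricted to the star of domains around $a(h)$. Concretely, recall from the proof of Proposition~\ref{prop:muLavaurs} the key relation \eqref{eq:betaalpha}: for an intersection $V_S^j$ attached to the equilibrium $a_j(h)$ one has
\[
\beta_{V_S^j}^{(0)}(h)=\alpha_{\Omega_S^{j-1}}(a_j(h))-\alpha_{\Omega_S^j}(a_j(h)),
\]
where $\alpha_{\Omega_S}=\bt\circ\Psi_{\Omega_S}-\bt$ \eqref{eq:alpha} is the shift function of the Fatou coordinate on the Lavaurs domain $\Omega_S$, and the limit at the equilibrium exists and is finite by Proposition~\ref{prop:Fatou-behavior}(ii) together with Theorem~\ref{thm:normalizngtransformation}(1). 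So $\beta_{V_S^j}^{(0)}(h)$ is a difference of ``boundary values'' of the $\alpha$-functions of the two Lavaurs domains meeting along $V_S^j$, evaluated at their common equilibrium.

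First I would list the Lavaurs domains incident to a fixed equilibrium $a(h)$. By the construction in Section~\ref{sec:6.2} (half-zone decomposition, Definition~\ref{def:halfz} and Definition~\ref{def:LavaursOmega}), the domains touching $a(h)$ — whether $a(h)$ sits on the boundary of $\alpha\omega$-type or sepal-type half-zones, and whether outer or inner — are cyclically ordered around $a(h)$, and consecutive ones share an intersection (outer, inner, or gate) which also abuts $a(h)$. Walking once around $a(h)$ through this cyclic list and summing the relation \eqref{eq:betaalpha} over precisely the intersections abutting $a(h)$, the $\alpha_{\Omega_S}(a(h))$ terms telescope: each domain $\Omega_S$ in the star contributes $+\alpha_{\Omega_S}(a(h))$ from one of its two incident intersections and $-\alpha_{\Omega_S}(a(h))$ from the other, so the total sum vanishes. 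The one point that needs care is the sign convention: for an inner intersection $V_S$ the lemma declares $\beta_{V_S}^{(0)}(h):=-\beta_{\sigma(V_S)}^{(0)}(h)$, which is forced by the $\sigma$-equivariance \textit{(2)} of the normalizing cochain (Theorem~\ref{thm:cochain}) and the fact that $\bt\circ\sigma=-\bt+\text{const}$ on each leaf (since $\int\bE^{-1}\circ\sigma=-\int\bE^{-1}$); one checks that with this convention the telescoping proceeds uniformly whether one travels through outer or inner intersections, because the outer coordinate and the inner coordinate at $a(h)$ differ only by an additive constant (Lemma~\ref{lemma:t}) which cancels in every difference. Gate intersections contribute a genuine $\beta_{V_S}^{(0)}$ term and must be included in the star sum; there is no contradiction with condition \textit{(3)} of Theorem~\ref{thm:cochain} because on a gate the transition map $\tilde\psi$ or $\psi$ can still be nontrivial away from the gate itself.

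The main obstacle, and the only place requiring real attention rather than bookkeeping, is to pin down the combinatorics of the star of $a(h)$ correctly in all cases: $a(h)$ may be a simple or a multiple equilibrium, the number of half-zones (hence Lavaurs domains) touching it depends on its multiplicity, and in degenerate configurations several separatrices or gates may emanate in the same direction. I would handle this by appealing to the local model of $e^{i\theta}h^s\bY_h$ near a parabolic point of multiplicity $m+1$ (Section~\ref{sec:6.2.1}): the $2m$ sepal sectors and the landing separatrices give a definite cyclic structure of incident half-zones, stable under the allowed variation $\theta\in K_h$, and one verifies that consecutive incident Lavaurs domains indeed share an intersection abutting $a(h)$. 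Once this cyclic incidence structure is fixed, the telescoping sum of \eqref{eq:betaalpha} is immediate and gives the claim.
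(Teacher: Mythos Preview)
Your overall strategy is exactly that of the paper: use \eqref{eq:betaalpha} to write each $\beta_{V_S}^{(0)}$ as a difference $\alpha_{\Omega_S^{j-1}}(a)-\alpha_{\Omega_S^j}(a)$ and telescope around the cyclic star of Lavaurs domains touching $a(h)$. The paper's proof is essentially the same bookkeeping, so you have the right idea.

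There is, however, a genuine misstep concerning the gate intersections. You write that ``Gate intersections contribute a genuine $\beta_{V_S}^{(0)}$ term and must be included in the star sum'' and that the transition map across a gate ``can still be nontrivial away from the gate itself''. This is backwards. Condition \textit{(3)} of Theorem~\ref{thm:cochain} says $\Psi_{\Omega_S}=\Psi_{\Omega_S'}$ on the entire gate intersection (an open set), so the transition map there is the identity and hence $\alpha_{\Omega_S}=\alpha_{\Omega_S'}$ on the gate, in particular $\alpha_{\Omega_S}(a)=\alpha_{\Omega_S'}(a)$. Gates therefore contribute \emph{zero} to the cyclic sum of $\alpha$-differences. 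This is precisely what the paper uses: the telescoping sum around $a(h)$ passes through outer, inner, and gate intersections; at outer and inner intersections one picks up the $\beta^{(0)}$ terms (with the stated sign convention for inner ones), while at gates the $\alpha$'s match and nothing is contributed. That is why the lemma can state the sum over only the outer and inner intersections and still obtain zero. If gates genuinely contributed, your telescoping would prove a different statement than the one claimed. Once you replace your sentence about gates by the observation that condition \textit{(3)} forces $\alpha_{\Omega_S}(a)=\alpha_{\Omega_S'}(a)$ across a gate, your argument is complete and coincides with the paper's.
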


\begin{proof}
	The identity \eqref{eq:betaalpha} expresses $\beta_{V_S}^{(0)}$ as the difference $\alpha_{\Omega_S}(a(h))-\alpha_{\tilde\Omega_S}(a(h))$,
	where $\Omega_{S}$ is the Lavaurs domain on the right of $V_S$ and $\tilde\Omega_S$ the one on the left.
	While the determination of $\bt$ determines also each of the $\alpha$ by an additive constant, the difference of them is independent of it as long as $\bt_{\Omega_S}=\bt_{\tilde\Omega_S}$ on the intersection. Hence it doesn't matter if it is an outer or an inner intersection. 
	\Grn{Moreover, if $\Omega_S$ and $\Omega_S'$ share a gate intersection, then $\alpha_{\Omega_S}=\alpha_{\Omega_S'}$.
	Hence when one expresses the sum of all the $\beta^{(0)}$'s in terms of $\alpha(a)$'s, they all cancel out.}
\end{proof}

\begin{proof}[Proof of Theorem~\ref{thm:reducedcochain}]
Let us show that the constants $C_{\Omega_S}(h)$ in \eqref{eq:normalizedcochain} can be chosen so that the cochain $\tilde\Psi_{\Omega_S}$ satisfies both \textit{(0)} and \textit{(1)-(3)}. 	
	
We may assume that the enumeration of the outer Lavaurs domains is such that $\Omega_S^0$ shares a gate with some inner domain $\sigma(\Omega_S^l)$ for some $l$
(such pair always exist by Proposition~\ref{prop:gate}).
Choosing some $C_{\Omega_S^0}$ on $\Omega_S^0$, then on the following outer domains $\Omega_S^1,\ldots,\Omega_S^{2kp-1}$ (in counterclockwise order) and $\Omega_S^{2kp}=e^{2\pi i J}(\Omega_S^0)$, where $J=\left(\begin{smallmatrix} 1&0\\[3pt] 0&-1\end{smallmatrix}\right)$,
we need to take $C_{\Omega_S^j}:=C_{\Omega_S^0}+\beta_{V_{S}^{1}}^{(0)}(h)+\ldots+\beta_{V_{S}^{j}}^{(0)}(h)$, $j=1,\ldots,2kp$, 
so that $C_{\Omega_S^{j}}-C_{\Omega_S^{j-1}}=\beta_{V_{S}^{j}}^{(0)}(h)$.
Choosing the determinations $F_{\Omega_S^j}$ of \eqref{eq:FS} such that they agree on the intersections $V_S^j\subseteq\Omega_{S}^j\cap\Omega_S^{j-1}$, $j=1,\ldots,2kp$, 
we then get ${\beta_{V_S^j}'}^{(0)}=\beta_{V_S^j}^{(0)}+C_{\Omega_S^{j-1}}-C_{\Omega_S^j}=0$, so \textit{(0)} is satisfied.
By Proposition~\ref{prop:muLavaurs}, $C_{\Omega_S^{2kp}}=C_{\Omega_S^0}+2\pi i\mu_S$,
while at the same time  
$F_{\Omega_S^{2kp}}=\exp(2\pi i\mu_S\bY)\circ F_{\Omega_S^{0}}$,
which means that $\tilde\Psi_{\Omega_S^{2kp}}=\tilde\Psi_{\Omega_S^0}$ is well defined.

On the inner domains $\sigma(\Omega_S^0),\ldots,\sigma(\Omega_S^{2kp-1}),\sigma(\Omega_S^{2kp})$ (in clockwise order)
we need to take $\tilde\Psi_{\sigma(\Omega_S^{j})}=\sigma\tilde\Psi_{\Omega_S^{j}}\circ\sigma$ in order to satisfy \textit{(2)}.
We now choose $C_{\Omega_S^0}$ so that on the gate intersection between the outer domain $\Omega_S^0$ and its inner neighbor $\sigma(\Omega_S^l)$
the two transformation agree $\tilde\Psi_{\Omega_S^{0}}=\tilde\Psi_{\sigma(\Omega_S^{l})}$. 
If one selects the two branches of \eqref{eq:FS} such that $F_{\Omega_S^{0}}=F_{\sigma(\Omega_S^{l})}$, then this translates to asking that
$C_{\Omega_S^0}=-C_{\Omega_S^l}$.

We now need to verify that the properties \textit{(1)} and \textit{(3)} hold, while the properties \textit{(2)} and \textit{(0)} are already satisfied by our construction.

\Grn{
Let us verify \textit{(1)}: 
Using \eqref{eq:normalizedcochain} and the condition \textit{(1)} of Theorem~\ref{thm:cochain}, this is equivalent to
\[F_{\Lambda(\Omega^j_S)}^{\circ(-1)}\circ\exp\big(C_{\Lambda(\Omega_S^{j})}\bY\big)=\Lambda F_{\Omega^j_S}^{\circ(-1)}\circ\Lambda^{-1}\circ\exp\big(C_{\Omega_S^{j}}\bY\big).\]
If $\Lambda=\left(\begin{smallmatrix}\lambda&0\\0&\lambda^{-1}\end{smallmatrix}\right)$, then on one hand, by Proposition~\ref{prop:muLavaurs}, $C_{\Lambda(\Omega_S^{j})}=C_{\Omega_S^j}+\mu_S\log\lambda$, on the other hand we have $\Lambda^{-1}F_{\Lambda(\Omega_S^j)}\circ\Lambda=\exp\big(\mu_S\log\lambda\,\bY\big)\circ F_{\Omega_S^j}$, 
and the condition is satisfied.
}

Let us verify \textit{(3)}:
Removing from the gate graph the edge corresponding to the gate intersection between $\Omega_S^0$ and its inner neighbor we obtain a tree by Proposition~\ref{prop:gate}.
For any gate (edge) of this tree, any branch of the tree starting at this gate has the sum of all the $\beta$'s at its vertices null by Lemma~\ref{lemma:beta}, which means that the constants $C_{\Omega_{S}}$ at the two sides of this gate are the same. And the determinations of $F_{\Omega_{S}}$ are chosen so that they agree along gates of the branch as well as along the intersection sectors attached to its vertices.

Let us now prove the asymptoticity of the normalizing cochain $\{\Omega_S\mapsto\tilde\Psi_{\Omega_S}\}$ \eqref{eq:normalizedcochain}. 
Let $\alpha_{\Omega_S}=\bt\circ\Psi_{\Omega_S}-\bt$ \eqref{eq:alpha}, and let
\[a_{\Omega_S}:=\tilde\bt_{S}\circ\tilde\Psi_{\Omega_S}-\tilde\bt_{S}=\alpha_{\Omega_S}(\xi)+C_{\Omega_S}(h)-\tfrac12\mu_S(h)\log\big(\tfrac{\xi_1}{\xi_2}\big).\]
This means that $\tilde\Psi_{\Omega_S}=\exp(th^{-s}\bXnfS)\big|_{t=a_{\Omega_S}}$, and (see \eqref{eq:LavaursX})
\[\bX_{\Omega_S}=\frac{\bXmod}{1+h^{-s}\bXmod.\alpha_{\Omega_S}}=\frac{\bXnfS}{1+h^{-s}\bXnfS.a_{\Omega_S}}.\] 
Correspondingly, let $\hat\bX$ be the formal infinitesimal generator of $\phi^{\circ p}=\exp(\hat{\bX})$, let $\hatbXnf=\frac{\bXmod}{1+h^{-s}\bXmod.\big(\tfrac12\hat\mu\log\big(\tfrac{\xi_1}{\xi_2}\big)\big)}$ be its formal normal form,
and let $\hat a(\xi)$ be the formal power series such that
$\hat a\circ\sigma=-\hat a$ and 
\[\hat\bX=\frac{\bXmod}{1+h^{-s}\bXmod.\big(\tfrac12\hat\mu\log\big(\tfrac{\xi_1}{\xi_2}\big)+\hat a\big)}=\frac{\hatbXnf}{1+h^{-s}\hatbXnf.\hat a}.\]
Then the $\sigma$-equivariant formal normalizing transformation $\hat\Psi$, such that $\hat\bX=\hat\Psi^*\hatbXnf$, can be also expressed as  $\hat\Psi=\exp(th^{-s}\hatbXnf)\big|_{t=\hat a}$. 
We know that $\mu_S$ is asymptotic to $\hat\mu$, hence $\bXnfS$ is asymptotic to $\hatbXnf$, so all we need to prove is that $a_{\Omega_S}$ is asymptotic to $\hat a$.

We also know that $\bX_{\Omega_S}$ is asymptotic to $\hat\bX$, which means that $\bE.a_{\Omega_S}$ is asymptotic to $\bE.\hat a$.
In the notation of the proof of Proposition~\ref{prop:asymptotic} this means that for any $n\in\Z_{>0}$
\[\bE.(a_{\Omega_S}-j^{(n)}\hat a)=0\mod \Cal J_{\Omega_S}^n, \]
which implies that $a_{\Omega_S}-j^{(n)}\hat a=c_{n,\Omega_S}(h)\mod \Cal J_{\Omega_S}^n$ for some bounded function $c_{n,\Omega_S}(h)$ on $S$.

On the outer intersection $V_S^j$ we have
\begin{align*}\tilde\beta_{V_S^j}\circ\tilde\bt_S=\big(\tilde\bt_{S}\circ\tilde\Psi_{\Omega_S^{j-1}}-\tilde\bt_{S}\circ\tilde\Psi_{\Omega_S^j}\big)\circ\tilde\Psi_{\Omega_S^j}^{\circ(-1)}
&=\big(a_{\Omega_S^{j-1}}-a_{\Omega_S^{j}}\big)\circ\tilde\Psi_{\Omega_{S}^{j}}^{\circ(-1)}\\
&=c_{n,\Omega_S^{j-1}}-c_{n,\Omega_S^{j}}\mod \Cal J_{V_S^j}^n, 
\end{align*}
with $\tilde\beta_{V_S^j}\circ\tilde\bt_S=\sum_{n\in\Z}\tilde\beta_{V_{S}^j}^{(n)}(h)e^{\frac{2\pi i n\tilde\bt_S}{h^s}}$ on the left side.
By the construction of the Lavaurs domains, 
\[\RE\big(-2\pi in\tfrac{\tilde\bt_S}{h^s}\big)\geq 2\pi n\sin\delta_3\tfrac{|\tilde\bt_S|}{|h|^s}\geq An\sin\delta_3|\xi|^{-2kps-2s},\]
for some $A>0$, where $s$ is as in Lemma~\ref{lemma:asymptotic-h}.
Since $\beta_{V_{S}^j}^{(0)}=0$, this means that the left side is exponentially flat in $|\xi|$, and therefore 
$c_{n,\Omega_S^{j-1}}-c_{n,\Omega_S^{j}}=0\mod \Cal J_{V_S^j}^n$.
We also have 
\[a_{\sigma(\Omega_S)}\circ\sigma=\big(\tilde\bt_S\circ\tilde\Psi_{\sigma(\Omega_S)}-\tilde\bt_S\big)\circ\sigma=-\tilde\bt\circ\tilde\Psi_{\Omega_S}+\tilde\bt_S=-a_{\Omega_S}, \]
which means that $c_{n,\sigma(\Omega_S)}(h)=-c_{n,\Omega_S}(h)\mod \Cal J_{\Omega_S}^n$.
We conclude that $c_{n,\Omega_S}=0\mod \Cal J_{\Omega_S}^n$ for all Lavaurs domains $\Omega_S$, and for any $n\in\Z_{>0}$. Hence $a_{\Omega_S^j}(\xi)$ is asymptotic to $\hat a(\xi)$, which we wanted to prove. 
\end{proof}

\GRN

\subsubsection{Proof of Theorem~\ref{thm:analytic}.}

\begin{proof}[Proof of Theorem~\ref{thm:analytic}]
The proof of Theorem~\ref{thm:analyticclassification} establishes that for any sector $S$, the two pairs $(\phi,\sigma)$ and $(\phi',\sigma)$ are conjugated by $G_S\in\Diff_{\id}^{h}(B_S,0)$ if and only if their outer cocycles over $S$ agree.
Let us show that this in fact implies that the analytic conjugacy extends from $B_S$ to a full neighborhood of $0\in\C^2$.

Since the cocycles over $S$ agree, then also the sectoral invariants $\mu_S=\mu_S'$ agree. So let $\{\Omega_{S}\mapsto\tilde\Psi_{\Omega_{S}}\}$, $\{\Omega_{S}\mapsto\tilde\Psi'_{\Omega_{S}}\}$ be the canonical normalizing cochains of Theorem~\ref{thm:reducedcochain} for $\phi,\phi'$, and let
$\{V_{S}\mapsto\tilde\psi_{V_{S}}\}$, $\{V_{S}\mapsto\tilde\psi'_{V_{S}}\}$ be the associated transition maps \eqref{eq:reducedcocycle}.
By the unicity, $\tilde\Psi'_{\Omega_{S}}=\tilde\Psi_{\Omega_{S}}\circ G_S$ for all $\Omega_{S}$, and therefore the transition maps agree $\tilde\psi'_{V_{S}}=\tilde\psi_{V_{S}}$.

Now if $\tilde S$ is another sector with non-trivial intersection $\tilde S\cap S\neq\{0\}$, then by the same argument, on this intersection one has $\mu_{\tilde S}=\mu'_{\tilde S}$, $\tilde\Psi'_{\Omega_{S}}=\tilde\Psi_{\Omega_{S}}\circ G_S$ and $\tilde\psi'_{V_{\tilde S}}=\tilde\psi_{V_{\tilde S}}$, and therefore it is true on the whole $\tilde S$. Hence the conjugacy $G_S$ extends  analytically also to $\tilde S$.

Repeating this argument we see that the conjugacy $G_S$ is in fact analytic on the union of all the domains $B_S$, that is, on a full neighborhood of $0\in\C^2$.
\end{proof}

\FGRN

\com{I'm omitting the whole section ``Analytic classification of reversible anti-holomorphic diffeomorphisms.'' as obsolete in view of Theorem~\ref{thm:antiholomorphicclassification}.}

\subsubsection{Compatibility of cocycles.}\label{sec:6.3compatibility}

Over each sector $S$ in the $h$-space the associated cocycle carries complete information about dynamics of $\phi$ on $B_S$. On the intersection of two different sectors $S$ and $S'$, the two cocycles have to describe the same dynamics on $B_{S\cap S'}$, therefore cannot be independent of each other. Our goal is to formulate a necessary compatibility condition between them.
We shall identify the cocycles with certain ``pseudo-representations'' of a fundamental groupoid $\Pi_1(B_S\sminus\{P=0\},\Ends_S)$ (Definition~\ref{def:fundamentalgroupoid}) in the pseudogroup of transformations commuting with $\phimod^{\circ p}$, and express the compatibility condition as a conjugacy of these ``pseudo-representations''. 
The basic idea is an analogy with the compatibility conditions encountered in problems of unfolding of moduli spaces in \cite{Hurtubise-Rousseau, Rousseau-Teyssier2}, where they are formulated in terms of conjugacy of monodromies of linear systems \cite{Hurtubise-Rousseau} or of holonomies of foliations \cite{Rousseau-Teyssier2}.
One difference is that we shall work with fundamental groupoids in place of fundamental groups, which is a natural generalization in situations involving a (non-linear) Stokes phenomenon. 
Another difference lies in the general impossibility of composition of transformations associated to different paths due to potential lack of their analytic extendability, thus the name ``pseudo-representation''.

\begin{definition}[Fundamental groupoid]\label{def:fundamentalgroupoid}
An \emph{end} of a Lavaurs domain $\Omega_{S,h}$ is the intersection of the closure of $\Omega_{S,h}$ with the boundary of $B_h$, and is identified with some marked point $e_{\Omega_{S,h}}\in\partial B_h\cap \ov\Omega_{S,h}$ (the choice is such that it depends continuously on $h\in S$ and respects the cyclic ordering of the inner/outer domains and the actions of $\sigma,\Lambda$). 
	
For a given sector $S$ in the $h$-space, and $h\in S$ we denote $\Ends_{S,h}$ the set of the  ends of the $4kp$ associated domains $\Omega_{S,h}$.
The fundamental groupoids $\Pi_1(B_h\sminus\{P_h=0\},\Ends_{S,h})$, $h\in S^*=S\sminus\{0\}$, consisting of relative classes of paths in $B_h\sminus\{P_h=0\}$ with fixed endpoints in $\Ends_{S,h}$, are identified with each other over the sector $S^*$ as a single groupoid
\[\Pi_1(B_{S^*}\sminus\{P=0\},\Ends_{S^*}),\qquad B_{S^*}=\coprod_{h\in S^*}B_h,\quad \Ends_{S^*}=\coprod_{h\in S^*}\Ends_{S,h}.\] 
\end{definition}

The Fatou coordinates $\bT_{\Omega_S}$ for $\phi^{\circ p}$ on the Lavaurs domains $\Omega_S$ are characterized by two conditions
\begin{enumerate}  \setlength\itemsep{0em}
	\item Fatou relation (Abel equation): $\bT_{\Omega_S}\circ\phi^{\circ p}=\bT_{\Omega_S}+h^s$,
	\item asymptotic condition: $\bT_{\Omega_S}-\bt$ has at most moderate growth on $\Omega_S$.
\end{enumerate}
The second condition assures its uniqueness up to addition of a constant. 
We will look at what information do the cochains of Fatou coordinates carry when the asymptotic condition is forgotten. 
In that case, the coordinate $\bT_{\Omega_S}$ is determined up to an addition of a term $m_{\Omega_S}\circ\bT_{\Omega_S}$ where $m_{\Omega_S}(t)=m_{\Omega_S}(t+h^s)$ can be any $h^s$-periodic function.
Equivalently, the normalizing cochain $\{\Omega_S\mapsto\Psi_{\Omega_S}\}$ is then defined only up to left composition with any cochain of transformations $\{\Omega_S\mapsto M_{\Omega_S}\}$ commuting with $\phimod^{\circ p}=\exp(\bXmod)$.
The point is that in the absence of the asymptotic condition the form of the domains $\Omega_S$ no longer plays a role,
so the cochains can be thought as defined on (neighborhoods of) the ends  $\{e_{\Omega_S}\mapsto\Psi_{\Omega_S}\}$, 
which then allows to compare the cocycles associated to normalizing cochains on different coverings over different sectors $S$.

\begin{definition}[Fatou pseudo-representations]
The set of Lavaurs domains $\{\Omega_S\}$ over a sector $S$ is in bijective correspondence with the set of ends $\Ends_S=\{e_{\Omega_S}\}$.
\emph{Elementary paths} in  $\Pi_1(B_{S^*}\sminus\{P=0\},\Ends_{S^*})$ are those that correspond to 
\begin{itemize}
	  \setlength\itemsep{0em}
	\item[-] either inner or outer intersections: they connect the ends of two neighboring inner or outer Lavaurs domains and lie inside their union, 
	\item[-] or gate intersections: they connect the ends of two  Lavaurs domains sharing a gate intersection and lie inside their union.
\end{itemize}
Clearly, the elementary paths generate the fundamental groupoid $\Pi_1(B_{S^*}\sminus\{P=0\},\Ends_{S^*})$.
	
A cocycle $\{V_S\mapsto\psi_{V_{S}}\}$ over $S$ gives rise to a \emph{Fatou pseudo-representation} $\mathfrak{r}=\mathfrak{r}_S$ of $\Pi_1(B_{S^*}\sminus\{P=0\},\Ends_{S^*})$ by
associating to each elementary path 
	$e_{\Omega_{S}}\xrightarrow{\gamma_{V_S}} e_{\Omega_{S}'}$ corresponding to an outer/inner/gate intersection $V_S\subseteq \Omega_{S}\cap \Omega_{S}'$ 
	a transformation	
	\[\mathfrak{r}(\gamma_{V_S}):=\psi_{V_S},\qquad 
	\bt\circ\mathfrak{r}(\gamma_{V_S})=\bt+\beta_{V_S}\circ\bt, \] 
	where $\beta_{V_S}(t)=\sum_{n\in\Z}\beta_{V_S}^{(n)}(h)\,e^{\frac{2n\pi i t}{h^s}}$
	is an analytic $h^s$-periodic function on $\bt(V_S)$.
Whenever some elementary paths $e_{\Omega_{0,S}}\xrightarrow{\gamma_{V_{1,S}}} e_{\Omega_{1,S}},\ldots,e_{\Omega_{n-1,S}}\xrightarrow{\gamma_{V_{n,S}}} e_{\Omega_{n,S}}$,
correspond to intersections $V_{1,S},\ldots,V_{n,S}$ attached to the same simple singularity, then to their composition $e_{\Omega_{0,S}}\xrightarrow{\gamma_{V_{1,S}}\cdots\gamma_{V_{n,S}}} e_{\Omega_{n,S}}$
is associated the corresponding composition of the transformations
\[\mathfrak{r}(\gamma_{V_{1,S}}\cdots\gamma_{V_{n,S}})=\mathfrak{r}(\gamma_{V_{1,S}})\circ\ldots\circ\mathfrak{r}(\gamma_{V_{n,S}})\]
which in this case is well-defined because each $\beta_{V_{j,S}}$ extends by the $h^s$-periodicity to a $\bt$-image of a full neighborhood of the singularity.

Two pseudo-representations $\mathfrak{r},\mathfrak{r}'$ are \emph{conjugated} if there exists a map $M$ that associates to each end $e=e_{\Omega_S}\in\Ends_{S^*}$ a transformation 
$M_{e_{\Omega_S}}:e_{\Omega_{S}}\to e_{\Omega_{S}}$ commuting with $\exp(h^s\bY)$
that conjugates 
\[\mathfrak{r}'(\gamma)=M_{e'}\circ\mathfrak{r}(\gamma)\circ M_{e}^{\circ(-1)}, \quad \text{for }\  e\xrightarrow{\gamma} e'.\]
\end{definition}

So far the Fatou pseudo-representations are not much more than just cocycles minus the asymptotic condition. 
The point is that when passing from one sector $S$ to another sector $S'$ with a non-trivial intersection $S\cap S'$, there is a natural identification of $\Ends_{S^*}$ and $\Ends_{{S'}^*}$:
in fact, the ends of the zones of $e^{i\theta}h^s\bY_h$ (p.~\pageref{page:ends}) depend continuously on $\theta\in\,\,]\delta_3,\pi-\delta_3[$ by rotating, even for unstable values of $\theta$, and this correspondence is carried also to the ends of the Lavaurs domains.
Therefore we can identify the fundamental groupoids 
\begin{equation}\label{eq:fundamentalgroupoid}
\begin{aligned}\Pi_1(B_{S^*}\sminus\{P=0\},\Ends_{S^*})&\simeq \Pi_1(B_{S^*\cap {S'}^*}\sminus\{P=0\},\Ends_{S^*\cap {S'}^*})\\
	&\simeq \Pi_1(B_{{S'}^*}\sminus\{P=0\},\Ends_{{S'}^*}).
\end{aligned}
\end{equation}

\begin{theorem}[Compatibility condition]
For two different sectors $S$, $S'$ with a non-trivial intersection	$S\cap S'$, the Fatou pseudo-representations $\mathfrak{r}_{S},\mathfrak{r}_{S'}$ of 
\eqref{eq:fundamentalgroupoid} 
generated by the classifying cocycles
$\{\mathfrak{r}_{S}(\gamma_{V_S})=\phi_{V_S}\}$ and $\{\mathfrak{r}_{S'}(\gamma_{V'_{S'}})=\phi_{V'_{S'}}\}$ are conjugated to each other.	
\end{theorem}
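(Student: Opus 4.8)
The plan is to construct, for each $h\in S\cap S'$ (and then passing to the limit $h\to 0$), an explicit conjugating map $M$ between the two Fatou pseudo-representations using the two normalizing cochains. The starting point is that over each sector we have a normalizing cochain $\{\Omega_S\mapsto\Psi_{\Omega_S}\}$ as in Theorem~\ref{thm:cochain}, conjugating $\phi^{\circ p}$ to the model $\phimod^{\circ p}=\exp(\bXmod)$, and likewise over $S'$ a cochain $\{\Omega_{S'}\mapsto\Psi_{\Omega_{S'}}\}$. On the intersection $S\cap S'$ both cochains are defined over $B_{(S\cap S')^*}$, but on \emph{different} (rotated) coverings by Lavaurs domains. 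The identification \eqref{eq:fundamentalgroupoid} of the fundamental groupoids comes precisely from the fact that the ends of the zones of $e^{i\theta}h^s\bY_h$ move continuously with $\theta$ even across the unstable values, so each end $e_{\Omega_S}$ of a Lavaurs domain over $S$ corresponds to a unique end $e_{\Omega_{S'}}$ over $S'$.

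First I would fix $h\in (S\cap S')^*$ and a pair of ends $e_{\Omega_S}\leftrightarrow e_{\Omega_{S'}}$ identified as above. Near the common singularity to which such an end's separatrix is attached (or more precisely, on a neighborhood of the end where both Fatou coordinates live), both $\Psi_{\Omega_S}$ and $\Psi_{\Omega_{S'}}$ conjugate $\phi^{\circ p}$ to $\exp(\bXmod)$. Hence the composition
\[
M_{e_{\Omega_S}}:=\Psi_{\Omega_{S'}}\circ\Psi_{\Omega_S}^{\circ(-1)}
\]
commutes with $\exp(\bXmod)=\phimod^{\circ p}$ on its domain of definition, i.e. it is exactly of the type allowed in the definition of conjugacy of pseudo-representations (in the coordinate $\bt$ it is $t\mapsto t+m_{e}\circ t$ with $m_e$ an $h^s$-periodic function, whose Fourier coefficients depend analytically on $h\in S\cap S'$ and extend continuously to $h=0$ by Proposition~\ref{prop:Fatou-dependence}). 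Then for an elementary path $e\xrightarrow{\gamma_{V}}e'$ corresponding to an intersection $V\subseteq\Omega\cap\Omega'$, we have $\mathfrak{r}_S(\gamma_V)=\psi_V=\Psi_{\Omega_{S}}\circ\Psi_{\Omega'_{S}}^{\circ(-1)}$ on the $S$-side and $\mathfrak{r}_{S'}(\gamma_V)=\psi'_V=\Psi_{\Omega_{S'}}\circ\Psi_{\Omega'_{S'}}^{\circ(-1)}$ on the $S'$-side, and the computation
\[
M_{e'}\circ\mathfrak{r}_S(\gamma_V)\circ M_{e}^{\circ(-1)}
=\Psi_{\Omega'_{S'}}\circ\Psi_{\Omega'_S}^{\circ(-1)}\circ\Psi_{\Omega_S}\circ\Psi_{\Omega'_S}^{\circ(-1)}\circ\Psi_{\Omega_S}^{\circ(-1)}\circ\Psi_{\Omega_{S'}}
=\Psi_{\Omega_{S'}}\circ\Psi_{\Omega'_{S'}}^{\circ(-1)}=\mathfrak{r}_{S'}(\gamma_V)
\]
gives exactly the required conjugacy relation (the middle cancellation uses that $\psi_V$ and hence the composed map is well-defined on $\bt(V)$, and the telescoping is legitimate because all the factors are germs defined on overlapping pieces of $B_h$).

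The main obstacle, and where care is needed, is twofold. First, the transformations $\Psi_{\Omega_S}$, $\Psi_{\Omega_{S'}}$ are a priori only defined on their respective Lavaurs domains, which over $S\cap S'$ are rotated relative to one another; so before composing one must verify that for each identified pair of ends the intersection of (neighborhoods of) $\Omega_S$ and $\Omega_{S'}$ near that end is non-empty and connected, so that $M_{e_{\Omega_S}}$ makes sense as a genuine germ, and that the telescoping identity above is carried out on a domain where all four maps are simultaneously defined — this is the reason the statement is about \emph{pseudo}-representations rather than representations, and one has to track domains of definition carefully using the structure of the covering from Section~\ref{sec:6.2}. Second, one must check that the $M_e$'s are consistent with the identification of the groupoids on compositions of elementary paths attached to a single simple singularity (where $\mathfrak{r}$ is genuinely a homomorphism); this follows because on a full punctured neighborhood of such a singularity both $\Psi_{\Omega_S}$ and $\Psi_{\Omega_{S'}}$ are defined and conjugate $\phi^{\circ p}$ to $\exp(\bXmod)$, so $M_e=\Psi_{\Omega_{S'}}\circ\Psi_{\Omega_S}^{\circ(-1)}$ extends over that neighborhood and commutes with $\phimod^{\circ p}$ there. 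Finally one passes $h\to 0$ within $S\cap S'$ using the continuity of the construction (Proposition~\ref{prop:Fatou-dependence}, Proposition~\ref{prop:halfz-param}), so that the conjugacy $M$ is also defined at the vertex, completing the identification of $\mathfrak{r}_S$ and $\mathfrak{r}_{S'}$ as conjugate pseudo-representations.
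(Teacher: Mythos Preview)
Your approach is essentially the same as the paper's: define $M_e:=\Psi_{\Omega_{S'}}\circ\Psi_{\Omega_S}^{\circ(-1)}$ for each identified end and telescope. The paper's proof is a single sentence stating exactly this formula; your additional discussion of domains of definition and the passage $h\to 0$ is reasonable but not strictly needed given how the pseudo-representation framework is set up (the transition maps are encoded as $h^s$-periodic Fourier series, so the ``germ at the end'' issue is absorbed into the definition).

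One minor correction: your displayed telescoping identity is miswritten. With $\mathfrak r_S(\gamma_V)=\Psi_{\Omega_S}\circ\Psi_{\Omega'_S}^{\circ(-1)}$ (so $e=e_{\Omega'}$, $e'=e_{\Omega}$), the correct expansion is
\[
M_{e'}\circ\mathfrak r_S(\gamma_V)\circ M_e^{\circ(-1)}
=\bigl(\Psi_{\Omega_{S'}}\circ\Psi_{\Omega_S}^{\circ(-1)}\bigr)\circ\bigl(\Psi_{\Omega_S}\circ\Psi_{\Omega'_S}^{\circ(-1)}\bigr)\circ\bigl(\Psi_{\Omega'_S}\circ\Psi_{\Omega'_{S'}}^{\circ(-1)}\bigr)
=\Psi_{\Omega_{S'}}\circ\Psi_{\Omega'_{S'}}^{\circ(-1)}=\mathfrak r_{S'}(\gamma_V),
\]
whereas the six factors you wrote do not cancel (the second and third, and the fourth and fifth, are not inverses of each other). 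This is just a bookkeeping slip in matching $e,e'$ with $\Omega,\Omega'$; the argument itself is correct.
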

\begin{proof}
If the end $e=e_{\Omega_S}=e_{\Omega'_{S'}}$ of the Lavaurs domain $\Omega_{S}$ over $S$ is identified with the end of $\Omega'_{S'}$ over $S'$, then the map $e\mapsto M_e:=\Psi_{\Omega'_{S'}}\circ \Psi_{\Omega_{S}}^{\circ(-1)}$ conjugates $\mathfrak{r}_S$ and $\mathfrak{r}_{S'}$.
\end{proof}

\begin{figure}[t]
\centering
\begin{subfigure}{.45\textwidth} \includegraphics{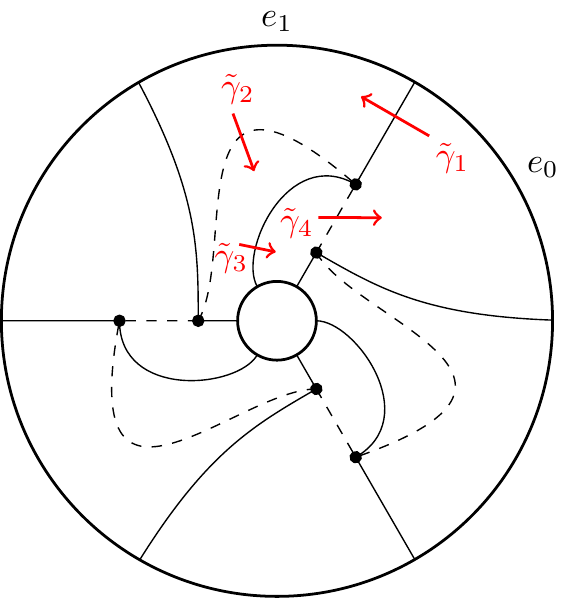}  \caption{} \label{figure:compatibility-b} \end{subfigure}
\qquad
\begin{subfigure}{.45\textwidth} \includegraphics{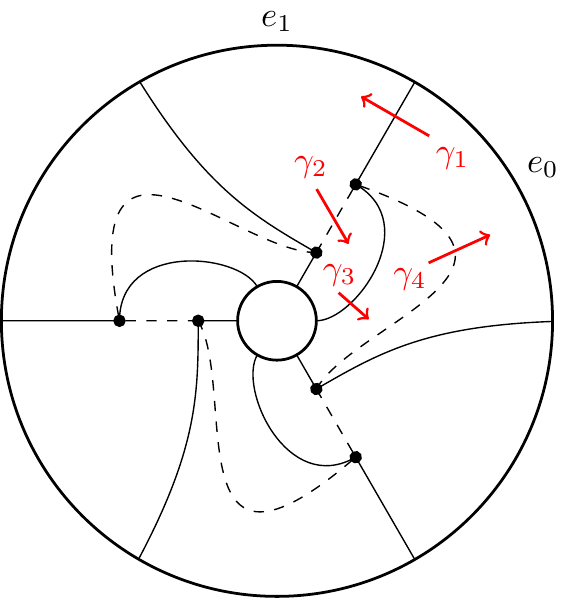} \caption{} \label{figure:compatibility-a} \end{subfigure}
	\caption{Schematic depiction of the topological organization of the Lavaurs domains corresponding to Figure~\ref{figure:vf3-a} and~\ref{figure:vf3-c} for $k=1$, $p=3$
		(see also Figure~\ref{figure:sectors}). 
	The dashed lines represent gate intersections, while the full lines represent outer and inner intersections. The arcs of the outer and inner boundary circle represent ends of the domains, and the red arrows represent selected paths between the ends of the corresponding domains.}
	\label{figure:compatibility}
\end{figure}

Let us illustrate this compatibility conditions on an example.

\begin{example}
Let $k=1$, and consider the case when all the equilibria are simple, and let us look at the two neighboring sectors $\tilde S$, resp. $S$, over which the topological organization of the Lavaurs domains is as depicted in Figure~\ref{figure:compatibility} (see also Figure~\ref{figure:sectors}).	
Under the identification of ends
\[e_0=e_{\Omega_{\tilde S}^0}=e_{\Omega_{S}^0},\qquad e_1=e_{\Omega_{\tilde S}^1}=e_{\Omega_{S}^1},\]
the four successive paths
$\ e_{\Omega_{\tilde S}^0} \xrightarrow{\tilde\gamma_1} e_{\Omega_{\tilde S}^1} \xrightarrow{\tilde\gamma_2} e_{\sigma\Lambda(\Omega_{\tilde S}^1)} \xrightarrow{\tilde\gamma_3} e_{\Lambda\sigma(\Omega_{\tilde S}^0)} \xrightarrow{\tilde\gamma_4} e_{\Omega_{\tilde S}^0},\ $
resp.
$\ e_{\Omega_{S}^0} \xrightarrow{\gamma_1} e_{\Omega_{S}^1} \xrightarrow{\gamma_2} e_{\Lambda\sigma(\Omega_{S}^1)} \xrightarrow{\gamma_3} e_{\sigma(\Omega_{ S}^0)} \xrightarrow{\gamma_4} e_{\Omega_{S}^0},\ $
which form a simple loop around the same simple equilibrium $a(h)$, have pseudo-representations
\begin{align*}
\mathfrak{r}_{\tilde S}(\tilde\gamma_1)&=\psi_{V_{\tilde S}^1} &
\mathfrak{r}_{\tilde S}(\tilde\gamma_2)&=\exp\big(-\nu_{0\infty}\bY\big)\\
\mathfrak{r}_{\tilde S}(\tilde\gamma_3)&=\Lambda\sigma\psi_{V_{\tilde S}^2}\circ(\Lambda\sigma) &
\mathfrak{r}_{\tilde S}(\tilde\gamma_4)&=\exp\big((\nu_a+\nu_{0\infty})\bY\big),
\end{align*}
resp.
\begin{align*}
\mathfrak{r}_{S}(\gamma_1)&=\psi_{V_{S}^1}&
\mathfrak{r}_{S}(\gamma_2)&=\exp\big((\nu_a-\nu_{0\infty})\bY\big)\\
\mathfrak{r}_{S}(\gamma_3)&=\sigma\Lambda\psi_{V_{S}^2}\circ(\sigma\Lambda)&
\mathfrak{r}_{S}(\gamma_4)&=\exp\big(\nu_{0\infty}\bY\big),
\end{align*}
where $\nu_a(h)$ is the dynamical residue of $\bY_h^{-1}$ at the equilibrium $a(h)$ \eqref{eq:nu_a}, \eqref{eq:nu12}, and $\nu_{0\infty}(h)$ is the period \eqref{eq:nu08} of $\bY_h^{-1}$ along the path from $0$ to $\infty$ corresponding to $\gamma_4$.
The whole loop also has a well defined pseudo-representation
$\mathfrak{r}_{\tilde S}(\tilde\gamma_1\tilde\gamma_2\tilde\gamma_3\tilde\gamma_4)=\mathfrak{r}_{\tilde S}(\tilde \gamma_1)\circ\mathfrak{r}_{\tilde S}(\tilde \gamma_2)\circ\mathfrak{r}_{\tilde S}(\tilde \gamma_3)\circ\mathfrak{r}_{\tilde S}(\tilde \gamma_4)$,
resp.
$\mathfrak{r}_{S}(\gamma_1\gamma_2\gamma_3\gamma_4)=\mathfrak{r}_{S}(\gamma_1)\circ\mathfrak{r}_{S}(\gamma_2)\circ\mathfrak{r}_{S}(\gamma_3)\circ\mathfrak{r}_{S}(\gamma_4)$.
Now the compatibility condition between the cocycles on the two sector demands that there exists a pair of transformations  $M_{e_0}$, $M_{e_1}$ 
(in our case given by $M_{e_j}=\Psi_{\Omega_{\tilde S}^{j}}\circ \Psi_{\Omega_{S}^{j}}^{\circ(-1)}$, $j=0,1$), 
commuting with $\exp(\bXmod)$, such that:
\begin{enumerate}
	\item $\psi_{V_{\tilde S}^1}\circ M_{e_1}=M_{e_0}\circ\psi_{V_{S}^1}$,
	\item $\psi_{V_{\tilde S}^2}\circ (\Lambda M_{e_0}\circ\Lambda^{-1})=M_{e_1}\circ\psi_{V_{S}^2}$,
	\item $\mathfrak{r}_{\tilde S}(\tilde\gamma_1\tilde\gamma_2\tilde\gamma_3\tilde\gamma_4)\circ M_{e_0}=M_{e_0}\circ \mathfrak{r}_{S}(\gamma_1\gamma_2\gamma_3\gamma_4)$.
\end{enumerate}
All other conjugacy relations follow from these three by $(\sigma,\Lambda)$-equivariance.
\end{example}

\goodbreak

\bibliographystyle{alpha}
\footnotesize

\end{document}